\documentclass[11pt]{amsart}
\usepackage{fullpage}
\usepackage[utf8]{inputenc}
\usepackage{microtype}
\usepackage{graphics}
\usepackage{amsmath,amssymb,stmaryrd,array,multirow,dsfont,mathrsfs}
\usepackage{amsthm}
\usepackage{dsfont}
\usepackage{enumerate}
\usepackage{xcolor}
\usepackage{color}
\usepackage{hyperref}
\usepackage{indentfirst}
\usepackage{pifont}
\usepackage[T1]{fontenc}
\usepackage{microtype}
\usepackage{graphics}
\usepackage{amsthm}
\usepackage{xcolor}
\usepackage{color}
\usepackage{hyperref}
\graphicspath{{Images/}}
\xdefinecolor{darkgreen}{rgb}{0,0.4,0}
\setlength{\parindent}{2em}
\setlength{\parskip}{0.5em}
\newcommand{\normmm}[1]{{\left\vert\kern-0.25ex\left\vert\kern-0.25ex\left\vert #1 
   \right\vert\kern-0.25ex\right\vert\kern-0.25ex\right\vert}}
\allowdisplaybreaks[4]
\newcommand{\beq}{\begin{equation}}
\newcommand{\eeq}{\end{equation}}
\newcommand{\beqs}{\begin{equation*}}
\newcommand{\eeqs}{\end{equation*}}
\newcommand{\ben}{\begin{eqnarray}}
\newcommand{\een}{\end{eqnarray}}
\newcommand{\beno}{\begin{eqnarray*}}
\newcommand{\eeno}{\end{eqnarray*}}

\renewcommand{\div}{{\rm div}}

\newcommand{\Lip}{{\rm Lip}\,}
\newcommand{\Id}{{\rm Id}\,}
\newcommand{\Supp}{{\rm Supp}\,}

\newcommand{\Rmnum}[1]{\uppercase\expandafter{\romannumeral #1} }
 \numberwithin{equation}{section}

\newtheorem{defi}{Definition}[section]
\newtheorem{thm}{Theorem}[section]
\newtheorem{lem}[thm]{Lemma}
\newtheorem{prop}[thm]{Proposition}
\newtheorem{rmk}[thm]{Remark}
\newtheorem{cor}[thm]{Corollary}

\def\curl{\mathop{\rm curl}\nolimits}
\def\il{|\!|\!|}

\def \d {\mathrm {d}}
\def\D {\mathrm {D}}
\def\cA{{\mathcal A}}

\def\cC{{\mathcal C}}

\def\cE{{\mathcal E}}
\def\cF{{\mathcal F}}
\def\cG{{\mathcal G}}
\def\cH{{\mathcal H}}

\def\cJ{{\mathcal J}}
\def\cK{{\mathcal K}}
\def\cL{{\mathcal L}}

\def\cN{{\mathcal N}}
\def\cO{{\mathcal O}}

\def\cQ{{\mathcal Q}}
\def\cR{{\mathcal R}}

\def\cT{{\mathcal T}}

\def\cV{{\mathcal V}}

\def\cZ{{\mathcal Z}}

\def\ud{\underline}

\let\f=\frac
\def \p {\partial}
\def\mR {\mathbb{R}}
\def\mS {\mathcal{S}}
\def\ep{\varepsilon}

\def \ltx {L_t^2L^2}

\def \pt {\partial_{t}}
\def \vr {\varrho}
\def \vp {\varphi}
\def\bN   {\mathbf{N}}
\def \bbp {\mathbb{P}_t}
\def \bbq {\mathbb{Q}_t}

\def\si {\sigma}
\def\na{\nabla}

\def \lesim {\lesssim}

\def \tz {\tilde{\zeta}}

\def \bn {\textbf{n}}
\def \hco {L_t^2H_{co}}
\def \htlde  {L_t^2\tilde{H}}
\def\lae {\Lambda\big(\f{1}{c_0},
\cN_{m,T}\big)}

\def \lca {\Lambda\big(\f{1}{c_0},\cA_{m,t}\big)}

\def\undertilde#1{\mathord{\vtop{\ialign{##\crcr
$\hfil\displaystyle{#1}\hfil$\crcr\noalign{\kern1.5pt\nointerlineskip}
$\hfil\tilde{}\hfil$\crcr\noalign{\kern1.5pt}}}}}

\title{ Incompressible limit for the free surface Navier-Stokes system.}
\author{Nader Masmoudi, Fr\'ed\'eric Rousset, Changzhen Sun}
\address{NYUAD Research Institute, New York University Abu Dhabi, PO Box 129188, Abu Dhabi,   United Arab Emirates.
 Courant Institute of Mathematical Sciences, New York University, 251 Mercer Street, New York, NY 10012, USA.}
\email{masmoudi@cims.nyu.edu}
\address{Universit\'e Paris-Saclay,  CNRS, Laboratoire de Math\'ematiques d'Orsay (UMR 8628),  91405 Orsay Cedex, France}
\email{frederic.rousset@universite-paris-saclay.fr. }

\address{Institut de Math\'ematiques de Toulouse, UMR5219, Universit\'e de Toulouse, CNRS, INSA, F-31077 Toulouse, France}
\email{changzhen.sun@math.univ-toulouse.fr}
\begin{document}
\maketitle
\begin{abstract}
    We establish uniform regularity  estimates with respect to the Mach number
    for the three-dimensional free surface compressible Navier-Stokes system in the case of slightly well-prepared initial data
     in the sense that the acoustic components like the divergence of the velocity field are 
      of size $\sqrt{\varepsilon}$, $\varepsilon$ being the Mach number. These estimates allow us  to  justify the convergence 
      towards the free surface incompressible Navier-Stokes system  in the low Mach number limit. 
      One of the main difficulties is the control of the regularity of the surface in presence of  boundary layers with fast oscillations.
\end{abstract}
\section{Introduction}
We consider the motion of a slightly compressible viscous fluid with a free surface.
It takes the following form:
\beq \label{FCNS}
 \left\{
\begin{array}{l}
 \displaystyle\pt \rho^{\varepsilon} +\div( \rho^{\varepsilon} w^\varepsilon)=0,\\
 \displaystyle\pt ( \rho^{\varepsilon} w^{\varepsilon})+\div(\rho^{\varepsilon}w^{\varepsilon}\otimes w^{\varepsilon} )-\div\mathcal{L}w^{\varepsilon}+
\f{\nabla P(\rho^{\varepsilon})}{\ep^2}=0,   \\
\displaystyle \rho^{\ep}|_{t=0}=\rho_0^{\ep}, \ w^{\ep}|_{t=0}=w_0^{\ep},
\end{array} \qquad \text{$(t,x)\in \mathbb{R}_{+}\times \Omega_t^{\ep} $},
\right.\\
\eeq
where $\rho^{\ep}>0,w^{\ep}\in \mR^3$ are the density and the velocity of the fluid, $P(\rho^{\ep}),$ a smooth function of $\rho^{\ep},$ stands for the pressure. The viscous tensor $\mathcal{L}w^{\ep}$ takes the form: $$\mathcal{L}w^{\ep}=
2\mu S w^{\ep}+\lambda \div w^{\ep} \text{Id}, \quad S w^{\ep}=\f{1}{2}(\nabla w^{\ep}+\nabla^{t} w^{\ep}).$$ 
Here,  
$\mu,\lambda$ are the  viscosity parameters that are assumed to be constant and to satisfy the conditions: $\mu>0, 2\mu+3\lambda>0.$ The parameter $\ep$ is the scaled Mach number which  is  assumed  small, that is $\ep\in(0,1]$. 
We focus on a  fluid domain  given by:
$$\Omega_t^{\ep}=\{x=(y,z)|\ y\in\mathbb{R}^2,-1<z<h^{\ep}(t,y)\},$$
where the upper surface is free and the bottom is fixed.
Here $h^{\ep}(t,y)$, the surface of the fluid domain, is unknown and needs to be solved together with $(\rho^{\ep},w^{\ep}).$ Since the fluid particles do not cross the surface,
$h^{\ep}$ solves 
\beq\label{surface equation}
\p_t h^{\ep}-w^{\ep}(t,y,h^{\ep}(t,y))\cdot \textbf{N}^{\ep}=0, \quad h^{\ep}(0,y)=h_0^{\ep}(y) \quad y\in\mathbb{R}^2
\eeq
where $\textbf{N}^{\ep}=(-\p_1h^{\ep},-\p_2 h^{\ep},1)^{t}$ denotes the outward normal vector to the surface $\Sigma_t^{\ep}=\{x=(y,z),z=h^{\ep}(t,y)\}.$
We supplement the system \eqref{FCNS} and \eqref{surface equation} with the following physical conditions. At the upper boundary, the continuity of the stress tensor reads:
\beq\label{upbdry}
\mathcal{L}u^{\ep} \textbf{N}^{\ep}=\frac{1}{\ep^2}\big(P(\rho^{\ep})-P(\bar{\rho})\big)\textbf{N}^{\ep} \quad \text{on} \quad \Sigma_t^{\ep}
\eeq
where $\bar{\rho}>0$ is a reference constant density.
At the bottom, we prescribe a slip boundary condition:
\beq\label{bebdry}
w_3^{\ep}=0, \quad \mu\p_3 w_{j}^{\ep}=a w_j^{\ep}\quad(j=1,2), \quad \text{on} \quad \{z=-1\},
\eeq
where $a$ is a constant that quantifies the effects of the friction at the boundary (this can be easily generalized
 to a smooth function $a$, see \cite{masmoudi2021uniform}). The case of the Dirichlet boundary condition at the bottom raises other difficulties even without the presence of a free surface and is left for future work. Note that we could also consider the case of a strip with infinite depth, see Section \ref{rmkhalfspace}.

 The system \eqref{FCNS} can be  obtained from a suitable  scaling of the original physical variables. Indeed, we get \eqref{FCNS}, \eqref{surface equation} by performing the following scaling:
 $$\tilde{\rho}(t,x)=\rho^{\ep}(\ep t,x),\,\tilde{w}(t,x)=\ep w^{\ep}(\ep t,x),\, \tilde{h}=h^{\ep}(\ep t,x),\,\tilde{\mu}=\ep \mu,\,\tilde{\lambda}=\ep\lambda,$$ where
 $\tilde{\rho},\tilde{u},\tilde{h}$ satisfy:
 \beq
 \left\{
\begin{array}{l}
 \displaystyle\pt \tilde{\rho} +\div(\tilde{\rho} \tilde{w})=0,\\
 \displaystyle\pt (\tilde{\rho} \tilde{w})+\div(\tilde{\rho}\tilde{w} \otimes \tilde{w} )-\div\tilde{\mathcal{L}}\tilde{w}+
\nabla P(\tilde{\rho})=0,   \\
\displaystyle \p_t \tilde{h}+\tilde{w}(t,y,\tilde{h}(t,y))\cdot \tilde{\textbf{N}}=0, \\ 
\end{array} 
\right.\\
\eeq
where $\tilde{\mathcal{L}}\tilde{w}=2\tilde{\mu}S\tilde{w}+\tilde{\lambda}\div \tilde{w}.$

The aim of this paper is to study the low Mach number limit problem, that is to
study the behavior of (strong) solutions to \eqref{FCNS} when $\varepsilon$ tends to 0.
Formally, due to the singular term $\frac{\nabla P(\rho^{\varepsilon})}{\varepsilon^2},$ the pressure (and hence the density $\rho^{\varepsilon}$) is expected to tend to a constant state in some suitable space, one thus expect that the limit of the solutions to \eqref{FCNS}, if it exists in a sufficiently strong sense,  will be the solution to the following incompressible free surface Navier-Stokes system:
  \beq \label{FINS}
 \left\{
\begin{array}{l}
 \displaystyle \bar{\rho}(\pt w^0+w^{0}\cdot\nabla w^{0} )-2\mu\div\, S w^{0}+
\nabla \pi^0=0,  \\
 \displaystyle \div\, w^0=0, 
 \qquad\qquad\qquad\qquad\qquad \text{$(t,x)\in \mathbb{R}_{+}\times \Omega_t^0 $}, \\
 w^0|_{t=0}=w_0^0, \, h^0|_{t=0}=h_0^0,
\end{array}
\right.\\
\eeq
supplemented with the boundary conditions:
\begin{align*}
\p_t h^0-w^0(t,y,h^0(t,y))\cdot \textbf{N}^0=0,\quad (t,y) \in \mathbb{R}_{+}\times \mR^2,\\
    {S}w^0 \textbf{N}^0=\pi^0 \textbf{N}^0 \quad \text{on} \quad \{z=h^0(t,y)\},\\
    w_3^{0}=0, \quad \p_3 w_{j}^{0}=a w_{j}^{0} \, (j=1,2) \quad \text{on} \quad \{z=-1\},
\end{align*}
where $\textbf{N}^{0}=(-\p_1h^{0},-\p_2 h^{0},1)^{t}.$

The rigorous justification of the low Mach number limit has been 
studied extensively in  different contexts depending on the generality of the system (isentropic or non-isentropic), the type of the system (Navier-Stokes or Euler), the type of solutions (strong solutions or weak solutions), the properties of the domain  (without boundaries, with fixed or free boundaries), as well as the type of the initial data considered (well-prepared or ill-prepared). 
The mathematical justification of the low Mach number limit was initiated by Ebin \cite{MR431261}, Klainerman-Majda \cite{MR615627,MR668409} for \textit{local strong solutions} of compressible fluids (Euler or Navier-Stokes), in the whole space with well-prepared data ($\div\, u_0^{\varepsilon}=\cO(\varepsilon), \nabla P_0^{\varepsilon}=\cO(\varepsilon^2)$) and later, by Ukai \cite{MR849223} for ill-prepared data ($\div\, u_0^{\varepsilon}=\cO(1),\nabla P_0^{\varepsilon}=\cO(\varepsilon)$).
These works are then extended by several authors in different settings.
One can refer for instance to \cite{MR2211706, MR1917042,Mr1834114,MR1946548} 
 for the study of the non-isentropic (Euler or Navier-Stokes) equations 
 under ill-prepared initial data whenever the domain is the whole space or the torus, and also
\cite{MR834481,MR2812710} for bounded domains with well-prepared initial data.  
 There are also many other related works, one can see for example \cite{MR2106119,MR1308856, MR1886005,MR3563240,MR1702718,MR2575476,MR4011035,MR918838,MR4293725,MR1628173,MR1710123}. For more exhaustive information, one can
refer for example to the well-written survey papers by 
 Alazard \cite{MR2425022}, Danchin \cite{MR2157145}, Feireisl \cite{MR3916821}, Gallagher \cite{MR2167201},
 Jiang-Masmoudi \cite{MR3916820},
 Schochet \cite{MR3929616}.

The analysis of the   low Mach number limit problem for the isentropic compressible Navier-Stokes (CNS) system in domains with fixed boundaries, which is more  related to the interest of the current paper, 
has been done in two different directions. Roughly speaking, for (CNS) in fixed bounded domains, one can either justify the limit process directly from global weak solutions, or prove that local strong solutions exist on a time interval independent of the Mach number and use compactness arguments to pass to the limit. For the first case, Lions and Masmoudi \cite{MR1628173} investigated the convergence of \textit{weak solutions} to (CNS) in bounded domains with various boundary conditions. Later on, 
for the same problem in bounded domains with Dirichlet boundary conditions, the authors in \cite{MR1697038, MR3281954} noticed that under some geometric assumption on the domain,
the acoustic waves are damped in a boundary layer so that local in time  strong convergence ($L_{t,x}^2$) holds.  One can also refer to \cite{MR2575476} for the justification of the convergence towards a solution of the incompressible Navier-Stokes system in unbounded domains by using the local energy decay for the acoustic system. All these results hold true for ill-prepared initial data. 
Concerning the local strong solutions, uniform high order energy estimates are established in \cite{MR2812710} with Dirichlet boundary conditions  and in \cite{MR3240080} with Navier-slip boundary conditions by assuming the initial data to be well-prepared.  Recently, we established in \cite{masmoudi2021uniform}  uniform high  regularity estimates in bounded domains with Navier-slip boundary conditions and {ill-prepared} initial data. To match the boundary layer effects due to the fast oscillations and the ill-prepared initial data assumption, we proved uniform estimates in an anisotropic functional framework with only one normal derivative close to the boundary.
 
  There are only a few works dealing with  the low Mach number limit problem for systems in the presence of free boundaries.
   They deal with inviscid systems. In  \cite{MR3812074}, Lindblad-Luo prove uniform a-priori estimates for the  free boundary compressible Euler equations in the case of a bounded reference domain. More recently, this result is extended by Luo
\cite{MR3887218} for unbounded reference domains and by Disconzi-Luo \cite{MR4097326}
for a bounded reference domain but with surface tension. All these results are based on the assumption that the initial datum is sufficiently well-prepared in the sense that the time derivatives up to at least order two are bounded initially, an assumption which
is stronger than the usual well-prepared data assumption which requires one time derivative to be bounded initially. Regarding viscous fluids, the author in \cite{ou-free} considered the 1d  compressible Navier-Stokes system with free boundaries and established uniform estimates with respect to the Mach number and the Froude number for both well-prepared and ill-prepared initial data. 
Nevertheless, within our knowledge, there is no related work for  multidimensional viscous systems. Indeed, in the multidimensional case, there are several difficulties that do not appear in the 1d case, 
as will be explained later, a boundary layer appears in the multidimensional case which will preclude the uniform control of higher order ($\geq 2$) normal derivatives of the solution.
The aim of the current work is thus to 
investigate the low Mach number limit problem for 3d viscous fluids
solving  \eqref{FCNS}-\eqref{bebdry}.
For the simplicity of presentation (compared to the case of general bounded domains) 
we choose a channel with finite depth as the reference domain. Nevertheless, one can extend easily our analysis to the cases where the reference domain is the half space or a bounded domain,
 we shall explain more about this aspect in Section \ref{rmkhalfspace}.

The core of the analysis in 
this paper is to establish some uniform high regularity estimates  in order to get the existence of a local strong solution on a time interval independent of $\ep$. Due to the presence of the diffusion term as well as the singular linear term, a boundary layer correction to the  highly oscillating acoustic waves appears and creates unbounded  high order normal derivatives of the velocity. 
Therefore, we  need to work in a functional framework based on conormal Sobolev spaces that minimizes the use of normal derivatives near the boundary in the spirit of \cite{MR3786770,MR3590375, MR3741102}.
Note that in the current situation, we have to handle simultaneously  fast oscillations in time and a boundary layer effect
 so that the difficulties and the analysis will be very different from the ones in \cite{MR3741102}, where   compressible slightly viscous fluids are considered. Indeed, the energy estimates for conormal derivatives 
  cannot be directly  obtained  since tangential vector fields do not commute with the singular part of the system. Moreover, to include only slightly well-prepared data (we will explain later what it means), it will be impossible for us to get uniform estimates for time derivatives. In \cite{masmoudi2021uniform}, we  could  establish uniform estimates 
  for the  isentropic compressible Navier-Stokes system  with Navier boundary condition in smooth fixed domains and ill-prepared initial data. For free surface fluids, there are extra difficulties essentially related to the control of the regularity  of the free surface. Indeed, because of the occurrence of the singular terms, the compressible part of the system behaves at time scale $\tau=t/\ep$ like a small viscosity approximation of the acoustic system, we thus cannot obtain uniform extra regularity for the surface from the diffusion term. 
   This is the main reason for which some kind of well-prepared assumption will be needed.
    We could nevertheless impose an assumption that we call slightly well-prepared which is weaker than the usual
     well-prepared assumption that requires one time derivative to be of order  $\cO(1)$ and thus much weaker
     than the assumption made for the free surface Euler system, for example in  \cite{MR3812074}, where two derivatives
      of the solution  are assumed to be $\cO(1)$  initially. We only require  
 the first time derivative of the solution  to be of  order $\ep^{-\f{1}{2}},$ this is thus intermediate between ill-prepared $\cO(\ep^{-1})$ and well-prepared $\cO(1),$ see also Remark \ref{rmk-data}. The main heuristics  is that despite the extra difficulties  arising from the boundary layer effects (note that the presence of a boundary layer is a feature of the viscous problem and is absent in the inviscid case), the presence  of the diffusion term can 
  help us to gain some regularity of the surface (not necessarily uniform). It thus allows us to include more general data compared to the corresponding works on inviscid systems \cite{MR3812074,MR3887218,MR4097326}.
We shall explain more precisely below after the reformulation of the system and the statement of the main results.
\subsection{Reformulation of the system in a fixed domain}
 Let us set  $$\varrho^{\ep}=\f{P(\rho)-P(\bar{\rho})}{\ep},$$
the system \eqref{FCNS} can be rewritten into the following symmetric form: 
  \beq \label{FCNS1}
 \left\{
\begin{array}{l}
 \displaystyle g_1(\ep\varrho^{\ep})\big(\pt \varrho^{\varepsilon}+ w^{\ep}\cdot \na\varrho^{\ep}\big)+\f{\div w^\varepsilon}{\ep}=0,\\[3pt]
 \displaystyle g_2\big(\ep \varrho^{\ep})(\pt   w^{\varepsilon}+w^{\varepsilon}\cdot\na w^{\varepsilon} \big)-\div \mathcal{L} w^{\varepsilon}+
\f{\nabla\varrho^{\varepsilon}}{\ep}=0,  \qquad \text{$(t,x)\in \mathbb{R}_{+}\times \Omega_t^{\ep} $}, \\[3pt]
 \displaystyle w^\ep|_{t=0} =w_0^{\varepsilon},\quad \vr^{\ep}|_{t=0}=\vr_0^{\varepsilon}\\
 \end{array}
 \right.
\eeq
where the scalar functions $g_1, g_2$ are defined by: 
\beq\label{defofg12}
g_2(s)=\rho^\ep=P^{-1}(P(\bar{\rho})+
s),\quad g_1(s)=(\ln g_2)' (s);\quad s>-\bar{P}=-P(\bar{\rho}).
\eeq
Moreover, the boundary condition \eqref{upbdry} is transformed into
\beq\label{upbdry1}
\mathcal{L}u^{\ep} \textbf{N}^{\ep}=\frac{\varrho^{\ep}}{\ep}\textbf{N}^{\ep} \quad \text{on} \quad \Sigma_t^{\ep}.
\eeq
In the following, we shall work on the system \eqref{FCNS1}, \eqref{surface equation} with boundary conditions \eqref{bebdry}, \eqref{upbdry1}.

We then  choose an appropriate change of coordinates to reduce the free-surface domain to a fixed one. One natural possibility is to use Lagrangian coordinates, nevertheless, since we shall consider the problem in the conormal Sobolev setting, the Lagrangian transformation would be also only bounded in the conormal setting, this would raise additional difficulties.
Therefore, instead of using Lagrangian coordinates, we shall use the following smoothing diffeomorphism \cite{MR3060183}, where the map will enjoy the usual Sobolev regularity. Let us set $\mathcal{S}=\mathbb{R}^2\times [-1,0],$ and consider the map
\beq\label{changeofvariable}
\begin{aligned}
\Phi_t^{\ep}:\mathcal{S}&\rightarrow\Omega_t^{\ep}\\
(y,z)&\mapsto \Phi^{\ep}(t,y,z)
=(y,\vp^{\ep}(t,y,z))^t
\end{aligned}
\eeq
where 
\beq\label{defvpep}
\vp^{\ep}(t,y,z)=z+\eta^{\ep}(t,y,z)(1+z).
\eeq
Here $\eta^{\ep}$ is given by a smoothing extension
\beq\label{defeta}
(\mathcal{F}\eta^{\ep})(t,\xi,z)=e^{-\delta_0(1+|\xi|^2)z^2}(\mathcal{F}h^{\ep})(t,\xi)
\eeq
where $\mathcal{F}$ stands for the Fourier transform with respect to the horizontal variable $y\in \mR^2,$ $\delta_0$ is a small parameter such that $\det(\D\Phi_0^{\ep})>0,$ which ensures that $\Phi_0^{\ep}$
is a diffeomorphism.
Note that $$\det(\D\Phi_0^{\ep})=\p_z\varphi^{\ep}(0,x)=1+h^{\ep}(0,x)+(\eta^{\ep}-h^{\ep})(0,x)+\p_z\eta^{\ep}(0,x)(1+z)>2c_0>0$$
as long as 
\beq\label{diffcdtion0}
1+h^{\ep}(0,x)\geq 3c_0>0, \,\forall x\in \mS,
\eeq
\beq\label{diffcdtion}
\|(\eta^{\ep}-h^{\ep})(0)\|_{L^{\infty}(\mS)}+\|\p_z\eta^{\ep}
(0)\|_{L^{\infty}(\mS)}<c_0,
\eeq
where  $c_0>0$ is a fixed constant. Let us notice that $\eqref{diffcdtion}$ holds 
 if $\|h^{\ep}(0)\|_{H^{s}(\mR^2)}<+\infty$, for some $s>2$ and $\delta_0$ is chosen sufficiently small.
Moreover, we have that
$$\|\nabla \varphi^{\ep}(t)\|_{L^2(\mathcal{S})}\lesssim |h^{\ep}(t)|_{H^{\f{1}{2}}(\mR^2)},$$
which means that we gain one half derivative.

Let  us now set
$$u^{\ep}(t,y,z)=w^{\ep}(t,y,\Phi^{\ep}(t,y,z)), \quad \sigma^{\ep}=\varrho^{\ep}(t,y,\Phi^{\ep}(t,y,z))$$
where $u^\ep$ and $\sigma^\ep$ are defined in $\mathcal{S}$.
 Then we set,
$\p_j^{\varphi^{\ep}}u^{\ep}=(\p_j w^{\ep})\circ \Phi^{\ep}, \p_j^{\varphi^{\ep}}\sigma^{\ep}=(\p_j \varphi^{\ep})\circ \Phi^{\ep},$ where $j=0,1,2,3$ with $\p_0=\p_t,\p_3=\p_z$ which yields 
\beq\label{newder}
\begin{aligned}
&\p_i^{\varphi^{\ep}}=\p_i-\f{\p_i\varphi^{\ep}}{\p_z\varphi^{\ep}}\p_z, \quad i=0,1,2,\quad \p_z^{\varphi^{\ep}}=\f{1}{\p_z\varphi^{\ep}}\p_z. 
\end{aligned}
\eeq
The equations \eqref{FCNS1}, \eqref{surface equation} and the boundary conditions \eqref{upbdry1}, \eqref{bebdry} are reformulated into the following systems:
\beq \label{FCNS2}
 \left\{
\begin{array}{l}
 \displaystyle g_1(\ep\si^{\ep})\big(\pt^{\varphi^{\ep}} \si^{\varepsilon}+ u^{\ep}\cdot \na^{\vp^{\ep}}\si^{\ep}\big)+\f{\div^{\vp^{\ep}} u^\varepsilon}{\ep}=0,\\
 \displaystyle g_2\big(\ep \si^{\ep})(\pt^{\vp^{\ep}}   u^{\varepsilon}+u^{\varepsilon}\cdot\na^{\vp^{\ep}} u^{\varepsilon} \big)-\div^{\vp^{\ep}} \mathcal{L}^{\vp^{\ep}} u^{\varepsilon}+
\f{\nabla^{\vp^{\ep}}\sigma^{\varepsilon}}{\ep}=0,  \qquad \text{$(t,x)\in \mathbb{R}_{+}\times \mS $} \\[5pt]
 \displaystyle u^\ep|_{t=0} =w_0^{\varepsilon}(\Phi_0^{\ep}(x)):=u_0^{\ep},\quad \si^{\ep}|_{t=0}=\vr_0^{\varepsilon}(\Phi_0^{\ep}(x)):=\sigma_0^{\ep},\\
 \end{array}
 \right.
\eeq
\beq\label{surfaceeq2}
\pt h^{\ep}-u^{\ep}(t,y,h^{\ep}(t,y))\cdot \bN^{\ep}=0,
\eeq
\beq\label{upbdry2}
\mathcal{L}^{\vp^{\ep}}u^{\ep} \textbf{N}^{\ep}=\frac{\sigma^{\ep}}{\ep}\textbf{N}^{\ep} \quad \text{on} \quad \{z=0\},
\eeq
\beq\label{bebdry2}
u_3^{\ep}=0, \quad \mu\p_z^{\vp^{\ep}} u_{j}^{\ep}=a u_j^{\ep}\quad(j=1,2), \quad \text{on} \quad \{z=-1\}.
\eeq
\subsection{Conormal spaces and notations.} 
Before stating our results, we need to 
introduce some notations.
We define the conormal vector fields:
$$Z_0=\ep\p_t,\, Z_1=\p_{y_1},\,Z_2=\p_{y_2},\,Z_3=\phi(z)\p_z$$
where the weight function is $\phi(z)=z(1+z)/(2-z)^2.$
We then introduce the space-time conormal space as follows, for $p=2,+\infty,$
$$L_{t}^p H_{co}^{m}\big( \mS\big)=\{f | \ 
Z^{\alpha}f\in L^p([0,t];L^2( \mS)),|\alpha|\leq m\},$$
with the corresponding norms:
\beq\label{defconormal1}
\|f\|_{L_t^pH_{co}^{m}}=\sum_{|\alpha|\leq m}\|Z^{\alpha} f\|_{L^p([0,t],L^2(\mS))},\quad 
\eeq
where $\alpha=(\alpha_0,\alpha')=(\alpha_0,\alpha_1,\alpha_2,\alpha_3)\in \mathbb{N}^4.$ Moreover, we shall also use the $L_{t,x}^{\infty}$ type norm defined by:
\beq\label{inftynorm}
\il f\il_{k,\infty,t}=\sum_{|\alpha|\leq k}\|Z^{\alpha} f\|_{L^\infty([0,t]\times\mS)}. 
\eeq
To distinguish the number  of time and space derivatives, we introduce also the norm:
\beq
\|f\|_{L_t^p\mathcal{H}^{j,l}}=\sum_{\alpha_0\leq j,|\alpha'|\leq l}\|Z^{\alpha}f\|_{L^p([0,t],L^2(\mS))},  
\eeq
and to simplify, we use 
\beq\label{chj}
\mathcal{H}^{j}=\cH^{j,0}.
\eeq
To measure the regularity along the boundary, we use:
\beq\label{bdynorm}
  |f|_{L_t^p\tilde{H}^s}=\sum_{j=0}^{[s]}|(\varepsilon\partial_t)^j f|_{L_t^p{H}^{s-j}(\mR^2)},  \quad
|f|_{k,\infty,t}=\sum_{|\alpha|\leq k,\alpha_3=0}|Z^{\alpha}f|_{L^\infty([0,t]\times\mR^2)}.
  \eeq
Finally, to measure pointwise regularity at a given time $t$ (in particular also with $t=0$), we shall use the semi-norms:
\beq\label{seminorm-bdry}
|f(t)|_{\tilde{H}^{s}}=\sum_{j=0}^{[s]}|(\varepsilon\partial_t)^j f(t)|_{{H}^{s-j}(\mR^2)},
\eeq
\begin{align}\label{seminorm}
&\|f(t)\|_{H_{co}^m}:=\sum_{|\alpha|\leq m}\|(Z^{\alpha}f)(t)\|_{L^2(\mS)},\quad \|f(t)\|_{\mathcal{H}^{j,l}}:=\sum_{\alpha_0\leq j,|\alpha'|\leq l}\|(Z^{\alpha}f)(t)\|_{L^2(\mS)},
\end{align}
\beq
\|f(t)\|_{k,\infty,\mS}:=\sum_{|\alpha|\leq k} \|(Z^{\alpha} f)(t)\|_{L^{\infty}(\mS)}.
\eeq
\subsection{Main results}
Before stating our main result, we first introduce the definition of the compatibility conditions which are
necessary to obtain  smooth enough solutions for the initial-boundary value problem of parabolic systems. 
\begin{defi}[Compatibility condition]
We say that $(\sigma_0^{\ep},u_0^{\ep})$ satisfy the compatibility condition up to order $m$ if for $j=0,1\cdots m-1,$
\beq\label{Compatibility condition}
\begin{aligned}
&\qquad\qquad\qquad (\varepsilon\partial_t)^{j}\big(\mathcal{L}^{\vp^{\ep}}u^{\ep}\bn^{\ep}\big)|_{t=0}= (\varepsilon\partial_t)^{j}(\sigma^{\ep}/\ep)\big|_{t=0}, \quad\text{on} \quad \{z=0\},\\
&\qquad\qquad\qquad\ep^j\pt^{j+1}h^{\ep}|_{t=0}=(\ep\pt)^j(u^{\ep}\cdot\bN^{\ep})|_{t=0} \quad\text{on} \quad \{z=0\},\\
&\qquad \big((\varepsilon\partial_t)^{j}u_3^{\ep}\big)\big|_{t=0}
=0,
\quad \big((\ep\pt)^j\p_z^{\vp^{\ep}}u_{j}^{\ep}\big)|_{t=0}=\f{a}{\mu}(\ep\pt)^j u^{\ep}_{j}|_{t=0} \ (j=1,2) \quad\text{on} \quad \{z=-1\}.\\
\end{aligned}
\eeq
\end{defi}
Note that the restriction of time derivatives of the solution at the initial time is defined inductively by using the equations. For instance:
\beqs
(\pt h^{\ep})|_{t=0}=u_0^{\ep}|_{z=0}\cdot (-\nabla_y h_0^{\ep},1)^{t}
\eeqs
\beqs
(\varepsilon\partial_t u^{\varepsilon})|_{t=0}=\frac{1}{g_2(\ep\sigma_0^{\ep})}(-\varepsilon \ud{u}_0^{\ep}\cdot\nabla u_0^{\ep}+\ep \div^{\vp_0^{\ep}} \mathcal{L}^{\vp_0^{\ep}}u_{0}^{\ep}-\nabla^{\vp_0^{\ep}}\sigma_0^{\ep}),
\eeqs
where $$\ud{u}_0^{\ep}=\big(u_{0,1}^{\ep},u_{0,2}^{\ep},(u_0^{\ep}\cdot\bN_0^{\ep}-(\pt\vp^{\ep})|_{t=0})/\p_z\vp_0^{\ep}\big)^{t},\quad \vp_0^{\ep}(\cdot)=\vp^{\ep}(0,\cdot)=z+\eta^{\ep}(0,\cdot)(1+z).$$
We remark that 
$\pt \vp^{\ep}|_{t=0}, \p_z \vp_0^{\ep}$ are determined by $(\pt h)^{\ep}|_{t=0}$ and $h_0^{\ep}$ respectively through \eqref{defvpep} and \eqref{defeta}.

We now define the space for the initial data:
\beq
Y_{m}^{\ep}=\bigg\{(\sigma^{\ep}_0,u_0^{\ep}, h_{0}^\ep)\in H^3(\Omega_0)^4 \times H^{m-\f{1}{2}}(\mathbb{R}^2)\bigg|
\begin{aligned}
& Y_{m}^{\ep}(0)<+\infty, 
 (\sigma_{0}^{\ep},u_0^{\ep},h_0^{\ep})    \text{ satisfy }\\ &\text{compatibility  condition up to order $m$}
\end{aligned}\bigg\},
\eeq
where
\beq\label{initialnorm}
\begin{aligned}
 &Y_{m}^{\ep}(0)=:|h_0^{\ep}|_{H^{m-\f{1}{2}}}+\ep^{\f{1}{2}}|h_0^{\ep}|_{H^{m+\f{1}{2}}} +\ep^{-\f{1}{2}}
 \|\nabla \si^{\ep}(0)\|_{m-5,\infty,\mS}+\|\nabla u^{\ep}(0)\|_{1,\infty,\mS}\\
 &+\ep^{\f{1}{2}}\|(\sigma_0^{\ep},u_0^{\ep})\|_{H^3(\mS)}
 +\ep^{\f{1}{2}}(\|(\sigma^{\ep},u^{\ep})(0)\|_{H_{co}^m(\mS)}
 +\|\nabla(\sigma^{\ep},u^{\ep})(0)\|_{H_{co}^{m-1}})\\
 &+\|(\sigma^{\ep},u^{\ep})(0)\|_{H_{co}^{m-1}}+\|\nabla(\sigma^{\ep},u^{\ep})(0)\|_{H_{co}^{m-2}}+\ep^{\f{1}{2}}\|\p_t(\sigma^{\ep},u^{\ep})(0)\|_{H_{co}^{m-1}(\mS)}+\ep^{\f{1}{2}}\|\pt \omega^{\ep}(0)\|_{H_{co}^{m-4}(\mS)}
.
\end{aligned}
\eeq
In the above, expression, $\omega^{\ep}=\curl^{\vp^{\ep}}u^{\ep}$  stands for  the vorticity. 

 To prove Theorem \ref{thm-localexis}, we introduce the following quantities:
\beq\label{defcN}
\cN_{m,T}^{\ep}=\cE_{m,T}^{\ep}+\cA_{m,T}^{\ep}=\colon \cE^{\ep}_{low,T}+\cE_{high,m,T}^{\ep}+\cA^{\ep}_{m,T}.
\eeq
Here, $\cE^{\ep}_{m,T}$ is composed of the  low order energy norms
$\cE^{\ep}_{low,T}$
and the high order energy norms ${\cE}^{\ep}_{high, m,T}$ :
\beqs
\begin{aligned}
\cE^{\ep}_{low,T}&=\|\ep^{\f{1}{2}}\pt(\sigma^{\ep},u^{\ep})\|_{L_T^{\infty}L^2}
+\ep^{\f{1}{2}}\|(\sigma^{\ep},u^{\ep})\|_{L_t^{\infty}H^3}
+\ep^{\f{3}{2}} \|\nabla^4 u^{\ep}\|_{\ltx},
\end{aligned}
\eeqs
\beq\label{defenergy-high}
\begin{aligned}
\cE_{high,m,T}^{\ep}=&\ep^{\f{1}{2}}|h^{\ep}|_{L_t^{\infty}\tilde{H}^{m+\f{1}{2}}}+\varepsilon^{\f{1}{2}} \|(\si^{\ep},u^{\ep})\|_{L_{T}^{\infty}H_{co}^{m}}\\
&+\ep^{\f{1}{2}} \|\nabla u^{\ep}\|_{L_T^{\infty}H_{co}^{m-1}\cap L_T^{2}H_{co}^m}+\ep^{\f{1}{2}}\|\nabla^2 u^{\ep}\|_{L_T^{\infty}H_{co}^{m-2}\cap L_T^{2}\cH^{m-1}}\\
&+|h^{\ep}|_{L_t^{\infty}\tilde{H}^{m-\f{1}{2}}}+
\varepsilon^{\f{1}{2}}\|\pt(\sigma^{\ep},u^{\ep})\|_{L_T^{\infty}\cH^{m-1}}+\ep^{\f{1}{2}}\|\pt\nabla
u^{\ep}\|_{L_T^2\cH^{m-1}\cap L_T^2H_{co}^{m-2}\cap L_T^{\infty}H_{co}^{m-4}}\\
&+\ep^{-\f{1}{2}}\|(\nabla^{\vp^{\ep}}\sigma^{\ep},\div^{\vp^{\ep}} u^{\ep})\|_{L_T^{\infty}H_{co}^{m-2}\cap L_T^2H_{co}^{m-1}}+\|(\sigma^{\ep},u^{\ep})\|_{L_t^{\infty}H_{co}^{m-1}}
+\|\nabla u^{\ep}\|_{L_T^{\infty}H_{co}^{m-4}\cap L_T^{2}H_{co}^{m-1}}\\
\end{aligned}
\eeq
whereas $\cA_{m,T}^{\ep}$ contains the 
$L_{t,x}^{\infty}$ norms:
\beq\label{defcA-free}
 \begin{aligned}
  \mathcal{A}^{\ep}_{m,T}&=\il
  \nabla u^{\ep}\il_{1,\infty,T}
  +\il(\ep^{\f{1}{2}}\pt (\sigma^{\ep},u^{\ep}),\ep^{-\f{1}{2}}(\nabla ^{\vp^{\ep}}\sigma,\div^{\vp^{\ep}} u)\il_{m-5,\infty,T}+\il(\text{Id},\ep\pt)(\sigma^{\ep},u^{\ep})\il_{m-4,\infty,T}\\
  &\qquad +
  \varepsilon^{\frac{1}{2}}\il\nabla u^{\ep}\il _{m-3,\infty,T}+\varepsilon^{\frac{1}{2}}\il(\sigma^{\ep},u^{\ep})\il_{m-2,\infty,T}+|h^{\ep}|_{m-2,\infty,T}.
 \end{aligned}
\eeq

Our main result is the following:
\begin{thm}[Uniform estimates]\label{thm-localexis}
 Define $0< c_0<\f{1}{2}$ such that 
\beqs
 \sup_{s\in [-3c_1\bar{P}, 3\bar{P}/{c_1}]} |(g_1,g_2)(s)|\in [{c_0},{1}/{c_0}]
\eeqs
where $0<c_1<\f{1}{4}$ is a fixed constant.
Given $m\geq 7$ an integer, suppose that the initial data belongs to $Y_m^{\ep}$, is such that
\beqs
1+h_0^{\ep}(x)\geq 3c_0>0, \quad
\sup_{\ep\in (0,1]}Y_{m}^{\ep}(0)< +\infty,
\eeqs
\beqs
-c_1 \bar{P}\leq \ep\sigma_0^{\ep}(x)\leq {\bar{P}}/{{c_1}},\quad \forall x\in\mS,\quad\forall \ep\in(0,1],
\eeqs
and $\delta_0$ (the parameter appearing in \eqref{defeta}) is chosen such that 
\eqref{diffcdtion} holds for $t=0$ so that
\beqs
\p_z\vp_0^{\ep}\geq 2c_0, 
\quad \forall x\in\mS,\quad\forall \ep\in(0,1].
\eeqs
Moreover,  (taking $c_0$ smaller if necessary), we can also assume that 
\beqs
|(\nabla\vp_0^{\ep},\nabla^2\vp_0^{\ep})(x)|\leq \f{1}{2c_0}, \quad \forall x\in\mS,\quad\forall \ep\in(0,1].
\eeqs
Then there exist $T_0>0,$ $ \ep_0 \in (0, 1],$ such that for any $\ep \in (0,  \ep_0],$
the system \eqref{FCNS2}-\eqref{bebdry2} has a unique solution
which satisfies:
$\cN_{m,T_0}^{\ep}(\sigma^{\ep},u^{\ep})<+\infty.$
In particular, we have the uniform estimate
\beqs
\begin{aligned}
&\sup_{0<\ep\leq \ep_0} \big(\| (\sigma^{\ep},u^{\ep})\|_{L_{T_0}^2H_{co}^{m}(\mS)\cap L_{T_0}^{\infty}H_{co}^{m-1}(\mS)}
+\ep^{-\f{1}{2}}\|(\div^{\vp^{\ep}} u^{\ep},\nabla\sigma^{\ep})\|_{L_{T_0}^{\infty}H_{co}^{m-2}(\mS)\cap L_{T_0}^{2}H_{co}^{m-1}}\\
&\qquad\qquad\qquad\qquad\qquad+\il\nabla u^{\ep}\il_{1,\infty,T_0}+\ep^{-\f{1}{2}}\il(\nabla\sigma^{\ep}, \div^{\vp^{\ep}} u^{\ep})\il_{m-5,\infty,T_0}\big)<+\infty.\end{aligned}
\eeqs
Moreover, the following properties hold: for any $(t,x)\in [0,T_0]\times\mS, \ep\in(0,\ep_0]$,
\beq\label{asstobeshown}
\p_z\vp^{\ep}(t,x)\geq c_0, \quad 
|(\nabla\vp^{\ep},\nabla^2\vp^{\ep})(t,x)|\leq {1}/{c_0}, \quad
-2c_1 \bar{P}\leq \ep\sigma^{\ep}(t,x)\leq {2\bar{P}}/{{c_1}}.
\eeq
\end{thm}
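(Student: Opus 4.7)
\emph{Strategy.} My plan is a continuity/bootstrap argument on the composite norm $\cN^{\ep}_{m,T}$. For each fixed $\ep$, local existence of a sufficiently smooth solution to \eqref{FCNS2}--\eqref{bebdry2} is standard once the compatibility conditions of order $m$ and the non-degeneracy $\partial_z \vp_0^{\ep} \geq 2c_0$ hold, so the whole point is to produce an $\ep$-uniform a priori bound of the form $\cN^{\ep}_{m,T} \leq \Lambda(c_0^{-1}, Y_m^{\ep}(0)) + T^{1/2}\Lambda(c_0^{-1}, \cN^{\ep}_{m,T})$ on some interval whose length depends only on $c_0$ and $\sup_{\ep} Y_m^{\ep}(0)$, from which absorption and continuation yield the uniform time $T_0$. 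The pointwise bounds \eqref{asstobeshown} then propagate from $t=0$ through the $\cA^\ep_{m,T}$ norms by the fundamental theorem of calculus, up to shrinking $T_0$.

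\emph{Core estimates.} The heart of the proof is a simultaneous closure of four coupled blocks. First, a conormal energy estimate: apply $Z^\alpha$ with $|\alpha|\leq m$ to the symmetric form \eqref{FCNS2} and test against $(Z^\alpha \sigma^\ep, g_2 Z^\alpha u^\ep)$. The singular skew-symmetric operator $(\nabla^{\vp^\ep}/\ep, \div^{\vp^\ep}/\ep)$ contributes only through commutators involving $\nabla\vp^\ep$ (hence $h^\ep$), controlled by $\cA^\ep_{m,T}$; the boundary integral at $z=0$, namely $\int_{\mR^2}(\sigma^\ep/\ep)\, Z^\alpha u^\ep\cdot \bN^\ep\,dy$, is reduced via the kinematic relation \eqref{surfaceeq2} to a time derivative acting on $Z^\alpha h^\ep$, which is precisely the mechanism that produces the surface energy contributions $|h^\ep|_{L_t^\infty \tilde H^{m-1/2}}$ and $\ep^{1/2}|h^\ep|_{L_t^\infty \tilde H^{m+1/2}}$ in $\cE^\ep_{high,m,T}$. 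Time derivatives are handled uniformly by treating $Z_0 = \ep\partial_t$ on the same footing as tangential vector fields, which is exactly what demands the slightly well-prepared assumption. Second, a finer estimate on the singular quantities $\ep^{-1/2}(\nabla^{\vp^\ep}\sigma^\ep, \div^{\vp^\ep} u^\ep)$: taking $\div$ of the momentum equation and combining with the mass equation yields a small-viscosity acoustic wave system on the compressible part at scale $\tau = t/\ep$, from which the gain $L_T^2 H_{co}^{m-1}$ follows by parabolic regularization, the Navier condition \eqref{bebdry2} being well adapted. Third, normal regularity of $u^\ep$: since $Z_3=\phi(z)\partial_z$ degenerates at both boundaries, genuine $\partial_z$ derivatives of $u^\ep$ must be recovered from the vorticity $\omega^\ep = \curl^{\vp^\ep}u^\ep$ (satisfying a transport-diffusion equation with the friction-type Navier condition at $z=-1$) and from the irrotational part controlled by the second block. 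Fourth, the $L_{t,x}^\infty$ norms collected in $\cA^\ep_{m,T}$ follow from anisotropic Sobolev embeddings of the type $\|f\|_{L^\infty(\mS)} \lesssim \|f\|_{H_{co}^2}^{1/2}(\|f\|_{H_{co}^2}+\|\partial_z f\|_{H_{co}^2})^{1/2}$ and boundary trace inequalities, with careful bookkeeping of $\ep$-weights.

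\emph{Main obstacle.} The genuinely new difficulty is the control of the surface at the top regularity level. The kinematic condition alone gives $|h^\ep|_{\tilde H^s}$ only in terms of the trace of $u^\ep$ in the same regularity, which is uniform in $\ep$ up to $\tilde H^{m-1/2}$ only. The extra half-derivative $\ep^{1/2}|h^\ep|_{\tilde H^{m+1/2}}$ -- indispensable for closing the commutators in the first block -- cannot come from the parabolic smoothing on the incompressible time scale, because at the acoustic time scale $\tau = t/\ep$ the effective viscosity is $\ep\mu$ and delivers no uniform smoothing. Instead, it must be extracted from the upper boundary condition \eqref{upbdry2} read as a Dirichlet--Neumann type relation $\mathcal{L}^{\vp^\ep}u^\ep\bN^\ep = (\sigma^\ep/\ep)\bN^\ep$: coupling the parabolic $L_T^2$-gain on $\ep^{1/2}\nabla^2 u^\ep$ available from the Stokes part with the boundary trace of $\ep^{-1/2}\sigma^\ep$ controlled by the second block produces precisely a half-derivative with $\ep^{1/2}$ weight, no more. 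The slightly well-prepared assumption $\ep^{1/2}\partial_t(\sigma^\ep,u^\ep)|_{t=0}=O(1)$ built into $Y_m^\ep(0)$ is exactly what initializes this coupling at the highest regularity level; a weaker assumption would break the chain, while a fully well-prepared one would be overkill. Once this top-order surface bound is in hand, the commutators $[Z^\alpha,\nabla^{\vp^\ep}]$ and $[Z^\alpha,\partial_t^{\vp^\ep}]$ in all four blocks close with an $\ep$-independent constant, completing the bootstrap.
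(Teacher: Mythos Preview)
Your overall strategy --- local existence for each fixed $\ep$, a uniform a priori bound on $\cN^\ep_{m,T}$, then a bootstrap --- matches the paper's. But several of the mechanisms you describe would not close, and the central structural idea of the paper is absent from your outline.

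First, the claim that in the highest-order energy estimate ``the singular skew-symmetric operator contributes only through commutators\ldots controlled by $\cA^\ep_{m,T}$'' is not right. The commutator $\ep^{-1}[Z^\alpha,\div^{\vp^\ep}]u$ carries a genuine $1/\ep$ factor (spatial conormal fields do not commute with $\nabla^{\vp^\ep}$), and this is precisely why a direct energy estimate at order $m$ gives only an $\ep$-weighted bound $\ep^{1/2}\|(\sigma,u)\|_{L_t^\infty H_{co}^m}$, not a uniform one. The paper's remedy is a Helmholtz-type splitting $u^\ep = v^\ep + \nabla^{\vp^\ep}\Psi^\ep$ via the projections $\mathbb{P}_t,\mathbb{Q}_t$ of Section~\ref{sec-projection}: the incompressible part $v^\ep$ solves a Stokes-type system \eqref{eqofv2} with no singular term, while the compressible part $\nabla^{\vp^\ep}\Psi^\ep$ is recovered from $\div^{\vp^\ep}u^\ep$ by elliptic estimates. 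The uniform $\ep^{-1/2}$ bounds on $(\nabla^{\vp^\ep}\sigma^\ep,\div^{\vp^\ep}u^\ep)$ come not from ``parabolic regularization on the acoustic scale'' but from an induction (Lemma~\ref{sigmainduction}) that trades spatial for time derivatives via the equations and an elliptic problem for $\sigma^\ep/\ep$. Without this splitting you have no route to the uniform $L_t^\infty H_{co}^{m-1}$ bound.

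Second, your account of the surface regularity is off on two points. The boundary term $\int_{z=0}(\sigma^\ep/\ep)Z^\alpha u^\ep\cdot\bN^\ep$ is \emph{not} handled via the kinematic relation; it is cancelled against the viscous boundary term by the stress balance \eqref{upbdry2}, leaving only commutators with $\bN^\ep$ (see $F_2^\alpha$ in \eqref{energyineq-first}). And the role of the slightly well-prepared assumption is not to initialize a Dirichlet--Neumann coupling; rather, it forces $\|\div^{\vp^\ep}u^\ep\|_{L_t^\infty H_{co}^1}=O(\ep^{1/2})$, which in the elliptic estimate for $\nabla^2\Psi^\ep$ places an $\ep^{1/2}$ prefactor on the otherwise uncontrolled surface term $|h^\ep|_{\tilde H^{m+1/2}}$ (Subsection~\ref{remarkondata}). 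Relatedly, the a priori bound takes the form $\cN_{m,T}^\ep \leq P_1 + (T+\ep)^\vartheta P_2$, not $T^{1/2}$ alone; the $\ep^\vartheta$ term is what forces a choice of $\ep_0$, and your write-up does not produce it.

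Finally, the vorticity step is more delicate than you indicate: the obstruction is the boundary value of $\omega^\ep$ at the \emph{free} surface, where $\omega^\ep\times\bn \approx \partial_y u^\ep + \div^{\vp^\ep}u^\ep$, so a direct energy estimate on $\omega^\ep$ loses half a derivative. The paper introduces a modified vorticity $\omega_{\bn}$ whose trace involves only the compressible part, lifts it by the explicit half-space heat kernel (Lemmas~\ref{lemtz1}--\ref{lemtz2}), and estimates the remainder by energy. This heat-kernel lifting is what produces the crucial $T^{1/4}$ smallness factor.
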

\begin{rmk}\label{rmk-data}
In view of the definition of $Y_{m}^{\ep},$ we have assumed that 
the first time derivative of the solution is of size of order $\ep^{-\f{1}{2}},$ which is better
than the usual  well-prepared data case (where $\pt(\sigma^{\ep},u^{\ep})|_{t=0}$ is assumed to be order $1$). 
This assumption is crucial in our analysis to control the  regularity on the surface. We shall give more details in Subsection \ref{remarkondata}.
 Note that our assumption is thus  much weaker than the one in \cite{MR3812074,MR3887218,MR4097326} for the inviscid system
 where two time derivatives are assumed to be bounded initially.
\end{rmk}

\begin{rmk}
It is also possible to 
 prove the uniform estimates by imposing an alternative assumption on the size of the acoustic waves, we can assume  them to be of order $\ep$ in a low regularity $H_{co}^1$ space and of  order $1$ in a higher regularity $H_{co}^m$ norm.
\end{rmk}
\begin{rmk}
In view of the definition \eqref{defenergy-high}, one has three kinds of bounds for the solution. The first two lines of
\eqref{defenergy-high} only imply  that the highest order norm with pointwise estimates in time  $L_t^{\infty}H_{co}^{m}$ of $(\sigma^{\ep},u^{\ep})$ can be unbounded and has a size $\cO(\ep^{-\f{1}{2}}).$
Nevertheless, in the  two last  lines of \eqref{defenergy-high}, we are able to get that
the  $L_t^2$ type norm with maximal number of derivatives,  $L_t^2H_{co}^m$ of $(\sigma^{\ep},u^{\ep})$ 
 and  the  $L_t^{\infty}H_{co}^{m-1}$ norm (so with one less derivative) are uniformly bounded.
Moreover, the first term in the fourth line of \eqref{defenergy-high} shows that the compressible part of the remains of   size  $\cO(\ep^{\f{1}{2}})$ in  $L_T^{\infty}H_{co}^{m-2}\cap L_T^2H_{co}^{m-1}$.
\end{rmk}
\begin{thm}[Convergence]\label{thm-convergence}
Assuming that $(u_0^{\ep},h_0^{\ep})$ tends to $(u_0^0,h_0^0)$ in $H^1(\mS)\times 
L^2(\mR^2)$ and the assumptions made in Theorem \ref{thm-localexis} hold. Let 
$(\sigma^{\ep},u^{\ep},h^{\ep})$ be
the solution to \eqref{FCNS2}-\eqref{bebdry2}.
Then 
$(P(\bar{\rho})+\ep\sigma^{\ep},u^{\ep},h^{\ep})$  converge in $C^{\gamma}([0,T_{0}]\times \mS)\times C([0,T_{0}],L_{loc}^2(\mS)) \times C([0,T_{0}], H_{loc}^{s}(\mathbb{R}^2))$
to $(P(\bar{\rho}),u^{0},h^0)$ where $0\leq \gamma<\f{1}{2}$
 and $0<s<m-{1}/{2}.$
 Moreover, $u^0$ has the additional regularity:
\beq\label{addi-regu}
u^{0}\in C\big([0,T_0],\cH^{0,m-2}\big),\quad
\nabla u^0\in 
L^2([0,T_0],\cH^{0,m-1})\cap L^{\infty}([0,T_0]\times\mS)
\eeq
and one can find $\pi^0 \in L^2([0,T_0],\cH^{0,m-1})$ such that $(u^0,\pi^0, h^0)$ solves uniquely the following incompressible free-surface Navier-Stokes system:
\beq \label{FINS1}
 \left\{
\begin{array}{l}
 \displaystyle\bar{\rho}(\pt^{\vp^0} u^0+ u^{0}\cdot\nabla^{\vp^0} u^{0})-\div^{\vp^0} S^{\vp^0} u^{0}+
\nabla^{\vp^0}\pi^0=0,  \\
 \displaystyle \div^{\vp^0} u^0=0, \qquad\qquad\qquad\qquad\qquad \text{$(t,x)\in [0,T_0]\times \mS,$} \\
  \displaystyle u^0|_{t=0}=u_0^0, h^0|_{t=0}=h_0^0\\
\end{array}
\right.\\
\eeq
with boundary conditions:
\begin{align}
\displaystyle \p_t h^0+u^0(t,y,0)\cdot {\bN}^0=0,\\
 {S}^{\vp^0}u^0 {\bN}^0=\pi^0 {\bN}^0 \quad \text{ on } \{z=0\},\\
 \displaystyle u_3^{0}=0, \quad \f{\mu}{\p_z\vp_0}\p_z u_{j}^{0}=a u_{j}^{0} \quad (j=1,2) \quad \text{ on }  \{z=-1\}.\label{FCNS1-bc}
\end{align}
Here $\vp^0$ is defined in 
\eqref{defvpep} (replacing $h^{\ep}$ by $h^0$), $\bN^0=(-\p_1h^0,-\p_2h^0,1)^{t}$.
\end{thm}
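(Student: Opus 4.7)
The plan is to exploit the uniform bounds of Theorem \ref{thm-localexis} to extract strong and weak limits along a subsequence of $(\si^{\ep},u^{\ep},h^{\ep})$, pass to the limit in the weak form of \eqref{FCNS2}--\eqref{bebdry2}, and then invoke uniqueness of the limit equations to upgrade subsequential convergence to convergence of the whole family. From \eqref{defenergy-high} one has $u^{\ep}$ bounded in $L^{\infty}_{T_0} H_{co}^{m-1}\cap L^2_{T_0}H_{co}^m$ and $h^{\ep}$ in $L^{\infty}_{T_0}\tilde H^{m-\f12}$; the surface equation \eqref{surfaceeq2} together with the trace of $u^{\ep}|_{z=0}$ provides the time regularity needed to apply Aubin-Lions and deduce $h^{\ep}\to h^0$ in $C([0,T_0], H^{s}_{loc}(\mR^2))$ for every $s<m-\f12$, whence $\vp^{\ep}\to\vp^0$ locally uniformly with its derivatives up to the required order. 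The pointwise bound $\il\si^{\ep}\il_{m-4,\infty,T_0}\leq C$ yields $\ep\si^{\ep}=O(\ep)$ in $L^{\infty}_{t,x}$, and $\il\ep^{\f12}\pt\si^{\ep}\il_{m-5,\infty,T_0}\leq C$ gives $\pt(\ep\si^{\ep})=O(\ep^{\f12})$ in $L^{\infty}_{t,x}$; interpolating the two rates delivers the $C^{\gamma}$ convergence of $P(\bar\rho)+\ep\si^{\ep}$ to $P(\bar\rho)$ claimed in the theorem. Finally, $\|\div^{\vp^{\ep}}u^{\ep}\|_{L^{\infty}_T H^{m-2}_{co}}=O(\ep^{\f12})$ passes to the constraint $\div^{\vp^0}u^0=0$ in the limit.

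The main obstacle is the strong compactness of $u^{\ep}$. A direct application of Aubin-Lions fails because the momentum equation in \eqref{FCNS2} yields only $\pt u^{\ep}=O(\ep^{-\f12})$, owing to the singular acoustic gradient $\ep^{-1}\nabla^{\vp^{\ep}}\si^{\ep}$. I would circumvent this by decomposing $u^{\ep}=\mathbb P^{\ep}u^{\ep}+\mathbb Q^{\ep}u^{\ep}$ via a Helmholtz-type projection adapted to the metric induced by $\vp^{\ep}$ and to the no-penetration condition of \eqref{bebdry2}. The ``compressible'' component $\mathbb Q^{\ep}u^{\ep}$ has strong-norm size controlled by $\|\div^{\vp^{\ep}}u^{\ep}\|=O(\ep^{\f12})$ via elliptic regularity for the associated Neumann problem, hence converges to zero in $L^2$. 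For the ``incompressible'' component $\mathbb P^{\ep}u^{\ep}$, one tests the momentum equation against smooth compactly supported $\vp^{\ep}$-divergence-free vector fields $\Psi$: integration by parts reduces the dangerous term $\int \ep^{-1}\nabla^{\vp^{\ep}}\si^{\ep}\cdot\Psi$ to the boundary contribution $\int_{\{z=0\}}\ep^{-1}\si^{\ep}\,\Psi\cdot\bN^{\ep}$, which by the upper boundary condition \eqref{upbdry2} equals $\int_{\{z=0\}}(\cL^{\vp^{\ep}}u^{\ep}\bN^{\ep})\cdot\Psi$, uniformly bounded in $\ep$. This yields a uniform $W^{1,\infty}_{T_0}H^{-s}_{loc}$ control of $\mathbb P^{\ep}u^{\ep}$; combined with the spatial compactness from $\|u^{\ep}\|_{L^{\infty}_T H^{m-1}_{co}}\leq C$, Aubin-Lions then delivers $u^{\ep}\to u^0$ strongly in $C([0,T_0], L^2_{loc}(\mS))$.

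With these compactness statements in hand, one passes to the limit in the weak form of \eqref{FCNS2}: the convective term by strong-weak pairing, the viscous term by weak convergence of $\nabla u^{\ep}$ coupled with strong convergence of $\vp^{\ep}$, and the coefficient $g_2(\ep\si^{\ep})\to g_2(0)=\bar\rho$ uniformly. The resulting identity, $\bar\rho(\pt^{\vp^0}u^0+u^0\cdot\nabla^{\vp^0}u^0)-\div^{\vp^0}\cL^{\vp^0}u^0=0$ against every $\vp^0$-divergence-free test field compatible with the boundary, produces by de Rham's lemma a distribution $\pi^0\in L^2([0,T_0],\cH^{0,m-1})$ for which \eqref{FINS1} holds, with \eqref{FCNS1-bc} arising as the limits of \eqref{upbdry2} and \eqref{bebdry2}. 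The additional regularity \eqref{addi-regu} of $u^0$ follows from lower semi-continuity of the conormal norms under weak-$\star$ convergence. Finally, standard uniqueness of solutions of the incompressible free-surface Navier-Stokes system \eqref{FINS1}--\eqref{FCNS1-bc} within the regularity class \eqref{addi-regu} upgrades the subsequential convergence to convergence of the entire family, concluding the proof.
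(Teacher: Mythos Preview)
Your outline is essentially the paper's own strategy: Helmholtz-type decomposition $u^{\ep}=v^{\ep}+\nabla^{\vp^{\ep}}\Psi^{\ep}$, strong convergence of the compressible part to zero via the $O(\ep^{1/2})$ bound on $\div^{\vp^{\ep}}u^{\ep}$ and elliptic estimates, Aubin--Lions for the incompressible part, and weak passage to the limit. The paper streamlines your time-compactness step by working directly with the equation \eqref{eqofv2} for $v^{\ep}$ derived in Section~\ref{sec-projection}: there the singular term $\ep^{-1}\nabla^{\vp^{\ep}}\sigma^{\ep}$ has already been replaced by $\nabla^{\vp^{\ep}}\pi^{\ep}$, which is uniformly bounded (the trace of $\sigma^{\ep}/\ep$ on $\{z=0\}$ is expressed via tangential derivatives of $u^{\ep}$, cf.\ \eqref{sigmabdry}). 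This gives $\pt v^{\ep}$ bounded in $L^2_t H^{-1}$ directly, without the duality argument you sketch.

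Two places deserve more care. First, your appeal to de~Rham yields a pressure as a distribution, but the claimed regularity $\pi^0\in L^2([0,T_0],\cH^{0,m-1})$ does not follow from de~Rham alone. The paper instead exhibits an explicit $\tilde\pi^{\ep}$ (built from $\pi^{\ep}$, $q^{\ep}$, and the commutator $[\pt^{\vp^{\ep}},\mathbb P_t]u^{\ep}$, all controlled by \eqref{q}--\eqref{comtime}) which is bounded in $L^2_t\cH^{0,m-2}$, and takes its weak limit. Second, your final sentence invokes ``standard uniqueness'', but as the remark following the theorem states, the limit does \emph{not} lie in the classical well-posedness class for the free-surface incompressible Navier--Stokes system (in particular, no uniform control of two normal derivatives of $u^0$). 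The paper therefore supplies a separate uniqueness proof (Subsection~\ref{subsection-uniqueness}) via a Gronwall estimate on $|h^1-h^2|_{H^{3/2}}^2+\|(u^1-u^2,\p_y(u^1-u^2))\|_{L^2}^2$, exploiting the Lipschitz bound $\nabla u^0\in L^{\infty}_{t,x}$ from \eqref{addi-regu}. Without this, subsequential convergence cannot be upgraded to convergence of the full family.
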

\begin{rmk}
Due to the absence of  estimate for the second order normal derivatives of the velocity $u^0$ (and thus for the strong trace of the normal derivative), the solution to \eqref{FINS1}-\eqref{FCNS1-bc}, must be interpreted in the following sense: $\div^{\vp^0}u^0=0$ holds
in $L^2([0,T_0]\times\mS)$ and
for any vector field $\psi =(\psi_1,\psi_2,\psi_3)^t\in \big[C_{c}^{\infty}\big(\overline{Q_{T_0}}\big)\big]^3$ with $\psi_3|_{z=-1}=0,$
 the following identity holds:  for any $0<t\leq T_0,$ 
\beq\label{def-weak}
\begin{aligned}
&\bar{\rho}\int_{\mS} u^0\cdot \psi(t,\cdot) \,\d\cV_t^0 + 2\mu \int_0^t\int_{\mS} S^{\vp^0}u^0 \cdot \nabla^{\vp^0}\psi\,\d\cV_s^0\d s + \bar{\rho}\int_0^t\int_{\mS}(u^0\cdot\nabla^{\vp^0} u^0)\cdot \psi\,\d\cV_s^0\d s\\
&=\bar{\rho}\int_{\mS} u^0\cdot \psi(0,\cdot) \,\d\cV_0^0+\bar{\rho}\int_0^t\int_{\mS}u^0\cdot\pt^{\vp^0}\psi\,\d\cV_s^0\d s+\int_0^t\int_{\mS}{\pi}^{0}\div^{\vp^{0}}\psi\,\d\cV_s^{0}\d s\\
&\quad+\int_0^t\int_{z=0} (u^0\cdot \bN^0) u^0\cdot\psi \,\d y\d s+ a
\int_0^t\int_{z=-1}(u_1^0\cdot\psi_1+u_2^0\cdot\psi_2)\,\d y\d s
\end{aligned}
\eeq
where $\d\cV_t^0=\f{1}{\p_z \vp^0}(t,\cdot)\,\d y \d z.$
\end{rmk}
\begin{rmk}
Note that we do not end up in  the classical space of existence and uniqueness for  the free boundary incompressible Navier-Stokes system, nevertheless, the uniqueness of the solution in our functional spaces can be obtained 
by taking benefits of the control of the Lipschitz norm of the solution. One can refer to subsection \ref{subsection-uniqueness} for the proof.
 \end{rmk}

\subsection{Main difficulties, general strategies.}
Due to the simultaneous presence of the singular term in the equation as well as the viscous term and boundaries, we are confronted with 
both difficulties resulting from boundary layer effects and fast time oscillations. These two phenomena are well understood when they occur separately, but some new difficulties occur when they occur at the same time. Indeed,  on the one hand, regarding the vanishing viscosity limit problem (see for instance \cite{MR3590375,MR3741102}),
one can estimate   the high order tangential derivatives by direct energy estimates, and then use the vorticity to control the normal derivatives. 
Nevertheless, for the system with  low Mach number,  the tangential derivatives ($\p_y$)  are not easy to control uniformly, since they do not commute with $\nabla^{\vp^{\ep}},\div^{\vp^{\ep}}$ and thus create singular
commutators.
 Without the  a priori knowledge of the tangential derivatives, the estimate of  the vorticity cannot be performed. 
On the other hand, for the compressible free boundary Euler system with a low Mach number, uniform estimates are established
for example  in 
\cite{MR4097326, MR3812074, MR3887218}. 
Besides the difficulties arising from the Taylor sign condition and the regularity of the surface, 
the idea behind getting uniform estimates is to control first weighted time derivatives $(\ep \partial_{t})^k$ and then to recover space derivatives 
by using the equations and by direct energy estimates for the vorticity. 
Here, in the case of viscous fluids, the vorticity is not easy to estimate due to the lack of information on its trace  on the boundaries. We shall explain more precisely in the following. For the sake of notational convenience, we will drop the $\ep-$dependence of the solution.

Indeed, the vorticity $\omega=\curl^\varphi u$ solves a  transport-diffusion equation
with  Dirichlet boundary condition (see \eqref{norpz}, \eqref{tanpz}) under the form
\beq\label{omegabdry}
\omega|_{\p\mS}\approx \p_y u+\div^{\vp}u|_{\p \mS}.
\eeq
Let us consider the simplest case, the heat equation with zero source  and initial data but with nonhomogeneous Dirichlet condition in a half space :
\beq\label{heateq}
\bar{\rho}\p_t f-\mu \Delta f=0, \quad f|_{t=0}=0,\quad f|_{z=0}=f^{b,1}, \quad (t,x)\in [0,T]\times\mathbb{R}_{-}^3,
\eeq
By using the heat kernel, we obtain
\beqs
\|f\|_{L_t^2H_{co}^{m-1}}\lesssim T^{\f{1}{4}}|f^{b,1}|_{L_t^2\tilde{H}^{m-1}}.
\eeqs
By applying this estimate to $\omega,$ we see that the boundary contribution when estimating $\|\omega\|_{L_t^2H_{co}^{m-\f{1}{2}}}$
is more or less $|(\p_y u,\div u)|_{L_t^2\tilde{H}^{m-1}},$ which requires the foreknowledge of the tangential derivatives and
which indicates the loss of half derivative. One could also use the (tangential) smoothing effects of the heat equation to overcome this loss of derivative. Nevertheless, in this way, it seems impossible to extract the extra $\ep$ or $T$ which are essential to close the estimate. More precisely, by using maximal regularity, one gets that 
\beqs
\begin{aligned}
\|\omega\|_{L_t^2H_{co}^{m-1}}&\leq C |(\p_y u,\div^{\vp} u)|_{L_t^2\tilde{H}^{m-\f{3}{2}}}+ \text{other terms}\\
&\leq C(\|\nabla u\|_{L_t^2H_{co}^{m-1}}+\|\nabla\div^{\vp} u\|_{L_t^2H_{co}^{m-2}})+\text{other terms}
\end{aligned}
\eeqs
which does not gain anything. Note that the constant $C$ is independent of $T$ and $\ep.$ 


To overcome these problems, we 
split the velocity $u$ into a compressible part $\nabla^{\vp}\Psi$ and an incompressible part $v$ 
(see definition \eqref{defofQ}, \eqref{defofP}). On the one hand, the compressible part is governed by the elliptic equation $\Delta^{\vp} \Psi=\div^{\vp}u$ with mixed boundary conditions (with homogeneous Dirichlet boundary condition on the upper boundary and homogeneous Neumann boundary condition on the bottom).  Hence the estimate for its gradient $\nabla^2\Psi$ can be deduced from the estimate of $\div^{\vp}u.$ 
We then use induction arguments and the equations to establish  high-order estimates of $\div^{\vp} u.$ On the other hand, the incompressible part $v,$ solves, up to the control of non-local commutators, a transport-diffusion equation and hence one can use direct energy estimates to get some suitable estimates (say $\|\p_y^{m-1}v\|_{L_t^{\infty}L^2}$ and $\|\nabla v\|_{L_t^2H_{co}^{m-1}}$), which together with the estimates on $\div^{\vp}u,$ lead to the uniform control of $\|\p_y^{m-1} u\|_{L_t^{\infty}L^2(\mS)}$ and 
$\|\nabla u\|_{L_t^2H_{co}^{m-1}}.$
The final task is to 
estimate $\|\nabla v\|_{L_t^{\infty}H_{co}^{m-4}}$ which 
stems from a careful study on $\omega\times \bn.$ We remark that this strategy has been employed 
by the authors in \cite{masmoudi2021uniform} where  uniform in low Mach number estimates
are established in the case of smooth  fixed bounded domain with Navier boundary condition and ill-prepared initial data. However, 
as will be explained in next subsection, 
there are various  extra difficulties for the free boundary problem arising from the control of the regularity of the surface.
\subsection{Remarks on the slightly well-prepared data assumption.}\label{remarkondata}
In the free surface setting, a very sensitive part of the analysis is  the control of the regularity of the surface. 
This is the reason why we have to allow the initial data to be slightly well-prepared. Indeed, since the incompressible part $v^{\ep}$ satisfies the boundary condition (see \eqref{eqofv}, \eqref{defpi-intro})
\beqs
(2\mu S^{\vp}v-\pi \text{Id})\bN|_{z=0}=2\mu(\div^{\vp}u\text{Id}-\nabla^{\vp}\nabla^{\vp}\Psi)\bN |_{z=0},
\eeqs
 in order to perform energy estimates for $v$ at order $m-1,$ it requires  information on $\|\nabla^3\Psi\|_{L_t^2H_{co}^{m-3}},$ 
which, by elliptic estimates, can be controlled by $\|\nabla\div^{\vp}u\|_{L_t^2H_{co}^{m-2}}$
and $|h|_{L_t^2\tilde{H}^{m+\f{1}{2}}}.$
Nevertheless, due to the fast oscillations, we cannot expect  $|h|_{L_t^2\tilde{H}^{m+\f{1}{2}}}$(or alternatively $\|\nabla u\|_{\hco^{m}}$) to be uniformly bounded. A similar problem occurs when one recovers the $L_t^2H_{co}^{m-1}$ norm of $\nabla^2\Psi$ from the one of $\div^{\vp}u$ by elliptic estimates. To overcome this problem, we assume the data to be slightly well-prepared so that $\|\div^{\vp} u\|_{L_t^{\infty}H_{co}^1}$ can be proved to be of order $\ep^{\vartheta}, $ ($0<\vartheta<1$ to be chosen). This can make an extra $\ep^{\vartheta}$ appear in front of $|h|_{L_t^2\tilde{H}^{m+\f{1}{2}}}$ in the process of the elliptic estimates (one can  refer to Step 3 of the following subsection for more details).
In turn, to control uniformly the term $\ep^{\vartheta}|h|_{L_t^2\tilde{H}^{m+\f{1}{2}}},$ which reduces to the estimate of $\ep^{\vartheta}\|\nabla u\|_{L_t^2H_{co}^{m}},$ we must assume that the compressible part 
$(\div^{\vp}u,\nabla\sigma)$ has the size of  
$\cO(\ep^{1-\vartheta})$ in $L_t^2H_{co}^{m-1}.$
Indeed, when performing the highest-order energy estimates, we need to be careful with the singular term 
\beq\label{singular-showcase}
\ep^{2\vartheta-1}\int_0^t\int_{\mS} Z^{\alpha}\sigma \underbrace{[Z^{\alpha},\div^{\vp}]u}_{[Z^{\alpha},\f{\bN}{\p_z\vp}\cdot\p_z] u}
+Z^{\alpha}u\cdot\underbrace{[Z^{\alpha},\nabla^{\vp}]\sigma}_{[Z^{\alpha},\f{\bN}{\p_z\vp}\p_z] \sigma}\ \d\cV_s \d s, \quad\,|\alpha|=m.
\eeq
By direct computations, these terms can be bounded by (up to other good terms and upon the foreknowledge of $|\ep^{\vartheta}h|_{L_t^2\tilde{H}^{m+\f{1}{2}}}$ )
$$\ep^{\vartheta-1}|\ep^{\vartheta}h|_{L_t^2\tilde{H}^{m+\f{1}{2}}}\big(\|Z\sigma\|_{L_t^2H_{co}^{m-1}}\il\nabla u\il_{0,\infty,t}+\|u\|_{\hco^m}\il\nabla\sigma\il_{0,\infty,t}\big)\Lambda\big(\f{1}{c_0},|h|_{m-2,\infty,t}\big),$$
which can be uniformly bounded if 
$$\|Z\sigma\|_{L_t^2H_{co}^{m-1}}=\cO(\ep^{1-\vartheta}),\quad  \il\nabla\sigma\il_{0,\infty,t}=\cO(\ep^{1-\vartheta}).$$
By optimizing,  $\vartheta=1-\vartheta$, we shall thus prove 
 the uniform estimates by assuming that $(\nabla\sigma, \div^{\vp} u)|_{t=0}=\cO(\ep^{\f{1}{2}})$. 
 By using the same ideas, it would be also possible  to establish  uniform estimates by assuming that  the compressible part is  of size at $\cO(\ep^\vartheta)$ 
($\f{1}{2}<\vartheta\leq 1$) in a low regularity space (say $H_{co}^{1}$) and 
 $\cO(\ep^{1-\vartheta})$ in a higher regularity space (say $H_{co}^{m-1}$).

One may wonder whether the introduction of the Alinhac good unknown which is used frequently in  free boundary problems can help us to avoid to lose  derivatives on the surface and to get uniform estimates without any size assumption on the data. However, this quantity does not seem useful here. Indeed, the use of the Alinhac good unknown would require the validity of the Taylor sign condition ($\p_{\bn}^{\vp}\sigma|_{z=0}>0$), which seems out of reach for ill-prepared data
since $\sigma$ solves a transport equation with a source term of size of $\cO(\ep^{-1}).$

\subsection{Sketch of the proof}\label{secsketchproof}
Let us explain the main steps for the proof of Theorem \ref{thm-localexis}. The uniform energy estimates will  be  established in the following steps:

\textbf{Step 1: $\ep-$dependent high-order energy estimates and $\ep-$independent high-order time derivative estimates.}

In this step, we aim to obtain two kinds of energy estimates. The first one is
the estimate of $\ep^{\f{1}{2}}\|(\sigma,u)\|_{L_t^{\infty}H_{co}^m}$
and $\|\ep^{\f{1}{2}}\pt(\sigma,u)\|_{L_t^{\infty}\mathcal{H}^{m-1}}.$ Since the spatial conormal vector fields $Z_1,Z_2,Z_3$ do not commute with $\nabla^{\vp}$ and $\div^{\vp},$ it seems hard to get the uniform estimate of $\|(\sigma,u)\|_{L_t^{\infty}H_{co}^{m}}$ by direct energy estimates. Nevertheless, it is easy to get an $\ep-$dependent estimate involving the control of  
$\|\nabla(\sigma,u)\|_{L_t^2H_{co}^{m-1}}.$
This can be done by applying $ Z^{\alpha} (|\alpha|\leq m)$ to the system \eqref{FCNS2} and then by 
performing standard energy estimates making use of the symmetric structure. We remark that at this stage we do not lose regularity on the surface. Indeed, 
besides the term listed in \eqref{singular-showcase} (setting $\vartheta=\f{1}{2}$),
the possible most problematic commutator term is 
\beqs
\ep \int_0^t\int_{\mS}Z^{\alpha}\textbf{N}\cdot\p_z\mathcal{L}^{\vp}u Z^{\alpha}u \ \d \cV_s\d s, \ \d\cV_s=\f{1}{\p_z\vp} \ \d y\d z
\eeqs
which can be bounded by:
$\ep^{\f{1}{2}}|h|_{L_t^2\tilde{H}^{m+\f{1}{2}}}\|u\|_{L_t^2H_{co}^m}\il\ep^{\f{1}{2}} \p_z\cL^{\vp}u\il_{\infty,t}.$ Note that the estimate of $\ep^{\f{1}{2}}|h|_{L_t^2\tilde{H}^{m+\f{1}{2}}}$ 
is available
owing to the control of 
$\ep^{\f{1}{2}}\|\nabla u\|_{L_t^2H_{co}^m}$ and $\il\ep^{\f{1}{2}} \p_z\cL^{\vp}u\il_{\infty,t}$
 by the terms appearing in $\cA_{m,t},$ using the equation of the velocity.

The estimate of
$\|\ep^{\f{1}{2}}\pt(\sigma,u)\|_{L_t^{\infty}\mathcal{H}^{m-1}}$ can also be derived by straightforward energy estimates. 
The main observation is that:
although the weighted time derivatives $\ep^{\f{1}{2}}(\ep\pt)^k\pt$  
do not commute with $\nabla^{\vp},$ their commutator can be uniformly controlled even for the singular term. Indeed, direct computation shows that for $k\leq m-1,$
\beqs
\ep^{\f{1}{2}}\f{[(\ep\p_t)^{k}\pt,\div^{\vp}]u}{\ep}=\ep^{k-\f{1}{2}}\big[\p_t^{k+1},\f{\textbf{N}}{\p_z\vp}\big]\cdot\p_z u
\eeqs
whose $L_{t}^2L^2(\mS)$ norm is uniformly controlled as long as $k\geq 1$ thanks to the boundedness of $|\ep^{\f{1}{2}}\pt^2 h|_{L_t^2\tilde{H}^{m-\f{3}{2}}}$
(see \eqref{surface3}).
We remark that in view of the definition \eqref{defeta},
 the boundedness of $ \textbf{N}$ can be derived from that of $h.$  The case $k=0$ needs to be treated differently and is explained in the next step.

The second kind of estimate is for the terms $\ep^{\f{1}{2}} \|(\nabla^{\vp}\sigma,\div^{\vp} u)\|_{L_t^{\infty}H_{co}^{m-1}},$ $\ep^{\f{1}{2}} \|\nabla^{\vp}\div^{\vp} u\|_{\hco^{m-1}},$
which follows again from direct energy estimates, we thus do not detail more here.

\textbf{Step 2. Uniform lower order energy estimates.} 
In this step, we aim to show the boundedness of 
 $\|\ep^{\f{1}{2}}\pt(\sigma,u)\|_{L_t^{\infty}L^2}.$ We remark that a naive energy estimate fails due to bad commutators with the singular term. Actually, the $L_{t}^2L^2(\mS)$ norm of the term $\ep^{-\f{1}{2}}[\pt,\div^{\vp}]u=\ep^{-\f{1}{2}}\p_t(\bN/\p_z\vp)\cdot\p_z u$ is out of control. The trick to avoid this problem is to multiply
$\p_t^{\vp}\eqref{FCNS2}_1$ by $\ep\p_t\sigma$ and multiply $\p_t\eqref{FCNS2}_2$ by $\ep\p_t^{\vp} u.$ In this way, the singular term can be dealt with as:
\beq\label{trick1}
\begin{aligned}
&\quad
\int_0^t \int_{\mS}\pt\nabla^{\vp}\sigma\p_t^{\vp}u+\pt^{\vp}\div^{\vp}u\pt\sigma \ \d\cV_s\d s\\
&=\int_0^t\int_{z=0}\pt^{\vp}u\cdot \bN \pt\sigma \ \d y\d s+\int_0^t \int_{\mS}\p_t^{\vp}u [\pt,\nabla^{\vp}]\sigma \ \d \cV_s\d s,\\
\end{aligned}
\eeq
where $\d \cV_s=\p_z\vp \ \d y\d z.$
The first boundary term  combined with another boundary term which comes from the integration by parts of the viscous term, result in a good term that can be controlled. Namely
\beqs
\ep\int_0^t \int_{z=0}\pt\big[-\cL^{\vp} u+\f{\sigma}{\ep}\text{Id}\big]\bN \cdot\pt^{\vp}u \ \d y \d s=-\ep\int_0^t \int_{z=0}(-\cL^{\vp} u+\f{\sigma}{\ep}\text{Id})\pt \bN \cdot\pt^{\vp}u \ \d y \d s.
\eeqs
Note that the trace of $\f{\sigma}{\ep}$ 
on the upper boundary 
can be expressed as the spatial tangential derivatives of the velocity (see \eqref{sigmabdry}) 
which can be easily treated by the trace inequality. 
The second term in \eqref{trick1}
is also manageable since $\ep^{-\f{1}{2}}\|[\pt,\nabla^{\vp}]\sigma\|_{L_t^2L^2(\mS)}$ can be roughly bounded by $\|\ep^{-\f{1}{2}}\nabla^{\vp}\sigma\|_{L_t^2L^2(\mS)}.$

  It should be mentioned that the above strategy does not apply for 
  the control of $\ep^{\f{1}{2}}\|(\p_y,Z_3)\pt(\sigma,u)\|_{L_t^{\infty}L^2}$ due to the bad commutator terms. We thus use the strategy of the splitting 
  mentioned before to deal with them in the following steps.

\textbf{Step 3. Recovering high order spatial derivatives of $(\nabla \sigma,\nabla\nabla^{\vp}\Psi)$ by induction.}
Denote by $\nabla^{\vp}\Psi$  
the compressible part of the velocity which is defined by the unique solution to the elliptic equation with mixed boundary conditions:
 \beq\label{def-compressible}
 \left\{
\begin{array}{l}
 -\div^{\vp}\nabla^{\vp}\Psi=-\div^{\vp}u,\\
 \Psi|_{z=0}=0,\\
 \p_{\bn}\Psi|_{z=-1}=0.
 \end{array}
 \right.
 \eeq
In this step, we aim to control the $L_t^2H_{co}^{m-1}$ norm of 
$\nabla^{\vp}(\sigma,\nabla^{\vp} \Psi),$ which can be reduced to the control of $\ep^{-\f{1}{2}}\|(\nabla^{\vp}\sigma,\div^{\vp}u)\|_{L_t^2H_{co}^{m-1}}.$ We will use the equation and induction arguments to recover the latter. 
Indeed, let us rewrite the system 
\eqref{FCNS2} as follows:
\beq\label{rewrite-comp}
\left\{
\begin{array}{l}
   - \div^{\vp}u=
   g_1 \ep\pt\sigma +\ep g_1\underline{u}\cdot \nabla\sigma,   \\[6pt]
    -\mu \ep \curl^{\vp}\omega- \nabla^{\vp}\big(\sigma-(2\mu+\lambda)\ep\div^{\vp}u\big)=g_2\ep\p_t u+\ep g_2 \underline{u}\cdot\nabla u.
\end{array}
\right.
\eeq
where $$\underline{u}=(u_1,u_2,u_z)=:(u_1,u_2,\f{u\cdot \bN-\pt\vp}{\p_z\vp}).$$ In view of \eqref{rewrite-comp}, one wants to show that for $j+l\leq m-1,$
 \beq\label{inductionpre1}
 \ep^{-\f{1}{2}}\|\div^{\vp}u\|_{L_t^2\cH^{j,l}}\lesssim \|\ep^{\f{1}{2}}\pt\sigma\|_{L_t^2\cH^{j,l}}+\mathcal{O}(\ep^{\f{1}{2}})\lesssim  \ep^{-\f{1}{2}}\|\nabla^{\vp}\sigma\|_{L_t^2\cH^{j+1,l-1}}+\mathcal{O}(1),
 \eeq
\beq\label{inductionpre2}
\begin{aligned}
\ep^{-\f{1}{2}}\|\nabla^{\vp}\sigma\|_{L_t^2\cH^{j,l}}&\lesssim\|\ep^{\f{1}{2}}\pt \nabla^{\vp}\Psi\|_{L_t^2\cH^{j+1,l}}+\mathcal{X}_{m,t}+\mathcal{O}(\ep^{\f{1}{2}})\\
&\lesssim \ep^{-\f{1}{2}}
\|\div^{\vp}u\|_{L_t^2\cH^{j+1,l-1}}
+\mathcal{X}_{m,t}+\mathcal{O}(1),
 \end{aligned}
\eeq
where 
$$\mathcal{X}_{m,t}\approx \ep^{\f{1}{2}}\|\nabla^{\vp}\div^{\vp}u\|_{L_t^2H_{co}^{m-1}}+\ep^{\f{1}{2}}\|\nabla^{\vp}u\|_{L_t^2H_{co}^m}$$
which has been controlled in the first step. These two inequalities in hand, we can conclude by induction arguments.
Note that the inequality \eqref{inductionpre1} results from the equality $\eqref{rewrite-comp}_1$ and the product estimate \eqref{crudepro}. 
To obtain \eqref{inductionpre2}, we take $\div^{\vp}$ of the equation $\eqref{rewrite-comp}_2$ and use the boundary condition \eqref{upbdry2}
to get the following elliptic equation:
\beq
\left\{
\begin{array}{l}
 \Delta^{\vp}(\ep\theta)=\div^{\vp}\big[\bar{\rho}\ep\p_t\nabla^{\vp}\Psi+\ep(\f{g_2-\bar{\rho}}{\ep}\ep\pt + g_2 \underline{u}\cdot\nabla)u\big]=\colon \div^{\vp}\tilde{G}\\[5pt]
 \ep\theta|_{z=0}=-2\ep\mu(\p_1u_1+\p_2u_2)+\ep(\omega\times \bN)_3|_{z=0}\\[5pt]
 \p_{\bn}\theta|_{z=-1}=\tilde{G}\cdot \bn+\mu\ep\curl^{\vp}\omega\times \bn|_{z=-1}.
\end{array}
\right.
\eeq
where $\theta=\sigma/\ep-(2\mu+\lambda)\div^{\vp}u.$ Inequality \eqref{inductionpre2} is thus the consequence of the elliptic estimates in the conormal setting (see Section 5). We remark that the trace of $\omega\times\bN$ involves only tangential derivatives of the velocity  on the boundary  (see \eqref{omegatimesn}).
 
 Now that $\div^{\vp} u$ has been bounded, we can control the compressible part of the velocity
 $\nabla^{2}\Psi$ by again elliptic estimates. Nevertheless, there will be a loss of one derivative on the surface if no smallness condition is made on the compressible part.
 Indeed, as $\nabla^{\vp}\Psi$ solves equation \eqref{def-compressible}, we have by the elliptic estimates that
\beq
\|\nabla^{2}\Psi\|_{L_t^{2}\cH^{0,m-1}}\lesssim (| h|_{L_t^2H^{m+\f{1}{2}}}+\|\div^{\vp}u\|_{L_t^2\cH^{0,m-1}})\lca
\eeq
where $\Lambda$ denotes a polynomial.
This estimate involves more regularity of the surface than that we can afford  since we have only the control of $|h|_{L_t^2H^{m-\f{1}{2}}}$.
Nevertheless, 
   checking the proof of the elliptic estimates for $\nabla^2\Psi,$ we find that
the main problematic term is indeed $\nabla\Psi Z^{\alpha}\nabla\bN$ $ (|\alpha|=m-1,\alpha_0=0),$ whose $L_t^2L^2(\mS)$ norm can be bounded by $$\|\nabla\Psi\|_{L_{t,x}^{\infty}}|h|_{L_t^2H^{m+\f{1}{2}}}\lesssim \Lambda\big(\f{1}{c_0},|h|_{3,\infty,t}\big) \|\div^{\vp}u\|_{L_t^{\infty}H_{tan}^1}|h|_{L_t^2H^{m+\f{1}{2}}}.$$
The right hand side can be controlled if $\|\div^{\vp}u\|_{L_t^2H_{tan}^1}=\mathcal{O}(\ep^{\f{1}{2}}).$ Hopefully, once assuming $\ep^{\f{1}{2}}(\pt\sigma,\pt u)(0)$ to be bounded uniformly in $H_{co}^1(\mS),$ we can show that $\|(\nabla^{\vp}\sigma,\div^{\vp}u)\|_{L_t^{\infty}H_{co}^1}= \mathcal{O}(\ep^{\f{1}{2}}).$ This is one reason that we need the initial data to be slightly well-prepared.

\textbf{Step 4. Uniform energy estimate of the incompressible part of the velocity.}
Set $v=u-\nabla^{\vp}\Psi$ the incompressible part of the velocity. By the computations in Section 5, we find that $v$ solves the following system: 
\beq\label{eqofv}
\left\{
\begin{array}{l}
   \bar{\rho} \p_t^{\vp}v-\mu\Delta^{\vp}v+\nabla^{\vp}\pi=-(f+\nabla^{\vp}q+\bar{\rho}[\mathbb{P}_t,\p_t^{\vp}]u) ,\\
    \div^{\vp}v=0,\\
   (2\mu S^{\vp}v-\pi \text{Id})\bN|_{z=0}=2\mu(\div^{\vp}u\text{Id}-(\nabla^{\vp})^2\Psi)\bN |_{z=0},\\
   v_3|_{z=-1}=0,\, \mu\p_z^{\vp} v_{j}=a u_{j}|_{z=-1}, \,j=1,2,
\end{array}
 \right.
\eeq
where $\bbq, \bbp $ are time-dependent projectors  defined in \eqref{defofQ} \eqref{defofP} and 
\beq\label{defpi-intro}
f=(g_2u\cdot\nabla^{\vp}u+\f{g_2-\bar{\rho}}{\ep}\ep\pt^{\vp}u),\nabla^{\vp}q=-\mathbb{Q}_t(f-\mu\Delta^{\vp}v),\nabla^{\vp}\pi=\mathbb{P}_t[\nabla^{\vp}(\f{\sigma}{\ep}-(2\mu+\lambda)\div^{\vp}u)].
\eeq
Note that $\nabla^{\vp}\pi$ does not vanish identically since $\mathbb{Q}_t\nabla^{\vp}\neq \nabla^{\vp}$
and  that $\nabla^{\vp}\pi$ is actually not singular  though it seems to involve $\sigma/\ep$.
Indeed  by the definition of $\bbq$ and  the boundary conditions \eqref{sigmabdry} \eqref{omegatimesn}, $\pi$ solves the elliptic equation:
\beqs
\left\{
\begin{array}{l}
     \Delta^{\vp}\pi=0,  \\
      \pi|_{z=0}=-2\mu(\p_1u_1+\p_2u_2)-2\mu(\Pi(\p_1u\cdot\bN,\p_2u\cdot\bN,0)^{t})_3,\\
      \p_z^{\vp}\pi|_{z=-1}=0.
\end{array}
\right.
\eeqs
The key point is that the  trace of $\pi$ on the upper boundary can be uniformly bounded.

In view of \eqref{eqofv}, we expect to perform energy estimates to get  a priori control of $\|v\|_{L_t^{\infty}H_{co}^{m-1}}, \|\ep^{\f{1}{2}}\pt v\|_{L_t^{\infty}H_{co}^{m-2}}$ and  $\|\nabla^{\vp}v\|_{L_t^2H_{co}^{m-1}}, \|\ep^{\f{1}{2}}\pt\nabla v\|_{L_t^{\infty}H_{co}^{m-2}}.$ 
Of course, due to the interaction with the compressible part through the boundary, their control rely also on the information for the compressible part $\nabla^{\vp}\Psi$ and we cannot get higher order estimates.

\textbf{Step 5. Control of the normal derivative of the velocity.}
We have obtained the estimates of $\|\nabla^{\vp}u\|_{L_t^2H_{co}^{m-1}}$ 
in Step 3 and Step 4. 
It remains to control 
$\ep^{-\f{1}{2}}\|(\nabla^{\vp}\sigma, \div^{\vp}u)\|_{L_t^{\infty}H_{co}^{m-2}}$ and $\|(\nabla v,\ep^{\f{1}{2}}\pt\nabla v)\|_{L_t^{\infty}H_{co}^{m-4}},$ which is useful to control the $L_{t,x}^{\infty}$ norm of the solution. The former quantity can be obtained again by induction arguments while the latter quantity can be deduced from that of $\omega\times \bn.$
Indeed, we have roughly the estimate:
\beqs
\|(\nabla v,\ep^{\f{1}{2}}\pt\nabla v)\|_{L_t^{\infty}H_{co}^{m-4}}\lesssim \|(\text{Id}, \ep^{\f{1}{2}}\pt)(\omega\times \textbf{n})\|_{L_t^{\infty}H_{co}^{m-4}}+\|(v,\ep^{\f{1}{2}}\pt v)\|_{L_t^{\infty}H_{co}^{m-3}}+|(h,\ep^{\f{1}{2}}\pt h)|_{L_t^{\infty}\tilde{H}^{m-2}}.
\eeqs
Let us explain the estimate of $\|(\text{Id}, \ep^{\f{1}{2}}\pt)(\omega\times \textbf{n})\|_{L_t^{\infty}H_{co}^{m-4}}.$ 
Direct computations show that:
\beq
\omega\times \textbf{n}|_{\p\mS}=-2\Pi(\p_1 u\cdot \textbf{n},\p_2 u\cdot \textbf{n},0)^{t}
\eeq
where $\Pi=\Id_{3\times 3}-\bn\otimes \bn.$
We define the modified vorticity
$\omega_{\textbf{n}}=\omega\times \bn+2\Pi(\p_1 v\cdot \textbf{n},\p_2 v\cdot \textbf{n},0),$ so that:
$$\omega_{\bn}|_{\p\mS}=-2\Pi(\p_1 \nabla^{\vp}\Psi\cdot \textbf{n},\p_2 \nabla^{\vp}\Psi\cdot \textbf{n},0)^{t}.$$ The advantage of working on $\omega_{\textbf{n}}$ rather than $\omega\times \textbf{n}$
is that the former one only involves the compressible part of velocity on the boundary, whose estimates have been established in Step 3.
To estimate $\omega_{\bn},$ we shall thus instead use a lifting of the boundary conditions by using the Green's function of the solution to the heat equation with non-homogenous 
 boundary conditions and control the remainder by energy estimates.
 More precisely, let $\omega^h$ solves the heat equation \eqref{heateq} with boundary condition $\omega^{b,1}|_{z=0}=\omega_{\bn}|_{z=0},$
 we use \eqref{heateq} to get roughly that:
\beqs
\begin{aligned}
\|(\text{Id}, \ep^{\f{1}{2}}\pt)\omega_{\textbf{n}}^h\|_{L_t^{\infty}H_{co}^{m-4}}&\lesssim T^{\f{1}{4}}\big(|(\text{Id}, \ep^{\f{1}{2}}\pt)\nabla\Psi|_{L_t^{\infty}\tilde{H}^{m-3}}+|(\text{Id}, \ep^{\f{1}{2}}\pt)h|_{L_t^{\infty}\tilde{H}^{m-3}}\big)\\
&\lesssim T^{\f{1}{4}}(\|(\text{Id}, \ep^{\f{1}{2}}\pt)\div^{\vp} u\|_{L_t^{\infty}H_{co}^{m-3}}+|(\text{Id}, \ep^{\f{1}{2}}\pt)h|_{L_t^{\infty}\tilde{H}^{m-3}}).
\end{aligned}
\eeqs
The remainder $\omega_{\bn}-\omega_{\bn}^h$ can then be controlled by direct energy estimates.

\textbf{Step 6. $L^{\infty}_{t,x}$ estimates.}
This final step is dedicated to the estimates of the $L_{t,x}^{\infty}$ type norms defined in $\cA_{m,T}.$ 
Most of them 
can be controlled thanks to the Sobolev embedding and the quantities appearing in $\cE_{m,T}.$ 
The estimate of the remaining terms $\ep^{-\f{1}{2}}\il\nabla\sigma\il_{m-5,\infty,T}$ and $\il \nabla u\il_{1,\infty,t}$ are obtained from
the maximum principle of the damped  transport equation satisfied by $\nabla\sigma$ and the estimate for the heat equation satisfied by  $\omega$.

\textbf{Structure of the paper:}
We state the uniform a-priori estimates 
in Section 2, which are shown in the following sections. Some preliminaries (useful lemmas, identities, 
projections, and elliptic estimates) are first shown in Sections 3-5. The control of the energy norm $\cE_{m,T}$ is achieved in Sections
6-Section 11. The $L_{t,x}^{\infty}$ type estimates are established in Section 12. 
 Theorem \ref{thm-localexis}
and Theorem \ref{thm-convergence} are then proved in Section 13 and Section 14 respectively. In Section 15, we explain how our results can be extended to the case when the reference domain is changed into a channel with infinite depth. Finally, one technical product estimate is presented in the appendix.

\textbf{Further notations}
 
$\bullet$ We denote $\Lambda(\cdot,\cdot)$
a polynomial that may differ from line to line but independent of $\ep\in(0,1].$

$\bullet$ 
The traces on the upper boundary \{z=0\} and lower boundary \{z=-1\} for a function $f\in H^1(\mS)$ are denoted by $f^{b,1}$ and $f^{b,2}$ respectively.

$\bullet$ We use the notation  $\lesssim$ for
$\leq C(1/c_0)$ for some number
$C(1/c_0)$ that depends only on $1/c_0.$

$\bullet$ We use the notation $L_t^2L^2=L^2([0,t]\times\mS).$

$\bullet$ We denote
$\|f\|_{E^k,t}=\|f\|_{L_t^2H_{co}^k}+\|\nabla f\|_{L_t^2H_{co}^{k-1}}.$

\section{Uniform a-priori estimates}
Our main a priori estimate is the following:
\begin{thm}\label{thm-apriori}
Let $c_0 \in (0, \f{1}{2}]$ such that:
\beq\label{preassumption1}
 \sup_{s\in [-3c_1\bar{P}, 3\bar{P}/{c_1}]} |(g_1,g_2)(s)|\in [{c_0},{1}/{c_0}]
\eeq
where $0< c_1<\f{1}{4}$ is a fixed constant. Suppose that for some $0<T\leq 1,$ for all $(t,x)\in [0,T]\times\mS,\ep\in[0,1],$ it holds that:
\beq\label{preassumption}
\p_z\vp^{\ep}(t,x)\geq c_0,\quad |(\nabla\vp^{\ep},\nabla^2\vp^{\ep})(t,x)|\leq {1}/{c_0},\quad -3c_1\bar{P}\leq \ep\sigma^{\ep}(t,x)\leq 3\bar{P}/{c_1}.
\eeq
Then there exist two  continuous  functions $P_1, P_2: \mathbb{R}_{+}\times \mathbb{R}_{+}\rightarrow \mathbb{R}_{+},$ and $\vartheta>0$ which are independent of $\ep,$ 
such that the following estimate holds:
\beq\label{apriori}
\cN_{m,T}^{\ep}\leq P_1\big(\f{1}{c_0}, Y^{\ep}_m(0) \big)+ (T+\ep)^{\vartheta}P_2\big(\f{1}{c_0},Y^{\ep}_m(0)+\cN^{\ep}_{m,T}\big)
\eeq
where $\cN_{m,T}^{\ep}$ is defined in \eqref{defcN}. 
\end{thm}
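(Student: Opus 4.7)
The overall plan is to combine high-order conormal energy estimates (weighted by $\ep^{1/2}$ at top order), uniform lower-order energy estimates, an induction-based transfer between the compressible and incompressible parts via elliptic estimates, and a vorticity-based recovery of normal derivatives of the velocity, following the six-step scheme of Subsection~\ref{secsketchproof}. The bootstrap assumption \eqref{preassumption} is used freely to control all Jacobian-type factors coming from $\vp^\ep$, $g_1(\ep\sigma^\ep)$ and $g_2(\ep\sigma^\ep)$; the small factor $(T+\ep)^\vartheta$ on the right-hand side of \eqref{apriori} will come from three sources: Duhamel-type smoothing for the heat subproblem ($T^{1/4}$), $\ep^{1/2}$ smallness of the compressible part, and small-time absorption of products of two high-order norms by $\il\cdot\il$-type norms.

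First I carry out two kinds of top-order energy estimates. Applying $Z^\alpha$ with $|\alpha|\le m$ to \eqref{FCNS2} and testing against $(Z^\alpha \sigma,\, g_2 Z^\alpha u)$ with measure $\p_z\vp^\ep\, \d y\d z$, the singular contributions come from commutators of $Z^\alpha$ with $\nabla^\vp$ and $\div^\vp$, as in \eqref{singular-showcase}, together with one term of the shape $\int Z^\alpha \bN\cdot\p_z\mathcal{L}^\vp u\, Z^\alpha u$; I absorb the former using the $\ep^{1/2}|h^\ep|_{L^2_t\tilde H^{m+1/2}}$ control built into $\cE_{high,m,T}^\ep$ and the latter using the $\cA_{m,T}^\ep$ norm of $\ep^{1/2}\p_z\mathcal{L}^\vp u$, yielding the $\ep^{1/2}$-weighted bounds on $\|(\sigma,u)\|_{L^\infty_t H^m_{co}}$ and $\|\nabla u\|_{L^2_t H^m_{co}}$. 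For weighted time derivatives $\ep^{1/2}(\ep\p_t)^k\p_t$ with $k\ge 1$, the commutators with $\nabla^\vp,\div^\vp$ are automatically $\cO(1)$ thanks to the $\ep^{1/2}\p_t^2 h^\ep$ control. The borderline case $k=0$ (Step 2) is handled by the trick \eqref{trick1}: multiplying the density equation by $\ep\p_t\sigma$ and the momentum equation by $\ep\p_t^\vp u$ (not $\ep\p_t u$) trades the dangerous $\ep^{-1/2}[\p_t,\div^\vp]u$ for a boundary contribution and a commutator with $\nabla^\vp$, and the resulting upper-boundary integral combines with the viscous trace so that $\sigma/\ep\big|_{z=0}$ may be eliminated through \eqref{upbdry2}.

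Step 3 is the main obstacle. Decomposing $u=\nabla^\vp\Psi+v$ with $\Psi$ solving the mixed elliptic problem \eqref{def-compressible}, I establish the pair of inequalities \eqref{inductionpre1}--\eqref{inductionpre2}, each exchanging one normal derivative for one tangential one between $\nabla^\vp\sigma$ and $\div^\vp u$ at the cost of the already-controlled quantity $\mathcal{X}_{m,t}$; the second uses the elliptic equation for $\theta=\sigma/\ep-(2\mu+\lambda)\div^\vp u$, whose boundary data consist only of tangential derivatives of $u$ and a curl trace whose normal component involves no top-order normal derivative. An induction on $l$ then gives $\ep^{-1/2}\|(\nabla^\vp\sigma,\div^\vp u)\|_{L^2_t H^{m-1}_{co}}$. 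Applying elliptic regularity to \eqref{def-compressible} to recover $\nabla^2\Psi$ in $L^2_t\cH^{0,m-1}$ produces one borderline commutator, $\nabla\Psi\cdot Z^\alpha \nabla\bN$, which a priori would require the uncontrolled $|h^\ep|_{L^2_t\tilde H^{m+1/2}}$; here the slightly well-prepared assumption enters decisively, because it forces $\|\div^\vp u\|_{L^\infty_t H^1_{co}}=\cO(\ep^{1/2})$ and so creates the $\ep^\vartheta$ gap that absorbs the surplus surface regularity. Step 4 then performs conormal energy estimates on $v=\bbp u$ using \eqref{eqofv}: the inhomogeneous stress data are $\div^\vp u$ and $(\nabla^\vp)^2\Psi$ (controlled in Step 3), the pressure $\pi$ of \eqref{defpi-intro} solves a harmonic Dirichlet problem whose trace involves only tangential derivatives of $u$ and a tangential projection of $\omega\times\bN$, and the projector commutator $\bar\rho[\bbp,\p_t^\vp]u$ is lower order.

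Step 5 recovers $\|\nabla v\|_{L^\infty_t H^{m-4}_{co}}$ and its weighted time-derivative analogue by estimating the modified vorticity $\omega_{\bn}=\omega\times\bn+2\Pi(\p_1 v\cdot\bn,\p_2 v\cdot\bn,0)^t$, whose upper trace depends only on $\nabla^\vp\Psi$. I split $\omega_{\bn}=\omega^h+\omega^r$, where $\omega^h$ solves a linear heat equation with the exact boundary data and zero initial data (inheriting a $T^{1/4}$ smallness from the heat-kernel estimate around \eqref{heateq}) and $\omega^r$ has homogeneous boundary data and a uniformly bounded source, amenable to direct conormal energy estimates. Finally, in Step 6 the bulk Sobolev bounds are converted into the $\il\cdot\il$-norms appearing in $\cA_{m,T}^\ep$ via conormal Sobolev embedding, while the two pointwise bounds $\ep^{-1/2}\il\nabla\sigma\il_{m-5,\infty,T}$ and $\il\nabla u\il_{1,\infty,T}$ come respectively from a maximum-principle argument applied to the damped transport equation satisfied by $\nabla\sigma$ (obtained from $\eqref{rewrite-comp}_1$ after differentiation) and from pointwise heat-kernel estimates for $\omega$. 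Throughout, I keep track of the small-factor structure so that every product in which both factors grow with $\cN_{m,T}^\ep$ carries at least one of $T^{1/4}$ or $\ep^{1/2}$, producing the bound \eqref{apriori}.
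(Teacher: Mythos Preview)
Your proposal is correct and follows essentially the same six-step scheme as the paper: the paper proves Theorem~\ref{thm-apriori} by combining Proposition~\ref{prop-energy} (energy norms $\cE_{m,T}^\ep$, assembled from Lemmas~\ref{lemsurface}, \ref{lemhighest}, \ref{lemnablasigma}, \ref{intermediate-epnablau}, \ref{sigmainduction}, \ref{lemhigh-v}, \ref{sigmainduction1}, \ref{lemnablau-LinftyL2}, \ref{lemsec-normal-u}, \ref{lemlow-full}) and Proposition~\ref{prop-Linfty} (the $L^\infty_{t,x}$ norms $\cA_{m,T}^\ep$), which correspond exactly to your Steps~1--5 and Step~6, respectively. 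Your identification of the key mechanisms---the $\ep^{1/2}$ gap from the slightly well-prepared data in Step~3, the modified vorticity $\omega_{\bn}$ with the heat-kernel lifting in Step~5, and the damped-transport/maximum-principle argument for $\ep^{-1/2}\il\nabla\sigma\il_{m-5,\infty,T}$ in Step~6---matches the paper precisely.
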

 This theorem is a direct consequence of the following two propositions.
\begin{prop}\label{prop-energy}
Under the assumption of Theorem \ref{thm-apriori}, 
there exist two  $\ep-$independent continuous functions
$P_3, P_4: \mathbb{R}_{+}\times \mathbb{R}_{+}\rightarrow \mathbb{R}_{+},$ such that: 
\beq\label{apriori-energy}
\mathcal{E}^{\ep}_{m,T}\leq P_3\big(\f{1}{c_0},Y^{\ep}_m(0)\big)+(T+\ep)^{\vartheta_1}P_4\big(\f{1}{c_0},Y^{\ep}_m(0)+\cN^{\ep}_{m,T}\big).
\eeq
\end{prop}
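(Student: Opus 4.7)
The plan is to decompose $\cE^{\ep}_{m,T}$ into its constituent pieces and bound each one following the six-step strategy outlined in Subsection \ref{secsketchproof}, keeping careful track of which terms produce an $\ep^{\vartheta_1}$ or $T^{\vartheta_1}$ factor and which must be absorbed into $P_3\bigl(\tfrac{1}{c_0},Y_m^\ep(0)\bigr)$. Throughout, all estimates will be derived under the a priori assumption \eqref{preassumption}, and the boundedness of the change of variables $\vp^\ep$ and its derivatives, together with the product/commutator estimates available in the conormal framework, will be used without comment.

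\textbf{Step A (highest-order $\ep^{1/2}$-weighted tangential and time estimates).} I would first apply $Z^\alpha$ with $|\alpha|\le m$ to the symmetrized system \eqref{FCNS2} and perform standard energy estimates using the symmetric structure of $(\sigma^\ep,u^\ep)$; the singular commutator of type \eqref{singular-showcase} will be absorbed thanks to the prefactor $\ep^{1/2}$ and the bound $\ep^{1/2}|h^\ep|_{L^2_t\tilde H^{m+1/2}}\lesssim \ep^{1/2}\|\nabla u^\ep\|_{L^2_tH^m_{co}}$, which is itself part of $\cE_{m,T}^\ep$ and thus closable in a Gronwall loop. For the time derivatives $\ep^{1/2}\pt(\sigma^\ep,u^\ep)$, I apply $(\ep\pt)^k\pt$ with $k\le m-1$ and use the crucial observation that $\ep^{1/2}[(\ep\pt)^k\pt,\div^\vp]$ is uniformly bounded in $L^2_tL^2$ for $k\ge 1$, because of the bound on $\ep^{1/2}\pt^2 h$ from \eqref{surface3}. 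The estimates on $\ep^{1/2}\|\nabla^\vp\sigma,\div^\vp u\|_{L^\infty_t H^{m-1}_{co}}$ and $\ep^{1/2}\|\nabla^\vp\div^\vp u\|$ follow from direct energy estimates on the equations $\eqref{rewrite-comp}_{1,2}$ after applying $Z^\alpha$.

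\textbf{Step B (uniform low-order time derivative estimate).} To gain the uniform (not $\ep^{1/2}$-weighted at the top) bound on $\ep^{1/2}\pt(\sigma^\ep,u^\ep)\in L^\infty_tL^2$, I apply $\pt$ to the density equation, $\pt^\vp$ to the momentum equation, and test against $\ep\pt\sigma^\ep$ and $\ep\pt^\vp u^\ep$ respectively, as in \eqref{trick1}. The singular commutators cancel up to a boundary integral on $\{z=0\}$ which, when combined with the boundary term produced by integrating $\pt(\cL^\vp u\cdot\bN)$ by parts in the viscous term, yields the controllable expression involving $\f{\sigma}{\ep}$ rewritten via \eqref{sigmabdry} as a tangential derivative of $u^\ep$.

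\textbf{Step C (induction for the compressible part and elliptic estimates for $\nabla^2\Psi$).} Here I write the velocity as $u^\ep = \nabla^\vp\Psi^\ep + v^\ep$ with $\Psi^\ep$ defined by \eqref{def-compressible}. Iterating the two one-step reductions \eqref{inductionpre1}--\eqref{inductionpre2} across indices $(j,l)$ with $j+l\le m-1$ — and using for \eqref{inductionpre2} the elliptic problem for $\theta^\ep=\sigma^\ep/\ep - (2\mu+\lambda)\div^\vp u^\ep$ with boundary data expressible in tangential derivatives via \eqref{omegatimesn} and \eqref{sigmabdry} — gives $\ep^{-1/2}\|(\nabla^\vp\sigma^\ep,\div^\vp u^\ep)\|_{L^2_tH^{m-1}_{co}}$ in terms of the quantities already controlled in Step A plus $\cX_{m,t}$. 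To pass from $\div^\vp u^\ep$ to $\nabla^2\Psi^\ep$ via the elliptic equation \eqref{def-compressible}, the principal obstruction is the term $\nabla\Psi^\ep\, Z^\alpha\nabla\bN$ in $L^2_tL^2$ for $|\alpha|=m-1$, which would lose half a derivative on $h^\ep$. This is handled by exploiting the slightly-well-prepared assumption in $Y_m^\ep(0)$: the initial smallness $\ep^{1/2}\pt(\sigma^\ep,u^\ep)(0)\in H^1_{co}$ propagates by a closed energy estimate on the once-differentiated system to give $\|\div^\vp u^\ep\|_{L^\infty_tH^1_{co}} = \cO(\ep^{1/2})$, producing the extra $\ep^{1/2}$ factor that tames $|h^\ep|_{L^2_t\tilde H^{m+1/2}}$. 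This is the main obstacle in the proof and the conceptual reason the assumption cannot be relaxed below $\ep^{1/2}$ here.

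\textbf{Step D (incompressible part and normal derivative via modified vorticity).} With $v^\ep$ satisfying \eqref{eqofv} and the pressure $\pi^\ep$ governed by the elliptic problem whose boundary data reduces to tangential derivatives of $u^\ep$, I perform direct conormal energy estimates on $v^\ep$ and $\ep^{1/2}\pt v^\ep$, using the already-controlled compressible part as a source. To close $\|\nabla v^\ep\|_{L^\infty_tH^{m-4}_{co}}$ and its time-derivative analogue, I follow Step 5 of the sketch: introduce $\omega_{\bn}^\ep=\omega^\ep\times\bn + 2\Pi(\p_1 v^\ep\cdot\bn,\p_2 v^\ep\cdot\bn,0)$ whose boundary values depend only on $\nabla^\vp\Psi^\ep$, split $\omega_{\bn}^\ep=\omega^{\ep,h}+(\omega_{\bn}^\ep-\omega^{\ep,h})$ where $\omega^{\ep,h}$ lifts the boundary data via the heat kernel giving the $T^{1/4}$ factor, and bound the remainder by a standard energy estimate on a heat equation with homogeneous Dirichlet data. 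Collecting all six contributions, the terms with an explicit $\ep^{1/2}$ or $T^{1/4}$ prefactor go into $(T+\ep)^{\vartheta_1}P_4$, while the data-only contributions from Gronwall integration form $P_3$, yielding \eqref{apriori-energy} for a suitable $\vartheta_1>0$ (e.g. $\vartheta_1=\tfrac14$).
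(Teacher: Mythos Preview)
Your plan follows essentially the same route as the paper: the decomposition into the highest-order $\ep^{1/2}$-weighted estimates, the low-order time-derivative trick \eqref{trick1}, the induction \eqref{inductionpre1}--\eqref{inductionpre2} for the compressible part, the energy estimates on the incompressible part $v^\ep$, and the modified vorticity $\omega_{\bn}$ with heat-kernel lifting. Two points, however, are not addressed and are needed to close:

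\emph{(i) The bootstrap on the surface regularity.} In nearly every intermediate estimate (Lemmas \ref{lemhighest}, \ref{lemnablasigma}, \ref{sigmainduction}, \ref{lemhigh-v}, \ref{sigmainduction1}, \ref{lemnablau-LinftyL2}, \ref{lemsec-normal-u}), the ``data'' contribution does not come out as a clean $P_3(\tfrac{1}{c_0},Y_m^\ep(0))$ but rather as $\Lambda\bigl(\tfrac{1}{c_0},|h^\ep|_{L_T^\infty\tilde H^{m-1/2}}^2+Y_m^\ep(0)^2\bigr)Y_m^\ep(0)^2$. The factor $|h^\ep|_{L_T^\infty\tilde H^{m-1/2}}$ depends on the solution, so you cannot simply place it in $P_3$. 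The paper closes this by invoking the surface estimate \eqref{surface1}, $|h^\ep|_{L_T^\infty\tilde H^{m-1/2}}^2\le Y_m^\ep(0)^2+T^{1/2}\Lambda(\tfrac{1}{c_0},\cN_{m,T}^\ep)$, and substituting it back into the polynomial; only then does the estimate take the form \eqref{apriori-energy}. Your final paragraph (``data-only contributions from Gronwall integration form $P_3$'') skips this step.

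\emph{(ii) The non-uniform low-order pieces of $\cE_{low,T}^\ep$.} The quantity $\cE_{m,T}^\ep$ includes $\ep^{1/2}\|(\sigma^\ep,u^\ep)\|_{L_T^\infty H^3}$ and $\ep^{3/2}\|\nabla^4 u^\ep\|_{L_T^2L^2}$, which are not covered by Steps A--D. These are handled in the paper (Lemma \ref{lemlow-full} and the two lemmas following it) by separate energy estimates on $\Delta^\vp\sigma^\ep$ via the damped transport equation \eqref{eqDeltasigma} and on $\nabla^3 u^\ep$, $\nabla^4 u^\ep$ via the elliptic structure of the momentum equation; they are needed so that the local-existence theorem (in $H^{4,2}$) can be invoked in the continuation argument.
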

\begin{proof}
 This proposition is obtained by energy estimates, we split it into several sections (Section 6-11).
 By Lemma \ref{lemsurface} for the estimate of the surface, Lemmas
 \ref{lemhighest}, \ref{lemnablasigma}, \ref{intermediate-epnablau} for $\ep-$dependent estimates to the highest order, 
 Lemmas
 \ref{sigmainduction}, \ref{lemhigh-v},  \ref{sigmainduction1}, \ref{lemnablau-LinftyL2},  \ref{lemsec-normal-u} for the uniform estimates, 
 we can find two polynomials $\Lambda_5,\Lambda_6$ whose coefficients are independent of $\ep,$ such that:
 \beq\label{tildeEmT}
 ({\cE}^{\ep}_{high,m,T})^2\leq \Lambda_5\big(\f{1}{c_0}, |h^{\ep}|_{L_T^{\infty}\tilde{H}^{m-\f{1}{2}}}^2+Y^{\ep}_{m}(0)^2\big) Y^{\ep}_{m}(0)^2+(T+\ep)^{\f{1}{4}}\Lambda_6\big(\f{1}{c_0}, \cN^{\ep}_{m,T}\big).
 \eeq
By Lemma \ref{lemlow-full}, there exist  polynomials $\Lambda_7,\Lambda_8$
whose coefficients are independent of $\ep,$ such that:
\beqs
(\tilde{\cE}_{low,T})^2\lesssim \Lambda_7\big(\f{1}{c_0}, |h^{\ep}|_{3,\infty,T}^2
\big)
(Y_m^{\ep}(0)^2+({\cE}_{high,m,T}^{\ep})^2)+ (T+\ep)^{\f{1}{2}}\Lambda_8\big(\f{1}{c_0}, \cN_{m,T}^{\ep}\big).
\eeqs
By the Sobolev embedding $H^{\f{3}{2}}(\mR^2)\hookrightarrow L^{\infty}(\mR^2),$
\beqs
|h^{\ep}|_{3,\infty,T}^2\lesssim |h^{\ep}|_{L_T^{\infty}\tilde{H}^{m-\f{1}{2}}}^2,
\eeqs
we thus find two polynomials $\Lambda_9$ and $\Lambda_{10}$ such that:
\beq\label{sec2:eq1}
 ({\cE}_{m,T}^{\ep})^2\leq \Lambda_9\big(\f{1}{c_0},|h^{\ep}|_{L_T^{\infty}\tilde{H}^{m-\f{1}{2}}}^2+Y^{\ep}_{m}(0)^2 \big) Y^{\ep}_{m}(0)^2+(T+\ep)^{\f{1}{4}}\Lambda_{10}\big(\f{1}{c_0},\cN^{\ep}_{m,T}\big).
\eeq
By \eqref{surface1}, there exists a polynomial $\Lambda_{11},$ such that: 
\beqs|h^{\ep}|_{L_t^{\infty}\tilde{H}^{m-\f{1}{2}}}^2\leq Y^{\ep}_m(0)^2+T^{\f{1}{2}}
\Lambda_{11}
\big(\f{1}{c_0}, \cN_{m,T}^{\ep}\big).
\eeqs
Plugging  this inequality
into \eqref{tildeEmT}, one finds two other polynomials $\Lambda_{12},\Lambda_{13},$
and a constant $\vartheta_2>0,$ such that:
\beqs
({\cE}_{high,m,T}^{\ep})^2\lesssim \Lambda_{12}(\f{1}{c_0}, Y^{\ep}_m(0)^2 )+(T+\ep)^{\vartheta_2}\Lambda_{13}(\f{1}{c_0}, Y_m^{\ep}(0)+\cN_{m,T}^{\ep}).
\eeqs
We thus finish the proof by inserting the above inequality into \eqref{sec2:eq1}.
\end{proof}
\begin{prop}\label{prop-Linfty}
Assume that \eqref{preassumption} holds, we have the a-priori estimate for the $L_t^{\infty}L^{\infty}(\mS)$ norms, 
\begin{equation}
\cA_{m,T}^{\ep}\leq 
\Lambda\big(\f{1}{c_0},Y_m(0)\big) +
    \Lambda\big(\f{1}{c_0},|h^{\ep}|_{3,\infty,t}\big)
    \tilde{\cE}_{m,T}^{\ep}+(\tilde{\cE}_{m,T}^{\ep})^4 
+(T+\ep^{\f{1}{4}})
\Lambda_{14}\big(\f{1}{c_0}, \cN_{m,T}^{\ep}\big).
\end{equation}
where $\Lambda_{14}$ is a polynomial with $\ep-$independent coefficients.
\end{prop}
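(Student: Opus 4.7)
The plan is to split $\cA_{m,T}^{\ep}$ into three groups and treat them separately: a collection of conormal $L_{t,x}^{\infty}$ norms that follow by Sobolev embedding from quantities already contained in $\cE_{m,T}^{\ep}$; the compressible seminorm $\ep^{-\frac{1}{2}}\il(\nabla^{\vp}\sigma,\div^{\vp}u)\il_{m-5,\infty,T}$; and finally the Lipschitz-type bound $\il\nabla u^{\ep}\il_{1,\infty,T}$, which is the main obstacle. Throughout I would use the pointwise controls \eqref{preassumption1}--\eqref{preassumption} to bound the nonlinear coefficients $g_1(\ep\sigma), g_2(\ep\sigma)$ and the geometric quantities $\nabla\vp, \p_z\vp$ uniformly, together with the anisotropic embedding $\il f\il_{k,\infty,t}\lesssim \|f\|_{L_t^{\infty}H_{co}^{k+2}}+\|\p_z f\|_{L_t^{\infty}H_{co}^{k+1}}$ (tangential Sobolev on horizontal slices plus integration in $z$) and $\tilde H^{m-\frac{1}{2}}\hookrightarrow W^{m-2,\infty}(\mR^2)$ for the surface. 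These embeddings, combined with the chain rule and the product estimate from the Appendix, reduce $|h^{\ep}|_{m-2,\infty,T}$, $\ep^{\frac{1}{2}}\il(\sigma^{\ep},u^{\ep})\il_{m-2,\infty,T}$, $\ep^{\frac{1}{2}}\il\nabla u^{\ep}\il_{m-3,\infty,T}$, $\il(\Id,\ep\pt)(\sigma^{\ep},u^{\ep})\il_{m-4,\infty,T}$ and the $\ep^{\frac{1}{2}}\pt(\sigma^{\ep},u^{\ep})$ component of the $(m-5,\infty,T)$ piece to norms inside $\cE_{high,m,T}^{\ep}$, producing only the prefactor $\Lambda(\tfrac{1}{c_0},|h^{\ep}|_{3,\infty,T})\,\cE_{m,T}^{\ep}$.

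For the compressible packet $\ep^{-\frac{1}{2}}\il(\nabla^{\vp}\sigma,\div^{\vp}u)\il_{m-5,\infty,T}$, I would read the two pointwise identities \eqref{rewrite-comp} as
\[
\ep^{-\frac{1}{2}}\div^{\vp}u=-g_1(\ep\sigma)\bigl(\ep^{\frac{1}{2}}\pt\sigma+\ep^{\frac{1}{2}}\underline{u}\cdot\nabla\sigma\bigr),
\]
\[
\ep^{-\frac{1}{2}}\nabla^{\vp}\sigma=-g_2(\ep\sigma)\bigl(\ep^{\frac{1}{2}}\pt u+\ep^{\frac{1}{2}}\underline{u}\cdot\nabla u\bigr)+\mu\ep^{\frac{1}{2}}\curl^{\vp}\omega-(2\mu+\lambda)\ep^{\frac{1}{2}}\nabla^{\vp}\div^{\vp}u,
\]
so that applying $Z^{\alpha}$ with $|\alpha|\le m-5$ leaves only factors of $\ep^{\frac{1}{2}}\pt(\sigma,u)$, $\ep^{\frac{1}{2}}u\cdot\nabla(\sigma,u)$ and $\ep^{\frac{1}{2}}\nabla^{2}u$ on the right, each of which has already been estimated in the previous step or embeds from $\cE_{high,m,T}^{\ep}$. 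Alternatively, in the spirit of Section 1.6, one differentiates $\eqref{FCNS2}_1$ and substitutes $\nabla^{\vp}\sigma/\ep$ from $\eqref{FCNS2}_2$ to obtain a damped transport equation for $\ep^{-\frac{1}{2}}\nabla\sigma$ along the characteristics of $\underline{u}$ and close via the maximum principle; this route is insensitive to the loss of the highest conormal derivative.

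The main difficulty is $\il\nabla u^{\ep}\il_{1,\infty,T}$, which demands a pointwise bound on one normal derivative of $\nabla u$ that is \emph{not} available from Sobolev embedding of the energy norm, since uniform boundedness of $\nabla^{2}u$ in $L_T^{\infty}H_{co}^{m-1}$ is precluded by the boundary layer. I would follow the splitting $u=\nabla^{\vp}\Psi+v$. The compressible piece $\nabla^{\vp}\Psi$ is bounded in $W^{1,\infty}$ by applying the elliptic estimates of Section 5 to \eqref{def-compressible}, reducing it to $L_{t,x}^{\infty}$ bounds on $\div^{\vp}u$ controlled in the previous paragraph and to $|h^{\ep}|_{3,\infty,T}$. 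The incompressible piece is handled through the modified vorticity $\omega_{\bn}=\omega\times\bn+2\Pi(\p_1 v\cdot\bn,\p_2 v\cdot\bn,0)^{t}$, which by Step 5 of Section 1.6 solves a transport-diffusion problem whose boundary datum on $\{z=0\}$ depends only on the already-controlled compressible piece $\nabla^{\vp}\Psi$. I would lift this datum by the Green's function of the heat equation, extracting an explicit $T^{\vartheta}$ factor in the $L_{t,x}^{\infty}$ norm via the heat-kernel bound sketched around \eqref{heateq}, and control the remainder by direct energy estimates; the $\ep^{1/4}$ gain arises from the $\ep^{1/2}$-weighted norms appearing in $\cE_{high,m,T}^{\ep}$ when the remainder couples to compressible terms. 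A Biot--Savart type recovery within the conormal elliptic framework of Section 5 transfers this control from $\omega$ to $\nabla v$, and summing with the compressible contribution and absorbing fractional prefactors into $(\cE_{m,T}^{\ep})^{4}$ by Young's inequality yields the stated $(T+\ep^{1/4})\Lambda_{14}$ dependence.
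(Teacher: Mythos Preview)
Your overall architecture matches the paper's Section~12: Sobolev embedding handles $|h|_{m-2,\infty,T}$ and the five $L_{t,x}^\infty$ norms that you list, the damped-transport route is correct for $\ep^{-\frac12}\il\nabla^\vp\sigma\il_{m-5,\infty,T}$, and a heat-kernel analysis is indeed the key to $\il\nabla u\il_{1,\infty,T}$. Two corrections are needed. First, your \emph{direct} approach to $\ep^{-\frac12}\nabla^\vp\sigma$ via \eqref{rewrite-comp} leaves $\ep^{\frac12}\curl^\vp\omega$ on the right, which carries two normal derivatives of $u$; the quantity $\ep^{\frac12}\il\nabla^2 u\il_{m-5,\infty,T}$ does \emph{not} embed from $\cE_{high,m,T}$ (you would need $\ep^{\frac12}\nabla^3 u$ in $L_t^\infty H_{co}^{m-4}$, which is unavailable). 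The paper first splits off the tangential part $\ep^{-\frac12}\il\p_y\sigma\il_{m-5,\infty,T}$, which embeds directly from $\ep^{-\frac12}\|\nabla^\vp\sigma\|_{L_t^\infty H_{co}^{m-2}}$, and then runs the damped transport argument only on the normal component $\nabla^\vp\sigma\cdot\bn$, where the dangerous term becomes $\curl^\vp\omega\cdot\bn=\div^\vp(\omega\times\bn)+\omega\cdot\curl^\vp\bn$, involving only \emph{tangential} derivatives of $\nabla^\vp u$ and hence controlled by $\ep^{\frac12}\il\nabla u\il_{m-4,\infty,T}$. This projection onto $\bn$ is essential; the maximum-principle argument you sketch does not close for the full gradient.

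The more serious gap is in $\il\nabla u\il_{1,\infty,T}$. After lifting the boundary datum of $\omega_{\bn}$ by the heat Green's function, you propose to ``control the remainder by direct energy estimates.'' But energy estimates produce $L_t^\infty L^2$ and $L_t^2 H^1$ bounds, not $L_{t,x}^\infty$; promoting to $L_{t,x}^\infty$ via the anisotropic embedding would require $\p_z$ of the remainder in $L_t^\infty H_{co}^k$, i.e.\ $\p_z\omega\sim\nabla^2 u$ uniformly in $L_t^\infty H_{co}^k$, which is precisely what the boundary layer forbids. The paper (Lemma~12.4) does not split into lift plus remainder. It works with $\widetilde{\chi\omega}$ itself (not $\omega_{\bn}$) in normal geodesic coordinates, writes the full Duhamel representation \eqref{explicit-vor1}, and bounds each of the three pieces pointwise. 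The nonlinear source contains $u\cdot\nabla^\vp\omega$; the paper rewrites this as a total $\p_{z'}$-derivative plus a harmless commutator and integrates by parts against the kernel, trading the normal derivative for $\|\p_{z'}E(t-s,z,\cdot)\|_{L_{z'}^2}\lesssim(t-s)^{-3/4}$, which is integrable and yields the $T^{1/4}$ gain. No energy estimate enters at this stage. Finally, the passage from $\omega$ back to $\nabla u$ is not Biot--Savart but the pointwise algebraic identities $\p_z^\vp u\cdot\bN=\div^\vp u-\p_1u_1-\p_2u_2$ and $|\bN|\Pi(\p_z^\vp u)=\omega\times\bn+\Pi(\p_1u\cdot\bn,\p_2u\cdot\bn,0)^t-\Pi(\bn_1\p_1u+\bn_2\p_2u)$.
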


\begin{proof}
Its proof is presented in Section 12.
\end{proof}

\section{Preliminaries I: Useful lemmas.}
In this section, we list some elementary lemmas which will be often used throughout this paper. 

\subsection{Product and commutator estimates.}
We begin with the following product and commutator estimates in $\mathbb{R}^2.$
\begin{lem}\label{lempro-com}
Let $f,g: \mathbb{R}^2\rightarrow \mathbb{R}$ belong to the spaces appearing in below. For any $s\geq 1,$
\beq\label{classical-product-1}
|\Lambda^s(fg)|_{L^2(\mathbb{R}^2)}\lesssim |f|_{H^s(\mR^2)}|g|_{L^{\infty}(\mR^2)}+|g|_{H^s(\mR^2)}|f|_{L^{\infty}(\mR^2)}
\eeq
\beq\label{classical-product}
|[\Lambda^s, f]g|_{L^2(\mathbb{R}^2)}\lesssim |f|_{H^{s-1}(\mR^2)}|g|_{L^{\infty}(\mR^2)}+|f|_{W^{1,\infty}(\mR^2)}|g|_{H^{s-1}(\mR^2)}
\eeq
For any $-1<s\leq 1,$
\beq\label{commutator-R2}
|[\Lambda^s,g]f|_{L^2(\mathbb{R}^2)}\lesssim |f|_{H^{s-1}(\mR^2)}|g|_{H^{2^{+}}(\mR^2)},
\eeq
\beq\label{product-R2}
|fg|_{H^s(\mathbb{R}^2)}\lesssim |f|_{H^s(\mR^2)}\min\{|g|_{H^{1^{+}}(\mR^2)}, |g|_{W^{1,\infty}(\mR^2)}\}.\qquad
\eeq
 where $(\Lambda^s f)(y)=\cF_{\xi\rightarrow y}^{-1}\big((1+|\xi|^2)^{\f{s}{2}}\hat{f}(\xi)\big),$ 
 $a^{+}$ denotes a real number that is larger but arbitrary close to $a.$ 
\end{lem}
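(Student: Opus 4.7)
The plan is to prove all four inequalities via Littlewood–Paley theory and Bony's paraproduct decomposition
$$fg = T_f g + T_g f + R(f,g),$$
where $T_f g = \sum_j S_{j-N_0}f\,\Delta_j g$ is the paraproduct and $R(f,g)=\sum_{|j-k|\leq N_0}\Delta_j f\,\Delta_k g$ is the remainder, with $\{\Delta_j\}$ the dyadic Littlewood--Paley projectors and $S_j=\sum_{k\leq j}\Delta_k$. The whole argument reduces to three building blocks: (a) $\|\Lambda^s T_f g\|_{L^2}\lesssim \|f\|_{L^\infty}\|g\|_{H^s}$ for every $s\in\mathbb{R}$, proved by Bernstein's inequality and the almost-orthogonality of the supports of $\widehat{\Delta_j g}$; (b) $\|\Lambda^s R(f,g)\|_{L^2}\lesssim \|f\|_{L^\infty}\|g\|_{H^s}$ whenever $s>0$ (and symmetrically in $f,g$), using that $\widehat{\Delta_j f\Delta_k g}$ is supported in a ball of radius $\sim 2^{\max(j,k)}$ together with summation of a geometric series; (c) for commutators, a first-order Taylor expansion of the kernel of $\Lambda^s$ yields $\|[\Lambda^s,S_{j-N_0}f]\Delta_j g\|_{L^2}\lesssim 2^{(s-1)j}\|\nabla f\|_{L^\infty}\|\Delta_j g\|_{L^2}$.

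For \eqref{classical-product-1} (valid for $s\geq 1>0$), I split $\Lambda^s(fg)=\Lambda^s T_f g+\Lambda^s T_g f+\Lambda^s R(f,g)$ and apply (a) twice and (b) once; the two symmetric choices in (b) produce the two terms on the right-hand side. For \eqref{classical-product}, write $[\Lambda^s,f]g=\Lambda^s(T_g f+R(f,g))-T_{\Lambda^s g}f+[\Lambda^s,T_f]g$. The first two blocks are bounded by $|f|_{H^s}|g|_{L^\infty}$ via (a)+(b) and the estimate $\|T_{\Lambda^s g}f\|_{L^2}\lesssim \|g\|_{L^\infty}\|f\|_{H^s}$ (shifting derivatives onto $f$ via Bernstein since the low factor has frequency $\ll$ the high factor); absorbing the first term into the $|g|_{L^\infty}|f|_{H^{s-1}}$ half of the bound via $|f|_{H^s}\lesssim |f|_{W^{1,\infty}}^{\theta}|f|_{H^{s-1}}^{1-\theta}$-type interpolation is not needed — instead one directly refines (a) to get $\|T_g f\|_{H^s}\lesssim \|g\|_{L^\infty}\|f\|_{H^s}$, and \emph{then} rewrites this high-low term using an integration-by-parts/commutator trick to trade one derivative on $f$ for a factor $\|\nabla f\|_{L^\infty}$ on the $H^{s-1}$ norm of $g$. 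Finally, $[\Lambda^s,T_f]g$ is handled by (c) and Plancherel.

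For \eqref{commutator-R2} with $-1<s\leq 1$ the issue is that (b) fails for $s\leq 0$, so I cannot split the commutator in the same way. I instead combine the low-high and remainder pieces and use that $g\in H^{2^+}\hookrightarrow W^{1,\infty}$ (Sobolev embedding in $\mathbb{R}^2$ with a bit of extra regularity). Writing
$$[\Lambda^s,g]f=[\Lambda^s,T_g]f+\Lambda^s(T_f g+R(f,g))-T_{\Lambda^s f}g,$$
the commutator $[\Lambda^s,T_g]f$ is controlled by (c) via $\|\nabla g\|_{L^\infty}\|f\|_{H^{s-1}}\lesssim |g|_{H^{2^+}}|f|_{H^{s-1}}$; the terms $\Lambda^s T_f g-T_{\Lambda^s f}g$ are a paraproduct of $g$ against $f$ at frequency-localized scales and are bounded by the same quantity (one extra derivative falls harmlessly on the high factor and is absorbed by $|g|_{H^{2^+}}$); for $R(f,g)$, when $s\leq 0$ one loses no derivative since $\Lambda^s$ is a smoothing operator, and the remainder is bounded by the $L^2$-times-$L^\infty$ Hölder estimate, again using $g\in W^{1,\infty}$. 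For \eqref{product-R2}, I apply the same paraproduct splitting and bound $\|T_g f\|_{H^s}+\|R(f,g)\|_{H^s}$ by $|g|_{L^\infty}|f|_{H^s}$ (here $s\geq 0$ makes $R$ trivially estimable, but for $s<0$ one uses that $H^{1^+}\hookrightarrow L^\infty$ in dimension $2$), while $\|T_f g\|_{H^s}\lesssim \|f\|_{H^s}\|g\|_{L^\infty}$.

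The main obstacle is the low-regularity case \eqref{commutator-R2} when $s\in(-1,0]$: the remainder $R(f,g)$ is \emph{not} controlled by $|g|_{L^\infty}|f|_{H^{s-1}}$ for negative $s$, and the usual commutator tricks do not apply directly because one cannot integrate by parts freely against $\Lambda^s$. The resolution, as indicated above, is to carefully pair the Bony blocks so that in every term exactly one derivative falls on the Sobolev factor; this is where the slightly stronger assumption $g\in H^{2^+}\hookrightarrow W^{1,\infty}(\mathbb{R}^2)$ (rather than merely $H^2$) is used to close the embedding sharply. All remaining estimates are then routine Bernstein/Hölder bookkeeping and will be compressed accordingly.
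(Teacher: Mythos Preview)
Your building blocks (a)--(c) and the treatment of \eqref{classical-product-1}, \eqref{classical-product} are standard and essentially fine; the paper simply cites these two together with \eqref{commutator-R2}. The only estimate the paper actually proves is \eqref{product-R2} (in its appendix), and this is precisely where your sketch breaks down.

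The assertion $\|T_f g\|_{H^s}\lesssim \|f\|_{H^s}\|g\|_{L^\infty}$ is false. In $T_f g=\sum_j S_{j-N_0}f\,\Delta_j g$ the high-frequency factor is $g$, so the $s$ derivatives inevitably land on $g$; they cannot be transferred to the low-frequency factor $f$ without paying extra regularity on $g$. Take $g$ a unit-amplitude wave packet localized at frequency $\sim 2^{j_0}$ and $f$ any fixed $H^s$ function: then $\|T_f g\|_{H^s}\sim 2^{j_0 s}\|f\|_{L^2}$, unbounded as $j_0\to\infty$, while $\|f\|_{H^s}\|g\|_{L^\infty}$ stays fixed. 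Likewise, for $s\le 0$ the fallback ``use $H^{1^+}\hookrightarrow L^\infty$'' is not enough for the remainder: the bound $\|R(f,g)\|_{H^s}\lesssim \|f\|_{H^s}\|g\|_{L^\infty}$ genuinely fails there because the geometric weight $2^{(l-j)s}$ in the dyadic sum is no longer summable over $j\ge l$.

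The paper's appendix resolves both issues at once by merging $T_fg+R(f,g)$ into the single block $\tilde T_f g=\sum_k S_{k+2}f\,\Delta_k g$ (two-piece Bony decomposition $fg=T_gf+\tilde T_fg$) and by exploiting the actual Sobolev regularity of $g$, not merely its $L^\infty$ bound. For $-1<s<0$ it applies the two-dimensional Bernstein inequality $\|\Delta_j(\cdot)\|_{L^2}\lesssim 2^j\|\Delta_j(\cdot)\|_{L^1}$ followed by H\"older $L^1\le L^2\times L^2$, which brings in $\|\Delta_k g\|_{L^2}$; the resulting convolution weight $2^{(j-k)(s+1)}$ is summable precisely because $s>-1$, and one closes against $\|g\|_{H^1}$. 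For $0<s\le 1$ it bounds $\|\tilde T_f g\|_{H^s}\lesssim\bigl(\sup_k 2^{k(s-1-\kappa)}\|S_{k+2}f\|_{L^\infty}\bigr)\|g\|_{H^{1+\kappa}}$ and controls the first factor by $\|f\|_{H^s}$ via the two-dimensional embedding $H^s\hookrightarrow B^{s-1}_{\infty,\infty}$. The $W^{1,\infty}$ half of \eqref{product-R2} is handled by citing the case $0\le s\le 1$ and deducing $-1<s<0$ by duality, which is cleaner than your paraproduct route. The same Bernstein-to-$L^1$ step is what would rescue the remainder in your \eqref{commutator-R2} argument for $s\le 0$; the ``$L^2\times L^\infty$ H\"older'' bound you propose there has the same gap.
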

The product estimate \eqref{classical-product-1} and the commutator estimate \eqref{classical-product} can be found in \cite{MR1340046} for example, \eqref{commutator-R2} is indeed a restatement of 
(A.6) in \cite{MR2354691}. The proof of \eqref{product-R2} is presented in the appendix.
\begin{cor}
Let $k\geq 2$ be an integer, one has the following estimates:
\beq\label{rough-product-bdry}
|(fg)(t)|_{\tilde{H}^{k+\f{1}{2}}}\lesssim |f(t)|_{\tilde{H}^{[\f{k}{2}]^{+}}}|g(t)|_{\tilde{H}^{k+\f{1}{2}}}+|g(t)|_{\tilde{H}^{[\f{k+1}{2}]+1^{+}}}|f(t)|_{\tilde{H}^{k+\f{1}{2}}},
\eeq
\beq\label{rough-com-bdry}
|[Z^{\alpha},f]g(t)|_{{H}^{\f{1}{2}}}\lesssim 
|f(t)|_{\tilde{H}^{[\f{k}{2}]^{+}}}|g(t)|_{\tilde{H}^{k-\f{1}{2}}}+|g(t)|_{\tilde{H}^{[\f{k+1}{2}]+1^{+}}}|f(t)|_{\tilde{H}^{k+\f{1}{2}}}, |\alpha|= k,
\eeq
where $\tilde{H}^s$ is defined in \eqref{seminorm-bdry}, and commutator $[Z^{\alpha}, f]g=Z^{\alpha}(fg)-f Z^{\alpha}g.$
\end{cor}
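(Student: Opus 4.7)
The plan is to reduce both inequalities to the product estimate \eqref{product-R2} of Lemma \ref{lempro-com} (augmented, when the regularity index exceeds $1$, by \eqref{classical-product-1} together with the Sobolev embedding $H^{1^{+}}(\mR^2)\hookrightarrow L^{\infty}(\mR^2)$), applied term by term after an explicit Leibniz expansion: in $\ep\p_t$ for \eqref{rough-product-bdry}, and in $Z^{\alpha}$ for \eqref{rough-com-bdry}. In both cases the organizing dichotomy is the same: either the order falling on $f$ is at most $[\f{k}{2}]-1$, and I pay a low norm on $f$ together with the high norm on $g$; or it is at least $[\f{k}{2}]$, in which case the complementary order falling on $g$ is automatically at most $k-[\f{k}{2}]=[\f{k+1}{2}]$, and I reverse the roles.

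For \eqref{rough-product-bdry}, I would first unfold
\beqs
|(fg)(t)|_{\tilde H^{k+\f12}}=\sum_{j=0}^{k}\Big|\sum_{l=0}^{j}\binom{j}{l}(\ep\p_t)^{l}f\,(\ep\p_t)^{j-l}g\Big|_{H^{k+\f12-j}(\mR^2)},
\eeqs
and then estimate each summand by one of the two Moser-type forms
\beqs
|FG|_{H^s}\lesssim |F|_{H^{1^+}}|G|_{H^s},\qquad |FG|_{H^s}\lesssim |F|_{H^s}|G|_{H^{1^+}},
\eeqs
both available for every $s\in[\f12,k+\f12]$ via Lemma \ref{lempro-com} (with Sobolev embedding supplying the $L^{\infty}$ bound for $s\geq 1$). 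When $l\leq[\f{k}{2}]-1$, the first form gives $|(\ep\p_t)^{l}f|_{H^{1^+}}|(\ep\p_t)^{j-l}g|_{H^{k+\f12-j}}\leq|f|_{\tilde H^{[\f{k}{2}]^{+}}}|g|_{\tilde H^{k+\f12}}$; when $l\geq[\f{k}{2}]$, automatically $j-l\leq[\f{k+1}{2}]$, so the second form gives $|(\ep\p_t)^{l}f|_{H^{k+\f12-j}}|(\ep\p_t)^{j-l}g|_{H^{1^+}}\leq|f|_{\tilde H^{k+\f12}}|g|_{\tilde H^{[\f{k+1}{2}]+1^{+}}}$. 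Summing over $j,l$ and absorbing the binomial factors produces \eqref{rough-product-bdry}.

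For \eqref{rough-com-bdry}, since $f,g$ depend only on the boundary variables, $Z^{\alpha}$ reduces to a monomial in $\ep\p_t,\p_{y_1},\p_{y_2}$, and the Leibniz rule gives
\beqs
[Z^{\alpha},f]g=\sum_{0<\beta\leq\alpha}\binom{\alpha}{\beta}\,Z^{\beta}f\cdot Z^{\alpha-\beta}g.
\eeqs
Each summand I would estimate in $H^{\f12}(\mR^2)$ by \eqref{product-R2} at $s=\f12$, splitting on $|\beta|$ by the same rule: if $|\beta|\leq[\f{k}{2}]-1$ I place the $H^{\f12}$--norm on $Z^{\alpha-\beta}g$ (yielding the first term on the right of \eqref{rough-com-bdry}); if $|\beta|\geq[\f{k}{2}]$ then $|\alpha-\beta|\leq[\f{k+1}{2}]$ and I place it on $Z^{\beta}f$ (yielding the second term). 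The constraint $|\beta|\geq 1$ is harmless since $\beta\neq 0$ in the commutator sum.

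No real difficulty is anticipated beyond this case-analysis bookkeeping: everything follows from Lemma \ref{lempro-com}, Leibniz, and the elementary identity $k-[\f{k}{2}]=[\f{k+1}{2}]$ valid for both parities of $k$. That identity is precisely what ensures the two ``low'' exponents $[\f{k}{2}]^{+}$ and $[\f{k+1}{2}]+1^{+}$ in the statement are sharp enough to accommodate every admissible index pair in the Leibniz expansion.
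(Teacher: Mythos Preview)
Your argument for \eqref{rough-com-bdry} is correct and coincides with the paper's: a full Leibniz expansion in $Z^{\alpha}$ followed by \eqref{product-R2} at $s=\tfrac12$, with the dichotomy on $|\beta|$.

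For \eqref{rough-product-bdry}, however, there is a genuine gap. You claim that both one-sided forms
\[
|FG|_{H^s}\lesssim |F|_{H^{1^+}}|G|_{H^s},\qquad |FG|_{H^s}\lesssim |F|_{H^s}|G|_{H^{1^+}}
\]
are available for all $s\in[\tfrac12,k+\tfrac12]$. This is false once $s>1^{+}$: take $G\in C_c^{\infty}(\mR^2)$ with $G\equiv 1$ on a ball and $F\in H^{1^+}\setminus H^s$ supported in that ball; then $FG=F\notin H^s$. What \eqref{classical-product-1} together with Sobolev embedding actually gives for $s\geq 1$ is only the \emph{two-sided} bound $|FG|_{H^s}\lesssim |F|_{H^s}|G|_{H^{1^+}}+|F|_{H^{1^+}}|G|_{H^s}$, so for each pair $(j,l)$ both terms are forced; your dichotomy on $l$ alone then no longer routes every contribution to the correct side of \eqref{rough-product-bdry}. (A more involved case analysis, splitting simultaneously on $l$ and on $j-l$, does succeed, but that is not what you wrote.)

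The paper sidesteps this entirely: it expands Leibniz in the \emph{full} $Z^{\alpha}$ (time \emph{and} spatial tangential derivatives), using the equivalence $|\,\cdot\,|_{\tilde{H}^{k+1/2}}\sim\sum_{|\alpha|\le k,\,\alpha_3=0}|Z^{\alpha}\cdot|_{H^{1/2}}$, and then applies \eqref{product-R2} only at the fixed level $s=\tfrac12$, where the one-sided form is genuinely available. This is exactly the reduction you already use for \eqref{rough-com-bdry}; reusing it for \eqref{rough-product-bdry} closes the gap with no extra work.
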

\begin{proof}
For any $|\alpha|\leq k,$ we write
\beq\label{usefulidentity}
Z^{\alpha}(fg)(t)
=\bigg(\sum_{|\beta|\leq [\f{|\alpha|}{2}]-1}+\sum_{|\alpha-\beta|\leq [\f{|\alpha|+1}{2}]}\bigg)Z^{\beta}f(t) Z^{\alpha-\beta}g(t)
\eeq
Inequality \eqref{rough-product-bdry} can then be derived from product estimate \eqref{product-R2}. The proof of  \eqref{rough-com-bdry} follows in the same way. 
\end{proof}
 The following (crude) product estimates in $L_t^{\infty}\cH^{j,l}$ will be useful for instance in the elliptic estimates. 
\begin{lem}\label{lemcrudepro}
Let $Z^{\alpha}=(\ep\pt)^j \cZ^{\alpha'}$ with $\cZ=(Z_1,Z_2,Z_3), |\alpha'|\leq l=k-j, k\geq 2.$ One has the crude estimates: for any integer $n \in [0 , k-1]$
\begin{equation}\label{crudepro}
\|(fg)(t)\|_{\cH^{j,l}}\leq \|f(t)\|_{\cH^{j,l}} \il g\il_{n,\infty,t}+\|g(t)\|_{\cH^{j,l}}\il f\il_{k-n-1,\infty,t},\\ 
\end{equation}
\begin{equation}\label{crudecom}
\begin{aligned}
\|[Z^{\alpha}, f]g(t)\|_{L^2(\mS)}
&\lesssim \bigg(\sum_{\tiny
\begin{array}{l}
    j'\leq j,l'\leq l,  \\
   j'+l'\leq k-n
\end{array}
}\|f(t)\|_{\cH^{j',l'}}\bigg)\il g\il_{n,\infty,t}\\
&\,+\big(\|g(t)\|_{\cH^{j-1,l}}+\|g(t)\|_{\cH^{j,l-1}}\big)
\il f\il_{k-n-1,\infty,t}.
\end{aligned}
\end{equation}
\end{lem}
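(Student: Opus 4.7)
The plan is to reduce both estimates to the standard Leibniz expansion for $Z^{\alpha}$ acting on a product. Since $\varepsilon\partial_t$, $Z_1$, $Z_2$, $Z_3=\phi(z)\partial_z$ are first-order derivations whose coefficients depend only on $z$, iterating the Leibniz rule yields, for every multi-index $\alpha=(\alpha_0,\alpha')$ with $\alpha_0\le j$ and $|\alpha'|\le l$,
\[
Z^{\alpha}(fg)=\sum_{\beta\le\alpha}\binom{\alpha}{\beta}\,Z^{\beta}f\cdot Z^{\alpha-\beta}g,
\]
where $\beta\le\alpha$ is understood componentwise and the multinomial coefficient is the product of the corresponding one-dimensional ones. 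In particular $\beta_0\le j$, $|\beta'|\le l$, and the same bounds hold for $\alpha-\beta$, so every factor remains anisotropically controlled.

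For the product estimate \eqref{crudepro} I would split each Leibniz term according to the size of $|\beta|$. When $|\beta|\le k-n-1$, the bound $\|Z^{\beta}f\|_{L^\infty}\le\il f\il_{k-n-1,\infty,t}$ is available, while $Z^{\alpha-\beta}g$ still has time-order $\le j$ and space-order $\le l$, so $\|Z^{\alpha-\beta}g\|_{L^2}\le\|g(t)\|_{\cH^{j,l}}$. In the complementary regime $|\alpha-\beta|\le n$ (equivalently $|\beta|\ge k-n$) the roles of $f$ and $g$ are swapped, producing $\|f(t)\|_{\cH^{j,l}}\,\il g\il_{n,\infty,t}$. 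Summing the resulting Hölder bounds over $\alpha_0\le j$, $|\alpha'|\le l$ gives \eqref{crudepro}.

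For the commutator \eqref{crudecom} I would apply the same expansion, now restricted to $|\beta|\ge 1$. The second term comes from the regime $|\beta|\le k-n-1$: since at least one component of $\beta$ is nonzero, either $\beta_0\ge 1$ (so that $Z^{\alpha-\beta}g\in\cH^{j-1,l}$) or $|\beta'|\ge 1$ (so that $Z^{\alpha-\beta}g\in\cH^{j,l-1}$), giving the factor $\|g(t)\|_{\cH^{j-1,l}}+\|g(t)\|_{\cH^{j,l-1}}$ paired with $\il f\il_{k-n-1,\infty,t}$. In the complementary regime $|\alpha-\beta|\le n$, one uses $\|Z^{\alpha-\beta}g\|_{L^\infty}\le\il g\il_{n,\infty,t}$ and collects the remaining $L^2$ pieces of $Z^{\beta}f$ by their time and space order, producing the displayed sum of $\|f(t)\|_{\cH^{j',l'}}$ indexed by $(j',l')=(\beta_0,|\beta'|)$ satisfying $j'\le j$, $l'\le l$, together with the natural constraint inherited from $|\beta|\le k$.

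The only obstacle is bookkeeping: one must verify that the anisotropic splitting is preserved under Leibniz, which amounts to checking that each pair $(\beta_0,|\beta'|)$ and $(\alpha_0-\beta_0,|\alpha'-\beta'|)$ respects the bounds entering the two norms on the right. This is immediate from $\beta\le\alpha$ and the definition \eqref{seminorm} of $\cH^{j,l}$, so there is no real analytic difficulty; both inequalities reduce to the routine Leibniz$+$Hölder argument adapted to the conormal anisotropy.
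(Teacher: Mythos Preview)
Your argument is correct and is precisely the approach the paper has in mind: the lemma is stated without proof there, but the Leibniz identity you use is exactly the one displayed in \eqref{usefulidentity} for the preceding corollary, so the intended proof is the same Leibniz--H\"older split you carry out.

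One small point of bookkeeping: in the commutator estimate, after restricting to $|\alpha-\beta|\le n$ for the first term, the constraint you obtain on $(j',l')=(\beta_0,|\beta'|)$ is $j'+l'=|\beta|\ge|\alpha|-n$, together with $j'\le j$, $l'\le l$ (hence $j'+l'\le k$). You summarize this as ``the natural constraint inherited from $|\beta|\le k$,'' which is fine, but note that the paper's displayed range $j'+l'\le k-n$ appears to have the inequality reversed; the Leibniz split naturally produces $j'+l'\ge k-n$ once the overlap with the other regime $|\beta|\le k-n-1$ is removed. Since $\|f\|_{\cH^{j',l'}}$ is monotone in $(j',l')$, the sum is in any case dominated by the single term $\|f\|_{\cH^{j,l}}$, which is how the estimate is actually used later (e.g.\ in the elliptic estimates following \eqref{sec3:eq1}), so this discrepancy does not affect your argument.
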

We also have the following composition estimates:
 \begin{cor}
 Suppose that $\psi\in C^0(Q_t)\cap L_t^2H_{co}^m$ with 
 $$A_1\leq  \psi(t,x)\leq A_2, \quad \forall(t,x)\in Q_t.$$
 Let
 $F(\cdot):[A_1,A_2]\rightarrow \mathbb{R}$ be a smooth function satisfying 
 $$\sup_{s\in [A_1,A_2],j\leq m}|F^{(j)}|(s)\leq B.$$
 Then we have the composition estimate:
 \beq\label{composition}
 \|F(\psi(\cdot,\cdot))-F(0)\|_{L_t^pH_{co}^m}\leq  \Lambda(B,\il \psi\il_{[\f{m}{2}],\infty,t})\|\psi\|_{L_t^pH_{co}^m},
 \eeq
\end{cor}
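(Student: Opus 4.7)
The plan is to apply the conormal vector fields directly to $F(\psi)-F(0)$, expand by Fa\`a di Bruno's formula, and then use the crude product estimates \eqref{crudepro}--\eqref{crudecom} together with the pointwise bound on the derivatives of $F$ to close.

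Since each $Z_i$ is a smooth vector field, for any multi-index $\alpha$ with $|\alpha|\leq m$ the chain rule gives a finite sum
\begin{equation*}
 Z^{\alpha}\bigl(F(\psi)-F(0)\bigr)=\sum_{k=1}^{|\alpha|}\sum_{\substack{\alpha_1+\cdots+\alpha_k=\alpha\\ |\alpha_i|\geq 1}}c_{\alpha_1,\dots,\alpha_k}\,F^{(k)}(\psi)\prod_{i=1}^{k}Z^{\alpha_i}\psi,
\end{equation*}
with combinatorial coefficients $c_{\alpha_1,\dots,\alpha_k}$ independent of $F$ and $\psi$. Using $A_1\leq \psi\leq A_2$ and the assumed uniform bound $\sup_{s,\,j\leq m}|F^{(j)}(s)|\leq B$, the factor $F^{(k)}(\psi)$ is bounded pointwise by $B$, so it suffices to estimate the $L^p_tL^2$-norm of each multilinear product $\prod_i Z^{\alpha_i}\psi$.

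In every partition $\alpha_1+\cdots+\alpha_k=\alpha$ with $|\alpha|\leq m$, at most one $\alpha_i$ can satisfy $|\alpha_i|>[m/2]$; call it $\alpha_{i_0}$ (choosing arbitrarily if the maximum is attained several times). Then all remaining factors have $|\alpha_i|\leq [m/2]$, and the crude product inequality \eqref{crudepro} (iterated $k-1$ times) yields
\begin{equation*}
\Bigl\|\prod_{i=1}^k Z^{\alpha_i}\psi\Bigr\|_{L^p_tL^2}\leq \|Z^{\alpha_{i_0}}\psi\|_{L^p_tL^2}\prod_{i\neq i_0}\il Z^{\alpha_i}\psi\il_{0,\infty,t}\leq \il \psi\il_{[\frac{m}{2}],\infty,t}^{\,k-1}\|\psi\|_{L^p_t H_{co}^{m}}.
\end{equation*}
Summing over the finitely many partitions and over $k=1,\dots,m$ produces, after collecting constants, a bound by
\begin{equation*}
 B\,\Bigl(\,\sum_{k=1}^{m}C_{m,k}\il \psi\il_{[\frac{m}{2}],\infty,t}^{\,k-1}\Bigr)\|\psi\|_{L^p_tH_{co}^{m}},
\end{equation*}
which is of the announced form $\Lambda\bigl(B,\il\psi\il_{[m/2],\infty,t}\bigr)\|\psi\|_{L^p_tH_{co}^{m}}$. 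Adding this control over $|\alpha|\leq m$ closes the estimate; the case $\alpha=0$ is handled by writing $F(\psi)-F(0)=\psi\int_0^1 F'(t\psi)\,dt$, which has pointwise norm at most $B|\psi|$.

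The only potentially delicate point is the derivative-counting in the Fa\`a di Bruno expansion: one must verify that only one factor can carry more than $[m/2]$ derivatives, which is immediate from $|\alpha_1|+\cdots+|\alpha_k|=|\alpha|\leq m$ since two such factors would contribute strictly more than $m$ derivatives. Everything else is a direct application of the already-proven product estimate.
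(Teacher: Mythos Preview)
Your proof is correct and is the standard argument; the paper itself does not give a proof of this corollary, but your approach---Fa\`a di Bruno, pigeonhole on the derivative counts, then placing the single high-order factor in $L^p_tL^2$ and the rest in $L^\infty_{t,x}$---is exactly what the placement of the statement (immediately after the crude product estimates) suggests. One small remark: the step where you bound $\|\prod_i Z^{\alpha_i}\psi\|_{L^p_tL^2}$ does not actually require invoking \eqref{crudepro}; it is just H\"older's inequality in space followed by $L^p$ in time, so you could streamline the citation there.
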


 \begin{cor}\label{corg12}
 Let $g_1(\ep\sigma),g_2(\ep\sigma)$ defined in \eqref{defofg12} and assume Property \eqref{preassumption1} and Assumption \eqref{preassumption}  hold.
 Then one has the following 
 estimates: for $j=1,2$
  \beq\label{esofg12-3}
 \|g_j(\ep\sigma)-g_j(0)\|_{L_t^pH_{co}^m}\lesssim \ep \Lambda\big (\f{1}{c_0}, \il\sigma\il_{[\f{m}{2}],\infty,t}\big)\|\sigma\|
 _{L_t^pH_{co}^m}.
  \eeq
 \beq\label{esofg12-1}
 \|Z g_j\|_{L_t^p\cH^{m-1}}\leq \ep \Lambda\big(\f{1}{c_0},\il \sigma\il_{[\f{m}{2}],\infty,t}\big)\|(\sigma,Z\sigma)\|_{L_t^p\cH^{m-1}}, 
 \eeq
 \beq\label{esofg12-2}
 \|Z g_j\|_{L_t^pH_{co}^{m-1}}\leq \ep \Lambda\big(\f{1}{c_0},\il \sigma\il_{[\f{m}{2}],\infty,t}\big)\|\sigma\|_{L_t^pH_{co}^m},
 \eeq

 \end{cor}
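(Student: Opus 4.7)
The three estimates share a common strategy: apply the chain rule to $g_j(\varepsilon\sigma)$, and then combine the composition estimate \eqref{composition} with the product estimate \eqref{crudepro} (and its straightforward $H_{co}^{m-1}$-analog obtained by summing \eqref{crudepro} over $j+l\le m-1$). The crucial structural point is that by \eqref{preassumption} the argument $\varepsilon\sigma$ remains uniformly in the fixed compact interval $[-3c_1\bar P,3\bar P/c_1]$, on which $g_j$ and all of its derivatives are bounded by a constant $B=B(c_0)$ (the function $g_j$ is smooth by \eqref{defofg12} and \eqref{preassumption1}). Throughout, I absorb any loss of a factor $\varepsilon\le 1$ into the polynomial $\Lambda(1/c_0,\cdot)$.

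For \eqref{esofg12-3}, I plan to apply \eqref{composition} directly with $F=g_j$ (so that $F(0)=g_j(0)$ and $|F^{(k)}|\le B$ on the relevant interval) and $\psi=\varepsilon\sigma$. Since $Z^{\alpha}(\varepsilon\sigma)=\varepsilon Z^{\alpha}\sigma$, one has $\|\varepsilon\sigma\|_{L_t^p H_{co}^m}=\varepsilon\|\sigma\|_{L_t^p H_{co}^m}$ and $\il\varepsilon\sigma\il_{[m/2],\infty,t}\le\il\sigma\il_{[m/2],\infty,t}$, and the desired inequality follows immediately.

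For \eqref{esofg12-1} and \eqref{esofg12-2}, the chain rule gives
$Z\,g_j(\varepsilon\sigma)=\varepsilon\,g_j'(\varepsilon\sigma)\,Z\sigma$
(with an additional factor $\varepsilon$ already absorbed into $Z_0=\varepsilon\p_t$, which is harmless). I split $g_j'(\varepsilon\sigma)=g_j'(0)+\bigl[g_j'(\varepsilon\sigma)-g_j'(0)\bigr]$. The constant-coefficient piece $\varepsilon g_j'(0)\,Z\sigma$ contributes at most $\varepsilon B\|Z\sigma\|_{X}$ for $X\in\{\cH^{m-1},H_{co}^{m-1}\}$, which is exactly of the requested form. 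For the remainder $\varepsilon[g_j'(\varepsilon\sigma)-g_j'(0)]\,Z\sigma$, I apply the product estimate \eqref{crudepro} with $f=Z\sigma$ and $g=g_j'(\varepsilon\sigma)-g_j'(0)$, controlling the two resulting contributions as follows: the pointwise factor is handled by the Lipschitz bound $|g_j'(\varepsilon\sigma)-g_j'(0)|\le\varepsilon B|\sigma|$ together with its chain-rule consequences for conormal derivatives, producing $\il g_j'(\varepsilon\sigma)-g_j'(0)\il_{n,\infty,t}\le\varepsilon\Lambda(1/c_0,\il\sigma\il_{n,\infty,t})$; the Sobolev factor is handled by applying \eqref{composition} (or, in the $\cH$-case, its analog obtained by iterating \eqref{crudepro} on the integral representation $g_j'(\varepsilon\sigma)-g_j'(0)=\varepsilon\sigma\!\int_0^1 g_j''(\tau\varepsilon\sigma)\,\d\tau$), giving $\|g_j'(\varepsilon\sigma)-g_j'(0)\|_{X}\le\varepsilon\Lambda\|\sigma\|_{X}$. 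The overall $\varepsilon$ prefactor is thus preserved (with an extra $\varepsilon$ to spare on the remainder).

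The main technical obstacle is the index bookkeeping in \eqref{crudepro}: one must pick the splitting parameter $n$ so that the $L^{\infty}$-type norms in the resulting bounds are of order at most $[m/2]$, so that they can be absorbed into $\Lambda(1/c_0,\il\sigma\il_{[m/2],\infty,t})$ rather than into larger norms. A secondary point is that since the composition estimate \eqref{composition} is stated in $L_t^pH_{co}^m$, the $\cH^{m-1}$-estimate \eqref{esofg12-1} requires either the iterated argument just sketched or a minor extension of \eqref{composition} to the anisotropic scale; this is essentially automatic because $\cH^{j,l}\subset H_{co}^{j+l}$ in regard to which derivatives are counted, and the proof of \eqref{composition} only uses a Faà di Bruno expansion together with a product estimate, which is available in both scales via \eqref{crudepro}.
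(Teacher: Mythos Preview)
Your proposal is correct and follows essentially the same approach as the paper: direct application of the composition estimate \eqref{composition} for \eqref{esofg12-3}, and for \eqref{esofg12-1}--\eqref{esofg12-2} the chain rule followed by the product estimate \eqref{crudepro} together with \eqref{composition}. The paper's proof is terse (it just says ``apply \eqref{crudepro} for $n=[\frac{m-1}{2}]-1$ and use again \eqref{composition}''), so your explicit splitting $g_j'(\varepsilon\sigma)=g_j'(0)+[g_j'(\varepsilon\sigma)-g_j'(0)]$ and your remarks on the index bookkeeping simply spell out what is implicit there; with the choice $n=[\frac{m-1}{2}]-1$ and the assignment $f=g_j'(\varepsilon\sigma)-g_j'(0)$, $g=Z\sigma$ in \eqref{crudepro}, both $L^\infty$ indices indeed land at $[\frac{m}{2}]$ as required.
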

\begin{proof}
Inequality \eqref{esofg12-3} is a direct consequence of the composition estimate \eqref{composition}. To get \eqref{esofg12-1}, \eqref{esofg12-2},
one can apply \eqref{crudepro} for 
$n=[\f{m-1}{2}]-1$ and use again \eqref{composition}.
\end{proof}
The next lemma states the generalized 
product estimate and commutator estimate \cite{MR1070840}.
\begin{lem}\label{prodcomu}
For $|\alpha|\leq m, \alpha_0=0$
 we have the product estimate  and commutator estimates:
 \ben\label{product}
 \|Z^{\alpha}(fg)\|_{\ltx}
 \lesim \|f\|_{L_t^2\cH^{0,m}} \il g\il _{0,\infty,t}+
 \|g\|_{L_t^2\cH^{0,m}}\il f\il _{0,\infty,t},
 \een
 \ben\label{commutator}\|[Z^{\alpha},f]g\|_{\ltx}\lesim\|  f\|_{L_t^2\cH^{0,m}}\il g\il _{0,\infty,t}+\|g\|_{L_t^2\cH^{0,m-1}}\il   f\il_{1,\infty,t}.
 \een
\end{lem}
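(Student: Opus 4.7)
My plan is to derive both estimates from a Leibniz expansion combined with a conormal Gagliardo--Nirenberg interpolation, following the framework of \cite{MR1070840}. Since $\alpha_0=0$, no time derivatives appear, and expanding
\beqs
Z^\alpha(fg) = \sum_{\beta\le\alpha}\binom{\alpha}{\beta}Z^\beta f\cdot Z^{\alpha-\beta}g,\qquad [Z^\alpha,f]g = \sum_{0<\beta\le\alpha}\binom{\alpha}{\beta}Z^\beta f\cdot Z^{\alpha-\beta}g,
\eeqs
everything reduces to controlling a generic piece $\|Z^\beta f\cdot Z^{\alpha-\beta}g\|_{\ltx}$ under the constraints $0\le|\beta|\le|\alpha|\le m$ (respectively $1\le|\beta|\le|\alpha|\le m$ for the commutator).

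For the product estimate \eqref{product}, I would split the sum according to whether $|\beta|\le [m/2]$ or $|\alpha-\beta|\le [m/2]$, placing the factor of lower differential order in $L^\infty$ and the one of higher order in $L^2$. The endpoint cases $|\beta|=0$ and $|\beta|=|\alpha|$ reproduce exactly the right-hand side. For intermediate orders, I would invoke the conormal Gagliardo--Nirenberg inequality
\beqs
\|Z^\gamma h\|_{L^{2m/|\gamma|}(\mS)}\,\lesssim\,\|h\|_{L^\infty(\mS)}^{1-|\gamma|/m}\,\|h\|_{\cH^{0,m}}^{|\gamma|/m},\qquad 0\le|\gamma|\le m,
\eeqs
pointwise in $t$, combined with H\"older in time. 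The two mixed factors then recombine into the endpoint products $\|f\|_{L_t^2\cH^{0,m}}\il g\il_{0,\infty,t}$ and $\il f\il_{0,\infty,t}\|g\|_{L_t^2\cH^{0,m}}$, yielding \eqref{product}.

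For the commutator \eqref{commutator}, the same argument applies with $|\beta|\ge1$. The endpoint $|\beta|=|\alpha|$ produces $\|Z^\alpha f\cdot g\|_{\ltx}\le\|f\|_{L_t^2\cH^{0,m}}\il g\il_{0,\infty,t}$, while the endpoint $|\beta|=1$ produces $\|Z_i f\cdot Z^{\alpha-e_i}g\|_{\ltx}\le \il f\il_{1,\infty,t}\|g\|_{L_t^2\cH^{0,m-1}}$. Interior terms $1<|\beta|<|\alpha|$ absorb into these two via the same Gagliardo--Nirenberg bound; having one derivative of $f$ available in $L^\infty$ is precisely what allows to interpolate the intermediate norms $\|Z^{\beta'}f\|_{L^\infty}$ (with $|\beta'|=|\beta|-1\ge 0$) between $\il f\il_{1,\infty,t}$ and $\|f\|_{\cH^{0,m}}$.

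The main obstacle I anticipate is the careful verification of the conormal Gagliardo--Nirenberg inequality on the product domain $\mS=\mR^2\times[-1,0]$: unlike the Euclidean case, the normal direction is measured by the degenerate vector field $Z_3=\phi(z)\partial_z$ with weight $\phi(z)=z(1+z)/(2-z)^2$ vanishing at both $z=0$ and $z=-1$. This vanishing is precisely what kills boundary terms in the iterated integration by parts underlying the interpolation, so the argument of \cite{MR1070840} carries over directly; once this building block is in place, both \eqref{product} and \eqref{commutator} follow without further difficulty.
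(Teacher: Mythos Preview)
Your proposal is correct and follows exactly the approach the paper intends: the paper gives no proof for this lemma and simply cites \cite{MR1070840}, whose argument is precisely the Leibniz expansion plus conormal Gagliardo--Nirenberg interpolation that you outline. One small wording issue: for the commutator, the cleanest way to phrase the interior terms is to write $Z^\beta f = Z^{\beta'}(Z_i f)$ with $|\beta'|=|\beta|-1$ and then apply the Gagliardo--Nirenberg inequality to the pair $(Z_i f, g)$ at order $m-1$ (since $|\beta'|+|\alpha-\beta|\le m-1$); this gives directly the two endpoints $\|Z_i f\|_{L^\infty}\|g\|_{\cH^{0,m-1}}$ and $\|Z_i f\|_{\cH^{0,m-1}}\|g\|_{L^\infty}\le \|f\|_{\cH^{0,m}}\|g\|_{L^\infty}$, which is what you want.
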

We finally state the following Sobolev embedding and trace inequalities whose proofs can be found in Proposition 2.2 of \cite{MR3590375}.
\begin{lem}
For each $t\in [0,T],$ we have:
\begin{equation}\label{soblev-embed}
    \| f(t)\|_{L^{\infty}(\mS)}\lesssim \|(f,\nabla f)(t)\|_{H_{tan}^{s_1}(\mS)}^{\f{1}{2}}\|f(t)\|_{H_{tan}^{s_2}(\mS)}^{\f{1}{2}},\quad  s_1+s_2>2, s_1,s_2\geq 0,
\end{equation}
\beq\label{trace}
\begin{aligned}
&|f(t,\cdot,0)|_{H^s(\mR^2)}+|f(t,\cdot,-1)|_{H^s(\mR^2)}\\
&\lesssim \|\p_z f(t)\|_{H_{tan}^{s-{1}/{2}}(\mS)}^{\f{1}{2}} \|f(t)\|_{H_{tan}^{s+{1}/{2}}(\mS)}^{{1}/{2}}+\| f(t)\|_{H_{tan}^{s+{1}/{2}}(\mS)},\quad s\geq\f{1}{2}.
\end{aligned}
\eeq
where we have used the notation
$\|f(t)\|_{H_{tan}^s(\mS)}=\|\Lambda^s f(t)\|_{L^2(\mS)}.$ 
\end{lem}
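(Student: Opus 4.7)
My plan is to obtain both bounds by reducing them to classical one-dimensional arguments on the strip $[-1,0]$ after extracting the full tangential regularity by Fourier analysis on $\mathbb{R}^2_y$. The common mechanism is: handle the $y$-direction via the standard 2D Sobolev inequality (or Plancherel for the trace), and then the normal $z$-direction via the fundamental theorem of calculus on the bounded interval, exploiting that $L^\infty_z$ and the traces at $z=0,-1$ are controlled by $L^2_z$ plus one derivative $\partial_z$.

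For the embedding \eqref{soblev-embed}, I would first fix $z\in[-1,0]$ and combine Plancherel with the Sobolev interpolation $|g|_{H^{(s_1+s_2)/2}(\mathbb{R}^2)}^2\le |g|_{H^{s_1}(\mathbb{R}^2)}|g|_{H^{s_2}(\mathbb{R}^2)}$ together with the embedding $H^{s_0}(\mathbb{R}^2)\hookrightarrow L^\infty(\mathbb{R}^2)$ valid for any $s_0>1$; since $(s_1+s_2)/2>1$ by hypothesis, this yields pointwise in $z$ the bound $|f(\cdot,z)|_{L^\infty(\mathbb{R}^2)}^2\lesssim |f(\cdot,z)|_{H^{s_1}(\mathbb{R}^2)}\cdot|f(\cdot,z)|_{H^{s_2}(\mathbb{R}^2)}$. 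To pass to $L^\infty_z$, I would apply the one-dimensional embedding $\sup_z\phi\le\int_{-1}^0\phi+\int_{-1}^0|\phi'|$ to $\phi(z)=|f(\cdot,z)|_{H^{s_1}}|f(\cdot,z)|_{H^{s_2}}$, using the pointwise estimate $\bigl|\partial_z|f(\cdot,z)|_{H^s}\bigr|\le|\partial_z f(\cdot,z)|_{H^s}$ and Cauchy-Schwarz in $z$. This produces three terms: $\|f\|_{H_{tan}^{s_1}}\|f\|_{H_{tan}^{s_2}}$, $\|\partial_z f\|_{H_{tan}^{s_1}}\|f\|_{H_{tan}^{s_2}}$, and $\|f\|_{H_{tan}^{s_1}}\|\partial_z f\|_{H_{tan}^{s_2}}$. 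The first two sit inside $\|(f,\nabla f)\|_{H_{tan}^{s_1}}\|f\|_{H_{tan}^{s_2}}$, while the third is reshaped either by invoking the symmetric counterpart of the inequality (permitted by the symmetry of the hypothesis in $(s_1,s_2)$) or by Young's inequality, giving \eqref{soblev-embed} after taking square roots.

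For the trace \eqref{trace}, I would pick a smooth cutoff $\chi$ on $[-1,0]$ with $\chi(0)=1$ and $\chi(-1)=0$, and start from the identity $|(\Lambda^s f)(y,0)|^2=\int_{-1}^0\partial_z\bigl(\chi(z)|(\Lambda^s f)(y,z)|^2\bigr)\,dz=2\int_{-1}^0\chi\,\Lambda^s f\cdot\Lambda^s\partial_z f\,dz+\int_{-1}^0\chi'|\Lambda^s f|^2\,dz$, then integrate in $y$. The decisive step is to split the Fourier weight as $\langle\xi\rangle^{2s}=\langle\xi\rangle^{s+1/2}\langle\xi\rangle^{s-1/2}$ inside the first integrand and apply Cauchy-Schwarz in $\xi\in\mathbb{R}^2$; this converts the pointwise-in-$z$ estimate into $|f(\cdot,z)|_{H^{s+1/2}}|\partial_z f(\cdot,z)|_{H^{s-1/2}}$, and a further Cauchy-Schwarz in $z$ yields the desired cross term $\|f\|_{H_{tan}^{s+1/2}}\|\partial_z f\|_{H_{tan}^{s-1/2}}$. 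The $z=-1$ trace is treated identically with a cutoff equal to $1$ at $z=-1$ and vanishing at $z=0$.

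The main obstacle is the delicate half-derivative swap in the trace inequality: without the Fourier-weight splitting described above, one obtains only the diagonal bound $\|f\|_{H_{tan}^s}^{1/2}\|\partial_z f\|_{H_{tan}^s}^{1/2}$, losing half a tangential derivative compared with the asymmetric form claimed in \eqref{trace}. The analogous subtlety for \eqref{soblev-embed} lies in the asymmetric placement of $\nabla f$ only on the first factor, which forces one either to rely on the symmetric form of the inequality (swapping the roles of $s_1$ and $s_2$) or to use Young's inequality to recast the cross-term $\|f\|_{H_{tan}^{s_1}}\|\partial_z f\|_{H_{tan}^{s_2}}$ into the desired shape. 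Everything else is routine, but one must take care to keep all constants genuinely dimensionless on the finite interval $[-1,0]$.
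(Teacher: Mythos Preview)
The paper does not prove this lemma; it merely cites Proposition~2.2 of \cite{MR3590375}. Your argument for the trace inequality \eqref{trace} is correct, and in particular the Fourier-weight splitting $\langle\xi\rangle^{2s}=\langle\xi\rangle^{s+1/2}\langle\xi\rangle^{s-1/2}$ is exactly the right mechanism.

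There is, however, a genuine gap in your treatment of \eqref{soblev-embed}. After reaching the three terms $A_1A_2$, $B_1A_2$, $A_1B_2$ (with $A_j=\|f\|_{H_{tan}^{s_j}}$, $B_j=\|\partial_z f\|_{H_{tan}^{s_j}}$), you claim the cross-term $A_1B_2$ can be ``reshaped'' into $(A_1+B_1)A_2$ by Young's inequality or by symmetry. Neither works: Young produces a sum of squares, not a product, and the symmetric counterpart proves a \emph{different} inequality (with $\nabla f$ attached to $s_2$). In fact $A_1B_2$ need not be controlled by $(A_1+B_1)A_2$: take $f=g_1(y)h_1(z)+g_2(y)h_2(z)$ with $\hat g_1,\hat g_2$ of disjoint frequency support, $g_1$ at low frequency with $\|h_1'\|_{L^2}\ll 1$, and $g_2$ at very high frequency with $\|h_2'\|_{L^2}\gg 1$ but $|g_2|_{L^2}$ tiny. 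Then $B_2/A_2\sim\|h_2'\|$ can be made arbitrarily large while $B_1/A_1$ stays bounded, so $A_1B_2\gg(A_1+B_1)A_2$.

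The fix is precisely the trick you already use for the trace: instead of working with $\phi(z)=|f(\cdot,z)|_{H^{s_1}}|f(\cdot,z)|_{H^{s_2}}$, set $\psi(z)=|f(\cdot,z)|_{H^{(s_1+s_2)/2}}^2=\int\langle\xi\rangle^{s_1+s_2}|\hat f(\xi,z)|^2\,d\xi$. Then $|\psi'(z)|\le 2\int\langle\xi\rangle^{s_1+s_2}|\hat f||\partial_z\hat f|\,d\xi$, and splitting the weight \emph{asymmetrically} as $\langle\xi\rangle^{s_1}\cdot\langle\xi\rangle^{s_2}$ before Cauchy--Schwarz gives $|\psi'(z)|\le 2|\partial_z f(\cdot,z)|_{H^{s_1}}|f(\cdot,z)|_{H^{s_2}}$, which after integrating in $z$ yields only the good term $B_1A_2$. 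The rest of your argument then goes through. The moral is that one must keep the Fourier integral open when differentiating in $z$; once you pass to the product of two norms, the freedom to place the derivative on the chosen factor is lost.
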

\subsection{Regularity of the extension and some further commutator estimates.}
We first show that the 
diffeomorphism $\Phi$ has the same regularity as $u$ in $\mS,$
which stems from the fact that
the extension function $\vp$ gains
half a space derivative with respect to $h.$ Before stating the main estimates, let us recall that $\vp$ and $\eta$ are defined in \eqref{defvpep},
\eqref{defeta}.
\begin{lem}\label{exth}
For any integers $ j, k\geq 0,$ we have the following estimates:
\begin{equation}\label{gradexth-1}
 \|[(\ep\pt)^j\nabla\vp](t)\|_{H^{k}(\mS)}\lesssim |[(\ep\pt)^j h](t,\cdot)|_{H^{k+\f{1}{2}}(\mathbb{R}^2)},
\end{equation}
\begin{equation}\label{gradexth}
 \|\nabla\vp\|_{L_t^2\cH^{j,k}(\mS)}\lesssim | h|_{L_t^2\tilde{H}^{k+j+\f{1}{2}}(\mathbb{R}^2)}. 
\end{equation}
Moreover, we have the $L_{t,x}^{\infty}$ estimates for $\eta:$ 
\begin{equation}\label{exthLinfty}
    \|[(\ep\pt)^j\eta](t)\|_{W^{k,\infty}(\mS)}\lesssim |[(\ep\pt)^j h](t)|_{W^{k,\infty}(\mathbb{R}^2)}\lesssim |h|_{k+j,\infty,t}.
\end{equation}
\begin{proof}
These estimates can be deduced from Young's inequality and the following estimates:
\beqs
\int_{-1}^0 e^{-2\delta_0 z^2\langle \xi\rangle^2}\d z\lesssim \delta_0^{-\f{1}{2}}\langle  \xi\rangle^{-1};\quad \|\cF^{-1}(e^{-\delta_0 z^2\langle \xi\rangle^2})\|_{L_z^{\infty}L_y^1}\lesssim 1.
\eeqs
One can refer to Proposition 3.1 of \cite{MR3590375} for the detail of the case $j=0.$ The case for $j>0$ follows from the observation that time derivatives commute with the actions $\vp(h)$ and $\eta(h).$
\end{proof}
\end{lem}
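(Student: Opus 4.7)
My plan is to reduce all three estimates to the case $j=0$ by noting that the time variable enters only through $h$ on the right-hand side of the defining relation $(\mathcal{F}\eta)(t,\xi,z)=e^{-\delta_0(1+|\xi|^2)z^2}(\mathcal{F}h)(t,\xi)$. Since the Fourier multiplier $M(\xi,z):=e^{-\delta_0\langle\xi\rangle^2 z^2}$ is independent of $t$, the operation $h\mapsto\eta$ commutes with $(\varepsilon\partial_t)^j$, so that $(\varepsilon\partial_t)^j\eta$ is obtained from $(\varepsilon\partial_t)^j h$ by exactly the same multiplier. Thus once $j=0$ is established for each of \eqref{gradexth-1}, \eqref{gradexth}, \eqref{exthLinfty}, the general case is automatic; for \eqref{gradexth} one also integrates the pointwise-in-$t$ estimate in $t$.

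For \eqref{gradexth-1} with $j=0$, after writing $\nabla\varphi=(\nabla_y\eta\cdot(1+z),\,1+\eta+(1+z)\partial_z\eta)^t$, I would use Plancherel in the $y$-variable to reduce each tangential/normal derivative to a weighted $L^2$ integral. The tangential derivatives contribute Fourier symbols $(i\xi)^\alpha$ applied to $M$, giving the bound
\begin{equation*}
\|\partial_y^\alpha\nabla_y\eta(t)\|_{L^2(\mathcal{S})}^2\;\lesssim\;\int_{\mathbb{R}^2}|\xi|^{2|\alpha|+2}|\widehat{h}(t,\xi)|^2\Big(\int_{-1}^{0}e^{-2\delta_0\langle\xi\rangle^2 z^2}\,dz\Big)d\xi,
\end{equation*}
and the inner $z$-integral is controlled by $C\delta_0^{-1/2}\langle\xi\rangle^{-1}$, producing the advertised half-derivative gain. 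A similar computation works for $\partial_z\eta$, where differentiating $M$ in $z$ brings down a factor $\langle\xi\rangle^2 z$ whose squared integral against $e^{-2\delta_0\langle\xi\rangle^2 z^2}$ still scales like $\langle\xi\rangle^{-3}$, again giving a half derivative gain overall. Iterating these two computations, I would control all mixed tangential and normal derivatives up to order $k$, obtaining $\|\nabla\varphi(t)\|_{H^k(\mathcal{S})}\lesssim|h(t)|_{H^{k+1/2}(\mathbb{R}^2)}$. The estimate \eqref{gradexth} for the mixed conormal norm follows from the same spectral bound, using that $\mathcal{Z}^{\alpha'}$ with $|\alpha'|\leq k$ maps into sums of expressions of the form (polynomial in $z$)$\cdot\partial^\beta$, $|\beta|\leq k$, all bounded by the same Gaussian computation, and then integrating in $t$.

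For the pointwise bound \eqref{exthLinfty}, I would express $\eta(t,\cdot,z)=G_z *_y h(t,\cdot)$ with $G_z:=\mathcal{F}^{-1}(e^{-\delta_0\langle\xi\rangle^2 z^2})$ and use Young's inequality $\|\eta(t,\cdot,z)\|_{L^\infty_y}\leq\|G_z\|_{L^1_y}\|h(t,\cdot)\|_{L^\infty_y}$. Since $G_z$ is (up to a Schwartz factor) a rescaled Gaussian in $y$, its $L^1_y$ norm is bounded uniformly in $z\in[-1,0]$, which gives $\|\eta(t)\|_{L^\infty(\mathcal{S})}\lesssim|h(t)|_{L^\infty(\mathbb{R}^2)}$. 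Spatial derivatives $\partial_y^\alpha\partial_z^\beta$ acting on $\eta$ can be represented as convolution of $\partial_y^\alpha h$ with $\partial_z^\beta G_z$, and the same Young inequality applies provided $\|\partial_z^\beta G_z\|_{L^\infty_z L^1_y}<\infty$, which is straightforward from the explicit Gaussian form. The time-derivative case $j>0$ again follows since $(\varepsilon\partial_t)^j$ passes through the convolution.

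The only genuine obstacle is keeping track of the constants in the two Gaussian integrals
\begin{equation*}
\int_{-1}^{0}e^{-2\delta_0\langle\xi\rangle^2 z^2}dz\lesssim\delta_0^{-1/2}\langle\xi\rangle^{-1},\qquad\|\mathcal{F}^{-1}(e^{-\delta_0\langle\xi\rangle^2 z^2})\|_{L^\infty_z L^1_y}\lesssim 1,
\end{equation*}
which are the two ingredients flagged by the authors; once these are checked by a change of variable $u=\sqrt{2\delta_0}\,\langle\xi\rangle z$, the rest is bookkeeping. Everything else is a direct application of Plancherel, Young, and the observation that $[(\varepsilon\partial_t),M(\xi,z)]=0$.
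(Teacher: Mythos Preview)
Your approach is essentially identical to the paper's: both identify the two key Gaussian bounds (the half-derivative gain from $\int_{-1}^0 e^{-2\delta_0\langle\xi\rangle^2 z^2}\,dz\lesssim\langle\xi\rangle^{-1}$ and the uniform $L^1_y$ bound on the convolution kernel $G_z$), invoke Young's inequality for the $L^\infty$ part, and observe that $(\varepsilon\partial_t)^j$ commutes with the Fourier multiplier $M(\xi,z)$ to pass from $j=0$ to general $j$.

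There is, however, one small but genuine slip in your treatment of \eqref{exthLinfty}. You write $\partial_y^\alpha\partial_z^\beta\eta=(\partial_z^\beta G_z)*\partial_y^\alpha h$ and assert that $\|\partial_z^\beta G_z\|_{L^\infty_z L^1_y}<\infty$ ``is straightforward from the explicit Gaussian form.'' This is false already for $\beta=1$: since $G_z(y)=e^{-\delta_0 z^2}(4\pi\delta_0 z^2)^{-1}e^{-|y|^2/(4\delta_0 z^2)}$, a direct scaling computation gives $\|\partial_z G_z\|_{L^1_y}\sim |z|^{-1}$ as $z\to 0$. The point is that $\partial_z$ acting on $e^{-\delta_0\langle\xi\rangle^2 z^2}$ produces a factor $\langle\xi\rangle^2 z$, and only one power of $\langle\xi\rangle$ is absorbed by the Gaussian rescaling. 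The correct fix is to split $\langle\xi\rangle^2 z = z + \sum_j (z\xi_j)\cdot\xi_j$, move the extra $\xi_j$ onto $\widehat{h}$ as a $y$-derivative of $h$, and then check that the remaining kernel with symbol $z\xi_j e^{-\delta_0\langle\xi\rangle^2 z^2}$ has $L^\infty_z L^1_y$ norm of order $\sqrt{\delta_0}$. This still yields $\|\partial_z\eta\|_{L^\infty}\lesssim|h|_{W^{1,\infty}}$ (one $z$-derivative costs one $y$-derivative of $h$), and iterating handles higher $\beta$ within the $W^{k,\infty}$ budget. With this correction, the rest of your argument goes through.
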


\begin{lem}\label{lemfpzphi}
 Suppose that:
 $\p_z\vp(t,x)\geq c_0$ for $(t,x)\in [0,T]\times \mS.$ Then for any $k\in\bN,$
 \beq\label{fpzphi1}
 \big\|\f{f}{\p_z\vp}\big\|_{L_t^pH_{co}^k}\lesssim \Lambda\big(\f{1}{c_0},|h|_{[\f{k}{2}]+1,\infty,t}+\il f\il_{[\f{k}{2}],\infty,t}\big)\big(\|f\|_{L_t^pH_{co}^k}+|h|_{L_t^p\tilde{H}^{k+\f{1}{2}}}\big), \quad p=2,+\infty.
 \eeq
\end{lem}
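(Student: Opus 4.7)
The strategy is to separate out the deviation $g:=\frac{1}{\p_z\vp}-1$ from the identity, control $g$ via the composition estimate \eqref{composition} and the gain-of-half-derivative property of the extension \eqref{gradexth-1}--\eqref{gradexth}, and then glue the pieces together with the crude product estimate \eqref{crudepro}. Concretely, write
\[
\frac{f}{\p_z\vp}\;=\;f\;+\;f\cdot g,
\]
so that $\|f/\p_z\vp\|_{L_t^pH_{co}^k}\le \|f\|_{L_t^pH_{co}^k}+\|fg\|_{L_t^pH_{co}^k}$. For $k\ge 2$, applying \eqref{crudepro} with the symmetric choice $n=[k/2]$ (so that the residual index $k-n-1$ is also $\le [k/2]$) and summing over $|\alpha|\le k$ reduces the estimate to controlling $\il g\il_{[k/2],\infty,t}$ and $\|g\|_{L_t^pH_{co}^k}$.

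To handle the pointwise norm, note that $\p_z\vp-1=\eta+(1+z)\p_z\eta$ by \eqref{defvpep}, so \eqref{exthLinfty} gives $\il \p_z\vp-1\il_{j,\infty,t}\lesssim |h|_{j+1,\infty,t}$; combined with $\p_z\vp\ge c_0$ and a finite induction based on $Z(1/\p_z\vp)=-(1/\p_z\vp)^2 Z\p_z\vp$, this yields $\il g\il_{[k/2],\infty,t}\lesssim \Lambda(1/c_0,|h|_{[k/2]+1,\infty,t})$. For the $H_{co}^k$ norm, I would apply the composition estimate \eqref{composition} to $F(s)=\frac{1}{1+s}-1$, which together with all its derivatives is bounded on $[c_0-1,\infty)$ by a polynomial in $1/c_0$; since $F(\p_z\vp-1)=g$ and $F(0)=0$,
\[
\|g\|_{L_t^pH_{co}^k}\;\lesssim\;\Lambda\bigl(1/c_0,\il\p_z\vp-1\il_{[k/2],\infty,t}\bigr)\,\|\p_z\vp-1\|_{L_t^pH_{co}^k}.
\]
Finally, expanding each conormal field $Z_3=\phi(z)\p_z$ into $\p_z$ with smooth bounded coefficients on $[-1,0]$ and applying \eqref{gradexth-1}--\eqref{gradexth} term-by-term to $\eta$ and $(1+z)\p_z\eta$ converts one vertical derivative into a half tangential derivative at the surface, giving $\|\p_z\vp-1\|_{L_t^pH_{co}^k}\lesssim |h|_{L_t^p\tilde{H}^{k+1/2}}$.

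Combining the three ingredients, bounding the resulting products by a single $\Lambda\bigl(1/c_0,|h|_{[k/2]+1,\infty,t}+\il f\il_{[k/2],\infty,t}\bigr)$ and using subadditivity of the norms produces the claim. The degenerate cases $k=0,1$, to which \eqref{crudepro} does not apply, follow trivially: for $k=0$ from $\|1/\p_z\vp\|_{L^\infty}\le 1/c_0$, and for $k=1$ from a single Leibniz expansion $Z(f/\p_z\vp)=Zf/\p_z\vp-f\,Z\p_z\vp/(\p_z\vp)^2$. The delicate point in the argument is the bookkeeping of indices in \eqref{crudepro} — one must verify that the threshold $[k/2]$ emerges on both factors for the choice $n=[k/2]$ in both the parity cases — and the requirement in \eqref{composition} that the composed smooth function vanish at $0$, which forces us to apply the composition estimate to the translated function $F(s)=1/(1+s)-1$ rather than directly to $1/s$, despite the absence of any a priori upper bound on $\p_z\vp$ in \eqref{preassumption}.
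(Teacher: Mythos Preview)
Your proof is correct and follows essentially the same approach as the paper: both write $f/\p_z\vp = f - f\cdot\frac{\p_z\vp-1}{\p_z\vp}$, apply the crude product estimate \eqref{crudepro} with $n=[k/2]$, and invoke the composition estimate \eqref{composition} for $F(s)=s/(1+s)$ (your $1/(1+s)-1$ is the same up to sign) together with the half-derivative gain \eqref{gradexth-1}--\eqref{gradexth} for $\p_z\vp-1=\eta+(1+z)\p_z\eta$. You are simply more explicit than the paper about the intermediate bounds on $\il g\il_{[k/2],\infty,t}$, about the degenerate cases $k=0,1$ (where \eqref{crudepro} does not apply), and about the upper-bound issue for $\p_z\vp$ in \eqref{composition}, none of which the paper spells out.
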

\begin{proof}
Let us write: 
\beqs
\f{f}{\p_z\vp}=\f{f}{1+\eta+\p_z\eta(1+z)}=f-f\f{\eta+\p_z\eta(1+z)}{1+\eta+\p_z\eta(1+z)}.
\eeqs
Therefore, one obtains \eqref{fpzphi1} by applying the product estimate \eqref{crudepro} for $n=[\f{k}{2}]$ and  composition estimate \eqref{composition} for $F(x)=\f{x}{1+x}\, (0<x<1).$
\end{proof}
\begin{rmk}
Similar to \eqref{fpzphi1}, under the same assumption as in Lemma 
\ref{lemfpzphi}, the following estimate also holds true, 
\beq\label{fpzphi2}
\big\|\f{f}{\p_z\vp}\big\|_{L_t^p\cH^{0,k}}\lesssim \Lambda\big(\f{1}{c_0},|h|_{1,\infty,t}+\il f\il_{0,\infty,t}\big)(\|f\|_{L_t^p\cH^{0,k}}+|h|_{L_t^p\tilde{H}^{k+\f{1}{2}}}), \, p=2,+\infty.
\eeq
\end{rmk}
The next lemma contains useful commutator estimates.
\begin{lem}
Under the assumption \eqref{preassumption}, the following commutator estimates hold, for 
$j=1,2,3, |\alpha|\leq k$
\beq
\begin{aligned}
\|[Z^{\alpha},\p_j^{\vp}]f\|_{\ltx}
&\lesssim \Lambda\big(\f{1}{c_0},|h|_{[\f{k}{2}]+1,\infty,t}\big)|h|_{L_t^2\tilde{H}^{k-n+\f{1}{2}}}\il\nabla f\il_{n,\infty,t}\\
&+\Lambda\big(\f{1}{c_0},|h|_{k-n,\infty,t})\|\nabla f\|_{L_t^2H_{co}^{k-1}} \,\quad (0\leq n\leq k-1). \label{comgrad}
\end{aligned}
\eeq
If $\alpha_0=0,$  we have that:
\begin{align}  &\|[Z^{\alpha},\p_j^{\vp}]f\|_{\ltx}\lesssim\Lambda\big(\f{1}{c_0},|h|_{1,\infty,t}\big)\|\nabla f\|_{L_t^2H_{co}^{k-1}}+\Lambda\big(\f{1}{c_0},\il\nabla f\il_{0,\infty,t}\big)|h|_{L_t^2\tilde{H}^{k+\f{1}{2}}}.\label{comgrad1}\end{align}
Moreover,  for $k\geq 3,$
\beq\label{comgradT}
\begin{aligned}
\|[Z_0^k\pt,\p_j^{\vp}]f\|_{\ltx}&\lesssim  \Lambda\big(\f{1}{c_0},\il (\p_z f, \ep\pt \p_z f)\il_{0,\infty,t}+|(h,\pt h)|_{k-2,\infty,t}+(\int_0^t|\ep\pt^2 h(s)|^2_{k-2,\infty}\d s)^{\f{1}{2}} \big)\\
&\quad\cdot\big(\ep\sum_{l\leq k-1}|Z_0^l\p_t^2 h|_{L_t^2H^{\f{1}{2}}}+\|Z_0 \p_z f\|_{L_t^2\cH^{k-1}}+\|Z_0\p_z f\|_{L_t^{\infty}\cH^{1}}\big).
\end{aligned}
\eeq
\end{lem}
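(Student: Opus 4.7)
The plan is to reduce all three estimates to the product and commutator inequalities of Lemma~\ref{lemcrudepro}, combined with the regularity of the extension $\vp$ from Lemma~\ref{exth} and the pointwise/Sobolev bound on $1/\p_z\vp$ from Lemma~\ref{lemfpzphi}. For $j=1,2$ I write $\p_j^{\vp} = \p_j - a_j\,\p_z$ with $a_j = \p_j\vp/\p_z\vp$, and for the normal direction $\p_z^{\vp} = (1+b)\,\p_z$ with $b = 1/\p_z\vp - 1$. Since $Z_0, Z_1, Z_2, Z_3$ all commute with $\p_{y_1},\p_{y_2}$, the main part of $[Z^\alpha,\p_j^{\vp}]f$ is a single commutator of the form $[Z^\alpha,a]\,\p_z f$; the remainder $a\,[Z^\alpha,\p_z]f$, via $[Z_3,\p_z] = -\phi'(z)\,\p_z$, is a lower-order commutator that preserves the $\p_z$-structure and is absorbed in $\|\nabla f\|_{L_t^2 H_{co}^{k-1}}$.

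For \eqref{comgrad}, I apply \eqref{crudecom} to $[Z^\alpha,a]\,\p_z f$. The first term on the right of \eqref{crudecom} places $a$ in the higher-regularity norm $\cH^{j',l'}$ with $j'+l' \leq k-n$ and $\p_z f$ in $\il\cdot\il_{n,\infty,t}$; by \eqref{gradexth} combined with Lemma~\ref{lemfpzphi}, the norm of $a$ is bounded by $|h|_{L_t^2 \tilde H^{k-n+\f{1}{2}}}$ up to a factor $\Lambda(1/c_0,|h|_{[\f{k}{2}]+1,\infty,t})$, giving the first summand. The second term of \eqref{crudecom} puts $a$ in $\il\cdot\il_{k-n-1,\infty,t}$ and $\p_z f$ in $L_t^2 H_{co}^{k-1}$; combining \eqref{exthLinfty} with Sobolev embedding bounds $\il a\il_{k-n-1,\infty,t}$ by $\Lambda(1/c_0,|h|_{k-n,\infty,t})$, yielding the second summand. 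Estimate \eqref{comgrad1} is the analogous statement in the purely spatial case $\alpha_0 = 0$; the absence of time derivatives allows the $L_{t,x}^\infty$-factor on $a$ to be reduced to $\Lambda(1/c_0,|h|_{1,\infty,t})$ via \eqref{exthLinfty}, while the opposite extreme $n = 0$ produces the $|h|_{L_t^2\tilde H^{k+\f{1}{2}}}$ factor against $\il\nabla f\il_{0,\infty,t}$.

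The main obstacle is \eqref{comgradT}, where the operator is purely temporal: $Z_0^k\pt = \ep^{-1}(\ep\pt)^{k+1}$. Expanding $[Z_0^k\pt, a\,\p_z]f$ by Leibniz gives $\ep^{-1}\sum_{l=1}^{k+1}\binom{k+1}{l}(Z_0^l a)(Z_0^{k+1-l}\p_z f)$, so an overall factor $\ep^{-1}$ must be absorbed. The key observation is that $a$ depends on $\vp$ and hence on $h$, which carries no $\ep$; in particular $Z_0^l h = \ep^2 Z_0^{l-2}\pt^2 h$ for $l \geq 2$, so each such factor releases an extra $\ep$ that compensates the $\ep^{-1}$ prefactor. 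In the extremal case $l = k+1$, rewriting the contribution as $\ep\cdot Z_0^{k-1}(\pt^2 a)\cdot \p_z f$ and applying \eqref{gradexth-1} produces the bound $\ep\sum_{l \leq k-1}|Z_0^l \pt^2 h|_{L_t^2 H^{1/2}}$ multiplied by $\il\p_z f\il_{0,\infty,t}$. For $1 \leq l \leq k$, one splits the product so that at least one weighted time derivative remains on $f$ in the form $Z_0\p_z f$, and \eqref{crudepro} applied at the appropriate level produces the $\|Z_0\p_z f\|_{L_t^2\cH^{k-1}}$ and $\|Z_0\p_z f\|_{L_t^\infty\cH^1}$ contributions. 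Finally the $1/\p_z\vp$ factors inside $a$ and $b$ are handled by the composition estimate \eqref{composition} with $F(x) = x/(1+x)$, and the $L_{t,x}^\infty$ bounds on the coefficients produced by \eqref{exthLinfty} yield precisely the factors $|(h,\pt h)|_{k-2,\infty,t}$ and $\bigl(\int_0^t |\ep\pt^2 h|^2_{k-2,\infty}\,\d s\bigr)^{1/2}$ in the $\Lambda$ prefactor.
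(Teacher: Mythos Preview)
Your approach matches the paper's for \eqref{comgrad} and \eqref{comgradT}. The paper records the same identity
\[
[Z^\alpha,\p_j^{\vp}]f=[Z^\alpha,\bN_j/\p_z\vp]\p_z f+(\bN_j/\p_z\vp)[Z^\alpha,\p_z]f,
\]
invokes \eqref{crudecom} and \eqref{fpzphi1} for \eqref{comgrad}, and for \eqref{comgradT} expands $[Z_0^k\pt,g]w=\sum_{l=0}^{k}C_{k,l}(Z_0^{k-l}\pt g)(Z_0^l w)$ via a three-way split $l\in\{0,1\}$, $l=2$, $l\ge 3$ (this is \eqref{expansion-T}--\eqref{com-temp}), which is the precise form of your sketch; your $\ep$-absorption observation is exactly the mechanism at work.

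There is one minor gap in your treatment of \eqref{comgrad1}. Applying \eqref{crudecom} does not yield the stated bound: in \eqref{crudecom} the $L^\infty$ orders on the two factors are linked through the single parameter $n$, so no choice of $n$ produces simultaneously $\il a\il_{1,\infty,t}$ against $\|\nabla f\|_{L_t^2H_{co}^{k-1}}$ and $\il\nabla f\il_{0,\infty,t}$ against $|h|_{L_t^2\tilde H^{k+1/2}}$. The paper instead invokes the sharper tame estimate \eqref{commutator} from Lemma~\ref{prodcomu} (valid precisely when $\alpha_0=0$), which decouples the two contributions and places at most one conormal derivative in $L^\infty$ on the coefficient; combined with \eqref{fpzphi2} and the assumption $|\nabla^2\vp|\le 1/c_0$ from \eqref{preassumption}, this is what gives the prefactor $\Lambda(1/c_0,|h|_{1,\infty,t})$.
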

\begin{proof}
By the definition \eqref{newder} for $\nabla^{\vp},$
\begin{equation}\label{comidentity}
\begin{aligned}
&[Z^{\alpha},\p_j^{\vp}]f=[Z^{\alpha}, {\bN_j}/{\p_z\vp}]\p_z f+({\bN_j}/{\p_z\vp})[Z^{\alpha},\p_z ]f.
\end{aligned}
\end{equation}
Moreover,  there exist smooth functions $C_{\phi,\beta,\alpha},C_{\phi,\gamma,\alpha}$ which depend on derivatives of $\phi$
such that:
\beq\label{identity-com-nor}
[Z^{\alpha},\p_z]=\sum_{|\beta|\leq |\alpha|-1}C_{\phi,\beta,\alpha}Z^{\beta}\p_z=\sum_{|\gamma|\leq  |\beta|-1}C_{\phi,\gamma,\alpha}\p_z Z^{\gamma}.
\eeq
Therefore, we get \eqref{comgrad} by \eqref{crudecom}, \eqref{fpzphi1}. and get \eqref{comgrad1} by \eqref{commutator}, \eqref{fpzphi2}.

Next, for \eqref{comgradT}, we use the following direct expansion
\beq\label{expansion-T}
[Z_0^k\pt, g]w=\bigg(\sum_{0\leq l\leq 1 }+\sum_{0\leq k-l\leq k-3}\bigg) \big(C_{k,l}Z_0^{k-l} \pt g \, Z_0^{l} w\big)+C_{k,2}Z_0^{k-2}\pt g Z_0^2w.
\eeq
to obtain:
\beq\label{com-temp}
\begin{aligned}
\|[Z_0^k\pt, g]w\|_{L^2L^2}&\lesssim \|Z_0\pt g\|_{L_t^2\cH^{k-1}}\il w\il_{1,\infty,t}+\|Z_0 w\|_{L_t^2\cH^{k-1}}\il\pt g\il_{k-3,\infty,t}\\
&+\|Z_0 w\|_{L_t^{\infty}\cH^1}\|Z_0^{k-2}\pt g\|_{L_t^2L^{\infty}}
\end{aligned}
\eeq
Applying \eqref{com-temp} with $g=\f{\bN_j}{\p_z\vp}, w=\p_z f$, and using
\eqref{gradexth-1}, we get \eqref{comgradT}.
\end{proof}
\subsection{Energy identities and Korn inequality}
We now present some  identities which will be often used in the energy estimates:
\begin{lem}\label{lemipp}
It holds that:
\beq\label{ipp-1}
\int_{\mS} g_1(\p_t^{\vp^{\ep}}+u\cdot\nabla^{\vp^{\ep}})\sigma(t) \cdot \sigma(t)\,\d \cV_t=\f{1}{2}\p_t\int_{\mS}g_1|\sigma|^2(t)\d \cV_t-\f{1}{2}\int_{\mS}
(\p_t^{\vp^{\ep}}g_1+\div^{\vp^{\ep}}(g_1u))|\sigma|^2(t)\ \d \cV_t,
\eeq
\beq\label{ipp-2}
\int_{\mS} g_2(\p_t^{\vp^{\ep}}+u\cdot\nabla^{\vp^{\ep}})u(t) \cdot u(t)\ \d \cV_t=\f{1}{2}\p_t\int_{\mS}g_2|u|^2(t)\ \d \cV_t, 
\eeq
\beq\label{ipp-3}
\begin{aligned}
&\qquad\int_{\mS}(-\div^{\vp^{\ep}}\mathcal{L}^{\vp^{\ep}}u+{\nabla^{\vp^{\ep}}\sigma}/{\ep})\cdot u(t)\ \d \cV_t\\
&=\int_{\mS} 2\mu|
{S}^{\vp^{\ep}}u(t)|^2+\lambda|\div^{\vp^{\ep}}u(t)|^2\ \d\cV_t-\int_{\mS}\sigma\div^{\vp^{\ep}}u(t)\ \d \cV_t+a\int_{z=-1}|u_{\tau}|^2\ \d y.
\end{aligned}
\eeq
 where $u_{\tau}=(u_1,u_2,0)^t$ denotes the tangential components of $u,$ $\d\cV_t=\p_z\vp\ \d y\d z$ is the measure in $\mS$ coming from the change of variable \eqref{changeofvariable}. 
 \end{lem}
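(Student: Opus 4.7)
The plan is to reduce all three identities to standard integration by parts on the fixed domain $\mS$ after exploiting the chain-rule structure of the $\vp$-derivatives. The core geometric fact I will use is the Piola-type Jacobian identity
\[
\partial_t(\partial_z \vp) + \div\bigl(\underline u \,\partial_z\vp\bigr) = \partial_z \vp \cdot \div^{\vp} u, \qquad \underline u = \Bigl(u_1, u_2, \tfrac{u\cdot \bN - \partial_t \vp}{\partial_z \vp}\Bigr),
\]
proved by directly expanding $\partial_z \vp \cdot \div^\vp u$ and pairing the mixed derivatives $\partial_i \partial_z \vp$. As a by-product one obtains the pointwise chain-rule identity $(\partial_t^\vp + u\cdot\nabla^\vp) f \cdot \partial_z\vp = (\partial_t + \underline u\cdot\nabla) f \cdot \partial_z\vp$, which converts the $\vp$-material derivative into an ordinary material derivative with transport velocity $\underline u$ in the $(t,y,z)$ coordinates. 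From the kinematic condition $\partial_t h = u\cdot \bN|_{z=0}$ together with $\vp|_{z=0}=h$, one reads off $u_z|_{z=0}=0$; and from $u_3|_{z=-1}=0$ together with $\partial_i \vp|_{z=-1}=0$, $u_z|_{z=-1}=0$. Thus $\underline u\cdot n$ vanishes on both horizontal parts of $\partial\mS$.

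For \eqref{ipp-1} I write
\[
g_1 \sigma(\partial_t^\vp + u\cdot\nabla^\vp)\sigma \,\partial_z\vp = \tfrac12 (\partial_t + \underline u\cdot\nabla)(g_1 \sigma^2) \,\partial_z\vp - \tfrac12 \sigma^2 (\partial_t^\vp + u\cdot\nabla^\vp) g_1 \,\partial_z \vp,
\]
and apply $(\partial_t + \underline u\cdot\nabla)(g_1\sigma^2)\,\partial_z\vp = \partial_t(g_1\sigma^2 \partial_z\vp) + \div(\underline u\, g_1 \sigma^2 \partial_z\vp) - g_1 \sigma^2 \partial_z\vp\, \div^\vp u$ via the Piola identity. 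Integration over $\mS$ kills the $\div$-term thanks to $\underline u\cdot n|_{\partial\mS}=0$, and recombining the remaining pieces via the algebraic identity $g_1 \div^\vp u + (\partial_t^\vp + u\cdot\nabla^\vp) g_1 = \partial_t^\vp g_1 + \div^\vp(g_1 u)$ yields \eqref{ipp-1}. For \eqref{ipp-2} the same argument applies with $(g_1,\sigma)$ replaced by $(g_2,u)$, except that the leftover source of the same shape now vanishes identically: multiplying the continuity equation \eqref{FCNS2}$_1$ by $\ep g_2(\ep\sigma)$ and using $g_1=(\ln g_2)'$ to recognize $g_1 \ep \partial_t^\vp\sigma = \partial_t^\vp g_2/g_2$ gives $\partial_t^\vp g_2 + \div^\vp(g_2 u)=0$.

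For \eqref{ipp-3} I will first record the general integration-by-parts formula
\[
\int_\mS \div^\vp F \cdot f \, d\cV_t = -\int_\mS F\cdot \nabla^\vp f \, d\cV_t + \int_{z=0}(F\cdot \bN)\, f\, dy - \int_{z=-1}(F\cdot \bN)\, f\, dy,
\]
which is an immediate consequence of the same Piola identity, applied to the conservative form $\partial_z\vp \,\div^\vp F = \partial_1(\partial_z\vp F_1) + \partial_2(\partial_z\vp F_2) + \partial_z(F\cdot \bN)$. I apply this row-by-row with the rows of $\cL^\vp u$ for $F$ and $u_i$ for $f$, using the symmetry of $S^\vp u$ to recognize the interior dissipation $\cL^\vp u:\nabla^\vp u = 2\mu|S^\vp u|^2 + \lambda|\div^\vp u|^2$. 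The boundary contributions are then handled with \eqref{upbdry2} and \eqref{bebdry2}: on $z=0$, $\cL^\vp u \,\bN = (\sigma/\ep)\bN$ produces exactly the boundary term that cancels the one generated by integration by parts of the pressure term $\int \nabla^\vp(\sigma/\ep)\cdot u \, d\cV_t$ (whose lower boundary contribution vanishes since $u\cdot\bN|_{z=-1}=u_3|_{z=-1}=0$); on $z=-1$, since $\bN=(0,0,1)$, $u_3=0$ and $\partial_i\vp|_{z=-1}=0$ force $\partial_i^\vp u_3|_{z=-1}=0$ for $i=1,2$, so $(\cL^\vp u \,\bN)\cdot u$ collapses to $\mu\, \partial_z^\vp u_\tau \cdot u_\tau = a|u_\tau|^2$ by the Navier condition.

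The main obstacle is purely bookkeeping: tracking signs at both horizontal faces of $\partial\mS$ and verifying the cancellation of the upper-boundary contributions from the viscous stress and the pressure. There is no analytic subtlety beyond the Piola-type Jacobian identity, which is the geometric input that makes integration by parts on the fixed domain $\mS$ compatible with $\vp$-derivatives and the measure $d\cV_t = \partial_z\vp\, dy\,dz$.
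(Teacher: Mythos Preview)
Your proof is correct and follows essentially the same approach as the paper. The paper's proof records the two fundamental integration-by-parts identities for $\partial_j^{\vp}$ and $\partial_t^{\vp}$ with respect to $d\cV_t$ and then applies the boundary conditions together with the continuity-equation identity $\partial_t^{\vp} g_2 + \div^{\vp}(g_2 u)=0$; your Piola-type Jacobian identity and material-derivative rewrite are precisely the computational content underlying those formulas, so the arguments coincide.
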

 \begin{proof}
By direct computations, one can obtain the following identities:
 \beqs
\int_{\mS}\p_j^{\vp^{\ep}}f(t) g(t)\,\d \mathcal{V}_t =-\int_{\mS} f(t)\p_j^{\vp^{\ep}} g(t)\,\d\cV_t+\int_{\p\mS}f(t) g(t) \mathbf{N}_j\,\d y,\quad j=1,2,3
\eeqs
\beqs
\int_{\mS}\pt^{\vp^{\ep}}f(t) g(t)\,\d \mathcal{V}_t =\pt \int_{\mS} fg(t) \,\d \cV_t-\int_{\mS} f(t)\pt^{\vp^{\ep}} g(t)\,\d\cV_t+\int_{z=0}f(t) g(t) \p_t h\,\d y,
\eeqs
which, along with the equation \eqref{surfaceeq2}-
\eqref{bebdry2} lead to \eqref{ipp-1}-\eqref{ipp-3}. Note that in the derivation of \eqref{ipp-2}, we have used the fact that $\p_t^{\vp^{\ep}}g_2+\div^{\vp^{\ep}}(g_2u)=0$ in $[0,t]\times\mS.$
 \end{proof}
 
 The next lemma shows that 
 one can control the gradient of the velocity by $S^{\vp}u.$
 \begin{lem}[\label{lemkorn}{Korn's inequality}]
 Suppose that  \eqref{preassumption} is true, then there exists $\Lambda_0(\f{1}{c_0}),\Lambda_1(\f{1}{c_0})$ such that:
 \beq\label{korn1}
 \begin{aligned}
 \int_{\mS} |\nabla u|^2(t)\,\d\cV_t&\leq \Lambda_0\big(\f{1}{c_0}\big) \int_{\mS} |\nabla^{\vp^{\ep}} u|^2(t)\,\d\cV_t\leq \Lambda_1\big(\f{1}{c_0}\big)\int_{\mS} (|{S}^{\vp^{\ep}} u|^2+|u|^2)\,\d\cV_t.
 \end{aligned}
 \eeq
 As a consequence, we have also:
 \beq\label{korn}
 \begin{aligned}
 \int_0^t\int_{\mS} |\nabla u|^2\,\d\cV_s\d s
 \leq \Lambda_1\big(\f{1}{c_0}\big)\int_0^t\int_{\mS} (|
 {S}^{\vp^{\ep}} u|^2+|u|^2)\,\d\cV_s\d s.
 \end{aligned}
 \eeq
 \end{lem}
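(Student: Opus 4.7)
I would prove the two inequalities in \eqref{korn1} separately; the time-integrated bound \eqref{korn} is then obtained simply by integrating the pointwise-in-$t$ estimate over $[0,t]$.

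For the first inequality, I would use the explicit formulas from \eqref{newder} inverted as
\[
\partial_z = (\partial_z\vp^{\ep})\,\partial_z^{\vp^{\ep}}, \qquad \partial_i = \partial_i^{\vp^{\ep}} + (\partial_i \vp^{\ep})\,\partial_z^{\vp^{\ep}} \quad (i=1,2).
\]
Under the pointwise bounds $\p_z\vp^{\ep}\ge c_0$ and $|\nabla\vp^{\ep}|\le 1/c_0$ from \eqref{preassumption}, the matrix expressing $\nabla u$ in terms of $\nabla^{\vp^{\ep}}u$ has entries bounded by a polynomial in $1/c_0$; hence pointwise $|\nabla u|^2 \le \Lambda_0(1/c_0)\,|\nabla^{\vp^{\ep}} u|^2$, and integrating against $\d\cV_t = \p_z\vp^{\ep}\,\d y\,\d z$ (whose density lies in $[c_0,1/c_0]$) gives the first estimate.

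For the second inequality, the natural route is to pass back to the physical domain $\Omega_t^{\ep}$ via the diffeomorphism $\Phi_t^{\ep}$ of \eqref{changeofvariable}. Setting $w(t,\cdot) = u(t,\cdot)\circ (\Phi_t^{\ep})^{-1}$, one has $(\nabla^{\vp^{\ep}}u)(t,y,z) = (\nabla w)(t,\Phi_t^{\ep}(y,z))$ and similarly $S^{\vp^{\ep}}u = (Sw)\circ \Phi_t^{\ep}$, while $\d\cV_t$ is precisely the pull-back of the Euclidean measure on $\Omega_t^{\ep}$. The claim is thus equivalent to
\[
\int_{\Omega_t^{\ep}} |\nabla w|^2\,\d x \;\le\; C(\Omega_t^{\ep})\int_{\Omega_t^{\ep}}\bigl(|Sw|^2 + |w|^2\bigr)\,\d x,
\]
which is the classical Korn second inequality on the Lipschitz graph domain $\Omega_t^{\ep}=\{-1<z<h^{\ep}(t,y)\}$, supplemented by nothing at the boundary (since Korn is unconditional on $w$ once the $\|w\|_{L^2}^2$ term is kept).

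The main obstacle is to show that the Korn constant $C(\Omega_t^{\ep})$ can be taken as a function of $1/c_0$ alone, independent of $t$ and $\ep$. This is where \eqref{preassumption} is used: since $|\nabla h^{\ep}|_{L^\infty}$ is controlled by $|\nabla\vp^{\ep}|_{L^\infty}\le 1/c_0$ and the depth $1+h^{\ep}$ stays bounded above and below, one can straighten the upper boundary by a bi-Lipschitz change of coordinates (for instance $\Phi_t^{\ep}$ itself) whose bi-Lipschitz constants depend only on $1/c_0$. The classical Korn inequality on the fixed slab $\mS$ (which follows, e.g., from the identity $2\int_{\mS}|Sw|^2 = \int_{\mS}|\nabla w|^2 + \int_{\mS}(\div w)^2 + \text{boundary terms}$ combined with Ne\v cas' negative-norm lemma, or from the abstract argument of Duvaut--Lions) then transfers to $\Omega_t^{\ep}$ with a constant depending only on the bi-Lipschitz norm, i.e.\ only on $1/c_0$. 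Pulling back to $\mS$, integrating in time, and absorbing the Jacobian factors as in the first step yields both \eqref{korn1} and \eqref{korn}.
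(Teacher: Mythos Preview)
Your outline is correct and is essentially the route the paper has in mind: the paper does not give a self-contained argument but simply refers to Proposition~2.9 of \cite{MR3590375}, whose proof proceeds exactly by passing to the physical domain $\Omega_t^{\ep}$ via $\Phi_t^{\ep}$ and invoking a Korn inequality there with a constant depending only on the Lipschitz character of the boundary (hence on $1/c_0$). One small caveat: your phrase ``Korn transfers under bi-Lipschitz maps'' hides the only non-trivial step, since the symmetric gradient $Sw$ does \emph{not} transform covariantly under a change of coordinates, so one cannot literally pull back Korn from $\mS$; what is actually used (and what the cited proposition establishes) is that the Korn constant on a Lipschitz graph domain of bounded depth depends only on the $L^\infty$ bound of $\nabla h$, which here is controlled by $1/c_0$.
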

These two inequalities can be shown similarly as in the proof of Proposition 2.9 in \cite{MR3590375}.
 \section{Preliminaries II: Reformulations of the boundary conditions}
 For notational convenience, from now on, we will skip the $\ep$-dependence of the solution.
 \begin{prop}
 The following boundary condition on 
 $\{z=0\}$ hold: 
\beq\label{sigmabdry}
\f{\sigma}{\ep}=(2\mu+\lambda)\div^{\vp}u-2\mu(\p_1u_1+\p_2u_2)+\mu(\omega\times \bN)_3,
\eeq
\beq\label{omegatimesn}
\omega\times\bn=-2\Pi(\p_1u\cdot\bn,\p_2u\cdot\bn,0)^{t},
\eeq
\beq\label{tan-nor}
 \Pi(\p_{\bn}^{\vp}u)=-\Pi(\p_1u\cdot\bn,\p_2u\cdot\bn,0)^{t},
\eeq
\beq\label{nor-nor}
\begin{aligned}
\p_{\bn}^{\vp}u\cdot\bn&=|\bN|^2\p_z^{\vp}u\cdot\bn-(\bn_1\p_1 u\cdot\bn+\bn_2\p_2u\cdot\bn)\\
&=|\bN|
(\div^{\vp}u-\p_1u_1-\p_2u_2)-(\bn_1\p_1 u\cdot\bn+\bn_2\p_2 u\cdot\bn)
\end{aligned}
\eeq
where $\omega=\nabla^{\vp}\times u, \Pi=\text{Id}_{3}-\bn\otimes\bn,$ here $Id_3$ denotes the identity matrix of order 3.
\end{prop}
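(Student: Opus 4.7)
The four identities \eqref{sigmabdry}--\eqref{nor-nor} are all algebraic rewritings, at $\{z=0\}$, of the single stress condition $\cL^{\vp}u\,\bN=(\sigma/\ep)\bN$, so my plan is to first extract two elementary computational building blocks from the change of variables and then assemble each identity in turn.

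The building blocks are as follows. Because $\vp(t,y,0)=h(t,y)$, one has $\p_i\vp|_{z=0}=-\bN_i$ for $i=1,2$, and the definition $\p_i^{\vp}=\p_i-(\p_i\vp/\p_z\vp)\p_z$ immediately gives the trace identity
\beqs
\p_i^{\vp}u\big|_{z=0}=\p_i u+\bN_i\,\p_z^{\vp}u,\qquad i=1,2,
\eeqs
and hence, since $\bN_3=1$, the expansion $\p_{\bN}^{\vp}u|_{z=0}=\bN_1\p_1u+\bN_2\p_2u+|\bN|^{2}\p_z^{\vp}u$. Summing the diagonal entries of $\nabla^{\vp}u$ at the boundary yields the scalar trace $\div^{\vp}u|_{z=0}=\p_1u_1+\p_2u_2+\p_z^{\vp}u\cdot\bN$, equivalently $|\bN|\,\p_z^{\vp}u\cdot\bn=\div^{\vp}u-\p_1u_1-\p_2u_2$. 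Identity \eqref{nor-nor} then follows by contracting the expansion of $\p_{\bN}^{\vp}u$ with $\bN$, dividing by $|\bN|^{2}$, and inserting the divergence trace; no boundary condition is used in this step.

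For \eqref{omegatimesn} and \eqref{tan-nor} I use the standard algebraic identity $(\nabla^{\vp}u-(\nabla^{\vp}u)^{t})v=\omega\times v$ with $\omega=\curl^{\vp}u$, which produces the decomposition $\p_{\bN}^{\vp}u=S^{\vp}u\,\bN+\f{1}{2}\omega\times\bN$. The tangential component of the stress condition, combined with the fact that $\lambda\div^{\vp}u\,\bN$ is normal, gives $\Pi(S^{\vp}u\,\bN)=0$; since $\omega\times\bN$ is automatically orthogonal to $\bN$, projection by $\Pi$ yields $\Pi(\p_{\bN}^{\vp}u)=\f{1}{2}\omega\times\bN$. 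Substituting the expansion of $\p_{\bN}^{\vp}u$: the $|\bN|^{2}\p_z^{\vp}u$ contribution, under $\Pi$, reduces to $|\bN|^{2}\bigl(\p_z^{\vp}u-(\p_z^{\vp}u\cdot\bn)\bn\bigr)$, and after reorganization this combines with $\bN_1\p_1u+\bN_2\p_2u$ to produce exactly $-|\bN|\,\Pi(\p_1u\cdot\bn,\p_2u\cdot\bn,0)^{t}$. Dividing by $|\bN|$ gives \eqref{tan-nor}, and \eqref{omegatimesn} is its restatement via $\omega\times\bn=2\Pi(\p_{\bn}^{\vp}u)$.

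Finally, \eqref{sigmabdry} follows by taking the scalar product of the stress condition with $\bn$: the symmetry of $S^{\vp}u$ gives $\bN^{t}S^{\vp}u\,\bN=\bN^{t}(\nabla^{\vp}u)\bN=\p_{\bN}^{\vp}u\cdot\bN$, so dividing by $|\bN|^{2}$ produces $\sigma/\ep=\lambda\div^{\vp}u+2\mu\,\p_{\bn}^{\vp}u\cdot\bn$. Inserting the expansion of $\p_{\bn}^{\vp}u\cdot\bn$ from \eqref{nor-nor} and identifying the combination $2(\bn_1\p_1u\cdot\bn+\bn_2\p_2u\cdot\bn)$ with $(\omega\times\bN)_3$ via \eqref{omegatimesn} delivers \eqref{sigmabdry}. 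The entire proof is careful bookkeeping of the definitions of $\nabla^{\vp}$, $\div^{\vp}$, and $\bN$ at $z=0$; there is no substantive obstacle, the single point of vigilance being the sign $\p_i\vp|_{z=0}=-\bN_i$ which threads through all the expansions.
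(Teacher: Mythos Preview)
Your proof is correct and follows essentially the same algebraic approach as the paper: both use the stress condition $\cL^{\vp}u\,\bN=(\sigma/\ep)\bN$ together with the trace relations $\p_i^{\vp}|_{z=0}=\p_i+\bN_i\p_z^{\vp}$ and the divergence identity $\p_z^{\vp}u\cdot\bN=\div^{\vp}u-\p_1u_1-\p_2u_2$. The minor differences are organizational: for \eqref{sigmabdry} the paper takes the third component of the boundary condition directly (exploiting $\bN_3=1$) and obtains the result without invoking \eqref{nor-nor} or \eqref{omegatimesn}, whereas you contract with $\bn$ and then feed in the other identities; for \eqref{omegatimesn}--\eqref{tan-nor} the paper works with the expansion of $(\nabla^{\vp}u)^{t}\bN=(\p_1u\cdot\bN,\p_2u\cdot\bN,0)^{t}+(\p_z^{\vp}u\cdot\bN)\bN$, which makes the projection step a one-liner, while you expand $\p_{\bN}^{\vp}u$ instead and need the ``reorganization'' you mention---this is exactly the identity $\p_{\bN}^{\vp}u+(\nabla^{\vp}u)^{t}\bN-(\p_z^{\vp}u\cdot\bN)\bN=2S^{\vp}u\,\bN$, so the two computations are equivalent. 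Your remark about the sign $\p_i\vp|_{z=0}=-\bN_i$ is well placed; it is indeed the only place where care is needed.
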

\begin{proof}
The first identity can be deduced from the boundary condition \eqref{upbdry2}. Indeed, by taking the third component of \eqref{upbdry2}, one gets that on the upper boundary
$\{z=0\},$
\beqs
\begin{aligned}
\f{\sigma}{\ep}&=\lambda\div^{\vp}u+2\mu\p_z^{\vp}u\cdot \bN +\mu\big[(\nabla^{\vp}u-(\nabla^{\vp}u)^{t})\cdot\bN\big]_{3}\\
&=(2\mu+\lambda)\div^{\vp}u-2\mu(\p_1u_1+\p_2u_2)+\mu(\omega\times \bN)_3.
\end{aligned}
\eeqs
Note that we have used the identity
\beq\label{norpz}
\p_z^{\vp}u\cdot\bN=\div^{\vp}u-\p_1u_1-\p_2u_2
\eeq
which holds indeed in the whole domain $\mS.$
For the second identity \eqref{omegatimesn}, we have that on the upper boundary:
\beq
\begin{aligned}
\mu \omega\times \bN=\mu\Pi(\omega\times \bN)&=2\mu\Pi\big(-(\nabla^{\vp}u)^{t}\bN+ S^{\vp}u\bN\big)\\
&=\Pi\big(-2\mu(\nabla^{\vp}u)^{t}\bN+
(\sigma/{\ep}-\lambda\div^{\vp}u)\bN\big)\\
&=-2\mu\Pi(\p_1u\cdot\bN,\p_2u\cdot\bN,0)^{t}.
\end{aligned}
\eeq
Note that $(\nabla^{\vp}u)^{t}\cdot\bN=(\p_1u\cdot\bN,\p_2u\cdot\bN,0)^{t}+(\p_z^{\vp}u\cdot\bN)\bN.$
The inequality \eqref{tan-nor} can be derived in a similar way:
\beq
\mu\Pi(\p_{\bn}^{\vp}u)=\mu\Pi(2 S^{\vp}u \bn-(\nabla^{\vp}u)^{t}\cdot\bn)=-\mu \Pi \big((\nabla^{\vp}u)^{t}\cdot\bn\big).
\eeq
The inequality \eqref{nor-nor} follows from direct computations and identity \eqref{norpz}.

\begin{rmk}
By the identity: $|\bN|\p_z^{\vp} u=\p_{\bn}^{\vp}u-\bn_1\p_1u-\bn_2\p_2u$, we have also:
\beq\label{tanpz}
|\bN|\Pi\p_z^{\vp} u=\Pi (\p_1u\cdot\bn,\p_2u\cdot\bn,0)^{t}-\Pi(\bn_1\p_1u+\bn_2\p_2 u).
\eeq
\end{rmk}
\begin{rmk}
In view of \eqref{norpz}, \eqref{tanpz}, we have that
$\p_z^{\vp}u\approx \div^{\vp}u+\p_y u$ on $\{z=0\},$
so that:
\begin{equation}\label{grad-upbdry}
\begin{aligned}
|(\nabla^{\vp}u)^{b,1}|_{\htlde^k}&\lesssim \Lambda\big(\f{1}{c_0}, \il\div^{\vp}u\il_{0,\infty,t}+\il u \il_{1,\infty,t}+|h|_{1,\infty,t}\big)\\
&\qquad\qquad\big(|(\div^{\vp}u)^{b,1}|_{\htlde^k}+
|u^{b,1}|_{\htlde^{k+1}}+|h|_{\htlde^{k+1}}\big).
\end{aligned}
\end{equation}
Recall that we denote for any $f,$ $f^{b,1}=f|_{z=0}.$
\end{rmk}
\end{proof}

 \section{Preliminaries III: Projection operators.}\label{sec-projection}
 \subsection{Definition of the projection.} 
We define the projection operator
$\mathbb{Q}_t$:
\begin{equation}
   \begin{aligned}
   \mathbb{Q}_t: \quad &
L^2(\mS,\,\d \cV_t)^3\rightarrow L^2(\mS,\,\d \cV_t)^3\\
&\qquad\qquad f \rightarrow \mathbb{Q}_t f=\nabla^{\vp}\vr
   \end{aligned} 
\end{equation}
where $\vr$ satisfies the elliptic equation with mixed boundary condition:
\beq\label{defofQ}
\left\{
\begin{array}{l}
     -\Delta^{\vp}\vr=-\div^{\vp}f \quad \text{ in } \mS \\[5pt]
     \vr|_{z=0}=0\\[5pt]
     \p_{z}^{\vp}\vr|_{z=-1}=f\cdot e_3
\end{array}
\right.
\eeq
where $e_3=(0,0,1)^t.$
We define also the projection 
\beq\label{defofP}
\mathbb{P}_t=\text{Id}-\mathbb{Q}_t.
\eeq
Let us notice that $\bbp,\bbq$ depends actually on $\vp(t,\cdot),$ but we used a lighten notation.
\begin{rmk}
Let us notice that the definition of the projection $\bbq$ is not the same as the standard Leary projection where only the Neumann boundary condition is involved. Nevertheless, the definition \eqref{defofQ} is classical in free boundary problems, one can refer for example to \cite{MR611750}.
 
\end{rmk}
\begin{rmk}
We remark that these two projectors are time-dependent since $\vp$ depends on $t.$
One also notes that in general, $\bbp \nabla^{\vp}\neq 0, \bbq \nabla^{\vp}\neq \nabla^{\vp}.$
These facts will lead to some extra commutators when we act the projection to the equations $\eqref{FCNS2}_2.$ 
\end{rmk}

Let us set $v=\bbp u, \nabla^{\vp}\Psi=\bbq u.$ Applying the $\bbp$ projection on the velocity equation $\eqref{FCNS2}_2,$ one gets:
\beqs
\bar{\rho}\pt^{\vp}v
+\bbp\nabla^{\vp}(\sigma/{\ep}-2(\mu+\lambda)\div^{\vp}u)=-\bbp(f-\mu \Delta^{\vp} v)-\bar{\rho}[\bbp,\pt^{\vp}]u
\eeqs
where $$f=\f{g_2-\bar{\rho}}{\ep}\ep\pt^{\vp}u+g_2u\cdot\nabla^{\vp}u.$$
By definition $\bbp\nabla^{\vp}$ can be expressed as a gradient, we thus denote $$\nabla^{\vp}\pi=\bbp\nabla^{\vp}(\sigma/{\ep}-2(\mu+\lambda)\div^{\vp}u).$$
To shorten the notation, we denote further $$\nabla^{\vp}q=-\bbq (f-\mu\Delta^{\vp} v).$$ Therefore, the above equations read:
\beq\label{eqofv1}
\bar{\rho}\pt^{\vp}v-\mu\Delta^{\vp} v+\nabla^{\vp}\pi=-(f+\nabla^{\vp}q+\bar{\rho}[\bbp,\pt^{\vp}]u).
\eeq
We are now in position to compute the boundary values of $v.$ On the bottom, in light of \eqref{bebdry2} and the fact $\p_z^{\vp}\Psi=u_3,$ we get that
\beq\label{v-bdry-bot}
v_3|_{z=-1}=0, \quad \p_z^{\vp}v_{\tau}|_{z=-1}=\p_z^{\vp}u_{\tau}|_{z=-1}-\nabla_{\tau}^{\vp}\p_z^{\vp}\Psi|_{z=-1}=\f{a}{\mu} u_{\tau}|_{z=-1}.
\eeq
where $\nabla_{\tau}^{\vp}=(\p_1^{\vp},\p_2^{\vp},0)^{t}, f_{\tau}=(f_1,f_2,0)^t.$
Note that $\nabla_{\tau}^{\vp}=(\p_1,\p_2,0)^t$ on the boundary $\{z=-1\}$ since $\p_{\tau}\vp|_{z=-1}=0.$

On the upper boundary, one first notices that by definition, $\pi|_{z=0}=\sigma/{\ep}-2(\mu+\lambda)\div^{\vp}u.$ Therefore, with the aid of the condition \eqref{upbdry2}, we find that:
\beq\label{v-bdry-up}
(2\mu S^{\vp}v-\pi \text{Id}_3)\bN|_{z=0}=2\mu(\div^{\vp}u \text{Id}_3-(\nabla^{\vp})^2\Psi)\bN|_{z=0}.
\eeq

\subsection{Elliptic estimates}
In this section, we establish some useful elliptic estimates in the conormal setting.
We first consider the problem:
\beq\label{elliptic0}
\left\{
\begin{array}{l}
     -\Delta^{\vp}\vr=-\div^{\vp}\tilde{F} \\
     \vr|_{z=0}=0\\[5pt]
     \p_{z}^{\vp}\vr|_{z=-1}=\tilde{F}\cdot e_3+g
\end{array}
\right.
\eeq
where $e_3=(0,0,1)^{t},$ $\tilde{F}, g$ are given source terms.
To perform elliptic estimates, it would be convenient to write it in a more explicit way. By a straightforward calculation, one finds that:
$$\div^{\vp}(\cdot)=\f{1}{\p_z\vp}\div(P\cdot),\quad \nabla^{\vp}=\f{1}{\p_z\vp} P^{*}\nabla^{\vp},\quad\Delta^{\vp}=\f{1}{\p_z{\vp}}\div(E\nabla)$$
where 
\beq\label{defPE}
P=\left(\begin{array}{ccc}
    \p_z\vp &0&0  \\
     0&\p_z\vp&0\\
     -\p_1\vp&-\p_2\vp&1
\end{array}
\right),\quad
E=\f{1}{\p_z{\vp}}PP^{\star}
\eeq
Denote $F=P\tilde{F}$, 
the equation \eqref{elliptic0} is then equivalent to the following elliptic problem:
\beq\label{elliptic1}
\left\{
\begin{array}{l}
     -\div(E\nabla\vr)=-\div F \\
     \vr|_{z=0}=0\\[5pt]
     (E\nabla\vr\cdot e_3)|_{z=-1}=F_3^{b,2}+g
\end{array}
\right.
\eeq
where $F_3^{b,2}=F^{b,2}\cdot e_3.$
In this paragragh, we study the elliptic equations for a given time $t.$
 
\begin{lem}[Elliptic estimates]\label{lemelliptic}
  Suppose that $\il \nabla\vp\il_{\infty,t}\leq 1/c_0,{\p_z\vp}\geq c_0,$ we have the following  estimates:
  for any $k\geq 0,$
\beq\label{elliptic1.5}
   \|\nabla\vr(t)\|_{H_{co}^{k+1}}+
   \|\nabla^2\vr(t)\|_{H_{co}^{k}}\lesssim \Lambda\big(\f{1}{c_0},|h|_{k+2,\infty,t}\big)\big(\|\div F(t)\|_{H_{co}^{k}}+|(F_3^{b,2}+g)(t)|_{\tilde{H}^{k+\f{1}{2}}
   }\big),
  \eeq
 and for $j+l=k,l\geq 1,j\geq 0,$
  \beq \label{elliptic2}
  \begin{aligned}
    \|\nabla\vr(t)\|_{\cH^{j,l}}&\lesssim\Lambda\big(\f{1}{c_0},\il\nabla\vr\il_{[\f{k}{2}]-1,\infty,t}+|h |_{[\f{k+3}{2}],\infty,t}\big)|h(t)|_{\tilde{H}^{k+\f{1}{2}}}\\
    &
    \qquad+\Lambda\big(\f{1}{c_0},|h |_{[\f{k+3}{2}],\infty,t}\big)
    \big(\|F(t)\|_{\cH^{j,l}}+|g(t)|_{\tilde{H}^{k-\f{1}{2}}}\big),
\end{aligned}
  \eeq
  \beq\label{elliptic2.5}
  \begin{aligned}
      \|\nabla\vr(t)\|_{\cH^{j,l}}&\lesssim
     \Lambda\big(\f{1}{c_0},\il\nabla\vr\il_{[\f{k}{2}]-1,\infty,t}+|h |_{[\f{k+3}{2}],\infty,t}\big)|h(t)|_{\tilde{H}^{k+\f{1}{2}}}\\
      &+\Lambda\big(\f{1}{c_0},|h |_{[\f{k+3}{2}],\infty,t}\big)\big(\|\div F(t)\|_{\cH^{j,l-1}}+|(F_3^{b,2},g)(t)|_{\tilde{H}^{k-\f{1}{2}}}\big),
 \end{aligned}
  \eeq
 \beq\label{elliptic3}
 \begin{aligned}
  \|\nabla^2\vr(t)\|_{\cH^{j,l}}&\lesssim 
 \Lambda\big(\f{1}{c_0}, |h |_{[\f{k+5}{2}],\infty,t}\big)\big(\|\div F(t)\|_{\cH^{j,l}}+|(F_3^{b,2}, g)(t)|_{\tilde{H}^{k+\f{1}{2}}}\big)\\
 &+ \Lambda\big(\f{1}{c_0}\il\nabla\vr\il_{[\f{k-1}{2}],\infty,t}+|h |_{[\f{k+5}{2}],\infty,t}\big)\big(\il\nabla\vr\il_{0,\infty,t}|h(t)|_{\tilde{H}^{k+\f{3}{2}}}+|h(t)|_{\tilde{H}^{k+\f{1}{2}}}\big),
\end{aligned} 
 \eeq
   \beq\label{elliptic3.25}
 \begin{aligned}
&\ep^{\f{1}{2}} \|\pt \nabla \vr(t)\|_{\cH^{j,l}}
\lesssim \Lambda\big(\f{1}{c_0},|h|_{k+1,\infty,t}\big)\big(\|\ep^{\f{1}{2}}\pt F(t)\|_{\cH^{j,l}}+\ep^{\f{1}{2}}|\pt g(t)|_{\tilde{H}^{k-\f{1}{2}}})\\
&\qquad\quad+\ep^{\f{1}{2}}\Lambda\big(\f{1}{c_0},\il\nabla\vr\il_{1,\infty,t}+|\pt h|_{k-1,\infty,t}+|h|_{k,\infty,t}\big)(|\pt h(t)|_{\tilde{H}^{k+\f{1}{2}}}+\|\nabla\vr(t)\|_{H_{co}^k}),
 \end{aligned}
 \eeq
 \beq\label{elliptic3.3}
  \begin{aligned}
      &\|{\ep^{\f{1}{2}}}\pt\nabla\vr(t)\|_{\cH^{j,l}}\lesssim \Lambda\big(\f{1}{c_0},|h |_{[\f{k+3}{2}],\infty,t}\big)\big(\|\ep^{\f{1}{2}}\pt\div F(t)\|_{\cH^{j,l-1}}+|\ep^{\f{1}{2}} \pt(F_3^{b,2}, g)
      (t)|_{\tilde{H}^{k-\f{1}{2}}}\big)
    \\
      &\qquad  +\Lambda\big(\f{1}{c_0}, \|\ep^{-\f{1}{2}}\nabla\vr\|_{[\f{k}{2}],\infty,t}+|(h,\pt h,)|_{[\f{k+3}{2}],\infty,t}\big)(|(\ep\pt h,h)(t)|_{\tilde{H}^{k+\f{1}{2}}}+\ep^{\f{1}{2}}\|\nabla\vr(t)\|_{H_{co}^k}).
 \end{aligned}
  \eeq
\end{lem}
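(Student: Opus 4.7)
The plan is to prove the six estimates inductively, proceeding from the simplest coercive $H^1$-bound up to the full conormal and time-derivative estimates. First, I would exploit the matrix structure: since $E = (\partial_z\vp)^{-1}PP^*$ with $P$ given by \eqref{defPE}, the assumption $\partial_z\vp\geq c_0$ makes $E$ uniformly elliptic with $L^\infty$ norm controlled by $\Lambda(1/c_0)$, and multiplying the equation by $\varrho$, integrating over $\mS$ with the Dirichlet condition at $z=0$ killing one boundary term and the Neumann-type condition at $z=-1$ producing $\int_{z=-1}(F_3^{b,2}+g)\varrho\,dy$, yields a basic $\|\nabla\varrho\|_{L^2}$ estimate after applying the trace inequality \eqref{trace}. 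This establishes the $k=0$ case.

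For \eqref{elliptic1.5}, I would argue by induction on $k$. For tangential directions $Z_1,Z_2,Z_0=\ep\pt$, the commutator $[Z^\alpha,\div(E\nabla\cdot)]$ produces lower-order terms times derivatives of $\nabla\vp$, which by Lemma \ref{exth} cost $|h|_{H^{k+3/2}}$; standard energy estimates on $Z^\alpha\varrho$ then close the loop, with the boundary contribution handled by the trace inequality applied to $(F_3^{b,2}+g)$. The delicate step is the $Z_3=\phi(z)\partial_z$ direction and the full normal second derivative $\partial_z^2\varrho$: I would extract these from the equation itself, writing
\[
E_{33}\partial_z^2\varrho = \partial_z F_3 + \partial_y F' - \partial_z E_{33}\,\partial_z\varrho - (\text{tangential derivatives of }E_{ij}\partial_j\varrho) - \text{div}(E\cdot)\text{ cross-terms},
\]
so that the normal derivative is recovered from tangential norms already estimated and from $\|\div F\|_{H_{co}^k}$, while the coefficient $E_{33}^{-1}$ is controlled via Lemma \ref{lemfpzphi}. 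Iterating in the number of $\partial_z$'s produces \eqref{elliptic1.5}.

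For \eqref{elliptic2} and \eqref{elliptic2.5}, the key distinction is that we want to avoid one derivative on $F$ (or on $g$). I would instead perform energy estimates directly on $Z^\alpha\varrho$ by integrating against $Z^\alpha\varrho$ and moving one derivative back onto the test function, so that only $F$ (not $\div F$) appears. The loss is that the commutators $[Z^\alpha,E]\nabla\varrho$ must now be expanded carefully: using the splitting identity \eqref{usefulidentity}, the worst term is $\nabla\varrho\cdot Z^\alpha \nabla\vp$ whose $L^2$ norm is bounded by $\|\nabla\varrho\|_{L^\infty}|h|_{\tilde H^{k+1/2}}$ via Lemma \ref{exth}, which is precisely the origin of the $\il\nabla\varrho\il_{\infty}|h|_{\tilde H^{k+1/2}}$ term in the estimate. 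Lower-order commutators are absorbed using the crude product estimate \eqref{crudepro}, and the boundary term is treated by trace. Estimate \eqref{elliptic2.5} follows by combining \eqref{elliptic2} with the equation to trade one normal derivative on $F$ against $\div F$ and $\nabla\varrho$. Estimate \eqref{elliptic3} is then obtained from \eqref{elliptic1.5} applied after one additional derivative, tracking the extra loss on $|h|_{\tilde H^{k+3/2}}$ via the same commutator mechanism.

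For the time-derivative versions \eqref{elliptic3.25} and \eqref{elliptic3.3}, I would apply $\ep^{1/2}\partial_t$ to the elliptic system \eqref{elliptic1}, producing a new elliptic equation for $\ep^{1/2}\partial_t\varrho$ with source
\[
-\div(\ep^{1/2}\partial_t F) + \div\bigl((\ep^{1/2}\partial_t E)\nabla\varrho\bigr),
\]
and apply \eqref{elliptic2} (resp.\ \eqref{elliptic2.5}) to this new equation. The commutator term $(\ep^{1/2}\partial_t E)\nabla\varrho$ is estimated by noting $\ep^{1/2}\partial_t E \sim \ep^{1/2}\partial_t\nabla\vp/\partial_z\vp$, which by Lemma \ref{exth} is bounded in terms of $\ep^{1/2}|\partial_t h|_{\tilde H^{k+1/2}}$ and lower-order $L^\infty$ norms of $\nabla\varrho$, accounting for the extra terms on the right-hand sides of \eqref{elliptic3.25}–\eqref{elliptic3.3}. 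The main obstacle throughout will be the bookkeeping of the precise regularity level of $h$ needed at each step and how the $L^\infty$ norm of $\nabla\varrho$ enters only at the top order, rather than at intermediate orders; this requires organizing the induction so that the $\Lambda(1/c_0,\il\nabla\varrho\il_{[k/2]-1,\infty,t})$ prefactor, and not a stronger pointwise norm, multiplies $|h|_{\tilde H^{k+1/2}}$.
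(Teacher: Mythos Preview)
Your proposal is essentially correct and follows the same strategy as the paper: variational estimate for the base case, commutator-and-induction for \eqref{elliptic2}--\eqref{elliptic2.5}, recovery of $\partial_z^2\varrho$ from the equation for \eqref{elliptic3}, and application of $\ep^{1/2}\partial_t$ plus re-use of earlier estimates for \eqref{elliptic3.25}--\eqref{elliptic3.3}.

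One technical device you omit is worth flagging. When $Z^\alpha$ contains powers of $Z_3=\phi(z)\partial_z$, applying $Z^\alpha$ to $\div(E\nabla\varrho)$ does not preserve the divergence form, since $[Z_3,\partial_z]\neq 0$. The paper avoids this by introducing the modified field $\tilde Z_3=Z_3+\phi'(z)\,\text{Id}$, which satisfies $\tilde Z_3\partial_z=\partial_z Z_3$; applying $\tilde Z^\alpha$ to the equation then yields $\div\bigl(Z^\alpha(E\nabla\varrho)\bigr)$ plus controllable remainders of the form $(\tilde Z^\alpha-Z^\alpha)(\cdot)$, which are lower order by \eqref{diffofvectors}. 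This produces directly the coercive identity \eqref{sec3:eq0} upon testing against $Z^\alpha\varrho$. Your approach of ``commuting and collecting lower-order terms'' would ultimately work, but the $\tilde Z_3$ device gives a cleaner organization of exactly which remainder terms appear and why they are harmless. Also, your derivation of \eqref{elliptic2.5} from \eqref{elliptic2} is not quite how the paper does it: rather than combining with the equation, the paper observes that the volume term $\int Z^\alpha F\cdot\nabla Z^\alpha\varrho$ in \eqref{sec3:eq0} can be integrated by parts differently (according to whether $\alpha_3=0$ or not) to yield $\|\div F\|_{\cH^{j,l-1}}$ plus the boundary trace of $F_3$, which explains the form of the right-hand side of \eqref{elliptic2.5}.
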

\begin{rmk}
We shall use \eqref{elliptic3.25} when $k\leq m-3$ since as will be seen later, $|h|_{m-2,\infty,t}$ can be uniformly controlled. The inequality \eqref{elliptic3.3} will be used when $m-3\leq k\leq m-1.$ 
\end{rmk}
\begin{proof}
We first notice that by using assumptions: $\il \nabla\vp\il_{\infty,t}\leq 1/c_0,{\p_z\vp}\geq c_0,$ $E$ is uniformly elliptic, that is, one can find $\iota(1/c_0)$ such that for any vectors $X\in\mR^3,$ $E X\cdot X \geq \iota |X|^2.$
The inequality \eqref{elliptic1.5} can be proved easily by the variational arguments
and the use of  Poincar\'e inequality:
$$\|\vr(t)\|_{L^2(\mS)}\leq C\|\nabla\vr(t)\|_{L^2(\mS)}.$$
Note that the generic constant $C$ is independent of $t$ and $\ep.$
More precisely, by testing \eqref{elliptic1} by $\vr(t),$ we easily get that:
\beqs
\begin{aligned}
\delta\|\nabla\vr(t)\|_{L^2(\mS)}\leq\int_{\mS}E\nabla\vr(t)\cdot\nabla \vr(t)\,\d x&=-\int_{\mS}\vr(t) \div F(t)\,\d x+\int_{z=-1}(F_3^{b,2}+g)(t)\vr(t)\,\d y\\
&\leq \f{\delta}{2}\|\nabla\vr(t)\|_{L^2(\mS)}+C_{\delta}(\|\div F(t)\|_{L^2(\mS)}+|(F_3^{b,2},g)(t)|_{H^{-\f{1}{2}}}).
\end{aligned}
\eeqs
The estimates of the higher-order norms $\|\nabla\vr(t)\|_{H^{k+1}}$ can be obtained again from variational arguments and commutator estimates. We skip them since they are essentially included in the proof of other inequalities (for instance \eqref{elliptic2} and \eqref{elliptic3}).

We now begin to prove \eqref{elliptic2}.
Let $\alpha=(j,\alpha'),Z^{\alpha}=(\ep\pt)^j Z_1^{\alpha_1}Z_2^{\alpha_2}Z_3^{\alpha_3}.$ 
If $\alpha_3\neq 0,$ taking $Z^{\alpha}$ derivatives on the equation shall destroy the divergence form. The trick to avoid this problem is to use another vector field $\tilde{Z}_3=Z_3+\p_z\phi \text{Id}$, such that:
$\tilde{Z}_3\p_z=\p_z Z_3.$ By induction, we have for any $\alpha_3\geq 1,$ $\tilde{Z}_3^{\alpha_3}\p_z=\p_z Z_3^{\alpha_3},$ which yields
$$\tilde{Z}^{\alpha}\p_z=\colon (\ep\pt)^j Z_1^{\alpha_1}Z_2^{\alpha_2}\tilde{Z_3}^{\alpha_3}\p_z=\p_z Z^{\alpha}.$$
It is useful to notice further that for any $f,$
\beq\label{diffofvectors}
\|(\tilde{Z}^{\alpha}-Z^{\alpha})f(t)\|_{L^2(\mS)}\lesssim \|f(t)\|_{\cH^{j,l-1}}.
\eeq
Taking $\tilde{Z}^{\alpha}$ derivative on the equation \eqref{elliptic1}, we find that:
\beq\label{highelliptic}
\left\{
\begin{array}{l}
     -\div\big(E (Z^{\alpha}\nabla\vr)\big)=\div ([Z^{\alpha},E]\nabla\vr-Z^{\alpha}F)+\div\big(\tilde{Z}^{\alpha}-Z^{\alpha})[(E\nabla\vr)_{\tau}-F_{\tau}]\big),\\
     Z^{\alpha}\vr|_{z=0}=0,\\[5pt]
   Z^{\alpha} (E\nabla\vr)\cdot e_3|_{z=-1}=\mathbb{I}_{\{\alpha_3= 0\}} Z^{\alpha}(F_3^{b,2}+g).
\end{array}
\right.
\eeq
Note that we denote by $X_{\tau}=(X_1,X_2, 0)^{t}$ the horizontal components of a three dimensional vector $X.$
Testing equation \eqref{highelliptic} by $Z^{\alpha}\vr,$ we obtain: 
\beq\label{sec3:eq0}
\begin{aligned}
&\delta\|Z^{\alpha}\nabla\vr\|_{L^2}^2\leq \int_{\mS} E Z^{\alpha}\nabla\vr Z^{\alpha}\nabla\vr\,\d x\\
=&\int_{\mS} E Z^{\alpha}\nabla\vr\cdot [Z^{\alpha},\nabla]\vr\,\d x-\int_{\mS}[Z^{\alpha},E]\nabla\vr\cdot\nabla Z^{\alpha}\vr\,\d x\\
-&\int_{\mS}
(\tilde{Z}^{\alpha}-Z^{\alpha})\big((E\nabla\vr)_{\tau}-F_{\tau}\big)\cdot \nabla Z^{\alpha}\vr\,\d x+\int_{\mS}Z^{\alpha}F\cdot\nabla Z^{\alpha}\vr\,\d x-\int_{z=-1}\mathbb{I}_{\{\alpha_3= 0\}}Z^{\alpha}g Z^{\alpha}\vr\,\d y.
\end{aligned}
\eeq
Combined with Young's inequality, property \eqref{diffofvectors} and the trace inequality \eqref{trace}, this yields
\beq\label{ineqelliptic}
\|Z^{\alpha}\nabla\vr(t)\|_{L^2(\mS)}^2\lesssim \|F(t)\|_{\cH^{j,l}}^2+|g(t)|_{\tilde{H}^{k-\f{1}{2}}}^2+
\|(\nabla\vr,E\nabla\vr)(t)\|_{\cH^{j,l-1}}^2+\|[Z^{\alpha},E]\nabla\vr(t)\|_{L^2(\mS)}^2.
\eeq
It follows from the product and commutator estimates \eqref{crudepro}, \eqref{crudecom} that:
\begin{equation}\label{sec3:eq1} 
\begin{aligned}
\|\nabla\vr(t)\|_{\tilde{H}^{j,l}}&\leq \Lambda(1/c_0)
\big(\|F(t)\|_{\cH^{j,l}}+|g(t)|_{\tilde{H}^{k-\f{1}{2}}}+\|\nabla\vr(t)\|_{\cH^{j,l-1}}\\
&+\|\nabla\vr(t)\|_{\cH^{j,l-1}\cap\cH^{j-1,l}}
\|E\|_{[\f{k+1}{2}],\infty,t}+\|E(t)\|_{\tilde{H}^{j,l}}\il\nabla\vr\il_{[\f{k}{2}]-1,\infty,t}
\big).
\end{aligned}
\end{equation}
By Lemma \ref{exth} and the expression of $E$ in \eqref{defPE}, we get
\beq\label{esofE}
\il E\il_{n,\infty,t}\lesssim\Lambda \big(\f{1}{c_0}, |h|_{n+1,\infty,t}\big),\quad \|E(t)\|_{\cH^{j,l}}\lesssim \Lambda \big(\f{1}{c_0}, |h|_{[\f{k}{2}]+1,\infty,t}\big)|h(t)|_{\tilde{H}^{k+\f{1}{2}}}.
\eeq
Inserting \eqref{esofE} into \eqref{sec3:eq1}, we arrive at:
\beq
\begin{aligned}
\|\nabla\vr(t)\|_{\tilde{H}^{j,l}}&\leq \Lambda\big(\f{1}{c_0},\il\nabla\vr\il_{[\f{k}{2}]-1,\infty,t}+|h |_{[\f{k+3}{2}],\infty,t}\big)|h(t)|_{\tilde{H}^{k+\f{1}{2}}}\\
&+\Lambda\big(\f{1}{c_0},|h |_{[\f{k+3}{2}],\infty,t}\big) \big(\|F(t)\|_{\cH^{j,l}}+|g(t)|_{\tilde{H}^{k-\f{1}{2}}}+\|\nabla\vr(t)\|_{\cH^{j,l-1}\cap\cH^{j-1,l}}\big).
\end{aligned}
\eeq
The inequality \eqref{elliptic2} then follows by induction on $j$ and $l.$

To get
\eqref{elliptic2.5}, it suffices to observe that the last three terms in \eqref{sec3:eq0} can indeed be replaced by:
\beqs
\begin{aligned}
&\int_{\mS}Z^{\tilde{\alpha}}\div F\p_y Z^{\alpha}\vr\,\d x-\int_{z=-1}Z^{\alpha}(F_3+g)Z^{\alpha}\vr\,\d y,\quad \text{if}\quad \alpha_3=0,Z^{\alpha}=\p_y Z^{\tilde{\alpha}}.\\
&-\int_{\mS}
(\tilde{Z}^{\alpha}-Z^{\alpha})(E\nabla\vr)_h\cdot \nabla Z^{\alpha}\vr\,\d x\int_{\mS}Z^{\tilde{\alpha}}\div F (Z_3+\p_z\phi) (Z^{\alpha}\vr)\,\d x, \quad \text{if}\quad \alpha_3\neq 0,Z^{\alpha}=Z_3 Z^{\tilde{\alpha}}.
\end{aligned}
\eeqs

To prove \eqref{elliptic3}, we first estimate $\|\p_y\nabla\vr(t)\|_{\cH^{j,l}}$ and then use the equation itself to recover 
$\|\p_z^2\vr(t)\|_{\cH^{j,l}}.$
The estimate of $\|\p_y\nabla\vr(t)\|_{\cH^{j,l}}$
is almost identical 
to that of \eqref{elliptic2.5}. For this one, we only need to 
distinguish the highest derivatives hitting on $E$ (or finally on $h$). 
Hence, when estimating the term $[Z^{\alpha}\p_y, E]\nabla\vr,$
we write 
\beqs
[Z^{\alpha}\p_y, E]\nabla\vr=(Z^{\alpha}\p_y E)\nabla\vr+\text{ other terms}
\eeqs
and control the first term as
\beqs
\|(Z^{\alpha}\p_y E)\nabla\vr(t)\|_{L^2(\mS)}
\lesssim \il\nabla\vr\il_{0,\infty,t}\Lambda(\f{1}{c_0},|h|_{[\f{k}{2}]+2,\infty,t})|h(t)|_{\tilde{H}^{k+\f{3}{2}}}.
\eeqs

We now sketch the proof of \eqref{elliptic3.25} and \eqref{elliptic3.3}. For \eqref{elliptic3.25}, 
we first have the following inequality analogues to \eqref{ineqelliptic}.
\beqs\label{ineqelliptic-1}
\begin{aligned}
\|\ep^{\f{1}{2}}Z^{\alpha}\pt\nabla\vr(t)\|_{L^2(\mS)}^2&\lesssim \|\ep^{\f{1}{2}} \pt F(t)\|_{\cH^{j,l}}^2+|\ep^{\f{1}{2}} \pt g(t)|_{\tilde{H}^{k-\f{1}{2}}}^2\\
&+\|\ep^{\f{1}{2}} \pt(\nabla\vr,E\nabla\vr)(t)\|_{\cH^{j,l-1}}^2+\|[\ep^{\f{1}{2}} \pt Z^{\alpha},E]\nabla\vr(t)\|_{L^2(\mS)}^2,
\end{aligned}
\eeqs
where the last two terms can be bounded in a rather rough way:
\beqs
\begin{aligned}
&\|\ep^{\f{1}{2}}\pt (E\nabla\vr)(t)\|_{\cH^{j,l}}\lesssim \| \ep^{\f{1}{2}}\pt\nabla\vr(t)\|_{\cH^{j,l-1}}\Lambda(\f{1}{c_0},|h|_{k,\infty,t})\\
&\qquad\qquad
+\ep^{\f{1}{2}}\Lambda\big(\f{1}{c_0}, \il\nabla\vr\il_{0,\infty,t}+|\pt h|_{k-1,\infty,t}\big)(|\pt h(t)|_{\tilde{H}^{k-\f{1}{2}}}+\|\nabla\vr(t)\|_{H_{co}^{k-1}}),
\end{aligned}
\eeqs
\beqs
\begin{aligned}
\ep^{\f{1}{2}}\|[\pt Z^{\alpha},E]\nabla\vr(t)\|_{L^2(\mS)}&\leq 
\ep^{\f{1}{2}}\|Z^{\alpha}(\pt E\nabla\vr)(t)\|_{L^2(\mS)}+\ep^{\f{1}{2}}\|[Z^{\alpha},E]\pt\nabla\vr(t)\|_{L^2(\mS)}\\
&\lesssim  \il\ep^{\f{1}{2}}\pt \nabla\vr\il_{\cH^{j,l-1}\cap \cH^{j-1,l}}\Lambda\big(\f{1}{c_0},|h|_{[k,\infty,t}\big)\\
&\quad+\ep^{\f{1}{2}}\Lambda\big(\f{1}{c_0},\il\nabla\vr\il_{1,\infty,t}+|\pt h|_{k-1,\infty,t}\big)(|\pt h(t)|_{\tilde{H}^{k+\f{1}{2}}}+\|\nabla\vr(t)\|_{H_{co}^k}),
\end{aligned}
\eeqs
The inequality \eqref{elliptic3.25} then follows from induction on $j,l.$
For \eqref{elliptic3.3}, similar to \eqref{elliptic2.5}, we have:
\beqs\label{ineqelliptic-2}
\begin{aligned}
\|\ep^{\f{1}{2}}Z^{\alpha}\pt\nabla\vr(t)\|_{L^2(\mS)}^2&\lesssim \|\ep^{\f{1}{2}} \pt \div F(t)\|_{\cH^{j,l}}^2+|\ep^{\f{1}{2}}(F_3^{b,2},\pt g)(t)|_{\tilde{H}^{k-\f{1}{2}}}^2\\
&\quad +\|\ep^{\f{1}{2}} \pt(\nabla\vr,E\nabla\vr)(t)\|_{\cH^{j,l-1}}^2+\|[\ep^{\f{1}{2}} \pt Z^{\alpha},E]\nabla\vr(t)\|_{L^2(\mS)}^2.
\end{aligned}
\eeqs
The last two terms are bounded as 
\beqs
\begin{aligned}
\|\ep^{\f{1}{2}} &\pt(\nabla\vr,E\nabla\vr)(t)\|_{\cH^{j,l-1}}^2+\|[\ep^{\f{1}{2}} \pt Z^{\alpha},E]\nabla\vr(t)\|_{L^2(\mS)}^2\\
&\lesssim \ep^{\f{1}{2}}\|\pt\nabla\vr(t)\|_{\cH^{j,l-1}\cap \cH^{j-1,l}}\Lambda\big(\f{1}{c_0},|h|_{[\f{k+3}{2}],\infty,t}\big)\\
&+\Lambda\big(\f{1}{c_0}, \|\ep^{-\f{1}{2}}\nabla\vr\|_{[\f{k}{2}],\infty,t}+|(\pt h, h)|_{[\f{k+3}{2}],\infty,t})\big(|(\ep\pt h,h)(t)|_{\tilde{H}^{k+\f{1}{2}}}+\ep^{\f{1}{2}}\|\nabla\vr(t)\|_{H_{co}^k}\big).
\end{aligned}
\eeqs
We obtain \eqref{elliptic3.3} again by induction on $j$ and $l.$

\end{proof}
\begin{rmk}
Similar to 
\eqref{elliptic2}, \eqref{elliptic3.3} the following estimate also hold, for $j+l=k\geq 3,$
\beq\label{elliptic-useful}
\begin{aligned}
 \|\nabla\vr(t)\|_{H_{co}^k}&\lesssim 
 \Lambda\big(\f{1}{c_0},|h|_{k-n,\infty,t} \big)\big(\|F(t)\|_{H_{co}^{k}}+|g(t)|_{\tilde{H}^{k-\f{1}{2}}}\big)\\
 &+\il\nabla \vr\il_{n,\infty,t}\Lambda\big(
 \f{1}{c_0},|h|_{[\f{k}{2}]+1,\infty,t}\big)|h(t)|_{\tilde{H}^{k+\f{1}{2}}}\, (n=0,1),
 \end{aligned}
 \eeq
\beq\label{elliptic2.5-1}
  \begin{aligned}
    & \ep^{\f{1}{2}} \|\pt\nabla\vr(t)\|_{\cH^{j,l}}
    +\ep^{\f{1}{2}}\|\pt \nabla^2\vr(t)\|_{\cH^{j,l-1}}\\
  &  \lesssim \Lambda\big(\f{1}{c_0},|h |_{k,\infty,t}\big)\big(\|\ep^{\f{1}{2}}\pt F(t)\|_{\cH^{j}} + \|\ep^{\f{1}{2}}\pt\div F(t)\|_{\cH^{j,l-1}}\mathbb{I}_{\{l\geq 1\}}+|\ep^{\f{1}{2}}\pt(F_3^{b,2},g)(t)|_{\tilde{H}^{k-\f{1}{2}}}\big)\\
  &+ \Lambda\big(\f{1}{c_0},|h |_{k,\infty,t}\big)\il\ep^{\f{1}{2}}\pt\nabla\vr\il_{0,\infty,t}|h(t)|_{\tilde{H}^{k+\f{1}{2}}}\\
     & +\ep^{\f{1}{2}}
     \Lambda\big(\f{1}{c_0},\il \ep^{-\f{1}{2}}
     \nabla\vr\il_{[\f{k+1}{2}],\infty,t}+|(h,\pt h) |_{[\f{k+3}{2}],\infty,t}\big)\big(|\pt h(t)|_{\tilde{H}^{k+\f{1}{2}}}+\|
     \nabla\vr(t)\|_{H_{co}^k}\big).
 \end{aligned}
  \eeq
\end{rmk}
\begin{cor}
Let $\nabla^{\vp}\Psi=\bbq u$ be the compressible part of the velocity, we have the following two estimates:
\beq\label{sec-normal-Psi}
\begin{aligned}
\|\nabla\nabla^{\vp}\Psi\|_{L_t^2H_{co}^{m-1}}+\|\nabla^{\vp}\Psi\|_{L_t^2H_{co}^{m}}&\lesssim  (T+\ep)^{\f{1}{2}}\lae,\\
\end{aligned}
\eeq
\beq\label{ptpsiL2}
\|\ep^{\f{1}{2}}\pt\nabla^{\vp}\Psi\|_{L_t^2H_{co}^{m-1}}\lesssim \Lambda\big(\f{1}{c_0},|h|_{L_t^{\infty}\tilde{H}^{m-\f{1}{2}}}\big)\|\ep^{\f{1}{2}}\pt\div^{\vp} u\|_{L_t^2H_{co}^{m-2}}+(T+\ep)^{\f{1}{2}}\lae,
\eeq
\beq\label{psiLinfty}
\begin{aligned}
&\ep^{\f{1}{2}}\|\pt\nabla^{\vp}\Psi\|_{L_t^{\infty}H_{co}^{m-2}}+\ep^{\f{1}{2}}\|\pt\nabla\nabla^{\vp}\Psi\|_{L_t^{\infty}H_{co}^{m-3}}\\
&\lesssim \Lambda\big(\f{1}{c_0},|h|_{L_t^{\infty}\tilde{H}^{m-\f{1}{2}}}\big)\big(\|\ep^{\f{1}{2}}\pt\div^{\vp}u\|_{L_t^{\infty}H_{co}^{m-3}}+\|\ep^{\f{1}{2}}\pt u\|_{L_t^{\infty}\cH^{m-2}}\big)
+(T+\ep)^{\f{1}{2}}\lae.
\end{aligned}
\eeq
\end{cor}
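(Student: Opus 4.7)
The strategy is to apply Lemma \ref{lemelliptic} to the Dirichlet--Neumann problem \eqref{def-compressible} defining $\Psi$. Written in the form \eqref{elliptic1} this corresponds to the choice $F=Pu$ and $g=0$, and the Neumann boundary data at $z=-1$ vanishes automatically because $u_3|_{z=-1}=0$ together with $\p_{y}\vp|_{z=-1}=\p_y\eta(1+z)|_{z=-1}=0$. Since $\nabla^{\vp}\Psi=\f{1}{\p_z\vp}P^{\star}\nabla\Psi$, bounds on $\nabla^{\vp}\Psi$ and $\nabla\Psi$ are equivalent modulo factors of $\nabla\vp$ controlled through Lemma \ref{exth} and the product estimates of Lemma \ref{lemcrudepro}.

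For \eqref{sec-normal-Psi}, I would apply \eqref{elliptic-useful} (supplemented by \eqref{elliptic1.5} for the highest order) pointwise in $t$ at order $k=m$: the source $\|F(t)\|_{H_{co}^{m}}$ is reduced via the identity $\div(Pu)=\p_z\vp\,\div^{\vp}u$ to $\|\div^{\vp}u(t)\|_{H_{co}^m}$ up to terms involving $\|\nabla\vp\|_{H_{co}^m}\lesssim |h(t)|_{\tilde{H}^{m+\f12}}$. Integrating in time and invoking $\|\div^{\vp}u\|_{L_t^2H_{co}^{m-1}}\lesssim \ep^{\f12}\cN^{\ep}_{m,T}$ (from the definition of $\cE_{high,m,T}^{\ep}$) together with $|h|_{L_t^2\tilde{H}^{m+\f12}}\lesssim T^{\f12}\ep^{-\f12}\cN^{\ep}_{m,T}$ (coming from the $\ep^{\f12}|h^{\ep}|_{L_t^{\infty}\tilde{H}^{m+\f12}}$ term in $\cE_{high,m,T}^{\ep}$) yields the announced factor $(T+\ep)^{\f12}$.

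For \eqref{ptpsiL2} and \eqref{psiLinfty}, I would differentiate the elliptic system in $t$: since $\pt$ commutes with $\div$ and $\nabla$, the problem satisfied by $\pt\Psi$ has source $\pt F$ plus a commutator $[\pt,E]\nabla\Psi$ in which $\pt$ falls on $\vp$. Applying \eqref{elliptic2.5-1} at the appropriate level ($k=m-1$ for the $L_t^2$ bound, $k=m-2$ and $k=m-3$ for the $L_t^{\infty}$ bounds) and isolating the contribution of $\ep^{\f12}\pt\div^{\vp}u$ gives the stated leading term. The residual commutator, proportional to $\ep^{\f12}\pt\nabla\vp$, is controlled via Lemma \ref{exth} by $\ep^{\f12}|\pt h|_{\tilde{H}^{m-\f12}}$; similarly, the contribution $\|\nabla\vr\|_{H_{co}^k}$ appearing on the right-hand side of \eqref{elliptic2.5-1} is absorbed using \eqref{sec-normal-Psi}, producing the remainder $(T+\ep)^{\f12}\lae$.

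The delicate point will be the bookkeeping at the top order: several contributions in the elliptic estimates involve $|h|_{\tilde{H}^{m+\f12}}$, a quantity that is \emph{not} uniformly bounded, only controlled with a weight $\ep^{\f12}$ in $\cE^{\ep}_{high,m,T}$. The smallness factor $(T+\ep)^{\f12}$ in the final bound is precisely what reconciles these non-uniform quantities with the uniform ones; it arises either from integrating in time over $[0,T]$ or from pairing the top-order surface norm with its $\ep^{\f12}$ weight, in both cases using that one of the two multiplicative factors in a critical product is small.
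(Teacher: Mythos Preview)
Your overall strategy—apply the elliptic estimates of Lemma \ref{lemelliptic} to the system \eqref{def-compressible}—is the paper's approach, but the execution has a gap that prevents the bound from closing.

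First, a bookkeeping slip: you cite \eqref{elliptic-useful}, whose source term is $\|F\|_{H_{co}^k}$, and then say this ``is reduced via $\div(Pu)=\p_z\vp\,\div^{\vp}u$ to $\|\div^{\vp}u\|_{H_{co}^m}$''. That identity relates $\div F$, not $F$, to $\div^{\vp}u$; the quantity $\|Pu\|_{H_{co}^m}$ is controlled only by $\|u\|_{H_{co}^m}$ (plus surface terms), which is $\cO(\ep^{-\f12})$, not small. The estimates that actually involve $\div F$ are \eqref{elliptic2.5} (for $\nabla\Psi$ at order $m$) and \eqref{elliptic3} (for $\nabla^2\Psi$ at order $m-1$); note also that \eqref{elliptic-useful} and \eqref{elliptic1.5} require $|h|_{m-1,\infty,t}$ or $|h|_{m+1,\infty,t}$ at the top order, whereas only $|h|_{m-2,\infty,t}$ sits in $\cA_{m,T}$.

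More importantly, you are missing the mechanism that puts the needed $\ep^{\f12}$ in front of $|h|_{\tilde{H}^{m+\f12}}$. In \eqref{elliptic3} the dangerous term $|h(t)|_{\tilde{H}^{k+\f32}}$ appears multiplied by $\il\nabla\vr\il_{0,\infty,t}=\il\nabla\Psi\il_{0,\infty,t}$, and the paper proves as \eqref{Linftynablapsi} that this $L^{\infty}$ norm is itself $\cO(\ep^{\f12})$: Sobolev embedding together with the \emph{low-order} estimate \eqref{elliptic1.5} give $\il\nabla\Psi\il_{[\f{m}{2}]-1,\infty,t}\lesssim \Lambda(|h|_{[\f{m}{2}]+2,\infty,t})\|\div^{\vp}u\|_{L_t^{\infty}H_{co}^{[\f{m}{2}]}}$, and the latter is $\cO(\ep^{\f12})$ from the definition of $\cE_{high,m,T}^{\ep}$. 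This is exactly the ``small multiplicative factor'' you allude to in your last paragraph, but without naming it the argument cannot close: your displayed bound $|h|_{L_t^2\tilde{H}^{m+\f12}}\lesssim T^{\f12}\ep^{-\f12}\cN_{m,T}^{\ep}$ leaves an uncompensated $\ep^{-\f12}$. The same observation \eqref{Linftynablapsi} is also what feeds into \eqref{elliptic3.3} and \eqref{elliptic2.5-1} to give \eqref{ptpsiL2} and \eqref{psiLinfty}.
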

\begin{proof}
We begin with the proof of \eqref{sec-normal-Psi}. Let us detail the estimate of $\|\nabla\nabla^{\vp}\Psi\|_{\hco^{m-1}}$, the other term can be obtained by similar arguments.
It suffices to show that:
\beq\label{sec-nor-psi-1}
\begin{aligned}
\|\nabla\nabla^{\vp}\Psi\|_{L_t^2H_{co}^{m-1}}&\lesssim \Lambda\big(\f{1}{c_0},|h|_{[\f{m}{2}]+2,\infty,t}\big)\|\div^{\vp}u\|_{L_t^2H_{co}^{m-1}}\\
&+\lae(|h|_{\htlde^{m-\f{1}{2}}}+|\ep^{\f{1}{2}}h|_{L_t^2\tilde{H}^{m+\f{1}{2}}}).
\end{aligned}
\eeq
which leads to \eqref{sec-normal-Psi}.
By definition, $\Psi$ solves the elliptic equation:
 \beq\label{eq-comp}
 \left\{
\begin{array}{l}
 \div(E\nabla\Psi)=\div(P u),\\
 \Psi|_{z=0}=0,\\
 \p_{\bn}\Psi|_{z=-1}=0.
 \end{array}
 \right.
 \eeq
We apply \eqref{elliptic3} for $F=P u, \, \div F=\p_z\vp \,\div^{\vp}u,\, F_3^{b,2}=g=0$ to get:
\begin{align*}
\|\nabla^2\Psi\|_{L_t^2H_{co}^{m-1}}&\lesssim \Lambda\big(\f{1}{c_0},|h|_{[\f{m}{2}]+2,\infty,t}\big)\|\p_z\vp\,\div^{\vp}u\|_{L_t^2H_{co}^{m-1}}\\
&\quad+\Lambda\big(\f{1}{c_0},|h|_{[\f{m}{2}]+2,\infty,t}+\il\ep^{-\f{1}{2}}\nabla\Psi\il_{[\f{m}{2}]-1,\infty,t}\big)
(|h|_{\htlde^{m-\f{1}{2}}}+|\ep^{\f{1}{2}}h|_{L_t^2\tilde{H}^{m+\f{1}{2}}}).
\end{align*}
By the product estimate \eqref{crudepro}, we find
\beq\label{sec3:eq2}
\begin{aligned}
&\|\nabla\nabla^{\vp}\Psi\|_{L_t^2H_{co}^{m-1}}\lesssim \|\nabla^2\Psi\|_{L_t^2H_{co}^{m-1}}+\|\nabla\big(\f{\bN}{\p_z\vp}\p_z\Psi\big)\|_{{L_t^2H_{co}^{m-1}}}\\
&\lesssim \Lambda\big(\f{1}{c_0},|h|_{[\f{m}{2}]+2,\infty,t}\big)\|\div^{\vp}u\|_{L_t^2H_{co}^{m-1}}+\Lambda\big(\f{1}{c_0},|h|_{[\f{m}{2}]+2,\infty,t}+\il\ep^{-\f{1}{2}}(\nabla\Psi,\div^{\vp}u)\il_{[\f{m}{2}]-1,\infty,t}
\big)\cdot\\
&\qquad\qquad\qquad\qquad\qquad\qquad\qquad\qquad\qquad(|h|_{L_t^2\tilde{H}^{m-\f{1}{2}}}+\ep^{\f{1}{2}}|h|_{L_t^2\tilde{H}^{m+\f{1}{2}}}).
\end{aligned}
\eeq
Moreover, the Sobolev embedding \eqref{soblev-embed} combined with the inequality \eqref{elliptic1.5} gives for $k\geq 0,$
\beq\label{Linftynablapsi}
\begin{aligned}
\ep^{-\f{1}{2}}\il\nabla\Psi\il_{[\f{m}{2}]-1,\infty,t}\lesssim \ep^{-\f{1}{2}}
(\|\nabla^2\Psi\|_{L_t^{\infty}H_{co}^{[\f{m}{2}]}}+\|\nabla\Psi\|_{L_t^{\infty}H_{co}^{[\f{m}{2}]+1}})\\
\lesssim \Lambda\big(\f{1}{c_0},|h|_{[\f{m}{2}]+2,\infty,t}\big)\|\ep^{-\f{1}{2}}\div^{\vp} u\|_{L_t^{\infty}H_{co}^{[\f{m}{2}]}}.
\end{aligned}
\eeq
Plugging this inequality into  \eqref{sec3:eq2}, we arrive at \eqref{sec-nor-psi-1}. 

Moreover, by applying \eqref{elliptic3.3},
\eqref{elliptic2.5-1}, \eqref{Linftynablapsi} to the solution of \eqref{eq-comp}, we get \eqref{ptpsiL2} and \eqref{psiLinfty}.
\end{proof}

\begin{cor}\label{corelliptic}
Consider the elliptic system with nontrivial Dirichlet upper boundary condition:
\beq
\left\{
\begin{array}{l}
     -\div(E\nabla\vr)=-\div F, \\
     \vr|_{z=0}=b,\\
     (E\nabla\vr)\cdot e_3|_{z=-1}=F_3^{b,2}+g.
\end{array}
\right.
\eeq
The following estimates hold:
\beq\label{elliptic4.9}
\begin{aligned}
\il\nabla\vr\il_{\infty,t}
&\lesssim \Lambda(\f{1}{c_0}, |h|_{3,\infty,t})\big(\|\div F\|_{L_t^{\infty}H_{co}^1}+|b|_{L_t^{\infty}H^{\f{5}{2}}}+|g|_{L_t^{\infty}H^{\f{3}{2}}}\big),
\end{aligned}
\eeq
\beq\label{elliptic5}
\begin{aligned}
\ep^{-\f{1}{2}}\|\nabla\vr(t)\|_{\cH^{j,l}}&\lesssim   \Lambda\big(\f{1}{c_0},\il\ep^{-\f{1}{2}}\nabla\vr\il_{[\f{k}{2}]-1,\infty,t}+|h |_{[\f{k+3}{2}],\infty,t}+\ep^{-\f{1}{2}}|b|_{L_t^{\infty}\tilde{H}^{[\f{k}{2}]+1^{+}}}\big)|h(t)|_{\tilde{H}^{k+\f{1}{2}}}
\\
 &\quad+\ep^{-\f{1}{2}}\Lambda\big(\f{1}{c_0}, |h |_{[\f{k+3}{2}],\infty,t})\big(\|F(t)\|_{\cH^{j,l}}+|b(t)|_{\tilde{H}^{k+\f{1}{2}}}+|g(t)|_{\tilde{H}^{k-\f{1}{2}}}
 \big),
  \end{aligned}
  \eeq
  \beq\label{ellipticuseful-1}
  \begin{aligned}
\|\nabla\vr(t)\|_{H_{co}^k}&\lesssim \Lambda\big(\f{1}{c_0}, |h|_{k-j,\infty,t}\big)\big(\|F(t)\|_{H_{co}^k}+|b(t)|_{H^{k+\f{1}{2}}}+|g(t)|_{L_t^{\infty}H^{k-\f{1}{2}}}\big)\\
&\qquad +\Lambda\big(
 \f{1}{c_0},\il\nabla \vr\il_{j,\infty,t}+|h|_{[\f{k}{2}]+1,\infty,t}\big)|h(t)|_{\tilde{H}^{k+\f{1}{2}}}, \, k\geq 2, j=0 \text{ or } 1,
 \end{aligned}
\eeq
\beq\label{elliptic6}
 \begin{aligned}
&\ep^{\f{1}{2}} \|\pt \nabla \vr(t)\|_{H_{co}^k}
\lesssim \Lambda\big(\f{1}{c_0},|h|_{k+1,\infty,t}\big)\big(\|\ep^{\f{1}{2}}\pt F(t)\|_{H_{co}^k}+|\ep^{\f{1}{2}}\pt b(t)|_{\tilde{H}^{k+\f{1}{2}}}+\ep^{\f{1}{2}}|\pt g(t)|_{\tilde{H}^{k-\f{1}{2}}})\\
&\qquad\quad+\ep^{\f{1}{2}}\Lambda\big(\f{1}{c_0},\il\nabla\vr\il_{1,\infty,t}+|\pt h|_{k-1,\infty,t}+|h|_{k,\infty,t}\big)(|\pt h(t)|_{\tilde{H}^{k+\f{1}{2}}}+\|\nabla\vr(t)\|_{H_{co}^k}),
 \end{aligned}
 \eeq
\end{cor}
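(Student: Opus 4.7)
The strategy is to reduce to the homogeneous Dirichlet case already treated in Lemma \ref{lemelliptic} by lifting the upper boundary datum $b$ into the domain. I would define
\[
\vr_b(t,y,z)=\cF^{-1}\bigl(e^{-\delta_0(1+|\xi|^2)z^2}(\cF b)(t,\xi)\bigr),
\]
so that $\vr_b|_{z=0}=b$ and, by exactly the same Plancherel computation as in Lemma \ref{exth}, the lift gains half a derivative:
\[
\|\nabla\vr_b(t)\|_{\cH^{j,l}}\lesssim |b(t)|_{\tilde H^{j+l+\f{1}{2}}},\qquad \|\nabla\vr_b(t)\|_{H_{co}^{k}}\lesssim |b(t)|_{\tilde H^{k+\f{1}{2}}},
\]
with the analogous estimates for $(\ep\pt)^j\nabla\vr_b$ since $\ep\pt$ commutes with the lifting. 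Setting $\tilde\vr=\vr-\vr_b$, the new unknown solves the system \eqref{elliptic1} with source $\tilde F=F-E\nabla\vr_b$, homogeneous Dirichlet datum on $\{z=0\}$, and Neumann source $\tilde F_3^{b,2}+g$ on $\{z=-1\}$. Lemma \ref{lemelliptic} is directly applicable to $\tilde\vr$.

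With this reduction, each of \eqref{elliptic5}, \eqref{ellipticuseful-1}, \eqref{elliptic6} would follow from the corresponding homogeneous estimate (\eqref{elliptic2}, \eqref{elliptic-useful}, \eqref{elliptic3.25}) applied to $\tilde\vr$, combined with the extension bounds above and with \eqref{esofE} for $E$. In \eqref{elliptic5}, the interior source $E\nabla\vr_b$ would be split via the product estimate \eqref{crudepro}: the piece where all derivatives fall on $\nabla\vr_b$ gives the additive contribution $|b(t)|_{\tilde H^{k+\f{1}{2}}}$ inside the factor $\Lambda(\f{1}{c_0},|h|_{[(k+3)/2],\infty,t})(\cdots)$, while the piece where all derivatives hit $E$ produces $|h(t)|_{\tilde H^{k+\f{1}{2}}}$ multiplied by $\il\nabla\vr_b\il_{[k/2]-1,\infty,t}$; the anisotropic Sobolev embedding \eqref{soblev-embed} combined with the half-derivative gain of the lift controls this $L^\infty_{t,x}$ norm by $|b|_{L_t^\infty \tilde H^{[k/2]+1^{+}}}$, which is precisely the source of the new $\ep^{-\f{1}{2}}|b|$ argument inside $\Lambda$. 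The estimate \eqref{elliptic6} is handled identically after replacing $b$ by $\ep^{\f{1}{2}}\pt b$.

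For the pointwise bound \eqref{elliptic4.9}, rather than going through \eqref{elliptic5}, I would apply \eqref{soblev-embed} to dominate $\il\nabla\vr\il_{\infty,t}$ by a product of $L_t^\infty H_{co}^{s_1}$ norms of $\nabla\vr$ and $\nabla^2\vr$ with $s_1+s_2>2$ (in particular $s_1,s_2\leq 2$), and then invoke \eqref{elliptic1.5} at level $k=1$ applied to $\tilde\vr$ together with the $H_{co}^2$--$H_{co}^1$ extension bounds for $\vr_b$, which convert everything into $\|\div F\|_{L_t^\infty H_{co}^1}$, $|b|_{L_t^\infty H^{\f{5}{2}}}$, and $|g|_{L_t^\infty H^{\f{3}{2}}}$. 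The main technical obstacle throughout will be the bookkeeping: each homogeneous estimate applied to $\tilde\vr$ produces extra terms from $E\nabla\vr_b$ that have to be reassembled so that the powers of $\ep$, the regularity index of $h$ in the $L^\infty_{t,x}$-type arguments of $\Lambda$, and the regularity index of $b$ all match the precise form stated, paying particular attention in \eqref{elliptic5} to the fact that the $\ep^{-1/2}$ scaling must be carried inside $\Lambda$ only through the $L^\infty_{t,x}$-norm of $\nabla\vr$ and through $\ep^{-\f{1}{2}}|b|$, and not through any fractional Sobolev norm of $b$ that is not already available.
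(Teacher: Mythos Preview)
Your proposal is correct and follows essentially the same approach as the paper: both lift the Dirichlet datum $b$ via a Fourier-side smoothing extension, subtract it, and apply Lemma \ref{lemelliptic} with the modified source $F-E\nabla\vr_b$. The paper uses the lift $\vr^H(t,y,z)=\cF^{-1}_{\xi\rightarrow y}(e^{-z^2\langle \xi\rangle^2}\hat b(t,\xi))(1+z)$, which differs from yours only by the harmless cutoff factor $(1+z)$ and the choice of $\delta_0$; the subsequent use of the product estimate \eqref{crudepro} and the $L^\infty$ bound $\il\nabla\vr^H\il_{[\f{k}{2}]-1,\infty,t}\lesssim |b|_{L_t^\infty\tilde H^{[\f{k}{2}]+1^+}}$ to absorb the $\ep^{-\f{1}{2}}|b|$ contribution into $\Lambda$ is exactly as you describe.
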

\begin{proof}
We introduce the lifting:
$$\vr^H(t,y,z)=\cF^{-1}_{\xi\rightarrow y}(e^{-z^2\langle \xi\rangle^2}\hat{b}(t,\xi))(1+z),$$
and reformulate the problem as:
\beqs
\left\{
\begin{array}{l}
     -\div(E\nabla\vr^L)=-\div (F-E\nabla\vr^H) \\
     \vr^L|_{z=0}=0\\
     \p_{z}\vr^L|_{z=-1}=(F-E\nabla\vr^H)\cdot e_3+g.
\end{array}
\right.
\eeqs
We apply
Lemma \ref{lemelliptic} with 
$F-E\nabla\vr^H.$ 
Note that we use again the product estimate \eqref{crudepro} to bound $E\nabla\vr^H.$ Moreover, Young's inequality and the definition of $\vr^H$ give:
\beqs
\|\nabla\vr^H(t)\|_{\cH^{j,l}}\lesssim |b(t)|_{\tilde{H}^{j+l+\f{1}{2}}},\quad \il\nabla\vr^H\il_{[\f{k}{2}]-1,\infty,t}\lesssim |b|_{[\f{k}{2}],\infty,t}\lesssim |b|_{L_t^{\infty}\tilde{H}^{[\f{k}{2}]+1^{+}}}.
\eeqs
\end{proof}

\section{Regularity of the surface}
In this section, we prove some regularity properties for the surface $h.$ Here and in the sequel, we will denote $m \geq 7$ 
an integer. We also recall that $\cN_{m,T},\cE_{m,T},\cA_{m,T}$ are defined in \eqref{defcN}. 


 \begin{lem}\label{lemsurface}
The following regularity estimates hold: 
$0<t\leq T,$ 
   \beq\label{surface2}
  |\pt h|_{L_t^{\infty}\tilde{H}^{m-\f{3}{2}}}+\ep^{\f{1}{2}} |\pt h|_{L_t^{\infty}\tilde{H}^{m-\f{1}{2}}}\lesssim  
 \cE_{m,T}+\cE_{m,T}^2,
  \eeq
  \beq\label{surface3}
\ep^{\f{1}{2}}|\pt^2 h|_{\htlde^{m-\f{3}{2}}}+|\ep^{\f{1}{2}}\pt^2 h|_{L_t^{\infty}\tilde{H}^{m-\f{5}{2}}}
+\sum_{k\leq m-1}|\ep^{\f{1}{2}}(\ep\pt)^k\pt^2 h|_{L_t^2H^{-\f{1}{2}}}
 \lesssim \lae,
 \eeq
 \beq\label{surface1}
  \begin{aligned}
  &|h|_{L_t^{\infty}\tilde{H}^{m-\f{1}{2}}}^2+ \ep|h|_{L_t^{\infty}\tilde{H}^{m+\f{1}{2}}}^2
 \lesssim
  Y^2_m(0)+T^{\f{1}{2}}\lae.
  \end{aligned}
  \eeq
 where $\Lambda$ denotes a polynomial that may change according to the contexts.
 \end{lem}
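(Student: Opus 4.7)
The organizing principle is the kinematic surface equation \eqref{surfaceeq2}, which reads $\pt h = u^{b,1}\cdot\bN$ with $\bN=(-\p_1 h,-\p_2 h,1)^t$. Every derivative of $h$ thus costs a boundary derivative of $u$, which can be converted back to conormal norms in $\mS$ via the trace inequality \eqref{trace}; the nonlinearities in $\bN$ are handled by the boundary product estimate \eqref{rough-product-bdry}.

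For \eqref{surface2}, I apply $(\ep\pt)^j\p_y^{\alpha'}$ of appropriate orders to $\pt h = u^{b,1}\cdot\bN$. Combined with \eqref{rough-product-bdry}, this yields the schematic bound
\beqs
|\pt h|_{\tilde H^{m-\f{3}{2}}}+\ep^{\f{1}{2}}|\pt h|_{\tilde H^{m-\f{1}{2}}}\lesssim \bigl(1+|h|_{\tilde H^{m-\f{1}{2}}}\bigr)\bigl(|u^{b,1}|_{\tilde H^{m-\f{3}{2}}}+\ep^{\f{1}{2}}|u^{b,1}|_{\tilde H^{m-\f{1}{2}}}\bigr).
\eeqs
The trace inequality \eqref{trace} reduces these boundary norms of $u$ to the appropriate $L_t^\infty$ conormal norms of $u$ and $\nabla u$ already collected in $\cE_{m,T}$ (most notably $\|u\|_{L_t^\infty H_{co}^{m-1}}$, $\ep^{\f{1}{2}}\|u\|_{L_t^\infty H_{co}^m}$ and $\ep^{\f{1}{2}}\|\nabla u\|_{L_t^\infty H_{co}^{m-1}}$). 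Combining with $|h|_{L_t^\infty\tilde H^{m-\f{1}{2}}}\leq\cE_{m,T}$ gives the bound $\cE_{m,T}+\cE_{m,T}^2$.

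For \eqref{surface3}, differentiate once more in time: $\pt^2 h = \pt u^{b,1}\cdot \bN + u^{b,1}\cdot \pt\bN$. The $\htlde^{m-\f{3}{2}}$ and $L_t^\infty\tilde H^{m-\f{5}{2}}$ norms of $\ep^{\f{1}{2}}\pt^2 h$ are treated exactly as in Step~1, now calling on $\ep^{\f{1}{2}}\|\pt u\|_{L_t^\infty\cH^{m-1}}$ and $\ep^{\f{1}{2}}\|\pt\nabla u\|_{L_t^2\cH^{m-1}\cap L_t^2 H_{co}^{m-2}}$ from $\cE_{m,T}$. The $L_t^2 H^{-\f{1}{2}}$ estimate of $\ep^{\f{1}{2}}(\ep\pt)^k\pt^2 h$ for $k\leq m-1$ is obtained by testing against $\phi\in H^{\f{1}{2}}(\mR^2)$ and invoking the trace $H^1(\mS)\hookrightarrow H^{\f{1}{2}}(\mR^2)$, so that only $L^2(\mS)$-control of $\ep^{\f{1}{2}}(\ep\pt)^k\pt u$ and $\ep^{\f{1}{2}}(\ep\pt)^k\pt\nabla u$ is needed; both lie in $\cE_{m,T}$.

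For \eqref{surface1}, integrate in time: $h(t)=h_0+\int_0^t\pt h\,ds$ yields by Minkowski
\beqs
|h|_{L_t^\infty\tilde H^s}\leq |h_0|_{\tilde H^s}+T^{\f{1}{2}}|\pt h|_{L_t^2\tilde H^s}.
\eeqs
For $s=m-\f{1}{2}$, bound $|\pt h|_{L_t^2\tilde H^{m-\f{1}{2}}}$ as in Step~1 but drawing on the $L_t^2$-in-time conormal estimates $\|\nabla u\|_{L_t^2 H_{co}^{m-1}}$ and $\|u\|_{L_t^\infty H_{co}^{m-1}}$ (combined with $T^{\f{1}{2}}\leq 1$) from $\cE_{m,T}$; for $s=m+\f{1}{2}$ weighted by $\ep^{\f{1}{2}}$, the crucial input is $\ep^{\f{1}{2}}\|\nabla u\|_{L_t^2 H_{co}^m}$, which $\cE_{m,T}$ was specifically designed to contain. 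The data pieces $|h_0|_{\tilde H^{m-\f{1}{2}}}$ and $\ep^{\f{1}{2}}|h_0|_{\tilde H^{m+\f{1}{2}}}$ are among the terms of $Y_m(0)$, giving the stated bound. The principal obstacle is exactly this last top-order estimate: it sits at the edge of the functional framework, accessible only through the $\ep^{\f{1}{2}}$-weighted spatial regularity of $\nabla u$ at order $m$, and without it the surface would lose half a derivative uniformly in $\ep$, preventing closure of the scheme.
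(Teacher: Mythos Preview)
Your treatment of \eqref{surface2} is correct and matches the paper. There are, however, two gaps in the remaining parts.

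\textbf{The estimate \eqref{surface1}.} Your argument writes $h(t)=h_0+\int_0^t \pt h$ and then needs $|\pt h|_{L_t^2\tilde H^{m-\f{1}{2}}}$. But $\pt h=u_3-u_y\cdot\nabla_y h$, and when you apply the product estimate \eqref{rough-product-bdry} to $u_y\nabla_y h$ at regularity $\tilde H^{m-\f{1}{2}}$, one of the two resulting terms is $|u_y|_{\mathrm{low}}\,|\nabla_y h|_{\tilde H^{m-\f{1}{2}}}\sim |u_y|_{\mathrm{low}}\,|h|_{\tilde H^{m+\f{1}{2}}}$. This is a genuine loss of one derivative on $h$, and $|h|_{\tilde H^{m+\f{1}{2}}}$ is only controlled in $\cE_{m,T}$ with an $\ep^{\f{1}{2}}$ weight, so the resulting bound is not uniform in $\ep$. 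The paper avoids this by treating \eqref{surfaceeq2} as a \emph{transport} equation: applying $Z^{\alpha}\Lambda_y^{1/2}$ yields $(\pt+u_y\p_y)(Z^{\alpha}\Lambda_y^{1/2}h)=Z^{\alpha}\Lambda_y^{1/2}u_3+\text{commutators}$, and an $L^2$ energy estimate absorbs the transport term $u_y\p_y$ on the left. Only $|u_3|_{L_t^2\tilde H^{m-\f{1}{2}}}$ and commutators remain on the right, and these are estimated (via \eqref{commutator-R2}, \eqref{rough-com-bdry}, and the trace inequality) without losing a derivative on $h$.

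\textbf{The $L_t^\infty\tilde H^{m-\f{5}{2}}$ part of \eqref{surface3}.} Your phrase ``treated exactly as in Step~1'' hides a regularity gap. A direct application of the trace inequality to $\ep^{\f{1}{2}}\pt u$ at level $m-\f{5}{2}$ would require $\ep^{\f{1}{2}}\|\pt\nabla u\|_{L_t^\infty H_{co}^{m-3}}$, but $\cE_{m,T}$ only contains this quantity at order $m-4$. The paper circumvents this by invoking identity \eqref{nor-nor}, which rewrites $\p_z(u\cdot\bN)$ in terms of $\div^{\vp}u$ and purely tangential derivatives of $u$; the needed control then comes from $\ep^{-\f{1}{2}}\|\div^{\vp}u\|_{L_t^\infty H_{co}^{m-2}}$, which \emph{is} in $\cE_{m,T}$.
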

 \begin{proof}
 \underline{Proof of \eqref{surface2}:}
We have by using the equation \eqref{surfaceeq2}, the product estimate \eqref{rough-product-bdry}
 the trace inequality 
\eqref{trace} and the definition of $\cE_{m,T}$
that:
\beqs
\begin{aligned}
&\ep^{\f{1}{2}}|\pt h|_{L_t^{\infty}\tilde{H}^{m-\f{1}{2}}}=\ep|(u\cdot \bN)|_{L_t^{\infty}\tilde{H}^{m-\f{1}{2}}}\\
&\lesssim \big(1+|u|_{L_t^{\infty}\tilde{H}^{[\f{m-1}{2}]+\f{1}{2}}}+|h|_{L_t^{\infty}\tilde{H}^{[\f{m}{2}]+\f{3}{2}}}\big)|\ep^{\f{1}{2}}(u,\nabla_y h)|_{L_t^{\infty}\tilde{H}^{m-\f{1}{2}}}\\
&\lesssim (1+\cE_{m,T})(\|\ep^{\f{1}{2}}(u,\nabla u)\|_{L_t^{\infty}H_{co}^{m-1}}+\ep^{\f{1}{2}}|h|_{L_t^{\infty}\tilde{H}^{m+\f{1}{2}}})\lesssim 
\cE_{m,T}+\cE_{m,T}^2.
\end{aligned}
\eeqs
Note that we have $[\f{m-1}{2}]+1\leq m-2, [\f{m}{2}]+\f{3}{2}\leq m-\f{1}{2}$ for $m\geq 5.$
The quantity $ |\pt h|_{L_t^{\infty}\tilde{H}^{m-\f{3}{2}}}$ can be dealt with in the same way, we thus omit the proof.\\[5pt]
\underline{Proof of \eqref{surface3}:}
Let us detail the estimates of the first two terms, the last one can  be controlled by similar calculations.
  Again, we use the equation \eqref{surfaceeq2} for $h$, the product estimate \eqref{rough-product-bdry}, the trace inequality
  \eqref{trace} to obtain that
  \begin{align*}
  \ep^{\f{1}{2}}|\pt h|_{\htlde^{m-\f{1}{2}}}
  &\lesssim 
  |(\ep^{\f{1}{2}}\pt u\cdot\bN, u\cdot \ep^{\f{1}{2}}\pt \bN)|_{\htlde^{m-\f{3}{2}}}\\
 &\lesssim 
 |\ep^{\f{1}{2}}\pt u|_{\htlde^{[\f{m-1}{2}]+\f{1}{2}}}|h|_{L_t^{\infty}\tilde{H}^{m-\f{1}{2}}}
 +(1+|h|_{L_t^{\infty}\tilde{H}^{[\f{m}{2}]+\f{3}{2}}})|\ep^{\f{1}{2}}\pt u|_{\htlde^{m-\f{3}{2}}}\\
 &+ |\ep^{\f{1}{2}}\pt h|_{L_t^{\infty}\tilde{H}^{m-\f{1}{2}}}|u|_{\htlde^{[\f{m}{2}]+\f{3}{2}}}
 +|\ep^{\f{1}{2}}\pt h|_{L_t^{\infty}\tilde{H}^{[\f{m}{2}]+\f{1}{2}}}|u|_{\htlde^{m-\f{3}{2}}}
 \lesssim \lae.
  \end{align*}
  For the second term, we use Equation \eqref{surfaceeq2} and the trace inequality to get:
  \begin{align*}
    |\ep^{\f{1}{2}}\pt^2 h|_{L_t^{\infty}\tilde{H}^{m-\f{5}{2}}}\lesssim   \|\ep^{\f{1}{2}}\pt\p_z (u\cdot\bN)\|_{L_t^{\infty}H_{co}^{m-3}}+\|\ep^{\f{1}{2}}\pt (u\cdot\bN)\|_{L_t^{\infty}H_{co}^{m-2}}.
  \end{align*}
  With the aid of identity \eqref{nor-nor} and the product estimate \eqref{crudepro}, we then find that:
  \begin{align*}
  &|\ep^{\f{1}{2}}\pt^2 h|_{L_t^{\infty}\tilde{H}^{m-\f{5}{2}}}\\
  &\lesssim \lca\big(\|\ep^{\f{1}{2}}\pt\div^{\vp}u\|_{L_t^{\infty}H_{co}^{m-3}}+\| (u,\ep^{\f{1}{2}}\pt u) \|_{L_t^{\infty}H_{co}^{m-2}}+|(h,\ep^{\f{1}{2}}\pt h)|_{L_t^{\infty}\tilde{H}^{m-\f{3}{2}}}\big)\\
  &\lesssim \lae.
  \end{align*}
\underline{Proof of \eqref{surface1}.}
We explain the estimate of 
$ |h|_{L_t^{\infty}H^{m-\f{1}{2}}},$
the control of
$\ep^{\f{1}{2}} |h|_{L_t^{\infty}H^{m+\f{1}{2}}}$
being similar.
Acting $Z^{\alpha}\Lambda_y^{\f{1}{2}}  (|\alpha|\leq m-1, \alpha_3=0)$  
 on \eqref{surfaceeq2}, one obtains:
 \beqs
(\pt+u_y\p_y) (Z^{\alpha}\Lambda_y^{\f{1}{2}}h)-Z^{\alpha}\Lambda_y^{\f{1}{2}}u_3=f=\colon [\Lambda_y^{\f{1}{2}},u_y]Z^{\alpha}\p_y h-\Lambda_y^{\f{1}{2}}\big([Z^{\alpha},u_y]\p_y h\big).
 \eeqs
Multiplying this equation by $ Z^{\alpha}\Lambda_y^{\f{1}{2}}h$ and integrating in space and time, we get that:
 \beq\label{EI-h}
 \begin{aligned}
& |Z^{\alpha}\Lambda_y^{\f{1}{2}}h(t)|_{L_y^2}^2\lesssim |Z^{\alpha} h(0)|_{H^{\f{1}{2}}}^2\\
 &+T^{\f{1}{2}}\Lambda(\il u\il_{1,\infty,t})\big(| u_3|_{L_t^2\tilde{H}^{m-\f{1}{2}}}^2+|f|_{L_t^2L_y^2}^2+| h|_{L_t^{\infty}\tilde{H}^{m-\f{1}{2}}}^2\big)
 \end{aligned}
 \eeq
By the trace inequality \eqref{trace},
\beq\label{h-1}
\big| u_3\big|_{L_t^2\tilde{H}^{m-\f{1}{2}}}^2\lesssim \|(u,\nabla u)\|_{L_t^2H_{co}^{m-1}}^2.
\eeq
To estimate the first term in 
$f,$ we apply the
commutator estimate \eqref{commutator-R2}
to get that:
\beq\label{h-2}
\begin{aligned}
|[\Lambda_y^{\f{1}{2}},u_y]Z^{\alpha} \p_y h|_{L_t^2L_y^2}&\lesssim |Z^{\alpha}\p_y h|_{L_t^2H^{-\f{1}{2}}}|u_y|_{L_t^{\infty}H^{2.5}}\\
&\lesssim |h|_{L_t^2\tilde{H}^{m-\f{1}{2}}} \|(u,\nabla u) \|_{L_t^{\infty}H^2(\mS)}\lesssim T^{\f{1}{2}}\cE_{m,T}^2.
\end{aligned}
\eeq
For the second term in $f,$ we have by the commutator estimate 
\eqref{rough-com-bdry}
and the trace inequality \eqref{trace}
that:
 \beq\label{h-3}
 \begin{aligned}
|[Z^{\alpha},u_y]\p_y h|_{L_t^2H^{\f{1}{2}}} 
&\lesssim |u|_{L_t^{2}\tilde{H}^{[\f{m}{2}]+\f{1}{2}}}| h|_{L_t^{\infty}\tilde{H}^{m-\f{1}{2}}}+
| h|_{L_t^{\infty}\tilde{H}^{[\f{m}{2}]+\f{5}{2}}} | u|_{L_t^2\tilde{H}^{m-\f{1}{2}}}\lesssim \cE_{m,T}^2.
\end{aligned}
\eeq
Inserting \eqref{h-1}-\eqref{h-3} into \eqref{EI-h}, we achieve \eqref{surface1}.
\end{proof}
\section {High order energy estimates}
In this section, we prove two kinds of energy estimates, namely the $\ep-$dependent  high order conormal energy estimates involving at least one spatial derivative, and the higher order estimates when only the time derivatives are involved. 
These quantities we are going to bound appears in the definition of energy norms ${\cE}_{high,m,T}$  in \eqref{defenergy-high} and are  necessary to prove the uniform estimates shown in Sections 10-12.

\subsection{Energy estimate I: Highest order energy estimates.}
\begin{lem}\label{lemhighest}
 Suppose that \eqref{preassumption} holds  for some $T>0$ then for any $0<t\leq T,$ then we have the following energy estimates:
 \beq\label{EI-1}
 \begin{aligned}
&\qquad\ep\|(\sigma,u)\|_{L_t^{\infty}H_{co}^m}^2+\ep\|\nabla u\|_{L_t^2H_{co}^{m}}^2\lesssim\ep \|(\sigma,u)(0)\|_{H_{co}^m}^2+(T+\ep)^{\f{1}{2}}\lae.
 \end{aligned}
 \eeq
\end{lem}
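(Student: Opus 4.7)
The plan is to apply $Z^{\alpha}$ for $|\alpha|\leq m$ to both equations of \eqref{FCNS2}, test the resulting $\sigma$-equation with $Z^{\alpha}\sigma$ and the $u$-equation with $Z^{\alpha}u$, and integrate against $\d\cV_t = \p_z\vp\,\d y\d z$. Using the symmetric structure of the system (Lemma \ref{lemipp}), the boundary conditions \eqref{upbdry2} and \eqref{bebdry2}, and Korn's inequality (Lemma \ref{lemkorn}), I expect to obtain an energy identity whose good part is
\[
\tfrac{1}{2}\p_t\!\int_{\mS}\bigl(g_1|Z^\alpha\sigma|^2+g_2|Z^\alpha u|^2\bigr)\d\cV_t + c\,\|\nabla Z^\alpha u\|_{L^2(\mS)}^2 + a\,|(Z^\alpha u)_\tau|^2_{L^2_y}.
\]
Multiplying through by $\ep$ and integrating in time controls the left-hand side of \eqref{EI-1} modulo commutator and boundary remainders, which is where the whole argument lives.

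The heart of the difficulty is the singular commutators. In the interior, the antisymmetric pair
\[
\ep^{-1}\!\int_0^t\!\!\int_{\mS}\!\bigl(Z^\alpha\sigma\,[Z^\alpha,\div^{\vp}]u + Z^\alpha u\cdot[Z^\alpha,\nabla^{\vp}]\sigma\bigr)\d\cV_s\d s
\]
reduces by \eqref{comidentity}--\eqref{identity-com-nor} to terms of the form $\ep^{-1}[Z^\alpha,\bN/\p_z\vp]\p_z(\sigma,u)$. After the overall factor of $\ep$ built into the norm we wish to estimate, the product/commutator estimates \eqref{crudepro}--\eqref{crudecom}, together with Lemmas \ref{exth} and \ref{lemfpzphi}, bound these terms by
\[
\ep^{\f{1}{2}}|h|_{L_t^2\tilde H^{m+\f{1}{2}}}\,\|(\sigma,u)\|_{L_t^2H_{co}^m}\,\il\nabla(\sigma,u)\il_{0,\infty,t}\,\Lambda\!\bigl(\tfrac{1}{c_0},|h|_{m-2,\infty,t}\bigr),
\]
plus strictly lower-order analogues. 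Since $\ep^{\f{1}{2}}|h|_{L_t^2\tilde H^{m+\f{1}{2}}}$ belongs to $\cE_{high,m,T}\subset \cN_{m,T}$, this piece is absorbed into $(T+\ep)^{\f{1}{4}}\lae$. The corresponding boundary contribution $\ep\int_0^t\!\int_{z=0}Z^\alpha(\mathcal{L}^{\vp} u\,\bN - \sigma\bN/\ep)\cdot Z^\alpha u\,\d y\d s$ vanishes to leading order by \eqref{upbdry2}, and the commutators it leaves, schematically $\ep\int_{z=0}Z^\alpha\bN\cdot\p_z\mathcal{L}^{\vp} u\,Z^\alpha u$, are controlled via the trace inequality \eqref{trace} by $\ep^{\f{1}{2}}|h|_{L_t^2\tilde H^{m+\f{1}{2}}}\|u\|_{L_t^2H_{co}^m}\il \ep^{\f{1}{2}}\p_z\mathcal{L}^{\vp} u\il_{\infty,t}$, the last factor being part of $\cA_{m,T}$ via $\eqref{FCNS2}_2$.

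The remaining commutators are regular. The transport commutator $[Z^\alpha,u\cdot\nabla^{\vp}](\sigma,u)$ is treated by \eqref{commutator} and \eqref{comgrad}; derivatives falling on $g_j(\ep\sigma)$ are handled by Corollary \ref{corg12}; the commutator of $\pt^{\vp}$ with $Z^\alpha$ uses the same identities together with Lemma \ref{exth}. Each of these produces either an explicit $\ep$ or a $T^{\f{1}{2}}$ after H\"older in time, so that summing over $|\alpha|\leq m$ yields the target bound. The main obstacle throughout is the book-keeping of surface regularity in the singular-commutator and boundary terms: the $m+\tfrac{1}{2}$-regularity of $h$ is not uniform in $\ep$, and we can only afford to use it in the combination $\ep^{\f{1}{2}}|h|_{L_t^2\tilde H^{m+\f{1}{2}}}$ --- which is precisely what the $\ep$-weight on the left-hand side of \eqref{EI-1} tolerates, and explains why we cannot hope for a uniform version of this highest-order estimate.
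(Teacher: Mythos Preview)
Your overall plan and the treatment of the boundary, viscous and transport commutators match the paper's proof. The gap is in the singular commutator
\[
\ep\cdot\ep^{-1}\!\int_0^t\!\!\int_{\mS}\!\bigl(Z^\alpha\sigma\,[Z^\alpha,\div^{\vp}]u + Z^\alpha u\cdot[Z^\alpha,\nabla^{\vp}]\sigma\bigr)\d\cV_s\d s,
\]
specifically the first piece. After the $\ep/\ep$ cancellation this is just $\int Z^\alpha\sigma\,[Z^\alpha,\div^{\vp}]u$, and Cauchy--Schwarz together with \eqref{comgrad} gives the leading contribution
\[
\|\sigma\|_{L_t^2H_{co}^m}\,|h|_{L_t^2\tilde H^{m+\f12}}\,\il\nabla u\il_{0,\infty,t}\,\Lambda,
\]
with \emph{no} factor of $\ep^{1/2}$; the $\ep^{1/2}$ in your displayed bound is unaccounted for. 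Neither $\|\sigma\|_{L_t^2H_{co}^m}$ nor $|h|_{L_t^2\tilde H^{m+1/2}}$ lies in $\cN_{m,T}$ --- each is only $O(T^{1/2}\ep^{-1/2}\cN_{m,T})$ via the $L_t^\infty$ bounds --- and $\il\nabla u\il_{0,\infty,t}$ is merely $O(1)$, so the product is $O(T\ep^{-1/2})$ times a polynomial in $\cN_{m,T}$ and cannot be absorbed into $(T+\ep)^{1/2}\lae$.

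The paper closes this term by splitting the weight \emph{asymmetrically}: it writes
\[
|\ep F_{6,1}^\alpha|\lesssim\|\ep^{-1/2}Z^\alpha\sigma\|_{L_t^2L^2}\cdot\ep^{1/2}\|[Z^\alpha,\div^{\vp}]u\|_{L_t^2L^2}
\]
and, since $|\alpha|\geq 1$, bounds the first factor by $\|\ep^{1/2}\pt\sigma\|_{L_t^2\cH^{m-1}}+\ep^{-1/2}\|\nabla^{\vp}\sigma\|_{L_t^2H_{co}^{m-1}}$, both of which are in $\cE_{m,T}$. This is exactly where the slightly well-prepared assumption $\nabla^{\vp}\sigma=O(\ep^{1/2})$ enters; without it the first factor is not uniformly bounded. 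The second factor then carries the smallness $\ep^{1/2}|h|_{L_t^2\tilde H^{m+1/2}}+\ep^{1/2}\|\nabla u\|_{L_t^2H_{co}^{m-1}}\lesssim(T+\ep)^{1/2}\cE_{m,T}$. Your writeup omits this mechanism; invoking only ``$\ep^{1/2}|h|_{L_t^2\tilde H^{m+1/2}}\in\cN_{m,T}$'' is not enough.
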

\begin{proof}
Let us start with \eqref{EI-1} for $m=0$ which is standard.  
Performing direct energy estimates for
$\eqref{FCNS2}$ we get by 
identities \eqref{ipp-1}-\eqref{ipp-3} that:
\beq\label{zeroenergy}
\begin{aligned}
 &\quad\f{1}{2}\int_{\mS}(g_1|\sigma|^2+g_2 |u|^2) (t)\,\d\cV_t+\int_0^t\int_{\mS}2\mu|S^{\vp}u|^2+\lambda|\div^{\vp}u|^2\,\d\cV_s\d s\\
&=\f{1}{2}\int_{\mS}(g_1|\sigma|^2+g_2|u|^2)(0)\,\d\cV_0+\f{1}{2}\int_{\mS}(\pt^{\vp}g_1+\div^{\vp}(g_1u))|\sigma|^2 \,\d\cV_s\d s-a\int_0^t\int_{z=-1}|u_{\tau}|^2\,\d y\d s
\end{aligned}
\eeq
where $u_{\tau}=(u_1,u_2,0)^{t}.$ Thanks to \eqref{preassumption1} and assumption \eqref{preassumption}, we have:
\beqs
\begin{aligned}
\il\pt^{\vp}g_1+\div^{\vp}(g_1u)\il_{0,\infty,t}&\leq \Lambda\big(\f{1}{c_0},\il(\sigma,u)\il_{1,\infty,t}+\il\nabla(\sigma,u)\il_{0,\infty,t}+|h|_{1,\infty,t})\\
&\lesssim \lca.
\end{aligned}
\eeqs
In view of the Korn inequality \eqref{korn}, 
the trace inequality \eqref{trace}, one gets  by using Young's inequality that:
\beq
\begin{aligned}
\|(\sigma,u)\|_{L_t^{\infty}L^2}^2+\|\nabla u\|_{L_t^2L^2}^2&\lesssim \|(\sigma_0,u_0)\|_{L^2(\mS)}^2+\lca \|(\sigma,u)\|_{L_t^{2}L^2}
^2\\
&\lesssim \|(\sigma_0,u_0)\|_{L^2(\mS)}^2+ T\lca \|(\sigma,u)\|_{L_t^{\infty}L^2}^2.
\end{aligned}
\eeq
We now detail the high order estimates in \eqref{EI-1}. Let $\alpha$ be a multi-index with $1\leq |\alpha|\leq m,$ applying $Z^{\alpha}$ on the equation \eqref{FCNS2}, and denoting $(\sigma^{\alpha},u^{\alpha})=Z^{\alpha}(\sigma,u),$ one obtains the system:
\beq
\left\{
\begin{array}{l}
     g_1(\pt^{\vp}+u\cdot\nabla^{\vp})\sigma^{\alpha}+\f{\div^{\vp}u^{\alpha}}{\ep}=\mathcal{C}_{\sigma}^{\alpha}-\f{1}{\ep}[Z^{\alpha},\div^{\vp}]u, \\[5pt]
    g_2(\pt^{\vp}+u\cdot\nabla^{\vp})u^{\alpha}-\div^{\vp}Z^{\alpha}\cL^{\vp}u+\f{\nabla^{\vp}\sigma}{\ep}=\mathcal{C}_u^{\alpha}-\f{1}{\ep}[Z^{\alpha},\nabla^{\vp}]\sigma+
    [Z^{\alpha},\div^{\vp}]\cL^{\vp}u.
\end{array}
\right.
\eeq
where the commutators are given by:
\beq\label{defcsiu}
\begin{aligned}
\cC_{\sigma}^{\alpha}&=\big[Z^{\alpha},\f{g_1}{\ep}\big]\ep\pt\sigma+[Z^{\alpha},g_1u_y]\nabla_y \sigma+[Z^{\alpha},g_1U_z\p_z]\sigma,\\
\cC_{u}^{\alpha}&=\big[Z^{\alpha},\f{g_2}{\ep}\big]\ep\pt u+[Z^{\alpha},g_2u_y]\nabla_y u+[Z^{\alpha},g_2U_z\p_z]u,
\end{aligned}
\eeq
 with 
 \beq\label{defofUz}
 U_z=\f{u\cdot\bN-\pt\vp}{\p_z\vp}
 \eeq
 Note that we have from \eqref{newder} that
 \beq\label{useful-identity}
\p_t^{\vp}+u\cdot\nabla^{\vp}=\pt+u_y\nabla_y+U_z\p_z.
\eeq
The energy equality then reads:
\beq\label{energyineq-first}
\begin{aligned}
\quad &\f{1}{2}\int_{\mS}(g_1|\sigma^{\alpha}|^2+g_2 |u^{\alpha}|^2) (t)\,\d\cV_t+\int_0^t\int_{\mS}2\mu|Z^{\alpha}S^{\vp}u|^2+\lambda|Z^{\alpha}\div^{\vp}u|^2\,\d\cV_s\d s\\
&=F_0^{\alpha}+F_1^{\alpha}+\cdots +F_7^{\alpha}.
\end{aligned}
\eeq
where 
\begin{align*}
  &F_0^{\alpha}=\f{1}{2}\int_{\mS}\big(g_1|\sigma^{\alpha}|^2+g_2|u^{\alpha}|^2\big)\,\d\cV_0, \quad F_1^{\alpha}=\f{1}{2}\int_0^t\int_{\mS}\big(\pt^{\vp}g_1+\div^{\vp}(g_1u)\big)|\sigma^{\alpha}|^2\ \d\cV_s\d s,\\
  & F_2^{\alpha}=-\int_0^t\int_{z=0}[Z^{\alpha},\bN](\mathcal{L}^{\vp}u-(\sigma/\ep)\text{Id})\cdot u^{\alpha}\ \d y\d s \ \mathbb{I}_{\{\alpha_3=0\}},\\
  & F_3^{\alpha}=\int_0^t\int_{\mS} Z^{\alpha}\mathcal{L}^{\vp}u\cdot[Z^{\alpha},\nabla^{\vp}] u \ \d\cV_s \d s, \quad F_4^{\alpha}=-\int_0^t\int_{\mS}[Z^{\alpha},\div^{\vp}]\cL^{\vp}u\cdot u^{\alpha}\ \d\cV_s \d s,\\
  & F_5^{\alpha}=\int_0^t\int_{\mS} \cC_{\sigma}^{\alpha}\sigma^{\alpha}+\cC _{u}^{\alpha}\cdot u^{\alpha}\ \d\cV_s \d s, \quad F_6^{\alpha}=-\f{1}{\ep}\int_0^t\int_{\mS} \sigma^{\alpha}[Z^{\alpha},\div^{\vp}]u+u^{\alpha}\cdot[Z^{\alpha},\nabla^{\vp}]\sigma\ \d\cV_s \d s,\\
  &F_7^{\alpha}=-a\int_0^t\int_{z=-1}|Z^{\alpha}u_{\tau}|^2\ \d y\d s.
\end{align*}
The first two terms can be controlled directly by:
\beq\label{sec5:eq1}
\ep(|F_0^{\alpha}|+|F_1^{\alpha}|)\lesssim \ep\|Z^{\alpha}(\sigma,u)(0)\|_{L^2(\mS)}^2+T\Lambda\big(\f{1}{c_0},\cA_{m,t}\big)\ep\| Z^{\alpha}\sigma\|_{L_t^{\infty}L^2(\mS)}^2.
\eeq
For the boundary term $F_2^{\alpha},$ which vanishes identically if $\alpha_3=0,$ we split it as:
\beqs
F_2^{\alpha}=-\int_0^t\int_{z=0}(\mathcal{L}^{\vp}u-(\sigma/\ep)\text{Id})Z^{\alpha}\bN\cdot u^{\alpha}+[Z^{\alpha},(\mathcal{L}^{\vp}u-(\sigma/\ep)\text{Id}),\bN]u^{\alpha}\,\d y\d s=:F_{21}^{\alpha}+F_{22}^{\alpha}.
\eeqs
By duality and \eqref{product-R2},  $F_{21}^{\alpha}$ can be bounded as:
\begin{equation*}
\begin{aligned}
    |F_{21}^{\alpha}|\lesssim |(\mathcal{L}^{\vp} u-(\sigma/\ep)\text{Id})^{b,1}|_{L_t^{\infty}W_y^{1,\infty}}|(u^{\alpha})^{b,1}|_{L_t^2H_y^{\f{1}{2}}}|Z^{\alpha}\bN|_{L_t^2H_y^{-\f{1}{2}}}.
\end{aligned}
\end{equation*}
By the identities \eqref{sigmabdry}, \eqref{tan-nor}, \eqref{nor-nor} and the definition \eqref{defcA-free}, we have:
\beq\label{inftyupbdry}
\begin{aligned}
|(\mathcal{L}^{\vp} u-(\sigma/\ep)\text{Id})^{b,1}|_{[\f{m}{2}]-1,\infty,t}&\lesssim \Lambda\big(\f{1}{c_0}, |h|_{[\f{m}{2}],\infty,t}+\il\div^{\vp}u\il_{[\f{m}{2}]-1,\infty,t}+\il u\il_{[\f{m}{2}],\infty,t}\big)\\
&\lesssim \Lambda\big(\f{1}{c_0},\cA_{m,t}\big).
\end{aligned}
\eeq
Hence, by the trace inequality and Young's inequality, we get that:
\beqs
\ep|F_{21}^{\alpha}| \leq\delta\ep\|\nabla u\|_{L_t^2H_{co}^m}^2+\ep\big(|Z^{\alpha} h|_{L_t^2H^{\f{1}{2}}}^2+\|u^{\alpha}\|_{\ltx}^2
\big)\Lambda\big(\f{1}{c_0},\cA_{m,t}\big).
\eeqs
For $F_{22}^{\alpha},$ we use successively the Cauchy-Schwarz inequality, the estimate \eqref{inftyupbdry} and the trace inequality
\eqref{trace} to get:
\begin{equation*}
\begin{aligned}
    |F_{22}^{\alpha}|&\lesssim 
    |(u^{\alpha})^{b,1}|_{L_t^2L_y^2}\big|[Z^{\alpha},\cL^{\vp}u-({\sigma}/{\ep})\text{Id},\bN]\big|_{L_t^2L_y^2}\\
    &\lesssim  |(u^{\alpha})^{b,1}|_{L_t^2L_y^2}\big(|(\cL^{\vp}u,\sigma/\ep)|_{[\f{m}{2}]-1,\infty,t}|h|_{L_t^2\tilde{H}^{m}}+|(\cL^{\vp}u,\sigma/\ep)|_{L_t^2\tilde{H}^{m-1}}
    |\bN|_{[\f{m+1}{2}]+1,\infty,t} \big)\\
    &\leq \delta \|\nabla u\|_{L_t^2H_{co}^m}^2+C_{\delta}\lca\big(
    \|u\|_{E^m,t}^2+\|\nabla\div^{\vp} u\|_{L_t^2H_{co}^{m-1}}\|\div^{\vp}u\|_{L_t^2H_{co}^{m-1}}+|h|_{L_t^2\tilde{H}^{m}}^2\big).
\end{aligned}
\end{equation*}
To summarize, we can control $\ep F_2^{\alpha}$ as:
\beq\label{sec5:eq2}
\ep|F_{2}^{\alpha}|\leq 2\delta \ep\|\nabla u\|_{L_t^2H_{co}^m}^2+C_{\delta}\lca\big(T\ep| h|_{L_t^{\infty}\tilde{H}^{m+\f{1}{2}}}^2
+\ep^{\f{1}{2}}(\| u\|_{E^m,t}^2+\ep\|\nabla\div^{\vp} u\|_{L_t^2H_{co}^{m-1}}^2)\big).
\eeq
Let us detail the estimate of $F_3^{\alpha}.$ We use the estimate  
\eqref{comgrad} for $n=2$ and Young's inequality to get that:
\beq\label{sec5:eq3}
\begin{aligned}
|\ep F_3^{\alpha}|&\leq \ep \|Z^{\alpha}\mathcal{L}^{\vp}u\|_{\ltx}\big(\|\nabla u\|_{L_t^2H_{co}^{m-1}}+|h|_{L_t^2\tilde{H}^{m+\f{1}{2}}}\big)\Lambda\big(\f{1}{c_0},|h|_{m-2,\infty,t}+\ep^{\f{1}{2}}\il\nabla u\il_{2,\infty,t}\big)\\
&\leq \delta  \ep \|\nabla u\|_{L_t^2H_{co}^m}^2+\lca\big(\ep\|\nabla u\|_{L_t^2H_{co}^{m-1}}^2+T\ep|h|_{L_t^{\infty}\tilde{H}^{m+\f{1}{2}}}^2\big).
\end{aligned}
\eeq
Similarly, for $F_4,$ by H\"older's inequality, the commutator estimate \eqref{comgrad} and the definition \eqref{defcA-free}, we find
\beq\label{sec5:eq4}
\begin{aligned}
|\ep F_4^{\alpha}|&\leq \ep \|u^{\alpha}\|_{\ltx}\|[Z^{\alpha}, \div^{\vp}]\cL^{\vp} u\|_{\ltx}\\
&\lesssim \ep^{\f{1}{2}} \|u^{\alpha}\|_{\ltx}\Lambda\big(\f{1}{c_0},|h|_{m-2,\infty,t}+\ep^{\f{1}{2}}\il\nabla\cL^{\vp} u\il_{2,\infty,t}\big)\big(|h|_{L_t^2\tilde{H}^{m+\f{1}{2}}}+\|\ep^{\f{1}{2}}\nabla\cL^{\vp} u\|_{L_t^2H_{co}^{m-1}}\big)\\
&\lesssim (T+\ep)^{\f{1}{2}}\lca \cE_{m,t}^2. 
\end{aligned}
\eeq
Next, we control $F_5$ as:
$$\ep |F_5^{\alpha}|\leq T^{\f{1}{2}}\|\ep^{\f{1}{2}}(\sigma^{\alpha},u^{\alpha})\|_{L_t^{\infty}L^2}\|\ep^{\f{1}{2}}(\cC_{\sigma}^{\alpha},\cC_u^{\alpha})\|_{\ltx}.$$
It thus remains to estimate 
$(\cC_{\sigma}^{\alpha},\cC_u^{\alpha})$ defined in \eqref{defcsiu}. Taking benefits of
the commutator estimate \eqref{crudecom} and the estimate \eqref{esofg12-2} for 
$g_1,g_2,$ we obtain:
\beqs
\|\ep^{\f{1}{2}}(\cC_{\sigma}^{\alpha},\cC_{u}^{\alpha})\|_{L_t^2L^2}\lesssim \lca(\|(\sigma,u)\|_{E^m,t}+\ep^{\f{1}{2}}|h|_{L_t^2{H}^{m+\f{1}{2}}} ).
\eeqs
Therefore, we obtain:
\beq\label{sec5:eq5}
|\ep F_5^{\alpha}|\lesssim T^{\f{1}{2}}\lca \cE_{m,t}^2.
\eeq
Let us split $F_6^{\alpha}$ as:
$
F_6^{\alpha}=F_{6,1}^{\alpha}+F_{6,2}^{\alpha}$
with 
\beqs
F_{6,1}^{\alpha}=-\f{1}{\ep}\int_0^t\int_{\mS}\sigma^{\alpha}[Z^{\alpha},\div^{\vp}]u\,\d \cV_s\d s, \qquad F_{6,2}^{\alpha}=-\f{1}{\ep}\int_0^t\int_{\mS}u^{\alpha}\cdot[Z^{\alpha},\nabla^{\vp}]\sigma\,\d \cV_s\d s.
\eeqs
For $F_{6,1}^{\alpha},$ thanks to the commutator estimate \eqref{comgrad},
\beq\label{F61}
\begin{aligned}
|\ep F_{6,1}^{\alpha}|&\lesssim \|\ep^{-\f{1}{2}}\sigma^{\alpha}\|_{L_t^2L^2}\ep^{\f{1}{2}}\|[Z^{\alpha},\div^{\vp}]u\|_{L_t^2L^2}\\
&\lesssim 
(\|\ep^{\f{1}{2}}\pt \sigma\|_{L_t^2\cH^{m-1}}+\|\ep^{-\f{1}{2}}\nabla\sigma\|_{\hco^{m-1}})(\ep^{\f{1}{2}}|h|_{L_t^{2}\tilde{H}^{m+\f{1}{2}}}
+\ep^{\f{1}{2}}\|\nabla u\|_{\hco^{m-1}})\lca\\
&\lesssim (T+\ep)^{\f{1}{2}}\lca\cE_{m,t}^2.
\end{aligned}
\eeq
Similarly, by using the fact that (recall $m\geq 7$),
$$\ep^{-\f{1}{2}}\il\nabla\sigma\il_{2,\infty,t}\lesssim \lca,$$
we finally find:
\beq\label{F62}
\begin{aligned}
|\ep F_{6,2}^{\alpha}|&\lesssim \|u\|_{\hco^m}\big(\|\nabla\sigma\|_{\hco^{m-1}}+|h|_{\htlde^{m+\f{1}{2}}}\il\nabla\sigma\il_{2,\infty,t}\big)\Lambda\big(\f{1}{c_0},|h|_{m-2,\infty,t}\big)\\
&\lesssim (T+\ep)^{\f{1}{2}}\lca\cE_{m,t}^2.
\end{aligned}
\eeq
Gathering \eqref{F61} and \eqref{F62}, we find that:
\beq\label{sec5:eq6}
|\ep F_6^{\alpha}|\lesssim (T+\ep)^{\f{1}{2}}\lca\cE_{m,t}^2.
\eeq
Finally, for the boundary term $F_7^{\alpha},$ we apply the trace inequality
\eqref{trace} and Young's inequality to get that:
\beq\label{lowerbdry}
\ep |F_7^{\alpha}|\lesssim \delta\ep\|\nabla Z^{\alpha} u_{\tau}\|_{L_t^2L^2}^2+C_{\delta}T\ep\|Z^{\alpha}u_{\tau}\|_{L_t^{\infty}L^2(\mS)}^2.
\eeq
Collecting \eqref{sec5:eq1}, \eqref{sec5:eq2}, \eqref{sec5:eq3}, \eqref{sec5:eq4}, \eqref{sec5:eq5},
\eqref{sec5:eq6}, \eqref{lowerbdry} and  summing up for $|\alpha|\leq m,$ we find by Korn's inequality \eqref{korn} and by choosing $\delta$ small enough,
\beqs
\begin{aligned}
&\quad \ep\|(\sigma,u)\|_{L_t^{\infty}H_{co}^m}^2+\ep \|\nabla u\|_{L_t^{2}H_{co}^m}^2
\lesssim\ep \|(\sigma,u)(0)\|_{H_{co}^m}^2+(T+\ep)^{\f{1}{2}}\lca\cE_{m,t}^2.
\end{aligned}
\eeqs
\end{proof}
\begin{thm}[Estimates for High-order time derivatives]
Under the same assumption as in Lemma \ref{lemhighest}, we have the following estimates: for any $0<t\leq T,$
\beq\label{EI-T1}
 \begin{aligned}
\ep\|\pt(\sigma,u)\|_{L_t^{\infty}\cH^{m-1}}^2+\ep\|\pt \nabla u\|_{L_t^2\cH^{m-1}}^2\lesssim \ep\|\pt(\sigma,u)(0)\|_{\cH^{m-1}}^2+(T+\ep)^{\f{1}{2}}\lae.
 \end{aligned}
 \eeq
\end{thm}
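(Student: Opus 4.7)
The plan is to mimic the energy estimate of Lemma \ref{lemhighest} with the choice $Z^{\alpha} = Z_0^k\pt = \ep^k \pt^{k+1}$ for each $k \in \{0,1,\ldots,m-1\}$ and then sum. Setting $(\sigma^{(k)}, u^{(k)}) := Z_0^k\pt(\sigma,u)$, applying $Z_0^k\pt$ to \eqref{FCNS2} produces the symmetric system
\[
g_1(\pt^\vp + u\cdot\nabla^\vp)\sigma^{(k)} + \tfrac{1}{\ep}\div^\vp u^{(k)} = S_\sigma^{(k)}, \quad g_2(\pt^\vp + u\cdot\nabla^\vp) u^{(k)} - \div^\vp\cL^\vp u^{(k)} + \tfrac{1}{\ep}\nabla^\vp \sigma^{(k)} = S_u^{(k)},
\]
with source terms $S_\sigma^{(k)} = \cC_\sigma^{(k)} - \frac{1}{\ep}[Z_0^k\pt,\div^\vp]u$ and $S_u^{(k)} = \cC_u^{(k)} - \frac{1}{\ep}[Z_0^k\pt,\nabla^\vp]\sigma + [Z_0^k\pt,\div^\vp]\cL^\vp u$, where $\cC_\sigma^{(k)},\cC_u^{(k)}$ are commutators analogous to those in \eqref{defcsiu} built only from time derivatives. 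Testing against $(\ep\sigma^{(k)}, \ep u^{(k)})$ and applying Lemma \ref{lemipp}, the singular compressible terms cancel by symmetry, producing the analog of \eqref{energyineq-first}.

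I would then bound the seven resulting right-hand-side contributions $F_0^{(k)},\ldots,F_7^{(k)}$ by adapting the arguments of Lemma \ref{lemhighest}. Crucially, since $Z_0^k\pt$ contains no $Z_3$ factor, the upper-boundary commutator $F_2^{(k)}$ involves only time derivatives of $\bN$ (equivalently of $h$), which are tame thanks to \eqref{surface2}--\eqref{surface3} together with the $L^\infty$ bound \eqref{inftyupbdry} and the trace inequality \eqref{trace}. The volume commutators $F_3^{(k)},F_4^{(k)}$, the transport/coefficient commutators $F_5^{(k)}$, and the lower-boundary term $F_7^{(k)}$ are treated by the product and commutator estimates \eqref{crudepro}--\eqref{crudecom} paired with the composition estimates \eqref{esofg12-1}--\eqref{esofg12-2}, the surface bounds of Lemma \ref{lemsurface}, and the Korn inequality \eqref{korn}.

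The main obstacle is the singular commutator $F_6^{(k)}$. By Leibniz,
\[
\ep^{-1}[Z_0^k\pt,\div^\vp] u \;=\; \ep^{k-1}\sum_{j=1}^{k+1}\tbinom{k+1}{j}\,\pt^j\!\bigl(\tfrac{\bN}{\p_z\vp}\bigr)\,\pt^{k+1-j}\p_z u,
\]
and likewise for $[Z_0^k\pt,\nabla^\vp]\sigma$. The worst term (all time derivatives falling on the coefficient) rewrites, for $k\geq 1$, as $\ep^{\f{1}{2}}(\ep\pt)^{k-1}\pt^2(\bN/\p_z\vp)\cdot\p_z u$, whose pairing with $\ep^{\f{1}{2}}\sigma^{(k)}$ is controlled in $L^2_tL^2$ by the surface bound $\sum_{k\leq m-1}|\ep^{\f{1}{2}}(\ep\pt)^k\pt^2 h|_{L^2_t H^{-\f{1}{2}}} \lesssim \lae$ from \eqref{surface3}, after invoking Lemma \ref{lemfpzphi} for the $1/\p_z\vp$ factor and the $\cA_{m,T}$-bound on $\p_z u$. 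Intermediate values $1\leq j\leq k$ are estimated by redistributing derivatives using \eqref{surface2} together with $L^\infty_{t,x}$ bounds from $\cA_{m,T}$.

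The case $k=0$ (which controls $\ep\|\pt(\sigma,u)\|_{L^\infty_t L^2}^2 + \ep\|\pt\nabla u\|_{L^2_t L^2}^2$) is not accessible by the above expansion, since the leading-order contribution is $\ep^{-\f{1}{2}}\pt\bN\cdot\p_z u$, which has no uniform bound. This case requires the substitution trick from Step 2 of the sketch: multiply the continuity equation by $\ep\pt\sigma$ but the momentum equation by $\ep\pt^\vp u$ (covariant time derivative on the test function), producing identity \eqref{trick1}. The resulting upper-boundary integral recombines with the viscous boundary contribution via \eqref{upbdry2} and is controlled thanks to \eqref{sigmabdry} and the trace inequality, while the leftover volume commutator $[\pt,\nabla^\vp]\sigma$ has $L^2_tL^2$ norm absorbed into $\|\ep^{-\f{1}{2}}\nabla^\vp\sigma\|_{L^2_tL^2}$. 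Summing the $k=0$ and $k\geq 1$ inequalities, absorbing the dissipation via Korn \eqref{korn}, and choosing the Young parameters small enough yields \eqref{EI-T1}.
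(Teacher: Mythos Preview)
Your proposal is correct and follows essentially the same approach as the paper. The paper likewise separates the case $k\geq 1$ (handled by substituting $\ep^{\f12}Z_0^k\pt$ into the energy identity \eqref{energyineq-first} and estimating the resulting $F_0^k,\ldots,F_7^k$) from the case $k=0$ (deferred to Lemma \ref{lemloworder}, which is exactly the substitution trick you describe), and for the singular commutator $F_6^k$ the paper packages your Leibniz expansion into the ready-made estimate \eqref{comgradT}, invoking the same surface bound from \eqref{surface3}.
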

\begin{proof}
Due to the singular terms in the system
\eqref{FCNS2},
we need to deal with the zero order and the higher order estimates for 
$\ep^{\f{1}{2}}\pt(\sigma,u)$ differently.
We will prove in \eqref{lemloworder} the zero order estimate:
\beqs
\ep\|\pt(\sigma,u)\|_{L_t^{\infty}L^2}^2+\ep\|\pt \nabla u\|_{L_t^2L^2}^2\lesssim
\ep\|\pt(\sigma,u)(0)\|_{L^2}^2+(T+\ep)^{\f{1}{2}}\lae.
\eeqs
 Let us stress that this estimate does not depend on the higher order estimate to be shown here and vice versa.

We now focus on the higher order estimates. Substituting $Z^{\alpha}$ by
$\ep^{\f{1}{2}}Z_0^k\pt$ ($1\leq k\leq m-1$) in \eqref{energyineq-first}, we find that:
\beq\label{energyineq-Thigh}
\begin{aligned}
\quad &\f{\ep}{2}\int_{\mS}(g_1|Z_0^k \pt\sigma|^2+g_2 |Z_0^k \pt u|^2) (t)\,\d\cV_t+\ep\int_0^t\int_{\mS}2\mu|Z_0^k \pt S^{\vp}u|^2+\lambda|Z_0^k \pt\div^{\vp}u|^2\,\d\cV_s\d s\\
&=F_0^{k}+F_1^{k}+\cdots +F_7^{k}.
\end{aligned}
\eeq
where $F_0^k-F_7^k$ are defined in the same way as $F_0^{\alpha}-F_7^{\alpha}$
(defined in $\eqref{energyineq-first}$)
by changing $Z^{\alpha}$ into $\ep^{\f{1}{2}}Z_0^k\pt.$
Our following task is to control $F_0^k-F_7^k$ one by one. 
The first two terms can be controlled by:
\beq\label{eq1-T}
|F_0^k+F_1^k|\lesssim \ep\|\pt (\sigma,u)\|_{L_t^2\cH^{k}}^2+T\lca\ep\|\pt\sigma\|_{L_t^{\infty}\cH^{k}}^2.
\eeq
Now, for the term $$F_2^k=-\ep\int_0^t\int_{z=0}[Z_0^k\pt,\bN](\cL^{\vp}u-\f{\sigma}{\ep}\text{Id})Z_0^k\pt u\,\d\cV_s\d s,$$ we first use the duality $\langle\cdot\rangle_{H^{\f{1}{2}}\times H^{-\f{1}{2}}}$, Cauchy-Schwarz inequality and the estimate \eqref{inftyupbdry} to control it as:
\beqs
|F_2^k|\lesssim |\ep^{\f{1}{2}}Z_0^k\pt u|_{L_t^2H^{\f{1}{2}}}|\ep^{\f{1}{2}}Z_0^k\pt h|_{L_t^2{H}^{\f{1}{2}}}\lca+|\ep^{\f{1}{2}}Z_0^k\pt u|_{L_t^2L_y^2}
\big|\ep^{\f{1}{2}}[Z_0^k\pt,\bN,(\cL^{\vp}u-\f{\sigma}{\ep}\text{Id})]\big|_{L_t^2L_y^2}
\eeqs
By \eqref{surface2}, the trace inequality 
\eqref{trace} and Young's inequality,
the first term in the right hand side of the above inequality is bounded by:
\beqs
\delta\|\ep^{\f{1}{2}}Z_0^k\nabla^{\vp}\pt u\|_{L_t^2L^2}^2+(T+\ep)^{\f{1}{2}}\lca\cE_{m,t}^2.
\eeqs
Moreover, we use the expansion \eqref{expansion-T}, the estimates \eqref{grad-upbdry}, \eqref{surface2}, the trace inequality \eqref{trace} successively to control the second one as:
\beqs
\begin{aligned}
&C |\ep^{\f{1}{2}}Z_0^k\pt u|_{L_t^2L_y^2}(\ep^{\f{1}{2}}|\pt h|_{L_t^2\tilde{H}^{k}}|(\cL^{\vp}u,\sigma/\ep)^{b,1}|_{[\f{k-1}{2}],\infty,t}+\ep^{\f{1}{2}}|(\cL^{\vp}u,\sigma/\ep)^{b,1}|_{L_t^2\tilde{H}^{k}}|\pt h|_{[\f{k}{2}],\infty,t})\\
&\leq \delta \|\ep^{\f{1}{2}}Z_0^k\nabla^{\vp}\pt u\|_{L_t^2L^2}^2+(T+\ep)^{\f{1}{2}}\Lambda\big(\f{1}{c_0},\cA_{m,t}\big)\cE_{m,t}^2.
\end{aligned}
\eeqs
Note that by \eqref{sigmabdry},  \eqref{tan-nor}, \eqref{nor-nor}, one has that:
\beqs
\begin{aligned}
&|\ep^{\f{1}{2}}(\cL^{\vp}u,\sigma)^{b,1}|_{\htlde^{m-1}}\lesssim
(|\ep^{\f{1}{2}}(\p_y u,\div^{\vp}u)^{b,1}|_{\htlde^{m-1}}+\ep^{\f{1}{2}}|h|_{L_t^2\tilde{H}^{m}})\lca\\
&\lesssim \ep^{\f{1}{4}}\big(\|\nabla u\|_{L_t^{2}H_{co}^{m-1}}+\ep^{\f{1}{2}}\|\nabla u\|_{L_t^2H_{co}^{m}}+\ep^{\f{1
}{2}}\|\nabla\div u\|_{L_t^2H_{co}^{m-1}}\big)\lca+T^{\f{1}{2}}|\ep^{\f{1}{2}}h|_{L_t^{\infty}\tilde{H}^{m}}\lca\\
&\lesssim (\ep^{\f{1}{4}}+T^{\f{1}{2}})\lca\cE_{m,t}.
\end{aligned}
\eeqs
We thus find that:
\beq
\begin{aligned}
|F_2^k|\lesssim  2\delta \|\ep^{\f{1}{2}}Z_0^k\nabla^{\vp}\pt u\|_{L_t^2L^2}^2+ (T+\ep)^{\f{1}{2}}\Lambda\big(\f{1}{c_0},\cA_{m,t}\big)\cE_{m,t}^2.
\end{aligned}
\eeq
Next, with the aid of the commutator estimate $\eqref{comgradT}$ and the estimate \eqref{surface3}, we can control the commutator $[Z_0^{k}\pt,\nabla^{\vp}] u $ as:
\beqs 
\begin{aligned}
\|[Z_0^{k}\pt,\nabla^{\vp}] u \|_{\ltx}\lesssim&\big(|\ep\p_t^2 h|_{L_t^2\tilde{H}^{m-\f{3}{2}}}+\| \ep\pt\p_z u\|_{L_t^2\cH^{m-2}\cap L_t^{\infty}\cH^1}\big)\cdot
\\
&\Lambda\big(\f{1}{c_0},\il \p_z u \il_{1,\infty,t}+|(h,\ep^{\f{1}{2}}\pt h)|_{m-2,\infty,t}+(\int_0^t |\ep^{\f{1}{2}}\pt ^2 h(s)|_{m-2,\infty}\d s)^{\f{1}{2}}\big)\\
&
\lesssim \lae.
\end{aligned}
\eeqs
Therefore, we bound the term
$$F_3^k=\ep\int_0^t\int_{\mS} Z_0^{k}\pt\mathcal{L}^{\vp}u\cdot[Z_0^{k}\pt,\nabla^{\vp}] u \ \d\cV_s \d s$$
by using Young's inequality and the assumption $k\leq m-1,$
\beq
\begin{aligned}
|F_3^k|
& \leq \delta \ep\|Z_0^k\pt \nabla^{\vp}u\|_{L_t^2L^2}^2+\ep\lae.
\end{aligned}
\eeq
We proceed to estimate $$F_4^k=-\ep \int_0^t\int_{\mS}[Z_0^k\pt,\div^{\vp}]\cL^{\vp}u\cdot Z_0^k\pt u\,\d \cV_s\d s.$$ By the expansion  \eqref{expansion-2-free}, the estimate \eqref{surface3},  
and the assumption $k\leq m-1,$ we obtain:
\begin{align*}
   \|\ep^{\f{1}{2}}[Z_0^k\pt, \div^{\vp}]\cL^{\vp}u\|_{L_t^2L^2}&\lesssim (|\ep\pt^2 h|_{\htlde^{m-\f{3}{2}}}+\ep^{\f{1}{2}}\|\p_z\cL^{\vp}u\|_{L_t^2\cH^{m-1}})\cdot\\
   &\Lambda\big(\f{1}{c_0}, \ep^{\f{1}{2}}\il\p_z\cL^{\vp} u\il_{[\f{m}{2}]-1,\infty,t}+|\pt h|_{[\f{m-1}{2}],\infty,t}+|h|_{[\f{m+1}{2}],\infty,t}\big)\\
   &\lesssim \lae. 
\end{align*}
We thus control $F_4^k$ by the Cauchy-Schwarz inequality:
\beq
\begin{aligned}
|F_4^k|&\leq  T^{\f{1}{2}} \|\ep^{\f{1}{2}}\pt u\|_{L_t^{\infty}\cH^k}\|\ep^{\f{1}{2}}[Z_0^k\pt, \div^{\vp}]\cL^{\vp}u\|_{L_t^2L^2}\\
&\lesssim T^{\f{1}{2}}\lae. 
\end{aligned}
\eeq
The next term $F_5^k$ is defined by
$$F_5^k=\ep \int_0^t\int_{\mS} C_{\sigma}^k Z_0^k\pt\sigma +C_{u}^k\cdot Z_0^k\pt u \,\d \cV_s \d s.$$
To continue, we need the following proposition to control the commutators 
$\ep^{\f{1}{2}}(\cC_{\sigma}^k,\cC_{u}^k):$
\begin{prop}\label{procsiu}
For commutators 
\beq\label{defcom-tem}
\begin{aligned}
\cC_{\sigma}^{k}&=\big[Z_0^{k}\pt,\f{g_1}{\ep}\big]\ep\pt\sigma+[Z_0^{k}\pt,g_1u_y]\nabla_y \sigma+[Z_0^{k}\pt,g_1U_z\p_z]\sigma,\\
\cC_{u}^{k}&=\big[Z_0^{k}\pt,\f{g_1}{\ep}\big]\ep\pt u+[Z_0^{k}\pt,g_1u_y]\nabla_y u+[Z_0^{k}\pt,g_1U_z\p_z]u.
\end{aligned}
\eeq
we have the estimate: for $k\leq m-1$
\beqs 
\|\ep^{\f{1}{2}}(\cC_{\sigma}^k,\cC_{u}^k)\|_{L_t^2L^2}
\lesssim \lae.
\eeqs
\end{prop}
We will postpone the proof of this proposition and continue to estimate the remaining terms $F_5^k-F_7^k.$ By using Proposition \ref{procsiu}, $F_5^k$ can be estimated as:
\begin{align}
|F_5^k|&\lesssim T^{\f{1}{2}}\|\ep^{\f{1}{2}}\pt(\sigma,u)\|_{L_t^{\infty}\cH^{m-1}}\|\ep^{\f{1}{2}}(\cC_{\sigma}^k,\cC_{u}^k)\|_{L_t^2L^2}\lesssim T^{\f{1}{2}}\lae.
\end{align}
For the term $$F_6^k=-\int_0^t\int_{\mS}Z_0^k\pt\sigma \cdot [Z_0^k\pt,\div^{\vp}]u+Z_0^k\pt u\cdot[Z_0^k\pt,\nabla^{\vp}]\sigma\,\d\cV_s\d s,$$ we can apply commutator estimate 
\eqref{comgradT} to obtain:
\beqs
\begin{aligned}
&\ep^{-\f{1}{2}}(\|[Z_0^k\pt,\div^{\vp}]u\|_{L_t^2L^2}+\|[Z_0^k\pt,\nabla^{\vp}]\sigma\|_{L_t^2L^2})\\
&\lesssim \Lambda\big(\f{1}{c_0},\cN_{m,t}\big)\big(
|\ep^{\f{1}{2}}\pt^2  h|_{L_t^2\tilde{H}^{m-\f{3}{2}}}
+\|\ep^{\f{1}{2}}\pt\nabla (\sigma,u)\|_{L_t^2\cH^{m-2}\cap L_t^{\infty}\cH^1}\big)\lesssim\lae. 
\end{aligned}
\eeqs
This estimate, combined with the Cauchy-Schwarz inequality, yields:
\beq
|F_6^k|\lesssim T^{\f{1}{2}}\lae. 
\eeq
Finally, we control the last term $$F_7^k=-a \ep \int_0^t\int_{z=-1}|Z_0^k\pt u_{\tau}|^2 \,\d y\d s$$ by the trace inequality \eqref{trace} and Young's inequality:
\beq\label{eq2-T}
F_7^k \leq \delta \ep\int_0^t 
\int_{\mS} |Z_0^k\pt \nabla^{\vp}u|^2\d\cV_s\d s+(T+\ep)\Lambda\big(\f{1}{c_0},\cA_{m,t}\big)\cE_{m,t}^2.
\eeq
Collecting \eqref{eq1-T}-\eqref{eq2-T}, summing up for $k\leq m-1$ and choosing $\delta$ small enough, we find \eqref{EI-T1}.
\end{proof}
We now give the proof of Proposition \ref{procsiu}.
\begin{proof}[Proof of Proposition \ref{procsiu}]
We use the following two expansions
\beq\label{expansion-2-free}
\begin{aligned}
\ep^{\f{1}{2}}[Z_0^{m-1}\pt,f]g=\sum_{0\leq l\leq [\f{m}{2}]-1}(C_{m}^l Z_0^l g Z_0^{m-1-l}\ep^{\f{1}{2}} \pt f)
+\sum_{ [\f{m}{2}]\leq l\leq m-1}(C_{m}^l Z_0^{l-1}\ep^{\f{1}{2}}\pt g Z_0^{m-l} f)
\end{aligned}
\eeq
\beqs 
\begin{aligned}
\ep^{\f{1}{2}}[Z_0^{m-1}\pt,f]g&=
\sum_{0\leq l\leq 1}(C_{m}^l Z_0^l g Z_0^{m-1-l}\ep^{\f{1}{2}} \pt f)
+C_m^2 Z_0^1 \ep^{\f{1}{2}} \pt g 
Z_0^{m-2} f\\
&\quad+\sum_{ 3\leq l\leq m-1}(C_{m}^l Z_0^{l-1}\ep^{\f{1}{2}}\pt g Z_0^{m-l} f).
\end{aligned}
\eeqs
In light of the second expansion, we  control the last term in
$\cC_{u}^{m-1}$ as follows:
\beqs 
\begin{aligned}
&\ep^{\f{1}{2}}\|[Z_0^{m-1}\pt,g_1U_z]\p_z u\|_{L_t^2L^2}\lesssim 
\il\p_z u\il_{1,\infty,t}\|\ep^{\f{1}{2}}\pt (g_1 U_z)\|_{L_t^2\cH^{m-1}}\\
&\qquad+\|\ep^{\f{1}{2}}\pt \p_z u\|_{L_t^{\infty}\cH^1}\|Z_0^{m-2}(g_1 U_z)\|_{L_t^{\infty}L^2}+\|\ep^{\f{1}{2}}\pt \p_z u\|_{L_t^{2}\cH^{m-1}}\il g_1 U_z\il_{m-3,\infty,t}\\
&\lesssim \lae.
\end{aligned}
\eeqs
The remaining terms appearing in 
$\cC_{\sigma}^{m-1},\cC_u^{m-1}$
can be estimated by using the first expansion:
\beqs
\begin{aligned}
&\|\ep^{\f{1}{2}}(\cC_{\sigma}^{m-1},\cC_{u}^{m-1}-[Z_0^{m-1}\pt,g_1U_z]\p_z u)\|_{L_t^2L^2}\\
&\lesssim \sum_{j=1}^2\big[\|\ep^{\f{1}{2}}\pt (g_j/\ep,g_j u_y,g_j U_z)\|_{L_t^2\cH^{m-1
}} (\il (\sigma,u)\il_{[\f{m}{2}],\infty,t}+\il\nabla\sigma\il_{[\f{m}{2}]-1,\infty,t})\\
&  \quad \qquad + \il \ep\pt (g_j/\ep,g_j u_y,g_j U_z)\il_{[\f{m-1}{2}],\infty,t} \|\ep^{\f{1}{2}}\pt(Z_0,\nabla)(\sigma,u)\|_{L_t^2\cH^{m-2}}\big]\\
&\lesssim \lae. 
\end{aligned}
\eeqs
\end{proof}

\subsection{Energy estimates II: High-order energy estimate for the compressible part of the system.}
 In this step, we 
 estimate the compressible part $(\nabla^{\vp}\sigma,\div^{\vp}u):$ 
\begin{lem}\label{lemnablasigma}
Under the same assumption as in
Lemma \ref{lemhighest}, the following estimates hold:
\beq\label{EI-2}
\begin{aligned}
&
\ep(\|(\nabla^{\vp}\sigma,\div^{\vp}u)\|_{L_t^{\infty}H_{co}^{m-1}}^2+\|\nabla^{\vp} \div ^{\vp}u\|_{L_t^2H_{co}^{m-1}}^2)\\
&\lesssim\Lambda\big(\f{1}{c_0},|h|_{2,\infty,t}\big)Y_m^2(0) +(T+\ep)^{\f{1}{2}}\lae.
\end{aligned}
\eeq
\end{lem}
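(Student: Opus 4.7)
The plan is to derive a closed ``wave with viscous dissipation'' system for the compressible unknowns $(\nabla^{\vp}\sigma,\div^{\vp}u)$ and to perform direct conormal energy estimates on it. The key algebraic observation is that
\beqs
\div^{\vp}\mathcal{L}^{\vp}u=-\mu\,\curl^{\vp}\omega+(2\mu+\lambda)\nabla^{\vp}\div^{\vp}u
\eeqs
(modulo $\vp$-commutators), so that taking $\div^{\vp}$ of $\eqref{FCNS2}_2$ produces the term $-(2\mu+\lambda)\Delta^{\vp}\div^{\vp}u$, which is the dissipation driver for $\div^{\vp}u$. Paired with $\nabla^{\vp}$ applied to $\eqref{FCNS2}_1$, the singular $1/\ep$ coupling between the two equations cancels in the energy identity via integration by parts:
\beqs
\int_{\mS}\nabla^{\vp}\div^{\vp}u\cdot\nabla^{\vp}\sigma\,\d\cV+\int_{\mS}\Delta^{\vp}\sigma\cdot\div^{\vp}u\,\d\cV=\text{boundary contributions}+O(\nabla\vp\text{-commutators}).
\eeqs

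I would apply $Z^{\alpha}$ with $|\alpha|\leq m-1$ to these two derived equations, multiply them respectively by $\ep Z^{\alpha}\nabla^{\vp}\sigma$ and $\ep Z^{\alpha}\div^{\vp}u$, sum and integrate on $[0,t]\times\mS$. Using Lemma \ref{lemipp} for the transport parts and the cancellation above, this yields, schematically,
\beqs
\tfrac{\ep}{2}\|(g_1^{1/2}Z^{\alpha}\nabla^{\vp}\sigma,g_2^{1/2}Z^{\alpha}\div^{\vp}u)(t)\|_{L^2(\d\cV_t)}^2+(2\mu+\lambda)\ep\|\nabla^{\vp}Z^{\alpha}\div^{\vp}u\|_{L_t^2L^2}^2\leq \textrm{i.d.}+\textrm{bdry}+\textrm{cmts}.
\eeqs
The boundary contributions at $\{z=0\}$ are handled by substituting the upper boundary identity \eqref{sigmabdry} to trade the trace of $\sigma/\ep$ for tangential derivatives of $u$ plus $\mu(\omega\times\bN)_3$; at $\{z=-1\}$ the Navier condition \eqref{bebdry2} gives direct control. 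The trace inequality \eqref{trace}, together with the half-derivative gain in $h$ from Lemma \ref{exth} and the regularity of the free surface from Lemma \ref{lemsurface}, then produces the desired bounds after a Young's inequality that absorbs $\delta\ep\|\nabla^{\vp}Z^{\alpha}\div^{\vp}u\|_{L_t^2L^2}^2$ into the dissipation. The commutators arising when $Z^{\alpha}$ meets $g_{1,2}$, $u\cdot\nabla^{\vp}$, and the operators $\nabla^{\vp},\div^{\vp},\Delta^{\vp}$ will be controlled via \eqref{crudepro}--\eqref{crudecom}, \eqref{comgrad}, \eqref{esofg12-2}, using the $\ep$-dependent bounds of Lemma \ref{lemhighest} and the high-order time-derivative estimate \eqref{EI-T1}; each such contribution picks up a factor of at least $(T+\ep)^{1/2}$, giving the polynomial $\Lambda(1/c_0,\cN_{m,T})$ structure of the final estimate.

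The main obstacle will be the treatment of the boundary cross term produced by the integration by parts that cancels the singular $1/\ep$ coupling: it involves two-derivative quantities on the trace, and must be rewritten using \eqref{sigmabdry}, \eqref{tan-nor}, \eqref{nor-nor} so that the only surface-regularity contribution appears in the form $\ep^{1/2}|h|_{L_t^2\tilde{H}^{m+1/2}}$, which is then absorbable through Lemma \ref{lemsurface} (in particular \eqref{surface1}) without losing a power of $\ep$. A secondary difficulty will be tracking the balance between commutators hitting $\bN/\p_z\vp$ (which drop half a derivative on $h$) and the available $L_t^{\infty}H_{co}^{m-1}$-regularity of $u$ from Lemma \ref{lemhighest}; this is where the coefficient $\Lambda(1/c_0,|h|_{2,\infty,t})Y_m^2(0)$ in \eqref{EI-2} originates, since one needs the $L_{t,x}^{\infty}$-control of $|h|$ (through Sobolev embedding inside $\cA_{m,t}$) to handle the pointwise coefficients arising from the change of variables.
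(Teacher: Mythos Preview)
Your overall strategy---derive a coupled system for $(\nabla^{\vp}\sigma,\div^{\vp}u)$ and run a conormal energy estimate---matches the paper's, but there is a genuine gap in your boundary analysis that stems from the choice of test function.

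You take $\div^{\vp}$ of $\eqref{FCNS2}_2$ and test the result with $Z^{\alpha}\div^{\vp}u$. This creates \emph{two} second--order operators, $(2\mu+\lambda)\Delta^{\vp}\div^{\vp}u$ and $\frac{1}{\ep}\Delta^{\vp}\sigma$, each of which must be integrated by parts. The resulting boundary terms on $\{z=0\}$ are
\[
-(2\mu+\lambda)\,\ep\!\int_{z=0} Z^{\alpha}\p_{\bn}^{\vp}\div^{\vp}u\cdot Z^{\alpha}\div^{\vp}u
\quad\text{and}\quad
\int_{z=0} Z^{\alpha}\p_{\bn}^{\vp}\sigma\cdot Z^{\alpha}\div^{\vp}u,
\]
both involving the \emph{normal derivative} of $\div^{\vp}u$ or of $\sigma$. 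The identities \eqref{sigmabdry}, \eqref{tan-nor}, \eqref{nor-nor} that you invoke give only the Dirichlet trace of $\sigma/\ep$ and first derivatives of $u$; they do not reduce $\p_{\bn}^{\vp}\div^{\vp}u$ or $\p_{\bn}^{\vp}\sigma$. Individually neither term is controllable at the required order.

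The paper avoids this entirely by a different pairing: it applies only $Z^{\beta}$ to $\eqref{FCNS2}_2$ (no $\div^{\vp}$) and tests against $-\nabla^{\vp}Z^{\beta}\div^{\vp}u$. With this multiplier the dissipative term $-(2\mu+\lambda)\nabla^{\vp}Z^{\beta}\div^{\vp}u$ produces $\|\nabla^{\vp}Z^{\beta}\div^{\vp}u\|^2$ \emph{without} integration by parts, and the singular $\frac{1}{\ep}Z^{\beta}\nabla^{\vp}\sigma$ cancels the $\sigma$--equation contribution purely algebraically, leaving only the commutator $J_5^{\beta}$. Boundary terms then arise only from integrating by parts the transport and $\curl^{\vp}\omega$ pieces ($J_4^{\beta}$ and $J_6^{\beta}$), and these involve only first--order traces, handled exactly as you sketch. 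Your route can be repaired---the two bad boundary terms combine into $\ep\int Z^{\alpha}\p_{\bn}^{\vp}\theta\cdot Z^{\alpha}\div^{\vp}u$ with $\theta=\sigma/\ep-(2\mu+\lambda)\div^{\vp}u$, and the velocity equation gives $\p_{\bn}^{\vp}\theta=(-g_2D_t u-\mu\curl^{\vp}\omega)\cdot\bn$, reducing back to $J_4^{\beta}+J_6^{\beta}$---but you have not identified this cancellation, and the mechanism you propose (via \eqref{sigmabdry}) does not do the job.
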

\begin{proof}
Let $\beta$ be a multi-index satisfying $|\beta|\leq m-1.$
Applying $Z^{\beta}\nabla^{\vp}$ (resp.\ $Z^{\beta}$) to the equation for $\sigma$ (resp. $u$), we find that:
\beq\label{sec5:eq7}
\left\{
\begin{array}{l}
     g_1(\p_t^\vp+u\cdot\nabla^{\vp})Z^{\beta}\nabla^{\vp}\sigma+\f{1}{\ep}Z^{\beta}\nabla^{\vp}\div^{\vp}u = \cR_{\sigma}^{\beta}\\[5pt]
     g_2(\p_t^\vp+u\cdot\nabla^{\vp})Z^{\beta}u+\mu\curl^{\vp}Z^{\beta}\omega-(2\mu+\lambda)\nabla^{\vp}Z^{\beta}\div^{\vp}u+\f{1}{\ep}Z^{\beta}\nabla^{\vp}\sigma=\cR_{u}^{\beta}
\end{array}
\right.
\eeq
where 
\beq\label{defofRsigmau}
\cR_{\sigma}^{\beta}=\cR_{\sigma,1}^{\beta}+\cR_{\sigma,2}^{\beta}+\cR_{\sigma,3}^{\beta}, \quad \cR_{u}^{\beta}=\cR_{u,1}^{\beta}+\cdots \cR_{u,3}^{\beta},
\eeq
with
\begin{alignat*}{2}
\cR_{\sigma,1}^{\beta}&=Z^{\beta}(\nabla^{\vp}g_1\pt^{\vp}\sigma+\nabla^{\vp}(g_1 u)\cdot\nabla^{\vp}\sigma), &\quad  \cR_{u,1}^{\beta}&=[Z^{\beta},g_2/\ep]\ep\pt u+[Z^{\beta},g_1u_y]\nabla_y u,\\
\cR_{\sigma,2}^{\beta}&=[Z^{\beta},g_1/\ep]\ep\pt \nabla^{\vp}\sigma+[Z^{\beta},g_1u_y]\nabla_y \nabla^{\vp}\sigma, &\quad  \cR_{u,2}^{\beta}&=[Z^{\beta},g_1U_z\p_z]u,\\
\cR_{\sigma,3}^{\beta}&=[Z^{\beta},g_1U_z\p_z]\nabla^{\vp}\sigma, &\quad  \cR_{u,3}^{\beta}&=-\mu[Z^{\beta},\curl^{\vp}]\omega+(2\mu+\lambda)[Z^{\beta},\nabla^{\vp}]\div^{\vp}u
\end{alignat*}
and $U_z$ is defined in \eqref{defofUz}.
Taking the scalar product of \eqref{sec5:eq7} by $(Z^{\beta}\nabla^{\vp}\sigma, -\nabla^{\vp}Z^{\beta}\div^{\vp}u)^{t}$ and by integrating in space and time,
one gets the following energy identity:
\begin{align}\label{defofJ}
\nonumber
&\f{1}{2}\int_{\mS}\big(g_1|Z^{\beta}\nabla^{\vp}\sigma|^2+g_2|Z^{\beta}\div^{\vp}u|^2\big)(t)\,\d\cV_t+(2\mu+\lambda)\|\nabla^{\vp}Z^{\beta}\div^{\vp}u\|_{\ltx}^2\\
&=J_0^{\beta}+J_1^{\beta}+\cdots J_7^{\beta}
\end{align}
with:
\begin{align*}
   &J_0^{\beta}=\f{1}{2}\int_{\mS} \big(g_1|Z^{\beta}\nabla^{\vp}\sigma|^2+g_2|Z^{\beta}\div^{\vp}u|^2\big)(0)\,\d\cV_0, \\
   &J_1^{\beta}=\f{1}{2}\int_0^t\int_{\mS}(\p_t^{\vp}g_1+\div^{\vp}(g_1 u))|Z^{\beta}\nabla^{\vp}\sigma|^2\,\d\cV_s\d s,\\
   &J_2^{\beta}=\int_0^t\int_{\mS}\big(\nabla^{\vp}g_2\cdot\pt^{\vp}Z^{\beta} u+\nabla^{\vp}(g_2u)
   \otimes \nabla^{\vp}Z^{\beta}u\big)Z^{\beta}\div^{\vp}u\,\d\cV_s\d s,\\
   &J_3^{\beta}=\int_0^t\int_{\mS}g_2(\pt^{\vp}+u\cdot\nabla^{\vp}
   )([Z^{\beta},\div^{\vp}]u)Z^{\beta}\div^{\vp}u\,\d\cV_s\d s,\\
&J_4^{\beta}=\int_0^t\int_{z=0}g_2(\pt+u_y\p_y)Z^{\beta}u\cdot\bN Z^{\beta}\div^{\vp}u\,\d y\d s\mathbb{I}_{\{\beta_3=0\}},\\
  & J_5^{\beta}=-\f{1}{\ep}\int_0^t\int_{\mS}
   Z^{\beta}\nabla^{\vp}\sigma[Z^{\beta},\nabla^{\vp}]\div^{\vp} u\,\d\cV_s\d s,\\
   & J_6^{\beta}=\mu\int_0^t\int_{\mS}\curl^{\vp}Z^{\beta}\omega\cdot\nabla^{\vp}Z^{\beta}\div^{\vp} u\,\d\cV_{s} \d s,\\
   &J_7^{\beta}=\int_0^t\int_{\mS}\cR_{\sigma}^{\beta}\cdot Z^{\beta}\nabla^{\vp}\sigma+\cR_{u}^{\beta}\cdot\nabla^{\vp}Z^{\beta}\div^{\vp}u\,\d\cV_{s}\d s.
\end{align*}
The first three terms can be controlled directly:
\beq\label{J0}
\ep J_0^{\beta}\leq \ep \|(\nabla^{\vp}\sigma,\div^{\vp}u)(0)\|_{H_{co}^{m-1}}^2,
\eeq
\beq
\ep (J_1^{\beta}+J_2^{\beta})
\lesssim \ep (\|(\sigma,u)\|_{E^{m},t}^2+|h|_{L_t^2\tilde{H}^{m-\f{1}{2}}}^2).
\eeq
In order to bound $J_3^{\beta},$ we need to control $(\pt^{\vp}+
u\cdot\nabla^{\vp})[Z^{\beta},\div^{\vp}]u.$ By the identity \eqref{useful-identity}, we can write
$$\pt^{\vp}+u\cdot\nabla^{\vp}=\pt+u_1\p_1+u_2\p_2+\f{U_z}{\phi}Z_3.$$ 
Since $U_z|_{\p\mS}=\f{u\cdot\bN-\pt\vp}{\p_z\vp}\big|_{\p S}=0,$ we have by the fundamental theorem of calculus and \eqref{exthLinfty} that:
\beq\label{UzLinfty}
\il U_z/\phi\il_{0,\infty,t}
\lesssim \il(U_z,\p_z U_z)\il_{0,\infty,t}\lesssim\Lambda\big(\f{1}{c_0}, \il (u, \nabla u)\il_{0,\infty,t}+|h|_{2,\infty,t}\big)\lesssim\lca.
\eeq
Therefore, we see that:
\beq\label{J3pre}
\|(\pt^{\vp}+\underline{u}\cdot\nabla^{\vp})[Z^{\beta},\div^{\vp}]u\|_{\ltx}\lesssim \f{1}{\ep} \lca
\|(\ep\pt,\ep Z)[Z^{\beta},\div^{\vp}]u\|_{\ltx}.
\eeq
Let us first consider:
$$\ep\pt[Z^{\beta},\div^{\vp}]u=\ep\pt\big(\f{\bN}{\p_z\vp}[Z^{\beta},\p_z]u\big)+\big[Z^{\beta},\ep\pt\big(\f{\bN}{\p_z\vp}\big)\big]\p_z u+\big[Z^{\beta},\f{\bN}{\p_z\vp}\big]\ep\pt\p_z u.$$
In view of Lemma \ref{lemfpzphi}, the identity \eqref{identity-com-nor} and the commutator estimate \eqref{crudecom}, the first two terms in the right hand side of the above identity can be bounded by:
\beqs
\begin{aligned}
\big(\|\nabla u\|_{\hco^{m-1}}+|(h,\ep\pt h)|_{\htlde^{m-\f{1}{2}}}\big)\Lambda\big(\f{1}{c_0}, \il\nabla u\il_{1,\infty,t}+|h|_{[\f{m}{2}]+2,\infty,t}\big).
\end{aligned}
\eeqs
For the third one, we control it as:
\beqs 
\begin{aligned}
\big\|\big[Z^{\beta},\f{\bN}{\p_z\vp}\big]\ep\pt\p_z u\big\|_{L_t^2L^2}&\lesssim \big\|\f{\bN}{\p_z\vp}\big\|_{\hco^{m-1}}\il \nabla u\il_{1,\infty,t}+\il\ep^{\f{1}{2}}(\f{\bN}{\p_z\vp})\il_{m-2,\infty,t}\|\ep^{\f{1}{2}}\pt\p_z u\|_{\hco^{m-2}}\\
&\lesssim \lca \cE_{m,t}.
\end{aligned}
\eeqs
Gathering the previous two estimates, we find that:
\beqs 
\|\ep\pt[Z^{\beta},\div^{\vp}]u\|_{\ltx}\lesssim \lca \cE_{m,t}.
\eeqs
In a similar way,  
we have:
\beqs
\|\ep Z[Z^{\beta},\div^{\vp}]u\|_{\ltx}\lesssim \lca \cE_{m,t}.
\eeqs
Plugging the above two estimates into \eqref{J3pre}, we can then control $J_3^{\beta}$ as:
\beq
\begin{aligned}
\ep J_3^{\beta}&\lesssim \ep^{\f{1}{2}}\|\div^{\vp}u/{\ep^{\f{1}{2}}}\|_{\hco^{m-1}} \|\ep(\pt^{\vp}+\underline{u}\cdot\nabla^{\vp})[Z^{\beta},\div^{\vp}]u\|_{\ltx}\\
&\lesssim 
\ep^{\f{1}{2}}\lca\cE_{m,t}^2.
\end{aligned}
\eeq
We now switch to estimate $J_4^{\beta}.$ 
On the one hand, if $Z^{\beta}=Z_0^k, k\leq m-1,$
we have by the trace inequality \eqref{trace} that:
\beqs 
\begin{aligned}
&\ep^{\f{1}{2}} |(\pt+u_y\p_y)Z_0^k u|_{L_t^2L_y^2}\\
&\lesssim ( \|\ep^{\f{1}{2}}\pt (u,\nabla u)\|_{L_t^2\cH^{m-1}}+\|\ep^{\f{1}{2}}\nabla u\|_{L_t^2H_{co}^m})\Lambda(\il u \il_{0,\infty,t})\lesssim \Lambda(\il u \il_{0,\infty,t})\cE_{m,t}.
\end{aligned}
\eeqs
Therefore, by the trace inequality \eqref{trace}, we get that in this case:
\beq\label{J4-pre1}
\begin{aligned}
\ep J_4^{\beta}&\lesssim \ep^{\f{1}{2}} |Z_0^k \div^{\vp}u|_{L_t^2L_y^2}  |\ep^{\f{1}{2}}(\pt+u_y\p_y)Z_0^k u|_{L_t^2L_y^2}|\bN|_{0,\infty,t}\\
&\lesssim \ep^{\f{1}{2}}(\|\div^{\vp}u/\ep^{\f{1}{2}}\|_{L_t^2\cH^{m-1}}^2+\|\ep^{\f{1}{2}}\nabla \div^{\vp}u\|_{L_t^2\cH^{m-1}}^2+\cE_{m,t}^2)\Lambda(\il u \il_{0,\infty,t}+|h|_{1,\infty,t})\\
&\lesssim \ep^{\f{1}{2}}\Lambda(\il u \il_{0,\infty,t}+|h|_{1,\infty,t})\cE_{m,t}^2.
\end{aligned}
\eeq
On the other hand, if $Z^{\beta}$ contains at least one spatial tangential derivatives 
$\p_{y_1},\p_{y_2},$ we control $\ep J_3^{\beta}$ as follows.
By the equation $\eqref{FCNS2}_2$ and the identity \eqref{sigmabdry}, we can express $(\div^{\vp} u)$ on the boundary $\{z=0\}$ as:
\beqs
{\div^{\vp}u}=\ep g_1(\pt+u_y\p_y)\big(\ep{\div^{\vp}u}+2\mu\ep (\p_1u_1+\p_2 u_2)-\mu\ep(\omega\times\bN)_3\big) \text{ on } \{z=0\}.
\eeqs
This, together with the product estimate \eqref{product}, the identity \eqref{omegatimesn} and the trace inequality \eqref{trace}    yields that:
\begin{align*}
&|(Z^{\beta}\div^{\vp}u)^{b,1}|_{L_t^2H^{-\f{1}{2}}}
\\
&\lesssim |(\div^{\vp}u)^{b,1}|_{\htlde^{m-\f{3}{2}}}\lesssim \ep \big|\big((\div^{\vp}u)^{b,1},\p_y u^{b,1},(\omega\times\bN)_3\big)\big|_{\htlde^{m-\f{1}{2}}}\\
&\lesssim \ep^{\f{1}{2}}\big(\ep^{\f{1}{2}}(|h|_{\htlde^{m+\f{1}{2}}}+\|\nabla u\|_{\hco^{m-1}})\lca+\ep^{\f{1}{2}}\|\nabla\div u\|_{\hco^{m-1}}\\
&\qquad+\ep^{\f{1}{2}}\|\nabla u\|_{\hco^m}\Lambda\big(\f{1}{c_0},|h|_{2,\infty,t}\big)\big),
\end{align*}
 which, combined with the Young's inequality, allows us to control $\ep J_4^{\beta}$  as:
 \beq\label{J4-pre2}
 \begin{aligned}
 |\ep J_4^{\beta}|&\lesssim \Lambda\big(\f{1}{c_0},|h|_{2,\infty,t}\big)|\ep^{\f{1}{2}}(\ep\pt,\ep Z)Z^{\beta} u|_{L_t^2H^{\f{1}{2}}} 
 |\ep^{-\f{1}{2}}Z^{\beta}\div^{\vp}u|_{L_t^2H^{-\f{1}{2}}}\\
 &\leq \delta \|\ep^{\f{1}{2}}\nabla\div^{\vp}u\|_{\hco^{m-1}}^2+C_{\delta}\|\ep^{\f{1}{2}}\nabla u\|_{\hco^m}^2\Lambda\big(\f{1}{c_0},|h|_{2,\infty,t}\big)+ T^{\f{1}{2}}\lca\cE_{m,t}^2.
  \end{aligned}
\eeq
In view of \eqref{J4-pre1} and \eqref{J4-pre2}, we find that:
\beq\label{J4}
|\ep J_4^{\beta}|\leq \delta \|\ep^{\f{1}{2}}\nabla\div^{\vp}u\|_{\hco^{m-1}}^2+C_{\delta}\|\ep^{\f{1}{2}}\nabla u\|_{\hco^m}^2\Lambda\big(\f{1}{c_0},|h|_{2,\infty,t}\big)+(T+\ep)^{\f{1}{2}}\lca\cE_{m,t}^2.
\eeq
 
Next, thanks to \eqref{comgrad},
$J_5^{\beta}$ can be bounded by:
\beq\label{J5}
\begin{aligned}
\ep J_5^{\beta}&\lesssim \ep^{\f{1}{2}} \|\nabla\sigma/\ep^{\f{1}{2}} \|_{\hco^{m-1}}\big(\|\nabla\div^{\vp}u\|_{\hco^{m-2}}+|h|_{\htlde^{m-\f{1}{2}}}\big)\Lambda\big(\f{1}{c_0},|h|_{m-2,\infty,t}+\il\p_z\div^{\vp}u\il_{1,\infty,t}\big)\\
&\lesssim \ep^{\f{1}{2}} \big(\|\nabla\sigma/\ep^{\f{1}{2}} \|_{\hco^{m-1}}^2+\|\nabla\div^{\vp}u\|_{\hco^{m-2}}^2+|h|_{\htlde^{m-\f{1}{2}}}^2\big)\lca\lesssim \ep^{\f{1}{2}}\lca\cE^2_{m,t}.
\end{aligned}
\eeq
Note that by the equation $\eqref{FCNS2}_1,$ we have $\p_z\div^{\vp}u=\p_z(g_1\ep\pt+\ep u_y\p_y+\ep U_{z}
\p_z)\sigma,$ we thus get that
\beqs
\il\p_z\div^{\vp}u\il_{1,\infty,t}\lesssim\Lambda\big({1}/{c_0}, \il(\sigma,\nabla\sigma)\il_{2,\infty,t}+\il(u,\nabla u)\il_{1,\infty,t}+|h|_{3,\infty,t}\big)\lesssim \lca.
\eeqs
For the next term $J_6^{\beta},$
we assume $\beta_3=0,$ since otherwise it vanishes identically. 
It follows from integration by parts that:
\beqs
\begin{aligned}
 J_6^{\beta}&=\mu \int_0^t\int_{z=0}(Z^{\beta}\omega\times \bn)\Pi
 \nabla^{\vp}Z^{\beta}\div^{\vp}u\,\d y\d s+\mu\int_0^t\int_{z=-1}Z^{\beta}
 (\omega_2,-\omega_1,0)^t\cdot (\p_y,0)Z^{\beta}\div^{\vp}u\,\d y\d s\\
 &=J_{6,1}^{\beta}+J_{6,2}^{\beta}
\end{aligned}
 \eeqs
 where  $\omega=\nabla^{\vp}\times u=(\omega_1,\omega_2,\omega_3)^t.$ 
 In light of the boundary condition \eqref{bebdry2}, we have by integration by parts along the boundary and the trace inequality \eqref{trace} that:
 \beq\label{J62}
 \begin{aligned}
 \ep 
  J_{6,2}^{\beta}\lesssim\ep |u^{b,2}|_{\htlde^{m}}|Z^{\beta}(\div^{\vp}u)^{b,2}|_{L_t^2L^2}&\lesssim \ep^{\f{1}{2}}(\|u\|_{\hco^m}^{\f{1}{2}}\|\ep^{\f{1}{2}}\nabla u\|_{\hco^m}^{\f{1}{2}}+\|u\|_{\hco^m})\cdot\\
&(\|\div^{\vp}u\|_{\hco^m}^{\f{1}{2}}\|\ep^{\f{1}{2}}\nabla\div^{\vp}u\|_{\hco^{m-1}}^{\f{1}{2}}+\|\div^{\vp}u\|_{\hco^{m-1}})\\
 &\lesssim \ep^{\f{1}{2}}\cE_{m,t}^2.
 \end{aligned}
 \eeq
For $J_{6,1}^{\beta},$ since $\Pi\nabla^{\vp}=\Pi(\p_1,\p_2,0)^{t},$ we also integrate by parts along the boundary to get:
\beqs
\begin{aligned}
\ep J_{6,1}^{\beta} &\lesssim \ep \lca\bigg(\big|\big(Z^{\beta}(\omega^{b,1}\times \bn),Z^{\beta}\bn\big)\big|_{L_t^2H^{\f{1}{2}}}|Z^{\beta}(\div^{\vp}u)^{b,1}|_{L_t^2H^{\f{1}{2}}}\\
&\qquad\qquad\quad +\big|\big(Z^{\beta}\omega^{b,1},[\p_y Z^{\beta},\bn,\omega^{b,1}]\big)\big|_{L_t^2L_y^2}|Z^{\beta}(\div^{\vp}u)^{b,1}|_{L_t^2L_y^2}\bigg).\\
\end{aligned}
\eeqs
Thanks to the boundary condition \eqref{omegatimesn}, we have that
\beqs
|Z^{\beta}(\omega^{b,1}\times \bn)|_{L_t^2H^{\f{1}{2}}}\lesssim |u^{b,1}|_{L_t^2\tilde{H}^{m+\f{1}{2}}}\Lambda\big(\f{1}{c_0},|h|_{2,\infty,t}\big)+(|u^{b,1}|_{L_t^2\tilde{H}^{m-\f{1}{2}}}+|h|_{L_t^2\tilde{H}^{m+\f{1}{2}}})\lca.
\eeqs
Moreover, by \eqref{norpz} \eqref{tanpz}, we have:
\beqs
\begin{aligned}
|Z^{\beta}\omega^{b,1}|_{L_t^2L_y^2}&\lesssim \big(|(\div^{\vp}u)^{b,1}|_{\htlde^{m-1}}+|(u^{b,1},h)|_{\htlde^{m}}\big)\lca,\\
\big|[\p_y Z^{\beta},\bn,\omega^{b,1}]\big|_{L_t^2L_y^2}&\lesssim (|\omega^{b,1}|_{\htlde^{m-1}}+|h|_{\htlde^{m}})\lca\\
&\lesssim 
\big(|(\div^{\vp}u)^{b,1}|_{\htlde^{m-1}}+|(u^{b,1},h)|_{\htlde^{m}}\big)
\lca.
\end{aligned}
\eeqs
Hence, by the trace inequality and Young's inequality, we end up with:
\beq\label{J61}
\begin{aligned}
\ep J_{6,1}^{\beta}&\leq \delta\ep\|\nabla^{\vp} \div^{\vp}u\|_{\hco^{m-1}}^2+\ep\|\nabla u\|_{\hco^{m}}^2\Lambda\big(\f{1}{c_0},|h|_{2,\infty,t}\big)\\
&\quad+\lca(\ep\|\nabla\div^{\vp}u\|_{\hco^{m-2}}^2+\ep | h|_{\htlde^{m+\f{1}{2}}}^2+\ep\| u\|_{E^m,t}).
\end{aligned}
\eeq
Summing up \eqref{J62} and \eqref{J61}, and using \eqref{na-div}, we obtain:
\beq\label{J6}
\ep J_{6}^{\beta}\leq 2\delta\ep^2\|\nabla^{\vp} \div^{\vp}u\|_{\hco^{m-1}}^2+C_{\delta}\Lambda\big(\f{1}{c_0},|h|_{2,\infty,t}\big)\|\ep^{\f{1}{2}} \nabla u\|_{\hco^{m}}^2)+(T+\ep)^{\f{1}{2}}\lca\cE_{m,t}^2.
\eeq
Finally, for $J_7^{\beta},$ by Young's inequality, 
\beq\label{J7-0}
\ep J_7^{\beta}\leq \delta\ep \|\nabla^{\vp}Z^{\beta}\div^{\vp}u\|_{\ltx}^2+C_{\delta}\ep\|\cR_{u}^{\beta}\|_{\ltx}^2+
\ep^{\f{1}{2}}(\|\nabla^{\vp}\sigma/\ep^{\f{1}{2}}\|_{\hco^{m-1}}^2+\ep\|\cR_{\sigma}^{\beta}\|_{\ltx}^2).
\eeq
Hence, it suffices to control 
$\ep^{\f{1}{2}}\|(\cR_{\sigma}^{\beta},\cR_{u}^{\beta})\|_{\ltx}.$ 
Let us first see the estimate of $\ep\cR_{\sigma}^{\beta}.$ 
In view of the definition \eqref{defofRsigmau}, we have by the product estimate \eqref{crudepro} and 
Corollary \ref{corg12} that
\beq\label{sig1}
\begin{aligned}
  \ep^{\f{1}{2}}\|\cR_{\sigma,1}^{\beta}\|_{\ltx}&
  \lesssim \ep^{\f{1}{2}}\lca\big(\|u\|_{E^{m},t}+\|(\ep^{\f{1}{2}}\pt\sigma,\ep^{-\f{1}{2}}\nabla\sigma)\|_{\hco^{m-1}}+|h|_{\htlde^{m-\f{1}{2}}}\big).
\end{aligned}
\eeq
Similarly, by the commutator estimate \eqref{crudecom} and 
Corollary \ref{corg12}, we have that:
\beq
 \ep^{\f{1}{2}}\|\cR_{\sigma,2}^{\beta}\|_{\ltx}\lesssim
 \ep^{\f{1}{2}}\lca\big(\|u\|_{E^{m},t}+\|\ep^{-\f{1}{2}}\sigma\|_{E^m,t}+|h|_{\htlde^{m-\f{1}{2}}}\big).
\eeq
For $\cR_{\sigma,3}^{\beta},$ we split it as:
\beq\label{Rsi3}
\cR_{\sigma,3}^{\beta}=
[Z^{\beta},g_1 U_z/\phi]Z_3\nabla^{\vp}\sigma+(g_1 U_z/\phi)[Z^{\beta},\phi]\p_z\nabla^{\vp}\sigma+g_1 U_z [Z^{\beta},\p_z]\nabla^{\vp}\sigma=\colon (1)+(2)+(3).
\eeq
Thanks to the commutator estimate \eqref{crudecom}, we have:
\begin{align*}
\ep^{\f{1}{2}}\|(1)\|_{\ltx}&\lesssim \ep^{\f{1}{2}}\|\nabla^{\vp}\sigma\|_{\hco^{m-1}}\il g_1U_z/\phi\il_{[\f{m+1}{2}],\infty,t}+\ep^{\f{1}{2}}\|g_1U_z/\phi\|_{\hco^{m-1}}\il \nabla^{\vp}\sigma\il_{[\f{m}{2}]-1,\infty,t}.
\end{align*}
Note that as $U_z$ vanishes on the boundary, we have by Hardy's inequality,
\beqs
\begin{aligned}
\ep^{\f{1}{2}}\|g_1U_z/\phi\|_{\hco^{m-1}}&\lesssim \ep^{\f{1}{2}} \|\p_z(g_1U_z)\|_{\hco^{m-1}}+\ep^{\f{1}{2}}\|g_1U_z\|_{\hco^{m-1}}\\
&\lesssim \lca ( \|(\sigma,u, \nabla\sigma,\div u)\|_{\hco^{m-1}}+|\ep^{\f{1}{2}} h|_{\htlde^{m+\f{1}{2}}}+|\ep^{\f{1}{2}}\pt h|_{\htlde^{m-\f{1}{2}}}).
\end{aligned}
\eeqs
Moreover, as for \eqref{UzLinfty}, the fundamental theorem of calculus leads to:
\begin{align*}
\ep^{\f{1}{2}}\il g_1U_z/\phi\il_{[\f{m+1}{2}],\infty,t} &\lesssim\ep^{\f{1}{2}} \il (U_z,\p_z U_z)\il_{[\f{m+1}{2}],\infty,t}(1+\il Z g_1\il_{[\f{m-1}{2}],\infty,t})\\ &\lesssim \Lambda\big( \f{1}{c_0},\ep^{\f{1}{2}}\il (\sigma,u)\il_{[\f{m+3}{2}],\infty,t}+\ep^{\f{1}{2}}\il \div^{\vp} u)\il_{[\f{m+1}{2}],\infty,t}\\
&\quad+|\ep^{\f{1}{2}} h|_{[\f{m+5}{2}],\infty,t}+|\ep^{\f{1}{2}}\p_t h|_{[\f{m+3}{2}],\infty,t}+\il(\sigma,u)\il_{[\f{m}{2}],\infty,t}+|(h,\pt h)|_{[\f{m}{2}]+1}\big).  \end{align*}
In view of Equation \eqref{surfaceeq2} and the definition \eqref{defcA-free}, we conclude:
\beq\label{na-Uz-Linfty}
\ep^{\f{1}{2}}\il g_1U_z/\phi\il_{[\f{m+1}{2}],\infty,t}\lesssim\lae.
\eeq
We thus obtain that:
\beq
\ep^{\f{1}{2}}\|(1)\|_{\ltx}\lesssim \ep^{\f{1}{2}}\lae.
\eeq
It remains to estimate $(2),(3)$ in \eqref{Rsi3}. By induction, one has up to some smooth function which depends only on $\phi$ and its derivatives,
\beqs
[Z^{\beta},\phi]=\sum_{\gamma< \beta,|\gamma|\leq |\beta|-1}*_{\beta,\gamma}
Z^{\gamma}\phi,  
\eeqs
The above identity, combined with \eqref{identity-com-nor}, \eqref{na-Uz-Linfty} yields:
\beqs
\ep^{\f{1}{2}}\|(2)+(3)\|_{\ltx}\lesssim\ep^{\f{1}{2}}\|\nabla^{\vp}\sigma/{\ep^{\f{1}{2}}}\|_{\hco^{m-1}} \lca.
\eeqs
To summarize, we have obtained:
\beq\label{sig3}
\ep^{\f{1}{2}} \|\cR_{\sigma,3}^{\beta}\|_{\ltx}\lesssim\lca\ep^{\f{1}{2}}\cE_{m,t}.
\eeq
Collecting \eqref{sig1}-\eqref{sig3}, we thus arrive at:
\beq\label{Rsig}
\ep^{\f{1}{2}}\|\cR_{\sigma}^{\beta}\|_{\ltx}\lesssim \ep^{\f{1}{2}}\lae.
\eeq
To finish the estimates of the right hand side of \eqref{J7-0}, it remains to control $\cR_{u}^{\beta}$ which is defined in \eqref{defofRsigmau}. We first find, in a similar way as for the control of $\cR_{\sigma}^{\beta},$ that:
\beq\label{Ru12}
\ep^{\f{1}{2}}\|(\cR_{u,1}^{\beta}+\cR_{u,2}^{\beta})\|_{\ltx}\lesssim \ep^{\f{1}{2}}\lca\cE_{m,t}.
\eeq
From the identities:
\beqs
[Z^{\beta},\curl^{\vp}]\omega=[Z^{\beta},\f{\bN}{\p_z\vp}\p_z]\times\omega, \quad [Z^{\beta},\nabla^{\vp}]\div^{\vp}u=[Z^{\beta},\f{\bN}{\p_z\vp}\p_z]\div^{\vp}u,
\eeqs
 $\cR_{u,3}^{\beta}$ can be treated thanks to \eqref{comgrad} as:
 \beq\label{Ru3}
 \begin{aligned}
\ep^{\f{1}{2}} \|\cR_{u,3}^{\beta}\|_{\ltx}&\lesssim \Lambda\big(\f{1}{c_0},|h|_{m-2,\infty,t}+\ep^{\f{1}{2}}\il\p_z(\omega,\div^{\vp}u)\il_{1,\infty,t}\big)\big(\ep^{\f{1}{2}}\|\p_z(\omega,\div^{\vp}u)\|_{\hco^{m-2}}+|h|_{\htlde^{m-\f{1}{2}}}\big) \\
&\lesssim \lca\big(\ep^{\f{1}{2}}\|\nabla^2 u\|_{\hco^{m-2}}+\ep^{\f{1}{2}} \|u\|_{E^{m},t}+|h|_{\htlde^{m-\f{1}{2}}}\big).
\end{aligned}
\eeq
Combining \eqref{Ru12} and \eqref{Ru3}, one finds that:
\beq\label{Ru}
\ep^{\f{1}{2}} \|\cR_{u}^{\beta}\|_{\ltx}\lesssim 
(T+\ep)^{\f{1}{2}}\lca\cE_{m,t}.
\eeq
Plugging \eqref{Rsig} and \eqref{Ru} into \eqref{J7-0}, we finally get that:
\beq\label{J7}
\begin{aligned}
\ep|J_7^{\beta}|&\leq
 \delta\ep^2\|\nabla^{\vp}Z^{\beta}\div^{\vp}u\|_{\ltx}^2+(T+\ep) ^{\f{1}{2}}\lca\cE_{m,t}^2.
 \end{aligned}
\eeq
Collecting \eqref{J0}-\eqref{J5}, \eqref{J6}, \eqref{J7}, 
and summing up for $k\leq m-1,$
we find that by choosing $\delta$ small enough,
\begin{align*}
&\ep\|(\nabla^{\vp}\sigma,\div^{\vp}u)\|_{L_t^{\infty}H_{co}^{m-1}}^2+\ep\|\nabla^{\vp}\div^{\vp}u\|_{\hco^{m-1}}^2\\
&\lesssim \ep\|(\nabla^{\vp}\sigma,\div^{\vp}u)(0)\|_{H_{co}^{m-1}}^2+\Lambda\big(\f{1}{c_0},|h|_{2,\infty,t}\big)\|\ep^{\f{1}{2}}\nabla^{\vp} u\|_{L_t^2H_{co}^m}^2
+(T+\ep)^{\f{1}{2}}\lae.
\end{align*}
This inequality, combined with \eqref{EI-1} 
leads to \eqref{EI-2}.

\end{proof}
\section{Control of the low-order energy norms}
This section is devoted to the control of the lower order term $\cE_{low,T}.$
 and 
 \beq\label{deflowerordernorm2}
{\cE}_{low,T}=\ep^{\f{1}{2}}\|\pt(\sigma,u)\|_{L_t^{\infty}L^2}
+\ep^{\f{1}{2}}\|(\sigma,u)\|_{L_t^{\infty}H^3}
+\ep^{\f{3}{2}} \|\nabla^4 u\|_{\ltx}.
 \eeq
 Except the first norm,
 the other norms appearing in ${\cE}_{low, T}$ are indeed not crucial to get an estimate uniformly in $\ep.$ Nevertheless, their presence allows us to take benefit of the known local existence results 
 \cite{MR697305, Tanaka-Tani, MR1316494} (see Theorem \ref{thm-localexis-nonuniform} in Section 13).

 \begin{lem}\label{lemlow-full}
Under the assumption \eqref{preassumption},
the following estimate holds:
 \begin{equation}\label{loworder-full}
 \begin{aligned}
  &\cE_{low,T}^2\leq \Lambda\big(\f{1}{c_0}, |h|_{3,\infty,T}^2\big)
(Y_m^2(0)+{\cE}_{high,m,T}^2)+ (T+\ep)^{\f{1}{2}}\lae. 
 \end{aligned}
 \end{equation}
 \end{lem}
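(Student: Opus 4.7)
The plan is to estimate each of the three pieces of $\cE_{low,T}$ separately, the delicate one being $\|\ep^{1/2}\pt(\sigma,u)\|_{L_T^\infty L^2}$, which is the lowest order estimate for the weighted time derivative. The other two pieces $\ep^{1/2}\|(\sigma,u)\|_{L_T^\infty H^3}$ and $\ep^{3/2}\|\nabla^4 u\|_{L_T^2 L^2}$ are not needed to close the uniform estimates; they are included only to match the functional framework of the local existence theory, so we can afford to control them using the equations together with the high-order norms already gathered in $\cE_{high,m,T}$.

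\emph{Step 1: the weighted time derivative at zero order.} As emphasized in Section 1.6, a naive energy estimate on $\pt(\sigma,u)$ fails because the commutator $\ep^{-1/2}[\pt,\nabla^{\vp}]\sigma=\ep^{-1/2}\pt(\bN/\p_z\vp)\cdot \p_z\sigma$ is not uniformly in $L_t^2L^2$. I follow the trick described in Step 2 of Subsection \ref{secsketchproof}: apply $\pt^{\vp}$ to $\eqref{FCNS2}_1$ and test with $\ep\pt\sigma$, apply $\pt$ to $\eqref{FCNS2}_2$ and test with $\ep\pt^{\vp} u$, then add. The singular contributions are reorganized as in \eqref{trick1}, so that they produce one interior commutator $\int \pt^{\vp}u\cdot[\pt,\nabla^{\vp}]\sigma$ (which carries $\pt\vp$, thus is harmless once paired with $\ep^{-1/2}\nabla\sigma$) and one boundary term on $\{z=0\}$ of the form $\int_{z=0}\pt^{\vp}u\cdot\bN\,\pt\sigma$. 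Differentiating in $\pt$ the boundary condition \eqref{upbdry2} lets me combine this with the trace contribution coming from the integration by parts of $-\div^{\vp}\cL^{\vp}u$, yielding the clean expression
\[
-\ep\int_0^t\!\int_{z=0}\!\bigl(-\cL^{\vp}u+(\sigma/\ep)\Id\bigr)\pt\bN\cdot\pt^{\vp}u\,\d y\d s,
\]
and on the boundary $\sigma/\ep$ is replaced by tangential derivatives of $u$ via \eqref{sigmabdry}. All remaining commutator terms from $\pt^{\vp}$ and $u\cdot\nabla^{\vp}$ acting on $(\sigma,u)$, together with those coming from $g_1(\ep\sigma),g_2(\ep\sigma)$, are controlled by Corollary \ref{corg12}, Lemma \ref{lemkorn} (for the $|u_\tau|^2$ bottom term) and the trace inequality \eqref{trace}, absorbing the diffusive contribution $\ep\|\nabla\pt u\|_{L_t^2L^2}^2$ on the left. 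The outcome is
\[
\ep\|\pt(\sigma,u)\|_{L_T^\infty L^2}^2+\ep\|\pt\nabla u\|_{L_T^2L^2}^2\lesssim \ep\|\pt(\sigma,u)(0)\|_{L^2}^2+(T+\ep)^{1/2}\Lambda(1/c_0,\cN_{m,T}),
\]
and the initial data term is bounded by $\Lambda(1/c_0,Y_m(0))$ by definition of $Y_m^\ep(0)$. This is the main step.

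\emph{Step 2: the $H^3$ piece.} For $\ep^{1/2}\|(\sigma,u)\|_{L_T^\infty H^3}$, I use the equations in $\mS$ to trade normal derivatives for lower order quantities already controlled by $\cE_{high,m,T}$. From $\eqref{FCNS2}_1$ I get $\div^{\vp}u=-\ep g_1(\pt^{\vp}+u\cdot\nabla^{\vp})\sigma$, so $\ep^{1/2}\|\div^{\vp}u\|_{L_T^\infty H^2}$ is bounded by $\ep^{3/2}\|\pt\sigma\|_{L_T^\infty H^2}$ plus conormal/Lipschitz terms; by Corollary \ref{corg12} and \eqref{defenergy-high} (in particular the $\ep^{1/2}\pt\sigma$ and $\ep^{-1/2}\nabla^{\vp}\sigma$ bounds, and the fact that $m-2\geq 3$ for $m\geq 7$) this is $\lesssim \Lambda(1/c_0,\cA_{m,T})\cE_{high,m,T}$. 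Then $\eqref{FCNS2}_2$ gives a Lamé-type elliptic equation for $u$ in $\mS$:
\[
-\mu\Delta u-(\mu+\lambda)\nabla\div u=-g_2(\pt+\underline{u}\cdot\nabla)u-\ep^{-1}\nabla\sigma+R,
\]
where $R$ gathers geometric terms involving $\nabla\vp$ (hence $h$) which are controlled via Lemma \ref{exth}, \eqref{fpzphi1}, \eqref{comgrad}. Applying classical elliptic regularity with the mixed boundary conditions \eqref{upbdry2}, \eqref{bebdry2} (standard for Lamé systems with Navier conditions on one side and a stress condition on the other) then recovers $\ep^{1/2}\|u\|_{L_T^\infty H^3}$ from $\ep^{1/2}\|\pt u\|_{L_T^\infty H^1}$, $\ep^{-1/2}\|\nabla\sigma\|_{L_T^\infty H^1}$, and the already bounded boundary data. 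All three are components of $\cE_{high,m,T}$ (again because $m\geq 7$). The $\sigma$ component is easier: by \eqref{preassumption1} and the composition $\sigma=P^{-1}(\cdots)-\bar\rho$, $\ep^{1/2}\|\sigma\|_{L_T^\infty H^3}\lesssim \Lambda(\cA_{m,T})(\ep^{1/2}\|\nabla\sigma\|_{L_T^\infty H^2}+\ep^{1/2}\|\sigma\|_{L_T^\infty L^2})$, which is $\lesssim \cE_{high,m,T}$.

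\emph{Step 3: the $\ep^{3/2}\|\nabla^4 u\|_{L_T^2L^2}$ piece.} Rewriting the velocity equation as a parabolic problem for $u$ with forcing $-g_2 u\cdot\nabla^{\vp}u-\ep^{-1}\nabla^{\vp}\sigma$, I apply maximal $L^2$ regularity for the Lamé system with the boundary conditions \eqref{upbdry2}--\eqref{bebdry2} to $\nabla^2 u$: differentiating the equation twice in space, the RHS involves $\ep^{3/2}\|\nabla^3\pt u\|_{L_T^2L^2}$ type contributions (controlled via $\ep^{1/2}\|\pt\nabla u\|_{L_T^2\cH^{m-1}}$ in $\cE_{high,m,T}$) and $\ep^{1/2}\|\nabla^3\sigma\|_{L_T^2L^2}$ (controlled via $\ep^{-1/2}\|\nabla\sigma\|_{L_T^2H_{co}^{m-1}}$), together with boundary data of $\nabla^2 u$ expressible through \eqref{sigmabdry}, \eqref{omegatimesn} in terms of $\sigma/\ep,\div^{\vp}u,\p_y u$ and $h$, whose regularity is known. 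This yields $\ep^{3/2}\|\nabla^4 u\|_{L_T^2L^2}^2\lesssim \Lambda(1/c_0,|h|_{3,\infty,T})\bigl(Y_m(0)^2+\cE_{high,m,T}^2\bigr)+(T+\ep)^{1/2}\Lambda(1/c_0,\cN_{m,T})$.

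Combining Steps 1--3 and using $|h|_{3,\infty,T}\lesssim |h|_{L_T^\infty\tilde H^{m-1/2}}$ (Sobolev in $y$) we obtain \eqref{loworder-full}. The genuinely new estimate is Step 1; Steps 2 and 3 are book-keeping through the equations. The main obstacle is to handle the boundary contribution in Step 1 without creating an uncontrolled trace of $\sigma/\ep$, which is precisely what the rewriting \eqref{sigmabdry} of $\sigma/\ep$ in terms of tangential velocity derivatives makes possible.
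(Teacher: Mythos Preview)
Your Step 1 is correct and matches the paper's Lemma \ref{lemloworder} essentially line by line: the non-standard pairing ($\pt^{\vp}$ on $\eqref{FCNS2}_1$ tested against $\ep\pt\sigma$, $\pt$ on $\eqref{FCNS2}_2$ tested against $\ep\pt^{\vp}u$), the cancellation of boundary terms via \eqref{upbdry2}, and the use of \eqref{sigmabdry} to express $(\sigma/\ep)^{b,1}$ tangentially are all exactly what the paper does.

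Steps 2 and 3, however, have a genuine gap: you repeatedly treat the conormal norms in $\cE_{high,m,T}$ as if they controlled full Sobolev norms. They do not. Every norm in $\cE_{high,m,T}$ is built from the fields $Z_0,Z_1,Z_2,Z_3=\phi(z)\p_z$ with $\phi$ vanishing on $\partial\mathcal S$, so quantities like $\ep^{-1/2}\|\nabla\sigma\|_{L_T^{\infty}H_{co}^{m-2}}$ give no control on $\|\p_z^2\nabla\sigma\|_{L^2}$ near the boundaries. Concretely: your claim ``$\ep^{1/2}\|\nabla\sigma\|_{L_T^{\infty}H^2}\lesssim \cE_{high,m,T}$'' is false, and the displayed inequality for $\sigma$ in Step 2 is tautological (it reduces $\|\sigma\|_{H^3}$ to $\|\nabla\sigma\|_{H^2}$, which is the same thing). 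The same issue recurs in Step 3 when you assert that $\ep^{1/2}\|\nabla^3\sigma\|_{L_T^2L^2}$ is ``controlled via $\ep^{-1/2}\|\nabla\sigma\|_{L_T^2H_{co}^{m-1}}$''. Your Lam\'e elliptic-regularity route for $u$ is also circular once you substitute \eqref{sigmabdry}: the Neumann datum then involves $\p_y u$ and $\div^{\vp}u$ at the same trace regularity as the unknown, and you do not explain how to absorb this.

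The paper closes this gap by two extra arguments that your proposal does not contain. First, for $\sigma$, it takes $\div^{\vp}$ of the damped transport equation \eqref{gradsigma} for $\nabla^{\vp}\sigma$ to obtain a damped transport equation \eqref{eqDeltasigma} for $\Delta^{\vp}\sigma$; the damping $\frac{1}{2\mu+\lambda}\Delta^{\vp}\sigma$ lets energy estimates close and yields $\ep\|\Delta^{\vp}\sigma\|_{L_T^{\infty}H_{co}^1}^2$ and $\ep\|\p_z\Delta^{\vp}\sigma\|_{L_T^{\infty}L^2}^2$, from which $\ep\|\nabla^3\sigma\|_{L_T^{\infty}L^2}^2$ is recovered via \eqref{laplacesigma}. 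Second, for $u$, the paper does \emph{not} use elliptic regularity with boundary conditions; it uses the velocity equation pointwise to write $\ep\mu\Delta^{\vp}u$ in terms of $\ep\pt u$, $\nabla^{\vp}\sigma$, $\nabla^{\vp}\div^{\vp}u$, then reads off $\p_z^2 u$ from the explicit expression \eqref{laplaceu}. This avoids boundary data entirely and, crucially, requires only $\ep^{-1/2}\|\nabla^2\sigma\|_{L^2}$, which is obtained from the elliptic problem \eqref{sigmaelliptic} for $\sigma/\ep$. The $\ep^{3/2}\|\nabla^4 u\|_{L_T^2L^2}$ piece then follows by one more normal derivative of the same identity, using the $\|\nabla^3\sigma\|_{L_T^2H_{co}^1}$ control just established.
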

 \begin{proof}
 This lemma is the consequence of the following three lemmas.
\end{proof}
The first term in $\cE_{low,T}$ is estimated in the next lemma.
 Before stating the result, it is convenient to introduce the notation:
\beq\label{defLinfty2}
\Lambda_{2,\infty,t}=\Lambda\big(\f{1}{c_0}, \il(\sigma,u)\il_{2,\infty,t}+\ep^{-\f{1}{2}}\il(\nabla^{\vp}\sigma, \div^{\vp}u)\il_{1,\infty,t}+\ep^{\f{1}{2}}\il\nabla^2 u\il_{0,\infty,t}+|h|_{3,\infty,t}\big),\eeq
where 
$\Lambda$ denotes a polynomial that may differ from line to line. Note that by the equation for $h$ \eqref{surfaceeq2}, we have:
\beq\label{factpth}|\pt h|_{2,\infty,t}\lesssim \Lambda_{2,\infty,t}.\eeq

\begin{lem}\label{lemloworder}
Assuming that \eqref{preassumption} holds true, then for every $0<t\leq T,$  we have the following estimate,
  \begin{align}\label{lowerder}
  &\ep\|\pt(\sigma,u)\|_{L_t^{\infty}L^2}^2+\ep\|\nabla\pt u\|_{L_t^2L^2}^2
  \lesssim \ep\|\pt(\sigma,u)(0)\|_{L^2(\mS)}^2+ (T+\ep)^{\f{1}{2}}
  \Lambda_{2,\infty,T}
 \mathcal{E}_{m,T}^2.
  \end{align}
 \end{lem}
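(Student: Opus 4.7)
The plan is to use the asymmetric testing scheme outlined in Subsection \ref{remarkondata} Step 2: apply $\pt^{\vp}$ to $\eqref{FCNS2}_1$ and test against $\ep\pt\sigma$, apply $\pt$ to $\eqref{FCNS2}_2$ and test against $\ep\pt^{\vp}u$, then sum and integrate over $[0,t]\times\mS$ against $\d\cV_s$. A naive scheme with $\pt$ on both equations generates the singular cross-term $\ep^{-1}\int\pt\sigma\,\pt(\bN/\p_z\vp)\cdot\p_z u$, which is of order $\ep^{-1}$ in $L^2_tL^2$ and is uncontrollable. The asymmetric version exploits the fact that $\pt^{\vp}$ commutes with both $\div^{\vp}$ and $\nabla^{\vp}$ (they are simply physical derivatives expressed in the moving coordinates): integration by parts in space then yields the identity \eqref{trick1}, in which the singular cross-terms reduce to a boundary integral on $\{z=0\}$ plus the harmless commutator $\int\pt^{\vp}u\cdot[\pt,\nabla^{\vp}]\sigma=-\int\pt^{\vp}u\cdot\pt(\bN/\p_z\vp)\p_z\sigma$.

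The energy portion $\ep g_1\pt^{\vp}(\pt^{\vp}\sigma)\pt\sigma+\ep g_2\pt(\pt^{\vp}u)\cdot\pt^{\vp}u$, after splitting $\pt=\pt^{\vp}+(\pt\vp/\p_z\vp)\p_z$ and using the time-IBP identity $\int_{\mS}\pt^{\vp}f\,\d\cV_t=\pt\int_{\mS}f\,\d\cV_t-\int_{z=0}\pt h\, f\,\d y$, yields $\tfrac{\ep}{2}\bigl[\int_{\mS}g_1|\pt^{\vp}\sigma|^2+g_2|\pt^{\vp}u|^2\,\d\cV\bigr]_0^t$ plus correction terms of the form $\ep g_1\pt^{\vp}(\pt^{\vp}\sigma)\cdot(\pt\vp/\p_z\vp)\p_z\sigma$, which after a further time integration by parts are absorbed using $\ep^{1/2}\|\p_z\sigma\|_{L^\infty_tL^2}\lesssim\cE_{m,t}$ together with the Lipschitz bound on $\pt\vp/\p_z\vp$ from \eqref{gradexth-1} and \eqref{factpth}. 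Integration by parts in the viscous contribution $-\ep\pt\div^{\vp}\cL^{\vp}u\cdot\pt^{\vp}u$ produces the Korn bulk term $\ep\int|S^{\vp}\pt u|^2\,\d\cV$ (converted to $\ep\|\nabla\pt u\|_{L^2L^2}^2$ via \eqref{korn} applied to $\pt u$), the Navier-slip contribution at $\{z=-1\}$ handled as in \eqref{lowerbdry}, and a boundary integral at $\{z=0\}$. The latter, combined with the surface integral $\ep\int_{z=0}\pt^{\vp}u\cdot\bN\,\pt\sigma$ coming from \eqref{trick1}, collapses under the upper boundary condition \eqref{upbdry2} into
\begin{equation*}
\ep\int_0^t\!\!\int_{z=0}\pt\bigl[-\cL^{\vp}u+(\sigma/\ep)\mathrm{Id}\bigr]\bN\cdot\pt^{\vp}u\,\d y\d s=-\ep\int_0^t\!\!\int_{z=0}\bigl(-\cL^{\vp}u+(\sigma/\ep)\mathrm{Id}\bigr)\pt\bN\cdot\pt^{\vp}u\,\d y\d s,
\end{equation*}
whose right-hand side no longer contains a factor $1/\ep$: by \eqref{sigmabdry} the trace of $\sigma/\ep$ on $\{z=0\}$ is expressible in tangential derivatives of $u$, so $\ep^{1/2}|(\cL^{\vp}u,\sigma/\ep)^{b,1}|_{\tilde H^{1/2}}$ is controlled via the trace inequality \eqref{trace} by $\ep^{1/2}\|\nabla u\|_{H^1_{co}}+\ep^{1/2}\|\div^{\vp}u\|_{H^1_{co}}\lesssim\cE_{m,t}$, while $|\pt\bN|_{L^\infty}\lesssim\Lambda_{2,\infty,t}$ by \eqref{gradexth-1}.

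To close the estimate, the surviving commutator from \eqref{trick1} is bounded directly by $\Lambda_{2,\infty,t}\,\|\ep^{1/2}\pt^{\vp}u\|_{L^\infty_tL^2}\|\ep^{-1/2}\nabla\sigma\|_{L^2L^2}\lesssim T^{1/2}\Lambda_{2,\infty,t}\,\cE_{m,t}^2$, and the convective terms $\pt^{\vp}(g_1u\cdot\nabla^{\vp}\sigma)$, $\pt(g_2u\cdot\nabla^{\vp}u)$ together with the $g_j$-commutators are handled in the spirit of Lemma \ref{lemhighest} using Corollary \ref{corg12} and the product estimate \eqref{crudepro}. Young's inequality absorbs $\delta\ep\|\nabla\pt u\|^2_{L^2L^2}$ on the left; conversion from $\pt^{\vp}$ back to $\pt$ via $\ep\|\pt\sigma\|^2_{L^\infty L^2}\lesssim\ep\|\pt^{\vp}\sigma\|^2_{L^\infty L^2}+\Lambda_{2,\infty,t}\cdot\ep\|\p_z\sigma\|^2_{L^\infty L^2}$ (and analogously for $u$) costs only $O(\ep)\,\cE_{m,t}^2$, yielding \eqref{lowerder}. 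The main obstacle will be the exact matching of the two boundary integrals at $\{z=0\}$ into a total $\pt$-derivative annihilated by \eqref{upbdry2}: this cancellation is what avoids any residual loss of a power of $\ep$, and it relies crucially on the asymmetric $\pt^{\vp}/\pt$ choice.
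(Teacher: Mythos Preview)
Your proposal is correct and follows the same essential approach as the paper: the asymmetric $\pt^{\vp}/\pt$ testing scheme, the boundary cancellation at $\{z=0\}$ via \eqref{upbdry2}, and the control of the surviving commutator $[\pt,\nabla^{\vp}]\sigma$ by $\ep^{-1/2}\|\nabla\sigma\|_{L^2}$ are exactly what the paper does. The only organizational difference is that the paper rewrites the differentiated system so that the transport operator acts directly on $(\pt\sigma,\pt^{\vp}u)$, packaging all commutators into explicit source terms $\cT_\sigma,\cT_u$ (see \eqref{neweq}--\eqref{deftausiu}); this yields the energy $\int g_1|\pt\sigma|^2+g_2|\pt^{\vp}u|^2$ directly, whereas you obtain $\int g_1|\pt^{\vp}\sigma|^2+g_2|\pt^{\vp}u|^2$ and convert at the end. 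The paper's bookkeeping is slightly cleaner since it avoids your correction term $\ep g_1\pt^{\vp}(\pt^{\vp}\sigma)\cdot(\pt\vp/\p_z\vp)\p_z\sigma$ and its time-IBP (which would bring in $\pt^2 h$), but both routes close.
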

  \begin{proof}
  Denote $Z_0=\ep\pt.$
 Applying
  $\p_t^{\vp}$ 
  (resp. $\p_t$)
on $\eqref{FCNS2}_1$
  (resp.$\eqref{FCNS2}_2$), one gets that:
  \beq\label{neweq}
  \left\{
  \begin{array}{l}
   \displaystyle g_1(\p_t^{\vp}+\underline{u}\cdot \nabla)(\pt\sigma) +\f{1}{\ep}\pt^{\vp}\div^{\vp}u=\cT_{\sigma}\\[6pt]
    \displaystyle  g_2(\p_t+\underline{u}\cdot \nabla)(\pt^{\vp} u) +\f{1}{\ep}\pt\nabla^{\vp}\sigma-\div^{\vp}(\pt\cL^{\vp}u)=\cT_{u}
  \end{array}
  \right.
  \eeq
  where 
  \beq\label{deftausiu}
  \cT_{\sigma}=\cT_{\sigma}^{1}+\cT_{\sigma}^{2}+\cT_{\sigma}^{3},\,\quad
  \cT_{u}=\cT_{u}^{1}+\cT_{u}^{2}+\cT_{u}^{3}+\cT_{u}^{4}
  \eeq
  with the following definitions:
  \beqs 
  \cT_{\sigma}^{1}=\big(\f{\pt^{\vp} g_1}{\ep}\big)(\ep\pt+\ep\underline{u}\cdot \nabla)\sigma,\quad \cT_{\sigma}^{2}=g_1[\pt,\underline{u}\cdot\nabla]\sigma,\quad \cT_{\sigma}^{3}=-\f{\pt{\vp}}{\p_z\vp}\p_z (\underline{u}\cdot\nabla \sigma),
  \eeqs
    \beqs
  \begin{aligned}
 &\cT_{u}^{1}=\big(\f{\pt g_2}{\ep}\big)(\pt+\underline{u}\cdot \nabla)u, \quad \cT_{u}^{2}=g_2\pt\underline{u}\cdot \nabla u,\\
 &\cT_{u}^{3}=[\pt,\div^{\vp}]\cL^{\vp}u,\quad \cT_{u}^{4}=-g_2(\pt+\underline{u}\cdot \nabla)\big(\f{\p_t\vp}{\p_z\vp}\p_z  u\big).
  \end{aligned}
\eeqs
where $\underline{u}=(u_1,u_2,U_z)$ and $U_z$ is defined in \eqref{defofUz}. Taking the scalar product of \eqref{neweq} and $\ep\big(\p_t\sigma,\p_t^{\vp} u\big)^t,$ integrating in space and time, we get by using Lemma \ref{lemipp} that
   \begin{equation}\label{sec8:eq0}
   \begin{aligned}
 &\f{\ep}{2}\int_{\mS} g_1|\pt\sigma|^2(t)+g_2|\pt^{\vp} u|^2(t)\,\d \cV_t-\ep\int_0^t\int_{\mS}\div^{\vp}(\pt\cL^{\vp}u)\p_t^{\vp}u(s)\,\d\cV_s\d s \\
 &=I_0+I_1+\cdots I_4
   \end{aligned}
   \end{equation}
  where 
  \beqs
  \begin{aligned}
  & I_0=\f{\ep}{2}\int_{\mS} g_1|\pt\sigma|^2(0)+g_2|\pt^{\vp} u|^2(0)\,\d \cV_0,\quad
  I_1=\int_0^t\int_{\mS}\pt\sigma \pt^{\vp}\div^{\vp}u+\pt\nabla^{\vp}\sigma\cdot\pt^{\vp} u\, \d \cV_s\d s,
\\
&  I_2=\f{\ep}{2}\int_0^t\int_{z=0}g_1\pt h|\pt\sigma|^2\,\d y\d s,\quad
 I_3=\f{\ep}{2}\int_0^t\int_{\mS}\big(\pt^{\vp}g_1+\f{1}{\p_z{\vp}}\div(g_1\underline{u}\p_z\vp)\big)|\pt\sigma|^2(s)\,\d\cV_s\d s,\\
   & I_4=\ep\int_0^t\int_{\mS}\pt\sigma \cT_{\sigma}
   +\pt^{\vp} u\cdot\cT_u \,\d\cV_s\d s.
  \end{aligned}
  \eeqs
 We focus on the control of $I_1-I_4$ in the following. Let us with $I_1,$ which is the most involved one and explains why we need to perform energy estimate in this non-standard way.
Let us integrate by parts in space to get:
\begin{align*}
  I_1&=\int_0^t\int_{\mS}\pt^{\vp} u\cdot [\pt,\nabla^{\vp}]\sigma\, \d \cV_s\d s+\int_0^t\int_{z=0}\pt\sigma\pt^{\vp} u\cdot\bN\, \d y\d s=\colon I_{11}+B_1.
\end{align*}
Since $[\pt,\nabla^{\vp}]\sigma=[\pt,\f{\bN}{\p_z\vp}]\p_z\sigma,$ it follows from the Cauchy-Schwarz inequality that:
\beq\label{sec4:eq2.25}
\begin{aligned}
   |I_{11}|&\lesssim \|\pt^{\vp} u\|_{\ltx} \big\|\p_z\sigma\big\|_{L_t^2L^2}
   \il\pt\big(\f{\bN}{\p_z \vp}\big)  \il_{0,\infty,t}\\
   &\lesssim T^{\f{1}{2}}\Lambda_{2,\infty,t}\ep^{\f{1}{2}}\|(\pt u,\nabla u)\|_{L_t^{\infty}L^2}\big\|\ep^{-\f{1}{2}}\nabla\sigma\big\|_{L_t^2L^2}.
\end{aligned}
\eeq
Note that $\Lambda_{2,\infty,t}$
is defined in \eqref{defLinfty2}.
 The boundary term $B_1$ combined with the boundary term
 arising from the integration by parts of the viscous term (in the right hand-side of \eqref{sec8:eq0}), lead to some cancellations, we thus first rewrite the viscous term:
 \beq\label{sec4:eq3}
 \begin{aligned}
 &-\ep\int_0^t\int_{\mS}\div^{\vp}(\pt\cL^{\vp}u)\cdot\p_t^{\vp} u(s)\,\d\cV_s\d s=\ep\int_0^t\int_{\mS}\pt\cL^{\vp}u\cdot\nabla^{\vp}\p_t^{\vp} u\,\d\cV_s\d s\\
 &\quad +\ep a\int_0^t\int_{z=-1} |\pt u_{\tau}|^2\,\d y\d s-\ep
 \underbrace{\int_0^t\int_{z=0}
  \pt\cL^{\vp}u\,\bN \cdot \pt^{\vp} u\,\d y\d s}_{=\colon B_2}.
 \end{aligned}
 \eeq
 In view of the boundary condition \eqref{upbdry2}, the identities  \eqref{grad-upbdry},
\eqref{sigmabdry} as well as the trace inequality \eqref{trace}, we have:
 \beqs
 \begin{aligned}
 B_1+B_2&=-\ep\int_0^t\int_{z=0}\pt^{\vp} u\cdot \big(\cL^{\vp}u-\f{\sigma}{\ep}\text{Id}_3\big)\pt\bN\,\d y\d s\\
 &\lesssim \ep \big|\big(\cL^{\vp}u-\f{\sigma}{\ep}\text{Id}_3\big)\pt\bN\big|_{L_t^2L_y^2}|\pt^{\vp} u|_{L_t^2L_y^2}\\
 &\lesssim \ep^{\f{1}{2}}\big(\|\ep^{\f{1}{2}}\pt u\|_{L_t^2H^1}^2+\|\ep^{\f{1}{2}} u\|_{L_t^2H^2}^2+\|\nabla u\|_{L_t^2H_{co}^1}^2+\|\nabla\div u\|_{L_t^2L^2}^2\big)\Lambda\big(\f{1}{c_0},|\pt h|_{0,\infty,t}+|h|_{1,\infty,t}\big)\\
 &\lesssim \ep^{\f{1}{2}}\Lambda_{2,\infty,t}\cE_{m,t}^2.
 \end{aligned}
 \eeqs
We can also estimate the first two terms in the right hand side of \eqref{sec4:eq3}. By using Young's inequality and the fact $[\nabla^{\vp},\p_t^{\vp}]=0,$
 \beq\label{sec4:eq4}
 \begin{aligned}
 &\ep\int_0^t\int_{\mS}\pt\cL^{\vp}u\cdot\nabla^{\vp}\p_t^{\vp} u\,\d\cV_s\d s\\
 =&\ep\int_0^t\int_{\mS}\pt\cL^{\vp}u\cdot\big(\pt \nabla^{\vp}u-\f{\pt\vp}{\p_z\vp}\p_z \nabla^{\vp}u) \d\cV_s\d s\\
 \geq &\ep\int_0^t\int_{\mS}2\mu|\pt S^{\vp}u|^2+\lambda|\pt\div^{\vp}u|^2\,\d\cV_s\d s-\Lambda_{2,\infty,t}\|\ep^{\f{1}{2}}\pt\cL^{\vp}u\|_{\ltx}\|\ep^{\f{1}{2}}\p_z\nabla^{\vp}u\|_{\ltx}\\
 \geq&\ep \int_0^t\int_{\mS}\mu|\pt S^{\vp}u|^2+\f{\lambda}{2}|\pt\div^{\vp}u|^2\d\cV_s\d s-C_{\mu,\lambda}T\Lambda_{2,\infty,t}\|\ep^{\f{1}{2}}\nabla^{\vp}u\|_{L_t^{\infty}H^1}^2
 \end{aligned}
 \eeq
 Moreover, by the trace inequality, we have
 \beq\label{sec4:eq4.55}
 \ep\int_0^t\int_{z=-1} |\pt u_{\tau}|^2\,\d y\d s\leq \delta \ep\|\pt \nabla^{\vp} u\|_{L_t^2L^2}^2+TC_{\delta}\ep\|(\pt u_{\tau},\nabla u_{\tau})\|_{L_t^{\infty}L^2}^2\Lambda_{2,\infty,t}.
 \eeq
Therefore, we get by collecting \eqref{sec4:eq2.25}-\eqref{sec4:eq4.55} that:
\begin{align}\label{sec4:I3}
  & I_1+\ep\int_0^t\int_{\mS}\div^{\vp}(\pt\cL^{\vp}u)\cdot\p_t^{\vp} u(s)\,\d\cV_s\d s\\
  &\leq -\ep\int_0^t\int_{\mS}\mu|\pt S^{\vp}u|^2+\f{\lambda}{2}|\pt\div^{\vp}u|^2\d\cV_s\d s+\delta\ep \|\pt\nabla^{\vp} u\|_{L_t^2L^2}^2+(T+\ep)^{\f{1}{2}}\Lambda_{2,\infty,t}\cE_{m,t}^2.\notag
\end{align}
  We are now left to control $I_2-I_4.$ The estimates of 
 $I_2,I_3$ are direct, we write
 \beqs
 |I_2|\lesssim \ep|\p_t h|_{\infty,t}|\pt\sigma|_{z=0}|_{L_t^2L_y^2}^2,
 \eeqs
  \beq\label{sec4:eq1}
 |I_3|\lesssim  \Lambda(\il\nabla(\sigma,u)\il_{\infty,t}+\il(\sigma,u)\il_{1,\infty,t}+|h|_{2,\infty,t})\|\ep^{\f{1}{2}}\pt\sigma\|_{\ltx}^2.
  \eeq
We remark that in view of the boundary condition  \eqref{sigmabdry},  one has
\beqs
\pt\sigma|_{z=0}=\pt(\sigma|_{z=0})=\ep\pt\big( (2\mu+\lambda)\div^{\vp}u-2\mu(\p_1u_1+\p_2u_2)+\mu(\omega\times\bN)_3|_{z=0}\big).
\eeqs
Therefore, by the trace inequality \eqref{trace}, we have:
\beq\label{sec4:eq2}
\begin{aligned}
|I_2|\lesssim \ep|\pt h|_{\infty,t}|\pt\sigma|_{L_t^2L_y^2}^2&\lesssim \ep \big(\|\nabla\div^{\vp}u\|_{L_t^2H_{co}^{1}}+\|(u,\nabla u)\|_{L_t^2H_{co}^{2}}
+|h|_{L_t^2\tilde{H}^{2}}\big)\Lambda_{2,\infty,t}\\
&\lesssim \ep \Lambda_{2,\infty,t}\cE_{m,t}^2. 
\end{aligned}
\eeq
As for the term $I_4,$ it can be bounded directly by 
\beq\label{sec4:eq5}
|I_4| \lesssim T^{\f{1}{2}}\big(\|\ep^{\f{1}{2}}\cT_{\sigma}\|_{\ltx}\|\ep^{\f{1}{2}}\pt \sigma\|_{L_t^{\infty}L^2}+\|\ep^{\f{1}{2}}\cT_{u}\|_{\ltx}\|\ep^{\f{1}{2}}\pt  u\|_{L_t^{\infty}L^2}\big).
\eeq
 It thus remains to control the commutators $\cT_{\sigma},\cT_{u}$ defined in \eqref{deftausiu}. 
By the explicit expression of $\cT_{\sigma},\cT_{u},$ we can obtain that:
\beq\label{sec4:eq6}
\ep^{\f{1}{2}}\|(\cT_{\sigma},\cT_{u})\|_{L_t^2L^2}\lesssim \Lambda_{2,\infty,t}(\|\ep^{\f{1}{2}}\pt (\sigma,u)\|_{L_t^2L^2}+\|\nabla(\sigma,u)\|_{L_t^2H_{co}^1})\lesssim \Lambda_{2,\infty,t}\cE_{m,t}.
\eeq
For instance, since we have: $$\ep^{\f{1}{2}}\cT_{\sigma}^1=\ep^{\f{1}{2}}\p_t^{\vp}(g_1/\ep) (\ep\pt+\ep\underline{u}\cdot\nabla)\sigma, \quad \ep^{\f{1}{2}}\cT_{u}^1=\ep^{\f{1}{2}}\p_t(g_2/\ep) (\ep\pt+\ep\underline{u}\cdot\nabla)u$$
by:
 \beqs
  \|\ep^{\f{1}{2}}(\cT_{\sigma}^{1}+\cT_{u}^1)\|\lesssim \Lambda_{1,\infty,t}(\|\ep^{\f{1}{2}}\pt (\sigma,u)\|_{L_t^2L^2}+\|\nabla(\sigma,u)\|_{L_t^2L^2})\lesssim \Lambda_{2,\infty,t}\cE_{m,t}. 
  \eeqs
Collecting \eqref{sec4:eq5}-\eqref{sec4:eq6}, we obtain that
\begin{align}\label{sec4:eq6.5}
 |I_4| &\leq T^{\f{1}{2}}\Lambda_{2,\infty,t}\cE_{m,t}^2.
\end{align}
Now, in view of the estimates: \eqref{sec4:eq1}-\eqref{sec4:eq2}, \eqref{sec4:I3} \eqref{sec4:eq6.5}, we get by choosing $\delta$ small enough,
that
\beq
\begin{aligned}
   & \f{1}{2}\ep\int_{\mS} g_1|\pt\sigma|^2(t)+g_2|\pt^{\vp} u|^2(t)\,\d \cV_t+\ep \int_0^t\int_{\mS}\mu|\pt S^{\vp}u|^2+\f{\lambda}{2}|\pt\div^{\vp}u|^2\,\d\cV_s\d s\\
    &\leq \f{1}{2}\ep\int g_1|\pt\sigma|^2(0)+g_2|\pt^{\vp} u|^2(0)\,\d \cV_0+\delta\ep\|\nabla^{\vp}\pt u\|_{L_t^2L^2}^2+(T+\ep)^{\f{1}{2}}\Lambda_{2,\infty,t}\cE_{m,t}^2. 
\end{aligned}
\eeq
From an explicit commutator, We can write that:
\beqs
\begin{aligned}
&\int_0^t\int_{\mS}\mu|\pt S^{\vp}u|^2+\f{\lambda}{2}|\pt\div^{\vp}u|^2\,\d\cV_s\d s\\
&\geq \int_0^t\int_{\mS}\mu| S^{\vp}\pt u|^2+\f{\lambda}{2}|\div^{\vp}\pt u|^2\,\d\cV_s\d s-\Lambda_{2,\infty,t}T^{\f{1}{2}}\|\nabla u\|_{L_t^{\infty}L^2}^2.
\end{aligned}
\eeqs
Hence, by using Korn's inequality \eqref{korn} and by choosing $\delta$ small enough, we finally obtain \eqref{lowerder}.
\end{proof}
 The following two lemmas are devoted to the estimates of the other norms appearing in $\cE_{low,T},$ for the proof of Lemma \ref{lemlow-full}.
 \begin{lem}
 Suppose that \eqref{preassumption} are holds, then we have for any $0< t\leq T,$
 \beq\label{sec9.2-1}
 \ep\|\nabla^3\sigma\|_{L_t^{\infty}L^2}^2+\ep^{-1}\|\nabla^3\sigma\|_{\ltx}^2 
 +\ep\|\nabla^2\sigma\|_{L_t^{\infty}H_{co}^1}^2+\ep^{-1}\|\nabla^2\sigma\|_{L_t^2H_{co}^1}^2
 \lesssim Y_m^2(0)+ (T+\ep)
 \Lambda_{2,\infty,t}\cE_{m,t}^2. 
 \eeq
 \end{lem}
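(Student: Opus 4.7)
The plan is to read off $\nabla\sigma$ from the momentum equation. Rewriting $\eqref{FCNS2}_2$ in the vorticity--compressible form of $\eqref{rewrite-comp}$, one has
\[
\nabla^{\vp}\bigl(\sigma-(2\mu+\lambda)\ep\div^{\vp}u\bigr)=-\mu\,\ep\,\curl^{\vp}\omega-\ep\,g_2\pt u-\ep\,g_2\,\underline{u}\cdot\nabla u,
\]
with $\omega=\curl^{\vp}u$. Combined with the mass equation $\div^{\vp}u=-\ep g_1(\pt^{\vp}+u\cdot\nabla^{\vp})\sigma$, this produces an identity of the schematic form
\[
\nabla\sigma + (2\mu+\lambda)\ep^2 g_1\,\pt\nabla\sigma
=-\mu\,\ep\,\curl^{\vp}\omega-\ep\,g_2\pt u-\ep\,g_2 u\cdot\nabla u+\mathcal{O}(\ep^2)\cdot(\text{lower order}),
\]
up to $\vp$-dependent smooth factors relating $\nabla$ and $\nabla^{\vp}$. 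I would take $\nabla^2$ of this identity (for the pure third-derivative bound) or $Z\nabla$ with $Z\in\{Z_0,Z_1,Z_2,Z_3\}$ (for the $H_{co}^1$ versions), and then estimate each piece term by term.

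For the $L_t^2L^2$ bounds: $\ep^{-1/2}\|\nabla^3\sigma\|_{L_t^2L^2}$ is reduced to estimating $\ep^{1/2}\|\nabla^2(\curl^{\vp}\omega,\pt u,u\cdot\nabla u)\|_{L_t^2L^2}$ plus $\ep^{3/2}\|\pt\nabla^3\sigma\|_{L_t^2L^2}$-type remainders. The rotational piece is controlled by $\ep^{1/2}\|\nabla u\|_{L_t^2 H_{co}^m}$ (through $\omega=\nabla^{\vp}\times u$), the time-derivative piece by $\ep^{1/2}\|\pt\nabla u\|_{L_t^2 H_{co}^{m-2}\cap L_t^2\cH^{m-1}}$, the transport piece by the crude product estimate \eqref{crudepro} together with $\cA_{m,t}$ and $\cE_{high,m,t}$, and the $\ep^{3/2}$-remainder by the compressible bound $\ep^{-1/2}\|\nabla^{\vp}\sigma\|_{L_t^2 H_{co}^{m-1}}\leq \cE_{m,t}$ combined with \eqref{EI-T1}. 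Each term carries either an $\ep^{1/2}$ from the prefactor of the identity or a $T^{1/2}$ from time integration, which together yield the factor $(T+\ep)^{\vartheta}$ in the statement. The $L_t^\infty L^2$ bound $\sqrt{\ep}\|\nabla^3\sigma\|_{L_t^\infty L^2}$ is handled pointwise in $t$ along the same lines; the initial data contribution is absorbed into $Y_m(0)$, which controls $\ep^{1/2}\|(\sigma,u)(0)\|_{H^3}$ and $\ep^{1/2}\|\nabla(\sigma,u)(0)\|_{H_{co}^{m-1}}$. The $H_{co}^1$ variants follow by substituting one $\nabla$ by a conormal field and invoking the commutator estimates $\eqref{comgrad}$--$\eqref{comgrad1}$.

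The main obstacle is the appearance of $\nabla^4 u$ (through $\nabla^2\curl^{\vp}\omega$ and the highest piece of $\nabla^2\div^{\vp}\cL^{\vp}u$) on the right-hand side after differentiating twice. Absorbing it via $\ep^{3/2}\|\nabla^4 u\|_{L_t^2L^2}\leq \cE_{low,T}$ directly would cost a factor of $\ep^{-1}$ and destroy the bound. The fix, which drives the whole computation, is the substitution $\ep\,\nabla^{\vp}\div^{\vp}u=-\ep^2\nabla^{\vp}[g_1(\pt^{\vp}+u\cdot\nabla^{\vp})\sigma]$ from the mass equation: this trades one effective $u$-derivative for an extra $\ep$ and a $\pt\sigma$-derivative, the latter being controlled uniformly by $\eqref{EI-T1}$. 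The residual $\nabla^4 u$-contribution from $\curl^{\vp}\omega$ is tamed by distributing the derivatives so that no more than $m-2$ conormal derivatives ever act on $\omega$, which fits inside the $L_t^2 H_{co}^m$ bound on $\nabla u$ available in $\cE_{high,m,t}$. Bookkeeping these commutations carefully, together with the composition estimate of Corollary \ref{corg12} for $g_1,g_2$, closes the estimate within the budget $(T+\ep)\,\Lambda_{2,\infty,t}\,\cE_{m,t}^2$.
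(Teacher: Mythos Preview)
Your identity is the same as the paper's equation \eqref{gradsigma}, and the overall strategy of exploiting the damped-transport structure is right. The gap is in the step where you take $\nabla^2$ of the identity and then try to bound $\ep^{1/2}\|\nabla^2\curl^{\vp}\omega\|_{L_t^2L^2}$. This term genuinely contains $\ep^{1/2}\|\p_z^4 u\|_{L_t^2L^2}$: the two outer $\nabla$'s can both be $\p_z$, and $\curl^{\vp}\omega$ already carries two normal derivatives of $u$. There is no way to ``distribute the derivatives'' into conormal form, and the only fourth-derivative control in $\cE_{m,t}$ is $\ep^{3/2}\|\nabla^4 u\|_{L_t^2L^2}$ (inside $\cE_{low,T}$), which is one power of $\ep$ short. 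The substitution $\ep\nabla^{\vp}\div^{\vp}u=-\ep^2\nabla^{\vp}[g_1(\pt^{\vp}+u\cdot\nabla^{\vp})\sigma]$ helps with the $\div^{\vp}u$ part of $\div^{\vp}\cL^{\vp}u$ but does nothing for $\curl^{\vp}\omega$.

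The paper's fix is structural: instead of applying $\nabla^2$ to \eqref{gradsigma}, it applies $\div^{\vp}$, which kills $\curl^{\vp}\omega$ outright (since $\div^{\vp}\curl^{\vp}=0$) and yields a damped transport equation for $\Delta^{\vp}\sigma$ with a source $\cH$ that contains only $\ep\pt\div^{\vp}u$--type terms at highest order. One then runs \emph{energy estimates} on that equation (multiply by $\Delta^{\vp}\sigma$, integrate): the coefficient $\tfrac{1}{2\mu+\lambda}$ provides the $\ep^{-1}\|\Delta^{\vp}\sigma\|_{L_t^2H_{co}^1}^2$ term directly, and the $T^{1/2}$ smallness comes from Cauchy--Schwarz in time against $\cH$, not from any smallness of $\cH$ itself. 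The pointwise reading-off you propose would not produce that $L_t^2$ gain. Finally, to reach $\nabla^3\sigma$, the paper differentiates once more by $\p_z$---still on the $\Delta^{\vp}\sigma$ equation, so no $\curl^{\vp}\omega$ reappears---and then recovers $\nabla^2\sigma,\nabla^3\sigma$ from $\Delta^{\vp}\sigma,\p_z\Delta^{\vp}\sigma$ via the explicit expansion \eqref{laplacesigma}.
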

 \begin{proof}
By applying $\ep^2\nabla^{\vp}$ to the equation $\eqref{FCNS2}_1$ and 
expressing the term $\ep\nabla^{\vp}\div^{\vp}u$ by using the velocity equations $\eqref{FCNS2}_2,$ we find that $\nabla^{\vp}\sigma$ solves
\beq\label{gradsigma}
\ep^2 g_1(\pt+\underline{u}\cdot\nabla)\nabla^{\vp}\sigma+\f{1}{2\mu+\lambda}\nabla^{\vp}\sigma= \cQ_1
\eeq
where 
$$\cQ_1=
-\ep^2g_1'\nabla^{\vp}\sigma(\ep\pt+\ep\underline{u}\cdot\nabla)\sigma-\ep^2 g_1\nabla^{\vp}u\cdot\nabla^{\vp}\sigma
-\f{\mu \ep}{2\mu+\lambda}\curl^{\vp}\omega-\f{1}{2\mu+\lambda}g_2(\ep\pt+\ep \underline{u}\cdot\nabla)u.$$

Next, by taking $\div^{\vp}$ of the equation 
\eqref{gradsigma}, we find that 
$\Delta^{\vp}\sigma$ solves:
 \beq\label{eqDeltasigma}
 \begin{aligned}
 \ep^2 g_1(\pt+\underline{u}\cdot\nabla)\Delta^{\vp}\sigma+\f{1}{2\mu+\lambda}\Delta^{\vp}\sigma&=\div^{\vp}\cQ_1-\ep^2g_1'\nabla^{\vp}\sigma\cdot\ep\pt\nabla^{\vp}\sigma-\ep^2\nabla^{\vp}(g_1\underline{u})\cdot\nabla\nabla^{\vp}\sigma\\
 &=\colon \cH
 \end{aligned}
 \eeq
Standard energy estimates for
 \eqref{eqDeltasigma} yield:
 \beqs
 \begin{aligned}
 &\quad \ep\|\Delta^{\vp}\sigma\|_{L_t^{\infty}H_{co}^1}^2+\ep^{-1}\|\Delta^{\vp}\sigma\|_{L_t^2H_{co}^1}^2\\
 &\lesssim \ep\|\Delta^{\vp}\sigma(0)\|_{H_{co}^1}^2+T\Lambda_{1,\infty,t}\ep\|\Delta^{\vp}\sigma\|_{L_t^{\infty}H_{co}^1}^2+T^{\f{1}{2}}\|\ep^{-\f{1}{2}}\Delta^{\vp}\sigma\|_{L_t^2H_{co}^1}(\|\ep^{-\f{1}{2}}\cH\|_{L_t^{\infty}H_{co}^1}+\ep^{\f{1}{2}}\Lambda_{2,\infty,t}\cE_{m,t})\\
 &\lesssim  T\Lambda_{1,\infty,t}\cE_{low,t}^2+T^{\f{1}{2}}(\|\ep^{\f{1}{2}}\pt\div^{\vp}u\|_{L_t^{\infty}H_{co}^1}+\ep^{\f{1}{2}}\Lambda_{2,\infty,t}\cE_{m,t})\|\ep^{-\f{1}{2}}\Delta^{\vp}\sigma\|_{L_t^2H_{co}^1}
 \end{aligned}
 \eeqs
 It thus follows from Young's inequality that 
 \beqs
\ep \|\Delta^{\vp}\sigma\|_{L_t^{\infty}H_{co}^1}^2+\ep^{-1}\|\Delta^{\vp}\sigma\|_{L_t^2H_{co}^1}^2\lesssim Y_m^2(0) +T\Lambda_{2,\infty,t}\cE^2_{m,t}.
 \eeqs
 Moreover, we can get also that:
 \beqs
 \begin{aligned}
 &\quad\ep\|\p_z\Delta^{\vp}\sigma\|_{L_t^{\infty}L^2}^2+\ep^{-1}\|\p_z\Delta^{\vp}\sigma\|_{\ltx}^2\\
 &\lesssim \ep\|\p_z\Delta^{\vp}\sigma(0)\|_{L^2}^2+ T \Lambda_{2,\infty,t}\big(\|\ep\nabla^3\sigma\|_{L_t^{\infty}L^2}^2+\|\ep^{\f{1}{2}}\pt\nabla\div u\|_{L_t^{\infty}L^2}^2+\ep \cE_{m,t}^2\big)\\
 &\lesssim Y_m^2(0)+T\Lambda_{2,\infty,t}\cE_{m,t}^2.
 \end{aligned}
 \eeqs
Next, we see that:
 \beqs
 \ep\| \nabla^2\sigma\|_{L_t^{\infty}H_{co}^1}^2\lesssim \ep\|\nabla\sigma\|_{L_t^{\infty}H_{co}^2}^2+\ep\|\p_z^2\sigma\|_{L_t^{\infty}L^2}^2
 \eeqs
  By the expressions of $\Delta^{\vp}\sigma,$
  \beq\label{laplacesigma}
 \Delta^{\vp}\sigma=\f{|\bN|^2}{\p_z\vp}\p_z^2 \sigma+\Delta_y \sigma+\p_1(\bN_1\p_z^{\vp}\sigma)+\p_2(\bN_2\p_z^{\vp}\sigma)+\bN_1\p_z^{\vp}\p_1 \sigma+\bN_2\p_z^{\vp}\p_2\sigma+\f{1}{2}\p_z \sigma\p_z\big|\f{\bN}{\p_z\vp}\big|^2,
 \eeq
 Therefore,
 \beqs
  \begin{aligned}
 \ep\| \nabla^2\sigma\|_{L_t^{\infty}H_{co}^1}^2&\lesssim \ep\Lambda(1/c_0,|h|_{3,\infty,t})\|\nabla\sigma\|_{L_t^{\infty}H_{co}^2}^2+\ep
 \|\Delta^{\vp}\sigma\|_{L_t^{\infty}H_{co}^1}^2\\
 &\lesssim Y_m^2(0)+(T+\ep)\Lambda_{2,\infty,t}\cE_{m,t}^2.
 \end{aligned}
 \eeqs
 Note that $|h|_{3,\infty,t}$ is included in the definition of $\Lambda_{2,\infty,t}$ \eqref{defLinfty2}.
 We have further that:
\beqs
\ep\|\nabla^3\sigma\|_{L_t^{\infty}L^2}^2
\lesssim \ep\|\p_z\Delta^{\vp}\sigma\|_{L_t^{\infty}L^2}^2+\ep\Lambda_{2,\infty,t}
\|\nabla^2\sigma\|_{L_t^{\infty}H_{co}^1}^2\lesssim Y_m^2(0)+ (T+\ep)\Lambda_{2,\infty,t}\cE_{m,t}^2.
\eeqs
In a similar way, the following estimate holds also:
\beqs
\ep^{-1}\|\nabla^3\sigma\|_{\ltx}^2
+\ep^{-1}\|\nabla^2\sigma\|_{L_t^2H_{co}^1}^2
\lesssim Y_m^2(0)+ T\Lambda_{2,\infty,t}\cE_{m,t}^2.
\eeqs
The proof of \eqref{sec9.2-1} is now finished.
 \end{proof}
 \begin{rmk}
In a similar way, one can also show that:
\beq\label{na3sigma-1}
\|\nabla^3\sigma\|_{L_t^2H_{co}^1}
\lesssim Y_{m}(0)+(T+\ep)^{\f{1}{2}}\cE_{m,t}.
\eeq
 \end{rmk}
\begin{lem}
Assume that \eqref{preassumption} holds,
then we have for any $0< t\leq T:$ 
\beq\label{sec9.2-5}
\begin{aligned}
&\quad \ep^{-1}\|\nabla^2\sigma\|_{L_t^{\infty}L^2}^2+ \ep\|\nabla^3 u\|_{L_t^{\infty}L^2}^2+\ep^3\|\nabla^4 u\|_{\ltx}^2
\\
&\lesssim \Lambda\big(\f{1}{c_0}, |h|_{3,\infty,t}^2\big) (\ep\|\nabla^2\sigma\|_{L_t^{\infty}H_{co}^1}^2+{\cE}_{high,m,t}^2)
+(T+\ep)\Lambda_{2,\infty,t}{\cE}_{m,t}^2.
\end{aligned}
\eeq
\end{lem}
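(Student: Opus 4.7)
My plan is to view the momentum equation as a Lam\'e system for $u$ with effective pressure $p := \sigma/\ep$, and to apply elliptic regularity in order to upgrade the normal Sobolev regularity of $u$ and $\sigma$ jointly. Rewriting $\eqref{FCNS2}_2$ as
\[
-\mu\Delta^{\vp}u - (\mu+\lambda)\nabla^{\vp}\div^{\vp}u + \nabla^{\vp}p = -g_2(\pt^{\vp}u + u\cdot\nabla^{\vp}u),
\]
equipped with the stress condition \eqref{upbdry2} at $\{z=0\}$ and the Navier condition \eqref{bebdry2} at $\{z=-1\}$, and flattening by $\Phi$, one obtains a uniformly elliptic Lam\'e system on $\mS$ whose coefficients are bounded thanks to \eqref{preassumption}. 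Standard pointwise-in-time elliptic regularity then yields
\[
\|\nabla^3 u(t)\|_{L^2(\mS)} + \|\nabla^2 p(t)\|_{L^2(\mS)} \lesssim \|\nabla(\pt^{\vp}u + u\cdot\nabla^{\vp}u)(t)\|_{L^2(\mS)} + |\text{boundary data}|_{H^{3/2}(\mR^2)} + \text{l.o.t.},
\]
so that after multiplication by $\ep^{1/2}$ the first two terms of \eqref{sec9.2-5} reduce to $\ep\|\nabla\pt u\|_{L_t^{\infty}L^2}^2$ plus boundary and lower-order contributions.

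The key point for the boundary data at $\{z=0\}$ will be to use the algebraic identities \eqref{sigmabdry}--\eqref{tan-nor} to express $(\sigma/\ep)|_{z=0}$ purely as tangential derivatives of $u$ and $\div^{\vp}u$; this removes the singular $1/\ep$ scaling and reduces the boundary norm to $|(\nabla_y u, \div^{\vp}u)|_{\tilde H^{3/2}} + |h|_{\tilde H^{5/2}}$. By the trace inequality \eqref{trace} and $m \geq 7$, these are absorbed by $\Lambda(1/c_0,|h|_{3,\infty,t})\cE_{high,m,t}$; the Navier condition at $\{z=-1\}$ is handled analogously. Lower-order terms of the elliptic estimate carrying at most one fewer derivative of $u$ or $\sigma$ are handled either by $\cE_{high,m,t}$ for conormal pieces or by the input $\ep\|\nabla^2\sigma\|_{L_t^{\infty}H_{co}^1}^2$ already supplied by Lemma \ref{lemnablasigma}, while $\ep\|\nabla\pt u\|_{L_t^{\infty}L^2}^2$ is dominated by $\ep\|\pt\nabla u\|_{L_t^{\infty}H_{co}^{m-4}}^2 \lesssim \cE_{high,m,t}^2$.

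For the $\ep^3\|\nabla^4 u\|_{\ltx}^2$ bound, I would apply the same Lam\'e elliptic regularity one derivative higher, in $L^2_t$:
\[
\ep^3\|\nabla^4 u\|_{\ltx}^2 \lesssim \ep^3\|\nabla^2\pt u\|_{\ltx}^2 + \ep\|\nabla^3\sigma\|_{\ltx}^2 + \ep^3|\text{boundary data}|_{L_t^2 H^{5/2}}^2 + \text{l.o.t.}
\]
The term $\ep\|\nabla^3\sigma\|_{\ltx}^2$ is controlled by \eqref{na3sigma-1}, the material-acceleration term $\ep^3\|\nabla^2\pt u\|_{\ltx}^2$ is handled by iterating the same elliptic estimate on $\pt u$ (i.e., applying the Lam\'e structure to $\pt$ of the equation), and the boundary contributions are again treated via the identities of Section 4.

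The main obstacle will be to keep track carefully of the factor $1/\ep$ in front of $\sigma$ through each elliptic estimate, and to confirm that every boundary term and every ``pressure-gradient'' term produced when commuting derivatives past $\nabla^{\vp}$ and $\div^{\vp}$ has an admissible size. The good $O(\ep)$ scaling built into \eqref{rewrite-comp}, namely
\[
\nabla^{\vp}\sigma = -\mu\ep\,\curl^{\vp}\omega + (2\mu+\lambda)\ep\,\nabla^{\vp}\div^{\vp}u - \ep g_2(\pt u + \underline{u}\cdot\nabla u),
\]
is precisely what makes $\ep^{-1}\|\nabla^2\sigma\|_{L_t^{\infty}L^2}^2$ a reasonable LHS quantity to control.
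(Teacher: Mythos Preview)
Your proposal has a genuine gap at the first step: the claim that ``standard Lam\'e elliptic regularity'' yields
\[
\|\nabla^3 u(t)\|_{L^2} + \|\nabla^2 p(t)\|_{L^2} \lesssim \|\nabla(\pt^{\vp}u + u\cdot\nabla^{\vp}u)(t)\|_{L^2} + |\text{bdry}|_{H^{3/2}} + \text{l.o.t.}
\]
is not correct. In the compressible setting there is no divergence constraint, so the Lam\'e operator $-\mu\Delta^{\vp}-(\mu+\lambda)\nabla^{\vp}\div^{\vp}$ is a $3\times 3$ elliptic system for $u$ alone; the quantity $p=\sigma/\ep$ is \emph{data}, not an unknown of that elliptic problem. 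Elliptic theory therefore gives
\[
\ep^{\f12}\|\nabla^3 u\|_{L^2}\lesssim \ep^{\f12}\|\nabla F\|_{L^2}+\ep^{-\f12}\|\nabla^2\sigma\|_{L^2}+\text{bdry}+\text{l.o.t.},
\]
with $\ep^{-1/2}\|\nabla^2\sigma\|$ on the right-hand side, not the left. Trying to close by invoking the identity in your last paragraph,
\[
\nabla^{\vp}\sigma=-\mu\ep\,\curl^{\vp}\omega+(2\mu+\lambda)\ep\,\nabla^{\vp}\div^{\vp}u-\ep g_2(\pt u+\underline{u}\cdot\nabla u),
\]
does not help: differentiating once produces $\ep^{-1/2}\|\nabla^2\sigma\|\lesssim \mu\,\ep^{1/2}\|\nabla\curl^{\vp}\omega\|+\cdots$, and $\nabla\curl^{\vp}\omega$ contains a genuine $\nabla^3 u$, so the two inequalities are circular with no smallness to absorb.

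The paper breaks this loop by not coupling the two estimates. It first derives a \emph{scalar} elliptic equation for $\sigma/\ep$ by taking $\div^{\vp}$ of the momentum equation (cf.\ \eqref{sigmaelliptic}), $-\Delta^{\vp}(\sigma/\ep)=\div^{\vp}G$ with $G=\bar\rho\,\pt^{\vp}u+\cdots-(2\mu+\lambda)\nabla^{\vp}\div^{\vp}u$; the point is that $G$ contains $\nabla^{\vp}\div^{\vp}u$ but \emph{not} $\nabla^{\vp}\sigma/\ep$. The dangerous piece $\|\div^{\vp}u\|_{H^2}$ in $\|\div^{\vp}G\|$ is then converted, via the continuity equation $\div^{\vp}u=-\ep g_1(\pt\sigma+\underline u\cdot\nabla\sigma)$, into $\|Z_0\nabla^2\sigma\|\le\|\nabla^2\sigma\|_{H_{co}^1}$, which is precisely the input term on the right of \eqref{sec9.2-5}. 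Only afterwards is $\ep^{1/2}\|\nabla^3 u\|_{L_t^{\infty}L^2}$ read off from the equation (rewriting $\ep\mu\Delta^{\vp}u=\cdots+\nabla^{\vp}\sigma$ and using \eqref{laplaceu}), now that $\ep^{-1/2}\|\nabla^2\sigma\|$ is available. The $\ep^{3/2}\|\nabla^4 u\|_{L_t^2L^2}$ bound follows the same pattern one derivative higher, using \eqref{na3sigma-1}. If you want to salvage your outline, the fix is to replace the Lam\'e step by this scalar elliptic estimate for $\sigma/\ep$.
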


\begin{proof}
 By taking $\div^{\vp}$ on $\eqref{FCNS2}_2$, we see that $\sigma$ solves the following elliptic problem:
 \beq\label{sigmaelliptic}
 \left\{
 \begin{array}{l}
      -\Delta^{\vp}(\sigma/\ep)=\div^{\vp}G,  \\[5pt]
      \sigma/\ep=(2\mu+\lambda)\div^{\vp}u-2\mu(\p_1u_1+\p_2u_2)-\mu(\omega\times \bN)_3 \quad\text{on}\quad \{z=0\},\\[5pt]
      \p_z^{\vp}\sigma/\ep=-G\cdot e_3+\mu\curl^{\vp}\omega\cdot e_3 \quad\text{on}\quad \{z=-1\},
 \end{array}
 \right.
 \eeq
where 
\beq\label{defofG}
G=\bar{\rho}\pt^{\vp}u+g_2u\cdot\nabla^{\vp}u+\f{g_2-\bar{\rho}}{\ep}\ep\p_t^{\vp}u-(2\mu+\lambda)\nabla^{\vp}\div^{\vp}u.
\eeq 
Note that on the upper boundary we have boundary identity \eqref{omegatimesn} for $\omega\times\bN$ and
on the bottom,  we have
\beq\label{curlwdotn}
\mu\curl^{\vp}\omega\times e_3=\mu(\p_1^{\vp}\omega_2-\p_2^{\vp}\omega_1)=a(\p_1 u_1+\p_2 u_2).
\eeq
Applying the elliptic estimate \eqref{elliptic1.5}, we find that:
\beqs
\begin{aligned}
\ep^{-\f{1}{2}}\|\nabla^2\sigma\|_{L_t^{\infty}L^2}
&\lesssim \Lambda\big(\f{1}{c_0}, |h|_{3,\infty,t}\big) \big(\ep^{\f{1}{2}}\|(\div^{\vp}G,G)\|_{L^{\infty}L^2}
+|\ep^{-\f{1}{2}}\sigma^{b,1}|_{L_t^{\infty}H^{\f{3}{2}}}+|\ep^{-\f{1}{2}}(\p_{\bn} \sigma)^{b,2}|_{L_t^{\infty}H^{\f{1}{2}}}\big)\\
&\lesssim \Lambda\big(\f{1}{c_0}, |h|_{3,\infty,t}\big)(\ep^{\f{1}{2}} \|\div^{\vp}u\|_{L_t^{\infty}H^2}+\|\ep^{\f{1}{2}} \pt u\|_{L_t^{\infty}H_{co}^1}+\|\ep^{\f{1}{2}} \pt\div^{\vp} u\|_{L_t^{\infty}L^2})+\ep^{\f{1}{2}}\Lambda_{2,\infty,t}\cE_{m,t}\\
&\lesssim \Lambda\big(\f{1}{c_0}, |h|_{3,\infty,t}\big)(\ep^{\f{1}{2}}\|\nabla^2\sigma\|_{L_t^{\infty}H_{co}^1}+\tilde{\cE}_{m,t})+\ep^{\f{1}{2}}\Lambda_{2,\infty,t}\cE_{m,t},
\end{aligned}
\eeqs
where $G$ is defined 
in \eqref{defofG}.
Note that by $\eqref{FCNS2}_1$ and the definition of $\cE_{m,t},$
\beqs
\ep^{\f{1}{2}}\|\div^{\vp}u\|_{L_t^{\infty}H^2}\lesssim \ep^{\f{1}{2}}\|\nabla^2\sigma\|_{L_t^{\infty}H_{co}^1}+\ep^{\f{1}{2}}\Lambda_{2,\infty,t}\cE_{m,t}.
\eeqs
Next, we get by the equation of velocity \eqref{FCNS2} that:
$$\ep
\mu\Delta^{\vp} u=g_2(\ep\pt+\underline{u}\cdot\nabla)u-(\mu+\lambda)\nabla^{\vp}\div^{\vp}u+\nabla\sigma$$
 Moreover, a direct computation shows that:
 \beq\label{laplaceu}
 \Delta^{\vp}u=\f{|\bN|^2}{|\p_z\vp|^2}\p_z^2 u+\Delta_y u+\p_1(\bN_1\p_z^{\vp}u)+\p_2(\bN_2\p_z^{\vp}u)+\bN_1\p_z^{\vp}\p_1 u+\bN_2\p_z^{\vp}\p_2u+\f{1}{2}\p_z u\p_z\big|\f{\bN}{\p_z\vp}\big|^2.
 \eeq
By using the previous two identities successively, we find the following two estimates
\beqs
\begin{aligned}
\ep^{\f{1}{2}}\|\nabla^3u\|_{L_t^{\infty}L^2}
&\lesssim \ep^{\f{1}{2}}\|\p_z\Delta^{\vp}u
\|_{L_t^{\infty}L^2}+\ep^{\f{1}{2}}\|\nabla^2 u\|_{L_t^{\infty}H_{co}^1}\Lambda_{2,\infty,t}\\
&\lesssim\ep^{-\f{1}{2}}\|\nabla\sigma\|_{L_t^{\infty}H^1}+\ep^{\f{1}{2}}
\|\nabla^2\sigma\|_{L_t^{\infty}H_{co}^1}+|h|_{L_t^{\infty}\tilde{H}^{\f{3}{2}}}+\|\ep^{\f{1}{2}}\pt u\|_{L_t^{\infty}H^1}+\ep^{\f{1}{2}}
\Lambda_{2,\infty,t}\cE_{m,t}.
\end{aligned}
\eeqs
and 
\beqs
\begin{aligned}
\ep^{\f{3}{2}}\|\nabla^4 u\|_{L_t^2L^2}
&\lesssim \ep^{\f{3}{2}}\|\nabla^2
\Delta^{\vp}u\|_{L_t^2L^2}
+\ep^{\f{3}{2}}\big(\|\nabla^3 u\|_{L_t^2H_{co}^1}+|h|_{\htlde^{\f{7}{2}}}\big)\Lambda_{2,\infty,t}\\
&\lesssim \ep^{\f{1}{2}} \|\nabla^3\sigma\|
_{L_t^2L^2}+\ep^{\f{1}{2}}\|\nabla^2 u\|_{L_t^2\cH^1}
+\ep^{\f{3}{2}}\|\nabla^3 (\sigma,u)\|_{L_t^2H_{co}^1}\Lambda_{2,\infty,t}+\ep\Lambda_{2,\infty,t}\cE_{m,t}\\
&\lesssim 
\ep^{\f{1}{2}}\|\pt u\|_{L_t^2\cH^1}+
\ep^{\f{3}{2}}\|\nabla^3\sigma\|_{L_t^2H_{co}^1}+
(T^{\f{1}{2}}+\ep)\Lambda_{2,\infty,t}\cE_{m,t}\\
&\lesssim Y_{m}(0)+(T+\ep)^{\f{1}{2}}\Lambda_{2,\infty,t}\cE_{m,t}. 
\end{aligned}
\eeqs
Note that in the second estimate, \eqref{na3sigma-1} has been used in the derivation of the last inequality.
\end{proof}
As stated in the beginning, we can now finish the proof of Lemma \ref{lemlow-full} since
gathering \eqref{lowerder}, \eqref{sec9.2-1} and \eqref{sec9.2-5} we finally obtain
\eqref{loworder-full}.

In the following several sections
(Sections 9-11),
we aim to show the estimate of high order norms $\cE_{high,m,T}$ defined in \eqref{defenergy-high}.
\section{Uniform control of high order energy norms-I}
 In this section, we focus on the uniform $L_t^2H_{co}^{m-1}$ estimates for $\nabla^{\vp}(\sigma,u).$ We first bound the higher order norms for 
 $(\nabla^{\vp}\sigma, \div^{\vp}u)$ 
 by using elliptic estimates for $\sigma$ and the equations to recover spatial derivatives from time derivatives  iteratively. Then, we perform direct energy estimates for the incompressible part $v$ ($v=\bbp u$ solves \eqref{eqofv1})
  to get the uniform control for $\|\nabla^{\vp}v\|_{L_t^2H_{co}^{m-1}}$ (and also $\|v\|_{L_t^{\infty}H_{co}^{m-1}}$ as a by-product).
\subsection{Uniform estimates for the compressible part}\label{sec-compressible}
In this subsection, we focus on the uniform estimates of the compressible part of the solution. More precisely, we shall establish the estimate of $\|(\nabla^{\vp}\sigma,\div^{\vp}u)\|_{
L_t^2H_{co}^{m-1}}.$
\begin{lem}\label{sigmainduction}
Suppose that \eqref{preassumption} is true, we can find some polynomial $\Lambda$, such that, for any $0<t\leq T,$
\begin{equation}\label{nor-compressible} 
\begin{aligned}
\ep^{-1}\|(\nabla^{\vp}\sigma,\div^{\vp}u)\|_{ L_t^2H_{co}^{m-1}}^2+\ep^{-1}\|\nabla\div^{\vp}u\|_{\hco^{m-2}}^2\\
\lesssim \Lambda\big(\f{1}{c_0},|h|^2_{L_T^{\infty}\tilde{H}^{m-\f{1}{2}}}\big)Y^2_m(0)+(T+\ep)^{\f{1}{2}}\lae.
\end{aligned}
\end{equation}
More precisely, we have for any $j,l$ with $j+l\leq m-1,$
\beq\label{induction1}
\begin{aligned}
&\ep^{-\f{1}{2}}\|(\nabla^{\vp}\sigma,\div^{\vp}u)\|_{L_t^2\cH^{j,l}}\lesssim (T+\ep)^{\f{1}{2}}\Lambda\big(\f{1}{c_0},\cN_{m,T}\big)\\
&\qquad+\big(\ep^{\f{1}{2}}\| \nabla\div^{\vp}u\|_{\hco^{m-1}}+\ep^{\f{1}{2}}\|\nabla^{\vp}u\|_{\hco^{m}}+
\ep^{\f{1}{2}}\|\pt (\sigma,u)\|_{L_t^2\cH^{m-1}}\big)\Lambda\big(\f{1}{c_0},|h|_{L_T^{\infty}\tilde{H}^{m-\f{1}{2}}}\big).
\end{aligned}
\eeq
\end{lem}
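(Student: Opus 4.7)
The goal is inequality \eqref{induction1} from which the summed estimate \eqref{nor-compressible} follows immediately by combining with the high-order bounds already obtained in Lemmas \ref{lemhighest} and \ref{lemnablasigma} and the surface regularity \eqref{surface1}. The plan is to set up an induction on the number $l$ of conormal spatial derivatives, using the rewritten system \eqref{rewrite-comp} to exchange one spatial derivative for a weighted time derivative $\ep\pt$ at each step. The base case ($l=0$) reduces to the already controlled quantity $\|\ep^{1/2}\pt(\sigma,u)\|_{L_t^2\cH^{m-1}}$ from Lemma 7.3, and the induction step is powered by two companion inequalities \eqref{inductionpre1}--\eqref{inductionpre2} that are applied alternately.

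The first ingredient is \eqref{inductionpre1}. Fix $j+l\leq m-1$. Reading $\eqref{rewrite-comp}_1$ in the form $-\div^{\vp}u = g_1\,\ep\pt\sigma + \ep g_1 \underline{u}\cdot\nabla\sigma$, I would apply the crude product estimate \eqref{crudepro} together with the composition bound \eqref{esofg12-3} for $g_1(\ep\sigma)$; the convective piece carries an explicit factor of $\ep$, which absorbs one $\ep^{-1/2}$ and leaves an $\cO(\ep^{1/2})$ remainder multiplied by $\Lambda(1/c_0,\cN_{m,T})$. This yields the first half of \eqref{inductionpre1}. Then I rewrite $\ep^{1/2}\pt\sigma$ using $\eqref{rewrite-comp}_1$ once more to obtain $\ep^{-1/2}\|\div^{\vp}u\|_{L_t^2\cH^{j,l}}\lesssim \ep^{-1/2}\|\nabla^{\vp}\sigma\|_{L_t^2\cH^{j+1,l-1}}+\cO(1)$.

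The second ingredient, \eqref{inductionpre2}, is the elliptic step. Setting $\theta=\sigma/\ep-(2\mu+\lambda)\div^{\vp}u$ and taking $\div^{\vp}$ of $\eqref{rewrite-comp}_2$ gives an elliptic equation for $\ep\theta$ with source $\div^{\vp}\tilde G$ where $\tilde G=\bar\rho\,\ep\pt\nabla^{\vp}\Psi+\ep\big(\tfrac{g_2-\bar\rho}{\ep}\ep\pt+g_2\underline{u}\cdot\nabla\big)u$. The boundary conditions are read off from \eqref{sigmabdry} and \eqref{omegatimesn} on $\{z=0\}$ and from \eqref{bebdry2}, \eqref{curlwdotn} on $\{z=-1\}$; crucially both traces are expressible through purely spatial tangential derivatives of $u$, so no singular $1/\ep$ factor survives on the boundary. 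Applying the conormal elliptic estimate \eqref{elliptic5} of Corollary \ref{corelliptic} to $\ep\theta$ (with nontrivial Dirichlet data), then invoking \eqref{ptpsiL2} to control $\|\ep^{1/2}\pt\nabla^{\vp}\Psi\|_{L_t^2\cH^{j+1,l}}$ by $\ep^{-1/2}\|\div^{\vp}u\|_{L_t^2\cH^{j+1,l-1}}$, yields \eqref{inductionpre2}; the leftover terms pile into the already-bounded quantity $\cX_{m,t}\approx \ep^{1/2}(\|\nabla\div^{\vp}u\|_{L_t^2H_{co}^{m-1}}+\|\nabla u\|_{L_t^2H_{co}^m})$, which is estimated via Lemma \ref{lemhighest} and \ref{lemnablasigma}.

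With \eqref{inductionpre1}--\eqref{inductionpre2} in hand, I close the argument by finite induction on $l$: each application converts $\cH^{j,l}$-control of one compressible variable into $\cH^{j+1,l-1}$-control of the other, plus bounded error; after at most $m-1$ steps one lands in a pure time-derivative norm controlled by the base case. Summing over $(j,l)$ with $j+l\leq m-1$ produces \eqref{nor-compressible}; the factor $|h|_{L_T^\infty\tilde H^{m-1/2}}$ on the right-hand side enters solely through the elliptic estimate, and its size is controlled by \eqref{surface1} and the initial data norm $Y_m(0)$. The main obstacle I expect is bookkeeping in the elliptic step: one has to verify that the nominally singular quantities $\sigma/\ep$ and $\omega\times\bN$ on $\{z=0\}$ really do reduce to tangential derivatives of $u$ via \eqref{sigmabdry}--\eqref{omegatimesn} so that the elliptic bound \eqref{elliptic5} can be applied without creating any $\ep^{-1}$ on the boundary data; and simultaneously, the commutators between $Z^\alpha$ and $\nabla^{\vp},\div^{\vp}$ must be absorbed into $\cX_{m,t}$ using the commutator lemma \eqref{comgrad}. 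Once these are aligned, a trailing $(T+\ep)^{1/2}$ emerges naturally from $L_t^2$-in-time Cauchy--Schwarz and the $\ep^{1/2}$ prefactor on $\nabla\div^{\vp}u$ from Lemma \ref{lemnablasigma}.
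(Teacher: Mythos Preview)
Your proposal is correct and follows essentially the same approach as the paper: induction on the number $l$ of spatial conormal derivatives, alternating between \eqref{inductionpre1} (from $\eqref{rewrite-comp}_1$) and \eqref{inductionpre2} (from the elliptic problem for $\sigma/\ep$, equivalently $\ep\theta$), with the crucial replacement of $\pt^{\vp}u$ by $\pt^{\vp}\nabla^{\vp}\Psi$ in the source term via $\div^{\vp}v=0$. One small technical remark: citing \eqref{ptpsiL2} is not quite sharp for the induction step, since that estimate lives in $H_{co}^{m-1}$; the paper applies the more refined $\cH^{j,l}$-version \eqref{elliptic3.3} directly to $\Psi$, which is what actually produces the index shift $\cH^{j,l}\to\cH^{j+1,l-1}$ you need.
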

\begin{proof}
By using the equation $\eqref{FCNS2}_1$ for $\sigma,$ we have:
\beq\label{eq-graddiv}
\nabla\div^{\vp} u=g_1(0)\ep\pt\nabla\sigma+\ep \nabla\big((\f{g_1-g_1(0)}{\ep}\ep\pt\sigma)+g_1\underline{u}\cdot \nabla \sigma\big),
\eeq
combined with the product estimate \eqref{crudepro}, this yields:
\beq\label{na-div}
\ep^{-\f{1}{2}}\|\nabla\div^{\vp}u\|_{L_t^2H_{co}^{m-2}}\lesssim \ep^{-\f{1}{2}}\|\nabla\sigma\|_{L_t^2H_{co}^{m-1}}+\ep^{\f{1}{2}}\Lambda\big(\f{1}{c_0},\cA_{m,t}\big)\cE_{m,t}.
\eeq
By  \eqref{EI-1}, \eqref{EI-T1}, \eqref{EI-2}, \eqref{na-div}, we can derive \eqref{nor-compressible} from \eqref{induction1}.
 In what follows, we shall establish \eqref{induction1}
 by induction on the number of conormal spatial derivatives.
 Firstly, let us rewrite the equation $\eqref{FCNS2}_1$ as:
 \beq\label{re-sigma}
 \div^{\vp}u=g_1(0)\ep\pt \sigma+ \ep\big(\f{g_1-g_1(0)}{\ep}\ep\pt\sigma+g_1u\cdot\nabla\sigma\big),
 \eeq
 By the product estimate \eqref{crudepro}, we obtain:
 \beqs
\ep^{-\f{1}{2}} \|\div^{\vp}u\|_{L_t^2\cH^{m-1}}\lesssim \|\ep^{\f{1}{2}}\pt\sigma\|_{L_t^2\cH^{m-1}}+\ep ^{\f{1}{2}}
 \Lambda\big(\f{1}{c_0},\cA_{m,t}\big)\cE_{m,t}.
 \eeqs
Moreover, as $\sigma$ solves by the elliptic problem \eqref{sigmaelliptic},
we can apply the elliptic estimate \eqref{elliptic5} with
\beqs
b=\sigma^{b,1},\,g=(\ep\mu\curl^{\vp}\omega\cdot e_3)^{b,2}\, F=\ep P {G} \, (\text{the vector } G \text{ is defined in } \eqref{defofG}, \text{ the matrix } P \text{ is defined in } \eqref{defofP} )
\eeqs
and the identity \eqref{curlwdotn}
 to get:
 \beqs
 \begin{aligned}
& \ep^{-\f{1}{2}}\|\nabla^{\vp}\sigma\|_{L_t^2\cH^{m-1}}\lesssim \Lambda\big(\f{1}{c_0},|h|_{[\f{m}{2}]+1,\infty,t}\big)\big(\|\ep^{\f{1}{2}}G\|_{L_t^2\cH^{m-1}}+|\ep^{-\f{1}{2}}\sigma^{b,1}|_{\htlde^{m-\f{1}{2}}}+\ep^{\f{1}{2}}|\p_y u^{b,2}|_{\htlde^{m-\f{3}{2}}}\big)\\
&\qquad\qquad\qquad\qquad\quad+\Lambda\big(\f{1}{c_0},|h|_{[\f{m}{2}]+1,\infty,t}+\il (\ep^{-\f{1}{2}}\nabla\sigma,\ep^{\f{1}{2}}G) \il_{[\f{m}{2}]-1,\infty,t}\big)|h|_{L_t^2\tilde{H}^{m-\f{1}{2}}}.
  \end{aligned}
 \eeqs
By the definition \eqref{defofG} of $G$ and the product estimate \eqref{crudepro},
\beqs
\il\ep^{\f{1}{2}}G\il_{[\f{m}{2}]-1,\infty,t}\lesssim \Lambda\big(\f{1}{c_0},\cA_{m,t}\big),
\eeqs
\beqs
\|\ep^{\f{1}{2}}G\|_{L_t^2\cH^{m-1}}\lesssim \ep^{\f{1}{2}}\big(\|\pt u\|_{L_t^2\cH^{m-1}}+\|\nabla\div^{\vp}u\|_{\hco^{m-1}})+\ep^{\f{1}{2}}\Lambda\big(\f{1}{c_0},\cA_{m,t}\big)\cE_{m,t}.
\eeqs
Moreover, thanks to the identity \eqref{sigmabdry} and the trace inequality \eqref{trace}, we have that:
\beqs
\begin{aligned}
&|\ep^{-\f{1}{2}}\sigma^{b,1}|_{\htlde^{m-\f{1}{2}}}+\ep^{\f{1}{2}}|\p_y u^{b,2}|_{\htlde^{m-\f{3}{2}}}\\
&\lesssim 
\Lambda\big(\f{1}{c_0},|h|_{L_t^{\infty}\tilde{H}^{m-\f{1}{2}}}\big)
\ep^{\f{1}{2}}(\|\nabla u\|_{\hco^{m}}+\|\nabla\div^{\vp}u\|_{\hco^{m-1}})+(T+\ep)^{\f{1}{2}}\lae.
\end{aligned}
\eeqs
Gathering the previous four inequalities,
we get \eqref{induction1} for $j\leq m-1,l=0.$
 For a given integer $l$ $(1\leq l\leq m-1),$ assuming now that \eqref{induction1} holds for $(j,l-1)$ with
 $j+l\leq m-1$ we then prove that it is also true for $(j,l)$ with $j+l\leq m-1.$
 By equation $\eqref{re-sigma}$ and the product estimate \eqref{crudepro}, we get:
 \beqs
 \begin{aligned}
\ep^{-\f{1}{2}} \|\div^{\vp}u\|_{L_t^2\cH^{j,l}}&\lesssim \|\ep^{\f{1}{2}}\pt \sigma\|_{L_t^2\cH^{j,l}}+
 (T+\ep)^{\f{1}{2}}\lae\\
 &\lesssim \|\ep^{-\f{1}{2}}\nabla^{\vp} \sigma\|_{L_t^2\cH^{j+1,l-1}}+
(T+\ep)^{\f{1}{2}}\lae\lesssim \text{R.H.S of } \eqref{induction1}.
 \end{aligned}
 \eeqs
 For the estimate of $\nabla^{\vp}\sigma,$ we first remark that in the elliptic equation \eqref{sigmaelliptic}, $G$ (defined in \eqref{defofG}) can be simplified slightly by changing $\p_t^{\vp} u$ into $\p_t^{\vp}\nabla\Psi,$  since $\div^{\vp}v=0,\p_t^{\vp}v_3|_{z=-1}=0.$ Denote thus
 $$\tilde{G}=\bar{\rho}\pt^{\vp}\nabla^{\vp}\Psi+g_2u\cdot\nabla^{\vp}u+\f{g_2-\bar{\rho}}{\ep}\ep\p_t^{\vp}u-(2\mu+\lambda)\nabla^{\vp}\div^{\vp}u.$$
 We can use again the elliptic estimate \eqref{elliptic5} 
 to get that:
 \begin{multline*}
\ep^{-\f{1}{2}} \|\nabla^{\vp}\sigma\|_{L_t^2\cH^{j,l}}\lesssim (T+\ep)^{\f{1}{2}}\lae\\
+ \Lambda\big(\f{1}{c_0}, |h|_{L_t^{\infty}\tilde{H}^{m-\f{1}{2}}}\big)\big(\|\ep^{\f{1}{2}} \tilde{G}\|_{L_t^2\cH^{j,l}}
 +\ep^{\f{1}{2}}(\|\nabla u\|_{\hco^{m}}+\|\nabla\div^{\vp}u\|_{\hco^{m-1}})\big)\\
 \lesssim \Lambda\big(\f{1}{c_0}, |h|_{L_t^{\infty}\tilde{H}^{m-\f{1}{2}}}\big)
 \ep^{\f{1}{2}}\big(\|\pt \nabla^{\vp}\Psi\|_{L_t^2\cH^{j,l}}+\|\nabla^{\vp}\div^{\vp}u\|_{\hco^{m-1}}+\|\nabla^{\vp}u\|_{\hco^{m}}\big)+(T+\ep)^{\f{1}{2}}\lae.
 \end{multline*}
 
Since $\Psi$ solves the elliptic problem \eqref{eq-comp}, we can apply the elliptic estimate \eqref{elliptic3.3} and the estimate \eqref{Linftynablapsi} to get that:
\beqs
\begin{aligned}
\|\ep^{\f{1}{2}}\pt\nabla^{\vp}\Psi\|_{L_t^2\cH^{j,l}}&\lesssim
\Lambda\big(\f{1}{c_0},|h|_{[\f{m}{2}]+1,\infty,t}\big)
\ep^{-\f{1}{2}}\|\div^{\vp}u\|_{L_t^2\cH^{j+1,l-1}}+(T+\ep)^{\f{1}{2}}\lae.
\end{aligned}
\eeqs
Combining the two previous inequalities and using the induction assumption to estimate $\|\div^{\vp}u\|_{L_t^2\cH^{j+1,l-1}}$, one finds:
\beqs
\ep^{-\f{1}{2}}\|\nabla^{\vp}\sigma\|_{L_t^2\cH^{j,l}}\lesssim \text{R.H.S of } \eqref{induction1}.
\eeqs
\end{proof}
\subsection{ Energy estimates: Incompressible part}
In this subsection, we focus on the analysis of the incompressible part of the velocity $v=\bbp u$ whose estimates can be obtained from direct energy estimates.
By \eqref{eqofv1}-\eqref{v-bdry-up}, $v$ solves the following system:
\beq\label{eqofv2}
\left\{
\begin{array}{l}
  \bar{\rho}  \pt^{\vp}v-\mu\Delta^{\vp} v+\nabla^{\vp}\pi=-(f+\nabla^{\vp}q+\bar{\rho}[\bbp,\pt^{\vp}]u), \\[5pt]
(2\mu S^{\vp}v-\pi Id)\bN|_{z=0}=2\mu(\div^{\vp}u \text{Id}-(\nabla^{\vp})^2\Psi)\bN|_{z=0},\\[5pt]
v_3|_{z=-1}=0, \quad \mu\p_z^{\vp}v_{j}|_{z=-1}=a u_{j}|_{z=-1}, \quad j=1,2.\\
\end{array}
\right.
\eeq
where 
 \beq \label{defpi}
 \nabla^{\vp}\pi=\bbp\nabla^{\vp}(\sigma/{\ep}-2(\mu+\lambda)\div^{\vp}u)=\colon\bbp\nabla^{\vp}\theta,
 \eeq
 \beq\label{def-f-free}
 f=\f{g_2-\bar{\rho}}{\ep}(\ep\pt^{\vp}u+\ep u\cdot\nabla^{\vp}u)+\bar{\rho}u\cdot\nabla^{\vp}u,\qquad \nabla^{\vp}q=-\bbq (f-\mu\Delta^{\vp} v).
\eeq
Before stating the main result for $v,$ it is useful to establish some auxiliary estimates for $\nabla^{\vp}\pi,f,\nabla^{\vp}q.$
\begin{prop}
Under the assumption \eqref{preassumption},
the following $L_t^2L^2(\mS)$ type estimates hold: for any $m\geq 7,$
\begin{align}
\|f\|_{\hco^{m-1}}+\|\div^{\vp}f\|_{\hco^{m-2}}+\ep^{\f{1}{2}}\|\pt f\|_{\hco^{m-2}}+\ep^{\f{1}{2}}\|f\|_{L_t^{\infty}H_{co}^{m-2}}\lesssim \lae \label{es-f},  
\end{align}
\beq\label{q}
\begin{aligned}
\|\nabla q\|_{\hco^{m-1}}+\ep^{\f{1}{2}} \|\nabla^{\vp}q\|_{L_t^{\infty}H_{co}^{m-2}}+\ep^{\f{1}{2}}\|\pt\nabla^{\vp}q\|_{\hco^{m-2}}\lesssim
\Lambda\big(\f{1}{c_0},\cN_{m,T}\big),
\end{aligned}
\eeq
\beq\label{pi1}
\begin{aligned}
\il\nabla \pi\il_{1,\infty,t}\lesssim 
\Lambda\big(\f{1}{c_0},|h|_{5,\infty,t}
\big)\cE_{m,T},
\end{aligned}
\eeq
\beq\label{pi}
\begin{aligned}
\|\nabla \pi\|_{\hco^{m-2}}\lesssim 
\Lambda\big(\f{1}{c_0},|h|_{m-2,\infty,t}\big) \|\nabla u\|_{\hco^{m-1}}+
T^{\f{1}{2}}
\lae,
\end{aligned}
\eeq
\beq\label{pi4}
\ep^{\f{1}{2}}\|\nabla^{\vp}\pi\|_{L_t^{\infty}H_{co}^{m-2}}\lesssim \Lambda\big(\f{1}{c_0},|h|_{m-2,\infty,t}\big)\| \ep^{\f{1}{2}}\nabla u\|_{L_t^{\infty}H_{co}^{m-1}}+\ep^{\f{1}{2}}\lae,
\eeq
\beq\label{pi2}
\ep^{\f{1}{2}}\|\pt\nabla\pi\|_{L_t^2H_{co}^{m-3}}\lesssim \Lambda\big(\f{1}{c_0},|h|_{m-2,\infty,t}
\big)\|\ep^{\f{1}{2}}\pt(u,\nabla u)\|_{L_t^2H_{co}^{m-2}}+
 (T+\ep)^{\f{1}{2}}\lae,
\eeq
\beq\label{comtime}
    \|[\bbp, \p_t^{\vp}]u\|_{L_t^2H_{co}^{m-1}}+\|[\bbp, \p_t^{\vp}]u\|_{L_t^{\infty}H_{co}^{m-2}}
    + \|\ep^{\f{1}{2}}\pt [\bbp, \p_t^{\vp}]u\|_{L_t^2H_{co}^{m-2}}
    \lesssim \lae.
\eeq
\end{prop}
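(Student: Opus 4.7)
My plan is to handle the seven estimates in four stages, treating $f$ by direct nonlinear estimates, $q$ and the commutator by elliptic estimates on the projections, and $\pi$ by exploiting the harmonic structure that removes the apparent $1/\ep$ singularity.

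For \eqref{es-f}, the quantity $f$ is algebraic in $(\sigma,u,\nabla^{\vp}u,\ep\pt^{\vp}u)$. I would apply the conormal product estimates \eqref{crudepro}, the commutator estimate \eqref{crudecom}, and Corollary \ref{corg12} to bound $(g_2-\bar\rho)/\ep=g_2'(0)\sigma+O(\ep)$. Each summand splits into one factor of low-order $L_{t,x}^{\infty}$ type, controlled by $\cA_{m,T}$ (in particular $\il\nabla u\il_{1,\infty,t}$ and $\il(\sigma,u,\ep\pt u)\il_{m-4,\infty,t}$), times a high-order factor controlled by $\cE_{m,T}$. The $\div^{\vp}f$ bound uses one more derivative but stays below $m-1$. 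For the time derivative $\ep^{1/2}\pt f$ and the $L_t^{\infty}H_{co}^{m-2}$ norm, the same scheme works with $\ep^{1/2}\pt u\in L_t^{\infty}\cH^{m-1}$ from the high-order energy estimate \eqref{EI-T1}.

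For \eqref{q}, the identity $\nabla^{\vp}q=-\bbq(f-\mu\Delta^{\vp}v)$ together with definition \eqref{defofQ} shows that $q$ solves an elliptic BVP with divergence source $\div^{\vp}(f-\mu\Delta^{\vp}v)$, Dirichlet condition $q|_{z=0}=0$ and Neumann bottom data $\p_z^{\vp}q|_{z=-1}=-(f-\mu\Delta^{\vp}v)\cdot e_3$. Since $\div^{\vp}v=0$, the divergence of $\Delta^{\vp}v$ reduces to a first-order commutator in $v$, controllable by $\|\nabla v\|_{L_t^2H_{co}^{m-1}}\le \|\nabla u\|_{L_t^2H_{co}^{m-1}}+\|\nabla\nabla^{\vp}\Psi\|_{L_t^2H_{co}^{m-1}}$ where the last is bounded by \eqref{sec-normal-Psi}. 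Applying the elliptic estimates \eqref{elliptic-useful}, \eqref{ellipticuseful-1}, and \eqref{elliptic6}, and combining with \eqref{es-f} and the compressible-part bounds \eqref{sec-normal-Psi}--\eqref{psiLinfty}, yields the three bounds in \eqref{q}.

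The estimates \eqref{pi1}--\eqref{pi2} on $\pi$ are the heart of the proposition. The critical observation, already recorded in the excerpt, is that substituting the boundary identity \eqref{sigmabdry} for $\sigma/\ep$ into $\pi|_{z=0}=\sigma/\ep-2(\mu+\lambda)\div^{\vp}u$ and using \eqref{omegatimesn} cancels $\div^{\vp}u$ and removes the factor $1/\ep$, leaving boundary data depending only on tangential gradients of $u$ and on $h$; together with $\Delta^{\vp}\pi=0$ and $\p_z^{\vp}\pi|_{z=-1}=0$, this makes $\pi$ a harmonic function with purely kinematic data. The $L^{\infty}_{t,x}$ bound \eqref{pi1} then follows from \eqref{elliptic4.9} once the boundary data is bounded in $L_t^{\infty}H^{5/2}$ by trace inequality and Sobolev embedding, which is controlled by $\il\nabla u\il_{2,\infty,t}$ and $|h|_{5,\infty,t}$ in $\cA_{m,T}$. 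For \eqref{pi} and \eqref{pi4} I would apply \eqref{ellipticuseful-1} with $F=0$, $g=0$, and Dirichlet boundary data $b=\pi^{b,1}$; the trace inequality \eqref{trace} then controls $|b|_{\tilde H^{k+1/2}}$ by $\|\nabla u\|_{H_{co}^{m-1}}$ plus lower-order contributions absorbed into the $T^{1/2}$ remainder via \eqref{surface1}. For the time-derivative estimate \eqref{pi2}, I would use \eqref{elliptic6} and separately track how time derivatives hit the boundary data versus the coefficients $\vp$, using \eqref{surface2}--\eqref{surface3} for the surface regularity of $\ep^{1/2}\pt h$.

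Finally, for \eqref{comtime}, the elementary identity $[\bbp,\pt^{\vp}]u=-[\bbq,\pt^{\vp}]u=\pt^{\vp}\nabla^{\vp}\Psi-\bbq\pt^{\vp}u$ reduces matters to bounds already available: the first term is estimated through \eqref{ptpsiL2}, \eqref{psiLinfty}, while $\bbq\pt^{\vp}u$ satisfies an elliptic problem of the same shape as \eqref{q} with source involving $\pt^{\vp}u$, so that the elliptic estimates produce the desired $L_t^2H_{co}^{m-1}$, $L_t^{\infty}H_{co}^{m-2}$ and $\ep^{1/2}\pt$-versions, the latter matching the bounds of Step 1 and Step 2 in Section 2. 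The main obstacle throughout is \eqref{pi}--\eqref{pi2}: one must verify that the harmonic reduction really eliminates the $1/\ep$ factor, and when one time derivative is inserted, carefully account for $\pt\vp$-terms generated by the $t$-dependence of both the boundary and the differential operator $\Delta^{\vp}$, so that no uncontrolled $\pt h$-regularity at the top order appears.
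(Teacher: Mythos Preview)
Your treatment of \eqref{es-f}, \eqref{q}, and \eqref{pi1}--\eqref{pi2} matches the paper's approach closely. Two minor points: in the $q$-equation the source term $\div^{\vp}\Delta^{\vp}v$ is not a ``first-order commutator'' but vanishes identically (since $\div^{\vp}v=0$ and the $\p_j^{\vp}$ commute), and for the bottom Neumann data you also need the reduction $(\Delta^{\vp}v_3)^{b,2}=-\tfrac{a}{\mu}(\p_1u_1+\p_2u_2)^{b,2}$, which the paper computes explicitly from the slip condition \eqref{bebdry2}.

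There is, however, a genuine gap in your approach to \eqref{comtime}. You write $[\bbp,\pt^{\vp}]u=\pt^{\vp}\nabla^{\vp}\Psi-\bbq\pt^{\vp}u$ and propose to bound each piece separately, citing \eqref{ptpsiL2} and \eqref{psiLinfty}. But those estimates carry an $\ep^{1/2}$ weight: they give $\|\ep^{1/2}\pt\nabla^{\vp}\Psi\|_{L_t^2H_{co}^{m-1}}$, not $\|\pt\nabla^{\vp}\Psi\|_{L_t^2H_{co}^{m-1}}$. Since $\pt^{\vp}\nabla^{\vp}\Psi=\pt\nabla^{\vp}\Psi-\tfrac{\pt\vp}{\p_z\vp}\p_z\nabla^{\vp}\Psi$, the first contribution is only $\cO(\ep^{-1/2})$ in $L_t^2H_{co}^{m-1}$. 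Likewise, $\bbq\pt^{\vp}u=\nabla^{\vp}\Psi_1$ solves an elliptic problem with source $\div^{\vp}\pt^{\vp}u=\pt^{\vp}\div^{\vp}u$; controlling $\nabla\Psi_1$ at order $m-1$ requires $\pt\div^{\vp}u$ at order $m-2$, which again is only $\cO(\ep^{-1/2})$. So each piece individually fails to be uniformly bounded.

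The paper avoids this by estimating the \emph{difference} directly: writing $[\bbp,\pt^{\vp}]u=\nabla^{\vp}(\Psi_1-\pt^{\vp}\Psi)$ and observing that $\Delta^{\vp}(\Psi_1-\pt^{\vp}\Psi)=\pt^{\vp}\div^{\vp}u-\pt^{\vp}\div^{\vp}u=0$, i.e.\ the singular sources cancel and the difference is $\Delta^{\vp}$-harmonic. The only nontrivial data is the upper Dirichlet trace $(\Psi_1-\pt^{\vp}\Psi)|_{z=0}=\tfrac{\pt h}{\p_z\vp}\p_z\Psi$, which is a product of $\pt h$ (bounded via \eqref{surface2}) and $\nabla\Psi$ (bounded via \eqref{Linftynablapsi}); the elliptic estimate \eqref{elliptic-useful} then yields the uniform bound. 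This cancellation is the missing idea in your sketch.
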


\begin{proof}
\underline{Proof of \eqref{es-f}}.
In view of definition of $f$ in \eqref{def-f-free}, we give details for the estimate of $u\cdot\nabla^{\vp}u$ and $\div^{\vp}(u\cdot\nabla^{\vp}u),$ the other terms can be controlled in a similar way. 
First, for the $L_t^{\infty}H_{co}^{m-2}$ norm, 
we have thanks to the product estimate  \eqref{crudepro} that:
\beqs
\begin{aligned}
\ep^{\f{1}{2}}\|u\cdot\nabla^{\vp}u\|_{L_t^{\infty}H_{co}^{m-2}}&\lesssim \Lambda\big(\f{1}{c_0},\il u\il_{[\f{m}{2}],\infty,t}+\ep^{\f{1}{2}}\il\nabla u\il_{[\f{m}{2}]-1,\infty,t}\big)\|(u,\ep^{\f{1}{2}}\nabla^{\vp}u\|_{L_t^{\infty}H_{co}^{m-2}}\\
&\lesssim \Lambda\big(\f{1}{c_0},\cA_{m,T}\big)\cE_{m,T}.
\end{aligned}
\eeqs
For the first three norms in the left-hand side of \eqref{es-f}, we first have
by the product estimate \eqref{product},
\beqs
\begin{aligned}
&\|u\cdot\nabla^{\vp}u\|_{L_t^2\cH^{0,m-1}}+\|\div^{\vp}(u\cdot\nabla u)\|_{L_t^2\cH^{0,m-2}}\\
&\lesssim \Lambda\big(\f{1}{c_0}, \il (u,\nabla^{\vp} u)\il_{0,\infty,t}+\il\nabla\div^{\vp} u
\il_{1,\infty,t}\big)(\|(u,\nabla^{\vp}u)\|_{L_t^2\cH^{0,m-1}}+\|\nabla^{\vp}\div^{\vp} u\|_{L_t^2\cH^{0,m-2}}).
\end{aligned}
\eeqs
It remains to control  $\|\ep\pt\div^{\vp}(u\cdot\nabla^{\vp}u)\|_{\hco^{m-3}}$ and  $\ep^{\f{1}{2}}\|\pt(u\cdot\nabla^{\vp}u)\|_{\hco^{m-2}}.$
We can estimate them in a rather rough way:
\beqs
\begin{aligned}
&\|\ep\pt\div^{\vp}(u\cdot\nabla^{\vp}u)\|_{\hco^{m-3}}\lesssim 
\|(\ep\pt\nabla^{\vp} u\cdot\nabla^{\vp}u, \ep\pt (u\cdot\nabla^{\vp}\div^{\vp}u))\|_{\hco^{m-3}}\\
&\lesssim \il\nabla^{\vp}u\il_{0,\infty,t}\|\nabla^{\vp}u\|_{\hco^{m-2}}+\il \ep\pt\nabla^{\vp} u\il_{0,\infty,t}\|\nabla^{\vp} u\|_{\hco^{m-3}}\\
&\qquad+\ep^{\f{1}{2}}\|\pt\nabla^{\vp}u\|_{\hco^{m-4}}\il\ep^{\f{1}{2}}\nabla^{\vp}u\il_{m-4,\infty,t}\\
&\quad +\il\nabla^{\vp}\div^{\vp}u\il_{[\f{m}{2}]-2,\infty,t}\|u\|_{\hco^{m-2}}+\il u\il_{[\f{m-1}{2}],\infty,t}\|\nabla^{\vp}\div^{\vp}u\|_{\hco^{m-2}}\\
&\lesssim \lca \cE_{m,T},
\end{aligned}
\eeqs
\beqs
\begin{aligned}
&\ep^{\f{1}{2}}\|\pt(u\cdot\nabla^{\vp}u)\|_{\hco^{m-2}}\lesssim \|( u\cdot\ep^{\f{1}{2}}\pt\nabla^{\vp}u, \ep^{\f{1}{2}}\pt u\cdot \nabla^{\vp}u)\|_{\hco^{m-2}}\\
&\lesssim \il u\il_{1,\infty,t}\|\ep^{\f{1}{2}}\pt\nabla^{\vp}u\|_{\hco^{m-2}}+\|\ep^{\f{1}{2}}\pt  \nabla^{\vp}u\|_{L_t^{\infty}H_{co}^{m-4}}\big(\int_0^t\|u(s)\|_{m-2,\infty}^2\d s\big)^{\f{1}{2}}\\
&\quad+\|\ep^{\f{1}{2}}\pt u\|_{\hco^{m-2}}\il\nabla^{\vp}u\il_{0,\infty,t}+\|\nabla^{\vp}u\|_{\hco^{m-2}}\big(\int_0^t\| \ep^{\f{1}{2}}\pt u(s)\|_{m-3,\infty}\d s\big)^{\f{1}{2}}\\
&\lesssim \lae.
\end{aligned}
\eeqs
\underline{Proof of \eqref{q}}
Let us now show the estimate  \eqref{q} for $q.$  
By the definition of $\bbq$ in \eqref{defofQ} and the fact that $\div^{\vp}\Delta^{\vp}v=0,$ $q$  solves the elliptic problem:
\beqs
\left\{
\begin{array}{l}
     \div(E\nabla q)=-\div (Pf),  \\
      q|_{z=0}=0,\\[5pt]
      \p_z^{\vp}q|_{z=-1}=-f\cdot e_3|_{z=-1}+g
\end{array}
\right.
\eeqs
where $P$ and $E$ are defined in \eqref{defPE} and $g=(\Delta^{\vp}v_3)^{b,2}=\Delta^{\vp}v_3|_{z=-1}.$ Applying the
elliptic estimate \eqref{ellipticuseful-1}, \eqref{elliptic1.5} for $F=f,$ 
we find:
\beq\label{q-1}
\begin{aligned}
\|\nabla q\|_{\hco^{m-1}}&\lesssim \Lambda\big(\f{1}{c_0}, |h|_{m-2,\infty,t}+ \|\div^{\vp}f\|_{L_t^{\infty}H_{tan}^2}+
\big|(\Delta^{\vp}v_3)^{b,2}\big|_{L_t^{\infty}H_{tan}^{\f{5}{2}}}
\big)\\
&\qquad(\|f\|_{\hco^{m-1}}+|h|_{\htlde^{m-\f{1}{2}}}+ \big|(\Delta^{\vp}v_3)^{b,2}\big|_{\htlde^{m-\f{3}{2}}}),
\end{aligned}
\eeq
\beq\label{q2}
\begin{aligned}
\ep^{\f{1}{2}}\|\nabla q\|_{L_t^{\infty}H_{co}^{m-2}}&\lesssim \Lambda\big(\f{1}{c_0},|h|_{m-2,\infty,t}+\|\div^{\vp}f\|_{L_t^{\infty}H_{tan}^1}+
\big|(\Delta^{\vp}v_3)^{b,2}\big|_{L_t^{\infty}H_{tan}^{\f{3}{2}}}\big)\\
&\qquad (\ep^{\f{1}{2}}\|f\|_{L_t^{\infty}H_{co}^{m-2}}+\ep^{\f{1}{2}}|
(\Delta^{\vp}v)^{b,2}|_{L_t^{\infty}\tilde{H}^{m-\f{5}{2}}}+\ep^{\f{1}{2}}|h|_{L_t^{\infty}\tilde{H}^{m-\f{3}{2}}}),
\end{aligned}
\eeq
\beq\label{q3}
\begin{aligned}
&\ep^{\f{1}{2}}\|\pt\nabla q\|_{L_t^{2}H_{co}^{m-2}}\\
&\lesssim \Lambda\big(\f{1}{c_0},|(h,\ep^{\f{1}{2}}\pt h)|_{m-2,\infty,t}+\|\ep^{-\f{1}{2}}\div^{\vp}f\|_{L_t^{\infty}H_{co}^2}+
\big|(\text{Id},\ep^{\f{1}{2}}\pt)(\Delta^{\vp}v_3)^{b,2}\big|_{L_t^{\infty}H_{tan}^{\f{5}{2}}}\big)\cdot\\
& \quad(\ep^{\f{1}{2}}\|\pt f\|_{L_t^{2}H_{co}^{m-2}}+\ep^{\f{1}{2}}|\pt(\Delta^{\vp}v)^{b,2}|_{L_t^{\infty}\tilde{H}^{m-\f{3}{2}}}+|(h,\ep^{\f{1}{2}}\pt h)|_{L_t^{2}\tilde{H}^{m-\f{3}{2}}}+\|\nabla q\|_{\hco^{m-2}}).
\end{aligned}
\eeq
It follows from direct computations that:
\beqs
\Delta^{\vp}v_3=\Delta^{\vp}u_3-\p_z^{\vp}\div^{\vp} u=(\p_1^{\vp})^2 u_3+(\p_2^{\vp})^2u_3-(\p_1^{\vp}\p_z^{\vp}u_1+\p_2^{\vp}\p_z^{\vp}u_2).
\eeqs
This, combined with the identities $$\p_1^{\vp}|_{z=-1}=\p_1,\quad \p_2^{\vp}|_{z=-1}=\p_2$$
as well as the boundary condition \eqref{bebdry2}, yields:
\beq\label{norofLap}
(\Delta^{\vp}v_3)^{b,2}=-\f{a}{\mu}
(\p_1 u_1+\p_2 u_2)^{b,2}.
\eeq
In light of  \eqref{es-f}, \eqref{q-1}-\eqref{q3}, \eqref{norofLap}, we find 
\eqref{q} by the trace inequality \eqref{trace}. \\[5pt]
\underline{Proof of \eqref{pi}-\eqref{pi2}}.
Let us switch to the estimate of $\pi.$ 
By definition, $\pi$ satisfies the following elliptic problem:
\beqs
\left\{
\begin{array}{l}
     \div(E\nabla \pi)=0,  \\
      \pi|_{z=0}=\theta^{b,1},\\
      \p_z^{\vp}\pi|_{z=-1}=0.
\end{array}
\right.
\eeqs
where $\theta^{b,1}=\theta|_{z=0}.$
Therefore, to prove \eqref{pi1}, we apply 
\eqref{elliptic4.9} to get that:
\beqs
\begin{aligned}
\il\nabla\pi\il_{1,\infty,t}&\lesssim \|\nabla^2\pi\|_{L_t^{\infty}H_{tan}^2}+\|\nabla\pi\|_{L_t^{\infty}H_{tan}^3}\\
&\lesssim \Lambda(\f{1}{c_0},|h|_{4,\infty,t})|\theta^{b,1}|_{L_t^{\infty}H^{\f{7}{2}}}.
\end{aligned}
\eeqs
By using the boundary conditions
\eqref{sigmabdry} \eqref{omegatimesn}, we have that on the upper boundary,
\beq\label{thetabdry}
\theta=-2\mu(\p_1u_1+\p_2u_2)-2\mu(\Pi(\p_1u\cdot\bN,\p_2u\cdot\bN,0)^{t})_3,
\eeq
hence, by the product estimate \eqref{product-R2} and the trace inequality \eqref{trace}, we get:
\beqs
|\theta^{b,1}|_{L_t^{\infty}H^{\f{7}{2}}}
\lesssim (\|\nabla u\|_{L_t^{\infty}H_{co}^4}+\|u\|_{L_t^{\infty}H_{co}^5})\Lambda(\f{1}{c_0},|h|_{5,\infty,t}).
\eeqs
This ends the proof of \eqref{pi1}.

Now, we can apply \eqref{ellipticuseful-1} and
\eqref{pi1} to get that for $p=2,+\infty,$
\beq\label{pi-3}
\begin{aligned}
\|\nabla^{\vp} \pi\|_{L_t^pH_{co}^{m-2}}\lesssim
\Lambda\big(\f{1}{c_0},|h|_{m-2,\infty,t}\big) |\theta^{b,1}|_{L_t^p\tilde{H}^{m-\f{3}{2}}}+
|h|_{L_t^{p}\tilde{H}^{m-\f{3}{2}}}\lae, \,
\end{aligned}
\eeq
In view of \eqref{thetabdry}, one has by the product estimate \eqref{product-R2} and the trace inequality \eqref{trace} that
\beq
|\theta^{b,1}|_{L_t^{p}\tilde{H}^{m-k+\f{1}{2}}}\lesssim \Lambda\big(\f{1}{c_0},|h|_{[\f{m}{2}]+1,\infty,t}\big)\|\nabla u\|_{L_t^{p}H_{co}^{m-1}}+\Lambda\big(\f{1}{c_0},\cA_{m,T}\big)|h|_{L_t^{p}\tilde{H}^{m-\f{1}{2}}},
\eeq
which, combined with \eqref{pi-3}, yields
\eqref{pi}-\eqref{pi4}.
Finally, for the estimate of \eqref{pi2}, we use the elliptic estimate \eqref{elliptic6} to obtain that:
\beqs
\ep^{\f{1}{2}}\|\pt \nabla^{\vp} \pi\|_{\hco^{m-3}}\lesssim \Lambda\big(\f{1}{c_0},|h|_{m-2,\infty,t}\big) (|\ep^{\f{1}{2}}\pt \theta^{b,1}|_{L_t^2\tilde{H}^{m-\f{5}{2}}}+|\ep^{\f{1}{2}}(\Delta^{\vp}v)^{b,2}|_{L_t^2\tilde{H}^{m-\f{7}{2}}})+\ep^{\f{1}{2}}\lae,
\eeqs
we thus obtain \eqref{pi2} by observing that:
\beqs
|\ep^{\f{1}{2}}\pt\theta^{b,1}|_{L_t^2\tilde{H}^{m-\f{7}{2}}}\lesssim\Lambda\big(\f{1}{c_0},|h|_{m-2,\infty,t}\big) \ep^{\f{1}{2}}\|\pt (u,\nabla^{\vp} u)\|_{\hco^{m-2}}+(T+\ep)^{\f{1}{2}}\lae.
\eeqs
\underline{Proof of \eqref{comtime}}. 
Finally, we estimate the commutator between 
the projection and the time derivative.
Set $\nabla^{\vp}\Psi_1=\bbq \pt^{\vp}u,$
then $$[\bbp,\pt^{\vp}]=-[\bbq,\pt^{\vp}]=\nabla^{\vp}(\Psi_1-\Psi).$$ By definition, $\Psi_1-\Psi$ solves the elliptic problem:
\beqs
\Delta^{\vp}(\Psi_1-\p_t^{\vp}\Psi)=0,\quad (\Psi_1-\p_t^{\vp}\Psi)|_{z=0}=\f{\p_t h}{\p_z\vp}\p_z \Psi,
\quad \p_z^{\vp}(\Psi_1-\p_t^{\vp}\Psi)|_{z=-1}=0.
\eeqs
It follows from 
\eqref{elliptic-useful} and the product estimate \eqref{product}
that:
\beq
\begin{aligned}
&\|\nabla^{\vp}(\Psi_1-\pt^{\vp}\Psi)\|_{L_t^2H_{co}^{m-1}}\\
&\lesssim  \Lambda\bigg(\f{1}{c_0},|h|_{m-2,\infty,t}+\bigg|\f{\p_t h}{\p_z\vp}\p_z\pi_1\bigg|_{L_t^{\infty}\tilde{H}^{\f{5}{2}}}\bigg)\bigg|(h, \f{\pt h}{\p_z\vp}\p_z\Psi)\bigg|_{L_t^2\tilde{H}^{m-\f{1}{2}}}\\ 
&\lesssim \Lambda\big(\f{1}{c_0},|h|_{m-2,\infty,t}+|\pt h|_{3,\infty,t}+\|\div^{\vp}u\|_{L_t^{\infty}H_{co}^2}\big)\\
&\qquad
\big(|\pt h|_{L_t^2\tilde{H}^{m-\f{1}{2}}}\il\nabla\Psi\il_{3,\infty,t}+\|(\nabla\Psi,\nabla^2\Psi)\|_{L_t^2H_{co}^{m-1}}|\pt h|_{m-3,\infty,t}\big).
\end{aligned}
\eeq
Combined with \eqref{sec-normal-Psi}, \eqref{psiLinfty}, \eqref{Linftynablapsi}, this yields the control of the first quantity in \eqref{comtime}. The second quantity can be controlled in a similar way, we omit the proof.
\end{proof}

\begin{lem}\label{lemhigh-v}
Suppose that $m\geq 7$ and \eqref{preassumption} holds, then we have the following high order energy estimate for $v$: for every $0<t\leq T,$
\beq\label{v-0}
\| v\|_{L_t^{\infty}H_{co}^{m-1}}^2+\|\nabla^{\vp}v\|_{\hco^{m-1}}^2
\leq \Lambda\big(\f{1}{c_0},|h|_{L_t^{\infty}\tilde{H}^{m-\f{1}{2}}}^2+Y_m^2(0)\big)Y^2_m(0)+(T+\ep)^{\f{1}{2}}\Lambda\big(\f{1}{c_0},\cN_{m,T}\big).
\eeq
\end{lem}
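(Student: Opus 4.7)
The strategy is to perform direct conormal energy estimates on the parabolic system \eqref{eqofv2} satisfied by $v=\mathbb{P}_t u$. For each multi-index $\alpha$ with $|\alpha|\leq m-1$, I would apply $Z^{\alpha}$ to \eqref{eqofv2}, take the scalar product with $Z^{\alpha}v$ against the measure $\,\d\cV_s$, and integrate over $[0,t]\times\mS$. Following the pattern of Lemma \ref{lemipp}, integration by parts on the transport term and on the viscous term $-\mu\Delta^{\vp}v$ produces the desired time norm $\tfrac{\bar\rho}{2}\int_{\mS}|Z^{\alpha}v|^2(t)\,\d\cV_t$, the dissipation $2\mu\int_0^t\!\int_{\mS}|Z^{\alpha}S^{\vp}v|^2\,\d\cV_s\d s$, two boundary contributions on $\{z=0\}$ and $\{z=-1\}$, and various commutator remainders.

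The crucial algebraic step is that the pressure term $\nabla^{\vp}\pi$ is naturally paired with the boundary contribution from the viscous part. After integrating by parts $Z^{\alpha}\nabla^{\vp}\pi$ and using $\div^{\vp}v=0$, the upper boundary contribution combines via \eqref{v-bdry-up} into
\[
-\int_0^t\!\!\int_{z=0} Z^{\alpha}\bigl[2\mu(\div^{\vp}u\,\mathrm{Id}-(\nabla^{\vp})^2\Psi)\bN\bigr]\cdot Z^{\alpha}v\,\d y\d s+\text{commutators},
\]
while on the bottom the lifted Neumann condition $\mu\p_z^{\vp}v_j=au_j$ produces $-a\int_{z=-1}|Z^{\alpha}u_\tau|^2\,\d y\d s$ plus a remainder, both bounded via the trace inequality \eqref{trace} and Young's inequality, the $\nabla^2\Psi\cdot\bN$ boundary factor being controlled through the elliptic estimates \eqref{sec-normal-Psi}. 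Once all boundary terms are absorbed (either into a small fraction of the dissipation or into $(T+\ep)^{\vartheta}\Lambda(1/c_0,\cN_{m,T})$), Korn's inequality \eqref{korn1}--\eqref{korn} converts the $S^{\vp}v$ dissipation into a genuine control of $\|\nabla^{\vp}v\|_{L_t^2H_{co}^{m-1}}$.

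The source terms $f,\nabla^{\vp}q,\bar\rho[\mathbb{P}_t,\p_t^{\vp}]u$ appearing on the right-hand side of \eqref{eqofv2} are paired with $Z^{\alpha}v$ and estimated directly via Cauchy--Schwarz using \eqref{es-f}, \eqref{q} and \eqref{comtime}. The commutators $[Z^{\alpha},\p_t^{\vp}]v$, $[Z^{\alpha},\nabla^{\vp}]$ and $[Z^{\alpha},\div^{\vp}S^{\vp}]v$ are controlled by the commutator estimates \eqref{comgrad}, \eqref{comgradT} together with Lemma \ref{exth} for the regularity gained on the extension $\vp$; all such terms produce factors of the form $\Lambda(1/c_0,\cA_{m,t})$ times either $\|\nabla u\|_{L_t^2H_{co}^{m-1}}$ or $|h|_{L_t^2\tilde{H}^{m-\frac12}}$, hence can be absorbed into $(T+\ep)^{\vartheta}\Lambda(1/c_0,\cN_{m,T})$ using \eqref{surface1}.

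The main obstacle will be the upper boundary integral $\int_{z=0}(\nabla^{\vp})^2\Psi\,\bN\cdot Z^{\alpha}v\,\d y\d s$: naively this requires $|h|_{L_t^2\tilde{H}^{m+\frac12}}$, which is only $\mathcal{O}(\ep^{-\frac12})$ and therefore not uniformly bounded. The resolution is to expand this boundary term carefully, isolating the top-order factor $\nabla\bN$ (hence $\nabla h$), and then to use the slightly well-prepared assumption through Lemma \ref{sigmainduction} and \eqref{sec-normal-Psi}: the prefactor multiplying $|h|_{\tilde{H}^{m+\frac12}}$ at top order is essentially $\|\div^{\vp}u\|_{L_t^{\infty}H_{co}^{1}}=\mathcal{O}(\ep^{\frac12})$, producing the balance $\ep^{\frac12}|h|_{L_t^2\tilde{H}^{m+\frac12}}\lesssim\cE_{m,T}$, which is admissible. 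All remaining pieces of that boundary integral involve lower-order derivatives of $\nabla^2\Psi$ and are tamed by trace inequality, the product estimate \eqref{rough-product-bdry}, and Young's inequality, the smoother factor $Z^{\alpha}v$ trace being bounded by $\delta\|\nabla^{\vp}v\|_{L_t^2H_{co}^{m-1}}^2+C_\delta\|v\|_{L_t^{\infty}H_{co}^{m-1}}^2$. Choosing $\delta$ small and collecting everything yields \eqref{v-0}.
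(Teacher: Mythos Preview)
Your overall strategy matches the paper's, but there are two genuine gaps in the sketch.

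First, the viscous commutator $\int_0^t\!\int_{\mS}[Z^{\alpha},\div^{\vp}]S^{\vp}v\cdot Z^{\alpha}v\,\d\cV_s\d s$ (the paper's $\cK_7$) cannot be handled by ``the commutator estimates \eqref{comgrad}''. Applying \eqref{comgrad} directly would produce $\|\p_z S^{\vp}v\|_{L_t^2H_{co}^{m-2}}$, i.e.\ a term involving $\nabla^2 v$, which is \emph{not} uniformly controlled at this regularity level (only $\ep^{\f12}\nabla^2 u$ is, cf.\ \eqref{sec-normal-u}). The paper avoids this loss of a normal derivative by a structured integration by parts: writing $[Z^{\alpha},\div^{\vp}]S^{\vp}v$ via $[Z^{\alpha},\p_z]$ and $[Z^{\alpha},\bN/\p_z\vp]$, then transferring the surviving $\p_z$ back onto $Z^{\alpha}v\,\p_z\vp$ so that only $\|\nabla v\|_{L_t^2H_{co}^{k}}$ and $\|\nabla v\|_{L_t^2H_{co}^{k-1}}$ appear (plus boundary terms treated like $\cK_3$). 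This is the decomposition $\cK_{71}+\cK_{72}+\cK_{73}$ in the paper and is an essential step you are missing.

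Second, you do not separate the case $Z^{\alpha}=(\ep\pt)^{|\alpha|}$ from the case where $Z^{\alpha}$ contains at least one spatial field. The paper treats the pure time-derivative case directly by elliptic estimates on $v=u-\nabla^{\vp}\Psi$ (no energy argument needed), and for the remaining indices uses the hypothesis $|\alpha'|\geq 1$ in two places: to write $Z^{\alpha}=\p_y Z^{\tilde{\alpha}}$ on $\{z=0\}$ so that the boundary pairing in $\cK_{31}$ becomes an $H^{\f12}\times H^{-\f12}$ duality, and to integrate by parts in a spatial variable in $\cK_{73}$ without creating boundary terms. Without this separation your boundary and commutator arguments do not close. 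Finally, note that the commutator bounds leave $\|\nabla v\|_{L_t^2H_{co}^{k-1}}^2$ on the right-hand side, so the proof closes by induction on $k=|\alpha|$ rather than by ``collecting everything'' in one shot; the leftover $\|v\|_{L_t^2H_{co}^{m-1}}$ is then absorbed via $\|v\|_{L_t^2H_{co}^{m-1}}\lesssim T^{\f12}\|v\|_{L_t^{\infty}H_{co}^{m-1}}$.
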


\begin{rmk}\label{rmkinitial}
By using the elliptic estimates \eqref{elliptic2} and \eqref{Linftynablapsi}, we have:
\beqs
\|\nabla^{\vp}\Psi(0)\|_{H_{co}^{m-1}}\lesssim \Lambda\big(\f{1}{c_0}, \tilde{Y}_{[\f{m}{2}]}(0)\big)(\|u(0)\|_{H_{co}^{m-1}}+|h(0)|_{\tilde{H}^{m-\f{1}{2}}})
\eeqs
where $\tilde{Y}_{[\f{m}{2}]}(0)=
\|(\div^{\vp}u)(0)\|_{H_{co}^{[\f{m}{2}]}(\mS)}+\sum_{|\alpha|\leq  [\f{m}{2}]+1}|(Z^{\alpha}h)(0)|_{L^{\infty}(\mathbb{R}^2)}\lesssim Y_m(0).$\\[5pt]
Since $v=u-\nabla^{\vp}\Psi,$ we thus get:
\beqs
\|(v,\nabla^{\vp}\Psi)(0)\|_{H_{co}^{m-1}}\lesssim \Lambda\big(\f{1}{c_0}, Y_{m}(0)\big)Y_{m}(0).
\eeqs
\end{rmk}
\begin{rmk}
By the control of normal derivative of the compressible part
\eqref{sec-normal-Psi}, \eqref{sigmainduction} and of the incompressible part \eqref{v-0},  one deduces that:
\beq\label{nor-L2}
\|\nabla^{\vp}u\|_{\hco^{m-1}}^2\lesssim \Lambda\big(\f{1}{c_0},|h|_{L_t^{\infty}\tilde{H}^{m-\f{1}{2}}}^2\big)Y_m^2(0)+(T+\ep)^{\f{1}{2}}\Lambda\big(\f{1}{c_0},\cN_{m,T}\big).
\eeq
\end{rmk}
\begin{proof}
Let $\alpha=(\alpha_0,\alpha'),|\alpha|=k\leq m-1.$ We can assume that
$Z^{\alpha}$ contains at least one spatial vector field (ie. $|\alpha'|\neq 0$), since $\|v\|_{L_t^{\infty}\cH^{m-1}}$ and $\|\nabla^{\vp}v\|_{L_t^2\cH^{m-1}}$ can be derived directly from the norms that have been bounded. Indeed, one has by elliptic estimates \eqref{elliptic-useful} and \eqref{elliptic3} that 
\beqs
\|v\|_{L_t^{\infty}\cH^{m-1}}
\lesssim \|(u,\nabla^{\vp}\Psi)\|_{L_t^{\infty}\cH^{m-1}}
\lesssim \|u\|_{L_t^{\infty}\cH^{m-1}}\Lambda\big(\f{1}{c_0},|h|_{m-2,\infty,t}\big)+(T+\ep)^{\f{1}{2}}\lae.
\eeqs
\beqs\begin{aligned}
\|\nabla v\|_{L_t^2\cH^{m-1}}&\lesssim \|(u,\nabla^{\vp}\Psi)\|_{L_t^{2}\cH^{m-1}}\lesim \Lambda(\f{1}{c_0},|h|_{m-2,\infty,t})\|\nabla u\|_{L_t^2\cH^{m-1}}+(T+\ep)^{\f{1}{2}}\lae.
\end{aligned}
\eeqs
Applying $Z^{\alpha}$ to $\eqref{eqofv2}_1,$ we obtain:
\begin{align*}
&\quad\bar{\rho}\pt^{\vp}Z^{\alpha}v-2\mu\div^{\vp}Z^{\alpha}S^{\vp}v+\nabla^{\vp}Z^{\alpha}\pi\\
&=-Z^{\alpha}(f+\nabla^{\vp}q+\bar{\rho}[\bbp,\pt^{\vp}]u)-[Z^{\alpha},\nabla^{\vp}]\pi+2\mu[Z^{\alpha},\div^{\vp}]S^{\vp}u-\bar{\rho}[Z^{\alpha},\pt^{\vp}]v.
\end{align*}
Performing standard energy estimates,
we obtain the energy identity:
\beq
\begin{aligned}
&\f{1}{2}\bar{\rho}\int_{\mS}|Z^{\alpha}v|^2(t)\d\cV_t+2\mu \int_{0}^t\int_{\mS}|Z^{\alpha}S^{\vp}v|^2\d\cV_s\d s+ a \int_0^t\int_{z=-1}|Z^{\alpha}v_{\tau}|^2\d y\d s\label{defck}\\
&=\colon\cK_0+\cK_1+\cdots \cK_8,
\end{aligned}
\eeq
where
\begin{align*}
    &\cK_0=\f{1}{2}\bar{\rho}\int_{\mS}|Z^{\alpha}v|^2(0)\,\d\cV_0,  \qquad\qquad\quad\qquad \cK_1=\f{1}{2}\bar{\rho}\int_0^t\int_{z=0}\pt h|Z^{\alpha}v|^2\,\d y\d s,\\
    & \cK_2=2\mu\int_{0}^t\int_{\mS}
    Z^{\alpha}S^{\vp}v\cdot[Z^{\alpha},\nabla^{\vp}]v\,\d\cV_s\d s, \quad \cK_3=\int_0^t\int_{z=0}Z^{\alpha}(2\mu S^{\vp}v-\pi\text{Id})\bN\cdot Z^{\alpha}v\,\d y\d s,\\
 &\cK_4=\int_{0}^t\int_{\mS} Z^{\alpha}\pi [\div^{\vp},Z^{\alpha}]v \,\d\cV_s\d s, \qquad\qquad
    \cK_5=-\int_{0}^t\int_{\mS}Z^{\alpha}v \cdot [Z^{\alpha},\nabla^{\vp}]\pi
   \,\d\cV_s\d s ,\\
   & \cK_6=-\bar{\rho}\int_{0}^t\int_{\mS} Z^{\alpha}v\cdot[Z^{\alpha},\pt^{\vp}]v \,\d\cV_s\d s, \qquad \quad \cK_7=2\mu\int_{0}^t\int_{\mS} [Z^{\alpha},\div^{\vp}]S^{\vp}v\cdot Z^{\alpha}v\,\d\cV_s\d s,\\
   &\cK_8=-\int_{0}^t\int_{\mS}Z^{\alpha}v \cdot\big(Z^{\alpha}(f+\nabla^{\vp}q+\bar{\rho}[\bbp,\pt^{\vp}]u)\big)\,\d\cV_s\d s. 
\end{align*}
By the trace inequality, 
\beq\label{lowbdryterm}
a \int_0^t\int_{z=-1}|Z^{\alpha}v_{\tau}|^2\d y\d s\geq-\delta \|\nabla v\|_{\hco^{k}}^2-C_{\delta}(\|\nabla v\|_{\hco^{k-1}}^2+\| v\|_{\hco^{k}}^2),
\eeq
we will choose $\delta$ sufficiently small in the end.
Our following task is to estimate $\cK_0-\cK_8$ one by one. By Remark \ref{rmkinitial}, we get that:
\beq\label{k0}
\cK_0\lesssim \Lambda\big(\f{1}{c_0}, Y_{m}^2(0)\big)Y_{m}^2(0). 
\eeq
Thanks to the trace inequality and Young's inequality, $\cK_1$ can be treated as:
\beq
\begin{aligned}
\cK_1 &\lesssim |\pt h|_{0,\infty,t}(\|\nabla Z^{\alpha}v\|_{\ltx}\|Z^{\alpha}v\|_{\ltx}+\|Z^{\alpha}v\|_{\ltx}^2)\\
&\leq  \delta \|\nabla v\|_{L_t^2H_{co}^k}^2+C_{\delta}\|\nabla v\|_{\hco^{k-1}}^2+ 
\Lambda\big(\f{1}{c_0},|\pt h|_{0,\infty,t}\big)\|v\|_{\hco^k}^2.
\end{aligned}
\eeq
For the term $\cK_2,$
to deal with the commutator term 
$[Z^{\alpha},\nabla^{\vp}]v,$ we apply \eqref{comgrad1} if 
$\alpha_0=0$ and \eqref{comgrad} if $\alpha_0\geq1$ and find that:
\beq\label{com-realuse}
\begin{aligned}
\|[Z^{\alpha},\nabla^{\vp}]v\|_{L^2}&\lesssim \Lambda\big(\f{1}{c_0},\il\nabla v\il_{1,\infty,t}+|(h,\ep\pt h)|_{m-2,\infty,t}\big)(|h|_{\htlde^{m-\f{1}{2}}}+|\ep h|_{\htlde^{m-\f{3}{2}}})\\
&+\Lambda\big(\f{1}{c_0},|(h,\ep\pt h)|_{m-2,\infty,t}\big)\|\nabla v\|_{\hco^{m-2}}\\
&\lesssim T^{\f{1}{2}}\lae+\Lambda\big(\f{1}{c_0},|(h,\ep\pt h)|_{m-2,\infty,t}\big)\|\nabla v\|_{\hco^{m-2}}.
\end{aligned}
\eeq
Note that by the estimate \eqref{Linftynablapsi}, we have:
\beqs
\begin{aligned}
\il\nabla v\il_{1,\infty,t}&\lesssim \il\nabla (u, \nabla^{\vp}\Psi)\il_{1\infty,t}\\
&\lesssim \Lambda\big(\f{1}{c_0}, \il\nabla u\il_{1\infty,t}+|h|_{4,\infty,t}+
\|\div^{\vp}u\|_{L_t^{\infty}H_{co}^2}\big)\lesssim \lae.
\end{aligned}
\eeqs
Therefore, by Young's inequality, one can control $\cK_2$ by:
\beq\label{k2}
\cK_2\leq \delta \|\nabla v\|_{\hco^k}^2+
\Lambda\big(\f{1}{c_0},|(h,\ep\pt h)|_{m-2,\infty,t}\big)
\|\nabla v\|_{\hco^{k-1}}^2+
T^{\f{1}{2}}\lae.
\eeq
For the boundary term $\cK_3,$ we use the boundary condition $\eqref{eqofv2}_2$ to split it into two terms:
\beqs
\begin{aligned}
\cK_3&=\int_0^t\int_{z=0}Z^{\alpha}\big(2\mu (\div^{\vp}u\text{Id}-\nabla^{\vp}\nabla^{\vp}\Psi)\bN\big)\cdot Z^{\alpha}v-[Z^{\alpha},\bN](2\mu S^{\vp}v-\pi\text{Id})\cdot Z^{\alpha}v \,\d y\d s\\
&=:\cK_{31}+\cK_{32}.
\end{aligned}
\eeqs
Since 
$\kappa_3$ vanishes if $\alpha_3\neq 0,$ we may assume that $Z^{\alpha}=\p_y Z^{\tilde{\alpha}}.$ It then follows by
duality that:
\begin{align*}
 \cK_{31}&\lesssim  |Z^{\alpha}v|_{L_t^2H^{\f{1}{2}}}|Z^{\tilde{\alpha}}\big(2\mu (\div^{\vp}u\text{Id}-\nabla^{\vp}\nabla^{\vp}\Psi)\bN\big)|_{L_t^2H^{\f{1}{2}}}
\end{align*}
 Thanks to 
product estimate \eqref{rough-product-bdry}, we obtain for $k\leq m-1,$
\begin{align*}
|Z^{\tilde{\alpha}}\big(2\mu (\div^{\vp}u\text{Id}-(\nabla^{\vp})^2)\bN\big)|_{L_t^2H^{\f{1}{2}}}&\lesssim |(\div^{\vp}u,(\nabla^{\vp})^2\Psi)|_{\htlde^{k-\f{1}{2}}}|h|_{L_t^{\infty}\tilde{H}^{[\f{k-1}{2}]+2^{+}}}\\
&\quad+
|h|_{\htlde^{k+\f{1}{2}}}
|(\div^{\vp}u,(\nabla^{\vp})^2\Psi)|_{L_t^{\infty}\tilde{H}^{[\f{k-1}{2}]+1^{+}}}\\
&\lesssim 
(\|\nabla\div^{\vp}u\|_{\hco^{m-2}}+\|\div^{\vp}u\|_{\hco^{m-1}})\Lambda\big(\f{1}{c_0},|h|_{L_t^{\infty}\tilde{H}^{m-\f{1}{2}}}+\|\nabla\Psi\|_{2,\infty,t}\big)\\
&\quad+T^{\f{1}{2}}\lae \big(|h|_{L_t^{\infty}\tilde{H}^{m-\f{1}{2}}}+\ep^{\f{1}{2}}|h|_{L_t^{\infty}\tilde{H}^{m+\f{1}{2}}}\big)\\
&\lesssim (T+\ep)^{\f{1}{2}}\lae.
\end{align*}
We remark that by the estimate \eqref{Linftynablapsi}, 
one has that
for $l\leq [\f{k-1}{2}]+1^{+}\leq [\f{m}{2}]^{+}\leq m-3$ (since $k\leq m-1, m\geq 7$), 
\beqs
\begin{aligned}
|(\nabla^{\vp})^2\Psi|_{L_t^{\infty}\tilde{H}^l}&\lesssim \|\nabla (\nabla^{\vp})^2\Psi\|_{L_t^{\infty}\tilde{H}^{l}}+\|(\nabla^{\vp})^2\Psi\|_{L_t^{\infty}\tilde{H}^{l}}\\
&\lesssim \big(
\|(\nabla\div^{\vp}u,\div^{\vp}u)\|_{L_t^{\infty}\tilde{H}^l}+|h|_{L_t^{\infty}\tilde{H}^{l+{5}/{2}}}\big)\Lambda\big(\f{1}{c_0},
\il\nabla^{\vp}\Psi\il_{[\f{m}{2}]-1,\infty,t}+|h|_{[\f{m}{2}]+2,\infty,t}\big)\\
&\lesssim \Lambda(\f{1}{c_0},\cN_{m,T}).
\end{aligned}
\eeqs
Therefore, by the trace inequality and Young's inequality, we get:
\beq\label{esofk31}
\cK_{31}\leq \delta \|\nabla v\|_{\hco^{k}}^2+ C_{\delta}\|\nabla v\|_{\hco^{k-1}}^2+
(T+\ep)^{\f{1}{2}}\lae. 
\eeq
For $\cK_{32},$ in order not to involve too many derivatives on the surface, 
we write it further as:
\beqs
\begin{aligned}
\cK_{32}&=-\int_0^t\int_{z=0}(2\mu S^{\vp}v-\pi\text{Id})Z^{\alpha}\bN \cdot Z^{\alpha}v +[Z^{\alpha},(S^{\vp}v-\pi\text{Id}),\bN] Z^{\alpha}v\,\d y\d s\\
&=:\cK_{321}+\cK_{322}.
\end{aligned}
\eeqs
By the definition \eqref{defpi} for 
$\pi$ we have that on the upper boundary,
\beq\label{pibdry}
\pi=\theta=-2\mu(\p_1u_1+\p_2u_2)-2\mu(\Pi(\p_1u\cdot\bN,\p_2u\cdot\bN,0)^{t})_3.
\eeq
 Moreover, 
 thanks to the boundary condition 
 \eqref{tanpz}, we can indeed express
 $\p_z^{\vp}v$ on the upper boundary.
On the one hand, we have the identity: 
\beq\label{nor-pzv}
\p_z^{\vp}v\cdot\bN=\div^{\vp}v-\p_1 v_1-\p_2 v_2=-(\p_1 v_1+\p_2 v_2).
\eeq
On the other hand, by the identity \eqref{tanpz}, one deduces:
\beq\label{tanpzv}
\begin{aligned}
|\bN|\Pi\p_z^{\vp} v&=|\bN|\Pi\p_z^{\vp} u-|\bN|\Pi\nabla^{\vp}\p_z^{\vp} \Psi\\
&=\Pi (\p_1u\cdot\bn,\p_2u\cdot\bn,0)^{t}-\Pi(\bn_1\p_1u+\bn_2\p_2 u)-|\bN|\Pi(\p_1,\p_2,0)^{t}\p_z^{\vp}\Psi,
\end{aligned}
\eeq
One thus has 
that:
\beqs
|(S^{\vp}v,\pi)^{b,1}|_{1,\infty,t}
\lesssim \Lambda\big(\f{1}{c_0},\il (v,\nabla^{\vp}\Psi )\il_{2,\infty,t}+|h|_{2,\infty,t}\big)
\lesssim \lae.
\eeqs
Therefore, by duality and the trace inequality \eqref{trace}, we obtain
\beq\label{ka321}
\begin{aligned}
\cK_{321}&\leq |2\mu S^{\vp}v-\pi\text{Id}|_{\infty,t}|Z^{\alpha}\bN|_{L_t^2H^{-\f{1}{2}}}|Z^{\alpha}v|_{L_t^2H^{\f{1}{2}}}\\
&\leq \delta\|\nabla v\|_{\hco^{k}}^2+ C_{\delta}\|\nabla v\|_{\hco^{k-1}}^2+(\|v\|_{\hco^{m-1}}^2+T|h|_{L_t^{\infty}\tilde{H}^{m-\f{1}{2}}}^2)\lae.
\end{aligned}
\eeq
Next, we can control $\cK_{322},$ in the following way:
\beqs
\begin{aligned}
\cK_{322}&\lesssim |Z^{\alpha}v|_{L_t^2L_y^2}\big(|h|_{L_t^2\tilde{H}^{m-1}}|(S^{\vp}v,\pi)|_{1,\infty,t}+ |(S^{\vp}v,\pi)|_{\htlde^{k-1}}
|h|_{m-2,\infty,t}\big).
\end{aligned}
\eeqs
By virtue of the boundary conditions 
\eqref{pibdry}-\eqref{tanpzv}, we obtain that:
\beqs
|(S^{\vp}v,\pi)|_{\htlde^{k-1}}\lesssim 
\Lambda\big(|h|_{m-2,\infty,t}+\il(v,\nabla^{\vp}\Psi)\il_{2,\infty,t}\big)\big(|(v,\nabla^{\vp}\Psi)|_{\htlde^{k}}+|h|_{\htlde^{k}}\big).
\eeqs
Combined with the trace inequality \eqref{trace}, Young's inequality and the elliptic estimate \eqref{sec-normal-Psi}, we find:
\begin{align*}
\cK_{322} 
&\leq \delta \|\nabla v\|_{\hco^k}^2+ C_{\delta} \|\nabla v\|_{\hco^{k-1}}^2
+\big(\|v\|_{\hco^{m-1}}^2+(T+\ep)^{\f{1}{2}}\big)\lae.
\end{align*}
This estimate, together with \eqref{ka321}, \eqref{esofk31}, gives (with possibly another $C_{\delta}$)
\beq\label{k3}
\begin{aligned}
\cK_{3}&\leq 3\delta \|\nabla v\|_{\hco^k}^2
+C_{\delta} \|\nabla v\|_{\hco^{k-1}}^2+\big(\|v\|_{\hco^{k}}^2
+(T+\ep)^{\f{1}{2}}\big)\lae.
\end{aligned}
\eeq
For the term $\cK_4$, since $Z^{\alpha}$ contains at least one spatial derivative,
we can estimate it as:
\beqs
\cK_4\lesssim \|\nabla \pi\|_{\hco^{k-1}}\bigg(\|\nabla v\|_{\hco^{k-1}}\Lambda\big(\f{1}{c_0},|h|_{m-2,\infty,t}\big)+|h|_{\htlde^{k+\f{1}{2}}}\Lambda\big(\f{1}{c_0},\il\nabla v\il_{1,\infty,t}+|h|_{m-2,\infty,t}\big)\bigg).
\eeqs
We then apply \eqref{pi} and the elliptic estimate \eqref{sec-normal-Psi} to estimate $\nabla^{\vp}\pi$ as:
\beqs
\begin{aligned}
\|\nabla^{\vp}\pi\|_{\hco^{k-1}}&\lesssim \Lambda\big(\f{1}{c_0}, |h|_{m-2,\infty,t}\big)\|\nabla u\|_{\hco^k}
+T^{\f{1}{2}}\lae\\
&\lesssim \Lambda\big(\f{1}{c_0}, |h|_{m-2,\infty,t}\big)\|\nabla v\|_{\hco^k}+(T+\ep)^{\f{1}{2}}\lae.
\end{aligned}
\eeqs
Therefore, by Young's inequality, we get:
\beqs
\cK_4\leq \delta \|\nabla v\|_{\hco^k}^2
+\Lambda\big(\f{1}{c_0}, |h|_{m-2,\infty,t}^2\big)
\|\nabla v\|_{\hco^{k-1}}^2+(T+\ep)^{\f{1}{2}}\lae. 
\eeqs
Similarly, for $\cK_5,$ by applying \eqref{fpzphi1}, \eqref{pi}, \eqref{pi1}, we obtain:
\beqs
\begin{aligned}
\cK_5&\lesssim \|v\|_{\hco^{k}}\bigg(\Lambda\big(\f{1}{c_0},|h|_{m-2,\infty,t}\big)\|\nabla \pi\|_{\hco^{k}}+\Lambda\big(\f{1}{c_0},\il \nabla \pi\il_{1,\infty,t}+|h|_{m-2,\infty,t}\big)|h|_{\htlde^{m-\f{1}{2}}}\bigg)\\
&\lesssim \|v\|_{\hco^{k}}
\bigg(\Lambda\big(\f{1}{c_0},|h|_{m-2,\infty,t}\big)\|\nabla v\|_{\hco^{k}}+(T+\ep)^{\f{1}{2}}\lae\bigg).
\end{aligned}
\eeqs
Combined with the Young's inequality, this yields:
\beq
\cK_5\leq \delta \|\nabla v\|_{\hco^k}^2 +C_{\delta}
\Lambda\big(\f{1}{c_0},|h|_{m-2,\infty,t}^2\big)\|v\|_{\hco^k}^2+(T+\ep)\lae.
\eeq
For the term $\cK_6,$ we use similar arguments as in \eqref{com-realuse} to deal with the commutator term:
\beqs
\big\|\big[Z^{\alpha},\f{\pt \vp}{\p_z\vp}\p_z\big] v\big\|_{\ltx}\lesssim \big(\|\nabla v\|_{\hco^{k-1}}+ (T+\ep)^{\f{1}{2}}\big)\lae.
\eeqs
Therefore, we control $\cK_6$ by the Cauchy-Schwarz inequality to get:
\beq\label{k6}
\begin{aligned}
\cK_6&\leq \|Z^{\alpha}v\|_{\ltx}\big\|\big[Z^{\alpha},\f{\pt \vp}{\p_z\vp}\p_z \big] v\big\|_{\ltx}\\
&\lesssim \|\nabla v\|_{\hco^{k-1}}^2+\big(\| v\|_{\hco^{m-1}}^2+(T+\ep)^{\f{1}{2}}\big)\lae.
\end{aligned}
\eeq
We are now ready to estimate $\cK_7.$ In order not to lose normal derivative, we split it into three terms:
\beqs
\cK_7=\cK_{71}+\cK_{72}+\cK_{73}.
\eeqs
with 
\begin{align*}
  &\cK_{71}= 2\mu \int_0^t\int_{\mS}[Z^{\alpha},\p_z]\big(\f{1}{\p_z\vp}S^{\vp}v\bN\big)\cdot Z^{\alpha}v \,\d\cV_s\d s,\\
  &\cK_{72}= 2\mu \int_0^t\int_{\mS}
  \bigg(\p_z Z^{\alpha}\big(\f{1}{\p_z\vp}S^{\vp}v\bN\big)-\big(\p_z Z^{\alpha}S^{\vp}v\big)\f{\bN}{\p_z\vp}\bigg) \cdot Z^{\alpha}v\,
  \d\cV_s\d s,\\
 &\cK_{73}=-2\mu\int_0^t\int_{\mS} Z^{\alpha}\bigg (S^{\vp}v\p_z\big(\f{\bN}{\p_z\vp}\big)\bigg)\cdot Z^{\alpha}v \,\d\cV_s\d s.
\end{align*}
To deal with $\cK_{71}$, we can use the identity \eqref{identity-com-nor} to integrate by parts in space. By doing so, we are led to control the following type of terms (up to some smooth functions that depends only on $\phi$ and its derivatives)
\beqs
\int_0^t\int_{\mS}Z^{\gamma}\big(\f{1}{\p_z\vp}S^{\vp}v\bN\big)\p_z (Z^{\alpha}v \p_z\vp)\, \d x\d s,\quad \int_0^t\int_{\p\mS} Z^{\gamma}\big(\f{1}{\p_z\vp}S^{\vp}v\bN\big)Z^{\alpha}v \p_z\vp\, \d y\d s,\quad |\gamma|\leq k-1.
\eeqs
The first type of term can be controlled easily by: $$\delta \|\nabla v\|_{\hco^{k}}^2+C_{\delta}\Lambda\big(\f{1}{c_0}, |h|_{m-2,\infty,t}^2\big)\|\nabla v\|_{\hco^{k-1}}^2
+T\lae,$$
while the second type of terms can be bounded by: 
\beqs
\begin{aligned}
&|v|_{\htlde^k}\big(|S^{\vp}v|_{\htlde^{k-1}}+T^{\f{1}{2}}\big)\lae\\
&\lesssim
|v|_{\htlde^k}\big(|(v,\nabla^{\vp}\Psi)|_{\htlde^k}+
T^{\f{1}{2}}\big)\lae\\
&\leq \delta \|\nabla v\|_{\hco^k}^2+ 
\big(\|v\|_{\hco^k}^2+(T+\ep)^{\f{1}{2}}\big)\lae.
\end{aligned}
\eeqs
Hence, we get that:
\beq\label{k71}
\begin{aligned}
\cK_{71}&\leq 2\delta \|\nabla v\|_{\hco^k}^2+C_{\delta}\Lambda\big(\f{1}{c_0},|h|_{m-2,\infty,t}^2\big)\|\nabla v\|_{\hco^{k-1}}^2\\
&\qquad +\big(\|v\|_{\hco^k}^2+(T+\ep)^{\f{1}{2}}\big)\lae.
\end{aligned}
\eeq
For $\cK_{72},$ we use again integration by parts to split it into three terms:
$\cK_{72}=\cK_{721}+\cK_{722}+\cK_{723},$ with
\begin{align*}
 &\cK_{721}=-2\mu\int_0^t\int_{\mS}\big[Z^{\alpha},\f{\bN}{\p_z\vp}\big]S^{\vp}v\cdot \p_z(Z^{\alpha}v\p_z\vp)\,\d x\d s,\\
 &\cK_{722}=2\mu\int_0^t\int_{\mS} Z^{\alpha}S^{\vp}v\cdot \p_z\big(
 \f{\bN}{\p_z\vp}\big) Z^{\alpha}v \,\d x \d s,\\
 &\cK_{723}=2\mu \int_0^t\int_{\p \mS}\big[Z^{\alpha}, \f{\bN}{\p_z\vp}\big]S^{\vp}v\cdot Z^{\alpha}v\p_z\vp\, \d y\d s.
\end{align*}
In view of the expressions of these three terms, one can show by the commutator estimate \eqref{crudecom} that 
\beq\label{k72}
\begin{aligned}
\cK_{72}&\leq \delta \|\nabla v\|_{\hco^k}^2
+C_{\delta}\Lambda\big(\f{1}{c_0}, |h|_{m-2,\infty,t}^2\big) \|\nabla v\|_{\hco^{k-1}}^2\\
&\quad+\big(\|v\|_{\hco^{k}}^2+(T+\ep)^{\f{1}{2}}\big)\lae.
\end{aligned}
\eeq
Note that the boundary term $\cK_{723}$ can be controlled in 
a similar way as $\cK_{32}.$ We thus skip the details.

For $\cK_{73},$ to avoid losing regularity on the surface, we use the assumption that $|\alpha'|\geq 1$ to integrate by parts in space.
By doing so, we find that it can be bounded as:
\beq\label{k73}
\cK_{73}\leq  \delta \|\nabla v\|_{\hco^k}^2+C_{\delta}\Lambda\big(\f{1}{c_0},|h|_{m-2,\infty,t}^2\big)\|\nabla v\|_{\hco^{k-1}}^2 +(T+\ep)^{\f{1}{2}}\lae.
\eeq
We remark that there is no boundary contribution in the process of integration by parts since 
the spatial vector fields are tangent to the boundary.
Collecting \eqref{k71}-\eqref{k73}, we finally find that:
\beq\label{k7}
\begin{aligned}
\cK_{7}&\leq 4\delta\|\nabla v\|_{\hco^k}^2
+C_{\delta}\Lambda\big(\f{1}{c_0}, |h|_{m-2,\infty,t}^2\big) \|\nabla v\|_{\hco^{k-1}}^2\\
&\quad+\big( \|v\|_{\hco^{m-1}}^2+(T+\ep)^{\f{1}{2}}\big)\lae.
\end{aligned}
\eeq
It remains to treat the last term $\cK_{8}.$  By \eqref{es-f}
\eqref{q}, \eqref{comtime}, we have:
\beq\label{k8}
\begin{aligned}
\cK_8&\lesssim \|v\|_{\hco^k} (\|(f,\nabla^{\vp}q)\|_{\hco^k}+\|[\bbp,\p_t^{\vp}]u\|_{\hco^k})\\
&\lesssim \|v\|_{\hco^{m-1}} \Lambda\big(\f{1}{c_0},\cN_{m,T}\big).
\end{aligned}
\eeq
Gathering \eqref{lowbdryterm}-\eqref{k2}, \eqref{k3}-\eqref{k6}, \eqref{k7}, \eqref{k8}, we find by using Korn's inequality \eqref{korn} and by choosing $\delta$ small enough that for any $0\leq |\alpha|=k\leq m-1,$
\beqs
\begin{aligned}
\|v\|_{L_t^{\infty}H_{co}^k}^2+\|\nabla^{\vp}v\|_{L_t^2H_{co}^k}^2&\lesssim Y_{m}^2(0)+ \Lambda\big(\f{1}{c_0},|h|_{m-2,\infty,t}^2\big)\|\nabla^{\vp}v\|_{L_t^2H_{co}^{k-1}}^2\\
&+(\|v\|_{\hco^{m-1}}+(T+\ep)^{\f{1}{2}})\lae.
\end{aligned}
\eeqs
Therefore, by induction (on $k$), we get (up to changing possibly the polynomial)
\beq\label{sec10-42}
\begin{aligned}
\|v\|_{L_t^{\infty}H_{co}^{m-1}}^2+\|\nabla^{\vp}v\|_{L_t^2H_{co}^{m-1}}^2
&\lesssim (Y^2_m(0)
+\|\nabla v\|_{\ltx}^2)\Lambda\big(\f{1}{c_0},|h|_{L_t^{\infty}\tilde{H}^{m-\f{1}{2}}}^2\big)\\
&+(\|v\|_{\hco^{m-1}}+(T+\ep)^{\f{1}{2}})\lae.
\end{aligned}
\eeq
By \eqref{psiLinfty}, we can extract an extra $T^{\f{1}{2}}$ from $\|v\|_{\hco^{m-1}}.$ More precisely, we obtain: 
\beqs
\begin{aligned}
\|v\|_{\hco^{m-1}}\lesssim \|(u,\nabla^{\vp}\Psi)\|_{\hco^{m-1}}&\lesssim T^{\f{1}{2}}\|(u,\nabla^{\vp}\Psi)\|_{L_t^{\infty}H_{co}^{m-1}}\lesssim T^{\f{1}{2}}\lae.
\end{aligned}
\eeqs
Moreover, thanks to the elliptic estimate \eqref{elliptic1.5} and the definition $v=\bbp u=u-\nabla^{\vp}\Psi,$ we also have:
\beqs
\|\nabla v\|_{\ltx}\leq \|\nabla u\|_{\ltx}\Lambda\big(\f{1}{c_0},|h|_{3,\infty,t}\big).
\eeqs
Inserting the above two estimates and  \eqref{EI-T1} into \eqref{sec10-42}, we finally arrive at \eqref{v-0}. 
\end{proof}

In the following lemma, we prove some estimates for $\ep^{\f{1}{2}}\pt v,$ which is useful to the estimate for $\ep^{\f{1}{2}}\pt u$ later.
\begin{lem}\label{lemloworder-v}
Under the assumption \eqref{preassumption}, 
the following estimate for $v$ holds:
\beq\label{lower-v}
\begin{aligned}
&\|\ep^{\f{1}{2}} \pt v\|_{L_t^{\infty}H_{co}^{m-2}}^2+\|\ep^{\f{1}{2}} \pt \nabla v\|_{\hco^{m-2}}^2\\
&\lesssim 
\Lambda\big(\f{1}{c_0},|h|_{L_t^{\infty}\tilde{H}^{m-\f{1}{2}}}^2+Y_m^2(0)\big)Y^2_m(0)+(T+\ep)^{\f{1}{2}}\Lambda\big(\f{1}{c_0},\cN_{m,T}\big).
\end{aligned}
\eeq

\end{lem}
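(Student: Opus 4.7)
The plan is to apply $\ep^{\f{1}{2}}\pt Z^{\alpha}$ with $|\alpha|\le m-2$ to the $v$-equation \eqref{eqofv2} and mimic the energy method of Lemma \ref{lemhigh-v}, trading one conormal derivative for one weighted time derivative. After multiplying by $\ep^{\f{1}{2}}\pt Z^{\alpha} v$ and integrating over $[0,t]\times\mS$ with respect to $\d\cV_s$, the diffusion produces, via integration by parts and Korn's inequality \eqref{korn}, a coercive contribution $\mu\|\ep^{\f{1}{2}}\pt\nabla^{\vp}Z^{\alpha}v\|_{L_t^2L^2}^2$ up to a boundary trace on $\{z=-1\}$ treated as in \eqref{lowbdryterm} and the $\{z=0\}$ term discussed below. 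The initial contribution is absorbed by $\Lambda(\f{1}{c_0},Y_m(0))Y_m^2(0)$ using the $\ep^{\f{1}{2}}\pt(\sigma,u)(0)$ bound in \eqref{initialnorm} together with an elliptic estimate of $\bbp$ at $t=0$ analogous to Remark \ref{rmkinitial}.

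For the right-hand side of \eqref{eqofv2}, the source terms $\ep^{\f{1}{2}}\pt Z^{\alpha}(f+\nabla^{\vp}q+\bar{\rho}[\bbp,\pt^{\vp}]u)$ are already bounded in $L_t^2L^2$ by $\Lambda(\f{1}{c_0},\cN_{m,T})$ through \eqref{es-f}, \eqref{q}, \eqref{comtime}. The commutators $[\ep^{\f{1}{2}}\pt Z^{\alpha},\nabla^{\vp}]\pi$, $[\ep^{\f{1}{2}}\pt Z^{\alpha},\div^{\vp}]S^{\vp}v$ and $[\ep^{\f{1}{2}}\pt Z^{\alpha},\pt^{\vp}]v$ are controlled by \eqref{comgradT} together with the surface bound \eqref{surface3}; the restriction $|\alpha|\le m-2$ guarantees that every resulting norm either belongs to $\cE_{high,m,T}$ or carries an extra $(T+\ep)^{\f{1}{2}}$ from time integration. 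The pressure term is integrated by parts against $\div^{\vp}$ and reduces, since $\div^{\vp}v=0$, to a commutator of the same type plus a boundary term at $z=0$ controlled by \eqref{pi2}, the boundary expression \eqref{pibdry} for $\pi|_{z=0}$, and the product estimate \eqref{rough-product-bdry}.

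The main obstacle is the upper boundary contribution
$$\int_0^t\!\!\int_{z=0}\ep^{\f{1}{2}}\pt Z^{\alpha}\big((2\mu S^{\vp}v-\pi\,\text{Id})\bN\big)\cdot \ep^{\f{1}{2}}\pt Z^{\alpha}v\,\d y\,\d s,$$
which via \eqref{v-bdry-up} amounts to bounding $\ep^{\f{1}{2}}\pt$ of $(\div^{\vp}u\,\text{Id}-(\nabla^{\vp})^{2}\Psi)\bN$ on $\{z=0\}$. Naively, the factor $(\nabla^{\vp})^{2}\Psi$ costs one extra normal derivative of $\Psi$, hence one extra derivative of $h$ beyond what the energy norm provides. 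The remedy mirrors the treatment in Lemma \ref{lemhigh-v}: split off the commutator with $\bN$ using \eqref{rough-product-bdry}, bound $\ep^{\f{1}{2}}\pt\nabla^{\vp}\Psi$ and $\ep^{\f{1}{2}}\pt\nabla\nabla^{\vp}\Psi$ by the elliptic estimates \eqref{ptpsiL2} and \eqref{psiLinfty}, and feed the resulting $\|\ep^{\f{1}{2}}\pt\div^{\vp}u\|_{L_t^2H_{co}^{m-2}\cap L_t^{\infty}H_{co}^{m-3}}$ through the induction identity \eqref{eq-graddiv}, combined with \eqref{EI-T1} and the uniform compressible-part bound of Lemma \ref{sigmainduction}, to close without loss.

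Putting these pieces together, Young's inequality absorbs the $\delta\|\ep^{\f{1}{2}}\pt\nabla^{\vp}Z^{\alpha}v\|_{L_t^2L^2}^2$ contributions on the left-hand side; summing over $|\alpha|\le m-2$ and inducting on the number of conormal spatial derivatives as in the derivation of \eqref{sec10-42}, one obtains a closed inequality of the form
$$\|\ep^{\f{1}{2}}\pt v\|_{L_t^{\infty}H_{co}^{m-2}}^{2}+\|\ep^{\f{1}{2}}\pt\nabla v\|_{L_t^2H_{co}^{m-2}}^{2}\lesssim \Lambda\!\big(\tfrac{1}{c_0},|h|_{L_t^{\infty}\tilde{H}^{m-\f{1}{2}}}^{2}+Y_m^2(0)\big)Y_m^2(0)+(T+\ep)^{\f{1}{2}}\Lambda\!\big(\tfrac{1}{c_0},\cN_{m,T}\big),$$
which is precisely \eqref{lower-v}.
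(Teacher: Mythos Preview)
Your proposal is correct and follows essentially the same route as the paper: apply $\ep^{\f{1}{2}}\pt Z^{\beta}$ with $|\beta|\le m-2$ to \eqref{eqofv2}, run the energy identity parallel to \eqref{defck} (the paper writes the resulting terms as $\tilde{\cK}_0,\dots,\tilde{\cK}_8$), absorb the diffusion by Korn, treat the upper boundary term via \eqref{v-bdry-up} together with the $\Psi$-elliptic estimates \eqref{ptpsiL2}--\eqref{psiLinfty}, and close by induction after invoking Lemma~\ref{sigmainduction}. One small correction: the commutators $[\ep^{\f{1}{2}}\pt Z^{\beta},\nabla^{\vp}]$ for general $Z^{\beta}$ are not covered by \eqref{comgradT} (which is for pure time derivatives $Z_0^k\pt$); the paper instead derives an ad~hoc estimate (displayed as \eqref{com-format} inside the proof) directly from the decomposition $[Z^{\beta}\pt,\p_j^{\vp}]f=Z^{\beta}(\pt(\bN_j/\p_z\vp)\p_z f)+[Z^{\beta},\bN_j/\p_z\vp]\pt\p_z f+(\bN_j/\p_z\vp)[Z^{\beta},\p_z]\pt f$, which is what you should cite there.
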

\begin{proof}
The proof of this Lemma is very similar to the previous one, we thus only sketch its proof. We have by the elliptic estimate \eqref{elliptic3.3} that:
\beqs
\begin{aligned}
&\|\ep^{\f{1}{2}}\pt v
\|_{L_t^{\infty}\cH^{m-2}}+\|\ep^{\f{1}{2}}\pt\nabla v\|_{L_t^2\cH^{m-2}}\\
&\lesssim \|\ep^{\f{1}{2}}\pt (u,\nabla^{\vp}\Psi)\|_{L_t^{\infty}\cH^{m-2}}+\|\ep^{\f{1}{2}}\pt\nabla( u,\nabla^{\vp}\Psi)\|_{L_t^2\cH^{m-2}}\\
&\lesssim \Lambda\big(\f{1}{c_0},|h|_{m-2,\infty,t}\big) \big(\|\ep^{\f{1}{2}}\pt u\|_{L_t^{\infty}\cH^{m-2}}+\|\ep^{\f{1}{2}}\pt\nabla u\|_{L_t^2\cH^{m-2}}+\|\ep^{\f{1}{2}}\pt\div^{\vp}u\|_{L_t^{2}\cH^{m-2}}\big)\\
&\qquad +(T+\ep)^{\f{1}{2}} \Lambda\big(\f{1}{c_0},\cA_{m,t}\big)\big(|\pt h|_{L_t^{\infty}\tilde{H}^{m-\f{3}{2}}}+|(h,\ep^{\f{1}{2}}\pt h)|_{L_t^{\infty}\tilde{H}^{m-\f{1}{2}}}\big).
\end{aligned}
\eeqs
For any multi-index $\beta$ with $|\beta|=k\leq m-2,$ direct energy estimates for $v$ yield:
\beq\label{EI-v-low}
\begin{aligned}
&\f{1}{2}\bar{\rho}\ep\int_{\mS}|Z^{\beta} \pt v|^2(t)\,\d\cV_t+2\mu\ep \int_{0}^t\int_{\mS}|Z^{\beta} \pt S^{\vp}v|^2\,\d\cV_s\d s+ a\ep \int_0^t\int_{z=-1}|Z^{\beta} \pt v_{\tau}|^2\,\d y\d s\\
&=\colon\tilde{\cK}_0+\tilde{\cK}_1+\cdots \tilde{\cK}_8,
\end{aligned}
\eeq
where $\tilde{\cK}_0-\tilde{\cK}_8$ are terms analogues to ${\cK}_0-{\cK}_8$
defined in \eqref{defck} 
in which $Z^{\alpha}$ is replaced by $\ep^{\f{1}{2}}Z^{\beta}\pt.$ 

At first, thanks to the trace inequality \eqref{trace}, Korn's inequality \eqref{korn} and Young's inequality, we have:
\beq\label{lowbdryterm1}
a\ep \int_0^t\int_{z=-1}|Z^{\beta}\pt v_{\tau}|^2\,\d y\d s\geq-\delta\ep \|Z^{\beta}\pt S^{\vp} v\|_{\ltx}^2-C_{\delta}(\ep\|\pt\nabla v\|_{\hco^{m-3}}^2+\ep\|\pt v\|_{\hco^{m-2}}^2).
\eeq
The remaining task is thus to estimate $\tilde{\cK}_1-\tilde{\cK}_8.$ We assume that $Z^{\beta}$ contains at least one spatial conormal derivative $Z_i (i=1,2,3).$ \\[5pt]
\underline{$\tilde{\cK}_1:$} 
Similar to the proof of \eqref{lowbdryterm1}, we have by the trace inequality \eqref{trace}, Young's inequality and Korn's inequality \eqref{korn} that:
\beq\label{tck1}
\begin{aligned}
\tilde{\cK}_1&=\f{1}{2}\ep\int_0^t\int_{z=0}\pt h|Z^{\beta}\pt v|^2\,\d y\d s\\
&\leq \delta \ep\|Z^{\beta}\pt S^{\vp} v\|_{\ltx}^2 + C_{\delta}\lca (T\ep\|Z^{\beta}\pt v\|_{L_t^{\infty}L^2}^2+\ep\|\pt\nabla v\|_{L_t^2H_{co}^{k-1}}^2).
\end{aligned}
\eeq
\underline{$\tilde{\cK}_2:$} 
By Young's inequality, 
$\cK_2$ can be controlled similarly:
\beqs
\begin{aligned}
\tilde{\cK}_2&=2\mu\ep\int_0^t\int_{\mS} Z^{\beta}\pt S^{\vp}v \cdot [Z^{\beta}\pt,\nabla^{\vp}]v\, \d\cV_s\d s\leq \delta \ep \|Z^{\beta}\pt S^{\vp}v\|_{\ltx}^2+C_{\delta}\ep\|[Z^{\beta}\pt,\nabla^{\vp}]v\|_{\ltx}^2
\end{aligned}
\eeqs
Since 
\beqs
[Z^{\beta}\pt,\p_j^{\vp}]f=Z^{\beta}\big(\pt\big(\f{\bN_j}{\p_z\vp}\big)\cdot \p_z f\big)+\big[Z^{\beta}, \f{\bN_j}{\p_z\vp}\big]\pt \p_z f+\f{\bN_j}{\p_z\vp}[Z^{\beta},\p_z]\pt\p_z f,\, j=1,2,3,
\eeqs
we can use the fact that $|\beta|=k\leq m-2$ to get that:
\beq\label{com-format}
\begin{aligned}
&\ep^{\f{1}{2}}\|[Z^{\beta}\pt,\p_j^{\vp}]f\|_{\ltx}\lesssim \Lambda\big(\f{1}{c_0},|h|_{m-2,\infty,t}\big)\ep^{\f{1}{2}}\|\pt\nabla f\|_{L_t^2H_{co}^{k-1}}\\
& +\Lambda\big(\f{1}{c_0}, \il(\text{Id},\ep^{\f{1}{2}}\pt)\p_z f\il_{0,\infty,t}+|(h,\pt h)|_{m-3,\infty,t}\big)(\ep^{\f{1}{2}}\|\p_z f\|_{\hco^{k}}+|(h,\ep^{\f{1}{2}}\pt h)|_{L_t^2\tilde{H}^{k+\f{1}{2}}}).
\end{aligned}
\eeq
We thus obtain that:
\beqs
\tilde{\cK}_2\leq \delta \ep \|Z^{\beta}\pt S^{\vp}v\|_{\ltx}^2+C_{\delta}\Lambda\big(\f{1}{c_0},|h|_{m-2,\infty,t}\big)\|\ep^{\f{1}{2}}\pt\nabla v\|_{L_t^2H_{co}^{k-1}}+(T+\ep)\lae.
\eeqs
\underline{$\tilde{\cK}_3:$} Regarding the estimate of 
$$\tilde{\cK}_3=\ep\int_0^t\int_{z=0}Z^{\beta}\pt(2\mu S^{\vp}v-\pi\text{Id})\bN \cdot Z^{\beta}\pt v\,\d y\d s,$$
as we did for $\cK_3,$ we write:
\beqs
\begin{aligned}
&Z^{\beta}\pt(2\mu S^{\vp}v-\pi\text{Id})\bN=2\mu Z^{\beta}\pt
\big((\div^{\vp}u \text{Id}-(\nabla^{\vp})^2\Psi)\bN\big)+[Z^{\beta}\pt,\bN](2\mu S^{\vp}v-\pi\text{Id})\\
&=2\mu Z^{\beta}\pt(\div^{\vp}u \text{Id}-(\nabla^{\vp})^2\Psi)\bN +\ep^{\f{1}{2}} [Z^{\beta}\pt, \bN](2\mu(\div^{\vp}u \text{Id}-(\nabla^{\vp})^2\Psi)+2\mu S^{\vp}v-\pi\text{Id}).
\end{aligned}
\eeqs
By using the trace inequality \eqref{trace}
and Lemma \ref{lemelliptic}, we get in a  similar way as for \eqref{com-format} that:
\beqs
\begin{aligned}
&\ep^{\f{1}{2}}\big|Z^{\beta}\pt(\div^{\vp}u \text{Id}-(\nabla^{\vp})^2\Psi)\bN\big|_{L_t^2H_y^{-\f{1}{2}}}\\
&\lesssim |h|_{2,\infty,t}\|\ep^{\f{1}{2}}\pt (\div^{\vp}u,(\nabla^{\vp})^2\Psi)\|_{\hco^{m-2}}+\|\ep^{\f{1}{2}}\pt \nabla(\div^{\vp}u,(\nabla^{\vp})^2\Psi)\|_{\hco^{m-3}}\\
&\lesssim \Lambda\big(\f{1}{c_0},|h|_{m-2,\infty,t}\big)\big(\|\ep^{\f{1}{2}}\pt\nabla\div^{\vp}u\|_{\hco^{m-3}}+\|\ep^{\f{1}{2}}\pt\div^{\vp}u\|_{\hco^{m-2}}\big)+\ep^{\f{1}{2}}\lae.
\end{aligned}
\eeqs
Moreover, by the boundary conditions \eqref{pibdry}-\eqref{tanpzv}, we have
\beqs
\begin{aligned}
&\ep^{\f{1}{2}}\big|[Z^{\beta}\pt,\bN](2\mu(\div^{\vp}u \text{Id}-(\nabla^{\vp})^2\Psi)+2\mu S^{\vp}v-\pi\text{Id})\big|_{L_t^2L_y^2}\\
&\lesssim |\ep^{\f{1}{2}}\pt (S^{\vp}v, \pi, \div^{\vp}u, (\nabla^{\vp})^2\Psi)^{b,1} |_{\htlde^{k-1}}\Lambda\big(\f{1}{c_0},|h|_{k,\infty,t}\big)\\
&+\Lambda\big(\f{1}{c_0}, |(\text{Id},\ep^{\f{1}{2}}\pt,Z)(S^{\vp}v,\pi,\div^{\vp}u,(\nabla)^2\Psi)^{b,1}|_{0,\infty,t}+|\pt h|_{k-1,\infty,t}\big)\cdot\\
&\qquad\qquad\big(\ep^{\f{1}{2}}|(S^{\vp}v,\pi,\div^{\vp}u,(\nabla)^2\Psi)|_{\htlde^{k}}+|(h,\ep^{\f{1}{2}}\pt h)|_{L_t^2\tilde{H}^{k+1}})\\
&\lesssim \big(|\ep^{\f{1}{2}}\pt v|_{\htlde^{k}}+
\|\ep^{\f{1}{2}}\pt (\div^{\vp} u,\nabla\div^{\vp} u)\|_{L_t^2H_{co}^{k-1}}\big)\Lambda\big(\f{1}{c_0},|h|_{k,\infty,t}\big)+(T+\ep)^{\f{1}{2}}\lae.
\end{aligned}
\eeqs
Therefore, by duality, the Cauchy-Schwarz inequality and Young's inequality \eqref{korn}, we obtain that:
\beq
\begin{aligned}
\tilde{\cK}_3 
&\leq \delta \|\ep^{\f{1}{2}}Z^{\beta}\pt \nabla v\|_{\hco^{k}}^2+(T+\ep)
\lae. \\
&+ C_{\delta}
\Lambda\big(\f{1}{c_0},|h|_{m-2,\infty,t}^2\big)\big(\|\ep^{\f{1}{2}}\pt (v,\div^{\vp}u)\|_{\hco^{m-2}}^2+\|\ep^{\f{1}{2}}\pt\nabla\div^{\vp}u\|_{\hco^{m-3}}^2).
\end{aligned}
\eeq
\underline{$\tilde{\cK}_4:$} 
$\tilde{\cK}_4:$ has the following expression:
\beqs
\tilde{\cK}_4=\ep\int_0^t\int_{\mS}Z^{\beta}\pt \pi [\div^{\vp},Z^{\beta}\pt]v\, \d\cV_s\d s
\eeqs
By H\"older inequality, the estimate  \eqref{pi2} for $\ep^{\f{1}{2}}\nabla\pt \pi,$ the Korn inequality \eqref{korn} and the commutator estimate
\eqref{com-format}
we get:
\beqs
\begin{aligned}
\tilde{\cK}_4&\leq \|\ep^{\f{1}{2}}\nabla\pt\pi\|_{\hco^{k-1}}\|\ep^{\f{1}{2}}[\div^{\vp},Z^{\beta}\pt]v\|_{\ltx} \leq \delta\|\ep^{\f{1}{2}} \pt \nabla v\|_{L_t^2H_{co}^{k}}^2
\\
&+C_{\delta}\Lambda\big(\f{1}{c_0},|h|_{m-2,\infty,t}^2\big)(\|\ep^{\f{1}{2}}\pt \nabla^{\vp}v\|_{\hco^{k-1}}^2+\|\ep^{\f{1}{2}}\pt \div^{\vp}u\|_{\hco^{m-2}}^2)+(T+\ep)^{\f{1}{2}}\lae.
\end{aligned}
\eeqs
\underline{$\tilde{\cK}_5.$}
By the Cauchy-Schwarz inequality and estimates \eqref{pi}, \eqref{pi2}, we obtain:
\beq
\begin{aligned}
\tilde{\cK}_5&\lesssim \|\ep^{\f{1}{2}}Z^{\beta}\pt v\|_{\ltx}\|\ep^{\f{1}{2}}[Z^{\beta}\pt,\nabla^{\vp}]\pi\|_{\ltx}\\
&\lesssim
\|\ep^{\f{1}{2}}\pt v\|_{\hco^{k}}\big(\Lambda\big(\f{1}{c_0},|h|_{m-2,\infty,t}\big)\|\ep^{\f{1}{2}}\pt\nabla\pi\|_{\hco^{k-1}}+(T+\ep)^{\f{1}{2}}\lae\big)\\
&\leq \delta \|\ep^{\f{1}{2}}\pt \nabla v\|_{\hco^{k}}^2+(T+\ep)^{\f{1}{2}}\lae. 
\end{aligned}
\eeq
\underline{$\tilde{\cK}_6,\tilde{\cK}_8:$}
By \eqref{es-f}, \eqref{q}, \eqref{comtime}, we have:
\beqs
\ep^{\f{1}{2}}\|\pt (f+\nabla^{\vp} q+\pt[\bbp,\pt^{\vp}]u)\|_{L_t^2H_{co}^{m-2}}\lesssim \lae. 
\eeqs
In addition, since $|\beta|=k\leq m-2,$ the following estimate holds:
\beqs
\begin{aligned}
\ep^{\f{1}{2}}\|\big[Z^{\beta}\pt,\f{\pt\vp}{\p_z\vp}\p_z\big]v\|_{\ltx}&\lesssim \Lambda\big(\f{1}{c_0},|(h,\pt h)|_{m-3,\infty,t}+\il \nabla v,\ep^{\f{1}{2}}\pt \nabla v\il_{0,\infty,t}
\big) \\
&\big(\|(\ep^{\f{1}{2}}\pt\nabla v,\nabla v)\|_{L_t^{2}H_{co}^{m-3}}\|(\ep^{\f{1}{2}}\pt\nabla v,\nabla v)\|_{L_t^{\infty}H_{co}^{m-4}}|(\pt h,\ep^{\f{1}{2}}\pt^2 h)|_{\htlde^{m-3}}\big)\\
&\lesssim\lae.
\end{aligned}
\eeqs
Therefore, we control $\tilde{\cK}_6+\tilde{\cK}_8$ as:
\beq
\begin{aligned}
\tilde{\cK}_6+\tilde{\cK}_8&\lesssim \|\ep^{\f{1}{2}}Z^{\beta}\pt v\|_{\ltx}\bigg( \ep^{\f{1}{2}}\big\|\big[Z^{\beta}\pt,\f{\pt\vp}{\p_z\vp}\p_z\big]v\big\|_{\ltx}+\ep^{\f{1}{2}}\big\|Z^{\beta}\pt(f+\nabla^{\vp}q+[\bbp,\pt^{\vp}]u)\big\|_{\ltx} \bigg)\\
&\lesssim T^{\f{1}{2}}\lae. 
\end{aligned}
\eeq
\underline{$\tilde{\cK}_7.$} 
For this term, one needs to integrate by parts to avoid losing normal derivatives. By following the same lines
as the control of $\cK_7$ in Lemma \ref{lemhigh-v},  we find that:
\beq\label{tck7}
\begin{aligned}
\tilde{\cK}_7&\leq \delta \|\ep^{\f{1}{2}}\pt \nabla v\|_{\hco^{k}}^2
+\Lambda\big(\f{1}{c_0}, |h|_{m-2,\infty,t}^2
\big)\|\ep^{\f{1}{2}}\pt\nabla^{\vp}v\|_{\hco^{k-1}}^2+(T+\ep)^{\f{1}{2}}\lae.
\end{aligned}
\eeq
Plugging \eqref{tck1}-\eqref{tck7} into 
\eqref{EI-v-low}, we get by choosing $\delta$ small enough and by using Korn inequality \eqref{korn} that for any $0\leq k\leq m-2,$
\beqs
\begin{aligned}
&\quad\|\ep^{\f{1}{2}}\pt v\|_{L_t^{\infty}H_{co}^{k}}^2+\|\ep^{\f{1}{2}}\pt\nabla v\|_{L_t^2H_{co}^{k}}^2\lesssim \|\ep^{\f{1}{2}}\pt v(0)\|_{H_{co}^{m-2}(\mS)}^2+ (T+\ep)^{\f{1}{2}}\lae\\
&+\Lambda\big(\f{1}{c_0},|h|_{m-2,\infty,t}^2\big)\big(\|\ep^{\f{1}{2}}\pt\nabla^{\vp}v\|_{\hco^{k-1}}^2+\|\ep^{\f{1}{2}}\pt \nabla^{\vp}\div^{\vp}u\|_{\hco^{m-3}}^2+\|\ep^{\f{1}{2}}\pt \div^{\vp}u\|_{\hco^{m-2}}^2\big),
\end{aligned}
\eeqs
where we have used the convention that $\|\cdot\|_{H_{co}^l}=0,$ if $l< 0.$ 
This estimate, combined with \eqref{surface1}, \eqref{lowerder}, \eqref{nor-compressible} and the induction on $k$
yields \eqref{lower-v}.
\end{proof}
\section{$\ep-$dependent high order energy estimate-II}
In this subsection, we aim to control $\ep^{\f{1}{2}}\|\nabla u\|_{L_t^{\infty}H_{co}^{m-1}},$
which is useful for the control of $L^{\infty}$ type norms.
\begin{lem}\label{intermediate-epnablau}
Under the assumption \eqref{preassumption}, we have for any $0<t\leq T,$
\beq\label{epnablauLinfty1}
\begin{aligned}
\ep \|\nabla u\|_{L_t^{\infty}H_{co}^{m-1}}^2
\lesssim \Lambda\big(\f{1}{c_0}, |h|^2_{L_t^{\infty}\tilde{H}^{m-\f{1}{2}}}
+Y_m^2(0)\big)Y^2_m(0)+(T+\ep)^{\f{1}{4}}\Lambda\big(\f{1}{c_0},
\cN_{m,T}\big).
\end{aligned}
\eeq
\end{lem}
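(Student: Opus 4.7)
The plan is to use the Helmholtz-type decomposition $u = v + \nabla^{\vp}\Psi$ and treat the compressible and incompressible parts separately.

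\textbf{Step 1 (compressible part).} Applying the $L_t^{\infty}$-in-time version of the elliptic estimate \eqref{elliptic3} to the equation \eqref{eq-comp} satisfied by $\Psi$, one obtains
\[
\ep^{\f12}\|\nabla\nabla^{\vp}\Psi\|_{L_t^{\infty}H_{co}^{m-1}}\lesssim \Lambda\big(\tfrac{1}{c_0},|h|_{L_t^{\infty}\tilde H^{m-\f12}}\big)\big(\ep^{\f12}\|\div^{\vp}u\|_{L_t^{\infty}H_{co}^{m-1}}+\ep^{\f12}|h|_{L_t^{\infty}\tilde H^{m+\f12}}\big),
\]
where the first right-hand term is controlled by Lemma \ref{lemnablasigma} (which bounds $\ep\|\div^{\vp}u\|_{L_t^{\infty}H_{co}^{m-1}}^2$) and the second by Lemma \ref{lemsurface}.

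\textbf{Step 2 (incompressible part).} For $v=\bbp u$ solving the Stokes-type system \eqref{eqofv2}, I would perform a parabolic-type energy estimate. Apply $Z^{\beta}$ with $|\beta|\leq m-1$ to $\eqref{eqofv2}_1$ and test it in $L^2(\d\cV_s)$ against $\ep Z^{\beta}\pt^{\vp}v$. Since $[\pt^{\vp},\nabla^{\vp}]=0$, integration by parts in the viscous term yields
\[
\ep\int(-\mu\Delta^{\vp}Z^{\beta}v)\cdot Z^{\beta}\pt^{\vp}v\,\d\cV_s = \tfrac{\mu\ep}{2}\tfrac{d}{dt}\int|\nabla^{\vp}Z^{\beta}v|^2\d\cV_s + (\textrm{commutators}) + (\textrm{boundary}).
\]
Combined with the coercive term $\bar\rho\ep\int|Z^{\beta}\pt^{\vp}v|^2\d\cV_s$ coming from the transport part and Korn's inequality \eqref{korn1}, integrating in time and summing over $|\beta|\leq m-1$ produces
\[
\ep\|\nabla v\|_{L_t^{\infty}H_{co}^{m-1}}^2 + \ep\|\pt v\|_{L_t^{2}H_{co}^{m-1}}^2 \lesssim \ep\|\nabla v(0)\|_{H_{co}^{m-1}}^2 + \textrm{RHS}.
\]
The right-hand side is controlled using \eqref{es-f}, \eqref{q}, \eqref{pi4}, \eqref{comtime} for the source terms $f$, $\nabla^{\vp}q$, $\nabla^{\vp}\pi$, $[\bbp,\pt^{\vp}]u$, while the boundary contribution from the stress condition \eqref{v-bdry-up} is handled using the trace inequality \eqref{trace} together with the elliptic estimates \eqref{sec-normal-Psi}--\eqref{psiLinfty} for the compressible part.

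\textbf{Step 3.} Combining the two pieces via $\nabla u = \nabla v + \nabla\nabla^{\vp}\Psi$ then gives \eqref{epnablauLinfty1}; the factor $(T+\ep)^{\f14}$ ultimately arises from Lemma \ref{lemsurface}, which provides $|h|_{L_t^{\infty}\tilde H^{m-\f12}}^2 \lesssim Y_m^2(0) + T^{\f12}\lae$ (squared norm against a $T^{\f12}$ factor gives $T^{\f14}$ in the unsquared estimate).

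\textbf{Main obstacle.} The delicate step is the parabolic estimate of Step 2. First, the boundary integrals coming from integration by parts couple $Z^{\beta}\nabla v$ at $z=0$ to the compressible part through \eqref{v-bdry-up}, and this coupling must be absorbed using the bounds of Step 1 on $\ep^{\f12}\|\nabla\nabla^{\vp}\Psi\|_{L_t^{\infty}H_{co}^{m-1}}$. Second, the pressure $\nabla^{\vp}\pi$ is only known to be in $L_t^{\infty}H_{co}^{m-2}$ by \eqref{pi4}, so the term $\int\nabla^{\vp}Z^{\beta}\pi\cdot Z^{\beta}\pt^{\vp}v\,\d\cV_s$ cannot be bounded directly; it must be treated via integration by parts together with $\div^{\vp}v=0$ (transferring a derivative off the pressure), leaving a commutator $[\div^{\vp},Z^{\beta}]\pt^{\vp}v$ that is estimated as in the proof of Lemma \ref{lemhigh-v}. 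Finally, the commutators $[Z^{\beta},\pt^{\vp}]v$ and $[Z^{\beta},\Delta^{\vp}]v$ are handled by the techniques of Lemma \ref{lemhigh-v}, with the time-derivative commutators requiring the regularity \eqref{surface3} of $\pt^2 h$ to remain uniformly controlled.
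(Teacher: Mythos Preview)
Your approach via the decomposition $u=v+\nabla^{\vp}\Psi$ is genuinely different from the paper's. The paper proves the lemma by a \emph{direct} parabolic estimate on $u$: one applies $Z^{\alpha}$ ($|\alpha|\leq m-1$) to the velocity equation $\eqref{FCNS2}_2$ and tests against $-\ep^2 Z^{\alpha}\div^{\vp}\cL^{\vp}u$. Integrating by parts produces on the left $\ep\mu\|Z^{\alpha}S^{\vp}u(t)\|_{L^2}^2+\ep\|Z^{\alpha}\div^{\vp}\cL^{\vp}u\|_{L_t^2L^2}^2$. The singular term $\nabla^{\vp}\sigma/\ep$ is harmless because pairing with $\ep^2\div^{\vp}\cL^{\vp}u$ gains a full power of $\ep$; the upper-boundary contribution uses only the original stress condition $\cL^{\vp}u\,\bN=(\sigma/\ep)\bN$ together with \eqref{sigmabdry}, and closes with $\ep^{\f12}\|\nabla u\|_{L_t^2H_{co}^m}$, $\ep^{\f12}\|\nabla\div^{\vp}u\|_{L_t^2H_{co}^{m-1}}$ and $\ep^{\f12}\|\pt(u,\nabla u)\|_{L_t^2\cH^{m-1}\cap L_t^2H_{co}^{m-2}}$, all already supplied by Lemmas~\ref{lemhighest}, \ref{lemnablasigma} and \ref{lemloworder-v}.

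Your Step~1 is fine. The gap is in Step~2. When you test \eqref{eqofv2} against $\ep Z^{\beta}\pt^{\vp}v$ at $|\beta|=m-1$ and integrate the viscous term by parts, the boundary term on $\{z=0\}$ is
\[
\ep\int_0^t\int_{z=0} Z^{\beta}\big[(\div^{\vp}u\,\text{Id}-(\nabla^{\vp})^2\Psi)\bN\big]\cdot Z^{\beta}\pt^{\vp}v\,\d y\d s.
\]
Bounding the trace $|Z^{\beta}\pt^{\vp}v|_{H^{1/2}}$ via \eqref{trace} forces $\ep^{\f12}\|\pt\nabla v\|_{L_t^2H_{co}^{m-1}}$, which is \emph{not} on your left-hand side (you only produce $\ep\|\pt v\|_{L_t^2H_{co}^{m-1}}^2$) and is not furnished at that order by any earlier lemma. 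Shifting the half-derivative to the other factor instead requires the trace $|(\nabla^{\vp})^2\Psi|_{\tilde H^{m-1/2}}$, hence $\|\nabla(\nabla^{\vp})^2\Psi\|_{L_t^2H_{co}^{m-1}}$; the elliptic estimate for this third-order quantity at level $m-1$ costs $\ep^{\f12}|h|_{L_t^2\tilde H^{m+3/2}}$, one half-derivative more surface regularity than the framework provides. The paper's direct test function sidesteps both issues: it places a genuine second-order quantity on the left, and the boundary condition for $u$ (rather than $v$) involves only $\sigma/\ep$, whose trace is purely tangential by \eqref{sigmabdry}.
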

\begin{proof}
We will prove the following estimates:
\beq\label{epnablauLinfty}
\begin{aligned}
&\|\ep^{\f{1}{2}}\nabla u\|_{L_t^{\infty}H_{co}^{m-1}}^2\lesssim Y^2_m(0)+(T+\ep)^{\f{1}{4}}\lae\\
&+\Lambda\big(\f{1}{c_0}, |h|^2_{m-2,\infty,t}\big)\big(\|\ep^{\f{1}{2}}\nabla u\|_{\hco^{m}}^2+
\|\ep^{\f{1}{2}}\nabla\div^{\vp} u\|_{\hco^{m-1}}^2+\|\ep^{\f{1}{2}}\pt(u,\nabla u)\|_{L_t^2\cH^{m-1}\cap\hco^{m-2} }^2\big).\\
\end{aligned}
\eeq
By \eqref{EI-1}, \eqref{EI-T1}, \eqref{EI-2},  
\eqref{lower-v}, we can then find a polynomial $\Lambda$, such that \eqref{epnablauLinfty1} holds.

The inequality \eqref{epnablauLinfty} can be obtained by direct energy estimates. Applying $Z^{\alpha}, |\alpha|\leq m-1$
to $\eqref{FCNS2}_2,$ taking the scalar product with $-\ep^2 Z^{\alpha}(\div^{\vp}\cL^{\vp}u)$ and integrating in space and time, we get  by integration by parts that:
 \beq\label{sec8.3:eq0}
 \begin{aligned}
 &\ep \mu\int_{\mS}|Z^{\alpha}S^{\vp}u|^2(t)\,\d\cV_t+\f{1}{2}\ep\lambda \int_{\mS}|Z^{\alpha}\div^{\vp}u|^2(t)\,\d\cV_t+\ep\|Z^{\alpha}\div^{\vp}\cL^{\vp}u\|_{\ltx}^2\\
 &+\f{a}{2}\ep\int_{z=1}|Z^{\alpha}u_{\tau}|^2(t)\,\d y=K_0+K_1+\cdots+K_5,
 \end{aligned}
 \eeq
where 
\beqs
\begin{aligned}
K_0&=\ep
\mu\int_{\mS}|Z^{\alpha}S^{\vp}u|^2(0)\,\d\cV_0+\f{1}{2}\ep\lambda \int_{\mS}|Z^{\alpha}\div^{\vp}u|^2(0)\,\d\cV_0+\f{a}{2}\ep\int_{z=1}|Z^{\alpha}u_{\tau}|^2(0)\,\d y,\\
K_1&=-\ep\int_0^t\int_{\mS}\big(\pt[\nabla^{\vp},Z^{\alpha}]u+[\nabla^{\vp},\pt]Z^{\alpha}u\big)\cdot Z^{\alpha}\cL^{\vp}u\,\d\cV_s\d s,\\
K_2&=\ep\int_0^t\int_{\mS}\pt Z^{\alpha}u\cdot
[Z^{\alpha},\div^{\vp}]\cL^{\vp}u\,\d\cV_s\d s,\quad K_3=\ep\int_0^t\int_{\mS}Z^{\alpha}(\f{g_2-1}{\ep}\ep\pt u)\cdot Z^{\alpha}\div^{\vp}\cL^{\vp}u\,\d\cV_s\d s,\\
K_4&=\ep\int_0^t\int_{\mS}Z^{\alpha}(\nabla^{\vp}\sigma)Z^{\alpha}(\div^{\vp}\cL^{\vp}u)\,\d\cV_s\d s,\quad K_5=-\ep\int_0^t\int_{\p\mS}Z^{\alpha}\cL^{\vp}u \bN \cdot \pt Z^{\alpha}
u\,\d y\d s.\\
\end{aligned}
\eeqs
At first, by the trace inequality \eqref{trace}: 
\beq\label{bdryterm}
\f{a}{2}\ep\int_{z=1}|Z^{\alpha}u_{\tau}|^2(t)\,\d y\geq -\delta\|\ep^{\f{1}{2}} \nabla u(t)\|_{H_{co}^{m-1}}^2-C_{\delta}\ep\| u\|_{L_t^{\infty}H_{co}^{m-1}}^2.
\eeq
Next, for the term $K_1,$ we use \eqref{comgrad} to find that:
\beqs
\begin{aligned}
K_1&\lesssim \|\nabla^{\vp} u\|_{\hco^{m-1}}\bigg(\ep\Lambda\big(\f{1}{c_0},|\pt h|_{0,\infty,t}\big)\|\nabla u\|_{\hco^{m-1}}+\|\ep\pt[\nabla^{\vp},Z^{\alpha}]u\|_{\ltx}\bigg).
\end{aligned}
\eeqs
By using the identity \eqref{comidentity}, we find that:
\beqs 
\ep\pt[Z^{\alpha},\nabla^{\vp}]u=\ep^{\f{1}{2}}\big[Z^{\alpha},\ep^{\f{1}{2}}\pt (\f{\bN}{\p_z\vp})\big]\p_z u+\ep\pt \big(\f{\bN}{\p_z\vp}[Z^{\alpha},\p_z]u\big)+\ep^{\f{1}{2}}\big[Z^{\alpha}, \f{\bN}{\p_z\vp}\big]\ep^{\f{1}{2}}\pt\p_z u.
\eeqs
The $L_t^2L^2$ norm of the first two terms in the right hand side can be controlled by: 
\beqs 
\begin{aligned}
&\ep^{\f{1}{2}}\Lambda\big(\f{1}{c_0},|(h,\ep^{\f{1}{2}}\pt h )|_{m-2,\infty,t}+\il\nabla u\il_{1,\infty,t}\big)\big(\|\nabla u\|_{L_t^2H_{co}^{m-2}}+|(h,\ep^{\f{1}{2}}\pt h)|_{L_t^2\tilde{H}^{m-\f{1}{2}}}\big)\\
&\lesssim \ep^{\f{1}{2}}\lae.
\end{aligned}
\eeqs
Moreover, the third term can be bounded as:
\beqs
\begin{aligned}
\big\|\ep^{\f{1}{2}}\big[Z^{\alpha}, \f{\bN}{\p_z\vp}\big]\ep^{\f{1}{2}}\pt\p_z u\big\|_{\ltx}&\lesssim T^{\f{1}{2}}\il\ep^{\f{1}{2}}\pt\nabla u\il_{L_t^2H_{co}^{1}}\big|\ep^{\f{1}{2}}\big(\f{\bN}{\p_z\vp}\big)\big|_{m-2,\infty,t}\\
&+ \ep^{\f{1}{2}}
\Lambda\big(\f{1}{c_0}\il\ep^{\f{1}{2}}\pt\nabla u\il_{0,\infty,t}+|h|_{m-2,\infty,t}\big)(\|\ep^{\f{1}{2}}\pt\nabla u\|_{L_t^2H_{co}^{m-2}}+|h|_{\htlde^{m-\f{1}{2}}})\\
&\lesssim(T+\ep)^{\f{1}{2}}\lae.
\end{aligned}
\eeqs
The previous two estimates then lead to:
\beqs
\|\ep\pt[Z^{\alpha},\nabla^{\vp}]u\|_{\ltx}\lesssim (T+\ep)^{\f{1}{2}}\lae,
\eeqs
from which we find that:
\beq\label{K1}
\begin{aligned}
K_1\lesssim (T+\ep)^{\f{1}{2}}\lae.
\end{aligned}
\eeq
Thanks to the commutator estimate \eqref{comgrad}, we control the term $\ep^{\f{1}{2}}[Z^{\alpha},\div^{\vp}]\cL^{\vp}u$ in the term $K_2$ as follows:
\beqs
\begin{aligned}
\ep^{\f{1}{2}}\|[Z^{\alpha},\div^{\vp}]\cL^{\vp}u\|_{\ltx}&\lesssim\Lambda\big(\f{1}{c_0},\il\ep^{\f{1}{2}}\p_z\cL^{\vp}u\il_{1,\infty,t}+|h|_{m-2,\infty,t}\big)|h|_{\htlde^{m-\f{1}{2}}} \\
&\qquad+\Lambda\big(\f{1}{c_0},|h|_{m-2,\infty,t}\big)\|\ep^{\f{1}{2}}\nabla\cL^{\vp}u\|_{\hco^{m-2}}\lesssim T^{\f{1}{2}}\lae.
\end{aligned}
\eeqs
Therefore, by Cauchy-Schwarz inequality, $K_2$ can be bounded by:
\beq\label{K2}
K_2\lesssim \|\ep^{\f{1}{2}}\pt u\|_{\hco^{m-1}} \|\ep^{\f{1}{2}}[Z^{\alpha},\div^{\vp}]\cL^{\vp}u\|_{\ltx}\lesssim 
T^{\f{1}{2}}\lae.
\eeq
Moreover, by the product estimate \eqref{crudepro}, 
we obtain:
\beq\label{K34}
K_3+K_4\leq \delta \|\ep^{\f{1}{2}}  Z^{\alpha}\div^{\vp}\cL^{\vp}u\|_{\ltx}^2+C_{\delta}\ep\lca\|(\sigma,u)\|_{E^m,t}.
\eeq
For the term $K_5$, we use the boundary condition \eqref{upbdry2} to split it as : 
\beqs
\begin{aligned}
K_5&=-\ep\int_0^t\int_{z=0}Z^{\alpha}(\sigma/\ep)\pt Z^{\alpha}u\cdot\bN+ [Z^{\alpha},\bN]\cL^{\vp}u\cdot \pt Z^{\alpha}u\,\d y\d s=\colon K_{51}+K_{52}.\\
\end{aligned}
\eeqs
Thanks to the trace inequality \eqref{trace} and the boundary conditions \eqref{norpz}, \eqref{tanpz}, $K_{52}$ can be bounded as:
\beqs
\begin{aligned}
K_{52}&\lesssim
 |\ep^{\f{1}{2}}\pt Z^{\alpha} u|_{L_t^2H^{-\f{1}{2}}}|\ep^{\f{1}{2}}[Z^{\alpha},\bN]\cL^{\vp}u|_{L_t^2H^{\f{1}{2}}}
 \\
 &\lesssim \big(\|\ep^{\f{1}{2}}\pt(u,\nabla u)\|_{L_t^2\cH^{m-1}}+\|\ep^{\f{1}{2}}\pt\nabla u\|_{\hco^{m-2}}\big)\big(\ep^{\f{1}{2}}|h|_{\htlde^{m+\f{1}{2}}}\Lambda\big(\f{1}{c_0},\cA_{m,t}\big)+\ep^{\f{1}{2}}\Lambda\big(\f{1}{c_0},\cN_{m,t}\big)\big)\\
&\lesssim (T+\ep)^{\f{1}{2}}\Lambda\big(\f{1}{c_0},\cA_{m,t}\big)\cE_{m,t}^2.
\end{aligned}
\eeqs
For $K_{51},$ we take benefits of the boundary condition \eqref{sigmabdry} and the trace inequality \eqref{trace} to find that, if $Z^{\alpha}=(\ep\pt)^{j},j\leq m-1,$
\beqs
\begin{aligned}
K_{51}&\lesssim \ep^{\f{1}{4}} \lca \big(\|(\ep^{\f{1}{2}}\pt(u,\nabla u),\div^{\vp}u,\ep^{\f{1}{2}}\nabla\div^{\vp}u)\|_{L_t^2\cH^{m-1}}^2+\|\nabla u\|_{\hco^{m-1}}^2+ |h|_{\htlde^{m-\f{1}{2}}}^2\big)\\
&\lesssim \ep^{\f{1}{4}}\lca \cE_{m,t}^2,
\end{aligned}
\eeqs
if $Z^{\alpha}=Z_3 Z^{\tilde{\alpha}},$ this term vanishes on the boundary
and if $Z^{\alpha}=\p_y Z^{\tilde{\alpha}},$
\beqs
\begin{aligned}
K_{51}&\lesssim  |\ep^{\f{1}{2}}Z^{{\alpha}}(\sigma/\ep)|_{L_t^2H_y^{\f{1}{2}}}
|\ep^{\f{1}{2}}\pt Z^{\tilde{\alpha}} u|_{L_t^2H_y^{\f{1}{2}}}|h|_{2,\infty,t}\\
&\lesssim \Lambda(\f{1}{c_0},|h|_{m-2,\infty,t}) 
\big(\|\ep^{\f{1}{2}}\nabla\div^{\vp} u\|_{\hco^{m-1}}^2+\|\ep^{\f{1}{2}}\nabla u\|_{\hco^{m}}^2+\|\ep^{\f{1}{2}} \pt(u,\nabla u)\|_{\hco^{m-2}}^2\big)\\
&\quad+T^{\f{1}{2}}\lca |h|_{\htlde^{m-\f{1}{2}}}
.
\end{aligned}
\eeqs
The previous three inequalities
yield:
\beqs\label{K5}
\begin{aligned}
K_{5}&\lesssim(T+\ep)^{\f{1}{4}}\Lambda\big(\f{1}{c_0},\cA_{m,t}\big)\cE_{m,t}^2+ \Lambda(\f{1}{c_0}, |h|_{m-2,\infty,t})\cdot\\
&\big(\|\ep^{\f{1}{2}}\nabla\div^{\vp} u\|_{\hco^{m-1}}^2+\|\ep^{\f{1}{2}}\nabla u\|_{\hco^{m}}^2+\|\ep^{\f{1}{2}}\pt(u,\nabla u)\|_{\hco^{m-2}\cap L_t^2\cH^{m-1} }^2\big).
\end{aligned}
\eeqs
Inserting this inequality and \eqref{bdryterm}-\eqref{K34} into \eqref{sec8.3:eq0}, using Korn's inequality \eqref{korn1} and choosing $\delta$ small enough, we obtain \eqref{epnablauLinfty}. 
\end{proof}

\section{Uniform control of high order energy norms-II} 
\subsection{$L_t^{\infty}L^2$ type norm for the compressible part}
In this section, we aim to get the a-priori estimates for
$\|(\nabla^{\vp}\sigma,\div^{\vp}u)\|_{L_t^{\infty}H_{co}^{m-2}}.$ This is mainly done by induction arguments.
\begin{lem}\label{sigmainduction1}
Suppose that \eqref{preassumption} is true, we have for any $0<t\leq T, m\geq 7,$
\begin{equation}\label{nor-compressible1}
\begin{aligned}
 &\ep^{-1}\|(\nabla^{\vp}\sigma,\div^{\vp}u)\|_{L_t^{\infty}H_{co}^{m-2}}^2\\
 &\lesssim \Lambda\big(\f{1}{c_0},|h|_{L_t^{\infty}\tilde{H}^{m-\f{1}{2}}}^2+Y_m^2(0)\big)Y_m^2(0)
 +(\ep+T)^{\f{1}{4}}\Lambda\big(\f{1}{c_0},\cN_{m,T}\big).
 \end{aligned}
\end{equation}
\end{lem}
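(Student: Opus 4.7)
The strategy mirrors the proof of Lemma \ref{sigmainduction}, now carried out in $L_t^\infty$ with one fewer conormal derivative in compensation. I would argue by induction on the number $l$ of spatial conormal vector fields contained in $Z^{\alpha}$, establishing that for every $(j,l)$ with $j+l \le m-2$,
\begin{equation*}
\ep^{-\tfrac12}\|(\nabla^{\vp}\sigma,\div^{\vp}u)\|_{L_t^\infty \cH^{j,l}}
\lesssim \Lambda\big(\tfrac{1}{c_0},|h|_{L_t^\infty \tilde H^{m-\tfrac12}}+Y_m(0)\big)Y_m(0) + (T+\ep)^{\tfrac18}\Lambda\big(\tfrac{1}{c_0},\cN_{m,T}\big),
\end{equation*}
from which \eqref{nor-compressible1} follows after squaring and summing over $j+l\le m-2$.

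For the base case $l=0$, the rewriting \eqref{re-sigma} of $\eqref{FCNS2}_1$ together with \eqref{crudepro} and Corollary \ref{corg12} bounds $\ep^{-1/2}\|\div^{\vp}u\|_{L_t^\infty \cH^{j,0}}$ by $\|\ep^{1/2}\pt\sigma\|_{L_t^\infty \cH^{j,0}}$ plus an $\ep^{1/2}$-small error, and the first term is controlled by the high-order time-derivative estimate \eqref{EI-T1}. For $\nabla^{\vp}\sigma$ I would apply the pointwise-in-time elliptic estimates of Lemma \ref{lemelliptic} to the mixed problem \eqref{sigmaelliptic} for $\sigma/\ep$, using \eqref{sigmabdry}, \eqref{omegatimesn} and \eqref{curlwdotn} to express the upper trace and lower flux through $\p_y u$ and $\div^{\vp}u$, and the trace inequality \eqref{trace} to pass to conormal norms; the resulting right-hand side is dominated by $\ep^{1/2}\|\nabla u\|_{L_t^\infty H_{co}^{m-1}}$, $\|\ep^{1/2}\pt u\|_{L_t^\infty \cH^{m-2}}$, $|h|_{L_t^\infty \tilde H^{m-1/2}}$ and lower-order quantities, all of which are already controlled through Lemma \ref{intermediate-epnablau}, \eqref{EI-T1} and the surface bound \eqref{surface1}. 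For the induction step, assuming the desired estimate at $(j+1,l-1)$, the bound on $\div^{\vp}u$ at $(j,l)$ again follows from \eqref{re-sigma}, while for $\nabla^{\vp}\sigma$ the elliptic estimate of Lemma \ref{lemelliptic} reduces $\ep^{-1/2}\|\nabla^{\vp}\sigma\|_{L_t^\infty\cH^{j,l}}$ to $\ep^{1/2}\|\pt\nabla^{\vp}\Psi\|_{L_t^\infty \cH^{j,l}}$ plus controlled boundary and source terms; invoking in turn the elliptic problem \eqref{eq-comp} for $\Psi$ together with the $L_t^\infty$ analogue of \eqref{ptpsiL2} (using \eqref{psiLinfty}) trades one spatial conormal derivative for a time derivative of $\div^{\vp}u$, yielding
\begin{equation*}
\ep^{\tfrac12}\|\pt\nabla^{\vp}\Psi\|_{L_t^\infty \cH^{j,l}} \lesssim \Lambda\big(\tfrac{1}{c_0},|h|_{L_t^\infty \tilde H^{m-\tfrac12}}\big)\,\ep^{-\tfrac12}\|\div^{\vp}u\|_{L_t^\infty \cH^{j+1,l-1}} + (T+\ep)^{\tfrac18}\Lambda\big(\tfrac{1}{c_0},\cN_{m,T}\big),
\end{equation*}
and the induction hypothesis closes the step.

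The principal obstacle is the uniform-in-$\ep$ $L_t^\infty$ control of the boundary data appearing in the elliptic estimate for $\sigma$. Via \eqref{sigmabdry}, the trace $\sigma^{b,1}/\ep$ is expressed through $\p_y u$ and $\div^{\vp}u$ on $\{z=0\}$, so \eqref{trace} forces a uniform bound on $\ep^{1/2}\nabla u$ in $L_t^\infty H_{co}^{m-1}$; this is precisely the content of Lemma \ref{intermediate-epnablau}, whose residual $(T+\ep)^{1/4}$ factor is what ultimately appears on the right of \eqref{nor-compressible1}. The many commutators produced when $Z^{\alpha}$ acts on $\bN/\p_z\vp$ in the elliptic arguments should then be absorbed into this same remainder using \eqref{fpzphi1}, Lemma \ref{exth}, and the surface estimates of Lemma \ref{lemsurface}, exactly as in the proof of Lemma \ref{sigmainduction}.
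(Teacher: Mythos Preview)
Your induction scheme is correct and parallels the paper's argument: the $\div^{\vp}u$ step via \eqref{re-sigma}, the reduction of $\ep^{-1/2}\nabla^{\vp}\sigma$ to $\ep^{1/2}\pt\nabla^{\vp}\Psi$, and then to $\div^{\vp}u$ at level $(j{+}1,l{-}1)$ through the elliptic problem \eqref{eq-comp} for $\Psi$ (via \eqref{elliptic2.5-1}), are exactly the intended mechanism; the crucial input $\ep^{1/2}\|\nabla u\|_{L_t^\infty H_{co}^{m-1}}$ from Lemma \ref{intermediate-epnablau} is also correctly identified.

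The paper organizes the $\nabla^{\vp}\sigma$ step slightly differently. Rather than re-applying Corollary \ref{corelliptic} to the elliptic system \eqref{sigmaelliptic} (as you propose, mirroring the $L_t^2$ proof), it writes $\ep^{-1}\nabla^{\vp}\sigma=(2\mu{+}\lambda)\nabla^{\vp}\div^{\vp}u+\nabla^{\vp}\theta$ and splits $\nabla^{\vp}\theta=\bbp\nabla^{\vp}\theta+\bbq\nabla^{\vp}\theta=\nabla^{\vp}\pi+(-\pt^{\vp}\nabla^{\vp}\Psi+[\bbq,\pt^{\vp}]u+\nabla^{\vp}q)$, then invokes the already-established $L_t^\infty$ bounds \eqref{q}, \eqref{pi4}, \eqref{comtime}. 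This recycles the auxiliary estimates of Section \ref{sec-projection} instead of redoing the elliptic argument for $\sigma$, but the net content---the same reduction to $\ep^{1/2}\|\pt^{\vp}\nabla^{\vp}\Psi\|_{L_t^\infty\cH^{j,l}}$ plus $\ep^{1/2}\|\nabla u\|_{L_t^\infty H_{co}^{m-1}}$---is identical. One small point in your version: when replacing $\pt^{\vp}u$ by $\pt^{\vp}\nabla^{\vp}\Psi$ in the source of \eqref{sigmaelliptic}, you should record (as in the proof of Lemma \ref{sigmainduction}) that this is legitimate because $\div^{\vp}v=0$ and $v_3|_{z=-1}=0$.
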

\begin{proof}
We shall prove for for $j+l\leq m-2$ that:
\beq\label{induction2-1}
\begin{aligned}
&\ep^{-\f{1}{2}}\|(\nabla^{\vp}\sigma,\div^{\vp}u)\|_{L_t^{\infty}\cH^{j,l}}\\
&\lesssim
(T+\ep)^{\f{1}{4}}\Lambda\big(\f{1}{c_0},\cN_{m,T}\big)+\Lambda\big(\f{1}{c_0},|h|_{m-2,\infty,t}\big)\|\ep^{\f{1}{2}}\pt \div^{\vp}u\|_{L_t^{\infty}H_{co}^1}|h|_{L_t^{\infty}H_{co}^{m-\f{3}{2}}}\\
&\quad+\Lambda\big(\f{1}{c_0},|h|_{L_t^{\infty}\tilde{H}^{m-\f{1}{2}}}\big)(\|\ep^{\f{1}{2}}\pt(\sigma, u)\|_{L_t^{\infty}\cH^{m-2,0}}+\|\ep^{\f{1}{2}}\nabla(\sigma,u)\|_{L_t^{\infty}H_{co}^{m-1}}),
\end{aligned}
\eeq 
and also:
\beq\label{induction2-2}
\begin{aligned}
\|\ep^{\f{1}{2}}\pt(\div^{\vp}u,\nabla^{\vp}\sigma)\|_{L_t^{\infty}H_{co}^1}\lesssim \|\ep^{\f{1}{2}}\pt (\sigma,u)\|_{L_t^{\infty}\cH^{2}}+(T+\ep)^{\f{1}{2}}\lae.
\end{aligned}
\eeq
These two inequalities, together with \eqref{EI-T1}, \eqref{EI-2} and \eqref{epnablauLinfty1} lead to \eqref{nor-compressible}.
Indeed, thanks to the estimate \eqref{EI-T1}, we derive that:
\beqs 
\|\ep^{\f{1}{2}}\pt(\div^{\vp}u,\nabla^{\vp}\sigma)\|_{L_t^{\infty}H_{co}^1}\lesssim \Lambda\big(\f{1}{c_0},|h|_{m-2,\infty,t}\big)Y_m(0)+(T+\ep)^{\f{1}{2}}\lae.
\eeqs
Inserting this inequality into \eqref{induction2-1}, and using the estimate \eqref{EI-T1}, \eqref{EI-2}, \eqref{epnablauLinfty1}, we find \eqref{nor-compressible1}.

 We present the proof of \eqref{induction2-1}. First of all, for any non-negative integers $j,l$ such that $j+l\leq m-2,$
 it follows from the equation \eqref{re-sigma} that:
   \beq\label{divu-induction}
   \begin{aligned}
   &\ep^{-\f{1}{2}}\|\div^{\vp}u\|_{L_t^{\infty}\cH^{j,l}} \lesssim \|\ep^{\f{1}{2}}\pt\sigma\|_{L_t^{\infty}\cH^{j,l}}+\ep^{\f{1}{2}} \|\big(\f{g_1-g_1(0)}{\ep} \ep\pt+g_1\underline{u}\cdot\nabla\big)\sigma\|_{L_t^{\infty}\cH^{j,l}}\\
   &\lesssim \|\ep^{\f{1}{2}}\pt\sigma\|_{L_t^{\infty}\cH^{m-2,0}}+ \|\ep^{-\f{1}{2}}\nabla^{\vp}\sigma\|_{L_t^{\infty}\cH^{j+1,l-1}}
   \mathbb{I}_{\{l\geq 1\}}
   +\ep^{\f{1}{2}}
   \lca \cE_{m,t}.
 \end{aligned}
   \eeq
Let us  control $\|\nabla^{\vp}\sigma\|_{L_t^{\infty}\cH^{j,l}}.$ 
As before, we denote $$\theta={\sigma}/{\ep}-2(\mu+\lambda)\div^{\vp}u.$$ 
By the equation of velocity,  $$\nabla^{\vp}\theta=-\pt^{\vp}u-f+\mu\Delta^{\vp}v,$$ where $$f=\f{g_2-\bar{\rho}}{\ep}\ep\pt^{\vp}u+u\cdot\nabla^{\vp}u, v=\bbp u.$$ We thus get that:
\beq\label{sec11:eq0}
\begin{aligned}
\ep^{-\f{1}{2}}\|\nabla^{\vp}\sigma\|_{L_t^{\infty}\cH^{j,l}}&\lesssim \ep^{\f{1}{2}} \|\nabla^{\vp}\div^{\vp}u\|_{L_t^{\infty}H_{co}^{m-2}}+\ep^{\f{1}{2}}\|(\bbp,\bbq)\nabla^{\vp}\theta\|_{L_t^{\infty}\cH^{j,l}}\\
&\lesssim \ep^{\f{1}{2}}\|\pt^{\vp}\nabla^{\vp}\Psi\|_{L_t^{\infty}\cH^{j,l}}+\ep^{\f{1}{2}} \|\nabla^{\vp}\div^{\vp}u\|_{L_t^{\infty}H_{co}^{m-2}}\\
&\quad +\ep^{\f{1}{2}}\big\| \big(\nabla^{\vp}\pi,[\bbq,\pt^{\vp}]u,\nabla^{\vp}q\big) \big\|_{L_t^{\infty}H_{co}^{m-2}}=:
\eqref{sec11:eq0}_1+\eqref{sec11:eq0}_2+\eqref{sec11:eq0}_3.
\end{aligned}
\eeq
where we have used the defintion \eqref{defpi},\eqref{def-f-free}.
By the elliptic estimate \eqref{elliptic2.5-1},
\beq\label{epptnablapsi}
\begin{aligned}
&\|\ep^{\f{1}{2}}\pt \nabla^{\vp}\Psi\|_{L_t^{\infty}\cH^{j,l}}\\
&\lesssim (T+\ep)^{\f{1}{2}}\lae+\Lambda\big(\f{1}{c_0},|h|_{m-2,\infty,t}\big)\|\ep^{\f{1}{2}}\pt \div^{\vp}u\|_{L_t^{\infty}H_{co}^1}|h|_{L_t^{\infty}H_{co}^{m-\f{3}{2}}}\\
&+\Lambda\big(\f{1}{c_0},|h |_{m-2,\infty,t}\big) \big(\|\ep^{\f{1}{2}}\pt u\|_{L_t^{\infty}\cH^{m-2,0}}+\|\ep^{\f{1}{2}}\pt \div^{\vp}u\|_{L_t^{\infty}\cH^{j
,l-1}}  \mathbb{I}_{\{l\geq 1\}}\big).
\end{aligned}
\eeq
Next, by the elliptic estimate \eqref{elliptic3}, we find:
\beqs
\begin{aligned}
\ep^{\f{1}{2}}\big\|\f{\pt \vp}{\p_z\vp} \p_z\nabla^{\vp}\Psi\big\|_{L_t^{\infty}H_{co}^{m-2}}&\lesssim \ep^{\f{1}{2}}\lae \big(\|\div^{\vp}u\|_{L_t^{\infty}H_{co}^{m-2}}+|h|_{L_t^{\infty}\tilde{H}^{m-\f{1}{2}}}+|\pt h|_{L_t^{\infty}\tilde{H}^{m-2}}\big)\\
&\lesssim \ep^{\f{1}{2}} \lae.
\end{aligned}
\eeqs
Together with \eqref{epptnablapsi}, this yields:
\beq\label{sec11:eq1}
\begin{aligned}
&\|\ep^{\f{1}{2}}\pt^{\vp}\nabla^{\vp}\Psi\|_{L_t^{\infty}\cH^{j,l}}\\
&\lesssim (T+\ep)^{\f{1}{2}}\lae+\Lambda\big(\f{1}{c_0},|h|_{m-2,\infty,t}\big)\|\ep^{\f{1}{2}}\pt \div^{\vp}u\|_{L_t^{\infty}H_{co}^1}|h|_{L_t^{\infty}H_{co}^{m-\f{3}{2}}}\\
&+\Lambda\big(\f{1}{c_0},|h |_{m-2,\infty,t}\big) \big(\|\ep^{\f{1}{2}}\pt u\|_{L_t^{\infty}\cH^{m-2,0}}+\|\ep^{\f{1}{2}}\pt \div^{\vp}u\|_{L_t^{\infty}\cH^{j
,l-1}}  \mathbb{I}_{\{l\geq 1\}}\big).
\end{aligned}
\eeq
Let us control the terms $\eqref{sec11:eq0}_2,\eqref{sec11:eq0}_3$ appearing in \eqref{sec11:eq0}:\\[6pt]
$\bullet\quad \eqref{sec11:eq0}_2= \ep^{\f{1}{2}}\|\nabla\div^{\vp}u\|_{L_t^{\infty}H_{co}^{m-2}}.$ Thanks to the equation \eqref{eq-graddiv}, we have:
\beq\label{sec11:eq2}
\begin{aligned}
&\quad \ep^{\f{1}{2}}\|\nabla\div^{\vp}u\|_{L_t^{\infty}H_{co}^{m-2}}\\
&\leq \ep^{\f{1}{2}}\|\nabla^{\vp}\sigma\|_{L_t^{\infty}H_{co}^{m-1}}+\ep^{\f{3}{2}}\big\|\big(\f{\pt\vp}{\p_z\vp}\p_z\sigma,\nabla^{\vp}(\f{g_2-1}{\ep}\ep\pt\sigma),\nabla^{\vp}(g_2\underline{u}\cdot\nabla\sigma)\big)\big\|_{L_t^{\infty}H_{co}^{m-2}}\\
&\lesssim \ep^{\f{1}{2}}\|\nabla^{\vp}\sigma\|_{L_t^{\infty}H_{co}^{m-1}}+\ep^{\f{1}{2}}\lae.
\end{aligned}
\eeq
$\bullet\quad \eqref{sec11:eq0}_3=\ep^{\f{1}{2}}\|(\nabla^{\vp}q,\nabla^{\vp}\pi,[\bbq,\pt^{\vp}]u)\|_{L_t^{\infty}H_{co}^{m-2}}.$ By \eqref{q}, \eqref{pi4}, \eqref{comtime}, we have that:
\beq\label{qp}
\ep^{\f{1}{2}}\|(\nabla^{\vp}q,\nabla^{\vp}\pi,[\bbq,\pt^{\vp}]u)\|_{L_t^{\infty}H_{co}^{m-2}}\lesssim \Lambda\big(\f{1}{c_0},|h|_{m-2,\infty,t}\big)\|\ep^{\f{1}{2}}\nabla u\|_{L_t^{\infty}H_{co}^{m-1}}+\ep^{\f{1}{2}}\lae.
\eeq
Inserting \eqref{sec11:eq1}-\eqref{qp}
into \eqref{sec11:eq0}, we achieve that: 
\beq
\begin{aligned}
&\|\nabla^{\vp}\sigma\|_{L_t^{\infty}\cH^{j,l}}\lesssim \ep^{\f{1}{2}}\lae+\Lambda\big(\f{1}{c_0},|h|_{m-2,\infty,t}\big)\|\ep^{\f{1}{2}}\pt \div^{\vp}u\|_{L_t^{\infty}H_{co}^1}|h|_{L_t^{\infty}H_{co}^{m-\f{3}{2}}}\\
&+(\|\div^{\vp} u\|_{L_t^{\infty}\cH^{j+1,l-1}}+\|\ep^{\f{1}{2}}\pt u\|_{L_t^{\infty}\cH^{m-2,0}}+\|\ep^{\f{1}{2}}\nabla(\sigma,u)\|_{L_t^{\infty}H_{co}^{m-1}})\Lambda\big(\f{1}{c_0},|h|_{m-2,\infty,t}\big).
\end{aligned}
\eeq
Together with \eqref{divu-induction} and induction arguments, this yields \eqref{induction2-1}.
\end{proof}
\begin{rmk}
By the estimates \eqref{psiLinfty}
\eqref{ptpsiL2} and \eqref{nor-compressible}, \eqref{nor-compressible1}, we find
\begin{align*}
&\|\ep^{\f{1}{2}}\pt\nabla^{\vp}\Psi\|_{L_t^{\infty}H_{co}^{m-2}}+\|\ep^{\f{1}{2}}\pt(\nabla^{\vp})^2\Psi\|_{L_t^{\infty}H_{co}^{m-3}\cap \hco^{m-2}}\\
&\lesssim \Lambda\big(\f{1}{c_0},|h|_{L_t^{\infty}\tilde{H}^{m-\f{1}{2}}}^2+Y_m^2(0)\big)Y_m^2(0)
 +(\ep+T)^{\f{1}{4}}\Lambda\big(\f{1}{c_0},\cN_{m,T}\big).
\end{align*}
which further, together with \eqref{lower-v}, yields that:
\beq\label{ptu-free-Linfty}
\begin{aligned}
&\|\ep^{\f{1}{2}}\pt u\|_{L_t^{\infty}H_{co}^{m-2}}
+\|\ep^{\f{1}{2}}\pt \nabla u\|_{L_t^{2}H_{co}^{m-2}}\\
&\lesssim \Lambda\big(\f{1}{c_0},|h|_{L_t^{\infty}\tilde{H}^{m-\f{1}{2}}}^2+Y_m^2(0)\big)Y_m^2(0)
 +(\ep+T)^{\f{1}{4}}\Lambda\big(\f{1}{c_0},\cN_{m,T}\big).
 \end{aligned}
\eeq
\end{rmk}
\subsection{Uniform control of the gradient of the velocity-II} 
In this subsection, we aim  to 
control the $L_t^{\infty}H_{co}^{m-4}$ norm of 
$(\nabla u,\ep^{\f{1}{2}}\pt\nabla u)$ More precisely, the following lemma will be proved.
\begin{lem}\label{lemnablau-LinftyL2}
Under the assumption \eqref{preassumption}, for any $0<t\leq T,$ we have the following estimate:
\beq\label{nablau-LinftyL2}
\begin{aligned}
&\|\nabla u\|_{L_t^{\infty}H_{co}^{m-4}}^2+\|\ep^{\f{1}{2}}\pt\nabla u\|_{L_t^{\infty}H_{co}^{m-4}}^2\\
&\lesssim  \Lambda\bigg(\f{1}{c_0},|h|^2_{L_t^{\infty}\tilde{H}^{m-\f{1}{2}}}+Y^2_m(0)\bigg)Y^2_m(0)+(T+\ep)^{\f{1}{4}}\lae.
\end{aligned}
\eeq
\end{lem}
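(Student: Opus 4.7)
The plan is to decompose $u = v + \nabla^{\vp}\Psi$ (incompressible + compressible parts), so that the control of $\nabla u$ reduces to estimating $\nabla v$ since $\|\nabla\nabla^{\vp}\Psi\|_{L_t^{\infty}H_{co}^{m-4}}$ and its weighted time derivative can be read off directly from the elliptic bounds of Section \ref{sec-projection} (applied to \eqref{eq-comp}), combined with the $L_t^{\infty}H_{co}^{m-2}$ estimate of $\div^{\vp}u$ obtained in Lemma \ref{sigmainduction1} and the estimates on $\ep^{\f{1}{2}}\pt\div^{\vp}u$. For $v$, since $\div^{\vp}v=0$, it suffices to control the vorticity-type quantity; we reduce matters to $\omega\times \bn$ via the by-now standard identity $\|\nabla v\|_{L_t^{\infty}H_{co}^{m-4}}\lesssim \|\omega\times\bn\|_{L_t^{\infty}H_{co}^{m-4}}+\|v\|_{L_t^{\infty}H_{co}^{m-3}}+\Lambda|h|_{L_t^{\infty}\tilde{H}^{m-2}}$ (and similarly for $\ep^{\f{1}{2}}\pt\nabla v$), the last two terms being already controlled by Lemma \ref{lemhigh-v}, Lemma \ref{lemloworder-v} and Lemma \ref{lemsurface}.

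The core of the proof is then the estimate of the modified vorticity
\[
\omega_{\bn}:=\omega\times\bn+2\Pi(\p_1 v\cdot\bn,\p_2 v\cdot\bn,0)^t.
\]
The advantage is that by \eqref{omegatimesn} its upper boundary trace equals $-2\Pi(\p_1\nabla^{\vp}\Psi\cdot\bn,\p_2\nabla^{\vp}\Psi\cdot\bn,0)^t$, hence only involves the compressible part, whose trace enjoys the refined bound
$|(\text{Id},\ep^{\f{1}{2}}\pt)\nabla\Psi|_{L_t^{\infty}\tilde{H}^{m-3}}\lesssim \|(\text{Id},\ep^{\f{1}{2}}\pt)\div^{\vp}u\|_{L_t^{\infty}H_{co}^{m-3}}+|(\text{Id},\ep^{\f{1}{2}}\pt)h|_{L_t^{\infty}\tilde{H}^{m-3}}$
coming from Section \ref{sec-projection}; on the lower boundary the Navier condition gives a similar controlled trace for the tangential components. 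From the equation \eqref{eqofv2} one derives that $\omega_{\bn}$ satisfies a transport--diffusion equation of the form $\bar{\rho}\pt^{\vp}\omega_{\bn}-\mu\Delta^{\vp}\omega_{\bn}=\mathcal{R}$ with right--hand side $\mathcal{R}$ controlled in $L_t^2H_{co}^{m-4}$ by quantities already bounded in $\cN_{m,T}$ (the commutators involve $\nabla v, \nabla^2\Psi, \nabla\pi, f, \nabla q$ and time/space derivatives of $\bn$, all of which have been estimated in Sections 6--10).

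The next step is the lifting/splitting $\omega_{\bn}=\omega_{\bn}^h+\omega_{\bn}^r$, where $\omega_{\bn}^h$ solves the heat equation \eqref{heateq} in $\mS$ with boundary data equal to the trace of $\omega_{\bn}$ on $\p\mS$ and zero initial data, and the remainder $\omega_{\bn}^r$ satisfies a heat-type equation with zero boundary data, zero initial condition for the time-derivative parts, and source $\mathcal{R}$. For $\omega_{\bn}^h$, the heat-kernel estimate announced in the discussion after \eqref{heateq} yields
\[
\|(\text{Id},\ep^{\f{1}{2}}\pt)\omega_{\bn}^h\|_{L_t^{\infty}H_{co}^{m-4}}\lesssim T^{\f{1}{4}}\,\Lambda\big(\tfrac{1}{c_0},|h|_{L_t^{\infty}\tilde{H}^{m-\f{1}{2}}}\big)\big(\|(\text{Id},\ep^{\f{1}{2}}\pt)\div^{\vp}u\|_{L_t^{\infty}H_{co}^{m-3}}+|(\text{Id},\ep^{\f{1}{2}}\pt)h|_{L_t^{\infty}\tilde{H}^{m-3}}\big),
\]
which, by Lemma \ref{sigmainduction1}, Remark following it, and Lemma \ref{lemsurface}, is absorbed in the right-hand side of \eqref{nablau-LinftyL2}. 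For the remainder $\omega_{\bn}^r$ we perform direct $L_t^{\infty}H_{co}^{m-4}$ energy estimates: testing against $Z^{\alpha}\omega_{\bn}^r$ ($|\alpha|\leq m-4$), the vanishing Dirichlet trace kills boundary contributions from the viscous term, and the commutators with $Z^{\alpha}$ against $\nabla^{\vp},\Delta^{\vp},\pt^{\vp}$ are handled exactly as in the proofs of Lemmas \ref{lemhigh-v} and \ref{lemloworder-v}, with the additional room provided by the two-derivative loss ($m-4$ against $m-2$).

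The main obstacle is the boundary contribution for $\omega_{\bn}^h$: one has to propagate the trace through the heat equation in the conormal framework while keeping both (a) the gain of a factor $T^{\f{1}{4}}$ needed to close the estimate and (b) the ability to estimate the weighted time derivative $\ep^{\f{1}{2}}\pt\omega_{\bn}^h$ uniformly in $\ep$. The delicate point is that the natural bound $\|(\text{Id},\ep^{\f{1}{2}}\pt)\div^{\vp}u\|_{L_t^{\infty}H_{co}^{m-3}}$ for the lifting data is available from Lemma \ref{sigmainduction1} but only modulo the precise structure of the initial data assumption (the slightly well-prepared hypothesis, which is what ensures the polynomial on the right-hand side has the correct form $Y_m(0)^2\cdot\Lambda(|h|_{L_t^{\infty}\tilde{H}^{m-\f{1}{2}}}^2+Y_m^2(0))$). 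Once the conormal heat-kernel estimate is set up correctly, the rest is bookkeeping using the commutator and product estimates of Section 3 together with the elliptic bounds for $\nabla\nabla^{\vp}\Psi$ and $\ep^{\f{1}{2}}\pt\nabla\nabla^{\vp}\Psi$.
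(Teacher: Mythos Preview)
Your overall strategy matches the paper's: reduce $\nabla u$ to $\omega\times\bn$ (plus $\div^{\vp}u$ and lower-order tangential pieces of $u$), introduce the modified vorticity $\omega_{\bn}$ so that its boundary trace involves only the compressible part $\nabla^{\vp}\Psi$, then split into a heat lifting plus a remainder handled by energy estimates. The paper's proof of the lemma itself is just the reduction step; the substance is deferred to Lemma~\ref{lemomegatimesbn}, Lemma~\ref{lemtz1} and Lemma~\ref{lemtz2}, which carry out exactly the program you describe.

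There is, however, one genuine technical gap in your plan. You propose that $\omega_{\bn}^h$ solve the \emph{flat} heat equation~\eqref{heateq}, and then do energy estimates on the remainder. But $\omega_{\bn}$ satisfies $(\bar\rho\pt^{\vp}-\mu\Delta^{\vp})\omega_{\bn}=\cR$, so after subtracting a flat-heat lifting the remainder equation contains the term $\mu(\Delta^{\vp}-\Delta)\omega_{\bn}^h$, whose leading piece is $\big(\tfrac{|\bN|^2}{|\p_z\vp|^2}-1\big)\p_z^2\omega_{\bn}^h$. This second normal derivative of the lifting is \emph{not} controlled in $L_t^2H_{co}^{m-4}$ with a $T^{1/4}$ gain, and you cannot integrate it by parts against $Z^{\alpha}\omega_{\bn}^r$ without producing a boundary term (since $\omega_{\bn}^h$ does not vanish on $\{z=0\}$). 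The paper avoids this obstruction by first localizing with a cutoff $\chi$ near the upper boundary and then changing to normal geodesic coordinates~\eqref{geodesic}, in which $\Delta^{\vp}$ becomes $\p_z^2+\tfrac12\p_z(\ln|g|)\p_z+\Delta_{\tilde g}$. The lifting $\tilde\zeta_1$ is then taken to solve $(\bar\rho\pt-\mu\p_z^2)\tilde\zeta_1=0$, so that the errors dumped into the remainder equation~\eqref{tz2} are only \emph{first order} in $\p_z$ (namely $\tfrac12\mu\p_z(\ln|g|)\p_z\tilde\zeta_1$) plus purely tangential terms $\mu\Delta_{\tilde g}\tilde\zeta_1$; these can be handled by one integration by parts and the $L_t^2H_{co}^{m-3}$ bound on $\tilde\zeta_1$ from Lemma~\ref{lemtz1}. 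This coordinate change is not mere bookkeeping---it is precisely what makes the splitting close.

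Two smaller points: (i) the remainder carries the nonzero initial data $\omega_{\bn}|_{t=0}$ (since the lifting has zero initial data), which is the source of the $Y_m^2(0)$ term on the right---your ``zero initial condition for the time-derivative parts'' is not correct; (ii) the paper does the reduction to $\omega\times\bn$ directly from $\nabla^{\vp}u$ via the algebraic identities \eqref{norpz}--\eqref{tanpz}, rather than first splitting $u=v+\nabla^{\vp}\Psi$ as you do, but this is an immaterial difference.
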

\begin{proof}
 By the identities \eqref{tanpz} and
\beqs
\begin{aligned}
|\bN|\Pi(\p_z^{\vp}u)&=\Pi(\p_{\bn}^{\vp}u-\bn_1\p_1u-\bn_2\p_2u)\\
&=\omega\times \bn+\Pi\big((\nabla^{\vp}u)^{t}\cdot\bn-\bn_1\p_1u-\bn_2\p_2u\big)\\
&=\omega\times \bn+\Pi(\p_1u\cdot\bn,\p_2 u\cdot\bn,0)^{t}-\Pi(\bn_1\p_1u+\bn_2\p_2u),
\end{aligned}
\eeqs
we have that:
\beqs
\|\nabla^{\vp}u\|_{L_t^{\infty}H_{co}^{m-4}}\lesssim \Lambda\big(\f{1}{c_0},|h|_{m-2,\infty,t}\big) \|u\|_{L_t^{\infty}H_{co}^{m-3}}+\|(\omega\times \bn,\div^{\vp}u)\|_{L_t^{\infty}H_{co}^{m-4}},
\eeqs
\begin{align*}
\|\ep^{\f{1}{2}}\pt\nabla^{\vp}u\|_{L_t^{\infty}H_{co}^{m-4}}&\lesssim \Lambda\big(\f{1}{c_0},|h|_{m-2,\infty,t}\big) \|\ep^{\f{1}{2}}\pt u\|_{L_t^{\infty}H_{co}^{m-3}}+\|\ep^{\f{1}{2}}\pt(\omega\times \bn,\div^{\vp}u)\|_{L_t^{\infty}H_{co}^{m-4}}\\
&\qquad+(T+\ep)^{\f{1}{2}}\lae.
\end{align*}
Therefore, \eqref{nablau-LinftyL2}
can be derived from the estimate 
\eqref{ptu-free-Linfty},
Lemma \ref{lemnablasigma} for $\div^{\vp}u,$ Lemma \ref{lemhigh-v} for $v,$ Lemma \ref{lemsurface} for $h$
as well as the next lemma for $\omega\times\bn.$ 
\end{proof}
\begin{lem}\label{lemomegatimesbn}
Suppose that Assumption \eqref{preassumption} is true,
then the following estimate holds:
\begin{equation}\label{omegatimesbn}
\begin{aligned}
&\|\omega\times\bn\|_{L_t^{\infty}H_{co}^{m-4}}^2+\|\ep^{\f{1}{2}}\pt(\omega\times\bn)\|_{L_t^{\infty}H_{co}^{m-4}}^2\lesssim Y^2_{m}(0)+(T+\ep)^{\f{1}{4}}\lae\\
&\qquad\qquad\qquad\qquad\qquad+\Lambda\big(\f{1}{c_0},|h|_{m-2,\infty,t}\big) 
\|(v,\ep^{\f{1}{2}}\pt v, \ep^{\f{1}{2}}\nabla u)\|_{L_t^{\infty}H_{co}^{m-2}}^2.
\end{aligned}
\end{equation}
\end{lem}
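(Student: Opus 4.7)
The plan is to follow the strategy outlined in Step 5 of Subsection \ref{secsketchproof}: introduce a modified vorticity whose boundary trace depends only on the compressible part $\nabla^{\vp}\Psi$, then split it into a heat-kernel lifting of the boundary data plus a remainder estimated by direct energy methods. Throughout we work on the equation obtained by taking the curl of \eqref{eqofv2} (or equivalently of $\eqref{FCNS2}_2$), which gives a transport--diffusion equation for $\omega=\curl^{\vp}u=\curl^{\vp}v$ with source terms involving $\nabla^{\vp}\pi$, $f$, $\nabla^{\vp}q$ and various commutators; all these source terms have already been controlled in \eqref{es-f}--\eqref{comtime} and previous lemmas.

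First, using the boundary identity \eqref{omegatimesn} and the splitting $u=v+\nabla^{\vp}\Psi$, define
\[
\omega_{\bn}:=\omega\times\bn+2\Pi(\p_1 v\cdot\bn,\p_2 v\cdot\bn,0)^{t},
\]
so that on $\{z=0\}$ one has $\omega_{\bn}|_{z=0}=-2\Pi(\p_1\nabla^{\vp}\Psi\cdot\bn,\p_2\nabla^{\vp}\Psi\cdot\bn,0)^{t}$, which involves only tangential derivatives of the compressible part. The correction $2\Pi(\p_1 v\cdot\bn,\p_2v\cdot\bn,0)^{t}$ is harmless in the interior since its $H_{co}^{m-4}$ norm and its $\ep^{\f12}\pt$ version are directly bounded by $\Lambda(1/c_0,|h|_{m-2,\infty,t})\|(v,\ep^{\f12}\pt v)\|_{L_t^{\infty}H_{co}^{m-3}}$, i.e.\ by the right hand side of \eqref{omegatimesbn}.

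Next, split $\omega_{\bn}=\omega_{\bn}^{h}+\omega_{\bn}^{r}$, where $\omega_{\bn}^{h}$ solves the linear heat equation
\[
\bar{\rho}\,\p_t\omega_{\bn}^{h}-\mu\Delta\omega_{\bn}^{h}=0\quad\text{in }\mS,\qquad \omega_{\bn}^{h}|_{t=0}=0,\qquad \omega_{\bn}^{h}|_{z=0}=\omega_{\bn}|_{z=0},
\]
with a homogeneous Neumann (or Robin, as dictated by the problem at $z=-1$) condition on the lower boundary. For the lifting $\omega_{\bn}^{h}$ I plan to use the heat-kernel estimate recalled in \eqref{heateq}, but in its $L_t^{\infty}H_{co}^{m-4}$ version: the classical semigroup bound gives, for any small $T$,
\[
\|(\mathrm{Id},\ep^{\f12}\pt)\omega_{\bn}^{h}\|_{L_t^{\infty}H_{co}^{m-4}}\lesssim T^{\f14}\,|(\mathrm{Id},\ep^{\f12}\pt)(\nabla^{\vp}\Psi)^{b,1}|_{L_t^{\infty}\tilde{H}^{m-3}}\,\Lambda\!\bigl(\tfrac{1}{c_0},|h|_{m-2,\infty,t}\bigr),
\]
and by the trace inequality \eqref{trace} together with \eqref{sec-normal-Psi}, \eqref{psiLinfty} and Lemma \ref{sigmainduction1}, the right hand side is bounded by $(T+\ep)^{\vartheta}\Lambda(1/c_0,\cN_{m,T})$ plus the controlled quantity $\Lambda(1/c_0,|h|_{L_t^{\infty}\tilde H^{m-\f12}}^2+Y_m(0)^2)Y_m(0)^2$.

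For the remainder $\omega_{\bn}^{r}$, which satisfies the same transport--diffusion equation as $\omega\times\bn$ with an explicit source and \emph{homogeneous} Dirichlet boundary condition on $\{z=0\}$ (the bottom contribution is handled via \eqref{curlwdotn}), I will perform standard conormal energy estimates at order $m-4$ and $m-4$ (with one $\ep^{\f12}\p_t$). The symmetric structure is preserved under the conormal vector fields since no singular term appears in the vorticity equation, so the estimates reduce to the product/commutator bounds of Section 3 and the source bounds \eqref{es-f}, \eqref{q}, \eqref{pi}--\eqref{comtime}; all of these have been shown to be $\lae$. The order $m-4$ is small enough that one can safely use $\|\nabla v\|_{L_t^2H_{co}^{m-1}}$ from Lemma \ref{lemhigh-v} and the $\ep^{\f12}\pt\nabla v$ estimate from Lemma \ref{lemloworder-v} to close the argument.

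The main obstacle I anticipate is the consistent treatment of the boundary data at $z=-1$: there the Dirichlet lifting is not directly available, so one must build $\omega_{\bn}^{h}$ so as to also absorb the Neumann-type contribution coming from \eqref{curlwdotn} (which only involves tangential derivatives of $u$ on the bottom, hence is controlled). A secondary technical point is ensuring that the commutator $[\p_z,\cdot]$ terms produced when measuring $\omega_{\bn}$ in $H_{co}^{m-4}$ do not require more regularity of $h$ than $|h|_{L_t^{\infty}\tilde{H}^{m-\f12}}$; this follows from \eqref{fpzphi1} and the fact that $m-4\leq m-3$ is well below the maximal conormal order. With these ingredients in place, summing the bounds for $\omega_{\bn}^{h}$, $\omega_{\bn}^{r}$ and the correction term yields \eqref{omegatimesbn}.
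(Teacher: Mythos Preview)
Your high-level strategy is exactly that of the paper: introduce $\omega_{\bn}$ so that its boundary trace on $\{z=0\}$ only sees the compressible part $\nabla^{\vp}\Psi$, then split into a heat lifting of the boundary data plus a remainder with homogeneous Dirichlet condition. The observation that the correction $2\Pi(\p_1 v\cdot\bn,\p_2 v\cdot\bn,0)^t$ is directly controlled by $\|(v,\ep^{\f12}\pt v)\|_{L_t^\infty H_{co}^{m-3}}$ is also right.

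There is, however, a genuine technical gap in the implementation of the lifting. You take $\omega_{\bn}^h$ to solve $(\bar\rho\,\p_t-\mu\Delta)\omega_{\bn}^h=0$ in $\mS$ with the flat Laplacian, while the actual equation for $\omega_{\bn}$ involves $\p_t^{\vp}$ and $\Delta^{\vp}$. Consequently the remainder carries the discrepancy $(\Delta^{\vp}-\Delta)\omega_{\bn}^h$, whose principal part is $\bigl(\tfrac{|\bN|^2}{|\p_z\vp|^2}-1\bigr)\p_z^2\omega_{\bn}^h$. In the energy estimate at order $m-4$ this forces you (after one integration by parts) to control $\|\p_z\omega_{\bn}^h\|_{L_t^2H_{co}^{m-4}}$; a direct heat-kernel computation shows this norm does \emph{not} come with a factor $T^{\vartheta}$ (the relevant kernel norm behaves like $(t-s)^{-5/4}$, which is not locally integrable), so the estimate cannot close. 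Replacing $\p_z\omega_{\bn}^h$ by $\p_z\omega_{\bn}-\p_z\omega_{\bn}^r$ does not help either, since $\|\p_z\omega_{\bn}\|_{L_t^2H_{co}^{m-4}}\sim\|\nabla^2 u\|_{L_t^2H_{co}^{m-4}}$ is not uniformly bounded in $\ep$.

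The paper avoids this by first localizing with a cutoff $\chi$ near the upper boundary and then passing to \emph{normal geodesic coordinates} $\tilde{\Phi}_t$ (equation \eqref{geodesic}), in which the induced metric is block-diagonal with $g_{33}=1$; hence $\Delta_g=\p_z^2+\tfrac12\p_z(\ln|g|)\,\p_z+\Delta_{\tilde g}$. The lifting $\tilde\zeta_1$ is then taken to solve the \emph{one-dimensional} heat equation $(\bar\rho\,\p_t-\mu\p_z^2)\tilde\zeta_1=0$, which exactly cancels the second-order normal part. The remaining discrepancy in the equation for $\tilde\zeta_2$ consists only of (i) first-order normal terms of the form $\p_z(\ln|g|)\,\p_z\tilde\zeta$ (with the \emph{full} $\tilde\zeta$, controlled via $\|\nabla u\|_{L_t^2H_{co}^{m-4}}$ after writing it as a total $\p_z$ and integrating by parts) and (ii) tangential second-order terms $\Delta_{\tilde g}\tilde\zeta_1$, for which the bound $\|\tilde\zeta_1\|_{L_t^2H_{co}^{m-3}}\lesssim T^{1/4}\Lambda$ from the explicit 1D formula suffices. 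One also needs the equivalence of conormal norms under the change of variables $\Phi_t^{-1}\circ\tilde\Phi_t$ (Proposition \ref{propequiva-norm}). If you incorporate this coordinate change, your outline becomes the paper's proof; without it, the second-order normal discrepancy is the obstruction.
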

\begin{proof}
As explained in the introduction, although
$\omega\times \bn$ satisfies a transport-diffusion equation without singular terms, one cannot control it by direct energy estimates due to the lack of information of the trace of
$\omega\times \bn$ on the boundary. Since $$(\omega\times\bn)|_{z=0}=2\Pi(\p_1u\cdot \bn,\p_2u\cdot\bn,0)^{t}|_{z=0}.$$
A natural attempt in order to do energy estimates is to introduce the modified vorticity: $\tilde{\omega}=\omega\times \bn-\Pi(\p_1u\cdot \bn,\p_2u\cdot\bn,0)^{t}.$ Nevertheless, if taking this way,  
we are confronted with the original difficulty due to the presence of a singular term in the equation of $\omega\times\bn.$ However, 
since the singular term appears only in the equation of the compressible part of the velocity, it is still useful to introduce the following quantity:
\beq
\omega_{\bn}=\omega\times \bn-2\Pi(\p_1v\cdot \bn,\p_2v\cdot\bn,0)^{t}.
\eeq
where $v$ is the incompressible part of the velocity. As will be seen later, the main advantage to work on $\omega_{\bn}$ rather than $\omega\times \bn$ is that up to remainders, one can reduce the estimate of $\omega_{\bn}$ to that of the compressible part of the velocity and one can extract some extra power of $T$ in the  estimates, 
which is essential to establish the local existence on a uniform time interval. 

Since away from the boundary, the conormal space is equivalent to the standard Sobolev space, it suffices to estimate $\omega_{\bn}$ near the boundary.
In the following, we shall focus on its control near the surface,
the case near the bottom is similar (and is even simpler, one can refer to \cite{masmoudi2021uniform} for details). To overcome the difficulty resulting from the nontrivial boundary condition, the general strategy to get a uniform estimate for $\omega_{\bn}$ is to split its system into two systems, one  carries on  the nonlinear terms and the initial data but with trivial Dirichlet boundary condition, while the other one is just a free heat equation with zero initial data and nontrivial Dirichlet boundary condition. The first system
can be treated by direct energy estimates because of the homogeneous Dirichlet boundary condition. The analysis of the second system relies on the explicit formulae for the heat equation in the half-space. 

To use the explicit formulae of the heat equation in the half-space, it is convenient to use a coordinate system in which the Laplacian has a good form.
We thus use the following normal geodesic coordinates:
\begin{equation}\label{geodesic}
   \begin{aligned}
   \tilde{\Phi}_t:
\quad \mS_{\kappa}=&\mathbb{R}^2\times [-\kappa,0]\longrightarrow \Omega_t\\
& \qquad (y,z) \rightarrow \left(\begin{array}{l}
    y\\
    h(t,y)
\end{array}\right)+z\bn^{b,1}(y)
 \end{aligned} 
\end{equation}
where $\bn^{b,1}=\f{\bN^{b,1}}{|\bN^{b,1}|}=
(-\p_1h,-\p_2 h,1)/{\sqrt{1+|\nabla h|^2}}$ denotes
the outward normal vector.
Straightforward computations show that:
\beqs
\D\tilde{\Phi}_t= \Bigg(\begin{array}{ccc}
    1 &0& \bn_1^{b,1}  \\
    0 &1& \bn_2^{b,1}  \\
    \p_1 h&\p_2 h& \bn_3^{b,1}
\end{array}\Bigg)
+z\Bigg(\begin{array}{ccc}
    \p_1\bn_1^b& \p_2\bn_1^b & 0  \\
    \p_1\bn_2^b&\p_2\bn_2^b& 0  \\
   \p_1\bn_3^b&\p_2\bn_3^b& 0
\end{array}\Bigg)
\eeqs
Therefore, as long as $|h|_{2,\infty,T}<+\infty,$ and $\kappa$ small enough, we have that: $\det(D\tilde{\Phi}_t)>0$ on $[0,T]\times \mS_{\kappa},$
hence $\tilde{\Phi}_t$ is a diffeomorphism between $\mS_{\kappa}$ and $\tilde{\Phi}_t(\mS_{\kappa}).$
The Riemann metric induced by the pullback of the Euclidean 
metric in $\Omega_t$ through $\tilde{\Phi}_t^{-1}$ has the 
block structure:
\beqs
g(y,z)=\left(
\begin{array}{cc}
   \tilde{g}(y,z) & 0  \\
    0 & 1 
\end{array}
\right)
\eeqs
where $\tilde{g}$ is a matrix that depends on the gradient of $\tilde{\Phi}_t.$
Therefore, the Laplacian in
this metric takes the form:
\beq\label{Riemannmetric}
\Delta_{g}f=\p_z^2 f+\f{1}{2}\p_z(\ln |g|)\p_z f+\Delta_{\tilde{g}}f, 
\eeq
where 
\beqs
\Delta_{\tilde{g}}f=\f{1}{|\tilde{g}|^{\f{1}{2}}}\sum_{1\leq i,j\leq 2}\p_{y^i}(\tilde{g}^{ij}|\tilde{g}|^{\f{1}{2}}\p_{y^j}f)\quad |\tilde{g}|=\det \tilde{g}.
\eeqs
We take a cut off function $\chi=\chi_0(\f{z}{C(\kappa)}),$ where $\chi_0(s): \mathbb{R}_{-}\rightarrow \mathbb{R}$ is a smooth function supported on $[-\f{3}{4},0]$ and equal to $1$ on the interval $[-\f{1}{2},0],$ $C(\kappa)$ is chosen such that $\Phi_t(\mathbb{R}^2\times[-C_{\kappa},0])\subset \tilde{\Phi}_t(\mS_{\kappa}),$
the following task is to  estimate  $\chi\omega_{\bn}.$ Let us begin with the derivation of the equations satisfied by $\chi
\omega_{\bn}.$
First of all, by taking the curl of $\eqref{FCNS2}_2$, we find that $\omega=\curl^{\vp} u$ solves:
\beq\label{eq-omega-free}
(\bar{\rho}\p_t^{\vp}-\mu\Delta^{\vp})\omega=G^{\omega}
\eeq
with 
$$G^{\omega}=-u\cdot\nabla^{\vp}\omega+\omega\cdot\nabla^{\vp}u-\omega\div^{\vp}u-\f{\nabla g_2}{\ep}\times ((\ep\pt+\ep\underline{u}\cdot\nabla)u)+\f{\bar{\rho}-g_2}{\ep}((\ep\pt+\ep\underline{u}\cdot\nabla)\omega).$$
Hence $\chi\omega\times\bn$
is governed by:
\beqs
(\bar{\rho}\p_t^{\vp}-\mu\Delta^{\vp})(\chi\omega\times\bn)=G_{\chi}^{\omega}
\eeqs
with
\beq\label{Gchiomega}
G_{\chi}^{\omega}=
\chi G^{\omega}\times \bn
-\mu\Delta^{\vp}(\chi\bn)\omega-2\mu\nabla^{\vp}\omega \times \nabla^{\vp}(\chi\bn)+\bar{\rho}\omega\times \pt^{\vp}(\chi\bn).
\eeq
By \eqref{eqofv2}, $v$ satisfies the equation:
\beqs
\bar{\rho}\pt^{\vp}v-\mu\Delta^{\vp} v=-(f+\nabla^{\vp}q+\bar{\rho}[\bbp,\pt^{\vp}]u)-\nabla^{\vp}\pi=\colon H,
\eeqs
which gives:
\beqs
(\bar{\rho}\pt^{\vp}-\mu\Delta^{\vp})(\p_j v\cdot\bN)=L_j
\eeqs
with 
$$L_j=[\p_j H +\p_j\big(\f{\pt\vp}{\p_z\vp}\big)\p_z v-\mu[\p_j,\Delta^{\vp}]v]\cdot\bN+\bar{\rho}\p_j v\cdot \p_t^{\vp}\bN-2\mu\nabla^{\vp}\p_j v\cdot\nabla^{\vp}\bN-\mu\Delta^{\vp}\bN\cdot\p_j v.$$
Denote $\varsigma=2(\p_1v\cdot\bN,\p_2v\cdot\bN,0)^{t}, L=(L_1,L_2,0)^{t}.$
Therefore, by recalling the definition of
projection $\Pi=\text{Id}_3-\bn\otimes\bn,$
it holds that:
\beqs
(\bar{\rho}\pt^{\vp}-\mu\Delta^{\vp})(\chi\Pi\varsigma)=G_{\chi}^{\varsigma}
\eeqs
where
\beq\label{Gchitheta}
G_{\chi}^{\varsigma}=2\chi\Pi L+\bar{\rho}\chi[\pt,\Pi]\varsigma
-\bar{\rho}\chi\f{\pt\vp}{\p_z\vp}[\p_z,\Pi]\varsigma+\mu\chi[\Pi,\Delta^{\vp}]\varsigma+\bar{\rho}[\pt^{\vp},\chi]\Pi\varsigma+\mu[\chi,\Delta^{\vp}]\Pi\varsigma.
\eeq
We thus finally find that:
\beq
(\bar{\rho}\pt^{\vp}-\mu\Delta^{\vp})(\chi\omega_n)=G_{\chi}^{\varsigma}+G_{\chi}^{\omega}.
\eeq
For the sake of notational simplicity, we
denote
$\zeta=\chi\omega_{\bn}, G_{\chi}^{\zeta}=G_{\chi}^{\varsigma}+G_{\chi}^{\omega}.$
Consider $$\tilde{\zeta}(t,x)=\zeta(t,\Phi_{t}^{-1}\circ \tilde{\Phi}_t(x)),$$
then $\tilde{\zeta}:[0,T]\times
\mS_{\kappa}\rightarrow \mathbb{R}$ solves the system:
\beqs
\left\{
\begin{array}{l}
    (\bar{\rho}\pt-\mu\Delta_g)\tilde{\zeta}=\widetilde{G_{\chi}^{\zeta}}+
   \bar{\rho} (\D\tilde{\Phi}_t)^{-1}\pt\tilde{\Phi}_t\cdot\nabla\tilde{\zeta}, \\
  \tilde{\zeta}|_{t=0}=\zeta(\Phi_0^{-1}\circ \tilde{\Phi}_0),\\
 \tilde{\zeta}|_{z=0}=-2\Pi(\p_1 \nabla^{\vp}\Psi\cdot \textbf{n},\p_2 \nabla^{\vp}\Psi\cdot \textbf{n},0)^{t}|_{z=0}.
\end{array}
\right.
\eeqs
where $\Delta_g$ is defined in \eqref{Riemannmetric}. Since $\tz$ vanishes in the vicinity of  $\{z=-\kappa\},$ we can extend it by zero to the whole lower half space $\mathbb{R}_{-}^3.$
Denote 
$$\|f\|_{L_t^{p}H_{co}^{k}(\mathbb{R}_{-}^3)}=\sum_{|\alpha|\leq k}\|Z^{\alpha}f\|_{L_t^p L^2(\mathbb{R}_{-}^3)}.$$
By Proposition \ref{propequiva-norm}, we have:
\beqs
\|\zeta\|_{L_t^{\infty}H_{co}^{m-4}(\mS)}\lesssim \|\zeta\|_{L_t^{\infty}H_{co}^{m-4}(\mathbb{R}_{-}^3)}\lesssim \Lambda(\f{1}{c_0},|h|_{m-2,\infty,t}) \|\tz\|_{L_t^{\infty}H_{co}^{m-4}(\mathbb{R}_{-}^3)},
\eeqs
\beqs
\begin{aligned}
&\|\ep^{\f{1}{2}}\pt\zeta\|_{L_t^{\infty}H_{co}^{m-4}(\mS)}\lesssim \|\ep^{\f{1}{2}}\pt\zeta\|_{L_t^{\infty}H_{co}^{m-4}(\mathbb{R}_{-}^3)}\\
&\lesssim \Lambda(\f{1}{c_0},|h|_{m-2,\infty,t})(\|\ep^{\f{1}{2}}\pt\tz\|_{L_t^{\infty}H_{co}^{m-4}(\mathbb{R}_{-}^3)}+\ep^{\f{1}{2}}\|\tz\|_{L_t^{\infty}H_{co}^{m-3}(\mathbb{R}_{-}^3)})+\ep^{\f{1}{2}}\lae,
\end{aligned}
\eeqs
\begin{align*}
\ep^{\f{1}{2}}\|\tz\|_{L_t^{\infty}H_{co}^{m-3}(\mathbb{R}_{-}^3)})&\lesssim  \Lambda(\f{1}{c_0},|h|_{m-2,\infty,t})\|\ep^{\f{1}{2}}\zeta\|_{L_t^{\infty}H_{co}^{m-3}(\mS)}\\
&\lesssim   \Lambda(\f{1}{c_0},|h|_{m-2,\infty,t})\|\ep^{\f{1}{2}}\nabla u\|_{L_t^{\infty}H_{co}^{m-3}(\mS)}
+(T+\ep)^{\f{1}{2}}\lae.
\end{align*}
Therefore, \eqref{omegatimesbn} follows from the estimate:
\beq\label{esoftz}
\|(\tz,\ep^{\f{1}{2}}\pt\tilde{\zeta})\|_{L_t^{\infty}H_{co}^{m-4}(\mS)}\lesssim Y^2_{m}(0)+(T+\ep)
^{\f{1}{4}}\lae,
\eeq
which is the consequence of Lemma \ref{lemtz1} and Lemma \ref{lemtz2}.
\end{proof}
\begin{prop}\label{propequiva-norm}
Suppose that $\cT_t:\mathbb{R}_{-}^3\rightarrow\mathbb{R}_{-}^3$ is a $C^{m-3}$ diffeomorphism with $\cT_t(y,0)=y, \forall y\in\mR^2.$
For any function $f(t,\cdot)$ which supported on $\mS_{\kappa},$ and for $p=2,+\infty,$ it holds that
\beq\label{Linftyeq-free}
\il f(s,\cT_s
\cdot)\il_{k,\infty,t}\lesssim \Lambda(\il(\cT ,\p_z\cT)\il_{k,\infty,t}) \il f\il_{k,\infty,t},
\eeq
\beq\label{equiva-norm0}
\|f(s,\cT_s
\cdot)\|_{L_t^{p}H_{co}^k(\mathbb{R}_{-}^3)}\lesssim \Lambda(\il(\cT ,\p_z\cT)\il_{k,\infty,t}) \|f\|_{L_t^{p}H_{co}^k(\mathbb{R}_{-}^3)},
\eeq
\beq\label{equiva-norm1}
\begin{aligned}
&\|\ep^{\f{1}{2}}\partial_s  [f(s,\cT_s
\cdot)]\|_{L_t^{p}H_{co}^k(\mathbb{R}_{-}^3)}\lesssim \Lambda(\il (\cT,\p_z\cT)\il_{k,\infty,t}
) \|\ep^{\f{1}{2}}(\pt,\cZ) f\|_{L_t^{p}H_{co}^k(\mathbb{R}_{-}^3)}\\
&+\ep^{\f{1}{2}}\Lambda(\il \pt (\cT,\p_z\cT)\il_{k-1,\infty,t})\|f\|_{L_t^pH_{co}^k(\mathbb{R}_{-}^3)}+\il \cZ \tz\il_{0,\infty,t}\Lambda(\|\pt\p_z\cT\|_{L_t^{\infty}H_{co}^k})
\end{aligned}
\eeq
where we denote $\cZ=(\p_{y_1},\p_{y_2},Z_3)$ the spatial conormal derivatives.
\end{prop}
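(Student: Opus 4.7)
The plan is to exploit the fact that $\cT_t$ preserves the boundary $\{z=0\}$ so that the conormal structure in $\mathbb{R}_{-}^3$ is preserved under pullback. Writing $\cT_t(y,z)=(\cT^y(t,y,z),\cT^z(t,y,z))$, the assumption $\cT_t(y,0)=y$ gives $\cT^z(t,y,0)=0$, hence $\cT^z(t,y,z)=z\,\Theta(t,y,z)$ with $\Theta(t,y,0)=\p_z\cT^z(t,y,0)$ bounded below by a positive constant (by invertibility of $D\cT_t$ under the control of $\p_z\cT$). Since the weight $\phi$ satisfies $\phi(s)\sim s$ near $0$, the auxiliary ratio $\phi(z)/\phi(\cT^z(t,y,z))$ extends as a smooth function uniformly bounded polynomially in $\il(\cT,\p_z\cT)\il_{0,\infty,t}$; the analogous statement holds for $(Z^\beta\cT^z)/\phi(\cT^z)$ and for $\phi(z)\p_z\cT^z/\phi(\cT^z)$ because $Z^\beta\cT^z$ inherits the vanishing factor of $z$ (resp.\ $\phi(z)$) from $\cT^z$.

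First I would establish \eqref{Linftyeq-free} and \eqref{equiva-norm0} jointly by induction on $k$. Set $g(t,x)=f(t,\cT_t(x))$. The chain rule reads, schematically, for $i=1,2$,
\begin{equation*}
Z_i g=\sum_{j=1,2}(Z_i\cT^y_j)\,(Z_j f)\!\circ\!\cT_t+\frac{Z_i\cT^z}{\phi(\cT^z)}\,(Z_3 f)\!\circ\!\cT_t,
\end{equation*}
and $Z_3 g=\phi(z)\p_z g$ rewrites analogously via the same substitution $\p_{X_3}f=(Z_3 f)/\phi(X_3)$. In the tangential identity the coefficient $Z_i\cT^z$ vanishes like $z$, cancelling the singularity of $1/\phi(\cT^z)$; in the normal identity the product $\phi(z)\p_z\cT^z/\phi(\cT^z)$ is bounded by the ratio argument above. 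Iterating gives $Z^\alpha g$ as a finite sum $\sum (Z^{\beta_1}\cT)\cdots(Z^{\beta_\ell}\cT)\,(Z^\gamma f)\!\circ\!\cT_t$ with uniformly bounded multiplicative coefficients depending polynomially on $\il(\cT,\p_z\cT)\il_{|\alpha|,\infty,t}$. Inequality \eqref{Linftyeq-free} is then immediate; \eqref{equiva-norm0} follows by applying the change of variables $\int |F(\cT_t\cdot)|^2\,dx=\int|F|^2|\det D\cT_t^{-1}|\,dX$, whose Jacobian is pinched between two positive constants since $\p_z\cT^z\geq c>0$.

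For \eqref{equiva-norm1}, I would differentiate $g(s,x)=f(s,\cT_s(x))$ in time to get $\pt g=(\pt f)\!\circ\!\cT_s+\pt\cT_s\cdot(\nabla_X f)\!\circ\!\cT_s$. The first term is handled by \eqref{equiva-norm0} applied to $\ep^{1/2}\pt f$. For the second, since $\cT_t(y,0)=y$ for all $t$, one has $\pt\cT_t(y,0)=0$, so both $\pt\cT^y_j$ and $\pt\cT^z=z\,\pt\Theta$ vanish at $z=0$; rewriting the components of $\pt\cT_s\cdot(\nabla_X f)\!\circ\!\cT_s$ as $\pt\cT^y_j\,(Z_j f)\!\circ\!\cT_s$ and $(\pt\cT^z/\phi(\cT^z))(Z_3 f)\!\circ\!\cT_s$ shows that only conormal derivatives of $f$ appear, paired with coefficients bounded in terms of $\pt\cT$. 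Applying $Z^\alpha$ and expanding by Leibniz, the term where all derivatives fall on $(\nabla_X f)\!\circ\!\cT_s$ yields $\Lambda(\il(\cT,\p_z\cT)\il_{k,\infty,t})\|\ep^{1/2}(\pt,\cZ)f\|_{L_t^p H_{co}^k}$ via \eqref{equiva-norm0}; the terms where $Z^\alpha$ splits non-trivially contribute the middle term $\ep^{1/2}\Lambda(\il\pt(\cT,\p_z\cT)\il_{k-1,\infty,t})\|f\|_{L_t^p H_{co}^k}$, since at least one conormal derivative remains on $f$ and the other factor is an $L^\infty$-controlled quantity involving $\pt(Z^{\beta}\cT)$ with $|\beta|\leq k-1$; the extremal term where the entirety of $Z^\alpha$ falls on $\pt\cT$ leaves a factor $(\nabla_X f)\!\circ\!\cT_s$ estimated in $L^\infty$ (which by \eqref{Linftyeq-free} amounts to $\il \cZ\tz\il_{0,\infty,t}$) multiplied by $\|Z^\alpha\pt\cT\|_{L_t^\infty L^2}$, producing the last term $\il\cZ\tz\il_{0,\infty,t}\,\Lambda(\|\pt\p_z\cT\|_{L_t^\infty H_{co}^k})$.

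The main obstacle I anticipate is the careful bookkeeping in the last estimate: tracking exactly how derivatives distribute in the Leibniz expansion so as to pair each factor with the sharpest available norm, and verifying that all auxiliary ratios $(Z^\beta\cT^z)/\phi(\cT^z)$ and $(\pt Z^\beta\cT^z)/\phi(\cT^z)$ extend as smooth bounded functions of the relevant order, with the prescribed polynomial dependence on $\cT$ and $\pt\cT$. This is standard Moser-type counting, but must respect the conormal weight near $\{z=0\}$ and the non-trivial interaction between the source and target normal coordinates.
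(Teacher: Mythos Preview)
Your proposal is correct and is precisely the argument the paper has in mind: the paper's own proof reads in full ``The proof of this lemma just follows from the Leibniz rule, we thus omit the proof.'' You have supplied the omitted details, namely the chain-rule expansion of conormal derivatives under composition with a boundary-preserving diffeomorphism, the control of the weight ratios $\phi(z)/\phi(\cT^z)$ and $Z^\beta\cT^z/\phi(\cT^z)$ via the factorization $\cT^z=z\,\Theta$, and the Leibniz bookkeeping for the time-differentiated estimate.
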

\begin{rmk}\label{rmk-changevariable}
Since $\Phi_t^{-1}\circ \tilde{\Phi}_t=\Phi_t^{-1}(t, y_1+z\bn_1^{b,1},y_2+z\bn_2^{b,1},h+z\bn_3^{b,1}),$ and $|D\Phi_t^{-1}|\lesssim |h|_{1,\infty,t},$ we have that:
\beqs
\il( \Phi_t^{-1}\circ \tilde{\Phi}_t,\p_z(\Phi_t^{-1}\circ \tilde{\Phi}_t))\il_{k,\infty,t}\lesssim \Lambda(\f{1}{c_0},|h|_{k+1,\infty,t}).
\eeqs
\end{rmk}
\begin{proof}
The proof of this lemma just follows from the Leibniz rule, we thus omit the proof. 
\end{proof}

As explained before, to show \eqref{esoftz}, we write $\tilde{\zeta}=\tilde{\zeta}_1+\tilde{\zeta}_2,$ where $\tilde{\zeta}_1,\tilde{\zeta}_2$ satisfy the following two systems:
\beq\label{tz1}
\left\{
\begin{array}{l}
   (\bar{\rho}\pt-\mu\p_z^2 )\tilde{\zeta}_1=0,\quad (t,x)\in [0,T]\times\mathbb{R}_{-}^3,  \\[5pt]
     \tilde{\zeta}_1|_{t=0}=0,\quad \tilde{\zeta}_1|_{z=0}=\tz|_{z=0}=-2\Pi(\p_1 \nabla^{\vp}\Psi\cdot \textbf{n},\p_2 \nabla^{\vp}\Psi\cdot \textbf{n},0)^{t}|_{z=0}.
\end{array}
\right.
\eeq
\beq\label{tz2}
\left\{
\begin{array}{l}
   (\bar{\rho}\pt-\mu\Delta_g)\tilde{\zeta}_2=\widetilde{G_{\chi}^{\zeta}}+\bar{\rho}\pt\tilde{\Phi}_t(\D\tilde{\Phi}_t)^{-1}\nabla\tilde{\zeta}+\f{1}{2}\mu\p_z(\ln|g|)\p_z\tilde{\zeta}_1-\mu\Delta_{\tilde{g}}\tilde{\zeta}_1,  \\[5pt]
     \tilde{\zeta}_2|_{t=0}= \tilde{\zeta}|_{t=0},\quad \tilde{\zeta}_2|_{z=0}=0.
\end{array}
\right.
\eeq
\begin{lem}\label{lemtz1}
Under the assumption \eqref{preassumption}, it holds that, for any $0<t\leq T,$
\beq\label{tz1-0}
\|(\tz_1,\ep^{\f{1}{2}}\pt \tz_1)\|_{L_t^{\infty}H_{co}^{m-4}(\mathbb{R}_{-}^3)}+\|(\tz_1,\ep^{\f{1}{2}}\pt\tz_1)\|_{L_t^{2}H_{co}^{m-3}(\mathbb{R}_{-}^3)}\lesssim T^{\f{1}{4}}\lae,
\eeq
\beq\label{tz1-00}
\|(\text{Id},\ep^{\f{1}{2}}\pt,\p_y,Z_3)\tz_1\|_{L^{\infty}([0,T]\times\mR_{-}^3)}
\lesssim \Lambda\big(\f{1}{c_0},\cN_{m,T}\big).
\eeq
\end{lem}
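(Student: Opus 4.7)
\textbf{Proof plan for Lemma \ref{lemtz1}.}
The equation for $\tilde\zeta_1$ is a one-dimensional heat equation in the normal variable $z$, with the horizontal variables $y$ and time playing the roles of parameters. The plan is therefore to write $\tilde\zeta_1$ explicitly via the half-space Green's function and then estimate the various conormal norms by combining kernel bounds with trace/elliptic control of the boundary datum. Concretely, for $z<0$, we have the representation
\begin{equation*}
\tilde\zeta_1(t,y,z)=\int_0^t G(t-s,z)\,\tilde\zeta_1|_{z=0}(s,y)\,\d s,\qquad G(t,z)=\frac{-z}{2\sqrt{\pi(\mu/\bar\rho)t^3}}\,e^{-\bar\rho z^2/(4\mu t)},
\end{equation*}
and the kernel satisfies the two basic estimates $\|G(t,\cdot)\|_{L_z^2}\lesssim t^{-3/4}$ and $\int_0^{\infty}|G(t,z)|\,\d t=1$ uniformly in $z$, from which $\|G\|_{L_t^1L_z^2}\lesssim T^{1/4}$ on $[0,T]$.

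The first step is to reduce all conormal derivatives to purely tangential ones applied to the boundary datum. For $Z^\alpha=(\ep\pt)^{\alpha_0}\p_y^{\alpha'}$ with $\alpha_3=0$, the vector field $Z^\alpha$ commutes with $\bar\rho\pt-\mu\p_z^2$, and annihilates the zero initial data, so $Z^\alpha\tilde\zeta_1$ solves the same heat equation with boundary value $Z^\alpha(\tilde\zeta_1|_{z=0})$. For normal derivatives I use the equation to trade them for time derivatives: $\p_z^2=(\bar\rho/\mu)\pt$ on any $Z^\alpha\tilde\zeta_1$ with $\alpha_3=0$, and any one extra factor $Z_3=\phi(z)\p_z$ is estimated via the $L_z^2$--$L_z^\infty$ bound on $\p_z G$ together with the Hardy-type weight $\phi(z)\sim z$ near $z=0$. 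In this way every norm to be bounded is reduced to a space-time convolution of $G$ (or $\p_z G$) with derivatives of $b:=\tilde\zeta_1|_{z=0}$.

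Next, Minkowski (or Young for convolutions) applied to the reduced expressions yields
\begin{equation*}
\|(\mathrm{Id},\ep^{1/2}\pt)\tilde\zeta_1\|_{L_T^\infty H_{co}^{m-4}(\mR_-^3)}+\|(\mathrm{Id},\ep^{1/2}\pt)\tilde\zeta_1\|_{L_T^2 H_{co}^{m-3}(\mR_-^3)}\lesssim T^{1/4}\,|(\mathrm{Id},\ep^{1/2}\pt) b|_{L_T^\infty\widetilde H^{m-3}},
\end{equation*}
and similarly $\|(\mathrm{Id},\ep^{1/2}\pt,\p_y,Z_3)\tilde\zeta_1\|_{L^\infty_{t,x}}\lesssim |(\mathrm{Id},\ep^{1/2}\pt)b|_{L_T^\infty W^{1,\infty}}$ using $\int_0^\infty |G(s,z)|\,\d s=1$ and its analogue for $\p_z G$. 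The boundary datum is $b=-2\Pi(\p_1\nabla^\varphi\Psi\cdot\bn,\p_2\nabla^\varphi\Psi\cdot\bn,0)^t\big|_{z=0}$, so by the trace inequality \eqref{trace}, the product estimate \eqref{rough-product-bdry} and the elliptic bounds \eqref{sec-normal-Psi}, \eqref{psiLinfty}, \eqref{Linftynablapsi} it is controlled by $\Lambda(1/c_0,\cN_{m,T})$ in both $L_T^\infty\widetilde H^{m-3}$ and $L_T^\infty W^{1,\infty}$, which yields \eqref{tz1-0} and \eqref{tz1-00}.

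The main technical obstacle I expect is bookkeeping: the normal vector field $Z_3=\phi(z)\p_z$ does not commute with $\p_z^2$ and the coefficient $\phi$ must be shuffled carefully when trading $Z_3^2$ for $(\bar\rho/\mu)\ep^{-1}\ep\pt$, and in the process one must keep track of the half-derivative loss in the trace (absorbed against the extra regularity of $\nabla\nabla^\varphi\Psi$ coming from the elliptic estimate) as well as of the factor $\ep^{1/2}$, which is ultimately provided by the slightly well-prepared assumption through the bound on $\ep^{1/2}\pt\div^\varphi u$. Once these are tracked, the $T^{1/4}$ gain from $\|G\|_{L_t^1L_z^2}$ is the decisive ingredient that produces the $(T+\ep)^{1/4}$ prefactor in \eqref{tz1-0}.
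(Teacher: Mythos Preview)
Your overall strategy --- explicit half-line heat kernel, reduction to the boundary trace, control of the trace via elliptic estimates on $\nabla^\varphi\Psi$ --- is exactly the paper's approach, and the $L^\infty$ estimate \eqref{tz1-00} and the purely tangential part of \eqref{tz1-0} go through as you describe.

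The gap is in your treatment of multiple $Z_3$ factors for the $H_{co}^{m-4}$ and $H_{co}^{m-3}$ norms. Trading $Z_3^2\tilde\zeta_1\to(\bar\rho/\mu)\phi^2\partial_t\tilde\zeta_1$ (plus lower order) via the equation introduces an \emph{unweighted} time derivative. Since the conormal field is $Z_0=\ep\partial_t$, each such trade costs a factor $\ep^{-1}$; the weight $\phi^2\sim z^2$ you gain only contributes an extra power of $T$ in the convolution, not of $\ep$. Concretely, for $Z_3^{2k}$ one obtains (up to lower order) $\ep^{-k}\phi^{2k}Z_0^k\tilde\zeta_1$, and after Minkowski the bound reads $\ep^{-k}T^{k+1/4}\,|Z_0^kb|_{L_t^\infty L_y^2}$. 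Only $(\ep\partial_t)^j b$ is uniformly bounded, not $\partial_t^j b$, so this blows up as $\ep\to 0$ already for $k=1$, which is needed as soon as $\gamma_3\ge 2$ (hence for every $m\ge 7$).

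The paper bypasses this by leaving all $Z_3$ on the kernel: in the representation
\[
Z^\gamma\tilde\zeta_1(t,y,z)=\int_0^t\frac{2\tilde\mu}{(4\pi\tilde\mu(t-s))^{1/2}}\,Z_3^{\gamma_3}\partial_z\big(e^{-z^2/(4\tilde\mu(t-s))}\big)\,Z_{tan}^{\gamma'}b(s,y)\,\d s,
\]
the key elementary observation (claim \eqref{claim}) is that $\|Z_3^{\ell}\partial_z(e^{-z^2/(4\tilde\mu t)})\|_{L_z^2}\lesssim t^{-1/4}$ \emph{uniformly in $\ell$}. This holds because $\phi(z)\sim z$ near the boundary, so $Z_3\approx z\partial_z$ is the scale-invariant Euler operator and preserves the self-similar profile of the heat kernel. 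All $Z_3$ derivatives therefore fall on the kernel at no cost, and only genuinely tangential derivatives hit $b$, with no loss of $\ep$. Replacing your $\partial_z^2\to\partial_t$ trade by this direct kernel estimate closes the argument.
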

\begin{proof}
 We present the estimates for $\ep^{\f{1}{2}}\pt\tz_1$ appearing in the inequality \eqref{tz1-0}, the estimates for $\tz_1$ is similar and easier.
Let $\gamma=(\gamma',\gamma_3)$ a multi-index such that $|\gamma|\leq m-4,$
$Z^{\gamma}=Z_{tan}^{\gamma'}Z_3^{\gamma_3}$ where $ Z_{tan}^{\gamma'}=
Z_0^{\gamma_0}Z_1^{\gamma_1}Z_2^{\gamma_2}.$
Taking $Z_{tan}^{\gamma'}$ on the equation of \eqref{tz1}, we get:
\beqs
\left\{
\begin{array}{l}
   (\bar{\rho}\pt-\mu\p_z^2 )(Z_{tan}^{\gamma'}\pt \tilde{\zeta}_1)=0,\quad (t,x)\in [0,T]\times\mathbb{R}_{-}^3,  \\
Z_{tan}^{\gamma'}\pt \tilde{\zeta}_1|_{t=0}=0,\quad Z_{tan}^{\gamma'}\pt\tilde{\zeta}_1|_{z=0}=Z_{tan}^{\gamma'}\pt\tz|_{z=0}.
\end{array}
\right.
\eeqs
By the explicit formulae of the heat equation on the half-line, we have that:
\beq\label{tz1-1}
\ep^{\f{1}{2}}Z^{\gamma}\pt \tz_1(t,y,z)=2\tilde{\mu}\ep^{\f{1}{2}}\int_0^t \f{1}{(4\pi\tilde{\mu}(t-s))^{\f{1}{2}}} Z_3^{\gamma_3}\p_z\big(e^{-\f{z^2}{4\tilde{\mu}(t-s)}}\big) Z_{tan}^{\gamma'}\pt\tz|_{z=0}(s,y)\,\d s
\eeq
where $\tilde{\mu}=\mu/{\bar{\rho}}.$ 
To continue, we need the following estimate whose proof is elementary and is left for the reader:
for any $l\geq 0$
\beq\label{claim}
\|Z_3^{l}\p_z(e^{-\f{z^2}{4\tilde{\mu}(t-s)}})\|_{L_z^2(0,\infty)}\lesssim  (t-s)^{-\f{1}{4}}.
\eeq
Now, taking the $L_z^2L_y^2$ norm of \eqref{tz1-1} and applying \eqref{claim},
we find that for any $0<t\leq T,$
\beqs
\ep^{\f{1}{2}}\|Z^{\gamma}\pt\tz_1\|_{L_t^{\infty}L^2(\mathbb{R}_{+}^3)}\lesssim T^{\f{1}{4}}|\ep^{\f{1}{2}}\pt \tz|_{z=0}|_{L_t^{\infty}\tilde{H}^{m-4}}.
\eeqs
By the trace inequality \eqref{trace} and the estimate \eqref{psiLinfty}, we get that:
\begin{equation*}
\begin{aligned}
|\ep^{\f{1}{2}}\pt \tz_1|_{z=0}|_{L_t^{\infty}\tilde{H}^{m-4}}&\lesssim |(\nabla^{\vp}\Psi,\ep^{\f{1}{2}}\pt\nabla^{\vp}\Psi)|_{L_t^{\infty}\tilde{H}^{m-3}}\Lambda\big(\f{1}{c_0}, |(h,\ep^{\f{1}{2}}\pt h)|_{m-3,\infty,t}\big)\\
&\lesssim\|\ep^{\f{1}{2}}\pt(\nabla^{\vp}\Psi, \nabla\nabla^{\vp}\Psi),(\nabla^{\vp}\Psi,\nabla\nabla^{\vp}\Psi)\|_{L_t^{\infty}H_{co}^{m-3}(\mS)}\Lambda\big(\f{1}{c_0}, |(h,\ep^{\f{1}{2}}\pt h)|_{m-3,\infty,t}\big)\\
&\lesssim\lae.
\end{aligned}
\end{equation*}
Combined the previous two inequalities, one finds:
\beqs
\|\ep^{\f{1}{2}}\pt\tz_1\|_{L_t^{\infty}H_{co}^{m-4}(\mR_{-}^3)}\lesssim T^{\f{1}{4}}\lae.
\eeqs
Similarly, by employing Young's inequality 
and the estimate \eqref{sec-normal-Psi}, 
 we obtain that:
 \begin{equation*}
\begin{aligned}
&\|\ep^{\f{1}{2}}\pt\tz_1\|_{L_t^{2}{H}_{co}^{m-3}(\mathbb{R}_{-}^3)}\lesssim T^{\f{1}{4}}|\ep^{\f{1}{2}}\pt \tz|_{z=0}|_{L_t^{2}\tilde{H}^{m-3}}\\
&\lesim T^{\f{1}{4}}\Lambda\big(\f{1}{c_0}, |(h,\ep \pt h|_{m-2,\infty,t}\big)\cdot\|\ep^{\f{1}{2}}\pt(\nabla^{\vp}\Psi, \nabla\nabla^{\vp}\Psi),\ep^{-\f{1}{2}}(\nabla^{\vp}\Psi, \nabla\nabla^{\vp}\Psi)\|_{L_t^{2}H_{co}^{m-2}(\mS)}\\
&\lesssim T^{\f{1}{4}}\lae.
\end{aligned}
 \end{equation*}
The above inequality then
leads to \eqref{tz1-0}. We now show the $L_{t,x}^{\infty}$ estimate \eqref{tz1-00}. It results from \eqref{tz1-1} that:
for any $t>0,z>0, j=1,2, Z^0=\text{Id}, Z^1=(\ep^{\f{1}{2}}\pt,\p_y),$
\beq\label{tz1Linfty1}
\begin{aligned}
\|Z_{tan}^j\tz_1(t,\cdot,z)\|_{L_y^{\infty}}&\leq \big|Z_{tan}^j\tz_1|_{z=0}\big|_{L_t^{\infty}L_y^{\infty}}\int_{0}^t \sqrt{2\pi^{-1}\tilde{\mu}^2} z^{-2}\big(\f{z^2}{2\tilde{\mu}(t-s)}\big)^{\f{3}{2}}
e^{-\f{z^2}{4\tilde{\mu}(t-s)}}\d s\\
&\leq C(\tilde{\mu})\big|Z_{tan}^j\tz_1|_{z=0}\big|_{L_t^{\infty}L_y^{\infty}}\lesssim \Lambda(\ep^{-\f{1}{2}}\il\nabla^{\vp}\Psi\il_{2,\infty,t}+|h|_{2,\infty,t}+|\ep^{\f{1}{2}}\pt h|_{1,\infty,t})
\end{aligned}
\eeq
where $C(\tilde{\mu})$ is a constant that depends only on $\tilde{\mu}.$
In the same fashion, we have
\beq\label{tz1Linfty2}
\begin{aligned}
\|Z_{3}\tz_1(t,\cdot,z)\|_{L_y^{\infty}}&\leq \big(\sqrt{2\pi^{-1}\tilde{\mu}^2} \phi(z)z^{-1}\int_{0}^t z^{-2}
P\big(\f{z}{\sqrt{2\tilde{\mu} s}}\big)\d s\big)
 \big|\tz_1|_{z=0}\big|_{L_t^{\infty}L_y^{\infty}}\\
&\leq C(\tilde{\mu})\big|\tz_1|_{z=0}\big|_{L_t^{\infty}L_y^{\infty}}\lesssim \Lambda(\il\nabla^{\vp}\Psi\il_{1,\infty,t}+|h|_{1,\infty,t}),
\end{aligned}
\eeq
where $P(z)=|(1-z^2)|z^3 e^{-z^2}.$
Note that $\phi(z)z^{-1}=(1+z)/(2-z)^2$ is uniformly bounded for $z>0.$
The proof of \eqref{tz1-00} is now finished.
\end{proof}
\begin{lem}\label{lemtz2}
Suppose that \eqref{preassumption} holds, for any $0<t\leq T,$ we have the following estimates:
\beq\label{EIoftz2}
\|\tz_2\|_{L_t^{\infty}H_{co}^{m-4}(\mR_{-}^3)}^2+\|(\nabla \tz_2,\ep^{\f{1}{2}}\pt\nabla\tz_2)\|_{L_t^{2}H_{co}^{m-4}(\mR_{-}^3)}^2\lesssim Y_{m}^2(0)+
T^{\f{1}{4}}\lae,
\eeq
\beq\label{EIoftz2-1}
\|\ep^{\f{1}{2}}\pt\tz_2\|_{L_t^{\infty}H_{co}^{m-4}(\mR_{-}^3)}^2+\|\ep^{\f{1}{2}}\pt\nabla\tz_2\|_{L_t^{2}H_{co}^{m-4}(\mR_{-}^3)}^2\lesssim \Lambda\big(Y_m^2(0)+\widetilde{\cE}_{m,t}^2\big)Y_{m}^2(0)+T^{\f{1}{4}}\lae.
\eeq
\end{lem}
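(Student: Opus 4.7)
Since $\tilde\zeta_2$ satisfies a parabolic problem with \emph{homogeneous} Dirichlet condition at $z=0$ and is compactly supported away from $z=-\kappa$, I will prove both inequalities by direct conormal energy estimates in $\mathbb{R}_-^3$, exploiting the block form \eqref{Riemannmetric} of $\Delta_g$. Write the right-hand side of \eqref{tz2} as $\mathcal{R}=\widetilde{G_\chi^\varsigma}+\widetilde{G_\chi^\omega}+\bar\rho\,\partial_t\tilde\Phi_t(D\tilde\Phi_t)^{-1}\nabla\tilde\zeta+\tfrac12\mu\,\partial_z(\ln|g|)\partial_z\tilde\zeta_1-\mu\Delta_{\tilde g}\tilde\zeta_1$. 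For $|\alpha|\le m-4$, apply $Z^\alpha$ to the equation, test against $Z^\alpha\tilde\zeta_2\,|g|^{1/2}\,dy\,dz$, and integrate by parts in $z$ (no boundary contribution at $z=0$ by the Dirichlet condition, and none at $z=-\kappa$ by the cutoff). This yields, for each $\alpha$,
\begin{align*}
&\tfrac{\bar\rho}{2}\|Z^\alpha\tilde\zeta_2(t)\|_{L^2}^2+\mu\int_0^t\|Z^\alpha\partial_z\tilde\zeta_2\|_{L^2}^2+\mu\int_0^t\langle \tilde g\,Z^\alpha\nabla_y\tilde\zeta_2,Z^\alpha\nabla_y\tilde\zeta_2\rangle\\
&\lesssim \|Z^\alpha\tilde\zeta_2(0)\|_{L^2}^2+\int_0^t\|Z^\alpha\mathcal{R}\|_{L^2}\|Z^\alpha\tilde\zeta_2\|_{L^2}+\bigl|\text{commutator with }\Delta_g\bigr|,
\end{align*}
the last commutators being harmless since all coefficients of $\Delta_g$ depend on $D\tilde\Phi_t$, hence on $h$, and $|h|_{m-2,\infty,t}$ is already absorbed into $\lae$. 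Korn/ellipticity of $\tilde g$ then gives control of $\|\nabla\tilde\zeta_2\|_{L_t^2H_{co}^{m-4}}$.

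The initial datum $Z^\alpha\tilde\zeta_2(0)=Z^\alpha\tilde\zeta(0)$ is bounded by $Y_m(0)$ via the definition of $\omega_{\bn}$, the elliptic estimate \eqref{elliptic3} for $\nabla^2\Psi(0)$, and Remark \ref{rmkinitial}. The transport term $\partial_t\tilde\Phi_t(D\tilde\Phi_t)^{-1}\nabla\tilde\zeta$ is absorbed by Cauchy–Schwarz using $\|\partial_t\tilde\Phi_t\|_{L_t^\infty}\lesssim\Lambda(1/c_0,|h|_{2,\infty,t})$ and the already proven $L_t^\infty H_{co}^{m-3}$ bound on $\tilde\zeta$ (coming from \eqref{nor-L2}). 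The two $\tilde\zeta_1$-correction terms are handled using \eqref{tz1-0}, which provides the $L_t^2H_{co}^{m-3}$ smallness in $T^{1/4}$.

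The main work is the estimate of $\widetilde{G_\chi^\omega}+\widetilde{G_\chi^\varsigma}$ in $L_t^2H_{co}^{m-4}$. Reading \eqref{Gchiomega} and \eqref{Gchitheta}, every term is a product of (derivatives of) $u,v,\sigma,\omega,\nabla^\vp\Psi,\pi,q,[\bbp,\partial_t^\vp]u$ with (derivatives of) the cutoff $\chi$, $\bn$, $\Pi$ and $g$. Apart from the commutator $[\Pi,\Delta^\vp]\varsigma$ and the $\chi\Pi L$ term, these involve at most one full gradient of $u$ or $v$. For those two most-derivative terms, I use $\varsigma=2\Pi(\partial_1 v\cdot\bN,\partial_2 v\cdot\bN,0)^t$ together with the uniform bounds \eqref{v-0} on $\|\nabla^\vp v\|_{L_t^2H_{co}^{m-1}}$ and on $\|H\|_{L_t^2H_{co}^{m-3}}$ obtained from \eqref{es-f}, \eqref{q}, \eqref{pi}, \eqref{comtime}. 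Applying the product estimate \eqref{crudepro} and Corollary \ref{corg12} (to handle $g_1,g_2$), every piece is bounded by $\lae$ times a $L_t^2$-norm, and Cauchy–Schwarz in time extracts the factor $T^{1/2}$ (which I relax to $T^{1/4}$ to match the statement). Combining with the initial datum bound, Young's inequality, and summing over $|\alpha|\le m-4$ gives \eqref{EIoftz2}.

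For \eqref{EIoftz2-1}, I repeat the same scheme but apply $Z^\alpha \varepsilon^{1/2}\partial_t$ instead of $Z^\alpha$ (with $|\alpha|\le m-4$), and test against $Z^\alpha\varepsilon^{1/2}\partial_t\tilde\zeta_2$. The boundary conditions are preserved since $\tilde\zeta_2|_{z=0}=0$ is time-independent. The only new ingredient is the commutator $[\varepsilon^{1/2}\partial_t,\Delta_g]$, which produces $\varepsilon^{1/2}\partial_t h$-factors controlled by Lemma \ref{lemsurface} (in particular \eqref{surface2}), and the weighted source $\varepsilon^{1/2}\partial_t\mathcal{R}$, whose analogous bound uses \eqref{lower-v}, \eqref{pi2}, \eqref{q}, \eqref{comtime} and \eqref{ptu-free-Linfty}. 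The main obstacle, and the reason the second bound has the factor $\Lambda(Y_m^2(0)+\widetilde{\cE}_{m,t}^2)Y_m^2(0)$ rather than just $Y_m^2(0)$, is that $\|\varepsilon^{1/2}\partial_t\tilde\zeta_2(0)\|_{H_{co}^{m-4}}$ must be estimated from the equation at $t=0$, so one trades one time derivative against a Laplacian of the initial data plus nonlinear products, which produces the polynomial dependence on the initial higher-order energy; otherwise the structure of the proof is identical. Summing and applying Gronwall (or absorbing $\|\tilde\zeta_2\|_{L_t^\infty H_{co}^{m-4}}^2$ on the left using Young with a small constant) closes both estimates.
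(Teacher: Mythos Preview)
The energy-estimate skeleton is the same as the paper's, but there is a genuine gap in your treatment of the source $\mathcal{R}$. You assert that, apart from $[\Pi,\Delta^{\vp}]\varsigma$ and $\chi\Pi L$, every term in $\widetilde{G_\chi^\omega}+\widetilde{G_\chi^\varsigma}$ involves at most one full gradient of $u$ or $v$. This is false: $G_\chi^\omega$ in \eqref{Gchiomega} contains both $\chi(u\cdot\nabla^{\vp}\omega)\times\bn$ (coming from $G^\omega$) and $-2\mu\nabla^{\vp}\omega\times\nabla^{\vp}(\chi\bn)$, while $L_j$ contains $-\mu[\p_j,\Delta^{\vp}]v\cdot\bN$. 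Each of these carries a factor $\p_z\omega$ or $\p_z^2 v$, i.e.\ \emph{two} normal derivatives of the velocity. There is no uniform bound on $\|\p_z^2 u\|_{L_t^2H_{co}^{m-4}}$ available in $\cN_{m,T}$; only $\ep^{\f12}\|\nabla^2 u\|$ is controlled (cf.\ \eqref{defenergy-high}). Hence simply placing $\|Z^\alpha\mathcal{R}\|_{L^2}$ on the right and applying Cauchy--Schwarz does not close. The same obstruction appears in the transport term $\p_t\tilde\Phi_t(D\tilde\Phi_t)^{-1}\nabla\tilde\zeta$: its normal component involves $\p_z\tilde\zeta\sim\p_z\omega$, so appealing to an $L_t^\infty H_{co}^{m-3}$ bound on $\tilde\zeta$ (not on $\nabla\tilde\zeta$) is insufficient.

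The paper's fix is precisely to isolate these three problematic pieces (denoted $G_{\chi,1}^\omega,G_{\chi,2}^\omega,G_{\chi,1}^\varsigma$) and, instead of estimating them in $L_t^2H_{co}^{m-4}$, rewrite each in divergence form---for instance $u\cdot\nabla^{\vp}\omega=\p_{y_1}(u_1\omega)+\p_{y_2}(u_2\omega)+\p_z\big(\tfrac{u\cdot\bN}{\p_z\vp}\omega\big)-R_2$, where $R_2$ contains only first derivatives of $u$---and then integrate by parts in the pairing against $Z^\beta\ep^{\f12}\p_t\tilde\zeta_2$. This trades the bad normal derivative for a term of size $\delta\|\nabla Z^\beta\ep^{\f12}\p_t\tilde\zeta_2\|_{L_t^2L^2}^2$, which the dissipation absorbs. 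The transport term (the paper's $\cJ_2$) and the $\p_z(\ln|g|)\p_z\tilde\zeta$ term ($\cJ_4$) are handled by exactly the same divergence-form/IBP device. Without this step the estimate cannot be closed uniformly in $\ep$; the remaining ``good'' source terms are then controlled as in Proposition~\ref{remainderL2}, and the $T^{\f14}$ factor comes from the estimate $\|(\tilde\zeta,\ep^{\f12}\p_t\tilde\zeta)\|_{L_t^2H_{co}^{m-4}}\lesssim T^{\f14}\lae$ (see \eqref{equiva-norm}), not from a naive Cauchy--Schwarz in time.
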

\begin{proof}
Again, we only give the details for the estimate of $\ep^{\f{1}{2}}\pt\tz_2,$ the one of $\tz_2$ is similar and slightly easier to deal with.
Let $\beta$ be a multi-index such that $|\beta|=k\leq m-4.$ 
Since $$\Delta_{\tilde{g}}=\p_i(\tilde{g}^{ij}\p_j\cdot)-\p_i(|\tilde{g}|^{-\f{1}{2}})\tilde{g}^{ij}|\tilde{g}|^{\f{1}{2}}\p_{i}f,$$ 
to avoid losing derivatives on the surface, it is convenient to rewrite the system \eqref{tz2} as:
\beq\label{tz2-0}
\left\{
\begin{array}{l}
   \big(\bar{\rho}\pt-\mu\p_z^2-\mu\p_i(\tilde{g}^{ij}\p_j \cdot)\big)\tilde{\zeta}_2=F_{\chi}^{\tz},  \\[5pt]
     \tilde{\zeta}_2|_{t=0}= \tilde{\zeta}|_{t=0},\quad \tilde{\zeta}_2|_{z=0}=0,
\end{array}
\right.
\eeq
where 
\begin{align*}
  F_{\chi}^{\tz}&=\widetilde{G_{\chi}^{\zeta}}-\bar{\rho}\pt\tilde{\Phi}_t(\D\tilde{\Phi}_t)^{-1}\nabla\tilde{\zeta}+\f{1}{2}\mu\p_z(\ln|g|)\p_z\tilde{\zeta}+\mu\p_i(\ln|g|)\tilde{g^{ij}}\p_j\tilde{\zeta}+\mu \p_i(\tilde{g}^{ij}\p_{j}\tz_1). 
\end{align*}
Note that we have used the summation convention for $i,j=1,2.$
Applying $Z^{\beta}$ on the equation \eqref{tz2-0}, we get that:
\beqs
\ep^{\f{1}{2}}\big(\bar{\rho}\pt-\mu\p_z^2-\mu\p_i(\tilde{g}^{ij}\p_j)\big)(Z^{\beta}\pt\tilde{\zeta}_2)=Z^{\beta}\ep^{\f{1}{2}}\pt F_{\chi}^{\tz}+\mu[Z^{\beta}\ep^{\f{1}{2}}\pt,\p_z^2]\tz+\mu\p_i[Z^{\beta}\ep^{\f{1}{2}}\pt,\tilde{g}^{ij}]\tz,
\eeqs
from which we get the energy inequality:
\beq\label{tz2-EI}
\begin{aligned}
&\bar{\rho}\ep\|Z^{\beta}\pt\tz_2(t)\|_{L^2(\mR_{-}^3)}^2+\mu\ep\|\p_z Z^{\beta}\pt\tz_2\|_{L_t^2L^2(\mR_{-}^3)}^2+\mu\int_0^t\int_{\mR_{-}^3} \tilde{g}_{ij}\p_iZ^{\beta}\pt\tz_2\cdot \p_j Z^{\beta}\pt\tz_2\,\d x\d s\\
&\leq \bar{\rho}\ep\|Z^{\beta}\pt\tz(0)\|_{L^2(\mR_{-}^3)}^2+\mu\ep \bigg|\int_0^t\int_{\mR_{-}^3} [Z^{\beta}\pt,\p_z^2]\tz_2\cdot Z^{\beta}\pt\tz_2\,\d x\d s\bigg|\\
&+\mu\ep \bigg|\int_0^t\int_{\mR_{-}^3} [Z^{\beta},\tilde{g}^{ij}]\p_j\tz_2 \p_i Z^{\beta}\tz_2\,\d x\d s\bigg|+\ep\bigg|\int_0^t\int_{\mR_{-}^3} Z^{\beta}F_{\chi}^{\tz}\cdot Z^{\beta}\tz_2\,\d x\d s\bigg|.
\end{aligned}
\eeq
As long as $\kappa$ is chosen small enough, the matrix $\left(\begin{array}{cc}
    \tilde{g}_{11} & \tilde{g}_{12}\\
    \tilde{g}_{21}  & \tilde{g}_{22}
\end{array}\right)$
is positive definite, so that the last two terms in the first line of \eqref{tz2-EI} control $C_{\kappa}\|\ep^{\f{1}{2}}\nabla  Z^{\beta}\pt\tz_2\|_{L_t^2L^2(\mR_{-}^3)}^2.$ In the sequel, to lighten the notation load and without much ambiguity, we shall denote $$\|\tilde{f}\|_{L_t^{p}H_{co}^{k}}=\|\tilde{f}\|_{L_t^{p}H_{co}^{k}(
{\mR}_{-}^3)},\, \|f\|_{L_t^{p}H_{co}^{k}}=\|f\|_{L_t^{p}H_{co}^{k}(\mS)},\,\qquad p=2,+\infty.$$
We begin now to estimate the last three terms of the right hand side of \eqref{tz2-EI}. 
At first, we have up to some smooth functions depending on $\phi,$
\beqs
[Z^{\beta},\p_z^2]=\sum_{|\tilde{\beta}|\leq |\beta|-1}*_{\beta,\tilde{\beta}}\p_z^2 Z^{\tilde{\beta}}+\sum_{|\gamma|\leq |\beta|-1}*_{\beta,\gamma}\p_z Z^{\gamma},
\eeqs
Therefore, thanks to integration by parts and Young's inequality, we write:
\beq\label{tz2-1}
\mu\ep \big|\int_0^t\int_{\mR_{-}^3} [Z^{\beta},\p_z^2]\pt\tz\cdot Z^{\beta}\pt\tz_2\,\d x\d s\big|\leq \delta\ep \|\p_z Z^{\beta}\pt\tz\|_{L_t^2L^2(\mR_{-}^3)}^2+C_{\delta}\ep(\|\p_z\pt\tz_2\|_{L_t^2H_{co}^{k-1}}^2+\|\pt \tz_2\|_{L_t^2H_{co}^{m-5}}^2).
\eeq
Similarly, by Young's inequality, we have:
\beq\label{tz2-2}
\begin{aligned}
&\mu \ep \big|\int_0^t\int_{\mR_{-}^3} [Z^{\beta}\pt,\tilde{g}^{ij}]\p_j\tz_2 \cdot\p_i Z^{\beta}\pt\tz_2\,\d x\d s\big|\leq\delta\ep\|\nabla Z^{\beta}\pt\tz_2\|_{L_t^2L^2(\mR_{-}^3)}^2\\
&+C_{\delta}\Lambda\big(\f{1}{c_0},|(h,\ep^{\f{1}{2}}\pt h)|_{m-2,\infty,t}+|\pt h|_{2,\infty,t}\big)
(\|(\tz_2,\ep^{\f{1}{2}}\pt\tz_2)\|_{\hco^{m-4}}^2+\ep\|\tz_2\|_{\hco^{m-3}}^2).
\end{aligned}
\eeq
We are now in position to control the last term in \eqref{tz2-EI}.
We split it into several terms:
\beqs
\ep\int_0^t\int_{\mR_{-}^3} Z^{\beta}\pt F_{\chi}^{\tz}\cdot Z^{\beta}\pt\tz_2\,\d x\d s=\colon \cJ_1+\cJ_2+\cJ_3+\cJ_4.
\eeqs
with 
\beqs
\begin{aligned}
&\cJ_1=\ep\int_0^t\int_{\mR_{-}^3} Z^{\beta}\pt \widetilde{G_{\chi}^{\zeta}}\cdot Z^{\beta}\pt\tz_2\,\d x\d s,\quad \cJ_2=\bar{\rho}\ep\int_0^t\int_{\mR_{-}^3} Z^{\beta}\pt \big((\D\tilde{\Phi}_s)^{-1}\p_s\tilde{\Phi}_s\cdot\nabla\tilde{\zeta}\big)\cdot Z^{\beta}\pt\tz_2\,\d x\d s,\\
&\cJ_3=\mu\ep\int_0^t\int_{\mR_{-}^3} Z^{\beta}\pt \p_i(\tilde{g}^{ij}\p_{j}\tz_1)\cdot Z^{\beta}\pt\tz_2\,\d x\d s,\quad\cJ_4= \f{1}{2}\mu\ep\int_0^t\int_{\mR_{-}^3} Z^{\beta}\pt\big(
\p_z(\ln|g|)\p_z\tilde{\zeta}\big)\cdot Z^{\beta}\pt\tz_2 \,\d x\d s ,\\
&\cJ_5=\f{1}{2}\mu\ep
\int_0^t\int_{\mR_{-}^3} Z^{\beta}\pt\big(
\p_i(\ln|g|){\tilde{g}^{ij}}\p_j\tilde{\zeta}\big)\cdot Z^{\beta}\pt\tz_2\,\d x\d s.\\
\end{aligned}
\eeqs
To estimate $\cJ_2,$ let us split it into two terms $\cJ_2=\cJ_{21}+\cJ_{22}:$
\begin{align*}
&\cJ_{21}=\bar{\rho}\ep\int_0^t\int_{\mR_{-}^3} Z^{\beta}\pt\bigg(\div\big((\D\tilde{\Phi}_s)^{-1}\p_s\tilde{\Phi}_s\big)\tz\bigg)Z^{\beta}\pt\tz_2\,\d x\d s,\\
&\cJ_{22}=\bar{\rho}\ep\int_0^t\int_{\mR_{-}^3}Z^{\beta}\pt\p_l\bigg(\big((\D\tilde{\Phi}_s)^{-1}\p_s\tilde{\Phi}_s\big)_l\tz\bigg)Z^{\beta}\pt\tz_2\,\d x\d s.
\end{align*}
We emphasize that since there is no gain of the regularity of $\tilde{\Phi}$ from that of $h$ (roughly speaking, one needs $k+1$ derivatives of $h$ to control $k$ derivatives of $\tilde{\Phi}$), careful attention needs to be paid to the regularity of the surface in the following computations.
To estimate $\cJ_{21},$ in order not to lose regularity on the surface, we consider two cases. If $Z^{\beta}$ contains at least one spatial conormal derivative, we  integrate by parts in space, and then use Young's inequality to get:
\beqs
\begin{aligned}
\cJ_{21}&\leq \delta\ep\|\nabla Z^{\beta}\pt\tz_2\|_{L_t^2L^2(\mR_{-}^3)}^2+\big(\|(\tz,\ep^{\f{1}{2}}\pt\tz)\|_{\hco^{m-4}}^2+|\ep^{\f{1}{2}}\pt^2 h|_{\htlde^{m-3}}^2\big)\cdot\\
&\qquad\qquad\qquad\qquad \Lambda\big(\il\tz\il_{1,\infty,t}+|(h,\ep^{\f{1}{2}}\pt h)|_{m-2,\infty,t}+|\pt h|_{m-3,\infty,t}+|\ep^{\f{1}{2}}\pt^2 h|_{m-5,\infty,t}\big).
\end{aligned}
\eeqs
Moreover, we have by Proposition \ref{propequiva-norm} and estimate \eqref{tz1-0} that for $l=3,4$ 
\beq\label{equiva-norm}
\begin{aligned}
&\|(\tz_2,\ep^{\f{1}{2}}\pt\tz_2)\|_{L_t^2H_{co}^{m-l}}\leq \|(\tz,\ep^{\f{1}{2}}\pt\tz),(\tz_1,\ep^{\f{1}{2}}\pt\tz_1)\|_{L_t^2H_{co}^{m-l}}\\
&\lesim \|(\nabla u,\ep^{\f{1}{2}}\pt\nabla u)\|_{L_t^2H_{co}^{m-l}}\Lambda\big(\f{1}{c_0},|(h,\ep^{\f{1}{2}}\pt h)|_{m-l+1,\infty,t}\big) + T^{\f{1}{4}}\lae
\\&\lesssim \left\{\begin{array}{cc}
    T^{\f{1}{4}}\lae  & \text{ if } l=4, \\
     \lesssim \lae & \text{ if } l=3,
\end{array}
\right.
\end{aligned}
\eeq
and by  \eqref{tz1-00} that:
\beq\label{tz2infty}
\begin{aligned}
 &\|(\text{Id},\ep^{\f{1}{2}}\pt,\p_y,Z_3)\tz\|_{L^{\infty}([0,T]\times\mR_{-}^3)}\\
 &\lesssim
 \il (\text{Id},\ep^{\f{1}{2}}\pt,\p_y,Z_3)\zeta\il_{0,\infty,t}\Lambda\big(|\ep^{\f{1}{2}}\pt h|_{2,\infty,t}+|h|_{3,\infty,t}\big)\lesssim \lca.
\end{aligned}
\eeq
Therefore, by combining \eqref{surface3}, we obtain that in this case,
\beq\label{J21-1}
\cJ_{21}\leq \delta\ep\|\nabla Z^{\beta}\tz_2\|_{L_t^2L^2(\mR_{-}^3)}^2 
+T^{\f{1}{2}}\lae.
\eeq
If $Z^{\beta}=(\ep\pt)^k,$ $(k\leq m-4),$ thanks to
\eqref{surface3}, \eqref{tz1-0}, \eqref{equiva-norm}, \eqref{tz2infty},
we can control $\cJ_{21}$ as:
\beq\label{J21-2}
\begin{aligned}
\cJ_{21}&\lesssim \|\ep^{\f{1}{2}}\pt\tz_2\|_{\hco^{m-4}}\Lambda\big(\f{1}{c_0},\il(\tz,\ep\pt\tz)\il_{0,\infty,t}+\cG_{\infty,t}(h)\big)\cdot\\
&\qquad (|(\ep^{\f{1}{2}}\pt^2 h,\ep^{\f{3}{2}}\pt^3 h)|_{\htlde^{m-3}}+\|(\tz_2,\ep^{\f{1}{2}}\pt\tz_2)\|_{\hco^{m-4}})
\\
&\lesssim T^{\f{1}{4}}\lae, 
\end{aligned}
\eeq
where $$\cG_{\infty,t}(h)\colon =|(h,\ep^{\f{1}{2}}\pt h)|_{m-2,\infty,t}+|\pt h|_{m-3,\infty,t}+
|(\ep^{\f{1}{2}}\pt^2 h, \ep^{\f{3}{2}}\pt^3 h)|_{m-5,\infty,t}.$$
Note that by \eqref{surface2}-\eqref{surface3}, and the Sobolev embedding $H^{\f{3}{2}}(\mR^2)\hookrightarrow L^{\infty}(\mR^2),$
$$\cG_{\infty,t}(h)\lesssim\lae.$$
Collecting \eqref{J21-1} and \eqref{J21-2}, we finally get that 
\beq\label{J21}
\cJ_{21}\leq \delta \|\nabla Z^{\beta}\tz_2\|_{L_t^2L^2(\mR_{-}^3)}^2+T^{\f{1}{4}}\lae.
\eeq
For $\cJ_{22},$ we write $Z^{\beta}\p_l=[Z^{\beta},\p_l]+\p_l Z^{\beta},$ we integrate by parts for the second term and follow similar arguments as in the estimate of $\cJ_{21}$ to get that:
\beqs
\begin{aligned}
\cJ_{22}
&\leq  \delta \|\nabla Z^{\beta}\tz_2\|_{L_t^2L^2(\mR_{-}^3)}^2+ T^{\f{1}{4}}\lae.
\end{aligned}
\eeqs
Combined with \eqref{J21}, this yields:
\beq\label{cJ2}
\cJ_{2}\leq 2\delta \|\nabla Z^{\beta}\tz_2\|_{L_t^2L^2(\mR_{-}^3)}^2+T^{\f{1}{4}}\lae. 
\eeq
For $\cJ_3,$ we integrate by parts again and use the
Cauchy-Schwarz inequality to get: 
\beqs
\begin{aligned}
\cJ_3&\lesssim \|\ep^{\f{1}{2}}\pt (\tilde{g}^{ij}\p_j\tz_1)\|_{\hco^{m-4}}\|\ep^{\f{1}{2}}\pt\tz_2\|_{\hco^{m-3}}\\
&\lesssim \Lambda\big(\f{1}{c_0},|(h,\ep^{\f{1}{2}}\pt h)|_{m-2,\infty,t}\big)\|(\tz_1,\ep^{\f{1}{2}}\pt\tz_1)\|_{\hco^{m-3}}\|\ep^{\f{1}{2}}\pt\tz_2\|_{\hco^{m-3}}.
\end{aligned}
\eeqs
By estimates \eqref{tz1-0}, \eqref{equiva-norm}, we find that:
\beq
\cJ_{3}
\lesssim T^{\f{1}{4}}\lae.
\eeq
We begin now to estimate $\cJ_4.$ 
By writing $$\p_z(\ln|g|)\p_z\tz=-\p_z^2(\ln|g|)\tz+\p_z\big(\p_z(\ln|g|)\tz\big),$$
we can follow the similar computations as in the estimates of $\cJ_2$ to obtain
(it is indeed easier in the sense that $\p_z^2(\ln|g|),\p_z(\ln |g|)$ involve  only two derivatives of $h$ thanks to Remark \ref{rmk-changevariable})
\beq
\begin{aligned}
\cJ_4&\leq \delta\|\ep^{\f{1}{2}}\pt\nabla Z^{\beta}\tz_2\|_{L_t^2L^2(\mR_{-}^3)}^2+\Lambda\big(\f{1}{c_0},|(h,\ep^{\f{1}{2}}\pt h)|_{m-2,\infty,t}\big) \|(\tz,\ep^{\f{1}{2}}\pt\tz)\|_{\hco^{m-4}}^2\\
&\leq \delta\|\ep^{\f{1}{2}}\pt\nabla Z^{\beta}\tz_2\|_{L_t^2L^2(\mR_{-}^3)}^2+T^{\f{1}{2}}\lae.
\end{aligned}
\eeq
We proceed to estimate $\cJ_5.$ 
If $Z^{\beta}=(\ep\pt)^k,$ we control it by 
inequalities \eqref{surface3}, \eqref{tz1-0}, \eqref{equiva-norm}:
\beqs
\begin{aligned}
\cJ_5&\lesssim \|\ep^{\f{1}{2}}\pt \tz_2\|_{\hco^{m-4}}\Lambda\big(\f{1}{c_0},
 \il\ep^{\f{1}{2}}\tilde{\zeta}\il_{1,\infty,t}+|(h,\ep^{\f{1}{2}}\pt h)|_{m-2,\infty,t}  \big)\big( \|(\tz,\ep^{\f{1}{2}}\pt\tz)\|_{\hco^{m-3}}+|\ep\pt^2 h|_{L_t^2\tilde{H}^{m-2}}\big)
 \\
 &\lesssim T^{\f{1}{2}}\lae.
\end{aligned}
\eeqs
If $Z^{\beta}$ contains at least one spatial conormal derivative, we integrate by parts in space and control it in a similar way as
$\cJ_3:$
\beqs
\begin{aligned}
\cJ_5&\lesssim \|\ep^{\f{1}{2}}\pt (\p_i(\ln|g|)\tilde{g}^{ij}\p_j\tz))\|_{\hco^{m-5}}\|\ep^{\f{1}{2}}\pt\tz_2\|_{\hco^{m-3}}\\
&\lesssim \Lambda\big(\f{1}{c_0},|(h,\ep^{\f{1}{2}}\pt h)|_{m-2,\infty,t}\big)\|(\tz,\ep^{\f{1}{2}}\pt\tz)\|_{\hco^{m-4}}\|\ep^{\f{1}{2}}\pt\tz_2\|_{\hco^{m-3}}\\
&\lesssim T^{\f{1}{2}}\lae.
\end{aligned}
\eeqs
To summarize, we get that:
\beq\label{cJ5}
\cJ_5\lesssim T^{\f{1}{2}}\lae. 
\eeq
We are now left to control the term $\cJ_1.$ After checking every term  of  $G_{\chi}^{\omega}$ and $G_{\chi}^{\varsigma}$ defined in \eqref{Gchiomega} and 
\eqref{Gchitheta}, we find that the problematic terms that may lead to a loss of derivatives are the following:
\beqs
G_{\chi,1}^{\omega}=(u\cdot\nabla^{\vp}\omega)\times\chi\bN,
\quad G_{\chi,2}^{\omega}=\nabla^{\vp}\omega\times\nabla^{\vp}(\chi\bn), \quad G_{\chi,1}^{\varsigma}=\chi\Pi([\p_1,\Delta^{\vp}]v\cdot\bN, [\p_2,\Delta^{\vp}]v\cdot\bN,0)^{t}.
\eeqs
All the other terms can be controlled directly through the Cauchy-Schwarz inequality, the estimate \eqref{equiva-norm} and Proposition \ref{remainderL2}:
\beqs
\begin{aligned}
&\int_0^t\int_{\mR_{-}^3}
\ep^{\f{1}{2}} Z^{\beta}\pt\big(\widetilde{G_{\chi}^{\zeta}}-\widetilde{G_{\chi,1}^{\omega}}-\widetilde{G_{\chi,2}^{\omega}}-\widetilde{G_{\chi,1}^{\varsigma}}\big)\cdot \ep^{\f{1}{2}} Z^{\beta}\pt\tz_2
\,\d x\d s\\
&\lesssim \|\ep^{\f{1}{2}}\pt\tz_2\|_{\hco^{m-4}}\|\ep^{\f{1}{2}}\pt\big(\widetilde{G_{\chi}^{\zeta}}-\widetilde{G_{\chi,1}^{\omega}}-\widetilde{G_{\chi,2}^{\omega}}-\widetilde{G_{\chi,1}^{\varsigma}}\big)\|_{\hco^{m-4}}\\
&\lesssim\|\ep^{\f{1}{2}}\pt\tz_2\|_{\hco^{m-4}}
\|\ep^{\f{1}{2}}\pt(G_{\chi}^{\zeta}-G_{\chi,1}^{\omega}-G_{\chi,2}^{\omega}-G_{\chi,1}^{\varsigma})\|_{\hco^{m-4}(\mS)}\\
&\lesssim T^{\f{1}{4}}\lae. 
\end{aligned}
\eeqs
Note that by Proposition \ref{remainderL2},
\begin{equation*}
  \begin{aligned}
&\quad\|G_{\chi}^{\zeta}-G_{\chi,1}^{\omega}-G_{\chi,2}^{\omega}-G_{\chi,1}^{\varsigma}\|_{\hco^{m-3}(\mS)}\lesssim \lae.
  \end{aligned}
\end{equation*}
It remains to control the remaining three terms. We shall explain the estimates of the term
involving $G_{\chi,1}^{\omega}.$ 
Let us first rewrite: 
\beqs
\begin{aligned}
u\cdot\nabla^{\vp}\omega &=u_1\p_{y_1}\omega+u_2\p_{y_2}\omega+(u\cdot\bN)\cdot\p_z\omega=R_1-R_2.
\end{aligned}
\eeqs
where 
\beqs
R_1=\p_{y_1}(u_1\omega)+\p_{y_2}(u_2\omega)+\p_{z}\big((\f{u\cdot\bN}{\p_z\vp}) \omega\big),\quad R_2=\p_{y_1} u_1\cdot \omega+\p_{y_2} u_2\cdot \omega+
\p_z\big(\f{u\cdot\bN}{\p_z\vp}\big)\cdot\omega
\eeqs
Since $$\p_z\big(\f{u\cdot\bN}{\p_z\vp}\big)=\p_z^{\vp}u\cdot\bN+u\cdot\p_z(\f{\bN}{\p_z\vp})=\div^{\vp}u-\p_{y_1}u_1-\p_{y_2}u_2+u\cdot\p_z(\f{\bN}{\p_z\vp}),$$
there is no term like $\p_z u\cdot\p_z u$ appearing in $R_2,$
we thus can show by using similar arguments as in the proof of Proposition \ref{remainderL2} that:
$$\|\ep^{\f{1}{2}}\pt R_2\|_{\hco^{m-4}}\lesssim \lae,$$ which further yields:
\beq\label{TR1}
\int_0^t\int_{\mR_{-}^3}
\ep^{\f{1}{2}}Z^{\beta}\pt\widetilde{R_2}\cdot \ep^{\f{1}{2}}Z^{\beta}\pt\tz_2\,
\d x\d s\lesssim T^{\f{1}{2}}\lae.
\eeq
Next, by the change of variable, we have:
$$\widetilde{R_1}= (\D({\Phi_t}\circ\widetilde{\Phi_t}^{-1})^{-1})_{jl}\p_l\big[\widetilde {I_j(u)\omega}\big], \qquad \text{ where }I(u)=(u_1,u_2,\f{u\cdot\bN}{\p_z\vp}).$$
 Therefore, using a similar strategy as the one employed in the estimate of $\cJ_2,$ we find that:
\beqs
\int_0^t\int_{\mR_{-}^3}
\ep^{\f{1}{2}}\pt Z^{\beta}\widetilde{R_1}\cdot \ep^{\f{1}{2}}Z^{\beta}\pt\tz_2\,
\d x\d s\leq \delta\|\ep^{\f{1}{2}}\nabla Z^{\beta}\pt\tz_2\|_{L_t^2L^2(\mR_{-}^3)}^2+T^{\f{1}{4}}\lae,
\eeqs
which, together with \eqref{TR1}, leads to:
\beqs
\int_0^t\int_{\mR_{-}^3}
\ep^{\f{1}{2}}Z^{\beta}\pt\widetilde{G_{\chi,1}^{\omega}}\ep^{\f{1}{2}}Z^{\beta}\pt\tz_2\,\d x\d s\leq 
\delta\|\ep^{\f{1}{2}}\nabla Z^{\beta}\pt\tz_2\|_{L_t^2L^2(\mR_{-}^3)}^2+T^{\f{1}{4}}\lae.
\eeqs
Following similar arguments, one can also show that:
\beqs
\int_0^t\int_{\mR_{-}^3}\ep^{\f{1}{2}}\pt Z^{\beta}(\widetilde{G_{\chi,2}^{\omega}}+\widetilde{G_{\chi,1}^{\varsigma}})\cdot  \ep^{\f{1}{2}} Z^{\beta} \pt\tz_2\,
\d x\d s\leq \delta\|\nabla Z^{\beta}\ep^{\f{1}{2}}\pt\tz_2\|_{L_t^2L^2(\mR_{-}^3)}^2+T^{\f{1}{4}}\lae.
\eeqs
To summarize, we have obtained that:
\beq\label{cJ1}
\cJ_1\leq 2\delta\|\nabla Z^{\beta}\tz_2\|_{L_t^2L^2(\mR_{-}^3)}^2+T^{\f{1}{4}}\lae. 
\eeq
Gathering \eqref{cJ2}-\eqref{cJ5},\eqref{cJ1}
and using \eqref{equiva-norm},
we obtain:
\beq\label{tz2-3}
\big|\int_0^t\int_{\mR_{-}^3} Z^{\beta}F_{\chi}^{\tz}\cdot Z^{\beta}\tz_2\,\d x\d s\big|\leq 10\delta \|\nabla Z^{\beta}\tz_2\|_{L_t^2L^2(\mR_{-}^3)}^2+T^{\f{1}{4}}\lae.
\eeq
Inserting \eqref{tz2-1}, \eqref{tz2-2} and \eqref{tz2-3} into \eqref{tz2-EI}, we get by choosing $\delta$ small enough that
for any $0\leq k\leq m-4,$
\beq\label{finalomegat}
\begin{aligned}
\|\ep^{\f{1}{2}}\pt\tz_2\|_{L_t^{\infty}H_{co}^{k}}^2+\|\ep^{\f{1}{2}}\pt\nabla\tz_2\|_{\hco^{k}}^2&\lesssim Y_{m}^2(0)+\|\ep^{\f{1}{2}}\pt\nabla\tz_2\|_{\hco^{k-1}}^2+
T^{\f{1}{4}}\lae.
\end{aligned}
\eeq
Note that in the above, we use the convention that 
$\|\cdot\|_{L_t^2H_{co}^l}=0$ if $l< 0.$ 
Moreover, we can show by repeating the procedure to prove \eqref{finalomegat} that: 
\beq\label{finalomega}
\begin{aligned}
\|\tz_2\|_{L_t^{\infty}H_{co}^{k}}^2+\|\nabla\tz_2\|_{\hco^{k}}^2&\lesssim Y_{m}^2(0)+\|\nabla\tz_2\|_{\hco^{k-1}}^2+
T^{\f{1}{4}}\lae.
\end{aligned}
\eeq
The estimate \eqref{EIoftz2} then stems from \eqref{finalomega} and an induction on $k\in [0,m-4],$ 
the estimate \eqref{EIoftz2-1}
can also be derived from \eqref{EIoftz2} and induction arguments. 
\end{proof}
In the following, we show an estimate needed to control $\cJ_1$ in the above lemma.
\begin{prop}\label{remainderL2}
Assume that \eqref{preassumption} holds, then for any $0<t\leq T,$
\beqs
\|(\Id,\ep^{\f{1}{2}}\pt)(G_{\chi}^{\zeta}-G_{\chi,1}^{\omega}-G_{\chi,2}^{\omega}-G_{\chi,1}^{\varsigma})\|_{\hco^{m-4}(\mS)}\lesssim \lae.
\eeqs
\end{prop}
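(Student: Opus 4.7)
The strategy is to go through each term of $G_\chi^\omega$ and $G_\chi^\varsigma$ (as defined in \eqref{Gchiomega}, \eqref{Gchitheta}) that is \emph{not} listed among $G_{\chi,1}^\omega$, $G_{\chi,2}^\omega$, $G_{\chi,1}^\varsigma$, and show that each is controlled in the $(\Id,\ep^{\f12}\pt)\hco^{m-4}$ norm by $\lae$. The main tools will be the crude product and commutator estimates \eqref{crudepro}--\eqref{crudecom}, the composition estimate \eqref{composition} together with Corollary \ref{corg12} applied to $g_1,g_2$, the regularity of the extension in Lemma \ref{exth}, and the bounds on $\p_z\vp$ contained in \eqref{fpzphi1}. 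Since derivatives of $\chi$ are harmless ($\chi$ is a fixed smooth cutoff) and $\nabla(\chi\bn)$ together with $\Delta^\vp(\chi\bn)$ depend on at most two derivatives of $h$, controlled by $|h|_{3,\infty,t}\lesssim\lae$, all factors involving the geometry can be absorbed into the polynomial $\Lambda$.

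For the contributions coming from $G_\chi^\omega$, the remaining pieces of $\chi G^\omega\times\bn$ are
$\chi(\omega\cdot\nabla^\vp u)\times\bn$, $-\chi(\omega\div^\vp u)\times\bn$, $-\chi\bigl(\tfrac{\nabla g_2}{\ep}\times(\ep\pt+\ep\underline u\cdot\nabla)u\bigr)\times\bn$ and $\chi\bigl(\tfrac{\bar\rho-g_2}{\ep}(\ep\pt+\ep\underline u\cdot\nabla)\omega\bigr)\times\bn$, together with the lower-order terms $-\mu\Delta^\vp(\chi\bn)\omega$ and $\bar\rho\,\omega\times\pt^\vp(\chi\bn)$. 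Each is a product of factors whose $\hco^{m-4}$ norm is controlled: for the quadratic-in-$\omega$ terms apply \eqref{crudepro} with one factor in $L^\infty$ (using $\il\nabla u\il_{1,\infty,t}\leq \cA_{m,T}$) and the other in $L^2H_{co}^{m-4}\subset L^2H_{co}^{m-1}$; for the terms carrying $\tfrac{\nabla g_2}{\ep}=g_2'(\ep\sigma)\nabla\sigma$ and $\tfrac{\bar\rho-g_2}{\ep}=O(\sigma)$, factor out an $\ep^{\f12}$ using Corollary \ref{corg12} and the compressible bounds $\ep^{-1/2}\|\nabla\sigma\|_{\hco^{m-1}}\le\cN_{m,T}$, $\|\ep^{1/2}\pt u\|_{\hco^{m-2}}\le\cN_{m,T}$. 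The time derivative $\ep^{1/2}\pt$ version is handled identically, gaining one more power of $\ep^{1/2}$ on whichever factor is differentiated and using $\|\ep^{1/2}\pt\nabla u\|_{L^2H_{co}^{m-2}}$ in $\cN_{m,T}$.

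For the contributions coming from $G_\chi^\varsigma$, recall $\varsigma=2(\p_1v\cdot\bN,\p_2v\cdot\bN,0)^t$. The commutators $[\pt,\Pi]\varsigma$, $[\p_z,\Pi]\varsigma$, $[\Pi,\Delta^\vp]\varsigma$ all expand, via Leibniz, into a sum of products in which each derivative of $\Pi=\Id-\bn\otimes\bn$ costs one derivative of $h$, and at most \emph{one} second derivative falls on $v$ — after the highest-order commutator $[\p_j,\Delta^\vp]v\cdot\bN$ has been pulled out as $G_{\chi,1}^\varsigma$. Thus what remains is controlled by $\|\nabla^2v\|_{\hco^{m-3}}$, which, via the decomposition $v=u-\nabla^\vp\Psi$, Lemma \ref{lemhigh-v} and the elliptic bounds \eqref{sec-normal-Psi}, \eqref{psiLinfty}, is bounded by $\lae$. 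The cutoff commutators $[\pt^\vp,\chi]\Pi\varsigma$ and $[\chi,\Delta^\vp]\Pi\varsigma$ only produce first-order derivatives of $\chi$ against $\Pi\varsigma$ or $\nabla\Pi\varsigma$, which is even better. The $\ep^{1/2}\pt$ version again goes through the same scheme, one time derivative being placed on the appropriate factor and estimated with the $\ep^{1/2}\pt$ pieces of $\cE_{high,m,T}$ (in particular $\|\ep^{1/2}\pt v\|_{\hco^{m-2}}$ from Lemma \ref{lemloworder-v} and $\|\ep^{1/2}\pt\nabla v\|_{L^2H_{co}^{m-2}}$).

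The only delicate point, which I expect to be the main obstacle, is to ensure that no factor forces us beyond the energy bounds available in $\cN_{m,T}$. The one potentially dangerous term is the remainder of $2\chi\Pi L$, where $L$ contains $\p_jH$ with $H=-(f+\nabla^\vp q+\bar\rho[\bbp,\pt^\vp]u)-\nabla^\vp\pi$; here one uses the already-proven estimates \eqref{es-f}, \eqref{q}, \eqref{pi}, \eqref{comtime} (and their $\ep^{1/2}\pt$ analogues \eqref{pi2}, \eqref{comtime}) to see that $\|(\Id,\ep^{1/2}\pt)\nabla H\|_{\hco^{m-4}}\lesssim\lae$. Once this is in place, summing all contributions yields the claim.
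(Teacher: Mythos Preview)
Your overall strategy matches the paper's: check each remaining term with product estimates. The gap is in your treatment of $\chi(\omega\cdot\nabla^{\vp}u)\times\bn$, which the paper singles out as the most delicate term. Writing
\[
\omega\cdot\nabla^{\vp}u=\omega_1\p_{y_1}u+\omega_2\p_{y_2}u+(\omega\cdot\bN)\,\p_z^{\vp}u,
\]
the last summand is naively of type $(\p_z u)(\p_z u)$. A direct application of \eqref{crudepro} to such a product in $H_{co}^{m-4}$ (and even more so to its $\ep^{1/2}\pt$ counterpart) does \emph{not} close with only $\il\nabla u\il_{1,\infty,t}$: the estimate \eqref{crudepro} with $k=m-4$ and $n=1$ also requires $\il\nabla u\il_{m-6,\infty,t}$ on the other factor, which is not available in $\cA_{m,T}$ for $m\ge 8$; and for the $\ep^{1/2}\pt$ piece the intermediate Leibniz terms $Z^{\beta}(\ep^{1/2}\pt\p_z u)\cdot Z^{\alpha-\beta}\p_z u$ with $|\beta|\ge 1$ cannot be bounded since neither $\il\ep^{1/2}\pt\nabla u\il_{1,\infty,t}$ nor $\il\nabla u\il_{2,\infty,t}$ is contained in $\cN_{m,T}$. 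The sharp product estimate \eqref{product} would avoid this, but it is only stated for $\alpha_0=0$.

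The paper's remedy is the identity
\[
\omega\cdot\bN=\div^{\vp}(u\times\bN)+u\cdot(\nabla^{\vp}\times\bN)
=-(u\times\bN)\cdot\p_z^{\vp}\bN-\p_{y_1}(u\times\bN)_1-\p_{y_2}(u\times\bN)_2+u\cdot(\nabla^{\vp}\times\bN),
\]
which shows that $\omega\cdot\bN$ involves only $u$, $\p_y u$ and derivatives of $\bN$, with no $\p_z u$. Consequently $\omega\cdot\nabla^{\vp}u=\p_z u\cdot F_1(\p_y u,u,\nabla\bN,\ldots)+F_2(\p_y u,\p_y u,\ldots)$. Since $\p_y$ is a conormal field, the tangential factor can be placed in $L^{\infty}$ at index $m-5$ using $\il u\il_{m-4,\infty,t}$ (respectively $\il\ep^{1/2}\pt u\il_{m-5,\infty,t}$ for the time-differentiated piece), both of which \emph{are} in $\cA_{m,T}$; the normal factor $\p_z u$ (respectively $\ep^{1/2}\pt\p_z u$) then sits in $L_t^2H_{co}^{m-4}$ or in $\il\cdot\il_{0,\infty,t}$. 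This structural cancellation is the one non-routine ingredient and needs to be made explicit in your argument.
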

\begin{proof}
One can show this estimate by bounding each term appearing in $G_{\chi}^{\zeta}-G_{\chi,1}^{\omega}-G_{\chi,2}^{\omega}-G_{\chi,1}^{\varsigma}.$ We will give the details for one term, namely $\omega\cdot\nabla^{\vp}u,$ which is 
the most difficult one, the other terms can be controlled easily.
Let us write $$\omega\cdot\nabla^{\vp}u=\omega_1\p_{y_1}u+\omega_2\p_{y_2}u+(\omega\cdot\bN) \p_z^{\vp} u.$$ Furthermore, we have: 
\beqs
\begin{aligned}
\omega\cdot\bN&=\div^{\vp}(u\times\bN)+u\cdot(\nabla^{\vp}\times\bN)\\
&=-(u\times\bN)\cdot\p_z^{\vp}\bN-\p_{y_1}(u\times\bN)_1-\p_{y_2}(u\times\bN)_2+u\cdot(\nabla^{\vp}\times\bN). 
\end{aligned}
\eeqs
We thus see that $\omega\cdot\nabla^{\vp}u=\p_z u\cdot F_1(\p_y u, \nabla^{\vp}\bN, u,\bN, \f{1}{\p_z\vp})+F_2(\p_y u,\p_y u)$
where $F_1, F_2$ are some polynomials with degree $4.$ Let us control 
$\ep^{\f{1}{2}}\pt(\p_z u \p_y u)$ for example, the other ones can be bounded in a similar way (note that we do not lose regularity on the surface the terms involving $\bN$). By counting the derivatives hitting on each term, one finds that:
\begin{align*}
&\|(\ep^{\f{1}{2}}\pt \p_z u \cdot\p_y u, \p_z u \cdot\ep^{\f{1}{2}}\pt\p_y u)
\|_{\hco^{m-4}}\\
&\lesssim
\il\ep^{\f{1}{2}}\pt \p_z u\il_{0,\infty,t}\|u\|_{\hco^{m-3}}
+\|\ep^{\f{1}{2}}\pt \p_z u\|_{\hco^{m-4}}\il u\il_{m-4,\infty,t}\\
&\quad+\il\nabla u\il_{1,\infty,t}\|\ep^{\f{1}{2}}\pt u\|_{\hco^{m-3}}+
\il \ep^{\f{1}{2}}\pt u\il_{m-5,\infty,t}
\|\nabla u\|_{L_t^{\infty}H_{co}^{m-4}}\\
&\lesssim \lae.
\end{align*}
\end{proof}
 \subsection{Estimate of the second order  normal derivatives of the velocity}
 To finish the a-priori estimates for the energy norms, we are left to  estimate $\nabla^2 u$ in a non-uniform way  which is the object of the following lemma.
\begin{lem}\label{lemsec-normal-u}
Assume that \eqref{preassumption} holds for some $T>0,$ then for any $0<t\leq T,$ the following estimate holds,
\beq\label{sec-normal-u}
\|\ep^{\f{1}{2}}\nabla^2 u\|_{L_t^{\infty}H_{co}^{m-2}\cap L_t^2\cH_{co}^{m-1}}^2\lesssim  \Lambda\bigg(\f{1}{c_0},|h|_{L_t^{\infty}\tilde{H}^{m-\f{1}{2}}}^2+Y_{m}^2(0)\bigg)Y_m^2(0)+(T+\ep)^{\f{1}{4}}\lae.
\eeq
\end{lem}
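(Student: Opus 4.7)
The plan is to recover the second normal derivative $\p_z^2 u$ algebraically from the momentum equation, in the same spirit as the reduction \eqref{laplaceu} used in Section~7. Rewriting $\div^{\vp}\mathcal{L}^{\vp}u=\mu\Delta^{\vp}u+(\mu+\lambda)\nabla^{\vp}\div^{\vp}u$, equation $\eqref{FCNS2}_2$ gives, after multiplying by $\ep^{1/2}/\mu$,
$$\ep^{1/2}\Delta^{\vp}u=\tfrac{g_2}{\mu}\,\ep^{1/2}\bigl(\p_t^{\vp} u+u\cdot\nabla^{\vp}u\bigr)-\tfrac{\mu+\lambda}{\mu}\,\ep^{1/2}\nabla^{\vp}\div^{\vp}u+\tfrac{1}{\mu}\,\ep^{-1/2}\nabla^{\vp}\sigma.$$
Combined with \eqref{laplaceu} and the ellipticity $|\bN|^2/|\p_z\vp|^2\geq c_0^2$, this expresses $\ep^{1/2}\p_z^2 u$ as a linear combination of $\ep^{1/2}\Delta^{\vp}u$, the tangential second derivatives $\ep^{1/2}\p_y^2 u,\ep^{1/2}\p_y\p_z u$, and first-order derivatives of $u$, with coefficients polynomial in $\bN,\p_z\vp,\nabla\bN$. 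It then suffices to control each of these pieces in $L_t^{\infty}H_{co}^{m-2}\cap L_t^2\cH^{m-1}$.

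For the $L_t^{\infty}H_{co}^{m-2}$ bound, the time-derivative contribution $\ep^{1/2}\p_t^{\vp}u$ is controlled by \eqref{ptu-free-Linfty} (after expanding $\p_t^{\vp}$ and using Lemma~\ref{lemfpzphi} for the factor $\p_t\vp/\p_z\vp$); the quasilinear term by \eqref{crudepro} together with $\cA_{m,T}$ and \eqref{epnablauLinfty1}; the term $\ep^{1/2}\nabla^{\vp}\div^{\vp}u$ by \eqref{EI-2}; and $\ep^{-1/2}\nabla^{\vp}\sigma$ by \eqref{nor-compressible1}. The residual tangential second derivatives $\ep^{1/2}\p_y^2u,\ep^{1/2}\p_y\p_z u$ carry at least one tangential vector field and are therefore bounded directly by $\ep^{1/2}\|\nabla u\|_{L_t^{\infty}H_{co}^{m-1}}$ from \eqref{epnablauLinfty1}. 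The coefficients coming from \eqref{laplaceu} are absorbed by Lemma~\ref{exth} and Lemma~\ref{lemfpzphi}, using $|h|_{L_t^{\infty}\tilde H^{m-1/2}}\lesssim Y_m(0)+T^{1/4}\lae$ from Lemma~\ref{lemsurface}.

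For the $L_t^2\cH^{m-1}$ bound, since $\cH^{m-1}=\cH^{m-1,0}$ involves only the weighted time derivatives $(\ep\p_t)^j$, $j\leq m-1$, the same identity reduces the problem to: $\|\ep^{1/2}\p_t u\|_{L_t^2\cH^{m-1}}\leq T^{1/2}\|\ep^{1/2}\p_t u\|_{L_t^{\infty}\cH^{m-1}}$, controlled by \eqref{EI-T1}; $\|\ep^{1/2}\nabla^{\vp}\div^{\vp}u\|_{L_t^2\cH^{m-1}}$, controlled by \eqref{EI-2}; and $\|\ep^{-1/2}\nabla^{\vp}\sigma\|_{L_t^2\cH^{m-1}}$, which is the $(j,l)=(m-1,0)$ case of \eqref{induction1} and is already embedded in \eqref{nor-compressible}. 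The tangential residues $(\ep\p_t)^j\p_y\nabla u$, $j\leq m-1$, involve at most $m$ conormal derivatives and are bounded by $\ep^{1/2}\|\nabla u\|_{L_t^2H_{co}^m}$ from \eqref{EI-1}.

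The main obstacle is the commutator bookkeeping: when $(\ep\p_t)^j$ or $Z^{\alpha}$ with $|\alpha|\leq m-2$ is commuted with the variable coefficients of \eqref{laplaceu} and with $\p_t^{\vp}$, $u\cdot\nabla^{\vp}$ in the equation, the top-order derivative may fall on $\vp$ and thus, via Lemma~\ref{exth}, on $h$. One must verify that such a term is paired with a lower-order conormal norm of $u$ (controlled in $L_{t,x}^{\infty}$ by $\cA_{m,T}$), so that a contribution of the form $\ep^{1/2}|h|_{\tilde H^{m+1/2}}$ or $|h|_{\tilde H^{m-1/2}}$ is admissible against the polynomial RHS of \eqref{sec-normal-u}, the extra $(T+\ep)^{1/4}$ gain being supplied by Lemma~\ref{lemsurface} together with a H\"older-in-time trade-off. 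This parallels the commutator analysis of Lemma~\ref{lemhighest}, using \eqref{comgrad}, Corollary~\ref{corg12} for the $g_2$-dependence, and \eqref{fpzphi1} for the $1/\p_z\vp$-factors.
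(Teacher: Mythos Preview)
Your proposal is correct and follows essentially the same route as the paper: extract $\ep^{1/2}\p_z^2 u$ from the momentum equation via \eqref{laplaceu}, then control each piece by the already-established estimates \eqref{EI-1}, \eqref{EI-T1}, \eqref{EI-2}, \eqref{nor-compressible}, \eqref{nor-compressible1}, \eqref{epnablauLinfty1}, \eqref{ptu-free-Linfty}. One small point: your invocation of \eqref{EI-2} for $\ep^{1/2}\|\nabla^{\vp}\div^{\vp}u\|_{L_t^{\infty}H_{co}^{m-2}}$ is not quite direct, since \eqref{EI-2} only yields the $L_t^2H_{co}^{m-1}$ norm of that quantity; the paper instead absorbs this term into $\ep^{1/2}\|\nabla\sigma\|_{L_t^{\infty}H_{co}^{m-1}}$ (which \emph{is} given by \eqref{EI-2}) via the continuity equation, and your last paragraph on commutator bookkeeping, while correct, is more elaborate than needed---the paper simply appeals to the product estimate \eqref{crudepro} and the definition of $\cE_{m,t}$.
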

\begin{proof}
We will prove the following two inequalities:
\beq\label{intemediate2}
\begin{aligned}
\ep^{\f{1}{2}}\|\nabla^2 u\|_{L_t^{\infty}H_{co}^{m-2}}&\lesssim (T+\ep)^{\f{1}{2}}\lae+\Lambda\big(\f{1}{c_0},|h|_{m-2,\infty,t}\big)\|\ep^{\f{1}{2}}\nabla u\|_{L_t^{\infty}H_{co}^{m-1}}
\\
&\qquad +\ep^{\f{1}{2}}\|\pt u\|_{L_t^{\infty}H_{co}^{m-2}}+\ep^{-\f{1}{2}}\|\nabla^{\vp}\sigma\|_{L_t^{\infty}H_{co}^{m-2}},
\end{aligned}
\eeq
\beq \label{intemediate1.5}
\begin{aligned}
\ep^{\f{1}{2}}\|\nabla^2 u\|_{L_t^2\cH^{m-1}}
&\lesssim (T+\ep)^{\f{1}{2}}\lae\\
&+\Lambda\big(\f{1}{c_0},|h|_{m-2,\infty,t}\big)\|\ep^{\f{1}{2}}\nabla u\|_{L_t^{2}H_{co}^{m}}+\ep^{-\f{1}{2}}\|\nabla^{\vp}\sigma\|_{L_t^{2}\cH^{m-1}} 
\end{aligned}
\eeq 
where $\cN_{m,T}$ is defined in \eqref{defcN}.
These two estimates, together with 
\eqref{EI-1},
\eqref{EI-T1},
\eqref{nor-compressible}, \eqref{epnablauLinfty1}, \eqref{nor-compressible1}, yield \eqref{sec-normal-u}.
To prove \eqref{intemediate2} and \eqref{intemediate1.5}, it suffices to control $\ep^{\f{1}{2}}\p_z^2 u.$  Let us rewrite the equations $\eqref{FCNS2}_2$ as
 \beq\label{rewrite-u}
\ep^{\f{1}{2}}\Delta^{\vp}u=\ep^{\f{1}{2}} g_2(\pt+\underline{u}\cdot\nabla)u+\ep^{-\f{1}{2}}\nabla^{\vp}\sigma-\ep^{\f{1}{2}}\nabla^{\vp}\div^{\vp}u.
 \eeq
In view of \eqref{rewrite-u}, \eqref{laplaceu}, we have by the product estimate \eqref{crudepro} and the definition of $\cE_{m,t}$ that:
\begin{align*}
\|\ep^{\f{1}{2}} \p_z^2 u\|_{L_t^{\infty}H_{co}^{m-2}}
&\lesssim \|\ep^{\f{1}{2}}\pt u\|_{L_t^{\infty}H_{co}^{m-2}}+\|\ep^{-\f{1}{2}}\nabla\sigma\|_{L_t^{\infty}H_{co}^{m-2}}\\
&+\Lambda\big(\f{1}{c_0},|h|_{m-2,\infty,t}\big)\big(\ep^{\f{1}{2}}\|(\sigma, u,\nabla\sigma,\nabla u)\|_{L_t^{\infty}H_{co}^{m-1}}+\lca|\ep^{\f{1}{2}}h|_{L_t^{\infty}\tilde{H}^{m-\f{1}{2}}}\big)\\
&\lesssim \|\ep^{\f{1}{2}}\pt u\|_{L_t^{\infty}H_{co}^{m-2}}+\|\ep^{-\f{1}{2}}\nabla\sigma\|_{L_t^{\infty}H_{co}^{m-2}}+\Lambda\big(\f{1}{c_0},|h|_{m-2,\infty,t}\big)\|\ep^{\f{1}{2}}\nabla u\|_{L_t^{\infty}H_{co}^{m-1}}\\
&\quad+(T+\ep)^{\f{1}{2}}\lae.
\end{align*}
 We thus finish the proof of  \eqref{intemediate2}. 
 The inequality  \eqref{intemediate1.5} can be shown in a similar way, 
 we thus omit the proof.
\end{proof}

\section{Control of the $L_{t,x}^{\infty}$ norm}
In this section, we prove 
Proposition \ref{prop-Linfty}, the a-priori estimate for
$\cA_{m,T}:$
 \begin{equation}\label{defcAmt-1}
 \begin{aligned}
  \mathcal{A}_{m,T}(\sigma,u)&=
 |h|_{m-2,\infty,t}+\il
  \nabla u\il_{1,\infty,T}
 +\il\ep^{-\f{1}{2}}(\nabla ^{\vp}\sigma,\div^{\vp} u)\il_{m-5,\infty,T}+\il\ep^{\f{1}{2}}\pt(\sigma,u)\il_{m-5,\infty,T}
\\&\quad +\il(\text{Id}, \ep\pt)(\sigma,u)\il_{m-4,\infty,T}
  +\il\varepsilon^{\f{1}{2}}\nabla u\il _{m-3,\infty,T}+\il\varepsilon^{\f{1}{2}}(\sigma, u)\il _{m-2,\infty,T}.
 \end{aligned}
 \end{equation}
 \begin{rmk}
 By the identity \eqref{epdeltau} and the equation $\eqref{FCNS2}_2$
 for $u,$
 we have that:
 \beq
 \ep^{\f{1}{2}}\il\p_z^2 u\il_{m-5,\infty,t}\lesssim \lca.
 \eeq
 \end{rmk}
 \begin{rmk}
 As $[\f{m}{2}]\leq m-4$ if $m\geq 7,$ we thus have:
 \beqs
 \begin{aligned}
&
\il\ep^{-\f{1}{2}}(\nabla ^{\vp}\sigma,\div^{\vp} u)\il_{[\f{m}{2}]-1,\infty,T}+\il\ep^{\f{1}{2}}\pt(\sigma,u)\il_{[\f{m}{2}]-1,\infty,T}+\ep^{\f{1}{2}}\il\p_z^2 u\il_{[\f{m}{2}]-1,\infty,t}\\
&+\il(\text{Id}, \ep\pt)(\sigma,u)\il_{[\f{m}{2}],\infty,T}
  +\il\varepsilon^{\f{1}{2}}\nabla u\il _{[\f{m}{2}]+1,\infty,T}+\il\varepsilon^{\f{1}{2}} u\il _{[\f{m}{2}]+2,\infty,T}\lesssim \cA_{m,T}.
  \end{aligned}
 \eeqs
 \end{rmk}
The other terms appearing in $\cA_{m,T}$ can be obtained by the Sobolev embedding \eqref{soblev-embed}.

\begin{proof}[Proof of Proposition \ref{prop-Linfty}]
 By the Sobolev embedding $H^{\f{3}{2}}(\mathbb{R}^2)\hookrightarrow L^{\infty}(\mathbb{R}^2),$ we have directly that:
\begin{equation}\label{hLinfty}
|h|_{m-2,\infty,T}\lesssim |h|_{L_T^{\infty}\tilde{H}^{m-\f{1}{2}}}\lesssim \tilde{\cE}_{m,T}.
\end{equation}

Furthermore, thanks to the Sobolev embedding
 \eqref{soblev-embed}, the last four terms in \eqref{defcAmt-1} can be controlled by the ones appearing in $\cE_{m,T}.$ Indeed,
 \begin{equation}
\begin{aligned}
 &\varepsilon^{\frac{1}{2}}\il\pt(\sigma,u) \il_{m-5,\infty,T}\lesssim  \sup_{0\leq s\leq T} \big( \|\ep^{\f{1}{2}}\pt u(s)\|_{H_{co}^{m-3}}+\|\varepsilon^{\f{1}{2}}\pt \nabla u(s)\|_{H_{co}^{m-4}}\big)\lesssim \tilde{\cE}_{m,T},\label{ptuinfty}
 \end{aligned}
\end{equation}
\begin{equation*}\label{epdeltau}
\begin{aligned}
 &\varepsilon^{\f{1}{2}}\il\nabla u(s)\il_{m-3,\infty,T}\lesssim  \sup_{0\leq s\leq T} \big( \|\varepsilon^{\f{1}{2}}\nabla u(s)\|_{H_{co}^{m-1}}+\|\varepsilon^{\f{1}{2}}\nabla^2 u(s)\|_{H_{co}^{m-2}}\big)\lesssim \tilde{\cE}_{m,T}, 
 \end{aligned}
\end{equation*}
 \begin{equation}\label{siuLinfty}
 \il(\sigma,u)\il_{m-4,\infty,T}\lesssim  \sup_{0\leq s\leq T}\big(\|(\sigma,u)(s)\|_{H_{co}^{m-1}}+\|\nabla(\sigma,u)(s)\|_{H_{co}^{m-4}}\big) \lesssim \tilde{\cE}_{m,T}.
\end{equation}
 \begin{equation*}
\il\ep\pt(\sigma,u)\il_{m-4,\infty,T}\lesssim  \sup_{0\leq s\leq T}\big(\|\ep^{\f{1}{2}}\pt(\sigma,u)(s)\|_{H_{co}^{m-2}}+\ep^{\f{1}{2}}\|\ep\pt\nabla(\sigma,u)(s)\|_{H_{co}^{m-3}}\big) \lesssim \tilde{\cE}_{m,T}.
\end{equation*}
\beq
\ep^{\f{1}{2}}\il(\sigma, u) 
\il _{m-2,\infty,T}\lesssim  \sup_{0\leq s\leq T}\big(\|(\sigma,u)(s)\|_{H_{co}^{m}}+\|\ep^{\f{1}{2}}\nabla(\sigma,u)(s)\|_{H_{co}^{m-1}}\big) \lesssim \tilde{\cE}_{m,T}.
\eeq
For the third term in \eqref{defcAmt-1}, we can use the equation for $\sigma$ to get that:
\beq\label{divuinfty}
\begin{aligned}
\ep^{\f{1}{2}}\il\div^{\vp}u\il_{m-5,\infty,T}&\lesssim \il \ep^{\f{1}{2}}\pt\sigma \il_{m-5,\infty,T}+\ep^{\f{1}{2}} \big(\il(u,\ep\pt\sigma,\nabla\sigma\il_{m-5,\infty,T}+|h|_{m-4,\infty,T}\big)^2\\
&\lesssim \tilde{\cE}_{m,T}+\ep^{\f{1}{2}}\Lambda\big(\f{1}{c_0},\cA_{m,T}\big).
\end{aligned}
\eeq
Moreover, in view of \eqref{hLinfty}, \eqref{siuLinfty} and identity $\Pi\nabla^{\vp}=\Pi(\p_1,\p_2,0)^{t},$
\beqs
\begin{aligned}
\ep^{-\f{1}{2}}\il\nabla^{\vp}\sigma\il_{m-5,\infty,T}&\lesssim \ep^{-\f{1}{2}}\il\p_y\sigma\il_{m-5,\infty,T}(1+|h|_{m-4,\infty,T})^2+\ep^{-\f{1}{2}}\il\nabla^{\vp}\sigma\cdot\bn\il_{m-5,\infty,T}|h|_{m-4,\infty,T}\\
&\lesssim 
\tilde{\cE}_{m,T}+\tilde{\cE}_{m,T}^3+\il\nabla^{\vp}\sigma\cdot\bn\il_{[\f{m}{2}]-1,\infty,T}^2.
\end{aligned}
\eeqs
Indeed, we have used the Sobolev embedding \eqref{soblev-embed} to get that: 
\beqs
\ep^{-\f{1}{2}}\il\p_y\sigma\il_{m-5,\infty,T}\lesssim \ep^{-\f{1}{2}}\il\nabla^{\vp}\sigma\il_{L_t^{\infty}H_{co}^{m-3}}\lesssim \tilde{\cE}_{m,T}.
\eeqs
Therefore, it remains to control 
$\ep^{-\f{1}{2}}\il\nabla^{\vp}\sigma\cdot\bn\il_{[\f{m}{2}]-1, \infty, T},$ which is the aim of the following lemma.
\end{proof}
\begin{lem}
Suppose that \eqref{preassumption} holds, then:
\begin{equation}\label{normal-grad-sigma}
\begin{aligned}
\ep^{-\f{1}{2}}\il\nabla^{\vp}\sigma\cdot\bn\il_{[\f{m}{2}]-1,\infty,T}&\lesssim Y_m^2(0)+
\tilde{\cE}_{m,T}^2+\ep^{\f{1}{2}}\Lambda\big(\f{1}{c_0},\cA_{m,T}\big).
\end{aligned}
\end{equation}
\end{lem}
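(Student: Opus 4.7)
The plan is to exploit the strongly damped transport equation \eqref{gradsigma} for $\nabla^{\vp}\sigma$, namely
\[
 \ep^{2}g_{1}(\pt+\underline{u}\cdot\nabla)\nabla^{\vp}\sigma+\tfrac{1}{2\mu+\lambda}\nabla^{\vp}\sigma=\mathcal{Q}_{1},
\]
combined with the method of characteristics. A crucial observation is that $\underline{u}=(u_{1},u_{2},U_{z})$ is tangent to both boundaries: $U_{z}|_{z=0}=0$ by the surface equation \eqref{surfaceeq2}, and $U_{z}|_{z=-1}=0$ by \eqref{bebdry2}, so the integral curves of $\underline{u}$ stay in $\mS$. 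I would apply $Z^{\alpha}$ with $|\alpha|\leq [\f{m}{2}]-1$ to the equation above; writing $f=Z^{\alpha}\nabla^{\vp}\sigma$, each component satisfies
\[
 \ep^{2}g_{1}(\pt+\underline{u}\cdot\nabla)f+\tfrac{1}{2\mu+\lambda}f=\mathcal{F}:=Z^{\alpha}\mathcal{Q}_{1}-\ep^{2}[Z^{\alpha},g_{1}(\pt+\underline{u}\cdot\nabla)]\nabla^{\vp}\sigma.
\]
Integrating along characteristics and using the uniform bound $g_{1}\geq c_{0}>0$ from \eqref{preassumption1}, the damping rate $\gtrsim 1/\ep^{2}$ exactly cancels the $\ep^{-2}$ factor one loses upon dividing $\mathcal{F}$ by $\ep^{2}g_{1}$, producing the maximum principle bound $\|f\|_{L^{\infty}_{t,x}}\lesssim\|f(0)\|_{L^{\infty}_{x}}+\Lambda(1/c_{0})\|\mathcal{F}\|_{L^{\infty}_{t,x}}$.

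I would then multiply by $\ep^{-1/2}$ and take the dot product with $\bn$, whose conormal derivatives up to order $[\f{m}{2}]-1\leq m-4$ are controlled by $\Lambda(1/c_{0},|h|_{m-2,\infty,T})\lesssim\Lambda(1/c_{0},\cA_{m,T})$. The initial data yields $\ep^{-1/2}\|Z^{\alpha}\nabla^{\vp}\sigma(0)\|_{L^{\infty}}\lesssim Y_{m}(0)$ directly from \eqref{initialnorm}. Writing $\mathcal{Q}_{1}=A+B+C+D$ as in the proof of Lemma \ref{lemnablasigma}, the pieces $A$, $B$, $D$ each carry at least one explicit $\ep$ together with factors controlled at $L^{\infty}$-level $\leq [\f{m}{2}]-1\leq m-5$ by $\cA_{m,T}$, contributing $\ep^{1/2}\Lambda(1/c_{0},\cA_{m,T})$. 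The commutator piece $\ep^{3/2}\|[Z^{\alpha},g_{1}(\pt+\underline{u}\cdot\nabla)]\nabla^{\vp}\sigma\|_{L^{\infty}_{t,x}}$ has a safe $\ep^{3/2}$ prefactor and is handled by the product estimate \eqref{crudepro} together with the Sobolev embedding \eqref{soblev-embed}, which again contributes $\ep^{1/2}\Lambda(1/c_{0},\cA_{m,T})$.

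The main obstacle is the curl term $C=-\tfrac{\mu\ep}{2\mu+\lambda}\curl^{\vp}\omega$, which formally involves second spatial derivatives of $u$ and is not included in $\cA_{m,T}$. After multiplying by $\ep^{-1/2}$ its $L^{\infty}$-contribution at conormal level $[\f{m}{2}]-1$ is at most
\[
 \ep^{1/2}\|\nabla^{2}u\|_{[\f{m}{2}]-1,\infty,T}\lesssim \ep^{1/2}\|\nabla^{2}u\|_{L^{\infty}_{T}H^{[\f{m}{2}]+1}_{co}}+\ep^{1/2}\|\nabla^{3}u\|_{L^{\infty}_{T}H^{[\f{m}{2}]}_{co}}
\]
by the anisotropic embedding \eqref{soblev-embed}. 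Since $m\geq 7$ gives $[\f{m}{2}]+1\leq m-2$ and $[\f{m}{2}]+2\leq m-1$, both right-hand terms are dominated by $\ep^{1/2}\|\nabla^{2}u\|_{L^{\infty}_{T}H^{m-2}_{co}}$ and $\ep^{1/2}\|\nabla u\|_{L^{\infty}_{T}H^{m-1}_{co}}$, already bounded by $\tilde{\cE}_{m,T}$ through \eqref{sec-normal-u} and \eqref{epnablauLinfty1}. Combining with the Leibniz expansion $Z^{\alpha}(\nabla^{\vp}\sigma\cdot\bn)=\sum Z^{\beta}\nabla^{\vp}\sigma\cdot Z^{\alpha-\beta}\bn$ and absorbing the resulting quadratic products either into $Y_{m}^{2}(0)$ (for the initial time) or into $\tilde{\cE}_{m,T}^{2}$ (for the curl contribution combined with the $\bn$-factor), and all remaining contributions into $\ep^{1/2}\Lambda(1/c_{0},\cA_{m,T})$, yields the estimate \eqref{normal-grad-sigma}.
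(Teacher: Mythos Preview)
Your overall strategy—use the damped transport equation \eqref{gradsigma} together with the method of characteristics—is the same as the paper's. The essential gap is in your treatment of the curl term $C=-\tfrac{\mu\ep}{2\mu+\lambda}\curl^{\vp}\omega$. You apply $Z^{\alpha}$ to the \emph{vector} equation for $\nabla^{\vp}\sigma$, run the maximum principle, and only afterwards take the scalar product with $\bn$. This forces you to bound $\ep^{1/2}\il Z^{\alpha}\curl^{\vp}\omega\il_{0,\infty,T}$, i.e.\ $\ep^{1/2}\il\nabla^{2}u\il_{[\f{m}{2}]-1,\infty,T}$. Your subsequent claim that $\ep^{1/2}\|\nabla^{3}u\|_{L_{T}^{\infty}H_{co}^{[m/2]}}$ is dominated by $\ep^{1/2}\|\nabla u\|_{L_{T}^{\infty}H_{co}^{m-1}}$ is incorrect: the two extra factors of $\nabla$ are genuine normal derivatives $\p_{z}$, not conormal ones, so they cannot be absorbed into the $H_{co}$ index. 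The only third-normal-derivative control available is $\ep^{1/2}\|\nabla^{3}u\|_{L_{T}^{\infty}L^{2}}$ from $\cE_{low,T}$, which is insufficient for $|\alpha|\geq 1$. (Trying instead to invoke the remark $\ep^{1/2}\il\p_{z}^{2}u\il_{m-5,\infty,t}\lesssim\Lambda(\cA_{m,T})$ is circular, since that bound itself relies on $\ep^{-1/2}\il\nabla^{\vp}\sigma\il_{m-5,\infty,t}$.)

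The paper avoids this obstruction by reversing the order: it first dots the equation with $\bn$ to form $R=\nabla^{\vp}\sigma\cdot\bn$, and \emph{then} applies $Z^{\beta}$. The point is the algebraic identity
\[
\curl^{\vp}\omega\cdot\bn=\div^{\vp}(\omega\times\bn)+\omega\cdot\curl^{\vp}\bn
=\p_{1}(\omega\times\bn)_{1}+\p_{2}(\omega\times\bn)_{2}-(\omega\times\bn)\cdot\p_{z}^{\vp}\bN+\omega\cdot\curl^{\vp}\bn,
\]
valid because $(\omega\times\bn)\cdot\bN=0$. This expresses $\curl^{\vp}\omega\cdot\bn$ using only \emph{tangential} derivatives of $\omega$, so after applying $Z^{\beta}$ with $|\beta|\leq m-5$ one only needs $\ep^{1/2}\il\nabla u\il_{m-4,\infty,T}$, which is available and gives the $\tilde{\cE}_{m,T}^{2}$ contribution. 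Your Leibniz expansion at the end cannot recover this cancellation, because the identity is specific to the normal component of $\curl^{\vp}\omega$ and must be used \emph{before} differentiation.
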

\begin{proof}
By \eqref{gradsigma}, we find that
$\nabla^{\vp}\sigma$ solves
\beq\label{gradsigma1}
\ep^2 g_1(\pt+\underline{u}\cdot\nabla)\nabla^{\vp}\sigma+\f{1}{2\mu+\lambda}\nabla^{\vp}\sigma= \cQ_1
\eeq
where $\cQ_1=\cQ_{11}+\cQ_{12}+\cQ_{13},$ with
$$\cQ_{11}=-\ep^2g_1'\nabla^{\vp}\sigma(\ep\pt+\ep\underline{u}\cdot\nabla)\sigma-\ep^2 g_1\nabla^{\vp}u\cdot\nabla^{\vp}\sigma,$$ 
$$\cQ_{12}=-\f{\mu \ep}{2\mu+\lambda}\curl^{\vp}\omega,\quad \cQ_{13}=-\f{1}{2\mu+\lambda}g_2(\ep\pt+\ep \underline{u}\cdot\nabla)u.$$
Denote $R=\nabla^{\vp}\sigma\cdot\bn,$ then by \eqref{gradsigma}, $R$ solves:
\beqs
\ep^2 g_1(\pt+\underline{u}\cdot\nabla)R+\f{1}{2\mu+\lambda}R=\ep^2 g_1\nabla^{\vp}\sigma(\pt+\underline{u}\cdot)\bn+\cQ_1\cdot\bn=\colon \cQ_2+\cQ_1\cdot\bn
\eeqs
For any multi-index with $|\beta|\leq m-5,$ denote
$R^{\beta}=Z^{\beta}R,$ then $R^{\beta}$ satisfies:
\beqs
\ep^2 g_1(\pt+\underline{u}\cdot\nabla)R^{\beta}+\f{1}{2\mu+\lambda}R^{\beta}=Z^{\beta}(\cQ_2+\cQ_1\cdot\bn)+\cC_{R,1}^{\beta}+\cC_{R,2}^{\beta}=\colon \cQ^{\beta},
\eeqs
where  $$\cC_{R,1}^{\beta}=-\ep^2[Z^{\beta},g_1/\ep]\ep\pt R, \quad \cC_{R,2}^{\beta}=-\ep^2[Z^{\beta},g_1\underline{u}\cdot \nabla] R.$$
Define $X_t(x)=X(t,x)$ the unique flow associated to $\underline{u}:$
$$\pt X(t,x)=\underline{u}(t,X(t,x)),\quad X(0,x)=x.$$
Note that since $\underline{u}\cdot \bn|_{z=0}=0,$
and $u\in \Lip([0,T]\times\Omega),$ we have for each $t\in[0,T],$ $X_t:\mS\rightarrow\mS$ is a diffeomorphism.  
Denote $f^{X}=f(t,X(t,x)),$ then $R^{\beta,X}$ solves the ODE:
\beqs
\ep^2 (g_1\pt R^{\beta})(t,X_t(x))+\f{1}{2\mu+\lambda}R^{\beta}(t,X_t(x))=Q^{\beta}(t,X_t(x))
\eeqs
from which, we deduce that:
\beqs
R^{\beta}(t,X_t(x))=e^{-\int_0^t \f{1}{\ep^2 g_1(s,X_s(x))}\d s}R^{\beta}(0)+\int_0^t e^{-\int_{\tau}^t \f{1}{\ep^2 g_1(x,X_s(x))}\d s}\f{1}{\ep^2}Q^{\beta} (\tau, X_{\tau}(x))\d\tau.
\eeqs
By assumption \eqref{preassumption}, $ c_0\leq g_1(t, X_t(x))\leq\f{1}{c_0}$ for any $(t,x)\in[0,T]\times\mS.$ Therefore,
\beq\label{Rbeta}
\begin{aligned}
\ep^{-\f{1}{2}}\il R^{\beta}\il_{0,\infty,T}&\lesssim \ep^{-\f{1}{2}} \sup_{(t,x)\in[0,T]\times\mS}| R^{\beta}(t, X_t(x))|\\
&\lesssim \ep^{-\f{1}{2}} \il R^{\beta}(0)\il_{L^{\infty}(\mS)}+\ep^{-\f{1}{2}}\int_0^T e^{-{c_0(t-s)}/{\ep^2}}\f{1}{\ep^2}\d s \il Q^{\beta}\il_{0, \infty,T}\\
&\lesssim Y_m(0)+\ep^{-\f{1}{2}}\il Q^{\beta}\il_{0, \infty, T}.
\end{aligned}
\eeq
It thus suffices to control the term $\ep^{-\f{1}{2}}\il Q^{\beta}\il_{0,\infty,T}.$
First of all, by the property \eqref{preassumption1}, we get that:
\beq\label{cR1}
\ep^{-\f{1}{2}}\il\cC_{R,1}^{\beta}\il_{0,\infty,T}\lesssim \ep^{\f{3}{2}} \big(\il(\sigma,\nabla \sigma)\il_{m-5,\infty, T}+|h|_{m-4,\infty,T}\big)^2\lesssim \ep^{\f{3}{2}}\cA^2_{m,T}.
\eeq
Next, by using that $\underline{u}\cdot \nabla=u_y\p_y+ \f{U_z}{\phi}Z_3 R,$ we can control the second commutator term as:
\beq\label{cR2}
\ep^{\f{3}{2}}\il\cC_{R,2}^{\beta}\il_{0,\infty,T}\lesssim \ep \big(\il(\sigma,u,\nabla \sigma,\ep^{\f{1}{2}}\nabla u)\il_{m-5,\infty, T}+|h|_{m+3,\infty, T}\big)^2\lesssim\ep\cA^2_{m,T}.
\eeq
Similarly, we can find some polynomial $\Lambda$, such that
\beq
\begin{aligned}
\ep^{-\f{1}{2}}\il Z^{\beta}(\cQ_2+\cQ_{11}\cdot\bn)\il_{0,\infty,T}&\lesssim \ep^{\f{1}{2}}\Lambda\big(\f{1}{c_0},  \il(\sigma,u,\nabla \sigma,\ep^{\f{1}{2}}\nabla u)\il_{[\f{m}{2}]-1,\infty,T}+ |h|_{[\f{m}{2}]+1,\infty,T}\big)\\
&\lesssim \ep^{\f{1}{2}} \Lambda\big(\f{1}{c_0}, \cA_{m,T}\big).
\end{aligned}
\eeq
 Moreover, in light of \eqref{hLinfty} and \eqref{siuLinfty}, we have
 \beq
 \begin{aligned}
\ep^{-\f{1}{2}} \il Z^{\beta}(\cQ_{13}\cdot\bn)\il_{0,\infty,T}&\lesssim 
 \il (\ep^{\f{1}{2}}\pt u\cdot\bn, \ep^{\f{1}{2}}\underline{u}\cdot\nabla u) \il_{m-5,\infty, T}+\ep^{\f{1}{2}}\Lambda\big(\f{1}{c_0},\cA_{m,T}\big)\\
 &\lesssim 
\tilde{\cE}_{m,T}^2+\ep^{\f{1}{2}}\Lambda\big(\f{1}{c_0},\cA_{m,T}\big).
 \end{aligned}
 \eeq
Finally, since 
\beqs
\begin{aligned}
\curl^{\vp}\omega\cdot\bn&=\div^{\vp}(\omega\times \bn)+\omega\cdot\curl^{\vp} \bn\\
&=-(\omega\times\bn)\cdot\p_z^{\vp}\bN+\p_1(\omega\times\bn)_1+\p_2(\omega\times\bn)_2+\omega\cdot\curl^{\vp} \bn
\end{aligned}
\eeqs
 involves only tangential derivatives of 
$\nabla^{\vp} u,$ one has again by \eqref{hLinfty} and \eqref{siuLinfty} that:
\beq\label{cQ3}
\ep^{-\f{1}{2}}\il Z^{\beta}(\cQ_{12}\cdot\bn)\il_{0,\infty,T}\lesssim
 \big(\il\ep^{\f{1}{2}}\nabla u\il_{m-4,\infty,T}+ |h|_{m-3,\infty,T}\big)^2\lesssim \tilde{\cE}_{m,T}^2.
 \eeq
Collecting \eqref{cR1}-\eqref{cQ3}, we find that:
\beqs
\il \cQ\il_{[\f{m}{2}]-1,\infty, T}\lesssim 
\tilde{\cE}_{m,T}^2+\ep^{\f{1}{2}}\Lambda\big(\f{1}{c_0},\cA_{m,T}\big).
\eeqs
 Inserting this inequality into \eqref{Rbeta}, we eventually get \eqref{normal-grad-sigma}.
\end{proof}
In the following Lemma, we obtain the $L_{t,x}^{\infty}$ estimates of 
$\nabla u,$ namely $\il\ep^{\f{1}{2}}\pt \nabla u\il_{0,\infty,t}, \il \nabla u\il_{1,\infty,t}.$ 
\begin{lem}\label{lem-inftynablau}
Assume that \eqref{preassumption} holds, then we have that for any $0<t\leq T,$
\begin{equation}\label{inftynablau}
    \il\ep^{\f{1}{2}}\pt \nabla u\il_{0,\infty,t}+\il \nabla u\il_{1,\infty,t}\lesssim 
  \Lambda\big(\f{1}{c_0},Y_m(0)\big) +
    \Lambda\big(\f{1}{c_0},|h|_{3,\infty,t}\big)
    \tilde{\cE}_{m,T}+(T+\ep)^{\f{1}{4}}\lae.
\end{equation}
\end{lem}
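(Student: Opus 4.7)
The plan is to decompose $u$ into its incompressible and compressible parts and treat the two pieces separately, since a naive Sobolev embedding from the uniform energy bound $\nabla v\in L_t^{\infty} H_{co}^{m-4}$ (Lemma \ref{lemnablau-LinftyL2}) is not enough for $L^{\infty}$ control in three space dimensions. Writing $u=v+\nabla^{\vp}\Psi$ with $v=\mathbb{P}_t u$, we have $\nabla u=\nabla v+\nabla\nabla^{\vp}\Psi$, so it suffices to bound each piece in the $\il\cdot\il_{1,\infty,t}$ and $\il\ep^{\f12}\pt(\cdot)\il_{0,\infty,t}$ norms.

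For the compressible part, I would use the elliptic problem \eqref{eq-comp} together with an $L^{\infty}$-type elliptic estimate derived from \eqref{elliptic3}--\eqref{elliptic3.3} and the Sobolev embedding \eqref{soblev-embed}. Since $\Psi$ solves a mixed-boundary elliptic problem with data $\div^{\vp}u$, and since Lemmas \ref{sigmainduction}--\ref{sigmainduction1} already give uniform bounds on $\ep^{-\f12}(\nabla\sigma,\div^{\vp}u)$ in $L_t^{p}H_{co}^{m-1}$ and $L_t^{\infty}H_{co}^{m-2}$, the Sobolev embedding applied to $\nabla\nabla^{\vp}\Psi\in L_t^{\infty}H_{co}^{m-2}$ together with one additional normal derivative recovered via the equation will produce a bound of the form $\Lambda(\frac{1}{c_0},|h|_{3,\infty,t})\,\tilde{\cE}_{m,T}$. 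The analogous estimate for $\ep^{\f12}\pt\nabla\nabla^{\vp}\Psi$ follows by differentiating \eqref{eq-comp} in time and applying the same argument to the resulting elliptic system, using the control of $\ep^{\f12}\pt\div^{\vp}u$ from \eqref{nor-compressible1}.

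For the incompressible part $\nabla v$, I would reduce the estimate to bounding $\omega\times\bn$ in $L^{\infty}$ via the identities \eqref{norpz} and \eqref{tanpz}: up to the tangential gradient of $u$ (which is bounded in $L^{\infty}$ thanks to the already-controlled component $\il (\mathrm{Id},\ep\pt)u\il_{m-4,\infty,t}$ of $\cA_{m,t}$, via Sobolev embedding on $u\in L_t^{\infty}H_{co}^{m-1}$) and the divergence $\div^{\vp}u$ (bounded through the transport equation analysis that gave \eqref{normal-grad-sigma}), the normal derivative $\p_z^{\vp}u$ is determined by $\omega\times\bn$. For the latter I would reuse the splitting $\tilde\zeta=\tilde\zeta_1+\tilde\zeta_2$ from Lemmas \ref{lemtz1}--\ref{lemtz2}: the pointwise bound \eqref{tz1-00} immediately handles $\tilde\zeta_1$ (and a variant with an extra tangential vector field applied to the heat representation \eqref{tz1-1} handles $\cZ\tilde\zeta_1$), while for $\tilde\zeta_2$ the homogeneous Dirichlet boundary condition together with the energy bounds \eqref{EIoftz2}--\eqref{EIoftz2-1} and a Sobolev embedding of the type \eqref{soblev-embed} applied at the level $m-4\geq 3$ will yield the required $L^{\infty}$ bound, with a $(T+\ep)^{1/4}$ prefactor inherited from the heat-kernel estimate \eqref{claim}.

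The main obstacle I anticipate is the $L^{\infty}$ control of $\tilde\zeta_2$: we only have it in $L_t^{\infty}H_{co}^{m-4}$, which is borderline for the Sobolev embedding. The resolution should be to exploit the zero Dirichlet boundary condition together with the parabolic structure of \eqref{tz2-0} to gain one additional half-derivative in the normal variable (so that $\|(\nabla\tilde\zeta_2,\ep^{\f12}\pt\nabla\tilde\zeta_2)\|_{L_t^{2}H_{co}^{m-4}}$ from \eqref{EIoftz2}--\eqref{EIoftz2-1} can be upgraded to an $L^{\infty}$ norm). The $\ep^{\f12}\pt$ version is handled in the same way, applying the heat-kernel splitting to $\ep^{\f12}\pt\omega\times\bn$ and using the already-controlled $\ep^{\f12}\pt$ time-regularity of the boundary data $\nabla^{\vp}\Psi\big|_{z=0}$.
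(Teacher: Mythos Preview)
Your overall plan---reduce $\nabla u$ to tangential derivatives, $\div^{\vp}u$, and the vorticity via \eqref{norpz}--\eqref{tanpz}, then handle the vorticity near the boundary through the heat equation---is the same as the paper's. The compressible part is fine. The gap is in your treatment of the incompressible/vorticity part.

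You propose to reuse the splitting $\tilde\zeta=\tilde\zeta_1+\tilde\zeta_2$ from Lemmas \ref{lemtz1}--\ref{lemtz2} and then pass to $L^{\infty}_{t,x}$ by Sobolev embedding. This does not close: the energy estimates \eqref{EIoftz2}--\eqref{EIoftz2-1} give $\tilde\zeta_2\in L_t^{\infty}H_{co}^{m-4}$ but only $\nabla\tilde\zeta_2\in L_t^{2}H_{co}^{m-4}$. The embedding \eqref{soblev-embed} is pointwise in time and requires $\nabla\tilde\zeta_2(t)\in H_{tan}^{s}$ at each $t$; an $L_t^{2}$ bound on $\nabla\tilde\zeta_2$ is not enough. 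Your proposed fix (``gain half a normal derivative from the parabolic structure'') would need an $L_t^{\infty}$ estimate on $\nabla\tilde\zeta_2$, and the energy method does not deliver this without losing control of the source terms, which contain $\nabla^{\vp}\omega$.

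The paper circumvents this by \emph{not} splitting into $\tilde\zeta_1,\tilde\zeta_2$. Instead it writes the full Duhamel formula \eqref{explicit-vor1} for $\widetilde{\chi\omega}$ (the raw vorticity, not the modified $\omega_{\bn}$) and bounds each piece in $L^{\infty}_{t,x}$ directly from pointwise properties of the one-dimensional heat kernel: the boundary contribution by the maximum-principle computation \eqref{tz1Linfty1}--\eqref{tz1Linfty2}, the initial contribution by the $L^{1}_{z'}$ bound on $E(t,z,\cdot)$, and the nonlinear contribution by the Cauchy--Schwarz estimate $\|E(t-s,z,\cdot)\|_{L^{2}_{z'}}\lesssim (t-s)^{-1/4}$ combined with the $H^{2}_{y}\hookrightarrow L^{\infty}_{y}$ embedding in the tangential variables only. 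The problematic source terms involving $\p_{z}\omega$ (namely $G_{\chi,1}^{\omega}, G_{\chi,2}^{\omega}$) are handled by one integration by parts in $z'$ against $\p_{z'}E$, which still integrates to $T^{1/4}$. This kernel-based argument produces $L^{\infty}_{t,x}$ bounds directly, without ever needing $\nabla\tilde\zeta_2\in L_t^{\infty}L^{2}$.
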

\begin{proof}
In view of the identities \eqref{norpz} and 
\beqs
\begin{aligned}
\Pi(\p_z^{\vp}u)&=\f{1}{|\bN|}\Pi\big(\omega\times\bN+(\nabla^{\vp}u)^{t}\cdot\bn-\bn_1\p_1 u-\bn_2\p_2 u\big)\\
&=\f{1}{|\bN|}(\omega\times\bN)+\Pi\nabla^{\vp}(u\cdot\bn)-\Pi\big((\nabla^{\vp}\bn)^t u-\bn_1\p_1 u-\bn_2\p_2 u\big),
\end{aligned}
\eeqs
one gets that:
\begin{align*}
&\il \nabla u\il_{1,\infty,t}+\ep^{\f{1}{2}}\il\pt \nabla u\il_{0,\infty,t}\lesssim\ep^{\f{1}{2}}\lca\\
&+ \Lambda\big(\f{1}{c_0},|h|_{3,\infty,t}\big)\big(\il u\il_{2,\infty,t}+\il\ep^{\f{1}{2}}\pt u\il_{1,\infty,t}+\|\ep^{-\f{1}{2}}\div^{\vp}u\|_{1,\infty,t}
+\il\omega\il_{1,\infty,t}+\il\ep^{\f{1}{2}}\pt \omega\il_{0,\infty,t}\big).
\end{align*}
The inequality \eqref{inftynablau} then follows from \eqref{ptuinfty}, \eqref{siuLinfty}, \eqref{divuinfty} and the next lemma for the estimates of $\omega.$
\end{proof}
\begin{lem}
Under the same assumption as in Lemma \eqref{lem-inftynablau},
\beq\label{sec13:eq7}
\il\omega\il_{1,\infty,t}+\il\ep^{\f{1}{2}}\pt \omega\il_{0,\infty,t}\lesssim \Lambda\big(\f{1}{c_0},Y_m(0)\big)+ \Lambda\big(\f{1}{c_0},|h|_{3,\infty,t}\big)\tilde{\cE}_{m,T}+
(T+\ep)^{\f{1}{4}}\lae.
\eeq
\end{lem}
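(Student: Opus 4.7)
The plan is to mirror the argument of Lemma~\ref{lemomegatimesbn}, upgrading its conormal $L^2$-type control of $\omega_{\bn}$ to pointwise control of $\omega$ together with one conormal derivative. Recall that each component of $\omega=\curl^{\vp}u$ solves the transport-diffusion equation \eqref{eq-omega-free}, with the tangential boundary trace \eqref{omegatimesn} on $\{z=0\}$ and the partial trace \eqref{curlwdotn} on $\{z=-1\}$. As before, near the upper surface I would work with the modified quantity $\omega_{\bn}=\omega\times\bn-2\Pi(\p_1 v\cdot\bn,\p_2 v\cdot\bn,0)^t$, whose Dirichlet value on $\{z=0\}$ is $-2\Pi(\p_1\nabla^{\vp}\Psi\cdot\bn,\p_2\nabla^{\vp}\Psi\cdot\bn,0)^t$ and hence involves only the compressible part of the velocity that has already been bounded in Section~9. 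Moving to the normal geodesic coordinates \eqref{geodesic} and localizing by the cutoff $\chi$, one splits $\tilde{\zeta}=\tilde{\zeta}_1+\tilde{\zeta}_2$ exactly as in \eqref{tz1}--\eqref{tz2}. The estimate near $\{z=-1\}$ is simpler (the boundary data \eqref{curlwdotn} is directly tangential in $u$) and handled by the same scheme.

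For $\tilde\zeta_1$, which solves the one-dimensional heat equation with homogeneous initial data and prescribed Dirichlet trace at $\{z=0\}$, I would apply $Z^{\alpha}$ with $|\alpha|\le 1$ and $\ep^{\f12}\p_t$ and use the explicit kernel representation \eqref{tz1-1} followed by the pointwise bounds derived in \eqref{tz1Linfty1}--\eqref{tz1Linfty2}. The resulting $L^\infty_{t,x}$ estimate reduces everything to a pointwise bound of one tangential derivative of the compressible boundary datum, which in turn is handled by Sobolev embedding in $\mathbb{R}^2$ from the trace of $\nabla\nabla^{\vp}\Psi$ on $\{z=0\}$; the latter is controlled through the elliptic estimates \eqref{sec-normal-Psi}, \eqref{psiLinfty} and the surface regularity provided by Lemma~\ref{lemsurface}. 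This yields
\[
\il(\Id,\ep^{\f12}\p_t, Z)\tilde\zeta_1\il_{0,\infty,t}\lesssim \Lambda\big(\tfrac{1}{c_0},Y_m(0)\big)+\Lambda\big(\tfrac{1}{c_0},|h|_{3,\infty,t}\big)\tilde{\cE}_{m,T}+(T+\ep)^{\f14}\Lambda\big(\tfrac{1}{c_0},\cN_{m,T}\big).
\]

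For $\tilde\zeta_2$, which satisfies \eqref{tz2-0} with homogeneous Dirichlet trace, a smooth source, and the initial datum inherited from $\omega_{\bn}(0)$, the energy estimates \eqref{EIoftz2}--\eqref{EIoftz2-1} furnish control in $L^\infty_t H^{m-4}_{co}\cap L^2_t H^{m-3}_{co}$ both for $\tilde\zeta_2$ and for $\ep^{\f12}\p_t\tilde\zeta_2$. Since $m\ge 7$ and thus $m-4\ge 3$, the Sobolev embedding \eqref{soblev-embed} applied on the half-space (together with a one-dimensional embedding in $z$) converts these bounds into pointwise control of $(\Id,\ep^{\f12}\p_t,Z)\tilde\zeta_2$ with exactly the structure of the right-hand side of \eqref{sec13:eq7}. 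Pulling back via $\Phi_t^{-1}\circ\tilde\Phi_t$ and using the transfer estimates of Proposition~\ref{propequiva-norm} and \eqref{Linftyeq-free}, I obtain the desired bound for $(\Id,\ep^{\f12}\p_t,Z)(\chi\omega_{\bn})$. Since $v$ and $\nabla v$ are already bounded in $L^\infty_{t,x}$ through Sobolev embedding from \eqref{v-0} and \eqref{lower-v}, this controls the tangential part $\omega\times\bn$. The normal component $\omega\cdot\bn$ can then be extracted from the vector identity $\omega\cdot\bn=\div^{\vp}(u\times\bn)+u\cdot\curl^{\vp}\bn$, in which the potentially normal derivative $\p_z^{\vp}(u\times\bn)_3$ reduces, after expansion, to tangential derivatives plus $\div^{\vp}u$ terms, all pointwise controlled by $\cA_{m,T}$ and $\tilde{\cE}_{m,T}$; the analogous argument near $\{z=-1\}$ yields the matching bound there.

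The main obstacle will be verifying that applying one spatial conormal derivative to the heat-kernel representation of $\tilde\zeta_1$ does not deteriorate the $L^\infty_{t,y}$ bound on the boundary datum. For tangential fields $Z_1,Z_2$ this is immediate by differentiating under the integral, and for $Z_3$ the analysis of \eqref{tz1Linfty2} shows that the weight $\phi(z)z^{-1}$ is uniformly bounded, so no singularity arises. The subtle point is the $\ep^{\f12}\p_t$ derivative: one must control $\ep^{\f12}\p_t$ of $\nabla\nabla^{\vp}\Psi|_{z=0}$ in $L^\infty_{t,y}$ without losing a factor of $\ep$, and this is precisely the kind of estimate made available by the slightly well-prepared data assumption through Lemma~\ref{sigmainduction1} and \eqref{psiLinfty}.
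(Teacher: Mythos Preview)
Your overall architecture is close to the paper's, but the key step for $\tilde\zeta_2$ does not close. The conormal Sobolev embedding \eqref{soblev-embed} is a pointwise-in-time inequality that requires, at each fixed $t$, control of $\|\p_z\tilde\zeta_2(t)\|_{H^{s_1}_{tan}}$. The energy identities \eqref{EIoftz2}--\eqref{EIoftz2-1} only yield $\|\nabla\tilde\zeta_2\|_{L^2_tH^{m-4}_{co}}$ (and the same for $\ep^{\f12}\pt\nabla\tilde\zeta_2$), i.e.\ $L^2$ in time, not $L^\infty$ in time. Hence you cannot upgrade them to $L^\infty_{t,x}$ for $(\Id,\ep^{\f12}\pt,Z)\tilde\zeta_2$ by embedding; the missing normal derivative in $L^\infty_t$ is exactly the information this lemma is meant to produce. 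The same obstruction blocks your assertion that ``$\nabla v$ is already bounded in $L^\infty_{t,x}$ through Sobolev embedding from \eqref{v-0} and \eqref{lower-v}'': those give $\|\nabla v\|_{L^2_tH^{m-1}_{co}}$, not $\|\nabla v\|_{L^\infty_tH^{k}_{co}}$, and the uniform $\|\nabla^2 u\|_{L^\infty_tH^{m-2}_{co}}$ is only available with an $\ep^{\f12}$ prefactor.

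The paper avoids this by not splitting into $\tilde\zeta_1+\tilde\zeta_2$ and not passing through energy estimates at all for the $L^\infty$ bound. It works directly with $\widetilde{\chi\omega}$ (not the modified $\omega_{\bn}$), writes the full Duhamel formula \eqref{explicit-vor1} with the one-dimensional heat kernel, and estimates each of the three pieces pointwise. The boundary piece uses that $\omega^{b,1}$ is, by \eqref{norpz}--\eqref{tanpz}, a function of $(u^{b,1},\p_y u^{b,1},(\div^{\vp}u)^{b,1},\bn^{b,1},\nabla\bn^{b,1})$, so that $\|(\Id,\ep^{\f12}\pt,\p_y)\omega^{b,1}\|_{L^\infty_{t,y}}$ is controlled by $\il\ep^{\f12}\pt u\il_{1,\infty,t}+\il\ep^{-\f12}\div^{\vp}u\il_{1,\infty,t}+\il u\il_{2,\infty,t}$, quantities already placed in $\tilde\cE_{m,T}$ by \eqref{ptuinfty}--\eqref{divuinfty}. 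The initial piece gives the $\Lambda(1/c_0,Y_m(0))$ contribution. For the source piece, the paper uses $H^2(\mathbb{R}^2)\hookrightarrow L^\infty_y$ together with $\|E(t-s,z,\cdot)\|_{L^2_{z'}}\lesssim(t-s)^{-1/4}$ (and $\|\p_{z'}E\|_{L^2_{z'}}\lesssim(t-s)^{-3/4}$ after integrating by parts the terms $u\cdot\nabla^{\vp}\omega$, $\p_z(\ln|g|)\p_z\widetilde{\chi\omega}$, etc.\ that contain a genuine normal derivative of $\omega$), which produces the $T^{1/4}$ factor in front of $\Lambda(1/c_0,\cN_{m,T})$. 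In short, the $L^\infty_{t,x}$ control of the nonlinear/source contribution comes from heat-kernel smoothing in the Duhamel integral, not from Sobolev embedding on energy norms; your proposal needs to be modified accordingly.
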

\begin{proof}
Away from the boundary, the conormal spaces are equivalent to the usual Sobolev space, the $L_{t,x}^{\infty}$ estimate for $\omega$ can be obtained directly from the usual Sobolev embedding. It thus suffices to establish the corresponding estimates near the boundaries. In what follows, we will detail their estimates near the upper boundary (which corresponds to the free surface), the one near the bottom being easier and has essentially been performed in \cite{masmoudi2021uniform}. As in the proof of Lemma \ref{lemomegatimesbn}, we will employ the normal geodesic coordinates \eqref{geodesic} to take the benefit of the explicit formula for the heat equation on the half line.
Taking the same cut off function $\chi=\chi_0(\f{z}{C(\kappa)})$ introduced in Lemma \ref{lemomegatimesbn} (which satisfies
$\Phi_t(\Supp \chi)\Subset \tilde{\Phi}_t(\mS_k) $), we use the equation \eqref{eq-omega-free} to obtain that:
\beqs
(\bar{\rho}\pt-\mu\Delta^{\vp})(\chi\omega)=\chi G^{\omega}-\mu \Delta^{\vp}\chi\omega-\mu\p_z\chi(\bN\cdot \nabla^{\vp})\omega=: G^{\chi,\omega}
\eeqs
where 
$$G^{\omega}=-u\cdot\nabla^{\vp}\omega+\omega\cdot\nabla^{\vp}u-\omega\div^{\vp}u-\f{\nabla g_2}{\ep}\times ((\ep\pt+\ep\underline{u}\cdot\nabla)u)+\f{\bar{\rho}-g_2}{\ep}((\ep\pt+\ep\underline{u}\cdot\nabla)\omega).$$
For a function $f(t,\cdot)$ supported on $\mR^2\times [-C(\kappa),0],$ we use the notation $$\tilde{f}(t,x)=f(t,\Phi_t^{-1}\circ \tilde{\Phi}_t(x)).$$
By the change of  variable, we find that $\widetilde{\chi\omega}$ satisfies the system:
\begin{align}\label{chiomega-free1}
  (\bar{\rho}\pt-\mu\p_z^2)\widetilde{\chi\omega}&= \widetilde{F^{\chi,\omega}}=\colon \widetilde{G^{\chi,\omega}}+ \bar{\rho}(D\tilde{\Phi}_t)^{-1}\pt \tilde{\Phi}_t\cdot\nabla\widetilde{\chi\omega}\\
&\quad +\mu\big[ \f{1}{2}\p_z(\ln|g|)\p_z+\p_i(\ln|g|)\tilde{g^{ij}}\p_j+ \p_i(\tilde{g}^{ij}\p_{j}\cdot)\big](\widetilde{\chi\omega})\nonumber
\end{align}
supplemented with the initial  and the boundary conditions:
\beq\label{chiomega-free2}
\widetilde{\chi\omega}|_{t=0}=\chi\omega|_{t=0}(\Phi_0^{-1}\circ\tilde{\Phi}_0), \qquad \widetilde{\chi\omega}|_{z=0}=\omega|_{z=0}=\colon \omega^{b,1}.
\eeq
Let
\beqs
E(t,z,z')=\tilde{\mu}\f{1}{(4\pi\tilde{\mu} t)^{\f{1}{2}}}\big(e^{-\f{|z-z'|^2}{4\tilde{\mu}t}}-e^{-\f{|z+z'|^2}{4\tilde{\mu}t}}\big),\qquad \tilde{\mu}=\bar{\rho}/{\mu},
\eeqs
the solution to the system \eqref{chiomega-free1}-\eqref{chiomega-free2} can be expressed as:
\beq\label{explicit-vor1}
\begin{aligned}
\widetilde{\chi\omega}(t,y,z)&=-\int_0^t (\p_{z'} E)(t-s,z,0) \omega^{b,1}(s,y)\, \d s+
\int_{-\infty}^0 E(t,z,z')\widetilde{\chi\omega}|_{t=0}(y,z')\,\d z'\\
&\qquad+\int_0^t \int_{-\infty}^0 E(t-s,z,z')
\widetilde{F^{\chi,\omega}}(s,y,z')\,\d z'\d s=(1)+(2)+(3).
\end{aligned}
\eeq
\underline{Control of the boundary term (1)}.
As in the estimate of \eqref{tz1Linfty1} and \eqref{tz1Linfty2}, we can bound the boundary term as:
\beqs
\begin{aligned}
\il (\text{Id},\ep^{\f{1}{2}}\pt, \p_y,Z_3)(1)\il_{0,\infty,t}\leq C(\mu) \|(\text{Id},\ep^{\f{1}{2}}\pt, \p_y) \omega^{b,1} \|_{L_{t,y}^{\infty}}
\end{aligned}
\eeqs
By the identities \eqref{nor-nor}, \eqref{tan-nor},
one sees that
$$\omega^{b,1}\approx F(u^{b,1},\p_y u^{b,1}, (\div^{\vp} u)^{b,1}, \bn^{b,1}, \nabla\bn^{b,1}),$$
which, together with the previous inequality, yields that:
\beq\label{sec13:eq5}
\begin{aligned}
&\il (\text{Id},\ep^{\f{1}{2}}\pt, \p_y,Z_3)(1)\il_{0,\infty,t}\\
&\lesssim \Lambda\big(\f{1}{c_0},|h|_{3,\infty,t}\big)\big(\il\ep^{\f{1}{2}}\pt u\il_{1,\infty,t}+\il\ep^{-\f{1}{2}} \div ^{\vp}u \il_{1,\infty,t}+\il u\il_{2,\infty,t}\big)+\ep^{\f{1}{2}}\lae.\\
&\lesssim \Lambda\big(\f{1}{c_0},|h|_{3,\infty,t}\big)\widetilde{\cE}_{m,t}+\ep^{\f{1}{2}}\lae.
\end{aligned}
\eeq
\underline{Control of the initial evolution (2).} 
Since $\pt,\p_y$ commute with the 
operator $\bar{\rho}\pt-\mu\p_z^2$, the following identity holds:
\beqs
(\ep^{\f{1}{2}}\pt,\p_y)(2)=\int_{-\infty}^0 E(t,z,z')(\ep^{\f{1}{2}}\pt,\p_y)(\widetilde{\chi\omega})|_{t=0}(y,z')\,\d z',
\eeqs
from which we derive that:
\beq\label{vor-2-1}
\begin{aligned}
&\il(\text{Id},\ep^{\f{1}{2}}\pt,\p_y)(2)\il_{0,\infty,t}\lesssim \bigg\|\int_{-\infty}^0|E(t,z,z')|\d z'\bigg\|_{L_t^{\infty}L_z^{\infty}}\big\|(\text{Id},\ep^{\f{1}{2}}\pt,\p_y)\widetilde{\chi\omega})|_{t=0}\big\|_{L^{\infty}(\mS_{\kappa})}\\
&\lesssim \Lambda\big(\f{1}{c_0},|h_0|_{2,\infty}+|\ep^{\f{1}{2}}\pt h|_{t=0}|_{1,\infty}\big)\big(\|(\ep^{\f{1}{2}}\pt \omega)|_{t=0}\|_{L^{\infty}(\mS)}+\|(\text{Id},\p_y,Z_3)\omega_0\|_{L^{\infty}(\mS)}\big) \\
&\lesssim \Lambda\big(\f{1}{c_0},Y_m(0)\big).
\end{aligned}
\eeq
To control $Z_3(2),$ we denote   $E_{\pm}(t,z,z')= \tilde{\mu}\f{1}{(4\pi\tilde{\mu} t)^{\f{1}{2}}} e^{-\f{|z\pm z'|^2}{4\tilde{\mu}t}}.$ 
By writing $z=z-z'+z'$ or $z=z+z'-z',$ one can split $Z_3(2)$ into two terms:
\beqs 
\begin{aligned}
Z_3(2)=\int_{-\infty}^0 \phi(z)\p_z (E_{-}-E_{+})(t,z,z') (\widetilde{\chi\omega})|_{t=0}\, \d z'=(Z_3(2))_1+(Z_3(2))_2
\end{aligned}
\eeqs
with 
\beno
 (Z_3(2))_1&=&\phi_1(z)\int_{-\infty}^0 \big((z-z')\p_z E_{-}-(z+z')\p_z E_{+}\big)(t,z,z')(\widetilde{\chi\omega})|_{t=0}\,\d z',\\
 (Z_3(2))_2&=&\phi_1(z)\int_{-\infty}^0 E(t,z,z') \p_{z'}(z' (\widetilde{\chi\omega})|_{t=0})\,\d z',
\eeno
where we use the notation $\phi(z)=\f{z(1-z)}{(2-z)^2}=z\phi_1(z).$
By straightforward calculation, we obtain
\beqs 
\big|\phi_1(z)\int_{-\infty}^0 \big((z-z')\p_z E_{-}-(z+z')\p_z E_{+}\big)(t,z,z')\d z'\big|\leq C(\tilde{\mu})
\eeqs
where $C(\tilde{\mu})$ is a constant depending only on $\tilde{\mu}$ (\,in particular, independent of $z$ and $t$).
The first term $(Z_3(2))_1$ can thus be bounded as:
\beqs 
 \il(Z_3(2))_1\il_{0,\infty,t}\lesssim \|(\widetilde{\chi\omega})|_{t=0}\|_{L^{\infty}(\mS_{\kappa})}\lesssim 
 \Lambda\big(\f{1}{c_0},Y_m(0)\big).
\eeqs
Next, by writing 
\beqs
\p_{z'}(z' (\widetilde{\chi\omega})|_{t=0})=(\widetilde{\chi\omega})|_{t=0}+\f{1}{\phi_1(z')}Z_3 (\widetilde{\chi\omega})|_{t=0},
\eeqs
and by observing that $\phi_1(z)$ has a uniform positive lower bound on $[-\kappa,0],$
we control the second term as:
\beqs
\il(Z_3(2))_2\il_{0,\infty,t}\lesssim \|(\text{Id},Z_3)(\widetilde{\chi\omega})|_{t=0}\|_{L^{\infty}(\mS_{\kappa})}\lesssim  \Lambda\big(\f{1}{c_0},Y_m(0)\big).
\eeqs
To summarize, we have obtained that 
\beqs 
\il Z_3(2)\il_{0,\infty,t}\lesssim\Lambda\big(\f{1}{c_0},Y_m(0)\big),
\eeqs
which, together with \eqref{vor-2-1}, yields that:
\beq\label{sec13:eq6}
\il(\text{Id},\ep^{\f{1}{2}}\pt, \p_y, Z_3)(3)\il_{0,\infty,t}\lesssim \Lambda\big(\f{1}{c_0},Y_m(0)\big).
\eeq
\underline{Control of the nonlinear term (3).}
We need to distinguish the terms appearing in $\widetilde{F^{\chi,\omega}}$ that involves one normal derivative of the vorticity and the others. Therefore, let us denote 
\beq\label{sec13:eq0}
\widetilde{F^{\chi,\omega}}=\bar{\rho}\widetilde{\chi u\cdot\nabla^{\vp}\omega}+\bar{\rho}\pt \tilde{\Phi}_t\cdot\nabla (\widetilde{\chi\omega})-\mu\widetilde{ \p_z\chi\bN\cdot\nabla^{\vp}\omega}
+\f{1}{2}\mu \p_z(\ln |g|)\p_z \widetilde{\chi\omega}+R,
\eeq
where the remainder term $R$ satisfies 
the estimate 
\beqs
\begin{aligned}
\|\ep^{\f{1}{2}}\pt R\|_{\hco^{2}}+\|R\|_{\hco^3}&\lesssim \lca(\|\ep^{\f{1}{2}}\pt(\sigma,u ,\nabla u)\|_{\hco^4}+\|(\sigma,u, \nabla u)\|_{\hco^5})\\
&\lesssim \lae.
\end{aligned}
\eeqs
By using the Sobolev embedding 
$H^2(\mR^2)\hookrightarrow L_y^{\infty}(\mR^2)$ 
we can deal with the term
$$\int_0^t \int_{-\infty}^0 E(t-s,z,z')
R(s,y,z')\,\d z'\d s$$ as follows:
\beqs 
\begin{aligned}
&\int_0^t \int_{-\infty}^0 E(t-s,z,z')
(\text{Id},\ep^{\f{1}{2}}\pt, \p_y )R(s,y,z')\,\d z'\d s\\
&\lesssim \big(\int_0^t \int_{-\infty}^0 |E(t-s,z,z')|^2 \d z'\d s\big)^{\f{1}{2}} \|(\text{Id},\ep^{\f{1}{2}}\pt, \p_y )R\|_{L_t^2L_{z'}^2L_y^{\infty}} \\
&\lesssim \big(\int_0^t (t-s)^{-\f{1}{2}}\d s \big)^{\f{1}{2}}
\|(\text{Id},\ep^{\f{1}{2}}\pt, \p_y )R\|_{L_t^2H_{co}^2}\lesssim T^{\f{1}{4}} \lae.
\end{aligned}
\eeqs
Moreover, as in the control of 
$Z_3(2),$ we have that:
\beq\label{sec13:eq1}
\begin{aligned}
&Z_3\int_0^t \int_{-\infty}^0 E(t-s,z,z')R(s,y,z')\,\d z'\d s\lesssim \|(\text{Id},Z_3)R\|_{\hco^{2}}\cdot\\
 &\bigg[ \bigg(\int_0^t \int_{-\infty}^0 |E|^2 \d z'\d s\bigg)^{\f{1}{2}}+\bigg(\int_0^t \int_{-\infty}^0 \big(|(z-z')\p_z E_{-}|^2+ |(z+z')\p_z E_{+}|^2\big)\d z'\d s\bigg)^{\f{1}{2}}\bigg]\\
 &\lesssim T^{\f{1}{4}}\lae.
\end{aligned}
\eeq
We are left to treat the first four terms appearing in \eqref{sec13:eq0}, for which we need to integrate by parts in order not to lose normal derivative. Let us explain the estimate for the term
\beqs 
\bar{\rho}\int_0^t \int_{-\infty}^0 E(t-s,z,z')\widetilde{\chi u\cdot\nabla^{\vp}\omega}\,\d z'\d s
\eeqs
By straightforward calculation, we find that
\begin{align}
\widetilde{\chi u\cdot\nabla^{\vp}\omega}&=\widetilde{\chi u}_k(\D\tilde{\Phi})_{jk}\p_j(\widetilde{\chi_1 \omega})+\widetilde{\chi u}_k(\D\tilde{\Phi})_{3k}\p_z(\widetilde{\chi_1 \omega})\nonumber\\
&=\widetilde{\chi u}_k(\D\tilde{\Phi})_{jk}\p_j(\widetilde{\chi_1 \omega})-\p_z(\widetilde{\chi u}_k(\D\tilde{\Phi})_{3k})\widetilde{\chi_1 \omega}+\p_z(\widetilde{\chi u}_k(\D\tilde{\Phi})_{3k}\widetilde{\chi_1 \omega})\label{change-variableLinfty}
\end{align}
where $\chi_1$ is a cut-off function
supported on $[-C(\kappa),0]$
that satisfies $\chi_1\chi=\chi.$
The Einstein summation convention is used for $j=1,2, k=1,2,3.$ As the first two terms in the right hand side of the above identity does not involve normal derivatives of $(\widetilde{\chi_1 \omega}),$ we have by following the same procedure as in the estimate of $R$ that:
\beqs 
\bar{\rho}\int_0^t \int_{-\infty}^0 E(t-s,z,z') \big(\widetilde{\chi u}_k(\D\tilde{\Phi})_{jk}\p_j(\widetilde{\chi_1 \omega})-\p_z(\widetilde{\chi u}_k(\D\tilde{\Phi})_{3k})\widetilde{\chi_1 \omega}\big) \,\d z'\d s\lesssim T^{\f{1}{4}}\lae.
\eeqs
For the one whose integrand involves the last term of \eqref{change-variableLinfty}, we integrate by parts in $z'$ to get that:
\beqs 
\begin{aligned}
&\bar{\rho}\int_0^t \int_{-\infty}^0 E(t-s,z,z')\p_{z'}(\widetilde{\chi u}_k(\D\tilde{\Phi})_{3k}\widetilde{\chi_1 \omega})\d z'\d s\\
&\lesssim \int_0^t \|\p_{z'}E(t-s,z,\cdot)\|_{L_{z'}^2}\d s \, \|\widetilde{\chi u}_k(\D\tilde{\Phi})_{3k}\widetilde{\chi_1 \omega}\|_{L_t^{\infty}L_{z'}^2L_y^{\infty}}\\
&\lesssim T^{\f{1}{4}} \|\widetilde{\chi u}_k(\D\tilde{\Phi})_{3k}\widetilde{\chi_1 \omega}\|_{L_t^{\infty}H_{co}^2}\lesssim T^{\f{1}{4}}\lae.
\end{aligned}
\eeqs
In addition to the above two inequalities, we have also analogs of \eqref{sec13:eq1}, that is to say:
\beq
\begin{aligned}
&\bar{\rho}(\ep^{\f{1}{2}}\pt, \p_y, Z_3)\int_0^t \int_{-\infty}^0 E(t-s,z,z')\widetilde{\chi u\cdot\nabla^{\vp}\omega}\,\d z'\d s\\
&\lesssim \lae \int_0^t \|E(t-s,z,\cdot),\p_{z'}(E(t-s,z,\cdot),\,(z-\cdot)\p_z E_{-},\,(z+\cdot)\p_z E_{+})\|_{L_{z'}^2}\d s \\
&\lesssim \lae\int_0^t (t-s)^{-\f{3}{4}}\d s\lesssim T^{\f{1}{4}}\lae.
\end{aligned}
\eeq
We have thus finished the estimate of the term
$\int_0^t \int_{-\infty}^0 E(t-s,z,z')\widetilde{\chi u\cdot\nabla^{\vp}\omega}\,\d z'\d s.$
The other three terms in \eqref{sec13:eq0} can be dealt with in the same way. Consequently, we find that for any 
$t\in (0,T], z<0,$
\beq\label{sec13:eq4}
\int_0^t \int_{-\infty}^0 E(t-s,z,z')\widetilde{F^{\chi,\omega}}
\d z'\d s\lesssim T^{\f{1}{4}}\lae.
\eeq
Collecting \eqref{sec13:eq5}, \eqref{sec13:eq6} and \eqref{sec13:eq4}, we find that:
\beqs
\il(\text{Id},\ep^{\f{1}{2}}\pt,\p_y, Z_3)(\widetilde{\chi\omega})\il_{0,\infty,t}\lesssim \Lambda\big(\f{1}{c_0},Y_m(0)\big)+ \Lambda\big(\f{1}{c_0},|h|_{3,\infty,t}\big)\tilde{\cE}_{m,t}+
(T+\ep)^{\f{1}{4}}\lae,
\eeqs
By the property \eqref{Linftyeq-free}, this leads to \eqref{sec13:eq7}.
\end{proof}

\section{Proof of Theorem \ref{thm-localexis}.}
This section is devoted to the proof of Theorem
\ref{thm-localexis} which is based on the known local existence results (non-uniform with respect to $\ep$) and the uniform estimates established in the previous sections. 
The local existence in the Sobolev-Slobodeskii space
$H^{4,2}$ (see the definition \eqref{defs-s})
is established  in \cite{MR697305}  \cite{MR1316494} (see also \cite{Tani-81} for the local existence in H\"older spaces). All these results deal with the case where the reference domain is a smooth bounded domain, nevertheless, by following the same arguments as in these papers, one can easily  obtain a similar result when the reference domain is changed into a strip or half space.
The following theorem corresponds  to Theorem B of \cite{MR697305} 
or Theorem 6.2 in \cite{MR1316494} in this framework.

\begin{thm}\label{thm-localexis-nonuniform}
Assume that the compatibility condition $\eqref{Compatibility condition}$ holds up to order 2 and
\beqs
(\sigma_0^{\ep},u_0^{\ep})\in (H^3(\mS))^4,\quad h_0^{\ep}\in H^{\f{7}{2}}(\mR^2),\quad 1+h_0^{\ep}\geq 3c_0>0,
\eeqs
$\delta$ is chosen sufficiently small such that
$$\p_z\vp_0^{\ep}(x)=1+\p_z\eta_0^{\ep}(1+z)+\eta_0^{\ep} \geq 2c_0>0, \forall x\in \mS,$$
where $\eta_0^{\ep}$ is the extension of $h_0^{\ep}$ defined in \eqref{defeta}.
Then 
for any $\ep\in (0,1],$ we can find $T^{\ep}>0$  such that:
\beqs
(\sigma^{\ep},u^{\ep})\in C([0,T^{\ep}],H^3(\mS)), \quad h^{\ep}\in C([0,T^{\ep}],H^{\f{7}{2}}(\mR^2)).
\eeqs
Moreover, 
\beq\label{defs-s}
u^{\ep}\in H^{4,2}([0,T^{\ep}]\times\mS)=\{u\big| \p_t^j u\in L^2\big([0,T^{\ep}],H^{4-2j}(\mS)\big),j=0,1,2\}
\eeq
and \eqref{preassumption} holds.
\end{thm}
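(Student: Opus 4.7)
The plan is to establish the non-uniform local existence for fixed $\varepsilon\in(0,1]$ by adapting the Solonnikov--Tani framework \cite{MR697305,MR1316494,Tani-81} to the strip geometry. Since $\varepsilon$ is fixed, the singular $1/\varepsilon$ terms are just $O(1)$ constants, and the system \eqref{FCNS2}--\eqref{bebdry2} becomes a standard parabolic--hyperbolic coupled free boundary problem. I would first freeze the domain through the diffeomorphism $\Phi^\varepsilon$ of \eqref{changeofvariable} and set up the iteration on the fixed strip $\mathcal{S}$. The natural functional framework is the Sobolev--Slobodeckii space $H^{4,2}$ for the velocity (which is the sharp scaling-invariant regularity for the parabolic Lam\'e operator with Neumann-type data), $C([0,T^\varepsilon],H^3)$ for the density, and $H^{7/2}$ for the surface (gaining one-half space derivative over $u^{b,1}$ via the transport equation \eqref{surfaceeq2}).

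Next I would set up a fixed-point scheme. Given $(\tilde{\sigma},\tilde{u},\tilde{h})$ in a ball of the functional space described above, define $\tilde{\varphi}$ via \eqref{defvpep}--\eqref{defeta}, then produce $(\sigma,u,h)$ by: (i) solving the transport equation for $h$ from $\tilde{u}$ using the method of characteristics, which gives $h\in C([0,T],H^{7/2})$ provided $\tilde{u}|_{z=0}\in L^1_tH^{7/2}$ with Lipschitz bounds; (ii) solving the linear transport equation $g_1(\varepsilon\tilde\sigma)(\partial_t^{\tilde\varphi}+\tilde u\cdot\nabla^{\tilde\varphi})\sigma = -\mathrm{div}^{\tilde\varphi}\tilde u/\varepsilon$ for $\sigma$, with Friedrichs-type estimates in $H^3(\mathcal{S})$; (iii) solving the linear parabolic system
\begin{equation*}
g_2(\varepsilon\tilde{\sigma})\partial_t^{\tilde\varphi} u-\mathrm{div}^{\tilde\varphi}\mathcal{L}^{\tilde\varphi} u = -g_2(\varepsilon\tilde{\sigma})\tilde u\cdot\nabla^{\tilde\varphi}u - \nabla^{\tilde\varphi}\sigma/\varepsilon
\end{equation*}
with the stress condition $\mathcal{L}^{\tilde\varphi}u\,\mathbf{N}^\varepsilon = (\sigma/\varepsilon)\mathbf{N}^\varepsilon$ on $z=0$ and the Navier-slip condition \eqref{bebdry2} on $z=-1$.

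The central technical input is the Solonnikov maximal regularity estimate in $H^{4,2}$ for the velocity problem with variable coefficients and non-homogeneous boundary data: for suitably smooth $\tilde\varphi$ (which is ensured by $\tilde h\in C([0,T],H^{7/2})$ and the smoothing extension \eqref{defeta}), the linear problem admits a unique solution $u\in H^{4,2}$ with the estimate
\begin{equation*}
\|u\|_{H^{4,2}}\lesssim \|u_0\|_{H^3}+\|(\text{source})\|_{H^{2,1}}+|\text{boundary data}|_{H^{5/2,5/4}\times H^{5/2,5/4}},
\end{equation*}
provided the compatibility conditions \eqref{Compatibility condition} hold up to order $2$ (one order for $j=0$ to match the spatial boundary values of $u_0$, and one order for $j=1$ to match $(\partial_tu)|_{t=0}$, which is defined by the equation itself from $(\sigma_0,u_0,h_0)$). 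This regularity for $u$ and hence $u|_{z=0}$ in $H^{5/2,5/4}$ feeds back into the transport equation for $h$ to obtain $h\in C([0,T^\varepsilon],H^{7/2})$.

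Contraction follows by energy estimates on differences: the transport equations give a Lipschitz bound in a lower-regularity norm (say $H^2$ for $\sigma$, $H^{5/2}$ for $h$) with a constant that is $o(1)$ as $T^\varepsilon\to 0$, and the parabolic estimate for the velocity difference closes the scheme in the same lower norm while the higher norms are preserved by weak-* compactness in the ball. The main obstacle---and the reason this result already requires non-trivial work---is verifying the parabolic maximal regularity in the Sobolev--Slobodeckii class on a moving domain with variable coefficients and matching the compatibility conditions exactly at order $2$; this is precisely the content of \cite{MR697305,MR1316494}, and the argument here is to transplant it from a bounded domain to the strip $\mathcal{S}$, for which only minor modifications (using Fourier transform in $y$ to handle the unboundedness) are needed. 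Finally, the pointwise positivity constraints in \eqref{preassumption} (on $\partial_z\varphi$, $\nabla\varphi$, $\nabla^2\varphi$, $\varepsilon\sigma$) hold at $t=0$ by the assumptions on $(\sigma_0^\varepsilon,h_0^\varepsilon)$ together with the choice of $\delta_0$, and are propagated to some small interval $[0,T^\varepsilon]$ by the continuity of the solution in $L^\infty_{t,x}$ (via Sobolev embedding from $H^3\hookrightarrow L^\infty$), possibly shrinking $T^\varepsilon$.
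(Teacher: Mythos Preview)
Your proposal is correct and follows exactly the approach the paper intends: the paper does not actually prove Theorem~\ref{thm-localexis-nonuniform} itself but merely cites \cite{MR697305,MR1316494,Tani-81} and remarks that ``by following the same arguments as in these papers, one can easily obtain a similar result when the reference domain is changed into a strip or half space.'' Your sketch of the Solonnikov--Tani fixed-point scheme (linearize, iterate via transport for $h$ and $\sigma$ and parabolic maximal regularity in $H^{4,2}$ for $u$, use compatibility up to order $2$, close by contraction in a lower norm, propagate the pointwise constraints by continuity) is precisely the content of those references adapted to the strip, so you have simply written out what the paper leaves implicit.
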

We shall combine this theorem with the uniform regularity estimates established in the previous sections. 
Set 
$$T_{*}^{\ep}=\sup\big\{T|(\sigma^{\ep},u^{\ep})\in C([0,T],H^3(S)), u^{\ep}\in 
W^{4,2}([0,T^{\ep}]\times\mS)
\text{ and } \eqref{preassumption}  \text{ holds}\big\}.$$
Since the initial datum is assumed to belong to $Y_m^{\ep},$ a space with higher regularity, by standard propagation of regularity arguments (for example based on applying finite difference instead of derivatives) and the computations presented in Section 6-Section 12, we can find the following uniform estimates of  Theorem \ref{thm-apriori}:
\beq\label{uniformestimate1}
\cN_{m,T}^{\ep}\leq P_5\big(\f{1}{c_0}, Y^{\ep}_m(0) \big)+ (T+\ep)^{\vartheta}P_{6}\big(\f{1}{c_0},Y^{\ep}_m(0)+\cN_{m,T}^{\ep}\big).
\eeq
where $0<\vartheta<1$ and $P_{5}, P_{6}$ are two increasing continuous functions that are independent of $\ep.$ 
By the fundamental theorem of calculus
and Lemma \ref{exth}, one finds for $0\leq t\leq T$
\beq
\begin{aligned}
\p_z\vp(t,x)&=\p_z\vp(0,x)+\int_0^t(\p_t\eta+(1+z)\pt\p_z\eta)(s,x)\,\d s\\
&\geq \p_z\vp(0,x)-C_1T|\pt h(t)|_{
L^{\infty}(\mR^2)},
\end{aligned}
\eeq
\beq\label{sec13:eq10}
\|(\nabla\vp,\nabla^2\vp)(t)\|_{L^{\infty}(\mS)}\leq \|(\nabla\vp,\nabla^2\vp)(0)\|_{L^{\infty}(\mS)}+C_2T|h(t)|_{W^{2,\infty}(\mR^2)}.
\eeq
where $C_1,C_2$ are two constants independent of $\ep.$
Moreover, $\ep\sigma^{\ep}$ can be expanded by using the characteristic method:
\beq\label{lagranian}
\ep\sigma^{\ep}(t,x)=\ep\sigma_0^{\ep}(X^{-1}(t,x))
-\int_0^t (\div u^{\ep}/g_1)(X(s,X^{-1}(t,x)))\d s
\eeq
where $X(t,x)$ is the unique flow associated to $\underline{u}.$
Let us define      
\beqs
T^{\ep}_{*}=\sup\{T\geq 0\big| (\sigma^{\ep},u^{\ep})\in C([0,T],H^3), u^{\ep}\in W^{4,2}([0,T]\times\mS)\},
\eeqs
\beqs
\begin{aligned}
T_0^{\ep}=\sup\big\{0\leq T\leq\min\{T^{\ep}_{*},1\}&\big| \cN_{m,T}(\sigma^{\ep}, u^{\ep})\leq 2P_5\big({1}/{c_0}, M\big)\\
&
\eqref{asstobeshown} \text{ holds
for all } (t,x)\in [0,T]\times\mS \big\}.
\end{aligned}
\eeqs
where $M$ is chosen such that $M \geq \sup_{\ep\in(0,1]}Y_m(\sigma_0^{\ep}, u_0^{\ep}).$

 We now choose successively two  constants $0<\ep_0\leq 1$ and $T_0>0$ (uniform in $\ep\in(0,\ep_0]$) which are small enough, such that:
 \beqs
 ({T_0}+\ep_0)^{\vartheta}
 P_6\big(1/c_0, M+2P_5(1/c_0,M)\big)<\f{1}{2}P_5({1}/{c_0},M),
 \eeqs
 \beqs 
 C_1T_0 P_5(1/c_0,M)^2\leq c_0,\quad 
 C_2T_0  P_5(1/c_0,M)\leq 1/(2c_0),\quad
  2P_5(1/c_0,M)T_0/c_0\leq \bar{c}\bar{P}.
 \eeqs
In order to prove Theorem \ref{thm-localexis}, it suffices  to show that $T_0^{\ep}\geq {T_0}$ for every  $0<\ep\leq \ep_0.$ Suppose otherwise $T_0^{\ep}<{T_0}$ for some $0<\ep\leq \ep_0,$ then in view of  inequalities
\eqref{uniformestimate1}-\eqref{sec13:eq10} and the formula \eqref{lagranian}, we have by the definition of $\ep_0$ and $T_0$ that:
\beq\label{sec6:eq3}
\cN_{m,T}(\sigma^{\ep},u^{\ep})\leq \f{3}{2}P_5({1}/{c_0},M) \qquad \forall T\leq \tilde{T}=\min\{T_0, T^{\ep}_{*}\},
\eeq
\beq\label{sec6:eq4}
\p_z\vp^{\ep}(t,x)\geq c_0, \quad 
|(\nabla\vp^{\ep},\nabla^2\vp^{\ep})(t,x)|\leq {1}/{c_0}, \quad -2{\bar{c}}\bar{P}\leq \ep\sigma^{\ep}(t,x)\leq 2\bar{P}/\bar{c}\quad\forall (t,x)\in [0,\tilde{T}]\times\Omega.
\eeq
We intend to prove that $\tilde{T}=T_0\leq T_{*}^{\ep}.$ This fact, combined with the definition of $T_0^{\ep}$ and the estimates \eqref{sec6:eq3}, \eqref{sec6:eq4},
yields $T_0^{\ep}\geq T_0,$ which is a contradiction with the  assumption $T_0^{\ep}<T_0.$ To continue, we shall need the claim stated and proved below.
 Indeed, once the following claim holds, we have by \eqref{sec6:eq3} that 
 $\|(\sigma^{\ep},u^{\ep})(T_0)\|_{H^3(\Omega)}<+\infty.$ Using the local existence result stated in 
 Theorem \ref{thm-localexis-nonuniform}, we obtain that $T_{*}^{\ep}>T_0=\tilde{T}.$

 $\textbf{Claim.}$ 
For all $\ep\in(0,1],$
if $\cN_{m,T}(\sigma^{\ep},u^{\ep})<+\infty,$ then $(\sigma^{\ep},u^{\ep})\in C([0,T], H^3),$
 $u^{\ep}\in H^{4,2}([0,T]\times\mS).$
 \begin{proof}[Proof of claim]
By the definition of $\cN_{m,T},$ we derive that:
$$\ep^{\f{3}{2}} u^{\ep}\in 
L^2([0,T],H^4),\quad \ep^{\f{3}{2}}\pt u^{\ep}\in L^2([0,T],H^2), \quad \ep^{\f{3}{2}}\pt^2 u\in L^2([0,T],L^2) \quad \ep^{\f{1}{2}} \sigma^{\ep} \in L^{\infty}([0,T],H^3), $$
which yields by interpolation that $\ep^{\f{3}{2}} u^{\ep}\in C([0,T],H^3)\cap H^{4,2}([0,T]\times\mS).$
Moreover, carrying out direct energy estimates for $\sigma^{\ep}$ in $H^3(\Omega),$
one gets that:
\beq\label{ineq-gronwall}
|\pt R^{\ep}(t)|
\leq  K^{\ep}
f^{\ep}(t)
\eeq
where  $K^{\ep}=\Lambda(1/c_0, \il(\sigma^{\ep},\nabla\sigma^{\ep},\nabla u^{\ep},\ep^{\f{1}{2}}\nabla^2 u^{\ep})\il_{\infty,t})$ is  uniformly bounded
and $$R^{\ep}(t)=\|\ep^{\f{1}{2}}\sigma^{\ep}(t)\|_{H^3}^2,\quad f^{\ep}(t)=\|\ep^{\f{3}{2}} u^{\ep}(t)\|_{H^4}^2+\|\ep^{\f{1}{2}}u^{\ep}(t)\|_{H^3}^2+\|(\sigma^{\ep},\ep^{-\f{1}{2}}\nabla\sigma^{\ep})(t)\|_{H^2}^2\in L^1([0,T]).$$
Inequality \eqref{ineq-gronwall} and the boundedness of $\|R^{\ep}(\cdot)\|_{L^{\infty}([0,T])}$
leads to the fact that $R^{\ep}(\cdot)\in C([0,T]),$ which further yields that $\ep^{\f{1}{2}}\sigma^{\ep}\in C([0,T],H^3).$ This ends  the proof of the claim.
Note that at this stage we do not require the norm $\|(\sigma^{\ep},u^{\ep})\|_{C([0,T], H^3)}$ to be bounded uniformly in $\ep$.
\end{proof}

\section{Convergence}
This section aims to show Theorem \ref{thm-convergence}.
In the following, we denote $Q_{T_0}=[0,T_0]\times \mS,\, \Gamma_{T_0}=[0,T_0]\times\mathbb{R}^2.$ 

First of all, for the surface, since
$\pt h^{\ep}$ is uniformly bounded in 
$L^{\infty}([0,T_0], H^{m-{3}/{2}}(\mR^2)),$ $h^{\ep}$ is uniformly bounded in  $L^{\infty}([0,T_0],H^{m-{1}/{2}}(\mR^2)),$ one has that
$h^{\ep}$ converges (say to $h^0$) in $C([0,T_0],H_{loc}^{s}(\mR^2))$ for any $0\leq s< m-{1}/{2}.$ Further, from the definition of $\vp^{\ep}$ \eqref{defvpep} and Lemma 
\eqref{exth}, we conclude also that
 $\vp^{\ep}\rightarrow \vp_0$ in 
 $C([0,T_0],H_{loc}^{s}(\mS)), 0\leq s< m$ where $\vp^0$ is defined in a similar way as \eqref{defvpep} by replacing $h^{\ep}$ with $h^0.$

Next, since $(\ep^{\f{1}{2}}\pt\sigma^{\ep}, \ep^{\f{1}{2}}\sigma^{\ep})$ is uniformly bounded in $L^{\infty}([0,T_0],H^1(\mS))\times L^{\infty}([0,T_0],H^3(\mS)),$ we have that $\ep^{\f{1}{2}}\sigma^{\ep}$ 
is uniformly bounded 
in $C^{\gamma}(Q_{T_0}), 0<\gamma<\f{1}{2}.$ 
In view of the definition of $\sigma^{\ep}: \sigma^{\ep}=(P(\rho)-P(\bar{\rho}))/{\ep},$ 
we have that $P(\rho^{\ep})\rightarrow P(\bar{\rho})$
in 
$C^{\gamma}(Q_{T_0}),$ which, combined with the uniform boundedness of $\il\nabla P(\rho^{\ep})\il_{\infty,t},$ yields the convergence of $\rho^{\ep}$ to $\bar{\rho}$ in $C^{\gamma}(Q_{T_0}).$ 

Let us see the convergence of the velocity. 
We write $u^{\ep}=\nabla^{\vp^{\ep}}\Psi^{\ep}+v^{\ep},$ where 
$\nabla^{\vp^{\ep}}\Psi^{\ep}$ and $v^{\ep}$ denote the compressible and incompressible part of the velocity (see definitions \eqref{defofQ}, \eqref{defofP}). On the one hand, since
$\ep^{-\f{1}{2}}\div^{\vp^{\ep}}u^{\ep},$
$\ep^{\f{1}{2}}\pt\div^{\vp^{\ep}}u^{\ep}$ are both uniformly bounded in $L^{\infty}([0,T_0],H^1(\mS)),$ we get that
$\div^{\vp^{\ep}}u^{\ep}\rightarrow 0$ in
$C^{\gamma}([0,T_0], H^1(\mS)), 0<\gamma<\f{1}{2}.$ By elliptic estimates
\eqref{elliptic1.5},
$\nabla^{\vp^{\ep}}\Psi^{\ep}\rightarrow 0$ in 
$C^{\gamma}([0,T_0], H^2(\mS)).$ 
On the other hand, due to the uniform boundedness of $\pt v^{\ep}$  in $L^{2}([0,T_0],H^{-1}(\mS)),$  and of $v^{\ep}$
 in $L^{\infty}([0,T_0],H^1(\mS)),$
we obtain by Aubin-Lions lemma that up to extraction of subsequences, 
$v^{\ep}$ converges (say to $u^0$) in 
$C([0,T_0],L_{loc}^2(\mS)).$ 
Since we will prove that $u^0$ is the \textit{unique} solution (in conormal spaces with additional regularity property), to 
the incompressible free-surface Navier-Stokes equations  this convergence holds indeed for the whole family. 
We thus proved that $u^{\ep}$ converges to $u^0$ in $C^{\gamma}([0,T_0],H^1(\mS))+ C([0,T_0],L_{loc}^2(\mS)).$

To conclude, we have achieved that 
\beq\label{convergence}
\sigma^{\ep}\rightarrow 0 
\quad  \rho^{\ep}\rightarrow \bar{\rho} \quad \nabla^{\vp^{\ep}}\Psi^{\ep}\rightarrow 0 \,\text{ in}\quad  C^{\gamma}(Q_{T_0}) \quad v^{\ep}\rightarrow u^0 \quad\text{in} \quad C\big([0,T_0],L_{loc}^2\big), 
\eeq
\beq\label{convergence1}
\vp^{\ep}\rightarrow \vp^0 \text{ in }
C\big([0,T_0],H_{loc}^{s}(\mS))\quad
\quad h^{\ep}\rightarrow
h^{0}
\quad\text{in} \quad C\big([0,T_0],H_{loc}^{s}(\mathbb{R}^2)\big), \quad 0\leq s\leq m-\f{1}{2}.
\eeq

We now show that there exists $\pi_0\in L^2([0,T_0],\cH^{0,m-1})$ such that $(u^0,\pi^0,h^0)$ is the (unique) solution to the incompressible free surface system \eqref{FINS1}. Let us rewrite the equations for the incompressible part of the velocity (see \eqref{eqofv2}) as follows:
\beq\label{re-eqv}
\bar{\rho}(\pt^{\vp^{\ep}}v^{\ep}+v^{\ep}\cdot\nabla^{\vp^{\ep}}v^{\ep})
-\mu\Delta^{\vp^{\ep}}v^{\ep}
+\nabla^{\vp^{\ep}}\tilde{\pi}^{\ep}=F^{\ep}.
\eeq
where 
\begin{align*}
&\nabla^{\vp^{\ep}}\tilde{\pi}^{\ep}=\nabla^{\vp^{\ep}}({\pi}^{\ep}-q^{\ep})-[\p_t^{\vp^{\ep}},\bbp]u^{\ep},\\
&F^{\ep}=\ep\f{g_2-1}{\ep}(\pt+\underline{u^{\ep}}\cdot\nabla)u^{\ep}-\bar{\rho}(v^{\ep}\cdot(\nabla^{\vp^{\ep}})^2\Psi^{\ep}+\nabla^{\vp^{\ep}}\Psi^{\ep}\cdot\nabla^{\vp^{\ep}} u^{\ep}).
\end{align*}
with $\nabla^{\vp^{\ep}}\pi^{\ep}, \nabla^{\vp^{\ep}}q^{\ep}$  defined in \eqref{defpi}. Note that by the definition \eqref{defofQ}, \eqref{defofP} for $\bbq,\bbp,$ the commutator $-[\p_t^{\vp^{\ep}},\bbp]u^{\ep}$ can be expressed as a gradient:
\beq\label{re-comtime}
-[\p_t^{\vp^{\ep}},\bbp]u^{\ep}=[\p_t^{\vp^{\ep}},\bbq]u^{\ep}=\nabla^{\vp^{\ep}}(\pt^{\vp^{\ep}}\Psi^{\ep}-\tilde{\Psi}^{\ep})
\eeq
where we denote $\nabla^{\vp^{\ep}}\tilde{\Psi}^{\ep}=\bbq (\p_t^{\vp^{\ep}} u^{\ep}).$ 
By estimates established in \eqref{q}, \eqref{pi2} and  \eqref{comtime}, we readily see that $\nabla\tilde{\pi}^{\ep}$ is uniformly bounded in $L^2([0,T_0],\cH^{0,m-2}).$ Therefore, there exists $\pi^0\in L^2([0,T_0],\cH^{0,m-1})$ such that $\nabla \tilde{\pi}^{\ep}$ tends (up to subsequences) to $\nabla \pi^0$ in $L_{w}^2(Q_{T_0})$ and 
$\tilde{\pi}^{\ep}$ converges to $\pi_0$ in $L_{w}^2([0,T_0],L_{loc}^2(\mS)).$
Next, by boundary conditions $\eqref{eqofv2}_2-\eqref{eqofv2}_3$ as well as the fact \eqref{re-comtime}, we have that:
\begin{align}
  (2\mu S^{\vp^{\ep}} u^{\ep} -\tilde{\pi}^{\ep} \text{Id})\bN^{\ep}= 2\mu (\div^{\vp^{\ep}}u \text{Id}-(\nabla^{\vp^{\ep}})^2\Psi^{\ep})\bN^{\ep}+(\f{\pt h^{\ep}}{\p_z\vp^{\ep}}\p_z\Psi^{\ep})\bN^{\ep} \quad \text{ on } z=0, \label{re-bdry-1}\\
  v_3^{\ep}=0,\, \mu\p_z^{\vp^{\ep}} v_{j}^{\ep}= a u_j^{\ep}\quad (j=1,2) \qquad \text{ on } {z=-1}. \label{re-bdry-2}
\end{align}

Let us now choose a smooth vector $\psi =(\psi_1,\psi_2,\psi_3)^t\in \big[C_{c}^{\infty}\big(\overline{Q_{T_0}}\big)\big]^3$ with condition $\psi_3|_{z=-1}=0.$ 
Multiplying the equations \eqref{re-eqv} by $\psi$ and integrating by parts in space and time, we find  by using the boundary conditions \eqref{re-bdry-1}, \eqref{re-bdry-2}
that: 
\beq\label{weakformu-incom}
\begin{aligned}
&\bar{\rho}\int_{\mS} (v^{\ep}\cdot \psi)(t,\cdot) \,\d\cV_t^{\ep} + 2\mu \int_0^t\int_{\mS} S^{\vp^{\ep}}v^{\ep} \cdot \nabla^{\vp^{\ep}}\psi\,\d\cV_s^{\ep}\d s + \bar{\rho}\int_0^t\int_{\mS}(v^{\ep}\cdot\nabla^{\vp^{\ep}} v^{\ep})\cdot \psi\,\d\cV_s^{\ep}\d s\\
&=\bar{\rho}\int_{\mS} (v^{\ep}\cdot \psi)(0,\cdot) \,\d\cV_0^{\ep}+\int_0^t\int_{\mS}F^{\ep}\cdot\psi\,\d\cV_s^{\ep}\d s+\bar{\rho}\int_0^t\int_{\mS}v^{\ep}\cdot\pt^{\vp^{\ep}}\psi\,\d\cV_s^{\ep}\d s+\int_0^t\int_{\mS}\tilde{\pi}^{\ep}\div^{\vp^{\ep}}\psi\,\d\cV_s^{\ep}\d s\\
&\qquad+ a
\int_0^t\int_{z=-1}(u_1^{\ep}\cdot\psi_1+u_2^{\ep}\cdot\psi_2)\,\d y\d s+\int_0^t\int_{z=0} (v^{\ep}\cdot \bN^{\ep}) (v^{\ep}\cdot\psi) \,\d y\d s\\
&\qquad+\int_0^t\int_{z=0} (2\mu \div^{\vp^{\ep}}u^{\ep}+\f{\pt h^{\ep}}{\p_z\vp^{\ep}}\p_z\Psi^{\ep})(\psi\cdot\bN^{\ep})- (\nabla^{\vp^{\ep}})^2\Psi^{\ep}\bN^{\ep}\cdot \psi \,\d y\d s
\end{aligned}
\eeq
where $\d\cV_t^{\ep}=\f{1}{\p_z \vp^{\ep}}(t,\cdot)\,\d y \d z.$
Since $v^{\ep}\rightarrow v^0$ in $C([0,T_0],L_{loc}^2(\mS)),$ $\p_z\vp^{\ep}$ converges to $\p_z \vp^0$ in $C([0,T_0],C_{loc}(\mS)),$ we see that:
\beq\label{timebdry-conv}
\bar{\rho}\int_{\mS} (v^{\ep}\cdot \psi)(t,\cdot) \,\d\cV_t^{\ep}\rightarrow \bar{\rho}\int_{\mS} (u^{0}\cdot \psi)(t,\cdot) \,\d\cV_t^{0},\quad 
\bar{\rho}\int_{\mS} (v^{\ep}\cdot \psi)(0,\cdot) \,\d\cV_0^{\ep}\rightarrow \bar{\rho}\int_{\mS} (u^{0}\cdot \psi)(0,\cdot) \,\d\cV_0^{0}
\eeq
Let us now show the convergence of  the last two terms in the left hand side of the above identity. Since
\beq\label{fact1} v^{\ep}\rightarrow u^0 \text{ in } L^2([0,T_0],L^2_{loc}(\mS)),
\, \nabla v^{\ep} \rightharpoonup  \nabla u^0\,
\text{ in } L^2(Q_{T_0}),\, v^{\ep} \text{ uniformly bounded in  } L^2([0,T_0],H^1(\mS))
\eeq
\beq\label{fact2}
\vp^{\ep}\rightarrow \vp^0 \,\text{ in } C([0,T_0],C^1_{loc}(\mS)), \qquad (\p_z{\vp}^{\ep},\p_z\vp_0)(t,x)\geq c_0>0, \forall (t,x)\in Q_{T_0} 
\eeq
one gets that: 
$\mS^{\vp^{\ep}}v^{\ep}\rightharpoonup \mS^{\vp_0} v^0,$  $\nabla^{\vp^{\ep}}\psi\rightarrow \nabla^{\vp^{0}}\psi $ in  $L^2(Q_{T_0}),$ which leads to the fact:
\beq\label{L-term23}
\begin{aligned}
&2\mu \int_0^t\int_{\mS} S^{\vp^{\ep}}v^{\ep} \cdot \nabla^{\vp^{\ep}}\psi\,\d\cV_s^{\ep}\d s + \bar{\rho}\int_0^t\int_{\mS}(v^{\ep}\cdot\nabla^{\vp^{\ep}} v^{\ep})\cdot \psi\,\d\cV_s^{\ep}\d s\\
  &\rightarrow  2\mu \int_0^t\int_{\mS} S^{\vp^{0}}u^{0} \cdot \nabla^{\vp^{0}}\psi\,\d\cV_s^{0}\d s + \bar{\rho}\int_0^t\int_{\mS}(u^{0}\cdot\nabla^{\vp^{0}} u^{0})\cdot \psi\,\d\cV_s^{0}\d s
\end{aligned}
\eeq 
It suffices to deal with the 
convergence of the the last four terms  in the right hand side of \eqref{weakformu-incom}.
As $\nabla^{\vp^{\ep}}\psi^{\ep}=\cO (\ep^{\f{1}{2}})$ in  $L_t^2H^1$ and $(u^{\ep},\ep^{\f{1}{2}}\pt u^{\ep})$ uniformly bounded in $L^2([0,T_0],H^1(\mS)),$ one readily see that $F^{\ep}\rightarrow 0$ in $L^2(Q_{T_0}),$ which gives that:
\beq
\int_0^t\int_{\mS}F^{\ep}\cdot\psi\,\d\cV_s^{\ep}\d s\rightarrow 0.
\eeq
Next, since $\pt \vp^{\ep}\rightarrow \pt \vp^0$ in $L_{w}^2([0,T_0],L^2(\mS)),$ we have by combining \eqref{fact2} that $\pt^{\vp^{\ep}}\psi\rightharpoonup \pt^{\vp^{0}}\psi$ in $L^2(Q_{T_0})$ This, together with \eqref{fact1} gives that: 
\beq\label{R-term3} 
\bar{\rho}\int_0^t\int_{\mS}v^{\ep}\cdot\pt^{\vp^{\ep}}\psi\,\d\cV_s^{\ep}\d s\rightarrow \bar{\rho}\int_0^t\int_{\mS}u^{0}\cdot\pt^{\vp^{0}}\psi\,\d\cV_s^{0}\d s.
\eeq
As for  \eqref{L-term23}, we have also that:
\beq 
\int_0^t\int_{\mS}\tilde{\pi}^{\ep}\div^{\vp^{\ep}}\psi\,\d\cV_s^{\ep}\d s\rightarrow \int_0^t\int_{\mS}{\pi}^{0}\div^{\vp^{0}}\psi\,\d\cV_s^{0}\d s.
\eeq
To proceed, we prove that $(u^{\ep})^{b,j},(v^{\ep})^{b,j}$ both convergent to $(u^0)^{b,j}$ in $L_{loc}^2([0,T_0]\times\mR^2)$ where $j=1,2.$
Indeed, by the trace inequality  and the fact \eqref{fact1}, one has for any $K\subset \mR^2$ compact,
\beqs 
|(v^{\ep})^{b,j}-(u^0)^{b,j}|_{L^2([0,T_0]\times K)}\lesssim \|v^{\ep}-u^0|_{L^2([0,T_0],L^2(\tilde{K}\times [-1,0])}^{\f{1}{2}}\|v^{\ep}-u^0|_{L^2([0,T_0],H^1(\mS)}^{\f{1}{2}}\rightarrow 0.
\eeqs
where $\tilde{K}\subset \mR^2$ is a compact set such that $K\Subset \tilde{K}.$
The same argument applies also for $u^{\ep}.$ Therefore, one deduces that:
\beq\label{R-term4}
\begin{aligned}
a\int_0^t\int_{z=-1}(u_1^{\ep}\cdot\psi_1+u_2^{\ep}\cdot\psi_2)\,\d y\d s+\int_0^t\int_{z=0} (v^{\ep}\cdot \bN^{\ep}) (v^{\ep}\cdot\psi) \,\d y\d s\\
\rightarrow a\int_0^t\int_{z=-1}(u_1^{0}\cdot\psi_1+u_2^{0}\cdot\psi_2)\,\d y\d s+\int_0^t\int_{z=0} (u^{0}\cdot \bN^{0}) (u^{0}\cdot\psi) \,\d y\d s
\end{aligned}
\eeq
 Finally, by the trace inequality 
 $\div^{\vp^{\ep}} u^{\ep}, \nabla^{\vp^{\ep}}\Psi^{\ep},  (\nabla^{\vp^{\ep}})^2\Psi^{\ep}=\cO(\ep^{\f{1}{2}})$ in $L^2([0,T_0],L^2(\mR^2)),$ which yields that:
\beq\label{R-term5}
\int_0^t\int_{z=0} (2\mu \div^{\vp^{\ep}}u^{\ep}+\f{\pt h^{\ep}}{\p_z\vp^{\ep}}\p_z\Psi^{\ep})(\psi\cdot\bN^{\ep})- (\nabla^{\vp^{\ep}})^2\Psi^{\ep}\bN^{\ep}\cdot \psi \,\d y\d s\rightarrow 0.
\eeq
Plugging \eqref{timebdry-conv} and \eqref{L-term23}-\eqref{R-term5} into \eqref{weakformu-incom}, we find that $(u^0,\pi^0,h^0)$ satisfies \eqref{def-weak}. Finally, it is direct to see that $u^0$ has the  additional regularity \eqref{addi-regu}. In particular, $u^0$ is Lipschitz continuous, which is sufficient to verify the uniqueness. For the reader's convenience, we will sketch the proof in the following subsection.
\subsection{Uniqueness of limit system.}\label{subsection-uniqueness}
Suppose that there are two solutions $(h^1,u^1,\nabla\pi^1)$ and $(h^2,u^2,\nabla\pi^2)$ to the system \eqref{FINS1}-\eqref{FCNS1-bc} on the time interval $[0,T_0]$ with the same initial data ($\vp^1,\vp^2$ are defined through \eqref{defvpep} and \eqref{defeta} associated to $h^1,h^2$). Let $h=h^1-h^2,u=u^1-u^2, \pi=\pi^1-\pi^2.$ 
We prove that $h=0,u=0.$ 
By direct calculation, we find that $(h,u)$ solves the following system:
\begin{align}
    \pt h+(u^1)^{b,1}\cdot\nabla_y h+u^{b,1}\cdot\nabla_y h^2+u_3^{b,1}=0
\end{align}
\begin{align}
    (\pt+\underline{u}^1\cdot\nabla)u+\na^{\vp^1}\pi-\mu\Delta^{\vp^1}u=F
\end{align}
where 
\beq\label{def-F-unique}
F=-(\underline{u}^1-\underline{u}^2)\cdot\nabla u^2+(\nabla^{\vp^2}-\nabla^{\vp^1})\pi^2+\mu(\Delta^{\vp^1}-\Delta^{\vp^2})u^2,\quad \underline{u}^i=(u_1^i,u_2^i,\f{u^i\cdot\bN^i-\pt\vp^i}{\p_z\vp^i}), i=1,2,
\eeq
and with boundary conditions:
\begin{align}\label{bdrycon-diff}
    (S^{\vp^1}u-\pi\text{Id}_3)\bn^1=[(S^{\vp^2}-S^{\vp^1})u^2]\bn^1+(-S^{\vp^2}u^2+\pi^2\text{Id}_3)(\bn^1-\bn^2) \quad \text{ on } \{z=0\},\\
    \mu\p_z u_j=a u_j \,(j=1,2) \quad u_3=0
    \quad \text{ on } \{z=-1\}.\label{bdrycon-diff0}
\end{align}
Define $$E(t)=|h(t)|_{H^{\f{3}{2}}(\mR^2)}^2+\|(u,\p_y u)(t)\|_{L^2(\mS)}^2.$$ 
It suffices to prove that 
\beq\label{unique-ineq}
E(t)+\int_0^t \|\nabla(u,\p_y u)(s)\|_{L^2(\mS)}^2 \d s \leq \Lambda(R)\int_0^t E(s) \d s, \,\forall t\in [0,T_0].
\eeq
where $$R=\sum_{i=1}^2\big(\il(u^i,\nabla u^i,\p_y \nabla u^i)\il_{0,\infty,t}+\il(\pi^i,\nabla\pi^i)\il_{0,\infty,t}+|h^i|_{L_t^{\infty}H^4}\big).$$
Direct energy estimates on $h$ lead to:
\beq\label{unique-h}
\begin{aligned}
|h(t)|_{H^{\f{3}{2}}(\mR^2)}^2&\lesssim \Lambda(R)
\big(|h|_{L_t^2H^{\f{3}{2}}(\mR^2)}^2+|h|_{L_t^2H^{\f{3}{2}}(\mR^2)}|u^{b,1}|_{L_t^2H^{\f{3}{2}}(\mR^2)}\big)\\
&\leq \f{1}{2} 
\int_0^t\|\nabla(u,\p_y u)(s)\|_{L^2(\mS)}^2 \,\d s+\Lambda(R)\int_0^t E(s) \,\d s.
\end{aligned}
\eeq
Thanks to Lemma \ref{lemipp} and boundary condition 
\eqref{bdrycon-diff0}, we can obtain the energy equality:
\beqs 
\begin{aligned}
&\f{1}{2} \int_{\mS} |u(t)|^2\,\d \cV_t^1
+2\mu\int_0^t\int_{\mS} |S^{\vp^1}u|^2\,
\d \cV_s^1 \d s+a \int_0^t \int_{\mR^2}|u|^2 \,\d y\d s\\
&= \int_0^t\int_{\mS} \pi \, \div^{\vp^1} u\, \d \cV_s^1 \d s + \int_0^t\int_{\mS} F\cdot u \, \d \cV_s^1 \d s+2\mu \int_0^t  (S^{\vp^1}u-\pi\text{Id}_3)\bn^1 \cdot u \,\d y \d s,
\end{aligned}
\eeqs
where $\d \cV_t^1=\p_z\vp^1(t,\cdot)\,\d x.$
In light of the definition \eqref{def-F-unique} for $F,$ boundary condition \eqref{bdrycon-diff} as well as the identity:
\beqs 
\div^{\vp^1} u=(\div^{\vp^2}-\div^{\vp^1})u^2,
\eeqs
 we can obtain, after lengthy but direct computations, that:
 \beqs 
 \int_{\mS} |u(t)|^2\d \cV^1
+\int_0^t\int_{\mS} |\nabla u|^2
\d \cV^1 \d s \leq 
\Lambda(R)\big(\int_0^t E(s) \,\d s+ \|\pi \|_{L_t^2L^2(\mS)}|h|_{L_t^2H^{\f{1}{2}}}\big).
 \eeqs
Following similar arguments, one can also show that:
 \beqs 
 \int_{\mS} |\p_y u(t)|^2\d \cV^1
+\int_0^t\int_{\mS} |\nabla \p_y u|^2
\d \cV^1 \d s \leq 
\Lambda(R)\big(\int_0^t E(s) \,\d s+ \|\pi \|_{L_t^2H^1(\mS)}|h|_{L_t^2H^{\f{3}{2}}}\big).
 \eeqs
 By the elliptic estimates performed in Section 5, we can find that:
 \beqs 
 \|\pi \|_{L_t^2H^1(\mS)}\lesssim \Lambda(R)\big(|h|_{L_t^2H^{\f{3}{2}}(\mR^2)}+\|(u,\p_y u)\|_{L_t^2H^1(\mS)}\big).
 \eeqs
 Combining the previous three inequalities and using Young's inequality, we have:
 \beqs 
 \|(u,\p_y u)(t)\|_{L^2(\mS)}^2+\int_0^t \|\nabla(u,\p_y u)(s)\|_{L^2(\mS)}^2 \d s
\leq 
\Lambda(R)\int_0^t E(s) \,\d s.
 \eeqs
 Together with \eqref{unique-h}, this yields \eqref{unique-ineq}.
 
\section{Remarks for other reference domains.}\label{rmkhalfspace}
In this section, we shall explain how to extend the uniform estimates results established in sections 5-12 to the case when the reference domain is 
a channel with infinite depth or a bounded domain. We will only explain the former case since the latter can be dealt with by using the similar covering as in \cite{MR3741102} and by working in  local coordinates based on the former case.

Assume now that $\Omega^{\ep}_t$ is given by:
$$\Omega^{\ep}_t=\{x=(y,z)|\, y\in\mR^2, z<h^{\ep}(t,y)\}.$$ 
The first step is still to use the so-called harmonic extension transformation to reduce the problem to a fixed domain. Consider the map
\beq
\begin{aligned}
\Phi_t^{\ep}:\mR_{-}^3&\rightarrow\Omega_t^{\ep}\\
(y,z)&\rightarrow \Phi^{\ep}(t,y,z)
=(y,\vp^{\ep}(t,y,z))^t
\end{aligned}
\eeq
where 
\beq\label{defvpep-1}
\vp^{\ep}(t,y,z)=A z+\eta^{\ep}(t,x)
\eeq
Here $\eta$ is given by \eqref{defeta}
and $A$ is a constant which is chosen sufficiently large such that $\p_z\varphi^{\ep}>0.$
We introduce the conormal vector fields  $$ Z_0=\ep\p_t, \quad Z_1=\p_{y_1}, \quad Z_2=\p_{y_2}, \quad Z_3=\phi(z)\p_z.$$
where the weight function $\phi(z)=z/(1-z).$  We can define  conormal spaces analogous  to those in Section 1.2 by using these vector fields. 
Furthermore, we can use the quantity $\cN_{m,T}^{\ep}$ defined in \eqref{defcN} (with the conormal norms being changed accordingly in the current definition). The projections $ \bbq, \bbp$ that send a vector field 
in $(L^2(\mR_{-}^3\,\d \cV_t))^3, ( \d \cV_t=\p_z\vp\,\d y\d z) $ to its compressible part and incompressible part are defined as: $\mathbb{P}_t=\text{Id}-\mathbb{Q}_t$ and
\begin{equation}
   \begin{aligned}
   \mathbb{Q}_t: \quad &
L^2(\mR_{-}^3\,\d \cV_t)^3\rightarrow L^2(\mR_{-}^3\,\d \cV_t)^3\\
&\qquad\qquad f \rightarrow \mathbb{Q}_t f=\nabla^{\vp^{\ep}}\vr
   \end{aligned} 
\end{equation}
where $\vr$ satisfies the elliptic equation with trivial Dirichlet boundary condition:
\beq\label{defofQ-1}
\left\{
\begin{array}{l}
     -\Delta^{\vp^{\ep}}\vr=-\div^{\vp^{\ep}}f \quad \text{ in } \mR_{-}^3 \\[5pt]
     \vr|_{z=0}=0
\end{array}
\right.
\eeq
Denote further $v^{\ep}=\bbp u^{\ep}, \nabla^{\vp^{\ep}}\Psi^{\ep}=\bbq u^{\ep}.$

Following the similar (and even easier since there is no lower boundary) computations done in Section 5-12, we can prove  uniform estimates analogous to those of   Theorem \ref{thm-apriori}, we thus do not detail them.
We comment that one crucial point that we have used in the computations is that $\il\nabla\Psi^{\ep}\il_{0,\infty,t}$ can be controlled by the $L_t^{\infty}H_{co}^1$ norm of $\div^{\vp^{\ep}} u^{\ep}$ (rather than $u^{\ep}$) which has a size of $\ep^{\f{1}{2}}.$ 
This is achieved by Sobolev embedding and
elliptic estimate similar to \eqref{elliptic1.5}. In the current situation, due to the lack of suitable Poincar\'e  inequality, only $\|\nabla^2\Psi\|_{L_{t}^{\infty}H_{co}^1}$ (but not $\|\nabla\Psi^{\ep}\|_{L_{t}^{\infty}H_{co}^2}$ ) can be controlled 
by $\|\div^{\vp}u^{\ep}\|_{L_{t}^{\infty}H_{co}^1}.$ Nevertheless, 
in the current situation, one has the following Sobolev embedding:
\beqs
\|f\|_{L^{\infty}(\mR_{-}^3)}\lesssim \|\nabla f\|_{H_{tan}^1(\mR_{-}^3)}
\eeqs
which leads to:
\beqs
\il\nabla\Psi^{\ep}\il_{0,\infty,t}\lesssim \il\nabla^2\Psi^{\ep}\il_{L_{t}^{\infty}H_{co}^1}\lesssim \Lambda\big(\f{1}{c_0}, |h|_{3,\infty,t}\big)\|\div^{\vp}u\|_{L_{t}^{\infty}H_{co}^1}.
\eeqs

\begin{section}{Appendix}
We give a short proof of  \eqref{product-R2}. The proof of $|f g|_{H^s(\mathbb{R}^2)}\lesssim |f|_{H^s}|g|_{W^{1,\infty}}, (0\leq s\leq 1)$ can be found in Theorem 15.2 of \cite{MR3590375}. The case for $-1<s<0$ is derived by duality. We thus focus on the proof of inequality:
$|f g|_{H^s(\mathbb{R}^2)}\lesssim |f|_{H^s}|g|_{H^{1^{+}}}, (-1<s\leq 1).$
We shall use Bony's decomposition: 
$$fg=T_g f+ \tilde{T}_f g=\sum_{j\geq 0}S_{j-1}g\Delta_j f+\sum_{k\geq -1}S_{k+2} f \Delta_{k} g.$$ One can refer to [p.61, [6]] for the definition of 
nonhomogeneous dyadic block \(\Delta_k\) and nonhomogeneous low-frequency cut-off operator \(S_k\).
For any $s\in \mR,$ one can control $T_g f$ as:
\beqs
|T_{g} f|_{H^s(\mR^2)}\lesssim |g|_{L^{\infty}}|f|_{H^s}\lesssim 
|g|_{H^{1^{+}}}|f|_{H^s}.
\eeqs
As for $\tilde{T}_f g,$ if $s<0,$ we control it with the aid of Bernstein inequality:
\beqs
\begin{aligned}
\big(2^{js}|\Delta_j \tilde{T}_f g|_{L^2}\big)_{l^2}&\lesssim \bigg(2^{j(s+1)}|\Delta_j \big(\sum_{k}S_{k+2}f\Delta_k g\big)|_{L^1}\bigg)_{l_j^2}\\
&\lesssim \big(2^{js}\sum_{k\leq j+5}|\Delta_{k}g|_{L^2}\big)_{l_j^2}\sup_k (2^{ks}|S_{k+2}f|_{L^2})\lesssim |g|_{H^1}|f|_{H^s},
\end{aligned}
\eeqs
and if $s>0,$
\beqs
\begin{aligned}
|\tilde{T}_f g|\lesssim \sup_k \big(2^{k(s-1-\kappa)}|S_{k+2}f|_{L^{\infty}}\big)|g|_{H^{1+\kappa}}\lesssim |f|_{H^s}|g|_{H^{1+\kappa}},
\end{aligned}
\eeqs
where $\kappa>0$ is a number that can be arbitrarily close to 0. The proof is now complete.
\end{section}

\section*{Acknowledgement}
The work of N. Masmoudi is supported by  NSF grant DMS-1716466 and  by Tamkeen under the NYU Abu Dhabi Research Institute grant
of the center SITE. F. Rousset was partially supported by the ANR projects ANR-18-CE40-0027 and ANR-18-CE40-0020-01. C. Sun 
benefits the postdoc fellowship funded by Labex CIMI. This work was conducted during the PhD study of the third author in LMO, he would like to thank the institute for providing great research environment.
\bibliographystyle{plain}
\nocite{*}
\bibliography{ref}

\begin{thebibliography}{10}

\bibitem{MR3260858}
T.~Alazard, N.~Burq, and C.~Zuily.
\newblock On the {C}auchy problem for gravity water waves.
\newblock {\em Invent. Math.}, 198(1):71--163, 2014.

\bibitem{MR2106119}
Thomas Alazard.
\newblock Incompressible limit of the nonisentropic {E}uler equations with the
  solid wall boundary conditions.
\newblock {\em Adv. Differential Equations}, 10(1):19--44, 2005.

\bibitem{MR2211706}
Thomas Alazard.
\newblock Low {M}ach number limit of the full {N}avier-{S}tokes equations.
\newblock {\em Arch. Ration. Mech. Anal.}, 180(1):1--73, 2006.

\bibitem{MR2425022}
Thomas Alazard.
\newblock A minicourse on the low {M}ach number limit.
\newblock {\em Discrete Contin. Dyn. Syst. Ser. S}, 1(3):365--404, 2008.

\bibitem{MR976971}
S.~Alinhac.
\newblock Existence d'ondes de rar\'{e}faction pour des syst\`emes
  quasi-lin\'{e}aires hyperboliques multidimensionnels.
\newblock {\em Comm. Partial Differential Equations}, 14(2):173--230, 1989.

\bibitem{MR2768550}
Hajer Bahouri, Jean-Yves Chemin, and Rapha\"{e}l Danchin.
\newblock {\em Fourier analysis and nonlinear partial differential equations},
  volume 343 of {\em Grundlehren der Mathematischen Wissenschaften [Fundamental
  Principles of Mathematical Sciences]}.
\newblock Springer, Heidelberg, 2011.

\bibitem{MR611750}
J.~Thomas Beale.
\newblock The initial value problem for the {N}avier-{S}tokes equations with a
  free surface.
\newblock {\em Comm. Pure Appl. Math.}, 34(3):359--392, 1981.

\bibitem{MR1308856}
H.~Beir\~{a}o~da Veiga.
\newblock Singular limits in compressible fluid dynamics.
\newblock {\em Arch. Rational Mech. Anal.}, 128(4):313--327, 1994.

\bibitem{MR2354691}
S.~Benzoni-Gavage, R.~Danchin, and S.~Descombes.
\newblock On the well-posedness for the {E}uler-{K}orteweg model in several
  space dimensions.
\newblock {\em Indiana Univ. Math. J.}, 56(4):1499--1579, 2007.

\bibitem{MR0344713}
J.~P. Bourguignon and H.~Brezis.
\newblock Remarks on the {E}uler equation.
\newblock {\em J. Functional Analysis}, 15:341--363, 1974.

\bibitem{MR1917042}
D.~Bresch, B.~Desjardins, E.~Grenier, and C.-K. Lin.
\newblock Low {M}ach number limit of viscous polytropic flows: formal
  asymptotics in the periodic case.
\newblock {\em Stud. Appl. Math.}, 109(2):125--149, 2002.

\bibitem{MR2228849}
J.-Y. Chemin, B.~Desjardins, I.~Gallagher, and E.~Grenier.
\newblock {\em Mathematical geophysics}, volume~32 of {\em Oxford Lecture
  Series in Mathematics and its Applications}.
\newblock The Clarendon Press, Oxford University Press, Oxford, 2006.
\newblock An introduction to rotating fluids and the Navier-Stokes equations.

\bibitem{MR1340046}
Jean-Yves Chemin.
\newblock Fluides parfaits incompressibles.
\newblock {\em Ast\'{e}risque}, (230):177, 1995.

\bibitem{MR2291920}
Daniel Coutand and Steve Shkoller.
\newblock Well-posedness of the free-surface incompressible {E}uler equations
  with or without surface tension.
\newblock {\em J. Amer. Math. Soc.}, 20(3):829--930, 2007.

\bibitem{MR1886005}
Rapha\"{e}l Danchin.
\newblock Zero {M}ach number limit in critical spaces for compressible
  {N}avier-{S}tokes equations.
\newblock {\em Ann. Sci. \'{E}cole Norm. Sup. (4)}, 35(1):27--75, 2002.

\bibitem{MR2157145}
Rapha\"{e}l Danchin.
\newblock Low {M}ach number limit for viscous compressible flows.
\newblock {\em M2AN Math. Model. Numer. Anal.}, 39(3):459--475, 2005.

\bibitem{MR3563240}
Rapha\"{e}l Danchin and Lingbing He.
\newblock The incompressible limit in {$L^p$} type critical spaces.
\newblock {\em Math. Ann.}, 366(3-4):1365--1402, 2016.

\bibitem{MR1697038}
B.~Desjardins, E.~Grenier, P.-L. Lions, and N.~Masmoudi.
\newblock Incompressible limit for solutions of the isentropic
  {N}avier-{S}tokes equations with {D}irichlet boundary conditions.
\newblock {\em J. Math. Pures Appl. (9)}, 78(5):461--471, 1999.

\bibitem{MR1702718}
Benoit Desjardins and E.~Grenier.
\newblock Low {M}ach number limit of viscous compressible flows in the whole
  space.
\newblock {\em R. Soc. Lond. Proc. Ser. A Math. Phys. Eng. Sci.},
  455(1986):2271--2279, 1999.

\bibitem{MR4097326}
Marcelo~M. Disconzi and Chenyun Luo.
\newblock On the incompressible limit for the compressible free-boundary
  {E}uler equations with surface tension in the case of a liquid.
\newblock {\em Arch. Ration. Mech. Anal.}, 237(2):829--897, 2020.

\bibitem{MR431261}
David~G. Ebin.
\newblock The motion of slightly compressible fluids viewed as a motion with
  strong constraining force.
\newblock {\em Ann. of Math. (2)}, 105(1):141--200, 1977.

\bibitem{MR3786770}
Tarek Elgindi and Donghyun Lee.
\newblock Uniform regularity for free-boundary {N}avier-{S}tokes equations with
  surface tension.
\newblock {\em J. Hyperbolic Differ. Equ.}, 15(1):37--118, 2018.

\bibitem{MR2575476}
Eduard Feireisl.
\newblock Incompressible limits and propagation of acoustic waves in large
  domains with boundaries.
\newblock {\em Comm. Math. Phys.}, 294(1):73--95, 2010.

\bibitem{MR3916821}
Eduard Feireisl.
\newblock Singular limits for models of compressible, viscous, heat conducting,
  and/or rotating fluids.
\newblock In {\em Handbook of mathematical analysis in mechanics of viscous
  fluids}, pages 2771--2825. Springer, Cham, 2018.

\bibitem{MR2425024}
Eduard Feireisl, Josef M\'{a}lek, and Anton\'{\i}n Novotn\'{y}.
\newblock Navier's slip and incompressible limits in domains with variable
  bottoms.
\newblock {\em Discrete Contin. Dyn. Syst. Ser. S}, 1(3):427--460, 2008.

\bibitem{MR2167201}
Isabelle Gallagher.
\newblock R\'{e}sultats r\'{e}cents sur la limite incompressible.
\newblock Number 299, pages Exp. No. 926, vii, 29--57. 2005.
\newblock S\'{e}minaire Bourbaki. Vol. 2003/2004.

\bibitem{MR2993751}
P.~Germain, N.~Masmoudi, and J.~Shatah.
\newblock Global solutions for the gravity water waves equation in dimension 3.
\newblock {\em Ann. of Math. (2)}, 175(2):691--754, 2012.

\bibitem{MR1070840}
Olivier Gu\`es.
\newblock Probl\`eme mixte hyperbolique quasi-lin\'{e}aire caract\'{e}ristique.
\newblock {\em Comm. Partial Differential Equations}, 15(5):595--645, 1990.

\bibitem{MR2646814}
Olivier Gu\`es, Guy M\'{e}tivier, Mark Williams, and Kevin Zumbrun.
\newblock Existence and stability of noncharacteristic boundary layers for the
  compressible {N}avier-{S}tokes and viscous {MHD} equations.
\newblock {\em Arch. Ration. Mech. Anal.}, 197(1):1--87, 2010.

\bibitem{MR4011035}
Liang Guo, Fucai Li, and Feng Xie.
\newblock Asymptotic limits of the isentropic compressible viscous
  magnetohydrodynamic equations with {N}avier-slip boundary conditions.
\newblock {\em J. Differential Equations}, 267(12):6910--6957, 2019.

\bibitem{MR2754340}
Drago\c{s} Iftimie and Franck Sueur.
\newblock Viscous boundary layers for the {N}avier-{S}tokes equations with the
  {N}avier slip conditions.
\newblock {\em Arch. Ration. Mech. Anal.}, 199(1):145--175, 2011.

\bibitem{MR918838}
Hiroshi Isozaki.
\newblock Singular limits for the compressible {E}uler equation in an exterior
  domain.
\newblock {\em J. Reine Angew. Math.}, 381:1--36, 1987.

\bibitem{MR4293725}
Juhi Jang, Igor Kukavica, and Linfeng Li.
\newblock Mach limits in analytic spaces.
\newblock {\em J. Differential Equations}, 299:284--332, 2021.

\bibitem{MR3281954}
Ning Jiang and Nader Masmoudi.
\newblock On the construction of boundary layers in the incompressible limit
  with boundary.
\newblock {\em J. Math. Pures Appl. (9)}, 103(1):269--290, 2015.

\bibitem{MR3916820}
Ning Jiang and Nader Masmoudi.
\newblock Low {M}ach number limits and acoustic waves.
\newblock In {\em Handbook of mathematical analysis in mechanics of viscous
  fluids}, pages 2721--2770. Springer, Cham, 2018.

\bibitem{MR2812710}
Song Jiang and Yaobin Ou.
\newblock Incompressible limit of the non-isentropic {N}avier-{S}tokes
  equations with well-prepared initial data in three-dimensional bounded
  domains.
\newblock {\em J. Math. Pures Appl. (9)}, 96(1):1--28, 2011.

\bibitem{MR615627}
Sergiu Klainerman and Andrew Majda.
\newblock Singular limits of quasilinear hyperbolic systems with large
  parameters and the incompressible limit of compressible fluids.
\newblock {\em Comm. Pure Appl. Math.}, 34(4):481--524, 1981.

\bibitem{MR668409}
Sergiu Klainerman and Andrew Majda.
\newblock Compressible and incompressible fluids.
\newblock {\em Comm. Pure Appl. Math.}, 35(5):629--651, 1982.

\bibitem{MR2138139}
David Lannes.
\newblock Well-posedness of the water-waves equations.
\newblock {\em J. Amer. Math. Soc.}, 18(3):605--654, 2005.

\bibitem{MR3060183}
David Lannes.
\newblock {\em The water waves problem}, volume 188 of {\em Mathematical
  Surveys and Monographs}.
\newblock American Mathematical Society, Providence, RI, 2013.
\newblock Mathematical analysis and asymptotics.

\bibitem{MR3812074}
Hans Lindblad and Chenyun Luo.
\newblock A priori estimates for the compressible {E}uler equations for a
  liquid with free surface boundary and the incompressible limit.
\newblock {\em Comm. Pure Appl. Math.}, 71(7):1273--1333, 2018.

\bibitem{MR1628173}
P.-L. Lions and N.~Masmoudi.
\newblock Incompressible limit for a viscous compressible fluid.
\newblock {\em J. Math. Pures Appl. (9)}, 77(6):585--627, 1998.

\bibitem{MR1710123}
Pierre-Louis Lions and Nader Masmoudi.
\newblock Une approche locale de la limite incompressible.
\newblock {\em C. R. Acad. Sci. Paris S\'{e}r. I Math.}, 329(5):387--392, 1999.

\bibitem{MR3887218}
Chenyun Luo.
\newblock On the motion of a compressible gravity water wave with vorticity.
\newblock {\em Ann. PDE}, 4(2):Paper No. 20, 71, 2018.

\bibitem{masmoudi2021uniform}
Nader Masmoudi, Frédéric Rousset, and Changzhen Sun.
\newblock Uniform regularity for the compressible navier-stokes system with low
  mach number in bounded domains, 2021.

\bibitem{MR2885569}
Nader Masmoudi and Fr\'{e}d\'{e}ric Rousset.
\newblock Uniform regularity for the {N}avier-{S}tokes equation with {N}avier
  boundary condition.
\newblock {\em Arch. Ration. Mech. Anal.}, 203(2):529--575, 2012.

\bibitem{MR3590375}
Nader Masmoudi and Frederic Rousset.
\newblock Uniform regularity and vanishing viscosity limit for the free surface
  {N}avier-{S}tokes equations.
\newblock {\em Arch. Ration. Mech. Anal.}, 223(1):301--417, 2017.

\bibitem{MR3741102}
Yu~Mei, Yong Wang, and Zhouping Xin.
\newblock Uniform regularity for the free surface compressible
  {N}avier-{S}tokes equations with or without surface tension.
\newblock {\em Math. Models Methods Appl. Sci.}, 28(2):259--336, 2018.

\bibitem{Mr1834114}
G.~M\'{e}tivier and S.~Schochet.
\newblock The incompressible limit of the non-isentropic {E}uler equations.
\newblock {\em Arch. Ration. Mech. Anal.}, 158(1):61--90, 2001.

\bibitem{MR1946548}
G.~M\'{e}tivier and S.~Schochet.
\newblock Averaging theorems for conservative systems and the weakly
  compressible {E}uler equations.
\newblock {\em J. Differential Equations}, 187(1):106--183, 2003.

\bibitem{ou-free}
Yaobin Ou.
\newblock Low mach and low froude number limit for vacuum free boundary problem
  of all-time classical solutions of one-dimensional compressible
  navier--stokes equations.
\newblock {\em SIAM Journal on Mathematical Analysis}, 53(3):3265--3305, 2021.

\bibitem{MR3240080}
Yaobin Ou and Dandan Ren.
\newblock Incompressible limit of global strong solutions to 3-{D} barotropic
  {N}avier-{S}tokes equations with well-prepared initial data and {N}avier's
  slip boundary conditions.
\newblock {\em J. Math. Anal. Appl.}, 420(2):1316--1336, 2014.

\bibitem{MR3485413}
Matthew Paddick.
\newblock The strong inviscid limit of the isentropic compressible
  {N}avier-{S}tokes equations with {N}avier boundary conditions.
\newblock {\em Discrete Contin. Dyn. Syst.}, 36(5):2673--2709, 2016.

\bibitem{MR3531754}
Dandan Ren and Yaobin Ou.
\newblock Incompressible limit of all-time solutions to 3-{D} full
  {N}avier-{S}tokes equations for perfect gas with well-prepared initial
  condition.
\newblock {\em Z. Angew. Math. Phys.}, 67(4):Art. 103, 27, 2016.

\bibitem{MR834481}
Steve Schochet.
\newblock The compressible {E}uler equations in a bounded domain: existence of
  solutions and the incompressible limit.
\newblock {\em Comm. Math. Phys.}, 104(1):49--75, 1986.

\bibitem{MR3929616}
Steven Schochet.
\newblock The mathematical theory of the incompressible limit in fluid
  dynamics.
\newblock In {\em Handbook of mathematical fluid dynamics. {V}ol. {IV}}, pages
  123--157. Elsevier/North-Holland, Amsterdam, 2007.

\bibitem{MR697305}
Paolo Secchi and Alberto Valli.
\newblock A free boundary problem for compressible viscous fluids.
\newblock {\em J. Reine Angew. Math.}, 341:1--31, 1983.

\bibitem{Tanaka-Tani}
Naoto Tanaka and Atusi Tani.
\newblock Surface waves for a compressible viscous fluid.
\newblock {\em J. Math. Fluid Mech.}, 5(4):303--363, 2003.

\bibitem{Tani-81}
Atusi Tani.
\newblock On the free boundary value problem for compressible viscous fluid
  motion.
\newblock {\em J. Math. Kyoto Univ.}, 21(4):839--859, 1981.

\bibitem{MR849223}
Seiji Ukai.
\newblock The incompressible limit and the initial layer of the compressible
  {E}uler equation.
\newblock {\em J. Math. Kyoto Univ.}, 26(2):323--331, 1986.

\bibitem{MR3419883}
Yong Wang, Zhouping Xin, and Yan Yong.
\newblock Uniform regularity and vanishing viscosity limit for the compressible
  {N}avier-{S}tokes with general {N}avier-slip boundary conditions in
  three-dimensional domains.
\newblock {\em SIAM J. Math. Anal.}, 47(6):4123--4191, 2015.

\bibitem{Wu-3dwaterwavelocal}
Sijue Wu.
\newblock Well-posedness in {S}obolev spaces of the full water wave problem in
  3-{D}.
\newblock {\em J. Amer. Math. Soc.}, 12(2):445--495, 1999.

\bibitem{Wu-3dglobal}
Sijue Wu.
\newblock Global wellposedness of the 3-{D} full water wave problem.
\newblock {\em Invent. Math.}, 184(1):125--220, 2011.

\bibitem{MR1316494}
W.~M. Zajaczkowski.
\newblock Existence of local solutions for free boundary problems for viscous
  compressible barotropic fluids.
\newblock {\em Ann. Polon. Math.}, 60(3):255--287, 1995.

\bibitem{MR2410409}
Ping Zhang and Zhifei Zhang.
\newblock On the free boundary problem of three-dimensional incompressible
  {E}uler equations.
\newblock {\em Comm. Pure Appl. Math.}, 61(7):877--940, 2008.

\end{thebibliography}
\end{document}